\DeclarePairedDelimiterX{\Iintv}[1]{\llbracket}{\rrbracket}{\iintvargs{#1}}
\NewDocumentCommand{\iintvargs}{>{\SplitArgument{1}{,}}m}
{\iintvargsaux#1}
\NewDocumentCommand{\iintvargsaux}{mm} {#1\mkern1.5mu..\mkern1.5mu#2}
\numberwithin{equation}{subsection}
\definecolor{airforceblue}{rgb}{0.36, 0.54, 0.66}
\definecolor{bured}{rgb}{0.8, 0.0, 0.0}
\definecolor{burntsienna}{rgb}{0.91, 0.45, 0.32}
\definecolor{burntsienna}{rgb}{0.91, 0.45, 0.32}
\definecolor{burntsienna}{rgb}{0.91, 0.45, 0.32}
\definecolor{chromeyellow}{rgb}{1.0, 0.65, 0.0}
\definecolor{cobalt}{rgb}{0.0, 0.28, 0.67}
\definecolor{coralred}{rgb}{1.0, 0.25, 0.25}
\definecolor{coralred}{rgb}{1.0, 0.25, 0.25}
\definecolor{coralred}{rgb}{1.0, 0.25, 0.25}
\definecolor{ferrarired}{rgb}{1.0, 0.11, 0.0}
\definecolor{forestgreen}{rgb}{0.0, 0.27, 0.13}
\definecolor{cream}{rgb}{1.0, 0.99, 0.82}
\definecolor{cream}{rgb}{1.0, 0.99, 0.82}
\definecolor{cream}{rgb}{1.0, 0.99, 0.82}
\definecolor{cream}{rgb}{1.0, 0.99, 0.82}
\definecolor{candy}{rgb}{0.64, 0.0, 0.0}
\definecolor{darkblue}{rgb}{0.0, 0.0, 0.55}
\definecolor{crimson}{rgb}{0.86, 0.08, 0.24}
\definecolor{carmine}{rgb}{0.94, 0.19, 0.22}
\definecolor{ashgrey}{rgb}{0.7, 0.75, 0.71}
\definecolor{tomato}{rgb}{1.0, 0.39, 0.28}
\definecolor{awesome}{rgb}{1.0, 0.13, 0.32}
\definecolor{cadmiumorange}{rgb}{0.93, 0.53, 0.18}
\definecolor{darkseagreen}{rgb}{0.56, 0.74, 0.56}
\definecolor{darkspringgreen}{rgb}{0.09, 0.45, 0.27}
\definecolor{antiquefuchsia}{rgb}{0.57, 0.36, 0.51}
\definecolor{amaranth}{rgb}{0.9, 0.17, 0.31}
\definecolor{myc}{cmyk}{0.0009,0.8,0.8,0.00}
\newtheorem{theorem}{Theorem}[section]
\newtheorem{definition}{Definition}[section]
\newtheorem{lem}[theorem]{Lemma}
\newtheorem{pro}[theorem]{Proposition}
\newtheorem{defi}[theorem]{Definition}
\newtheorem{cor}[theorem]{Corollary}
\newtheorem{rem}[theorem]{Remark}
\newtheorem{claim}[theorem]{Claim}
\newtheorem{conjecture}[theorem]{Conjecture}
\newtheorem{question}[theorem]{Question}
\newtheorem*{nb}{\footnotesize {N.B}}
\newtheorem*{exs}{Examples}
\def\res#1{\text{\tt Res}\left(#1\right)}
\def\p{\partial}
\def\no{\noindent}
\def\io{{\infty}}
\def\inde{\operatorname{index}}
\def\sech{\operatorname{sech}}
\def\coth{\operatorname{coth}}
\def\cosec{\operatorname{csc}}
\def\cosech{\operatorname{csch}}
\def\cotan{\operatorname{cotan}}
\def\arcth{\operatorname{arcth}}
\def\range{\operatorname{ran}}
\def\diag{\operatorname{diag}}
\def\re{\operatorname{Re}}
\def\card{\operatorname{Card}}
\def\spectrum{\operatorname{Spectrum}}
\def\im{\operatorname{Im}}
\def\Id{\operatorname{Id}}
\def\Lg{\operatorname{Log}}
\def\dive{\operatorname{div}}
\def\moo{C^{\io}}
\def\mooc{C^{\io}_{\textit c}}
\def\N{\mathbb N}
\def\Z{\mathbb Z}
\def\Q{\mathbb Q}
\def\R{\mathbb R}
\def\C{\mathbb C}
\def\poscal#1#2{\langle#1,#2\rangle}
\def\poi#1#2{\left\{#1,#2\right\}}
\def\norm#1{\Vert#1\Vert}
\def\val#1{\vert#1\vert}
\def\Val#1{\left\vert#1\right\vert}
\def\valjp#1{\langle#1\rangle}
\def\l2{L^2(\R^{n})}
\def\L2{L^2(\R^{2n})}
\def\supp{\operatorname{supp}}
\def\sign{\operatorname{sign}}
\def\op#1{{\text{Op}(#1)}}
\def\ops#1#2{{\text{Op}_{#1}(#2)}}
\def\OPS#1#2{{\text{Op}_{#1}\bigl(#2\bigr)}}
\def\opw#1{{\text{\rm Op}_{\text{\rm w}}(#1)}}
\def\OPW#1{{\text{\rm Op}_{\text{\rm w}}\bigl(#1\bigr)}}
\def\OPWM#1{{\text{\rm Op}_{\text{\rm w}}\Bigl(#1\Bigr)}}
\def\mett#1#2#3#4{M_{#1,#2,#3}^{\scriptscriptstyle\{#4\}}}
\def\Mett#1#2#3#4{\mathcal M_{#1,#2,#3}^{\scriptscriptstyle\{#4\}}}
\def\RZ{\R^{2n}}
\def\trace{\operatorname{trace}}
\def\sign{\operatorname{sign}}
\def\hs{{\hskip15pt}}
\def\vs{\vskip.3cm}
\def\indic#1{\mathbf 1_{#1}}
\let \dis=\displaystyle
\let\no=\noindent
\let \dis=\displaystyle
\let\no=\noindent
\def\mat22#1#2#3#4{\begin{pmatrix}#1&#2\\ #3&#4\end{pmatrix}}
\def\matdu#1#2{\begin{pmatrix}#1\\ #2\end{pmatrix}}
\def\rank{\operatorname{rank}}
\def\wt#1{\widetilde{#1}}
\def\XXint#1#2#3{{\setbox0=\hbox{$#1{#2#3}{\int}$}
     \vcenter{\hbox{$#2#3$}}\kern-.5\wd0}}
\def\beq{\begin{equation}}
\def\eeq{\end{equation}}
\def\did{n}
\DeclareMathOperator{\sinc}{sinc}
\DeclareMathOperator{\shc}{shc}
\newcommand{\ai}{\operatorname{\tt A\!i}} 
\begin{document}
\baselineskip=1.15\normalbaselineskip
\title[Integrals of the Wigner Distribution]{Integrating the Wigner Distribution
\\ on subsets of the phase space, a Survey}
\author[Nicolas Lerner]{Nicolas Lerner}
\address{\noindent \textsc{N. Lerner, Institut de Math\'ematiques de Jussieu,
Sorbonne Universit\'e (formerly Paris VI),
Campus Pierre et Marie Curie,
4 Place Jussieu,
75252 Paris cedex 05,
France}}
\email{nicolas.lerner@imj-prg.fr, nicolas.lerner@sorbonne-universite.fr}
\numberwithin{equation}{subsection}
\begin{abstract}
We review several properties of integrals of the Wigner distribution on subsets of the phase space. Along our way, we provide  a theoretical proof of the invalidity of Flandrin's conjecture, a fact already proven via numerical arguments in our joint  paper \cite{DDL}
with B.~Delourme and T.~Duyckaerts. We use also the  J.G.~Wood \& A.J.~Bracken paper \cite{MR2131219},
for which we offer a mathematical perspective.
We review thoroughly the case of subsets of the plane  whose boundary is a conic curve and show that Mehler's formula can be helpful in the analysis of these cases, including for the higher dimensional case investigated in the paper
\cite{MR2761287} by
E.~Lieb and Y.~Ostrover.
Using the Feichtinger algebra,
we show that, generically in the Baire sense, the Wigner distribution of a pulse in $L^2(\R^n)$
does not belong to $L^1(\RZ)$, providing as a byproduct a large class of examples of subsets  of the phase space $\RZ$ on which the integral of the Wigner distribution is infinite.
We study as well the case of convex polygons of the plane, with a rather weak estimate depending on the number of vertices, but independent of the area of the polygon.
\end{abstract}
\keywords{Wigner distribution, Signal theory, Pseudo-differential operators}
\subjclass[2010]{35P05, 42B20, 47G10, 47G30, 47N70, 94A12}
\maketitle
\centerline{\color{magenta}\boxed{\text{\bf \today,\ \currenttime}}}
{\small\tableofcontents}
\section*{Foreword}
As indicated by the title of this article, this paper is a survey of properties of integrals of the Wigner distribution on subsets of the phase space. Since it is quite lengthy, we wish in this foreword to describe the content of this paper, browsing through the table of contents, expecting  that the reader will find some organization with the way this article is written. In particular, we shall point here what is original in our survey (to the best of our knowledge) and what was well-known beforehand. There is no doubt that the fifty-five articles quoted in the references list are a small part of the literature on the topic and could be probably extended tenfold: we expect nevertheless that our choice of references will be enough to cover the most important contributions.
\par
Section 1 is {\bf{Preliminaries and Definitions}} and is very classical. We have used 
J.~Leray's book \cite{MR644633}
and other Lecture Notes of this author at the {\sl Coll\`ege de France} such as \cite{MR0501198},
L.~H\"ormander's
four-volume treatise, {\sl The Analysis of Linear Partial Differential operators} and in particular Volume III,
as well as {K.~Gr\"{o}chenig's }\cite{MR1843717} {\sl Foundations of time-frequency analysis},
along with G.B.~Folland's \cite{MR983366}, 
A.~Unterberger's \cite{MR552965} and N.~Lerner's \cite{MR2599384}.
Some details are given, in particular on positive quantizations, but that section is far from being self-contained, which is probably unavoidable: the link of properties of the Wigner distribution and of the Weyl quantization of classical Hamiltonians is easy to obtain but turns out to be an important piece of information for our purpose.
\par
Section 2 is stressing the link {\bf{Quantization of radial functions - Mehler's formula}} and is also very classical: here also the link aforementioned is easy to get but gives some simplifications  in the formulas providing the quantization of radial Hamiltonians: in one dimension for the configuration space  (phase space $\R^{2}$), we are reduced to check simple integrals related to the Laguerre polynomials, following P.~Flandrin's method in his 1988 article \cite{conjecture}.
\par
Section 3 is dealing with {\bf{Conics with eccentricity $\mathbf {<1}$.}} 
The result for the disc in $\R^{2}$ is due to P.~Flandrin and the result for the Euclidean ball in $\RZ$ to E.~Lieb \& Y.~Ostrover in \cite{MR2761287}. Using Mehler's formula simplifies a little bit the presentation, but leaves open the case of anisotropic ellipsoids for which we formulate a conjecture.
\par
Section 4 is dealing with {\bf{Epigraphs of Parabolas.}} 
The results obtained in that section follow easily from Section 3 but nevertheless the precise diagonalization proven there   seems to be new.
We formulate also a conjecture on anisotropic paraboloids which is  closely related to the conjecture in Section 3.
\par
Section 5 is concerned with {\bf{Conics with eccentricity $\mathbf{>1}$.}} 
Many of the results in that section are contained in the paper
\cite{MR2131219}
by J.G. Wood and A.J. Bracken; however since the latter article contains some formal calculations, using for instance test functions which do not belong to $L^{2}(\R)$, we have made a mathematically sound presentation. As certainly the most important contribution of this work, we provide a ``theoretical''
disproof of Flandrin's conjecture on integrals of the Wigner distribution on convex subsets of the phase space: we find in particular some $a>0$ and  some function $u\in L^{2}(\R)$ with norm 1 such that
$$
\iint_{[0,a]^{2}} \mathcal W(u,u)(x,\xi) dx d\xi>1,
$$where $\mathcal W(u,u)$ is the  Wigner distribution of $u$.
This fact was already proven in 
our joint  paper \cite{DDL}
with B.~Delourme and T.~Duyckaerts, using a rigorous numerical argument.
\par
Section 6 is entitled {\bf{Unboundedness is Baire generic}} and most of its content
is included in Chapter 12 of K.~Gr\"ochenig's book \cite{MR1843717},
\emph{Foundations of time-frequency analysis}.
Using the Feichtinger algebra,
we show that, generically in the Baire sense, the Wigner distribution of a pulse in $L^2(\R^n)$
does not belong to $L^1(\RZ)$, providing as a byproduct a large class of examples of subsets  of the phase space $\RZ$ on which the integral of the Wigner distribution is infinite.
We raise a couple of questions, in particular whether we can find a pulse $u\in L^{2}(\R^{n})$
such that 
\begin{equation*}
E_{+}(u)=\{(x,\xi)\in \RZ, \mathcal W(u,u)(x,\xi)>0  \}\quad\text{is connected.}
\end{equation*}
\par
Section 7 is  {\bf{Convex polygons in the plane}}: we study there the sets defined by the intersection of $N$ half-spaces in the plane $\R^{2}$ and the integrals of the Wigner distribution on these sets. We start with convex cones ($N=2$) for which a complete result is known and we go on with triangles ($N=3$) for which we find an upper bound: the integral of $\mathcal W(u,u)$ on a triangle of $\R^{2}$ for a normalized pulse in $L^{2}(\R)$ is bounded above by a universal constant. We show also that 
 the integral of $\mathcal W(u,u)$ on a convex polygon with $N$ sides of $\R^{2}$ for a normalized pulse in $L^{2}(\R)$ is bounded above by a universal constant $\times \sqrt N$.
 We raise a couple of questions: in particular it seems possible that the behaviour of convex subsets of the  plane is such that there exists a constant $\alpha>1$ such that
 \begin{multline*}
 \forall C\text{ convex subset of the plane $\R^{2}$},\quad 
 \forall u\in L^{2}(\R) \text{ with $\norm{u}_{L^{2}(\R)}=1$,} 
 \\\text{we have\quad }
 \iint_{C}\mathcal W(u,u)(x,\xi) dx d\xi\le \alpha.
\end{multline*}
That would be a weak version of Flandrin's conjecture: the original Flandrin's conjecture was the above statement with $\alpha=1$, which is untrue, but that does not rule out the existence of a number $\alpha>1$ such that the above estimate holds true.
\par
Section 8 is entitled  {\bf{Open questions and Conjectures}}: we review in that section the various conjectures that we meet along the text of the article, estimating the importance and difficulty of the various questions. Section 9 is an Appendix containing only classical material, hopefully helping the reader by improving the self-containedness of this paper.
\section{Preliminaries \& Definitions}
\subsection{The Wigner Distribution}
\index{Wigner distribution}
\index{{~\bf Notations}!$\Omega(u,v)$}
Let $u,v$ be given functions in $L^2(\R^n)$.
The function $\Omega$, defined on $\R^n\times \R^n$ by
\begin{equation}\label{funcome}
\R^{n}\times \R^{n}\ni(z,x)\mapsto u(x+\frac z2) \bar v(x-\frac z2) =\Omega(u,v)(x,z),
\end{equation}
belongs to $L^{2}(\RZ)$ from the identity
\begin{equation}\label{newide}
\int_{\RZ}\val{\Omega(u,v)(x,z)}^{2} dxdz=\norm{u}^{2}_{L^{2}(\R^{n})}\norm{v}^{2}_{L^{2}(\R^{n})}.
\end{equation}
We have also
\begin{equation}\label{113uhb}
\sup_{x\in \R^n}\int_{\R^n}\val{\Omega(x,z)} dz\le 2^n \norm{u}_{L^{2}(\R^{n})}\norm{v}_{L^{2}(\R^{n})}.
\end{equation}
We may then give the following definition.
\begin{defi}
 Let $u,v$ be given functions in $L^2(\R^n)$. We define the joint Wigner distribution
 $\mathcal W(u,v)$ 
as the partial Fourier transform\footnote{\label{foot1}For $f\in \mathscr S(\R^{N})$,
we define its Fourier transform by 
$
\hat f(\xi)=\int_{\R^{N}} e^{-2i\pi x\cdot \xi} f(x) dx
$ and we obtain the inversion formula
$ f(x)=\int_{\R^{N}} e^{2i\pi x\cdot \xi} \hat f(\xi) d\xi$. Both formulas can be extended to tempered distributions:
for $T\in \mathscr S'(\R^{N})$, we define the tempered distribution  $\hat T$ by 
\begin{equation}\label{res444}
\poscal{\hat T}{\phi}_{ \mathscr S'(\R^{N}), \mathscr S(\R^{N})}=
\poscal{T}{\hat \phi}_{ \mathscr S'(\R^{N}), \mathscr S(\R^{N})}.
\end{equation}
Note also that with this normalization, it is natural to introduce the operators $ D_{x}^\alpha$ defined for $\alpha\in \N^N$
 by 
\begin{equation}\label{deuxipi}
D_{x}^\alpha u=D_{x_{1}}^{\alpha_{1}}\dots D_{x_{N}}^{\alpha_{n}} u,\
 D_{x_{j}}u=\frac{\partial u}{2i\pi\p x_{j}},\text{ so that }\widehat{D_{x}^\alpha u}=\xi^\alpha \hat u(\xi), \quad \text{with $\xi^\alpha=\xi_{1}^{\alpha_{1}}\dots \xi_{N}^{\alpha_{N}}$}.
\end{equation}
\label{fn:first}
} with respect to $z$ of the function $\Omega$ defined in
\eqref{funcome}.
We have for  $(x,\xi)\in\R^n_{x}\times \R^n_{\xi}$, using \eqref{113uhb},
\index{{~\bf Notations}!$\mathcal W(u,v)$}
\begin{equation}\label{wigner}
\mathcal W(u,v)(x,\xi)=\int_{\R^n} e^{-2i\pi z\cdot \xi} u(x+\frac z2) \bar v(x-\frac z2) dz.
\end{equation}
The Wigner distribution of $u$ is defined as $\mathcal W(u,u)$.
\end{defi}
\begin{nb}
 By inverse Fourier transformation we get, in a weak sense,
 \index{reconstruction formula}
  \begin{equation}\label{623new}
u(x_{1})\otimes \bar v(x_{2})=\int \mathcal W(u,v)(\frac{x_{1}+x_{2}}2, \xi)\ e^{2i\pi(x_{1}-x_{2})\cdot \xi}d\xi.
\end{equation}
\end{nb}
\begin{lem}
 Let $u,v$ be given functions in $L^2(\R^n)$.  The function 
 $\mathcal W(u,v)$  belongs to $L^2(\RZ)$ and we have
 \begin{equation}\label{norm}
\norm{\mathcal  W(u,v)}_{L^{2}(\RZ)}=\norm{u}_{L^{2}(\R^{n})}\norm{v}_{L^{2}(\R^{n})}.
\end{equation} 
We have  also
\begin{equation}\label{complex}
\overline{\mathcal W (u,v)(x,\xi)}=\mathcal W (v,u)(x,\xi),
\end{equation}
so that 
 $\mathcal W(u,u)$ is real-valued.
\end{lem}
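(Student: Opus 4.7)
My plan is to deduce the norm identity \eqref{norm} from the already-stated identity \eqref{newide} together with the fact that the partial Fourier transform is an $L^{2}$-isometry, and then to obtain the conjugation formula \eqref{complex} by a direct change of variable in the defining integral.

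First I would briefly confirm \eqref{newide} by the linear change of variables $y_{1}=x+z/2$, $y_{2}=x-z/2$ on $\RZ$. The corresponding Jacobian is the $2n\times 2n$ block matrix $\begin{pmatrix} I_{n} & I_{n}/2 \\ I_{n} & -I_{n}/2 \end{pmatrix}$, whose determinant equals $(-1)^{n}$, so that $dx\,dz=dy_{1}\,dy_{2}$ in absolute value; Fubini then yields $\norm{\Omega(u,v)}_{L^{2}(\RZ)}^{2}=\norm{u}_{L^{2}(\R^{n})}^{2}\norm{v}_{L^{2}(\R^{n})}^{2}$. Since $\mathcal W(u,v)$ is, by definition, the partial Fourier transform of $\Omega(u,v)$ with respect to $z$, and since this partial Fourier transform extends to a unitary operator on $L^{2}(\RZ)\simeq L^{2}(\R^{n}_{x})\otimes L^{2}(\R^{n}_{z})$ (apply Plancherel fiberwise in $z$ and use Fubini, or argue by density from Schwartz functions), the identity \eqref{norm} follows immediately from \eqref{newide}.

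For \eqref{complex}, I would use the elementary identity $\overline{\hat f(\xi)}=\hat g(\xi)$ with $g(z)=\overline{f(-z)}$, valid in $L^{2}$. Applied fiberwise in $z$ to $\Omega(u,v)(x,\cdot)$, this reads
\begin{equation*}
\overline{\mathcal W(u,v)(x,\xi)}
=\int_{\R^{n}} e^{-2i\pi z\cdot\xi}\,\overline{\Omega(u,v)(x,-z)}\,dz,
\end{equation*}
and the one-line computation $\overline{\Omega(u,v)(x,-z)}=\overline{u(x-z/2)}\,v(x+z/2)=\Omega(v,u)(x,z)$ identifies the right-hand side with $\mathcal W(v,u)(x,\xi)$. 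Specializing to $v=u$ gives $\overline{\mathcal W(u,u)}=\mathcal W(u,u)$, so $\mathcal W(u,u)$ is real-valued.

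I do not anticipate any serious obstacle: the whole argument reduces to elementary Fourier analysis and one linear change of variables. The only mild technical point is that the integral in \eqref{wigner} need not converge absolutely for individual $x$ when $u,v\in L^{2}(\R^{n})$, but this is absorbed by interpreting the partial Fourier transform through its standard unitary extension on $L^{2}(\RZ)$, which is precisely the framework in which \eqref{norm} and \eqref{complex} are to be read.
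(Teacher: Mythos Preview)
Your proposal is correct and follows essentially the same approach as the paper: the paper's own proof is a two-sentence remark that \eqref{norm} follows from \eqref{newide} and the definition of $\mathcal W$ as a partial Fourier transform of $\Omega$, and that \eqref{complex} is ``immediate''. You have simply spelled out the change of variables behind \eqref{newide}, the Plancherel argument for the partial Fourier transform, and the conjugation identity $\overline{\hat f}=\widehat{\overline{f(-\cdot)}}$ that makes \eqref{complex} immediate---all of which is exactly what the paper leaves implicit.
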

\begin{proof}
Note that the function $\mathcal W(u,v)$ is in $L^{2}(\RZ)$ and satisfies
\eqref{norm}
from \eqref{newide} and the definition of $\mathcal W$ as the partial Fourier transform of $\Omega$.
Property \eqref{complex} is immediate and entails that $\mathcal W(u,u)$ is real-valued.
\end{proof}
\begin{rem}\label{rem13}
\rm
We note also that the real-valued function $\mathcal W (u,u)$ can take negative values,
choosing for instance $u_{1}(x)=x e^{-\pi x^{2}}$ on the real line, we get
$$
\mathcal W (u_{1},u_{1})(x,\xi)=2^{1/2} e^{-2\pi(x^{2}+\xi^{2})}\bigl(x^{2}+\xi^{2}-\frac1{4\pi}\bigr).
$$
In fact the real-valued function $\mathcal W (u,u)$ will take negative values unless $u$ is a Gaussian function, thanks to a Theorem due to E. Lieb (see \cite{MR1039210}
and the books \cite{MR1843717} and \cite{MR1614527}).
As a matter of fact, this range of $\mathcal W(u,u)$ intersecting $\R_{-}$ for most ``pulses'' $u$ in $L^2(\R^n)$
makes rather weird
the qualification of $\mathcal W(u,u)$ as a ``quasi-probability''
(anyhow the emphasis must be on {\it quasi}, not on {\it probability}).
\end{rem}
\begin{rem}\rm
We have also by Fourier inversion formula, say for $u\in \mathscr S(\R^{n})$,
\begin{equation}\label{flat}
u(x+\frac z2) \bar u(x-\frac z2) =\Omega(x,z)=\int \mathcal W(u,u)(x,\xi) e^{2i\pi z\cdot \xi} d\xi,
\end{equation}
so that, with $z=2x=y$,
we get the Reconstruction Formula,
\begin{equation}\label{rec}
u(y)\bar u(0)=\int \mathcal W(u,u)(\frac y2,\xi) e^{2i\pi y\cdot \xi} d\xi,
\end{equation}
as well as
\begin{equation}\label{hg55ss}
\val{u(x)}^{2}=\int \mathcal W(u,u)(x,\xi) d\xi,
\quad
\val{\hat u(\xi)}^{2}=\int \mathcal W(u,u)(x,\xi) dx,
\end{equation}
the former formula following from \eqref{flat} and the latter from 
\begin{multline}
\int \mathcal W(u,u)(x,\xi) dx=\iint e^{-2i\pi z\xi} u(x+\frac z2)\bar u(x-\frac z2) dzdx
\\=\iint e^{-2i\pi \xi(x_{1}-x_{2})} u(x_{1})\bar u(x_{2}) dx_{1}dx_{2}=\val{\hat u(\xi)}^{2}.
\end{multline}
\end{rem}
\begin{lem}\label{lem.even}
 Let $u$ be a function in $L^2(\R^n)$ which is even or odd. Then $\mathcal W(u,u)$ is an even function.
\end{lem}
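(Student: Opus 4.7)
The plan is to work directly from the defining formula \eqref{wigner} and perform a change of variable. I want to show that $\mathcal{W}(u,u)(-x,-\xi)=\mathcal{W}(u,u)(x,\xi)$ whenever $u$ is even or odd.

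First, I would write
\begin{equation*}
\mathcal{W}(u,u)(-x,-\xi)=\int_{\R^n} e^{2i\pi z\cdot \xi}\, u(-x+\tfrac z2)\,\bar u(-x-\tfrac z2)\, dz,
\end{equation*}
and then perform the change of variables $z\mapsto -z$ (which has Jacobian $1$) to get
\begin{equation*}
\mathcal{W}(u,u)(-x,-\xi)=\int_{\R^n} e^{-2i\pi z\cdot \xi}\, u(-x-\tfrac z2)\,\bar u(-x+\tfrac z2)\, dz.
\end{equation*}

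Next, I would use the parity hypothesis. If $u$ is even, then $u(-x-\tfrac z2)=u(x+\tfrac z2)$ and $\bar u(-x+\tfrac z2)=\bar u(x-\tfrac z2)$, so the integrand matches that of $\mathcal{W}(u,u)(x,\xi)$ verbatim. If $u$ is odd, then each of the two factors picks up a minus sign, so the product is unchanged. In both cases one concludes
\begin{equation*}
\mathcal{W}(u,u)(-x,-\xi)=\int_{\R^n} e^{-2i\pi z\cdot \xi}\, u(x+\tfrac z2)\,\bar u(x-\tfrac z2)\, dz=\mathcal{W}(u,u)(x,\xi).
\end{equation*}

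There is essentially no obstacle: the argument is a one-line change of variable combined with the parity of $u$. The only point to keep in mind is that these manipulations are justified in $L^2$ by density (one can first argue on $\mathscr{S}(\R^n)$ and then use the continuity assertion \eqref{norm} to pass to the limit), but since the identity is pointwise for $u\in \mathscr{S}(\R^n)$ and both sides depend continuously on $u\in L^2(\R^n)$ in the $L^2(\R^{2n})$ sense, the conclusion extends immediately to all $u\in L^2(\R^n)$ as an equality in $L^2(\R^{2n})$.
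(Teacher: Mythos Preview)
Your proof is correct and follows essentially the same approach as the paper: a direct substitution $z\mapsto -z$ in the defining integral, combined with the parity of $u$. The paper phrases the computation slightly more generally by first establishing $\mathcal W(u,v)(-x,-\xi)=\mathcal W(\check u,\check v)(x,\xi)$ and then specializing to $v=u$ with $\check u=\pm u$, but the content is identical.
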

\begin{proof}
Using the notation 
\begin{equation}\label{check5}
\check u(x)= u(-x),
\end{equation}
we check
\begin{align*}
\mathcal W(u,v)&(-x,-\xi)=\int_{\R^n} e^{2i\pi z\cdot \xi} u(-x+\frac z2) \bar v(-x-\frac z2) dz
\\&=\int_{\R^n} e^{2i\pi z\cdot \xi} \check u(x-\frac z2) \check{\bar v}(x+\frac z2) dz
=
\int_{\R^n} e^{-2i\pi z\cdot \xi} \check u(x+\frac z2) \bar{\check{ v}}(x-\frac z2) dz\\
&=\mathcal W(\check u, \check v)(x,\xi),  
\end{align*}
so that if $\check u=\pm u$, we get 
$\mathcal W(u,u)(-x,-\xi)=\mathcal W(u,u)(x,\xi)$.
\end{proof}
\begin{nb}\rm
 This lemma is a very particular case of the symplectic covariance property displayed below in 
 \eqref{segal+}.
\end{nb}
\begin{nb}\rm
In 
Part 1 of Volume IV in the collected works
\cite{MR1635991} of Eugene P. Wigner, we find 
the first occurrence of  what will be called later on the \emph{Wigner distribution} along with a physicist point of view.
 \end{nb}
It turns out that most of the properties of the Wigner distribution (in particular Lemma \ref{lem.even}) are inherited
from its links with the Weyl quantization introduced by  H. Weyl in 1926 in the first edition of \cite{MR0450450} and our next remarks are devised to stress that link.
\subsection{Weyl quantization, Composition formulas, Positive quantizations}
\subsubsection{Weyl quantization}\label{sec.weyl}
\index{Weyl quantization}
The main goal of Hermann Weyl in his seminal paper \cite{MR0450450} was to give a simple formula, also providing symplectic covariance, ensuring that  real-valued Hamiltonians $a(x,\xi)$ get quantized by formally self-adjoint operators.  The standard way of dealing with differential operators does not achieve that goal since for instance the standard quantization of the Hamiltonian
$x\xi$ (indeed real-valued) is the operator $xD_{x}$, which is not symmetric ($D_{x}$ is defined in \eqref{deuxipi}); H.~Weyl's choice in that case was
$$
x\xi\text{ \it should be quantized by  the operator\quad} \frac{1}{2}(xD_{x}+D_{x}x),
\quad\text{\it (indeed symmetric),}$$
and more generally, say for 
$a\in \mathscr S(\RZ), u\in \mathscr S(\R^n)$, the quantization of the Hamiltonian $a(x,\xi)$, denoted by $\opw{a}$, 
should be 
given by the formula
\index{{~\bf Notations}!$\opw{a}$}
\begin{equation}
(\opw{a} u)(x)=\iint e^{2i\pi (x-y)\cdot \xi} a\bigl(\frac{x+y}2, \xi\bigr) u(y) dy d\xi.
\end{equation}
For $v\in\mathscr S(\R^n)$, we may consider
\begin{multline*}
\poscal{\opw{a} u}{v}_{L^2(\R^n)}
=\iiint a(x,\xi)e^{-2i\pi z\cdot \xi} 
u(x+\frac z2)\bar v(x-\frac z2) dzdxd\xi 
\\
=\iint_{\R^n\times \R^n} a(x,\xi) \mathcal W(u,v)(x,\xi) dx d\xi,
\end{multline*}
and the latter formula allows us to give the following definition.
\begin{defi}
Let $a\in \mathscr S'(\RZ)$. We define the Weyl quantization $\opw{a}$ of the Hamiltonian $a$, by the formula
\index{Weyl quantization}
\begin{equation}\label{weylq}
(\opw{a}u)(x)=\iint e^{2i\pi (x-y)\cdot \xi} a\bigl(\frac{x+y}2, \xi\bigr) u(y) dy d\xi,
\end{equation}
to be understood weakly as 
\begin{equation}\label{eza654}
\poscal{\opw{a}u}{\bar v}_{\mathscr S'(\R^{n}), \mathscr S(\R^{n})}=\poscal{a}{\mathcal W(u,v)}_{\mathscr S'(\RZ), \mathscr S(\RZ)}.
\end{equation}
\end{defi}
We note that the sesquilinear  mapping 
$$\mathscr S(\R^{n})\times \mathscr S(\R^{n})\ni(u,v)\mapsto \mathcal W(u,v) \in \mathscr S(\RZ),$$
is continuous so that the above bracket of duality $
\poscal{a}{\mathcal W(u,v)}_{\mathscr S'(\R^{2n}), \mathscr S(\R^{2n})}
$
makes sense.
We note as well that a temperate distribution $a\in\mathscr S'(\RZ)$ gets quantized
by a continuous  operator $\opw a$ from 
$\mathscr S(\R^{n})$ 
into
$\mathscr S'(\R^{n})$.
This very general framework  is not really useful since we want to compose our operators $\opw{a}\opw{b}$.
A first step in this direction is to look for sufficient conditions
ensuring that the operator $\opw{a}$ is bounded on $L^{2}(\R^{n})$. 
Moreover, for $a\in \mathscr S'(\RZ)$ and $b$ a polynomial in $\C[x,\xi]$,
we have the composition formula,
\index{{~\bf Notations}!$a\sharp b$}
\index{composition formula}
\begin{align}
\opw{a}\opw{b}&=\opw{a\sharp b},\label{gfcd44}
\\
(a\sharp b)(x,\xi)&= \sum_{k\ge 0}\frac1{(4i\pi)^{k}}\sum_{\val \alpha+\val \beta=k}
\frac{(-1)^{\val \beta}}{\alpha!\beta!}(\p_{\xi}^{\alpha}\p_{x}^{\beta}a)(x,\xi)
(\p_{x}^{\alpha}\p_{\xi}^{\beta}b)(x,\xi),\label{gfcd44+}
\end{align}
which involves here a finite sum.
This follows from (2.1.26) in \cite{MR2599384}
where several generalizations can be found (see in particular in that reference the integral formula (2.1.18) which can be given a meaning for quite general classes of symbols).
As a consequence of \eqref{gfcd44+}, we get that 
\index{Poisson bracket}
\index{{~\bf Notations}!$\poi{a}{b}$}
\begin{align}
&(a\sharp b)=\sum_{k\ge 0}\omega_{k}(a,b), \quad \omega_{0}(a,b)=ab, \quad \omega_{1}(a,b)=\frac1{4i\pi}
\poi{a}{b},
\label{gfl227}\\
&\poi{a}{b}=\p_{\xi}a\cdot \p_{x}b-\p_{x}a\p_{\xi}b,
\end{align}
where $\poi{a}{b}$ is called the \emph{Poisson bracket} of $a$ and $b$.
\begin{pro}\label{pro1717}
Let $a$ be a tempered distribution on $\RZ$. Then we have 
 \begin{equation}\label{norm01}
\norm{\opw{a}}_{\mathcal B(L^{2}(\R^{n}))}\le \min\bigl(2^{n}\norm{a}_{L^{1}(\RZ)}, \norm{\hat a}_{L^{1}(\RZ)}\bigr).
\end{equation}
\end{pro}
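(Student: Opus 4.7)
The proposition contains two separate estimates, and I would treat them independently, both starting from the weak defining formula \eqref{eza654}, i.e. $\poscal{\opw{a}u}{\bar v} = \poscal{a}{\mathcal W(u,v)}$, initially for $u,v\in\mathscr S(\R^n)$ and extending by density/duality.

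For the first bound $\norm{\opw{a}}_{\mathcal B(L^2)}\le 2^n\norm{a}_{L^1}$, the task is to upgrade \eqref{113uhb} to a genuine $L^\infty$ estimate on the Wigner distribution. Since $\mathcal W(u,v)(x,\xi)$ is by \eqref{wigner} the partial Fourier transform in $z$ of $\Omega(u,v)(x,z)$, one has the trivial pointwise bound $\val{\mathcal W(u,v)(x,\xi)} \le \int \val{\Omega(u,v)(x,z)}\,dz$, which combined with \eqref{113uhb} yields
\begin{equation*}
\norm{\mathcal W(u,v)}_{L^\infty(\RZ)} \le 2^n \norm{u}_{L^2(\R^n)}\norm{v}_{L^2(\R^n)}.
\end{equation*}
Inserting this in \eqref{eza654} gives $\val{\poscal{\opw{a}u}{\bar v}} \le 2^n\norm{a}_{L^1}\norm{u}_{L^2}\norm{v}_{L^2}$, hence the claimed operator norm estimate by taking the supremum over normalized $v$.

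For the second bound $\norm{\opw{a}}_{\mathcal B(L^2)}\le \norm{\hat a}_{L^1}$, I would represent $a$ as a superposition of plane waves via Fourier inversion (permissible since $\hat a\in L^1$): $a(x,\xi)=\iint \hat a(\eta,y)\,e^{2i\pi(x\cdot\eta+\xi\cdot y)}\,d\eta\,dy$. The key computational input is the explicit form of the Weyl quantization of a pure plane wave: using \eqref{weylq} and performing the $\xi$-integration first to produce a Dirac mass forcing $y'=x'+y$, a direct computation gives
\begin{equation*}
\bigl(\opw{e^{2i\pi(x\cdot\eta+\xi\cdot y)}}u\bigr)(x') = e^{i\pi y\cdot\eta}\,e^{2i\pi x'\cdot\eta}\,u(x'+y),
\end{equation*}
which is, up to a unimodular factor, the composition of a translation by $y$ and a modulation by $\eta$, hence a unitary operator on $L^2(\R^n)$. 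Plugging this superposition into \eqref{eza654}, Fubini (justified by the $L^1$ hypothesis on $\hat a$ together with continuity of $\mathcal W(u,v)$) allows writing
\begin{equation*}
\poscal{\opw{a}u}{\bar v} = \iint \hat a(\eta,y)\,\poscal{\opw{e^{2i\pi(x\cdot\eta+\xi\cdot y)}}u}{\bar v}\,d\eta\,dy,
\end{equation*}
so that $\val{\poscal{\opw{a}u}{\bar v}} \le \norm{\hat a}_{L^1}\norm{u}_{L^2}\norm{v}_{L^2}$ by unitarity, yielding the second bound.

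The routine parts are the Cauchy--Schwarz/change-of-variables step behind \eqref{113uhb} and the explicit computation of the Weyl quantization of a plane wave. The one place to be careful is the second bound, where $a$ itself need not be in $L^1$ nor even continuous; the argument must be arranged so that all equalities involving $a$ are interpreted weakly through \eqref{eza654}, and Fubini is applied only to the integral $\iint \hat a(\eta,y)\,\poscal{\opw{e^{2i\pi(x\eta+\xi y)}}u}{\bar v}\,d\eta\,dy$, whose integrand is absolutely integrable. This is the main, though mild, technical obstacle.
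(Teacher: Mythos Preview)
Your proposal is correct and follows essentially the same route as the paper: for the first bound the paper phrases things via the phase symmetries $\sigma_{x,\xi}$ and the representation $\opw{a}=2^n\iint a(x,\xi)\,\sigma_{x,\xi}\,dx\,d\xi$, which is exactly dual to your $L^\infty$ estimate on $\mathcal W(u,v)$ (indeed $\mathcal W(u,v)(X)=2^n\poscal{\sigma_X u}{v}$); for the second bound the paper reaches the same unitary plane-wave quantization via the ambiguity function and the identity $L^{\sharp N}=L^N$, where you compute the kernel directly. The underlying mechanism---writing $\opw{a}$ as an $L^1$ superposition of unitaries---is identical in both cases.
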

\begin{proof}
In fact we have from \eqref{eza654}, $u,v\in \mathscr S(\R^{n})$, 
$$
\poscal{\opw{a}u}{ v}_{L^{2}(\R^{n})}=\iiint a(x,\xi) u(2x-y)\bar v(y) e^{-4i\pi(x-y)\cdot \xi} 2^{n}dy dxd\xi,
$$
so that defining for $(x,\xi)\in \RZ$ the operator $\sigma_{x,\xi}$ by
\index{phase symmetry}
\index{Grossman-Royer operator}
\index{{~\bf Notations}!$\sigma_{x,\xi}$}
\begin{equation}\label{phase}
(\sigma_{x,\xi} u)(y)=u(2x-y)e^{-4i\pi(x-y)\cdot \xi}, 
\end{equation}
we see that the operator $\sigma_{x,\xi}$ (called \emph{phase symmetry}, also known as the \emph{Grossman-Royer operator}) is unitary and self-adjoint\footnote{\label{foot2}Indeed  we have\begin{multline}\label{unit22}
(\sigma_{x,\xi}^2 u)(y)=
(\sigma_{x,\xi}u)(2x-y)e^{-4i\pi(x-y)\cdot \xi}
\\
= u(2x-(2x-y)) e^{-4i\pi(x-(2x-y))\cdot \xi}e^{-4i\pi(x-y)\cdot \xi}
= u(y), \ \text{so that}\ \sigma_{x,\xi}^2=\Id.
\end{multline}
We have 
$
\poscal{\sigma_{x,\xi}^*u}{v}_{L^2(\R^n)}=\poscal{u}{\sigma_{x,\xi}v}_{L^2(\R^n)}
$
$=\overline{\mathcal W(v,u)(x,\xi)}={W(u,v)(x,\xi)}
={\poscal{\sigma_{x,\xi}u}{v}_{L^2(\R^n)}},
$
proving that $\sigma_{x,\xi}^*=\sigma_{x,\xi}$.
} and 
\begin{equation}\label{12hh}
\opw{a}=2^{n}\iint a(x,\xi) \sigma_{x,\xi} dxd\xi,
\end{equation}
proving the first estimate of the proposition. 
As a consequence of \eqref{12hh},
we obtain that 
\begin{equation}\label{}
\left(\opw{a}\right)^{*}=\opw{\bar a},\quad\text{so that for $a$ real-valued, $(\opw{a})^{*}=\opw{a}$.}
\end{equation}
\index{ambiguity function}
\index{{~\bf Notations}!$\mathcal A(u,v)$}
To prove the second estimate, we introduce the so-called ambiguity function $\mathcal A(u,v)$ as the inverse Fourier transform of the Wigner function $\mathcal W(u,v)$, so that for $u,v$ in the Schwartz class, we have
$$
(\mathcal A(u,v))(\eta, y)=\iint \mathcal W(u,v)(x,\xi) e^{2i\pi(x\cdot \eta+\xi\cdot y)} dx d\xi,
$$
i.e.
\begin{equation}\label{1210}
(\mathcal A(u,v))(\eta, y)=\int u(x+\frac{y}2)
\bar v(x-\frac{y}2) e^{2i\pi x\cdot \eta} dx,
\end{equation}
which reads as well as
\begin{multline}\label{1211++}
(\mathcal A(u,v))(\eta, y)=\int u(\frac{y}2+\frac z2)
\
\overline{ \check v}(\frac{y}2-\frac z2) \ e^{2i\pi z\cdot \frac\eta 2} dz\ 2^{-n}
\\=\mathcal W(u, \check v)(\frac y2,- \frac \eta 2) 2^{-n}.
\end{multline}
\begin{nb}\rm
The ambiguity function is called the Fourier-Wigner transform in G.B.~Folland's book
\cite{MR983366}.
\end{nb}
\begin{rem}\label{rem.334455}\rm
 With $\Omega(u,v)$ defined by \eqref{funcome}, we have
  \begin{equation}\label{65rtfg} \mathcal W(u,v)=\mathcal F_{2}\bigl(\Omega(u,v)\bigr),
\end{equation}
 where $\mathcal F_{2}$ stands for 
 the Fourier transformation with respect to the second variable. Taking the  Fourier transform with respect to the second variable  in the previous formula gives, with $\mathcal F_j$ (resp. $\mathcal F$)
 standing for the Fourier transform with respect to the $j^{\text{th}}$ variable
 (resp. all variables),
  $$
\mathcal F_{2}{\mathcal W}=\mathcal C_{2}{\Omega},\quad 
\mathcal F{\mathcal W}=\mathcal F_1{\mathcal C_{2}{\Omega}},
\quad
\mathcal A=\mathcal C\mathcal F{\mathcal W}=\mathcal F_{1}{\mathcal C_{1}{\Omega}},
$$
 where $\mathcal C$
 (resp. $\mathcal C_{1}$ or $\mathcal C_{2}$)
  stands for the ``check'' operator $\mathcal C$ in $\R^{n}\times \R^{n}$
  given by \eqref{check5}
 (resp. with respect to the first or second variable),
 the latter formula being
\eqref{1210}.
\end{rem}
Applying Plancherel formula on \eqref{eza654},
we get
\begin{equation}\label{ambigu}
\poscal{\opw{a}u}{v}_{L^{2}(\R^{n})}=\poscal{\hat a}{\mathcal A(u,v)}_{\mathscr S'(\RZ), \mathscr S(\RZ)}.
\end{equation}
We note that a consequence of \eqref{gfcd44+} is that for a linear form $L(x,\xi)$,
we have
$$
L\sharp L=L^{2},\quad \text{and more generally}\quad L^{\sharp N}=L^{N}.
$$
As a result, considering for $(y,\eta)\in \RZ$, the linear form $L_{\eta, y}$ defined by
\begin{equation}
L_{\eta, y}(x,\xi)=x\cdot \eta+\xi\cdot y,
\end{equation}
we see that 
\begin{equation}\label{}
\mathcal A(u,v)(\eta, y)=\poscal{
\opw{e^{2i\pi (x\cdot \eta+\xi\cdot y)}
}u}{v}_{L^{2}(\R^{n})},
\end{equation}
and thus we get Hermann Weyl's original formula 
\begin{equation}\label{}
\opw{a}=\iint \widehat a(\eta, y) e^{i\opw{L_{\eta,y}}} dyd\eta,
\end{equation}
which implies the second estimate in the proposition.
\end{proof} 
\index{distribution kernel}
\begin{pro}\label{pro1716}
Let $a\in \mathscr S'(\RZ)$. The distribution kernel $k_{a}(x,y)$ of the operator $\opw{a}$ is
\begin{equation}\label{reteey}
k_{a}(x,y)=\widehat a^{[2]}(\frac{x+y}2, y-x),
\end{equation}
where $a^{[2]}$ stands for the Fourier transform of $a$ with respect to the second variable.
Let $k\in \mathscr S'(\RZ)$ be the distribution kernel of a continuous operator $A$ from $\mathscr S(\R^n)$ into   $\mathscr S'(\R^n)$. Then the Weyl 
symbol $a$
of $A$ is 
\begin{equation}\label{}
a(x,\xi)= \int e^{-2\pi i t\cdot \xi}
k(x+\frac t2,x-\frac t2) dt,
\end{equation}
where the integral sign means  that we take the Fourier transform with respect to $t$
of the distribution $k(x+\frac t2,x-\frac t2)$ on $\RZ$
(see \eqref{res444} in  footnote \ref{foot1}  for the definition of the Fourier transformation on tempered distributions). 
\end{pro}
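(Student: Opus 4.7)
The proof is essentially a direct computation via partial Fourier inversion combined with a linear change of variables, with the only delicate issue being the distributional setting. My plan is to first establish the kernel formula by unpacking the defining integral \eqref{weylq}, and then invert the resulting relation to recover the Weyl symbol.

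For the first identity, I would start from \eqref{weylq} (or, at the distributional level, from the weak definition \eqref{eza654}) and rewrite it in kernel form
\begin{equation*}
(\opw{a}u)(x)=\int k_{a}(x,y) u(y) dy,\qquad
k_{a}(x,y)=\int e^{2i\pi (x-y)\cdot \xi}\, a\Bigl(\frac{x+y}{2},\xi\Bigr)\, d\xi.
\end{equation*}
The inner integral is an inverse partial Fourier transform of $a$ in its second variable evaluated at $(\frac{x+y}{2},\, x-y)$; using the normalization recalled in footnote \ref{fn:first}, this inverse partial transform at a point $\eta$ coincides with $\widehat{a}^{2}(\cdot,-\eta)$. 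Setting $\eta=x-y$ yields exactly \eqref{reteey}. For the second identity, the plan is to invert the previous relation by the affine change of variables $x'=\frac{x+y}{2},\ t=y-x$ (of Jacobian $1$, with inverse $x=x'-t/2,\ y=x'+t/2$), which recasts the kernel formula as
\begin{equation*}
\widehat{a}^{2}(x',t)=k_{a}\!\left(x'-\tfrac{t}{2},\,x'+\tfrac{t}{2}\right).
\end{equation*}
Taking inverse partial Fourier transform in $t$ and then performing the substitution $t\mapsto -t$ produces the announced formula for $a(x,\xi)$.

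To handle the distributional setting, I would interpret every partial Fourier transform by duality in the sense of \eqref{res444} and verify that the change of variables is a well-defined linear automorphism of $\mathscr S'(\RZ)$. Given a continuous operator $A:\mathscr S(\R^{n})\to \mathscr S'(\R^{n})$, the existence and uniqueness of its Schwartz kernel $k\in\mathscr S'(\RZ)$ is the Schwartz kernel theorem; once $k$ is in hand, the two operations — partial Fourier transform in the second variable and the affine pullback $(x,y)\mapsto(x-t/2,x+t/2)$ — are bijective continuous maps on $\mathscr S'(\RZ)$, so inverting them furnishes a unique Weyl symbol $a$ reproducing $A$.

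There is no genuinely hard step here: the only point that requires care is to ensure that every integration-by-parts, Fubini exchange, and Fourier inversion performed formally on Schwartz data extends to $\mathscr S'(\RZ)$ by the weak definition \eqref{eza654} and by continuity of each elementary operation. This is straightforward once one observes that both the kernel formula $a\mapsto k_{a}$ and its inverse are compositions of standard continuous isomorphisms of $\mathscr S'(\RZ)$, namely a linear change of variables and a partial Fourier transform.
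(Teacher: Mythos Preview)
Your proposal is correct and follows essentially the same route as the paper: identify the kernel as the partial inverse Fourier transform of $a$ in $\xi$ evaluated at $(\frac{x+y}{2},x-y)$, then invert by the affine change of variables and partial Fourier inversion, with the distributional justification carried by the weak definition \eqref{eza654}. The paper's write-up is slightly more explicit in that it works directly through the duality bracket $\poscal{a}{\mathcal W(u,v)}=\poscal{a}{\mathcal F_{2}\Omega(u,v)}$ from the outset (via Remark~\ref{rem.334455}), whereas you do the formal computation first and defer the distributional bookkeeping to the end; but the underlying argument is the same.
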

\begin{proof}
 With $u,v\in \mathscr S(\R^n)$, we have defined $\opw{a}$ via Formula \eqref{eza654} 
 and using Remark \ref{rem.334455}, we get
 \begin{align*}
 \poscal{\opw{a}u}{\bar v}_{\mathscr S'(\R^{n}), \mathscr S(\R^{n})}&=\poscal{a(x, \xi)}{
\widehat{\Omega}^{[2]}(x,\xi)
 }_{\mathscr S'(\RZ), \mathscr S(\RZ)}
\\&=
\poscal{\widehat a^{[2]}(t, z)}{
u(t+\frac z2) \bar v(t-\frac z2) 
 }_{\mathscr S'(\RZ), \mathscr S(\RZ)}
 \\&=
\poscal{\widehat a^{[2]}(\frac{x+y}2, y-x)}{
u(y) \bar v(x) 
 }_{\mathscr S'(\RZ), \mathscr S(\RZ)},
\end{align*}
proving \eqref{reteey}. As a consequence, we find that 
$
k_{a}(x+\frac t2,x-\frac t2)={\widehat a^{[2]}}(x,-t),
$
and by Fourier inversion, this entails
\begin{equation}\label{symker}
a(x,\xi)=\mathcal F\text{ourier}_{t}\bigl(k_{a}(x+\frac t2,x-\frac t2)\bigr)(\xi)=\int e^{-2\pi i t\cdot \xi}
k_{a}(x+\frac t2,x-\frac t2) dt,
\end{equation}
where the integral sign means  that we perform a Fourier transformation with respect to the variable $t$.
 \end{proof}
 \index{Segal formula}
A particular case of Segal's formula (see e.g. Theorem 2.1.2 in \cite{MR2599384})
is  with $F$ standing for the Fourier transformation on $\R^{n}$,
\begin{equation}\label{fs55}
F^{*}\opw{a}F=\opw{a(\xi,-x)}.
\end{equation}
\index{{~\bf Notations}!$[X,Y]$}
\subsubsection{The symplectic group}
We define the canonical symplectic form $\sigma$
on $\R^{n}\times \R^{n}$ by
\begin{equation}\label{sympfor}
\poscal{\sigma X}{Y}=\bigl[X,Y\bigr]=\xi\cdot y-\eta\cdot x, \quad \text{with $X=(x,\xi), Y=(y,\eta)$.}
\end{equation}
\index{symplectic group}
\index{{~\bf Notations}!$Sp(n,\R)$}
\index{generators of the symplectic group}
\no
The symplectic group\footnote{This is obviously a group
since for $S_{1}, S_{2}\in  Sp(n,\R)$, the last equation in \eqref{55vhqs} implies that $\val{\det S}=1$ and 
$
[S_{1}S_{2}^{-1}X, S_{1}S_{2}^{-1}Y]=[S_{2}^{-1}X, S_{2}^{-1}Y]=[X,Y],
$
since 
$
[S_{2}^{-1}X, S_{2}^{-1}Y]=[S_{2}S_{2}^{-1}X, S_{2}S_{2}^{-1}Y]=[X,Y].
$
We shall prove below that the determinant of a symplectic mapping is actually 1.
} 
$\text{\sl Sp}(n,\R)$ is the subgroup of $S\in \text{\sl Gl}(2n,\R)$
such that 
\begin{equation}\label{55vhqs}
\forall X, Y\in \RZ, \quad [S X, S Y]=[X,Y], \qquad\text{i.e.\quad} S^{*}\sigma S=\sigma,
\end{equation}
where $S^{*}$ is the transpose and 
\begin{equation}\label{ffqq99}
\sigma=\mat22{0}{I_{n}}{-I_{n}}{0}.
\end{equation}
It is easy to prove directly from \eqref{55vhqs} that
$
Sp(1,\R)=Sl(2, \R).
$
\index{generators of the symplectic group}
\index{{~\bf Notations}!$\Xi_{A,B,C}$}
\begin{theorem}\label{4.thm.gensym}
Let $n$ be an integer $\ge 1$.
The group
$Sp(n,\R)$ is included in $Sl(2n, \R)$
and generated by the following mappings
\begin{align}
\mat22{I_{n}}{0}{A}{I_{n}}&,\ \text{where $A$ is a $n\times n$ symmetric matrix,}
\label{syty01}\\
\mat22{B^{-1}}{0}{0}{B^*}&,\quad B\in Gl(n,\R),
\label{syty02}\\
\mat22{I_{n}}{-C}{0}{I_{n}}&,\ \text{where $C$ is a $n\times n$ symmetric matrix.}
\label{syty03}
\end{align} 
For $A, B, C$ as above, the mapping
\begin{multline}\label{gensym}
\Xi_{A,B,C}=\mat22{B^{-1}}{-B^{-1}C}{AB^{-1}}{B^*-AB^{-1}C}
\\=
\mat22{I_{n}}{0}{A}{I_{n}}
\mat22{B^{-1}}{0}{0}{B^*}
\mat22{I_{n}}{-C}{0}{I_{n}}.
\end{multline}
belongs to $Sp(n,\R)$.
\index{generating function}
Moreover, we define on $\R^{n}\times \R^{n}$ the {\it generating function} $S$
of the symplectic mapping
$\Xi_{A,B,C}$
by the identity
\begin{equation}\label{4.genfun}
S(x,\eta)=\frac12\bigl(\poscal{Ax}{x}+2\poscal{Bx}{\eta}
+\poscal{C\eta}{\eta}
\bigr)
\
\text{so that}
\hs
\Xi \Bigl(\frac{\p S}{\p\eta}\oplus \eta\Bigr)=
x\oplus\frac{\p S}{\p x}.
\end{equation}
For a symplectic mapping $\Xi$, to be of the form \eqref{gensym}
is equivalent
to the assumption that the mapping
$x\mapsto \pi_{\R^{n}\times\{0\}}\Xi(x\oplus 0)$
is invertible from $\R^{n}$ to $\R^{n}$;
moreover, if this mapping is not invertible,
the symplectic mapping $\Xi$ is the product of two mappings of the type
$\Xi_{A,B,C}$.
\end{theorem}
\begin{proof}
The expression of $\Xi$ above
as well as \eqref{4.genfun}
follow from a simple direct computation
left to the reader.
The inclusion of the symplectic group in the special linear group
follows from the statement
on the generators.
We consider now $\Xi$ in $Sp(n,\R)$: we have
\begin{equation}\label{notsym}
\Xi=\mat22{P}{Q}{R}{S},\quad\text{where
$P,
Q,
R,
S,$
are $n\times n$
matrices.
}
\end{equation}

The equation
\begin{equation}\label{4.eqsygr}
{\Xi^{*}} \sigma\Xi=\sigma
\end{equation} is satisfied
with 
$
\sigma=\mat22{0}{I_{n}}{-I_{n}}{0}, 
$
which means
\begin{equation}\label{4.eqsyma}
{P^{*}} R={({P^{*}} R)}^{*},\quad
{Q^{*}} S={({Q^{*} S})}^{*},\quad
{P^{*}} S-{R^{*}} Q=I_{n}.
\end{equation}
We can note also that the mapping $\Xi\mapsto \Xi^{*}$ is an isomorphism
of $Sp(n,\R)$
since 
$\Xi\in Sp(n,\R)$ means
$$
\Xi^{*} \sigma \Xi=\sigma\Longrightarrow
\Xi^{-1} \sigma^{-1} (\Xi^{*})^{-1}=\sigma^{-1}
\Longrightarrow
\Xi^{-1} (-\sigma^{-1}) (\Xi^{*})^{-1}=(-\sigma^{-1}),
$$
and since
$
(-\sigma^{-1})=\mat22{0}{I_{n}}{-I_{n}}{0}, 
$
we get that 
$\Xi^{*}\in Sp(n,\R)$.
  As a result, 
  \begin{equation}\label{fc45y9}
  \Xi=\mat22{P}{Q}{R}{S} \in Sp(n,\R),
\end{equation}
 is also equivalent to
  \begin{equation}\label{4.eqsym'}
 P Q^{*}={(P Q^{*})}^{*},\quad
  R S^{*}={(R S^{*})}^{*},\quad
  P S^{*}-Q R^{*}=I_{n}.
 \end{equation}
Let us assume that
 the mapping $P$ is invertible,
 which is the assumption in the last statement of the theorem.
 We define then the mappings
 $A, B, C$ by
 \begin{gather*}
 A= R P^{-1},\quad
 B= P^{-1},\quad C=-P^{-1}Q,
 \\
  \quad\text{\footnotesize so that we have}\quad
  A^*= {P^*}^{-1}R^* P P^{-1}
  ={P^*}^{-1}P^*R P^{-1}=R P^{-1}=A,
  \\
  \quad\text{\footnotesize as well as}\
  C^*= -Q^*{P^*}^{-1}=-P^{-1}P Q^*{P^*}^{-1}
  =-P^{-1}QP^*{P^*}^{-1}=-P^{-1} Q=C,
 \\
 \text{\footnotesize and}\quad
 P=B^{-1},\quad R= A B^{-1},\quad Q= -B^{-1} C,
 \\
 S={P^*}^{-1}(I_{n}+ R^* Q)= B^*(I_{n}-{B^*}^{-1} A^*B^{-1}C)
 =B^*-A B^{-1}C.
 \end{gather*}
We have thus proven that any symplectic matrix
$\Xi$ as above such that 
$P$ is invertible
is indeed given
by the product appearing in Theorem 
\ref{4.thm.gensym}.
\par
Let us now consider the case where a symplectic mapping $\Xi$ (given by \eqref{fc45y9})
is such that $\det P=0$;
writing
$\R^{n}=\ker P\oplus N$
we have that $P$ is an isomorphism from $N$ onto $\range P$.
Let $B_{1}\in Gl(n,\R)$ such that $B_{1}P$ is the identity on $N$\footnote{This is indeed possible: choosing a supplement space $M$ for $P(N)$, we have
$$
\R^{n}=\underbracket[0.3pt]{\ker P}_{\text{dim } p}\oplus \underbracket[0.3pt]{N}_{\text{dim } n-p}=\underbracket[0.3pt]{P(N)}_{\text{dim } n-p}\oplus \underbracket[0.3pt]{M}_{\text{dim } p},
$$
and we can define $B_{1}$ on $P(N)$ by  $B_{1}(Px)=x$
(without ambiguity since for $x_{1}, x_{2}\in N$ with $Px_{1}=Px_{2}$ we get 
$x_{1}-x_{2}\in \ker P\cap N=\{0\}$) and ${B_{1}}_{\vert M}:M\rightarrow \ker P$
can be chosen as an isomorphism, so that 
$
B_{1}(P(N))+B_{1}(M)=N+\ker P,\ \text{which implies }\rank B_{1}=n.
$
}.
We have
\begin{equation}\label{4.k225sa}
\mat22{B_{1}}{0}{0}{{B_{1}^*}^{-1}}\mat22{P}{Q}{R}{S}=
\mat22{B_{1}P}{B_{1}Q}{{B_{1}^*}^{-1}R}{{B_{1}^*}^{-1}S}.
\end{equation}
If $p=\dim(\ker P)$, we have for the $n\times n$ matrix $B_{1}P$ the following block decomposition
\begin{equation}\label{4.k226sa}
B_{1}P=\mat22{0_{p, p}}{0_{p, n-p}}{0_{n-p,p}}{I_{n-p}},
\end{equation}
where $0_{r,s}$ stands for a $r\times s$ matrix with only 0 as an entry.
On the other hand, we know from
\eqref{4.eqsyma}
that the mapping
$$
(B_{1}P)^* {B_{1}^*}^{-1}R=P^* R
$$
is symmetric.
Writing
$
{B_{1}^*}^{-1}R=\mat22{\tilde R_{p, p}}{\tilde R_{p, n-p}}{\tilde R_{n-p,p}}{\tilde R_{n-p, n-p}},
$
where $\tilde R_{r,s}$ stands for a $r\times s$ matrix,
this gives the symmetry of
$$
\mat22{0_{p, p}}{0_{p, n-p}}{0_{n-p,p}}{I_{n-p}}
\mat22{\tilde R_{p, p}}{\tilde R_{p, n-p}}{\tilde R_{n-p,p}}{\tilde R_{n-p, n-p}}
=
\mat22{0_{p, p}}{0_{p, n-p}}{\tilde R_{n-p,p}}{\tilde R_{n-p, n-p}},
$$
implying that
$\tilde R_{n-p,p}=0$.
The symplectic  matrix \eqref{4.k225sa} is thus equal to
$$
\mat22{\mat22{0_{p, p}}{0_{p, n-p}}{0_{n-p,p}}{I_{n-p}}}{B_{1}Q}{
\mat22{\tilde R_{p, p}}{\tilde R_{p, n-p}}{ 0_{n-p,p}}{\tilde R_{n-p, n-p}}
}{{B_{1}^*}^{-1}S},\quad\text{where $B_{1}Q$ and  ${B_{1}^*}^{-1}S$
are $n\times n$ blocks.}
$$
The invertibility of \eqref{4.k225sa} implies that  $\tilde R_{p, p}$ is invertible.
We consider  now the $n\times n$ symmetric matrix
$$
C=\mat22{I_{p, p}}{0_{p, n-p}}{0_{n-p,p}}{0_{n-p, n-p}},
$$
and the symplectic mapping
\begin{equation}\label{4.decomp}
\mat22{I_{n}}{C}{0}{I_{n}}
\mat22{B_{1}}{0}{0}{{B_{1}^*}^{-1}}\mat22{P}{Q}{R}{S}=\mat22{I_{n}}{C}{0}{I_{n}}
\mat22{B_{1}P}{B_{1}Q}{{B_{1}^*}^{-1}R}{{B_{1}^*}^{-1}S},
\end{equation}
which is a symplectic mapping $\mat22{P'}{Q'}{R'}{S'}$
with
\begin{multline*}
P'=B_{1}P+C{B_{1}^*}^{-1}R\\=
\mat22{0_{p, p}}{0_{p, n-p}}{0_{n-p,p}}{I_{n-p}}+
\mat22{I_{p, p}}{0_{p, n-p}}{0_{n-p,p}}{0_{n-p, n-p}}
\mat22{\tilde R_{p, p}}{\tilde R_{p, n-p}}{0_{n-p,p}}{\tilde R_{n-p, n-p}}
\\
=\mat22{\tilde R_{p, p}}{\tilde R_{p, n-p}}{0_{n-p,p}}{\tilde I_{n-p}}
,
\end{multline*}
which is an invertible mapping.
From the equation \eqref{4.decomp}
and the first part of our discussion, we get that
$$\mat22{P'}{Q'}{R'}{S'}=\mat22{I_{n}}{0}{A'}{I_n}
\mat22{B'^{-1}}{0}{0}{B'^*}
\mat22{I_{n}}{-C'}{0}{I_{n}},$$
with $A', C'$ symmetric and $B'$ invertible
and
$$
\Xi=
\mat22{{B_{1}}^{-1}}{0}{0}{{B_{1}^*}}\
\mat22{I_{n}}{-C}{0}{I_{n}}
\mat22{I_{n}}{0}{A'}{I_{n}}
\mat22{B'^{-1}}{0}{0}{B'^*}
\mat22{I_{n}}{-C'}{0}{I_{n}},
$$
proving that
the $\Xi_{A,B,C}$
generate the symplectic group
and more precisely
that every $\Xi$ in the symplectic group is the product of at most two mappings of  type 
$\Xi_{A,B,C}$.
The proof of
Theorem
\ref{4.thm.gensym}
is complete.
\end{proof}
\begin{cor}
 We have $Sp(n,\R)\subset Sl(2n, \R)$.
\end{cor}
\begin{proof}
 Indeed the symplectic mappings \eqref{syty01}, \eqref{syty02} and  \eqref{syty03}
 do have determinants equal to 1 and since Theorem \ref{4.thm.gensym} implies that they generate the symplectic group, this proves
 the sought result.
\end{proof}
\begin{rem}\rm
 Of course for $n\ge 2$, $Sp(n,\R)$ is a proper subgroup of $Sl(2n, \R)$. Indeed
 the following matrix
 $$
  \mathtt M=
  \begin{pmatrix}
\begin{matrix}
1&0\\
0&1
\end{matrix}
&\begin{matrix}
0&0\\
1&0
\end{matrix}\\
\begin{matrix}
0&0\\
0&0
\end{matrix}&
\begin{matrix}
1&0\\
0&1
\end{matrix}
\end{pmatrix}
 $$
 has determinant 1 but fails to be symplectic: using Notation \eqref{notsym}, we see that the first and the third equation are satisfied, which is not the case for  the second one.
\end{rem}
\begin{nb}\rm
 Since the matrix $-I_{2n}$ belongs to $Sp(n,\R)$ (\eqref{55vhqs} holds trivially), we find that $S\in Sp(n,\R)$ is equivalent to 
 $-S\in Sp(n,\R)$.
 \end{nb}
\index{generators of the symplectic group}
\begin{claim}
The symplectic group is also generated by the mappings
\begin{align}
&\text{(i)\quad } (x,\xi)\mapsto (B^{-1}x, B^{*}\xi), \quad B\in \text{\sl Gl}(n,\R),\label{sym1}\\
&\text{(ii)\quad } (x,\xi)\mapsto (\xi,-x), \label{sym2}\\
&\text{(iii)\quad } (x,\xi)\mapsto (x, \xi+Ax), \quad A\in \text{\sl Sym}(n,\R).\label{sym3}
\end{align}
Another set of generators of the symplectic group is given by the mappings
\begin{align}
&\text{(j)\quad } (x,\xi)\mapsto (B^{-1}x, B^{*}\xi), \quad B\in \text{\sl Gl}(n,\R),\label{sym01}\\
&\text{(jj)\quad } (x,\xi)\mapsto (\xi,-x), \label{sym02}\\
&\text{(jjj)\quad } (x,\xi)\mapsto (x-C\xi,\xi), \quad C\in \text{\sl Sym}(n,\R).\label{sym03}
\end{align}
 \end{claim}
 Indeed, we have for $C^{*}=C$ a real symmetric $n\times n$ matrix
 $$
 \underbrace{\mat22{0}{-I_{n}}{I_{n}}{0}}_{\sigma^{-1}}\mat22{I_{n}}{-C}{0}{I_{n}}\underbrace{\mat22{0}{I_{n}}{-I_{n}}{0}}_{\sigma}
 =\mat22{I_{n}}{0}{C}{I_{n}}.
 $$
\begin{rem}\label{rem.549877}\rm
The symplectic matrix
\begin{equation}\label{noti22}
\mat22{0}{I_{n}}{-I_{n}}{0}=2^{-1/2}
\mat22{I_{n}}{I_{n}}{-I_{n}}{I_{n}}2^{-1/2}
\mat22{I_{n}}{I_{n}}{-I_{n}}{I_{n}}=\Xi_{-I_{n}, 2^{1/2}I_{n}, -I_{n}}^{2},
\end{equation}
is not of the form 
$\Xi_{A,B,C}$ but is the square of such a matrix.
It is also the case of all the mappings
$
(x_{k},\xi_{k})\mapsto (\xi_{k},-x_{k})$
with the other coordinates fixed.
Similarly the symplectic  matrix
\begin{equation}\label{}
\mat22{0}{-I_{n}}{I_{n}}{I_{n}}=
\mat22{I_{n}}{-I_{n}}{0}{I_{n}}
\mat22{I_{n}}{0}{I_{n}}{I_{n}},
\end{equation}
is not of the form 
$\Xi_{A,B,C}$ but is the product
$\Xi_{0,I,I}\Xi_{I,I,0}$.
\end{rem}
\subsubsection{The metaplectic group}
\index{{~\bf Notations}!$M_{A,B,C}$}
\begin{pro}\label{pro.kj77qq}
Let $A,B,C$ be as in Theorem \ref{4.thm.gensym},
and let  $S$ be the generating function of $\Xi_{A,B,C}$ (cf. \eqref{4.genfun}).
We define the operator $M_{A,B,C}$ on $\mathscr S(\R^{n})$ by
\begin{equation}\label{4.genmet}
(M_{A,B,C}v)(x)=\int_{\R^{n}}e^{2i\pi S(x,\eta)}\hat v(\eta) d\eta
(\det B)^{1/2},
\end{equation}
where $(\det B)^{1/2}$ is a square root of $\det B$.
This operator is an automorphism of 
$\mathscr S'(\R^{n})$ and of $\mathscr S(\R^{n})$
which is unitary on $L^2(\R^{n})$,
and such that, for all $a\in \mathscr S'(\RZ)$, 
\begin{equation}\label{1239}
M_{A,B,C}^{*} \opw{a}M_{A,B,C}=\opw{a\circ\Xi_{A,B,C}},
\end{equation}
where $\Xi_{A,B,C}$ is defined   in Theorem \ref{4.thm.gensym}.
\end{pro}
\begin{nb}
 We have for $A,B,C$ as above, 
 \begin{align}
 (M_{A, I, 0} v)(x)&= e^{i\pi \poscal{Ax}{x}} v(x),\label{mety01}\\
 (M_{0,B,0} v)(x)&=(\det B)^{1/2} v(Bx),\label{mety02}\\
   (M_{0, I, C} v)(x)&= \bigl(e^{i\pi \poscal{CD_{x}}{D_{x}}} v\bigr)(x),\label{mety03}
\end{align}
three operators which are obviously automorphisms of $\mathscr S(\R^{n})$ and of 
$\mathscr S'(\R^{n})$ as well as unitary operators in $L^{2}(\R^{n})$.
\end{nb}
\begin{proof}
 Formula \eqref{1239} is easily checked  for each operator \eqref{mety01},
 \eqref{mety02}
 and \eqref{mety03}. Since $\Xi_{A,B,C}=\Xi_{A,I,0}\ \Xi_{0,B,0}\ \Xi_{0,I,C}$ and 
 \begin{equation}\label{54gvrs}
 M_{A,B,C}= M_{A,I,0} M_{0,B,0} M_{0,I,C},
\end{equation}
we get \eqref{1239} and the proposition.
\end{proof}
\index{{~\bf Notations}!$m(B)$}
\begin{rem}\rm
We  define
\begin{multline}\label{maslov}
m(B)=\frac{\arg(\det B)}{\pi}
\\=\begin{cases}
\frac{k2\pi}{\pi}=2k\in \{0,2\}\mod 4\quad \text{for } \det B >0,\\
\frac{k2\pi+\pi}{\pi}=2k+1\in \{1,3\}\mod 4\quad \text{for } \det B <0,
\end{cases}
\end{multline}
so that 
\begin{equation}\label{}
\det B=\val{\det B} e^{i\pi m(B)}, \quad (\det B)^{1/2}\in
\val{\det B}^{1/2} \{e^{i\frac{\pi}2 m(B)},
e^{i\frac{\pi}2 (m(B)+2)}
\}.\footnote{This is a synthetic way to write
$$
(\det B)^{1/2}\in \{(\pm 1)\val{\det B}^{1/2}\}\text{ if $\det B>0$},\quad
(\det B)^{1/2}\in \{(\pm i)\val{\det B}^{1/2}\}\text{ if $\det B<0$}.
$$ }
\end{equation}
We shall consider $m(B)$ as an element of $\Z/4\Z$, so that the function $m(B)\mapsto e^{i\frac{\pi}2 m(B)}$ is well-defined. For $A,B,C$ as in Proposition \ref{pro.kj77qq}, we may define
\index{{~\bf Notations}!$\mett{A}{B}{C}{m}$}
\begin{equation}\label{1256hh}
\bigl(\mett{A}{B}{C}{m(B)} v\bigr)(x)=e^{\frac{i\pi m(B)}2}\val{\det B}^{1/2}
\int_{\R^{n}}e^{i\pi(Ax^{2}+2Bx\cdot \eta+C\eta^{2})}\hat v(\eta) d\eta,\footnote{We can of course define 
$\mett{A}{B}{C}{m}$ for any $m$, but to stay in the metaplectic group (cf. Definition \ref{def.41fdhh}), we have to make sure that $m\in\{m(B), m(B)+2\}$ modulo 4.
}
\end{equation}
but
most of the time, we shall omit the super-script $m(B)$ when we do not want to distinguish between the two roots of $\det B$.\footnote{\label{footsign}
We note in particular that we have
$
\mett{0}{I_{n}}{0}{0}=\Id_{L^{2}(\R^{n})},\ 
\mett{0}{I_{n}}{0}{2}=-\Id_{L^{2}(\R^{n})},
$
and also with the notation \eqref{phase},
$
\mett{0}{-I_{n}}{0}{n}=e^{\frac{i\pi n}2}\sigma_{0},\
\mett{0}{-I_{n}}{0}{n+2}=-e^{\frac{i\pi n}2}\sigma_{0}.
$
More generally, we have 
$$
 \text{for $\det B>0$, }\mett{A}{B}{C}{0}=-\mett{A}{B}{C}{2},
 \qquad
 \text{for $\det B<0$, }\mett{A}{B}{C}{1}=-\mett{A}{B}{C}{3}.
$$}
We note also that for $B\in Gl(n,\R)$, we have
\begin{equation}\label{}
m(B^{*})=m(B)=m(B^{-1}),
\end{equation}
since $\det B=\det B^{*}$ and 
$
\det (B^{-1})=(\det B)^{-1}
$
so that 
$$\arg(\det B)=\arg(\det B^{-1}).
$$
 Moreover we have for $B\in Gl(n,\R)$,
 $$
 \det(-B)=(-1)^{n}\det B, \quad 
\arg(\det(-B))=
\begin{cases}
\arg(\det B)&\text{if $n$ is even,}
 \\
\arg(\det B)+\pi&\text{if $n$ is odd,}
\end{cases}
 $$
 so that 
 \begin{equation}\label{1254}
m(-B)=n+m(B).
\end{equation}
\end{rem}
\begin{exs}\rm
Let us start with a one-dimensional example: in Remark \ref{rem.549877}, we have seen in particular that 
$$
\mat22{0}{1}{-1}{0}=\left\{2^{-1/2}\mat22{1}{1}{-1}{1}\right\}^{2}, \quad
2^{-1/2}\mat22{1}{1}{-1}{1} =\Xi_{-1,2^{1/2}, -1},
$$
where we have used 
\eqref{gensym} to get the second equation.
We have also with the notations of Theorem \ref{4.thm.gensym},
$$
(M_{-1, 2^{1/2},-1}v)(x)=\int_{\R}e^{2i\pi\frac12(-x^{2}+2^{3/2}x\eta-\eta^{2})}\hat v(\eta) d\eta 2^{1/4},
$$
so that the kernel $k_{1}(x,y)$ of the operator $M_{-1, 2^{1/2},-1}$ is 
$$
k_{1}(x,y)=2^{1/4}\int e^{i\pi(
-x^{2}+2^{3/2}x\eta-\eta^{2}
)} e^{-2i\pi y\eta} d\eta\underbracket[0.2pt]{=}_{\text{use  \eqref{foimga}}}
2^{1/4}e^{-i\pi/4} e^{i\pi(x^{2}+y^{2})}e^{-2^{3/2} i\pi xy},
$$
so that the kernel $k_{2}$
of the operator $(M_{-1, 2^{1/2},-1})^{2}$ is (using again  \eqref{foimga}),
\begin{multline*}
k_{2}(x,y)=\int k_{1}(x,z) k_{1}(z,y) dz\\=2^{1/2} e^{-i\pi/2} e^{i\pi(x^{2}+y^{2})}
\int e^{2i\pi z^{2}} e^{-2i\pi z2^{1/2}(x+y)}dz
=e^{-i\pi/4} e^{-2i\pi xy},
\end{multline*}
so that 
\begin{equation}\label{}
(M_{-1, 2^{1/2},-1})^{2}=e^{-i\pi/4}\mathcal F_{1}, \end{equation}
with $\mathcal F_{1}$ standing for the  1d Fourier transformation.
We get similarly that in $n$ dimensions, 
\begin{equation}\label{kjh123}
(M_{-I_{n}, 2^{1/2}I_{n},-I_{n}})^{2}=e^{-i\pi n/4}\mathcal F, \end{equation}
with $\mathcal F$ standing for the  Fourier transformation.
Similar expressions can be obtained for $\mathcal F_{k}$, the Fourier transformation with respect to the variable $x_{k}$ in $n$ dimensions, $k\in \llbracket1,n\rrbracket$ with 
\begin{equation}\label{}
(M_{A_{k}, B_{k},C_{k}})^{2}=
e^{-i\pi/4}\mathcal F_{k},
\end{equation}
where $B_{k}$ is the $n\times n$ diagonal matrix with diagonal entries equal to $1$ except for the $k$th equal to $2^{1/2}$, the $n\times n$ diagonal
matrices $A_{k}=C_{k}$ with diagonal entries equal to 0, except for the $k$th equal to $-1$.
\end{exs}
\index{metaplectic group}
\begin{defi}\label{def.41fdhh}
 The metaplectic group $Mp(n)$
 is defined  as the subgroup of the group of unitary operators on $L^{2}(\R^{n})$ generated by
 \begin{align}
&M_{A,I,0},\text{\footnotesize where $A$ is a $n\times n$ symmetric matrix, cf. \eqref{mety01}},\label{hhjj01}\\
 &M_{0,B,0},\text{\footnotesize
 with $B\in Gl(n,\R)$, with $(\det B)^{\frac12}=\val{\det B}^{\frac12} e^{\frac{i\pi m(B)}2}$, cf. \eqref{maslov},
 \eqref{mety02},
 }\label{hhjj02}\\
 &M_{0,I,C},\text{\footnotesize where $C$ is a $n\times n$ symmetric matrix, cf. \eqref{mety03}}.
 \label{hhjj03}
\end{align}
\end{defi}
\begin{claim}
 If $M$ belongs to $Mp(n)$, then $-M$  belongs to $Mp(n)$.
\end{claim}
\begin{proof}
 According to Footnote \ref{footsign} on page \pageref{footsign}, we have $\mett{0}{I_{n}}{0}{2}=-\mett{0}{I_{n}}{0}{0}=-\Id_{L^{2}(\R^{n})}$
 so that $-\Id_{L^{2}(\R^{n})}$ belongs to $Mp(n)$, proving the claim.
\end{proof}
\index{generators of the metaplectic group}
\begin{pro}
The metaplectic group $Mp(n)$
 is  generated by
 \begin{align}
&M_{A,I,0},\text{\footnotesize where $A$ is a $n\times n$ symmetric matrix, cf. \eqref{mety01}},
\label{778801}\\
 &M_{0,B,0},\text{\footnotesize
 with $B\in Gl(n,\R)$, with $(\det B)^{\frac12}=\val{\det B}^{\frac12} e^{\frac{i\pi m(B)}2}$, cf. \eqref{maslov},
 \eqref{mety02},
 }\label{778802}\\
 &e^{-\frac{i\pi n}4}\mathcal F, \text{\footnotesize where $\mathcal F$ is the Fourier transformation.}
 \label{778803}
\end{align}
 \end{pro}
\begin{proof}
 We check for $C$ symmetric $n\times n$ matrix,
 $$
 \bigl(M_{C,I,0}^{\scriptscriptstyle\{0\}}(e^{-i\pi n/4}\mathcal F v)\bigr)(\eta)=e^{-i\pi n/4} e^{i\pi C\eta^{2}}\hat v(\eta), 
 $$
 so that 
 $$
 e^{i\pi n/4}\bigl(\mathcal F^{-1}(e^{-i\pi n/4} e^{i\pi C\eta^{2}}\hat v(\eta))\bigr)(x)=
 \int e^{2i\pi x\eta}e^{i\pi C\eta^{2}}\hat v(\eta) d\eta=(\mett{0}{I}{C}{0}v)(x),\qquad
 $$
 yielding
 $
 e^{i\pi n/4}\mathcal F^{-1}\mett{0}{I}{C}{0}e^{-i\pi n/4}\mathcal F=\mett{0}{I}{C}{0},
 $
 so that the group generated by \eqref{778801}, \eqref{778802}, \eqref{778803}
 contains 
 \eqref{hhjj01},  \eqref{hhjj02},  \eqref{hhjj03} and thus contains $Mp(n)$.
 Moreover \eqref{kjh123} shows that \eqref{778803} is included  in $Mp(n)$
 so that the group generated by \eqref{778801}, \eqref{778802}, \eqref{778803} is included in $Mp(n)$, proving the proposition.
 \end{proof}
 \begin{rem}\rm
 According to \eqref{azqs22} in our Appendix and to Footnote \ref{footsign} on page \pageref{footsign}, we find
 \begin{equation}\label{}
(e^{-i\pi n/4} \mathcal F)^{*}=e^{i\pi n/4} \mathcal F\sigma_{0}=e^{-i\pi n/4} \mathcal Fe^{i\pi n/2} \sigma_{0}
=e^{-i\pi n/4} \mathcal F M_{0,-I_{n}, 0}^{\scriptscriptstyle\{n\}}.
\end{equation}
As a consequence, 
$
e^{-i\pi n/4}\mathcal F, e^{-i\pi n/2}\sigma_{0}, e^{i\pi n/2}\sigma_{0}
$
belong to the metaplectic group.
\end{rem}
 \begin{lem}\label{lem.54dftz}
 For $Y\in \RZ$, we define the linear form $L_{Y}$ on $\RZ$ by
 \begin{equation}\label{}
L_{Y}(X)=\poscal{\sigma Y}{X}=[Y,X].
\end{equation}
For any $M\in Mp(n)$ there exists a unique $\chi\in Sp(n,\R)$ such that 
\begin{equation}\label{hg54rr}\forall Y\in \RZ, \quad 
M^{*}\opw{L_{Y}}M=\opw{L_{\chi^{-1} Y}}.
\end{equation}
\end{lem}
 \begin{proof}
 Indeed, thanks to \eqref{1239} and Definition \ref{def.41fdhh}, we can find $\chi\in Sp(n,\R)$ such that 
\begin{gather*}
M^{*}\opw{L_{Y}}M=\opw{L_{Y}\circ \chi}=\opw{L_{\chi^{-1} Y}}, \quad \text{since}\\
(L_{Y}\circ \chi)(X)=\poscal{\sigma Y}{\chi X}=\poscal{\chi^{*}\sigma \chi \chi^{-1}Y}{X}=\poscal{\sigma\chi^{-1}Y}{X}=L_{\chi^{-1}Y}(X).
\end{gather*}
Moreover, if $\chi_{1}, \chi_{2}\in Sp(n,\R)$ are such that for all $Y\in\RZ$,
$$
0=\opw{L_{\chi_{2}^{-1} Y}-L_{\chi_{1}^{-1} Y}}=\opw{L_{(\chi_{2}^{-1} -\chi_{1}^{-1})Y }},
$$
we get $L_{(\chi_{2}^{-1} -\chi_{1}^{-1})Y }=0$, implying 
$
\forall Y\in \RZ, \ (\chi_{2}^{-1} -\chi_{1}^{-1})Y =0,  
$
i.e.
$\chi_{1}=\chi_{2}$.\end{proof}
We can thus define a mapping 
\begin{equation}\label{hqma28}
\Psi:Mp(n)\rightarrow Sp(n,\R), \quad\text{with $\Psi(M)=\chi$ satisfying
\eqref{hg54rr}.} 
\end{equation}
In particular we have from \eqref{1239} in Proposition \ref{pro.kj77qq} and \eqref{kjh123} that
\begin{equation}\label{1270}
\Psi(M_{A,B,C})=\Xi_{A,B,C}, \qquad \Psi\bigl(e^{-\frac{i\pi n}4}\mathcal F\bigr)=\sigma=\mat22{0}{I_{n}}{-I_{n}}{0}.
\end{equation}
\begin{theorem}\label{thm.groups}
 The mapping $\Psi$ defined in \eqref{hqma28} is a surjective homomorphism of groups with kernel $\{\pm 
 \Id_{L^{2}(\R^{n})}
 \}$.
\end{theorem}
\begin{proof}
This mapping is an homomorphism of groups: if $M_{1}, M_{2}$ belong to $Mp(n)$, we have with $\chi_{j}=\Psi(M_{j})$, 
\begin{multline*}
(M_{1}M_{2})^{*}\opw{L_{Y}}M_{1}M_{2}=M_{2}^{*}\opw{L_{\chi_{1}^{-1}Y}}M_{2}
\\=\opw{L_{\chi_{2}^{-1}\chi_{1}^{-1}Y}}
=\opw{L_{(\chi_{1}\circ \chi_{2})^{-1}Y}},
\end{multline*}
proving that 
$
\Psi(M_{1}M_{2})=\Psi(M_{1})\Psi(M_{2}).
$
Moreover the homomorphism $\Psi$ is onto, thanks to \eqref{1239} and Theorem \ref{4.thm.gensym}.
The kernel of $\Psi$ is made with $M\in Mp(n)$ such that for all $Y\in\RZ$,
$$
M^{*}\opw{L_{Y}}M=\opw{L_{Y}}, \quad \text{i.e.}\quad \bigl[\opw{L_{Y}}, M\bigr]=0,
$$  
so that, thanks to \eqref{gfcd44+}, \eqref{gfl227}, if $\mu(x,\xi)$ is the Weyl symbol of $M$
($M$ is an endomorphism of $\mathscr S'(\R^{n})$ and thus has a distribution kernel as well as a Weyl symbol
 via Formula \eqref{symker}), we get for all $(y,\eta)\in \RZ$,
$$
0=\poi{\eta\cdot x-y\cdot \xi}{\mu(x,\xi)}\quad\text{so that $d\mu=0$,}
$$
and $\mu$ is a constant so that $M=c\Id_{L^{2}(\R^{n})}$, necessarily with $\val c=1$ (since $M$ is unitary). 
Applying Theorem \ref{thm.phaseb} gives $c\in\{\pm 1\}$, concluding the proof.\end{proof}
\begin{nb}\rm
 The proof of Theorem \ref{thm.phaseb} is relegated in our Appendix, and requires some effort. 
\end{nb}
\begin{cor} For $\chi\in Sp(n,\R)$, 
 the fiber $\Psi^{-1}\{\chi\}$ contains exactly two metaplectic transformations and more precisely
 $$
 \Psi^{-1}\{\chi\}=\{M,-M\},
 $$
 where $M$ is a metaplectic transformation.
\end{cor}
\begin{proof}
 This corollary is an immediate consequence of Theorem \ref{thm.groups}.
\end{proof}
\index{symplectic covariance}
\begin{theorem}[Symplectic covariance of the Weyl calculus]
 Let $a$ be in $\mathscr S'(\RZ)$ and let $\chi$ be in $Sp(n,\R)$. Then for a metaplectic operator $M$ such that $\Psi(M)=\chi$, we have  
 \index{Segal formula}
 \begin{equation}\label{segal}
M^{*}\opw{a} M=\opw{a\circ \chi}.
\end{equation}
For $u,v\in \mathscr S(\R^{n}),$
we have 
\begin{equation}\label{segal+}
\mathcal W\left(M u, Mv\right)=\mathcal W(u,v)\circ \chi^{-1},
\end{equation}
where $\mathcal W$ is the Wigner distribution given in \eqref{wigner}.
\end{theorem}
\begin{proof}
 The first property follows  immediately from \eqref{1239} and Definition \ref{def.41fdhh}
 whereas \eqref{segal+} is a consequence of \eqref{eza654} and \eqref{segal}.
\end{proof}
We note also that for $Y=(y,\eta)\in \RZ$, the symmetry $S_{Y}$ is defined by
$S_{Y}(X)=2Y-X$ and is quantized by the phase symmetry $\sigma_{Y}$ as defined by \eqref{phase}
with the formula
\begin{equation}\label{segalsym}
\opw{a\circ S_{Y}}=\sigma_{Y}^{*}\opw{a}\sigma_{Y}=\sigma_{Y}
\opw{a}\sigma_{Y}.
\end{equation}
\index{phase translation}
\index{{~\bf Notations}!$\tau_{y,\eta}$}
Similarly, 
the translation $T_{Y}$ is defined on the phase space by $T_{Y}(X)=X+Y$ and is quantized by the {\it phase translation} $\tau_{Y}$,
\begin{equation}\label{phtrans}
(\tau_{(y,\eta)} u)(x)=u(x-y) e^{2i\pi(x-\frac y2)\cdot\eta},
\end{equation}
and we have
\begin{equation}\label{1224}
\opw{a\circ T_{Y}}=\tau_{Y}^{*}\opw{a}\tau_{Y}=\tau_{-Y}
\opw{a}\tau_{Y}.
\end{equation}
\begin{rem}\rm
 Property \eqref{segal+} can be extended to the affine symplectic group and we have  with the phase translations defined in \eqref{phtrans},
 \begin{equation}\label{}\forall (X,Y)\in\RZ\times \RZ, \qquad
\mathcal W\left(\tau_{Y} u, \tau_{Y}v\right)(X)=\mathcal W(u,v)(X-Y).
\end{equation}
We shall define the \emph{affine group $Mp_{a}(n)$} as the group of unitary transformations of $L^{2}(\R^{n})$
generated by  transformations \eqref{mety01}, \eqref{mety02}, \eqref{mety03}
  and phase translations given by \eqref{phtrans}.
\end{rem}
\begin{nb}\rm
More information on the metaplectic group 
is given in J. Leray's book \cite{MR644633}, the same author's articles
\cite{MR614311}, \cite{MR0501198},
as well as A. Weil's
paper \cite{MR165033}
(see also V.S. Buslaev's article
\cite{MR0297258},
Chapter 9 in  K.~Gr\"{o}chenig's book \cite{MR1843717},
H. Reiter's Lecture Notes \cite{MR1011671}). 
 \end{nb}
Theorem 1 in E. Lieb's classical article \cite{MR1039210} gives a more precise version  of 
\eqref{1225},  \eqref{1226} and \eqref{lieb44} below.
\begin{theorem}\label{thm776655}
 Let $u, v$ be in $L^2(\R^n
)$. Then $\mathcal W(u,v)$ is a uniformly continuous function belonging to $L^2(\RZ)\cap L^\io(\RZ)$ and using the definitions \eqref{phtrans}, \eqref{phase} for the phase translations and phase symmetry, we have
\begin{align}\label{1224+}
&\mathcal W(u,v)(X)=2^n\poscal{\sigma_{X}u}{v}_{L^2(\R^n)}
=2^n\poscal{\tau_{X}^* u}{\tau_{X}  \check v}_{L^2(\R^n)}
\\&
\hskip 190pt
=
2^n\poscal{\sigma_{0}\tau_{-2X} u}{ v}_{L^2(\R^n)},
\notag\\
&\norm{\mathcal W(u,v)}_{L^2(\RZ)}=\norm{u}_{L^2(\R^n)}\norm{v}_{L^2(\R^n)},
\label{1225}
\\
&\forall p\in [1,+\io], \quad\norm{\mathcal W(u,v)}_{L^\io(\RZ)}\le 2^n\norm{u}_{L^p(\R^n)}\norm{v}_{L^{p'}(\R^n)}.
\label{1226}
\end{align}
More generally, for $q\ge 2$ and $r\in [q',q]$, we have\footnote{\label{footmore}We use the standard notation: for $p\in[1,+\io]$ we define $p'$ by the equality $\frac1{p}+\frac1{p'}=1.$
} 
\begin{equation}\label{lieb44}
\norm{\mathcal W(u,v)}_{L^q(\RZ)}\le 
2^{\frac{n(q-2)}{q}} 
\norm{u}_{L^r(\R^n)}\norm{v}_{L^{r'}(\R^n)}.
\end{equation}
Moreover, we have 
\begin{equation}\label{1228}
\lim_{\substack{\RZ\ni X, \val X\rightarrow+\io}}\Bigl[
{\mathcal W(u,v)(X)}\Bigr]=0.
\end{equation}
\end{theorem}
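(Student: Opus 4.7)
\textbf{Proof plan for Theorem \ref{thm776655}.} The plan is to start from the three equivalent expressions in \eqref{1224+}, which form the workhorse for all the other assertions. The first identity is obtained by the change of variables $y=x-z/2$ in \eqref{wigner}: then $z=2(x-y)$, $x+z/2=2x-y$, $dz=2^{n}dy$, and the exponential becomes $e^{-4i\pi(x-y)\cdot\xi}$, which is precisely $2^{n}\poscal{\sigma_{X}u}{v}_{L^{2}(\R^{n})}$ by \eqref{phase}. The second and third identities follow by unwinding the definitions \eqref{phtrans}, \eqref{phase} and \eqref{check5}: a direct computation shows $\sigma_{X}=\sigma_{0}\tau_{-2X}$ (up to a harmless unimodular factor that the normalisation has been chosen to kill), and similarly the product $\tau_{X}^{*}\tau_{X}$ rearranges into $\sigma_{0}\tau_{-2X}$ against $\check v$. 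The $L^{2}$ identity \eqref{1225} has already been established through \eqref{newide} and Plancherel applied in the variable $z$, so nothing extra is needed for it.

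Next, \eqref{1226} comes directly from \eqref{1224+} and H\"older's inequality once we observe that the phase symmetry $\sigma_{X}$ is an isometry of every $L^{p}(\R^{n})$: indeed $\val{(\sigma_{X}u)(y)}=\val{u(2x-y)}$ because the exponential factor has modulus one, whence $\norm{\sigma_{X}u}_{L^{p}}=\norm{u}_{L^{p}}$ by the change of variable $y\mapsto 2x-y$; therefore
\begin{equation*}
\val{\mathcal W(u,v)(X)}=2^{n}\val{\poscal{\sigma_{X}u}{v}_{L^{2}}}\le 2^{n}\norm{\sigma_{X}u}_{L^{p}}\norm{v}_{L^{p'}}=2^{n}\norm{u}_{L^{p}}\norm{v}_{L^{p'}}.
\end{equation*}

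For the Lieb inequality \eqref{lieb44}, the plan is bilinear complex interpolation (Riesz--Thorin for sesquilinear maps). Consider the sesquilinear operator $T(u,v)=\mathcal W(u,v)$. We have two endpoint bounds:
\begin{equation*}
T\colon L^{2}(\R^{n})\times L^{2}(\R^{n})\to L^{2}(\RZ)\text{ with norm }1\text{ by }\eqref{1225},
\end{equation*}
\begin{equation*}
T\colon L^{p}(\R^{n})\times L^{p'}(\R^{n})\to L^{\io}(\RZ)\text{ with norm }\le 2^{n}\text{ by }\eqref{1226},
\end{equation*}
valid for any fixed $p\in[1,+\io]$. Interpolating with parameter $\theta\in[0,1]$ gives $T\colon L^{r}\times L^{r'}\to L^{q}$ with norm $\le 2^{n\theta}$, where $\tfrac1{q}=\tfrac{1-\theta}{2}$ and $\tfrac1{r}=\tfrac{1-\theta}{2}+\tfrac{\theta}{p}=\tfrac1{q}+\tfrac{\theta}{p}$. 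Thus $\theta=(q-2)/q$ yields the constant $2^{n(q-2)/q}$; keeping $q$ fixed and letting $p$ run through $[1,+\io]$, the exponent $1/r$ sweeps through $[1/q,1/q']$, so $r$ sweeps through $[q',q]$, exactly matching the range of the statement. The verification that $T$ really does admit bilinear complex interpolation on these scales of spaces is standard (Calder\'on's method applied to conjugate-linear arguments), and this is the step I expect to be the most delicate point of the proof.

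Finally, the uniform continuity and the decay at infinity \eqref{1228} are obtained by density. Choose sequences $u_{k},v_{k}\in\mathscr S(\R^{n})$ with $u_{k}\to u$ and $v_{k}\to v$ in $L^{2}(\R^{n})$. For Schwartz data, $\mathcal W(u_{k},v_{k})$ is Schwartz (hence uniformly continuous and vanishing at infinity). By sesquilinearity,
\begin{equation*}
\mathcal W(u,v)-\mathcal W(u_{k},v_{k})=\mathcal W(u-u_{k},v)+\mathcal W(u_{k},v-v_{k}),
\end{equation*}
and applying \eqref{1226} with $p=2$ to each piece gives
\begin{equation*}
\norm{\mathcal W(u,v)-\mathcal W(u_{k},v_{k})}_{L^{\io}(\RZ)}\le 2^{n}\bigl(\norm{u-u_{k}}_{L^{2}}\norm{v}_{L^{2}}+\norm{u_{k}}_{L^{2}}\norm{v-v_{k}}_{L^{2}}\bigr)\to 0.
\end{equation*}
Since uniform limits of uniformly continuous functions are uniformly continuous, and the space $C_{0}(\RZ)$ of continuous functions vanishing at infinity is closed under uniform convergence, we conclude that $\mathcal W(u,v)$ is uniformly continuous on $\RZ$ and satisfies \eqref{1228}. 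Continuity alone could also be read off directly from the third identity in \eqref{1224+} together with the strong continuity of the affine action $X\mapsto \tau_{-2X}u$ in $L^{2}(\R^{n})$, but the density argument above yields uniform continuity and decay in one stroke.
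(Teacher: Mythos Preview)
Your proof is correct, and it agrees with the paper on \eqref{1224+}, \eqref{1225} and \eqref{1226}. The differences lie in \eqref{lieb44} and in the proof of uniform continuity together with \eqref{1228}.

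For \eqref{lieb44}, the paper does not interpolate: it writes $\mathcal W(u,v)=\mathcal F_{2}\bigl(\Omega(u,v)\bigr)$, applies Hausdorff--Young in the second variable to get $\norm{\mathcal W(u,v)}_{L^{q}\otimes L^{q}}\le 2^{n(q-2)/q}\norm{\Omega(u,v)}_{L^{q}\otimes L^{q'}}$, and then observes that $\int\val{\Omega(u,v)(x,z)}^{q'}dz$ is a convolution $\val u^{q'}\ast\val{\check v}^{q'}$, to which Young's inequality applies. Your bilinear Riesz--Thorin argument between the endpoints \eqref{1225} and \eqref{1226} is a legitimate alternative and recovers exactly the same constant $2^{n(q-2)/q}$ and exactly the same range $r\in[q',q]$ (the sesquilinearity is harmless: replace $v$ by $\bar v$ to make the map bilinear). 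What the paper's approach buys is that it avoids invoking multilinear interpolation machinery, relying only on two classical one-variable inequalities; what your approach buys is that it makes transparent \emph{why} the range $r\in[q',q]$ appears, as the image of $p\in[1,\infty]$ under the interpolation formula.

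For the uniform continuity and the decay \eqref{1228}, the paper treats them separately: continuity is proved directly from the identity $\mathcal W(u,v)(Y)-\mathcal W(u,v)(X)=2^{n}\poscal{\sigma_{X}u}{(\sigma_{X}\sigma_{Y}-\Id)v}$ together with the group relation $\sigma_{X}\sigma_{Y}=\tau_{2(X-Y)}e^{4i\pi[Y,X]}$ and the strong continuity of $Z\mapsto\tau_{Z}v$ in $L^{2}$; the decay \eqref{1228} is then obtained by a separate density step. Your route, approximating $u,v$ by Schwartz functions and using that $C_{0}(\RZ)$ and the space of uniformly continuous functions are closed under uniform limits, handles both at once and is slightly more economical.

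One small quibble: your parenthetical ``up to a harmless unimodular factor'' in the verification of $\sigma_{X}=\sigma_{0}\tau_{-2X}$ is unnecessary hedging---with the paper's normalisations \eqref{phase} and \eqref{phtrans} the identity holds on the nose, as the paper checks in \eqref{symm+}.
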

\begin{proof} 
We have  with $\check v(x)=v(-x)=(\sigma_{0}v)(x)$,
\begin{align*}
&\mathcal W(u,v)(x,\xi)=2^n\int
u(x+z ) \bar v(x-z)
e^{-4i\pi z\xi}dz
\\&=
2^n\int
u(z -(-x)) e^{2i\pi(z-\frac{-x}2)(-\xi)}
\bar{\check v} (z-x)
 e^{-2i\pi(z-\frac{x}2)\xi}
e^{-4i\pi z\xi
+2i\pi(z-\frac{-x}2)\xi
+2i\pi(z-\frac{x}2)\xi}dz
\\&=
2^n\int(\tau_{(-x,-\xi)}  u)(z)
\overline{(\tau_{(x,\xi)}  \check v)(z)} dz
=2^n\poscal{\tau_{(x,\xi)}^* u}{\tau_{(x,\xi)}  \check v}_{L^2(\R^n)},
\end{align*}
or for short
\begin{equation}\label{}
\mathcal W(u,v)(X)=2^n\poscal{\tau_{X}^* u}{\tau_{X}  \check v}_{L^2(\R^n)}.
\end{equation}
As a consequence we find from \eqref{12hh} that
$$
\poscal{\opw{a}u}{v}=\int a(X)2^n \poscal{\sigma_{0} \tau_{2X}^* u}{v} dX,
$$
and since 
$
(\sigma_{x,\xi} u)(y)=u(2x-y)e^{-4i\pi(x-y)\cdot \xi},
$
we can verify directly that 
\begin{equation}\label{symm+}
\sigma_{0}\tau_{-2X}=\sigma_{X}.
\end{equation}
Indeed, composing the translation  of vector $-2X$ in $\RZ$ with the symmetry with respect to 0, 
we have 
$$
Y\mapsto Y-2X\mapsto 2X-Y=Y', \qquad \frac12(Y+Y')=X,
$$
that is the symmetry with respect to $X$.
Quantifying this equality, we use
$$
(\tau_{(-2x,-2\xi)} u)(z)=u(z+2x) e^{2i\pi (z-\frac{-2x}{2})(-2\xi)}
=u(z+2x) e^{-4i\pi (z+x)\xi},
$$
so that we obtain
$$
\sigma_{0}(\tau_{(-2x,-2\xi)} u)(z)
=u(-z+2x) e^{-4i\pi (-z+x)\xi}=(\sigma_{x,\xi}u)(z),
$$
which proves \eqref{symm+} and thus \eqref{1224+}.
Formula \eqref{1225} is already proven in \eqref{norm} and \eqref{1226}
 follows from \eqref{1224+}, H\"older's
inequality and the fact that $\tau_{X}$ is an endomorphism of  $L^{p}(\R^{n})$ 
with norm 1 (cf. the expression \eqref{phtrans}).
To prove \eqref{lieb44} we note that 
from the expression \eqref{65rtfg}, the Hausdorff-Young's inequality implies
\begin{equation}\label{65ggff}
\norm{\mathcal W(u,v)}_{L^{q}\otimes L^{q}}\le 
\norm{\Omega(u,v)}_{L^{q}\otimes L^{q'}}\le \norm{\val u^{q'}\ast \val v^{q'}}_{L^{q/q'}}^{1/q'}
2^{n\frac{q-2}{q}},
\end{equation}
and since 
Young's inequality\footnote{
For $p,q,r\in [1,+\io]$ with $\frac{1}{p'}+\frac{1}{q'}=\frac{1}{r'}$, we have 
\begin{equation}
\norm{f\ast g}_{L^{r}}\le \norm{f}_{L^{p}}\norm{g}_{L^{q}}.
\end{equation}
} gives
$$
\norm{\val u^{q'}\ast \val v^{q'}}_{L^{q/q'}}
\le \norm{\val{u}^{q'}}_{L^{a/q'}}\norm{\val{v}^{q'}}_{L^{b/q'}}, 
$$
$a,b\ge q'$ with
$$
1-\frac{q'}{q}=1-\frac{q'}{a}+1-\frac{q'}{b},\quad
\text{i.e.\quad }q'\bigl(\frac1a+\frac1b\bigr)=1+\frac{q'}q,\
\text{that is \ }\frac1a+\frac1b=1,
$$
so that 
$$
\norm{\val u^{q'}\ast \val v^{q'}}_{L^{q/q'}}
\le \norm{{u}}_{L^{a}}^{q'}\norm{{v}}_{L^{b}}^{q'}, 
$$
in such a way that \eqref{65ggff} yields
$$
\norm{\mathcal W(u,v)}_{L^{q}\otimes L^{q}}\le
2^{n\frac{q-2}{q}}  \norm{{u}}_{L^{a}}\norm{{v}}_{L^{b}}, \quad a, b\ge q',\quad \frac1a+\frac1b=1,
$$
which is 
 \eqref{lieb44} .
 We are left with the proof of uniform continuity of $\mathcal W(u,v)$.
 We have for $X,Y\in \RZ$,
 $$
 \mathcal W(u,v)(Y)-\mathcal W(u,v)(X)
 =2^n\poscal{(\sigma_{Y}-\sigma_{X})u}{v}_{L^2(\R^n)},
 $$
 and since $\sigma_{Y}^2=\Id$  (see the footnote \ref{foot2}  on page \pageref{foot2}), we find
 \begin{multline*}
 \mathcal W(u,v)(Y)-\mathcal W(u,v)(X)
  =2^n\poscal{(\sigma_{Y}\sigma_{X}-\Id)\sigma_{X}u}{v}_{L^2(\R^n)}
  \\=2^n\poscal{\sigma_{X}u}{(\sigma_{X}\sigma_{Y}-\Id)v}_{L^2(\R^n)}.
\end{multline*} 
 According to Formula (2.1.16)
 in \cite{MR2599384}, we have 
 \begin{equation}\label{r}
 \sigma_{X}\sigma_{Y}=\tau_{2X-2Y} e^{4i\pi[Y,X]},
\end{equation}
and this implies 
\begin{equation}\label{rreezz}
\val{\mathcal W(u,v)(Y)-\mathcal W(u,v)(X)}
  \le 2^n\norm{u}_{L^2(\R^n)}
  \norm{\tau_{2(X-Y)} v}_{L^2(\R^n)}.
\end{equation}
We have  from \eqref{segalsym},
\begin{multline*}
\tau_{z,\zeta} v(x) -v(x)= v(x-z) e^{2i\pi(x-\frac z2) \zeta}-v(x)
\\=\bigl(v(x-z)-v(x)\bigr)e^{2i\pi(x-\frac z2) \zeta}
+v(x)\bigl(e^{2i\pi(x-\frac z2) \zeta}-1\bigr),
\end{multline*}
and thus
$$
\norm{\tau_{Z}v-v}_{L^2(\R^n)}
\le \left(\int \val{v(x-z) -v(x)}^2 dx\right)^{1/2}
+\left(\int \val{v(x)}^2\val{e^{2i\pi (x-\frac z2)\zeta}-1}^2 dx\right)^{1/2}.
$$
We have the classical result, due to the density in $L^2$ of continuous compactly supported functions,
$$
\lim_{\R^n\ni z\rightarrow 0}\int \val{v(x-z) -v(x)}^2 dx=0,
$$
and moreover the Lebesgue Dominated Convergence Theorem  implies
$$
\lim_{(z,\zeta)\rightarrow (0,0)}\int \underbrace{\val{v(x)}^2}_{\in L^1(\R^n)}
\underbrace{\val{e^{2i\pi (x-\frac z2)\zeta}-1}^2 }_{\le 4}dx=0,
$$
so that 
$\lim_{\RZ\ni Z\rightarrow 0}
\norm{\tau_{Z}v-v}_{L^2(\R^n)}=0
$.
As a consequence \eqref{rreezz} implies the uniform continuity of $\mathcal W(u,v)$.
Moreover, we have, for  $\phi,\psi\in \mathscr S(\R^n)$,
$$
\mathcal W(u,v)= \mathcal W(u-\phi,v)+\mathcal W(\phi,v-\psi)
+\mathcal W(\phi,\psi),
$$
so that 
\begin{align*}
\val{\mathcal W(u,v)(x,\xi)} &\le 
\int \val{(u-\phi)(x+\frac z2)} \val{v(x-\frac z2)} dz
\\&\hskip58pt+\iint \val{(v-\psi)(x-\frac z2)} \val{\phi(x+\frac z2)} dz
+\val{\mathcal W(\phi,\psi)(x,\xi)} 
\\
&\le
2^n\norm{u-\phi}_{L^2(\R^n)} \norm{v}_{L^2(\R^n)} 
+2^n\norm{v-\psi}_{L^2(\R^n)} \norm{\phi}_{L^2(\R^n)} 
\\&\hskip237pt+\val{\mathcal W(\phi,\psi)(x,\xi)}.
\end{align*}
We choose now sequences $(\phi_{k}), (\psi_{k})$ of $\mathscr S(\R^n)$
converging respectively in $L^2(\R^n)$ towards $u,v$.
We obtain for all $k\in \N$,
\begin{multline}\label{ineq44}
\val{\mathcal W(u,v)(x,\xi)}\le 
2^n\norm{u-\phi_{k}}_{L^2(\R^n)} \norm{v}_{L^2(\R^n)} 
+2^n\norm{v-\psi_{k}}_{L^2(\R^n)} \norm{\phi_{k}}_{L^2(\R^n)} 
\\+\val{\mathcal W(\phi_{k},\psi_{k})(x,\xi)},
\end{multline}
so that using that $\mathcal W(\phi_{k},\psi_{k})$ belongs to $\mathscr S(\RZ)$, we get
\begin{multline*}
\limsup_{\substack{\RZ\ni X, \val X\rightarrow+\io}}\Bigl[
\val{\mathcal W(u,v)(X)}\Bigr]
\\\le 2^n\norm{u-\phi_{k}}_{L^2(\R^n)} \norm{v}_{L^2(\R^n)} 
+2^n\norm{v-\psi_{k}}_{L^2(\R^n)} \norm{\phi_{k}}_{L^2(\R^n)} ,
\end{multline*}
and thus, taking the limit when $k\rightarrow+\io$, we obtain
$$
\lim_{\substack{\RZ\ni X, \val X\rightarrow+\io}}\Bigl[
\val{\mathcal W(u,v)(X)}\Bigr]=0,
$$
completing the proof of Theorem \ref{thm776655}.
\end{proof}
\begin{rem}\rm
 Let $u$ be in $L^2(\R^n)$ be an even function. We then have 
\begin{equation}\label{}
 \mathcal W(u,u)(0,0)=2^n\norm{u}_{L^2(\R^n)}^2=\norm{\mathcal W(u,u)}_{L^\io(\RZ)}.
\end{equation}
 On the other hand if $u$ is odd we have 
\begin{equation}\label{}
 \mathcal W(u,u)(0,0)=-2^n\norm{u}_{L^2(\R^n)}^2=-\norm{\mathcal W(u,u)}_{L^\io(\RZ)},
\end{equation}
showing that for odd functions the minimum of the Wigner distribution is negative (we assume $u\not=0$ in $L^2(\R^n)$) and attained at 0.
Let us check for instance the (odd) function $u_{1}$ of Remark \ref{rem13}.
We have 
\begin{multline*}
2\norm{u_{1}}_{L^2(\R)}^2=2\int x^2 e^{-2\pi x^2}dx=4\int_{0}^{+\io}\frac{t}{2\pi}
e^{-t} (2\pi)^{-1/2}\frac12 t^{-1/2} dt\\
=\frac{2\Gamma(3/2)}{ (2\pi)^{3/2}}=\frac{\Gamma(1/2)}{ (2\pi)^{3/2}}=\frac{1}{2^{3/2}\pi}=-\mathcal W (u_{1},u_{1})(0,0).
\end{multline*}
 \end{rem}
\subsubsection{On weak versions of the Wigner distribution}
\index{weak versions of the Wigner distribution}
Let $u,v$ be in the space  $\mathscr S'(\R^{n})$
of tempered distributions. Then we can define as above the tempered distribution
$\Omega(u,v)$ in $\RZ$:
we set
\begin{multline}
\poscal{\Omega(u,v)(x,z)}{\Phi(x,z)}_{\mathscr S'(\R^{2n}),\mathscr S(\R^{2n})}
\\=\poscal{u(x_{1})\otimes\bar v(x_{2})}{\Phi(\frac{x_{1}+x_{2}}{2},x_{1}-x_{2})}_{\mathscr S'(\R^{2n}),\mathscr S(\R^{2n})},
\end{multline}
and then we define the Wigner distribution $\mathcal W(u,v)$ as the Fourier
transform with respect to $z$ of  $\Omega(u,v)$, meaning that 
\begin{equation}
\poscal{\mathcal W(u,v)}{\Psi}_{\mathscr S'(\R^{2n}),\mathscr S(\R^{2n})}
=\poscal{\Omega(u,v)}{\mathcal F_{2}\Psi}_{\mathscr S'(\R^{2n}),\mathscr S(\R^{2n})},
\end{equation}
where 
$$
(\mathcal F_{2}\Psi)(x,\xi)=\int_{\R^{n}} e^{-2i\pi z\cdot \xi} \Psi (x,z) dz.
$$
Of course $\mathcal W(u,v)$ is only a tempered distribution  on $\RZ$ and we have the inversion formula, using the notations of Remark \ref{rem.334455},
\begin{equation}
\Omega(u,v)=\mathcal F_{2}\mathcal C_{2}\mathcal W(u,v).
\end{equation}
The above remarks show that there is no difficulty to extend the definition of the joint Wigner distribution $\mathcal W(u,v)$ to the case where $u,v$ are both tempered distributions on $\R^{n}$. Some properties are surviving from the $L^{2}$ theory, in particular the inversion formula, but one should be reasonably cautious at avoiding to write brackets of duality as integrals.
Theorem 2 in \cite{MR1039210} gives a more complete version of the following result.
\begin{theorem}\label{thm.112lkj}
 Let $u\in L^{2}(\R^{n})$ such that $\mathcal W(u,u)\in L^{1}(\RZ)$. Then $u$ belongs to 
 $L^{p}(\R^{n})$ for all $p\in [1,+\io]$
 and we have 
 $$
 \norm{u}_{L^{1}(\R^{n})}
  \norm{u}_{L^{\io}(\R^{n})}\le 2^{n} \norm{\mathcal W(u,u)}_{L^{1}(\R^{2n})}.
 $$
\end{theorem}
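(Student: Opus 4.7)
The plan is to extend the reconstruction formula \eqref{flat} from the Schwartz class to $L^{2}(\R^{n})$ and to extract from it a pointwise a.e.~domination. Since $\Omega(u,u)\in L^{2}(\RZ)$ by \eqref{newide} and $\mathcal W(u,u)=\mathcal F_{2}\Omega(u,u)$ (see \eqref{65rtfg}), Plancherel gives $\Omega(u,u)=\mathcal F_{2}^{-1}\mathcal W(u,u)$ as an $L^{2}$ equality. Under the hypothesis $\mathcal W(u,u)\in L^{1}(\RZ)$, Fubini shows that the function
\begin{equation*}
g(x):=\int_{\R^{n}}\val{\mathcal W(u,u)(x,\xi)}d\xi
\end{equation*}
lies in $L^{1}(\R^{n})$ with $\norm{g}_{L^{1}(\R^{n})}=\norm{\mathcal W(u,u)}_{L^{1}(\RZ)}$, that the $\xi$-integral defining $\mathcal F_{2}^{-1}\mathcal W(u,u)(x,\cdot)$ is absolutely convergent for a.e.~$x$, and consequently that the representative given by that integral satisfies $\val{\Omega(u,u)(x,z)}\le g(x)$ for almost every $(x,z)\in\RZ$.

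Next I would perform the change of variables $y_{1}=x+z/2,\ y_{2}=x-z/2$ (Jacobian $1$), which turns the previous inequality into
\begin{equation*}
\val{u(y_{1})}\val{u(y_{2})}\le g\bigl(\tfrac{y_{1}+y_{2}}{2}\bigr)\quad\text{for a.e.~}(y_{1},y_{2})\in\R^{n}\times\R^{n}.
\end{equation*}
Assuming $u\not=0$ (otherwise the statement is trivial), for each $R>0$ I would integrate in $y_{2}$ over the ball $B_{R}\subset\R^{n}$ and use the substitution $w=(y_{1}+y_{2})/2$ (Jacobian $2^{n}$) to obtain, for a.e.~$y_{1}$,
\begin{equation*}
\val{u(y_{1})}\int_{B_{R}}\val{u(y_{2})}dy_{2}\le 2^{n}\int_{(y_{1}+B_{R})/2}g(w)dw\le 2^{n}\norm{g}_{L^{1}(\R^{n})}.
\end{equation*}
Taking the essential supremum in $y_{1}$ and letting $R\to+\io$ (monotone convergence: the truncated $L^{1}$-integral of $\val u$ increases to $\norm{u}_{L^{1}(\R^{n})}\in(0,+\io]$) yields
\begin{equation*}
\norm{u}_{L^{\io}(\R^{n})}\cdot\norm{u}_{L^{1}(\R^{n})}\le 2^{n}\norm{\mathcal W(u,u)}_{L^{1}(\RZ)}.
\end{equation*}
Since $u\not=0$, the left-hand side is not zero, which simultaneously forces $\norm{u}_{L^{1}(\R^{n})}<+\io$ and $\norm{u}_{L^{\io}(\R^{n})}<+\io$ (if $\norm{u}_{L^{1}}=+\io$, then the displayed bound forces $\norm{u}_{L^{\io}}=0$, contradiction). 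Membership in the intermediate $L^{p}$ spaces then follows from the log-convexity estimate $\int\val u^{p}\le\norm{u}_{L^{\io}}^{p-1}\norm{u}_{L^{1}}$.

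The step I anticipate as the main obstacle is the rigorous transition from the $L^{2}$ identity $\Omega(u,u)=\mathcal F_{2}^{-1}\mathcal W(u,u)$ to the pointwise a.e.~domination $\val{\Omega(u,u)(x,z)}\le g(x)$: one must invoke Fubini on $\mathcal W(u,u)\in L^{1}(\RZ)$ to know that $\mathcal W(u,u)(x,\cdot)\in L^{1}(\R^{n})$ for a.e.~$x$, and then use that the absolutely convergent inverse partial Fourier integral selects a canonical representative of $\Omega(u,u)$ obeying the bound. Once that a.e.~inequality is in hand, the remainder is just Fubini and a truncation-plus-monotone-convergence trick, whose advantage is to handle a~priori potentially infinite $\norm{u}_{L^{1}}$ and $\norm{u}_{L^{\io}}$ in one stroke, thereby bypassing any need for a preliminary one-sided estimate.
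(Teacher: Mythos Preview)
Your proof is correct and follows essentially the same route as the paper: both use the reconstruction identity $\Omega(u,u)=\mathcal F_{2}^{-1}\mathcal W(u,u)$ to obtain the a.e.~bound $\val{u(y_{1})}\val{u(y_{2})}\le g\bigl((y_{1}+y_{2})/2\bigr)$ and then integrate one variable out (Jacobian $2^{n}$). Your version is more scrupulous about the a.e.~justification and about handling potentially infinite norms via the truncation $B_{R}$, whereas the paper writes the integral inequality directly; note that your last deduction is cleaner if you first read off $\norm{u}_{L^{\io}}<\io$ from the fixed-$R$ bound (since $\int_{B_{R}}\val u>0$ for large $R$) and only then let $R\to\io$ to get $\norm{u}_{L^{1}}<\io$, rather than arguing via the indeterminate product $0\cdot(+\io)$.
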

\begin{nb}
 We refer the reader to our Section
 \ref{sec.meager} and in particular to 
our
 Theorem \ref{thm.67kjhg} on page \pageref{thm.67kjhg}
 showing that the set of $u$ in $L^{2}(\R^{n})$ such that 
 $\mathcal W(u,u)$ belongs to $L^{1}(\RZ)$ is meager.
\end{nb}
\begin{proof}
Thanks to Theorem \ref{thm776655}, we have  $\mathcal W(u,u)\in L^{p}(\RZ)$ for all $p\in [1,+\io]$
and we have in a weak sense,
 $$
 u(x+\frac z2)\bar u(x-\frac z2)=\int e^{2i\pi z\cdot \xi}\mathcal W(u,u)(x,\xi) d\xi, 
 $$
 so that 
 \begin{equation}\label{623+++}
 u(x_{1}) \bar u(x_{2})=\int e^{2i\pi (x_{1}-x_{2})\cdot \xi}\mathcal W(u,u)(\frac{x_{1}+x_{2}}2,\xi) d\xi, 
\end{equation}
and thus
$$
\int\val{u(x_{1})}\val{u(x_{2})} dx_{1}\le \iint \bigl\vert\mathcal W(u,u)(\frac{x_{1}+x_{2}}2,\xi)\bigr\vert d\xi dx_{1}
= 2^{n}\norm{\mathcal W(u,u)}_{L^{1}(\RZ)},
$$
i.e.
$$
\norm{u}_{L^{1}(\R^{n})}\norm{u}_{L^{\io}(\R^{n})}\le 2^{n}\norm{\mathcal W(u,u)}_{L^{1}(\RZ)},
$$
proving the lemma.
\end{proof}
 \subsubsection{Composition Formulas}
 \index{composition formula}
 The following lemma is classical (see e.g \cite{MR416349}, \cite{MR424081}); however
 we shall provide a proof for the convenience of the reader.
 \begin{lem}\label{lem.113esd}
 Let $u,v, f,g$ be in $L^2(\R^n)$. Then
\begin{equation}\label{131131}
 \poscal{u}{g}_{L^2(\R^n)} \poscal{f}{v}_{L^2(\R^n)}=\iint
 \mathcal W(u,v)(x,\xi)\mathcal W(f,g)(x,\xi) dx d\xi.
\end{equation}
In other words, the Weyl symbol of the rank-one operator
$
u\mapsto  \poscal{u}{g}_{L^2(\R^n)} f
$
is $\mathcal W(f,g)$.
In particular if $f=g$ is a unit vector in $L^2(\R^n
)$ we find that 
$\mathcal W(f,f)$ is the Weyl symbol of the orthogonal projection onto $\C f$. \end{lem}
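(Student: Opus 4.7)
The plan is to prove identity \eqref{131131} directly by reducing the integral of the product $\mathcal{W}(u,v)\cdot\mathcal{W}(f,g)$ to an integral of the $\Omega$-functions via Plancherel, and then untangle the result using an affine change of variables; the rank-one interpretation will then follow from \eqref{eza654}.

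First I would rewrite the right-hand side of \eqref{131131}, using \eqref{complex}, as
\begin{equation*}
\iint \mathcal{W}(u,v)(x,\xi)\,\mathcal{W}(f,g)(x,\xi)\,dx\,d\xi
=\iint \mathcal{W}(u,v)(x,\xi)\,\overline{\mathcal{W}(g,f)(x,\xi)}\,dx\,d\xi,
\end{equation*}
so that the $\xi$-integral becomes a genuine $L^2$ inner product. Since $\mathcal{W}(u,v)$ and $\mathcal{W}(g,f)$ are the Fourier transforms of $\Omega(u,v)$ and $\Omega(g,f)$ with respect to the $z$-variable (cf.\ \eqref{65rtfg}), Plancherel's identity applied in $\xi$ yields, for almost every $x$,
\begin{equation*}
\int \mathcal{W}(u,v)(x,\xi)\,\overline{\mathcal{W}(g,f)(x,\xi)}\,d\xi
=\int \Omega(u,v)(x,z)\,\overline{\Omega(g,f)(x,z)}\,dz.
\end{equation*}
Inserting the explicit formula \eqref{funcome}, the integrand becomes $u(x+\tfrac z2)\bar v(x-\tfrac z2)\,\bar g(x+\tfrac z2)\,f(x-\tfrac z2)$, all four factors being $L^2$ functions whose pairwise products are integrable by Cauchy--Schwarz, justifying the use of Fubini below.

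Next I would integrate in $x$ and perform the linear change of variables $(x,z)\mapsto(x_1,x_2)=(x+\tfrac z2,\,x-\tfrac z2)$, which has Jacobian $1$. The integral splits as a product:
\begin{equation*}
\iint u(x_1)\bar g(x_1)\, f(x_2)\bar v(x_2)\,dx_1\,dx_2
=\poscal{u}{g}_{L^2(\R^n)}\,\poscal{f}{v}_{L^2(\R^n)},
\end{equation*}
which is exactly \eqref{131131}. Some care is needed to justify the use of Fubini and Plancherel when $u,v,f,g$ are only in $L^2$, but this is standard: the identity holds trivially for Schwartz functions, and both sides are continuous sesquilinear forms on $L^2(\R^n)^4$ (the right-hand side via Cauchy--Schwarz, the left-hand side via \eqref{norm}), so a density argument extends it to all $L^2$ quadruples.

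For the rank-one statement, let $A$ denote the operator $u\mapsto \poscal{u}{g}_{L^2}\,f$ and let $a$ denote its Weyl symbol (which exists as a tempered distribution, e.g.\ via Proposition~\ref{pro1716}). By \eqref{eza654} we have, for all $u,v\in\mathscr{S}(\R^n)$,
\begin{equation*}
\poscal{a}{\mathcal{W}(u,v)}_{\mathscr S'(\RZ),\mathscr S(\RZ)}
=\poscal{Au}{\bar v}_{\mathscr S',\mathscr S}
=\poscal{u}{g}_{L^2(\R^n)}\,\poscal{f}{v}_{L^2(\R^n)},
\end{equation*}
and the identity just proved shows that the choice $a=\mathcal{W}(f,g)\in L^2(\RZ)$ reproduces this bracket. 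Since the map $(u,v)\mapsto \mathcal{W}(u,v)$ has dense range in $\mathscr{S}(\RZ)$ (as already noted in the paper in connection with the well-definedness of \eqref{eza654}), this uniquely identifies $a=\mathcal{W}(f,g)$. The specialization $f=g$ with $\norm{f}_{L^2}=1$ gives an operator $u\mapsto\poscal{u}{f}f$ which is the orthogonal projection onto $\C f$, and its Weyl symbol is the real-valued function $\mathcal{W}(f,f)$. The only genuinely delicate point is the careful bookkeeping between the Hermitian inner product $\poscal{\cdot}{\cdot}_{L^2}$ and the bilinear duality bracket $\poscal{\cdot}{\cdot}_{\mathscr S',\mathscr S}$; once that is settled, the proof is essentially a Plancherel computation followed by a change of variables.
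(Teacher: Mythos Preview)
Your proof is correct and follows essentially the same approach as the paper's own proof: Plancherel in the $\xi$-variable to pass from $\mathcal W$ to $\Omega$, followed by the affine change of variables $(x,z)\mapsto(x+\tfrac z2,x-\tfrac z2)$, with the rank-one statement deduced from \eqref{eza654}. The paper is more compressed (it does not spell out the conjugation via \eqref{complex} or the density argument), but the underlying computation is identical; your additional care with the $L^2$ extension and the duality bookkeeping is welcome but not a genuinely different route.
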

\begin{proof}
 Both functions $\mathcal W(u,v),
 \mathcal W(f,g)$ belong to $L^2(\RZ)$, so that the integral on the right-hand-side of \eqref{131131}
 actually makes sense. Also 
 $\mathcal W(u,v)$
  is the partial Fourier transform with respect to the variable $z$
  of $(x,z)\mapsto u(x+z/2)\bar v(x-z/2)$, thus applying Plancherel formula\footnote{\label{foot3}We refer of course to the formula
  $
  \poscal{\hat u}{\hat v}_{L^{2}(\R^{n})}=\poscal{u}{v}_{L^{2}(\R^{n})},
  $
  when using the \emph{complex} Hilbert space $L^{2}(\R^{n})$.
  Note however that Formula \eqref{res444} is using the \emph{real} duality between 
  $\mathscr S(\R^{n})$ and $\mathscr S'(\R^{n})$ so that to check,
  with $\mathscr S^{*}(\R^{N})$ standing for the anti-dual of $\mathscr S(\R^{N})$
  (i.e. continuous anti-linear forms on $\mathscr S(\R^{N})$), we have also 
  \vskip-17pt
 \begin{multline}
  \poscal{\widehat T}{\widehat \phi}_{\mathscr S^{*}(\R^{N}), \mathscr S(\R^{N})}=
  \poscal{\widehat T}{\overline{\widehat \phi}}_{\mathscr S^{'}(\R^{N}), \mathscr S(\R^{N})}
  = \poscal{ T}{
 \widehat{\overline{\widehat \phi}}
  }_{\mathscr S^{'}(\R^{N}), \mathscr S(\R^{N})}=\poscal{ T}{
 {\overline{\phi}}
  }_{\mathscr S^{'}(\R^{N}), \mathscr S(\R^{N})}
  \\=
  \poscal{ T}{
 {{\phi}}
  }_{\mathscr S^{*}(\R^{N}), \mathscr S(\R^{N})}.
\end{multline}
  } we obtain that  
  \begin{multline*}
\iint
 \mathcal W(u,v)(x,\xi)\mathcal W(f,g)(x,\xi) dx d\xi
 \\=
  \iint u(x+z/2)\bar v(x-z/2)
f (x-z/2)\bar g(x+z/2)dx dz
\\
=\poscal{u}{g}_{L^2(\R^n)} \poscal{f}{v}_{L^2(\R^n)}.
\end{multline*}
The last property follows from \eqref{eza654}.
\end{proof}
Using Section 2.1.5 in \cite{MR2599384},
we obtain that 
for $a,b \in \mathscr S(\RZ)$ 
$$
\opw{a}\opw{b}=\iint_{\RZ\times\RZ} a(Y) b(Z) 2^{2n} \sigma_{Y}\sigma_{Z} dY dZ.
$$
We get 
\begin{equation}\label{sharp0}
\opw{a}\opw{b}=\opw{a\sharp b},
\end{equation}
with 
\begin{align}\
(a\sharp b)(X)&=
2^{2n}
\iint_{\RZ\times\RZ}  e^{-4i\pi[X-Y,X-Z]} 
a(Y) b(Z) dY dZ
\label{2.wecofo}
\\&=
\iint_{\RZ\times\RZ}  e^{-2i\pi\poscal{\Xi}{Z}} 
a\bigl(X+\frac{\sigma^{-1} \Xi}2\bigr) b(Z+X) d\Xi dZ,
\label{wecof3}
\\
&=\int_{\RZ}
e^{2i\pi \poscal{X}{\Xi}}a\bigl(X+\frac{\sigma^{-1} \Xi}2\bigr)\widehat b(\Xi) d\Xi,
\label{weco++}
\end{align}
where $[\cdot, \cdot]$ is the symplectic form \eqref{sympfor} and $\sigma$ is \eqref{ffqq99}.
Formula  \eqref{wecof3} is interesting since very close to the group $J^{t}$ defined in Formula (4.1.14) of 
\cite{MR2599384}.
\begin{lem}\label{lem.54jhgf}
 Let $u_{0}, u_{1}, u_{2}, u_{3}$ be in $L^{2}(\R^{n})$.
 Then we have for all $X\in \RZ$, 
\begin{equation}\label{}
 \val{\poscal{u_{1}}{u_{2}}_{L^{2}}}
\val{\mathcal W(u_{0},u_{3})(X)}\le 2^{n}
\bigl(\val{\mathcal W(u_{0}, u_{2})}\ast\val{\mathcal W(\check u_{1}, u_{3})}\bigr)(X).
\end{equation}
\end{lem}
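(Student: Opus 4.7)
The target inequality is essentially the absolute value of a clean identity that one can uncover via the Moyal pairing of Lemma~\ref{lem.113esd}. First, using \eqref{1224+} and the self-adjointness of the phase symmetry $\sigma_X$, I write $\mathcal{W}(u_0, u_3)(X) = 2^n \poscal{u_0}{\sigma_X u_3}_{L^{2}(\R^{n})}$, multiply by $\poscal{u_1}{u_2}_{L^{2}(\R^{n})}$, and apply Lemma~\ref{lem.113esd} with the assignment $(u, g, f, v) = (u_0,\, \sigma_X u_3,\, u_1,\, u_2)$ to obtain
\begin{equation*}
\poscal{u_1}{u_2}_{L^{2}(\R^{n})} \mathcal{W}(u_0, u_3)(X) = 2^n \int_{\RZ} \mathcal{W}(u_0, u_2)(Y)\, \mathcal{W}(u_1, \sigma_X u_3)(Y)\, dY.
\end{equation*}

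Next I identify the inner factor $\mathcal{W}(u_1, \sigma_X u_3)(Y)$ as a translated Wigner distribution of $(\check u_1, u_3)$. Using \eqref{1224+} once more, $\mathcal{W}(u_1, \sigma_X u_3)(Y) = 2^n \poscal{\sigma_Y u_1}{\sigma_X u_3}_{L^{2}(\R^{n})} = 2^n \poscal{\sigma_X \sigma_Y u_1}{u_3}_{L^{2}(\R^{n})}$, and the composition rule \eqref{r} turns this into $2^n e^{4i\pi[Y, X]} \poscal{\tau_{2(X - Y)} u_1}{u_3}_{L^{2}(\R^{n})}$. On the other hand, applying \eqref{1224+} to $\mathcal{W}(\check u_1, u_3)$ at the point $X - Y$, together with $\sigma_0 \check u_1 = u_1$ and the elementary conjugation identity $\sigma_0 \tau_Z \sigma_0 = \tau_{-Z}$ (immediate from \eqref{phtrans} and \eqref{phase}), yields $\mathcal{W}(\check u_1, u_3)(X - Y) = 2^n \poscal{\sigma_0 \tau_{-2(X-Y)} \sigma_0 u_1}{u_3}_{L^{2}(\R^{n})} = 2^n \poscal{\tau_{2(X - Y)} u_1}{u_3}_{L^{2}(\R^{n})}$. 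Comparing gives
\begin{equation*}
\mathcal{W}(u_1, \sigma_X u_3)(Y) = e^{4i\pi[Y, X]}\, \mathcal{W}(\check u_1, u_3)(X - Y),
\end{equation*}
and in particular the two moduli agree.

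Finally, taking absolute values in the Moyal identity above and estimating the integral by the integral of absolute values yields $|\poscal{u_1}{u_2}_{L^{2}(\R^{n})}| \cdot |\mathcal{W}(u_0, u_3)(X)| \le 2^n \int |\mathcal{W}(u_0, u_2)(Y)| \cdot |\mathcal{W}(\check u_1, u_3)(X - Y)|\, dY$, which is exactly $2^n \bigl(|\mathcal{W}(u_0, u_2)| \ast |\mathcal{W}(\check u_1, u_3)|\bigr)(X)$. The only genuinely non-routine point is the middle step: tracking signs and phases, in particular the appearance of $\check u_1$ rather than $u_1$, which is forced by the $\sigma_0$ buried in the decomposition $\sigma_X = \sigma_0 \tau_{-2X}$ of \eqref{symm+}.
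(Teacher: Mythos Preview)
Your proof is correct and reaches the same underlying identity as the paper, but by a different route. The paper composes the two Weyl operators $\opw{\mathcal W(u_0,u_2)}$ and $\opw{\mathcal W(u_1,u_3)}$ via Lemma~\ref{lem.113esd}, reads off the symbol equation $\mathcal W(u_0,u_2)\sharp\mathcal W(u_1,u_3)=\poscal{u_1}{u_2}\mathcal W(u_0,u_3)$, and then expands the $\sharp$-product using the integral formula \eqref{weco++} together with the ambiguity-function relation \eqref{1211++} to obtain the twisted convolution expression. You instead apply Lemma~\ref{lem.113esd} directly as a Moyal identity at the level of integrals, and then identify $\mathcal W(u_1,\sigma_X u_3)(Y)$ with $e^{4i\pi[Y,X]}\mathcal W(\check u_1,u_3)(X-Y)$ through the phase-symmetry algebra \eqref{1224+}, \eqref{r}, \eqref{symm+} and the conjugation $\sigma_0\tau_Z\sigma_0=\tau_{-Z}$. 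Your approach is more elementary in that it bypasses the Weyl composition machinery and the detour through the ambiguity function; the paper's approach, on the other hand, makes the origin of the identity (as a symbol composition formula \eqref{1258++}) more transparent and reusable. After the change of variable $Y\mapsto X-Y$ the two twisted-convolution identities coincide exactly, and both conclude by taking absolute values.
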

\begin{proof}
According to Lemma \ref{lem.113esd},
we have for $v\in L^{2}(\R^{n})$,
\begin{multline*}
\opw{\mathcal W(u_{0},u_{2}})\opw{\mathcal W(u_{1},u_{3}})v=
\opw{\mathcal W(u_{0},u_{2}})\bigl(\poscal{v}{u_{3}}_{L^{2}(\R^{n})}u_{1}\bigr)
\\
=\poscal{v}{u_{3}}_{L^{2}(\R^{n})}\poscal{u_{1}}{u_{2}}_{L^{2}(\R^{n})} u_{0}
\\
=\poscal{u_{1}}{u_{2}}_{L^{2}(\R^{n})}\opw{\mathcal W(u_{0},u_{3})}v, 
\end{multline*}
so that with the notation \eqref{sharp0}, we get
\begin{equation}\label{1258++}
\mathcal W(u_{0},u_{2})\sharp
\mathcal W(u_{1},u_{3})=\poscal{u_{1}}{u_{2}}_{L^{2}(\R^{n})}
\mathcal W(u_{0},u_{3}),
\end{equation}
and using  \eqref{weco++}, we get
\begin{multline}
\bigl(\mathcal W(u_{0},u_{2})\sharp
\mathcal W(u_{1},u_{3})\bigr)(x,\xi)
\\
=\iint
e^{2i\pi(x\cdot\eta+\xi\cdot y)}
\mathcal W(u_{0},u_{2})\bigl(x-\frac y2, \xi+\frac\eta 2\bigr)
\overbrace{\mathcal F\bigl(\mathcal W(u_{1},u_{3})\bigr)(\eta, y)}^{\mathcal A(u_{1}, u_{3})(-\eta, -y)} dy d\eta,
\end{multline}
 where $\mathcal F$ stands for the Fourier transformation and $\mathcal A$ for the ambiguity function
 (cf. \eqref{1210}).
 With Formula \eqref{1211++}, we obtain
 \begin{multline}
\bigl(\mathcal W(u_{0},u_{2})\sharp
\mathcal W(u_{1},u_{3})\bigr)(x,\xi)
\\
=\iint
e^{4i\pi(-x\cdot\eta+\xi\cdot y)}
\mathcal W(u_{0},u_{2})(x-y, \xi-\eta)
\mathcal W(\check u_{1}, u_{3})(y, \eta )dy d\eta 2^{n},
\end{multline}
yielding from \eqref{1258++} for any $X\in \RZ$, 
\begin{multline}\label{}
\poscal{u_{1}}{u_{2}}_{L^{2}}
\mathcal W(u_{0},u_{3})(X)
\\=\int_{\RZ}
e^{4i\pi[X,Y]}
\mathcal W(u_{0},u_{2})(X-Y)
\mathcal W(\check u_{1}, u_{3})(Y)dY 2^{n},
\end{multline}
 which implies the lemma.
\end{proof}
\subsubsection{$L^{2}$-boundedness}
\begin{theorem}\label{thm.117jhg}
 Let $a$ be a semi-classical symbol on $\RZ$,
i.e. a  smooth function of $(x,\xi)$ depending on $h\in (0,1]$ such that
\begin{equation}\label{semi00++}
\forall l\in \N,\quad
p_{l}(a)=
\sup_{\substack{
(x,\xi)\in \RZ, h\in (0,1]\\\val \alpha+\val \beta\le l
}
}\val{(\p_{x}^\alpha\p_{\xi}^\beta \text{a})(x,\xi,h)}h^{-\frac {\val \alpha+\val \beta}{2}}<+\io.
\end{equation}
Then the operator $\opw{a(x,\xi,h)}$ is bounded on $L^{2}(\R^{n})$ and such that 
\begin{equation}\label{}
\norm{\opw{a(x,\xi,h)}}_{\mathcal B(L^{2}(\R^{n}))}\le c_{n} p_{\ell_{n}}(a),
\end{equation}
where $c_{n}$ and $\ell_{n}$ depend only on $n$.
\end{theorem}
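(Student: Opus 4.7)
The plan is a two-stage reduction: first use symplectic covariance to reduce to the non-semi-classical case where all partial derivatives are uniformly bounded, then establish a standard Calder\'on-Vaillancourt theorem by a partition-of-unity plus Cotlar-Stein almost-orthogonality argument. For the first stage, the linear map $S_h(x,\xi) = (h^{1/2}x, h^{-1/2}\xi)$ is symplectic of type \eqref{sym1} (with $T = h^{1/2}I_n$, since $h^{1/2}\cdot h^{-1/2}=1$), so by Segal's Formula \eqref{segal} there is a unitary $\mathcal M_h \in \meta$ with $\opw{a\circ S_h} = \mathcal M_h^* \opw{a}\mathcal M_h$, giving $\norm{\opw{a}}_{\mathcal B(L^2(\R^n))} = \norm{\opw{b_h}}_{\mathcal B(L^2(\R^n))}$ where $b_h := a \circ S_h$. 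From \eqref{semi00++} one computes, for $h \in (0,1]$ and $\val\alpha + \val\beta \le l$,
$$
\val{\p_x^\alpha \p_\xi^\beta b_h(x,\xi)} = h^{(\val\alpha - \val\beta)/2}\val{(\p_x^\alpha \p_\xi^\beta a)(h^{1/2}x, h^{-1/2}\xi, h)} \le p_l(a)\, h^{\val\alpha} \le p_l(a),
$$
so the family $\{b_h\}_{h \in (0,1]}$ sits in a uniformly bounded subset of the classical symbol class with all derivatives controlled by $p_l(a)$.

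For the second stage, it then suffices to prove the classical Calder\'on-Vaillancourt statement: there exist $\ell_n \in \N$ and $c_n > 0$ such that every $b \in \moo(\RZ)$ with $q_{\ell_n}(b) := \sup_{\val\alpha + \val\beta \le \ell_n}\norm{\p_x^\alpha \p_\xi^\beta b}_{L^\infty} < \infty$ satisfies $\norm{\opw{b}}_{\mathcal B(L^2(\R^n))} \le c_n q_{\ell_n}(b)$. Fix $\chi \in \mooc(\RZ)$ with $\sum_{k \in \Z^{2n}}\chi(X-k) \equiv 1$, set $\chi_k(X) = \chi(X-k)$, $b_k = \chi_k b$, and split $\opw{b} = \sum_k \opw{b_k}$, each piece having symbol supported in a fixed compact set around $k$. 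The naive bound $\norm{\opw{b_k}}_{\mathcal B(L^2)} \le \norm{\widehat{b_k}}_{L^1}$ from Proposition \ref{pro1717}, combined with integration by parts in $\xi$ and $x$, gives $\norm{\opw{b_k}} \le C_n q_{\ell_n}(b)$; but the sum over $k$ is infinite, so almost-orthogonality is essential. Using the exact composition formula \eqref{2.wecofo}, the symbol of $(\opw{b_j})^*\opw{b_k} = \opw{\overline{b_j}\sharp b_k}$ is given by a double oscillatory integral with phase $-4\pi[X-Y,X-Z]$; the critical point $(Y,Z) = (X,X)$ lies outside the support of $\overline{b_j}(Y)\, b_k(Z)$ as soon as $\val{j-k}$ is large, and repeated integration by parts produces, for any $N$,
$$
\norm{(\opw{b_j})^*\opw{b_k}}_{\mathcal B(L^2)} + \norm{\opw{b_j}(\opw{b_k})^*}_{\mathcal B(L^2)} \le C_N\, q_{\ell_n}(b)^2\,(1 + \val{j-k})^{-N}.
$$
The Cotlar-Stein lemma, applied with $N > 2n$, then sums these estimates to yield $\norm{\opw{b}}_{\mathcal B(L^2)} \le c_n q_{\ell_n}(b)$, completing Step~2.

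The main obstacle is the almost-orthogonality estimate in Step~2: one must extract arbitrary polynomial decay in $\val{j-k}$ from the oscillatory integral defining $\overline{b_j}\sharp b_k$, while controlling the constants by a \emph{single} semi-norm $q_{\ell_n}(b)$ whose order $\ell_n$ depends only on $n$. Once this is settled, tracking the explicit values of $\ell_n$ and $c_n$ is essentially bookkeeping, and the semi-classical rescaling in Step~1 is immediate from the covariance formula.
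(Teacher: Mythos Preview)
Your argument is correct, and it supplies considerably more than the paper does: the paper's own proof is a one-line citation of Boulkhemair's result (giving the sharp $\ell_n = [n/2]+1$) together with a reference to Theorem 1.1.4 of \cite{MR2599384} for an elementary proof in the case of the ordinary quantization. Your two-step approach is sound. The symplectic rescaling in Step~1 is exactly the device the paper invokes later, in the proof of Theorem~\ref{garding} (via \eqref{semi11}), to pass between the two normalizations of semi-classical symbol classes; you have simply front-loaded it here. Step~2 is the classical Cotlar--Stein proof of Calder\'on--Vaillancourt, which is one of the standard routes to this result. The trade-off is that your $\ell_n$, produced by the number of integrations by parts needed to get summable decay in $\Z^{2n}$, will typically exceed $[n/2]+1$; but the statement only asks for \emph{some} $\ell_n$ depending on $n$, so this is immaterial. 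The almost-orthogonality estimate you flag as the main obstacle is genuinely the heart of the matter, and your description of how to extract it from the composition formula \eqref{2.wecofo} is accurate.
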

\begin{proof}
 Theorem 1.2 in A.~Boulkhemair's article  \cite{MR1696697} is providing that result (and more)
  with $\ell_{n}=[n/2]+1$.
  Note also that Theorem 1.1.4 in \cite{MR2599384} is providing an elementary proof
  of the above result for the ordinary quantization  of $a$ given by 
  \index{ordinary quantization}
  \index{{~\bf Notations}!$\ops{0}{a}$}
  \begin{multline}
(\ops{0}{a}u)(x)=\int e^{2i\pi x\cdot \xi} a(x,\xi,h) \hat u(\xi) d\xi\\=\iint e^{2i\pi (x-y)\cdot \xi} a(x,\xi,h) u(y) dy d\xi.
\end{multline}
\end{proof}
\begin{nb}\rm
Formula  \eqref{weco++} appears as 
\begin{equation}(a\sharp b)(X)=
\Bigl(\OPS{0}{a\bigl(X-\frac{\sigma \Xi}2}b\Bigr)
(X),
\end{equation}
where $\ops{0}{\cdot}$ stands for the ordinary quantization in $2n$ dimensions.
\end{nb}
The following classical result is a consequence of Theorem \ref{thm.117jhg}.
\begin{theorem}\label{thm.117j++}
 Let  $C^{\infty}_{b}(\RZ)$ be the set of bounded smooth complex-valued functions on $\RZ$ such that all derivatives are bounded and 
 let $a$ be in $C^{\infty}_{b}(\RZ)$. Then the operator $\opw{a}$ is bounded on $L^{2}(\R^{n})$  
 and the $\mathcal B(L^{2}(\R^{n}))$ norm of $\opw{a}$ is bounded above by a fixed semi-norm of $a$ in the Fr\'echet space  $C^{\infty}_{b}(\RZ)$.
\end{theorem}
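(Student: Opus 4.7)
The plan is to deduce this statement directly from the semi-classical $L^2$-boundedness result of Theorem \ref{thm.117jhg} by specializing the semi-classical parameter. The key observation is that the Fréchet topology of $C^{\infty}_b(\RZ)$ is defined by the seminorms
\[
q_l(a) = \sup_{\substack{(x,\xi)\in\RZ \\ |\alpha|+|\beta|\le l}} |(\p_x^\alpha \p_\xi^\beta a)(x,\xi)|,
\]
and setting $h = 1$ in the seminorm $p_l$ of \eqref{semi00++} gives exactly $p_l(a) = q_l(a)$. Thus any $a \in C^{\infty}_b(\RZ)$, viewed as an $h$-independent symbol, automatically satisfies \eqref{semi00++} with $p_l(a) = q_l(a) < +\infty$ for every $l \in \N$.

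First I would apply Theorem \ref{thm.117jhg} to this constant-in-$h$ symbol $a$ at the specific value $h = 1$. This yields directly that $\opw{a}$ is bounded on $L^2(\R^n)$ with
\[
\norm{\opw{a}}_{\mathcal B(L^2(\R^n))} \le c_n\, p_{\ell_n}(a) = c_n\, q_{\ell_n}(a),
\]
where $\ell_n = [n/2]+1$. Since $q_{\ell_n}$ is precisely one of the defining seminorms of the Fréchet space $C^{\infty}_b(\RZ)$, this is the desired estimate.

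There is effectively no obstacle here; the whole content is the observation that the semi-classical scaling $h^{-(|\alpha|+|\beta|)/2}$ trivializes at $h = 1$. The only small conceptual point to note explicitly is that the hypothesis of Theorem \ref{thm.117jhg} is stated with $h \in (0,1]$ but does not force any $h$-dependence of the symbol, so taking $a$ independent of $h$ is perfectly legitimate. In a fully written proof I would simply record these two observations in two or three lines and invoke Theorem \ref{thm.117jhg}.
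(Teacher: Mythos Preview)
Your overall strategy---deduce the result from Theorem~\ref{thm.117jhg}---is exactly what the paper does (the paper's entire ``proof'' is the one-line remark preceding the statement). However, your specific reduction has a gap.

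You claim that for an $h$-independent symbol $a$, the seminorm $p_l(a)$ of \eqref{semi00++} equals $q_l(a)$. This is not correct: $p_l(a)$ is a supremum over $h\in(0,1]$, not an evaluation at $h=1$. For $a$ independent of $h$ and any multi-index with $|\alpha|+|\beta|\ge 1$, the quantity $|(\p_x^\alpha\p_\xi^\beta a)(x,\xi)|\,h^{-(|\alpha|+|\beta|)/2}$ blows up as $h\to 0_+$ whenever the derivative does not vanish identically. Hence $p_l(a)=+\infty$ for every $l\ge 1$ (unless $a$ is constant), and Theorem~\ref{thm.117jhg} does not apply to the constant-in-$h$ family.

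The fix is a rescaling: given $a\in C^\infty_b(\RZ)$, set
\[
b(x,\xi,h)=a(h^{1/2}x,\,h^{1/2}\xi).
\]
Then $(\p_x^\alpha\p_\xi^\beta b)(x,\xi,h)=h^{(|\alpha|+|\beta|)/2}(\p_x^\alpha\p_\xi^\beta a)(h^{1/2}x,h^{1/2}\xi)$, so the factor $h^{-(|\alpha|+|\beta|)/2}$ cancels and one gets $p_l(b)=q_l(a)<+\infty$. Theorem~\ref{thm.117jhg} now applies to $b$, giving a uniform bound $\norm{\opw{b(\cdot,\cdot,h)}}\le c_n\,p_{\ell_n}(b)=c_n\,q_{\ell_n}(a)$; evaluating at $h=1$ (where $b(\cdot,\cdot,1)=a$) yields the desired estimate. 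Alternatively, one can observe that the underlying Calder\'on--Vaillancourt theorem behind Theorem~\ref{thm.117jhg} is really a statement at each fixed $h$, with the right-hand side read as the frozen seminorm at that $h$; but as literally stated, the rescaling is needed.
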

\subsubsection{On the Heisenberg Uncertainty Relations}
\index{Heisenberg uncertainty relations}
Let $u\in \mathscr S(\R)$. We have, using the notations \eqref{deuxipi},
\begin{equation}\label{1259pp}
2\re\poscal{D_{x} u}{i x u}_{L^{2}(\R)}=\poscal{[D_{x}, ix] u}{u}_{L^{2}(\R)}=\frac{1}{2\pi}\norm{u}^{2}_{L^{2}(\R)}, 
\end{equation}
implying in particular
\begin{equation}
\norm{D_{x}u}_{L^{2}(\R)}\norm{xu}_{L^{2}(\R)}\ge \frac{1}{4\pi}\norm{u}^{2}_{L^{2}(\R)}, 
\end{equation}
which is an equality for $u(x)=e^{-\pi x^{2}}$; moreover we infer also from 
\eqref{1259pp} that 
\begin{align}\label{}
\poscal{\pi(D_{x}^{2}+x^{2}) u}{u}&\ge \frac{1}{2}\norm{u}^{2}_{L^{2}(\R)}, 
\end{align}
and for 
\begin{equation}
q_{\mu}(x,\xi)=\sum_{1\le j\le n}\mu_{j}(x_{j}^{2}+\xi_{j}^{2}), \quad 0\le \mu_{1}\le \dots\le \mu_{n},
\end{equation}
the inequality\begin{equation}\label{1263az}
\poscal{\OPW{\pi q_{\mu}(x,\xi)}u}{u}_{L^{2}(\R^{n})}\ge \norm{u}^{2}_{L^{2}(\R^{n})}\frac12\underbrace{\sum_{1\le j\le n} \mu_{j}
}_{\substack{\text{defined as}\\
\trace_{+}(q_{\mu})}},\end{equation}
which is an equality for $u(x)=e^{-\pi\val x^{2}}$. Note that  the above (optimal)
inequality can be reformulated  as 
\begin{equation}
\iint_{\RZ} \pi q_{\mu}(x,\xi)\mathcal W(u,u)(x,\xi) dx d\xi\ge  \norm{u}^{2}_{L^{2}(\R^{n})}\frac12\trace_{+}(q_{\mu}).
\end{equation}
\index{fundamental matrix}
Note also that   with the symplectic matrix $\sigma$ defined by \eqref{ffqq99},
the so-called fundamental matrix of $q_{\mu}$ is defined by 
\begin{multline}
F_{q_{\mu}}=\sigma^{-1}Q_{\mu}=\mat22{0}{-I}{I}{0}\mat22{M}{0}{0}{M}=\mat22{0}{-M}{M}{0},
\\ \text{with\quad}M=\diag(\mu_{1},\dots, \mu_{n}),
\end{multline}
so that 
\index{{~\bf Notations}!$\trace_{+}$}
\begin{equation}
\spectrum F_{q_{\mu}}=\{\pm i\mu_{j}\}_{1\le j\le n}, \quad \trace_{+}(q_{\mu})=
\sum_{\substack{\lambda \text{ eigenvalue of $F_{q_{\mu}}$}\\
\text{with }\im \lambda >0}} \lambda/i.
\end{equation}
With the notations 
\index{creation}
\index{annihilation}
 \begin{align}
 \begin{cases}
 C_{j}=D_{x_{j}}+i x_{j}, & \text{ creation operators}, 
 \\
 C_{j}^{*}=D_{x_{j}}-i x_{j},&
 \text{annihilation operators}, 
\end{cases}
 \end{align}
we see that 
$$
\pi[C_{j}^{*},C_{j}]=\pi[D_{x_{j}}-i x_{j}, D_{x_{j}}+i x_{j}]=I,
$$
and 
\begin{equation}
\opw{q_{\mu}}=\pi\sum_{1\le j\le n} \mu_{j}C_{j} C_{j}^{*}+
\frac12\trace_{+}(q_{\mu}),
\end{equation}
which provides another proof  of \eqref{1263az}.
\begin{lem}[Quantum Mechanics must deal with unbounded operators\footnote{Thus QM must involve infinite dimensional Hilbert spaces and unbounded operators on them.}]
Let $\mathbb H$ be a Hilbert space and let $J,K\in \mathcal B(\mathbb H)$; then the commutator $[J,K]\not=\Id$.
\end{lem}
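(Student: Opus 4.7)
The plan is to argue by contradiction. Suppose $[J,K]=JK-KJ=\Id$ in $\mathcal B(\mathbb H)$. The strategy is to derive from this commutation relation an estimate on $\|K^{n}\|$ that forces $K$ to be nilpotent, and then to rederive a contradiction from nilpotency.

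First I would establish by induction on $n\ge 1$ the identity
\begin{equation*}
[J,K^{n}]=nK^{n-1}.
\end{equation*}
The base case $n=1$ is the hypothesis. For the induction step, I would use the Leibniz-type rule $[J,K^{n+1}]=[J,K]K^{n}+K[J,K^{n}]$, which combined with the inductive hypothesis gives $K^{n}+nK\cdot K^{n-1}=(n+1)K^{n}$.

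Next, taking operator norms in the identity $nK^{n-1}=[J,K^{n}]=JK^{n}-K^{n}J$ and applying the triangle inequality together with submultiplicativity gives
\begin{equation*}
n\,\|K^{n-1}\|\le 2\,\|J\|\,\|K^{n}\|\le 2\,\|J\|\,\|K\|\,\|K^{n-1}\|.
\end{equation*}
If $K^{n-1}\neq 0$ for every $n$, this yields $n\le 2\|J\|\|K\|$ for all $n\in\N$, which is absurd; hence $K$ must be nilpotent.

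Finally I would let $m$ be the smallest positive integer with $K^{m}=0$, so that $K^{m-1}\neq 0$ (taking $m\ge 2$, since $K=0$ would contradict $[J,K]=\Id$). Plugging $n=m$ into the identity from the first step gives $0=[J,K^{m}]=mK^{m-1}$, forcing $K^{m-1}=0$, which contradicts minimality of $m$. The main obstacle, such as it is, is simply spotting the induction that converts a single commutator relation into the amplified family $[J,K^{n}]=nK^{n-1}$; once this is in hand, both the norm bound and the nilpotency argument close cleanly.
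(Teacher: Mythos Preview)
Your proof is correct and is essentially the same as the paper's: both establish $[J,K^{n}]=nK^{n-1}$ by induction, use the norm estimate $n\|K^{n-1}\|\le 2\|J\|\|K\|\|K^{n-1}\|$, and rule out nilpotency via the same descent argument. The only difference is the order of presentation---the paper first shows $K^{n}\neq 0$ for all $n$ and then derives the norm contradiction, while you first derive that $K$ must be nilpotent and then contradict minimality---but the ingredients are identical.
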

\begin{proof}
Let $J,K$ be bounded operators with $[J,K]=\Id$. Then for all $N\in \N^{*}$, we have 
\begin{equation}\label{1269ou}
[J, K^{N}]=NK^{N-1}.
\end{equation}
Indeed, this is true for $N=1$ and if it holds for some $N\ge 1$, we find that
\begin{multline*}
[J,K^{N+1}]=JK^{N}K-K^{N+1} J=[J,K^{N}] K+K^{N}JK-K^{N+1} J
\\=[J,K^{N}] K+K^{N}(JK-KJ) =[J,K^{N}] K+K^{N}=(N+1) K^{N}, \quad\text{qed}.
\end{multline*}
Note that \eqref{1269ou} implies that for all $N\in \N^{*}$, we have $K^{N}\not=0$: of course $K\not=0$ since $[J,K]=\Id$ and 
if we had $K^{N}=0$ for some $N\ge 2$,
\eqref{1269ou} would imply $K^{N-1}=0$
and eventually $K=0$.
As a consequence, we get from \eqref{1269ou} that for all $N\ge 2$, 
$$
N\norm{K^{N-1}}_{\mathcal B(\mathbb H)}\le 2\norm{J}_{\mathcal B(\mathbb H)}\norm{K^{N}}_{\mathcal B(\mathbb H)}\le 2\norm{J}_{\mathcal B(\mathbb H)}\norm{K}_{\mathcal B(\mathbb H)}\norm{K^{N-1}}_{\mathcal B(\mathbb H)},
$$
implying since $\norm{K^{N-1}}_{\mathcal B(\mathbb H)}>0$,
that
$$
\forall N\ge 2, \quad N\le 2\norm{J}\norm{K},
$$
which is impossible and proves the lemma.
\end{proof}
\index{Hardy inequality}
\begin{lem}[Hardy's inequality: the study of non-self-adjoint operators may be useful to determine lowerbounds of self-adjoint operators]
Let $n\in \N, n\ge 3$;
let $u$ in $L^{2}(\R^{n})$ such that $\nabla u \in L^{2}(\R^{n}), \val x^{-1} u \in L^{2}(\R^{n})$.
Then we have 
\begin{equation}
\norm{\nabla u}_{L^{2}(\R^{n})}^{2}\ge \bigl(\frac{n-2}{2}\bigr)^{2}
\norm{\val x^{-1}{u}}_{L^{2}(\R^{n})}^{2}.
\end{equation}
\end{lem}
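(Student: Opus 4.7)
The plan is to follow the hint in the lemma's title: introduce a one-parameter family of non-self-adjoint first-order operators and exploit the fact that the composition of such an operator with its adjoint is automatically non-negative. Concretely, for a real parameter $c$, define on $\R^n\setminus\{0\}$
\begin{equation*}
A_j^{(c)}=\p_{j}+c\,\frac{x_{j}}{\val x^{2}},\qquad j=1,\dots,n,
\end{equation*}
so that a formal integration by parts yields $A_{j}^{(c)\,*}=-\p_{j}+c\,x_{j}/\val x^{2}$ (the multiplicative part is real, so survives conjugation; the $\p_j$ part changes sign). The key identity, computed on smooth functions supported away from $0$, is
\begin{equation*}
\sum_{j=1}^{n}A_{j}^{(c)\,*}A_{j}^{(c)}=-\Delta+\frac{c^{2}-c(n-2)}{\val x^{2}},
\end{equation*}
which uses the divergence computation $\dive(x/\val x^{2})=(n-2)/\val x^{2}$ on $\R^{n}\setminus\{0\}$ (the two cross terms $c\,(x/\val x^{2})\cdot\nabla$ cancel up to this divergence).

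The second step is to pair with $u\in C^{\io}_{\textit c}(\R^{n}\setminus\{0\})$ and use the trivial bound $\sum_j\norm{A_{j}^{(c)}u}_{L^{2}}^{2}\ge 0$. This produces
\begin{equation*}
\norm{\nabla u}_{L^{2}(\R^{n})}^{2}\ge\bigl(c(n-2)-c^{2}\bigr)\,\norm{\val x^{-1}u}_{L^{2}(\R^{n})}^{2},
\end{equation*}
and maximizing the scalar prefactor $c\mapsto c(n-2)-c^{2}$ at $c=(n-2)/2$ gives the constant $((n-2)/2)^{2}$ announced in the statement. Note that the assumption $n\ge 3$ ensures $(n-2)/2>0$, so the factorization is genuinely informative; moreover Gaussians $e^{-\pi\val x^{2}}$ would satisfy the inequality strictly, and one can check that the constant is sharp by testing $u(x)=\val x^{-(n-2)/2}\varphi(|x|)$ for suitable radial $\varphi$ (not needed here).

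The last step, which I expect to be the most delicate part, is to extend the inequality from $u\in C^{\io}_{\textit c}(\R^{n}\setminus\{0\})$ to any $u\in L^{2}(\R^{n})$ satisfying the two finiteness conditions in the statement. The strategy is a double cutoff plus mollification. Pick $\chi\in C^{\io}(\R_{+})$ with $\chi=0$ on $[0,1]$ and $\chi=1$ on $[2,+\io)$, set $\chi_{\ep}(x)=\chi(\val x/\ep)(1-\chi(\ep\val x))$, and form $u_{\ep}=\chi_{\ep}u$. By dominated convergence, $u_{\ep}\to u$ in $L^{2}$ and $\val x^{-1}u_{\ep}\to \val x^{-1}u$ in $L^{2}$. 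For the gradient, the only nontrivial term is $u\,\nabla\chi_{\ep}$; on the inner annulus $\{\ep\le\val x\le 2\ep\}$ we bound
\begin{equation*}
\int\bigl\vert u\,\nabla\chi_{\ep}\bigr\vert^{2}dx\le\frac{C}{\ep^{2}}\int_{\ep\le\val x\le 2\ep}\val u^{2}dx\le 4C\int_{\val x\le 2\ep}\frac{\val u^{2}}{\val x^{2}}dx\xrightarrow[\ep\to 0]{}0,
\end{equation*}
which is where the integrability of $\val x^{-1}u$ is used in a crucial way (and where $n\ge 3$ enters through the solid-angle factor making this cutoff feasible). The outer-annulus contribution is handled analogously using $\val u^{2}\in L^{1}_{\text{loc}}$ at infinity after noting $\ep\val x\le 2$ on the support. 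A standard mollification of $u_{\ep}$ then lands us in $C^{\io}_{\textit c}(\R^{n}\setminus\{0\})$, and passing to the limit in the inequality completes the proof.
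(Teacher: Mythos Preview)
Your proof is correct and takes essentially the same approach as the paper: the paper's operators $D_{x_{j}}-i\phi_{j}$ with $\phi_{j}=\nu x_{j}/(2\pi\val x^{2})$ are, given the normalization $D_{x_{j}}=(2\pi i)^{-1}\p_{x_{j}}$, precisely $(2\pi i)^{-1}A_{j}^{(\nu)}$, so the key identity and the optimization over the scalar parameter are identical. The paper's argument is written tersely as an operator inequality and omits the density step you carry out; your treatment of the cutoff near the origin (using $\val x^{-1}u\in L^{2}$ to kill the commutator term) is the standard way to close that gap.
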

\begin{proof}
 We write first
 $$
 \sum_{1\le j\le n}\norm{(D_{x_{j}}-i\phi_{j}) u}^{2}_{L^{2}(\R^{n})}
 =\poscal{\val{D}^{2}u}{u}_{L^{2}(\R^{n})}+\poscal{\val \phi^{2}u}{u}_{L^{2}(\R^{n})}-\frac{1}{2\pi}
 \poscal{(\dive \phi)u}{u}_{L^{2}(\R^{n})},
 $$
 so that with 
 $
 \phi(x)=\frac{\nu x}{2\pi \val x^{2}},
 $
 we get the operator inequality 
 $$
 \val D^{2}+\frac{\nu^{2} }{4\pi^{2}\val x^{2}}\ge \frac{\nu(n-2)}{4\pi^{2}\val x^{2}},
 \quad
\text{ so that }\quad
{ -\Delta}\ge \val x^{-2} \underbrace{\nu(n-2-\nu)}_{\text{largest at }
\nu=(n-2)/2},
 $$
  proving the lemma.
\end{proof}
\begin{nb}\rm 
 A modern approach to the Heisenberg Uncertainty Principle should certainly begin with reading C. Fefferman's article \cite{MR707957} as well as E. Lieb's book \cite{MR2766495}.
\end{nb}
\subsubsection{Non-negative quantizations formulas}\label{sec.nonneg}
\index{non-negative quantization formulas}
\begin{lem}\label{lem112}
 Let $\chi$ be an even  function in $\mathscr S(\RZ)$ with $L^2(\RZ)$ norm equal to 1. We define 
 \begin{equation}\label{gamchi}
\Gamma_{\chi}=\bar \chi\sharp \chi.
\end{equation}
Then the function $\Gamma_{\chi}$ belongs to $\mathscr S(\RZ)$, is real-valued even and is such that 
$$\int_{\RZ}\Gamma_{\chi}(X) dX=1.$$
Let $u$ be in $L^2(\R^n)$. Then the convolution
$\mathcal W(u,u)\ast \Gamma_{\chi}$ is non-negative.
As a result, the operator with Weyl symbol $a\ast \Gamma_{\chi}$ is a non-negative operator whenever $a$ is a non-negative function.
\end{lem}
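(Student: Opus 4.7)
The plan is to treat the five claims in sequence, exploiting throughout the key interpretation $\opw{\Gamma_\chi}=\opw{\bar\chi}\opw{\chi}=\bigl(\opw{\chi}\bigr)^{*}\opw{\chi}$ provided by \eqref{sharp0} and the identity $(\opw{a})^{*}=\opw{\bar a}$. First, since $\chi$ and $\bar\chi$ both lie in $\mathscr S(\RZ)$ and the composition formula \eqref{weco++} makes the Schwartz class stable under $\sharp$, we obtain $\Gamma_\chi\in\mathscr S(\RZ)$. The self-adjointness of $\opw{\chi}^{*}\opw{\chi}$ then immediately yields $\Gamma_\chi=\overline{\Gamma_\chi}$, i.e. real-valuedness.

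For evenness, I use the symplectic covariance \eqref{segalsym} specialised to $Y=0$, which reads $\opw{a\circ S_{0}}=\sigma_{0}\opw{a}\sigma_{0}$ with $S_{0}(X)=-X$. Since $\chi$ is even, $\opw{\chi}$ commutes with the self-adjoint involution $\sigma_{0}$, hence so does $\opw{\chi}^{*}$ and therefore their product $\opw{\Gamma_\chi}$; reading the covariance backwards gives $\Gamma_\chi\circ S_{0}=\Gamma_\chi$. For the total mass, Proposition \ref{pro1716} identifies the kernel of $\opw{\chi}$ as $\widehat{\chi}^{[2]}\bigl(\tfrac{x+y}{2},y-x\bigr)$, which under the unit-Jacobian change of variables $(x,y)\mapsto(\tfrac{x+y}{2},y-x)$ and Plancherel has $L^{2}(\R^{n}\times\R^{n})$-norm equal to $\norm{\chi}_{L^{2}(\RZ)}=1$. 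Thus $\opw{\chi}$ is Hilbert--Schmidt with HS-norm $1$, and $\opw{\Gamma_\chi}=\opw{\chi}^{*}\opw{\chi}$ is trace class of trace $1$; since the trace of a Weyl operator with Schwartz symbol equals the integral of the symbol (set $y=x$ in Proposition \ref{pro1716} and apply Fourier inversion), we conclude $\int_{\RZ}\Gamma_\chi(X)\,dX=\norm{\opw{\chi}}_{HS}^{2}=1$.

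The main substantive step is the non-negativity of $\mathcal W(u,u)\ast\Gamma_\chi$. Fix $X_{0}\in\RZ$ and set $\chi_{X_{0}}=\chi\circ T_{-X_{0}}$. Combining the translation-covariance formula \eqref{1224} with $\Gamma_\chi=\bar\chi\sharp\chi$ and cancelling $\tau_{-X_{0}}\tau_{-X_{0}}^{*}=\Id$ in the middle gives the equivariance identity $\Gamma_\chi\circ T_{-X_{0}}=\Gamma_{\chi_{X_{0}}}$. Using the evenness proved above,
\begin{equation*}
(\mathcal W(u,u)\ast\Gamma_\chi)(X_{0})=\int_{\RZ}\Gamma_\chi(X_{0}-Y)\mathcal W(u,u)(Y)\,dY=\int_{\RZ}\Gamma_{\chi_{X_{0}}}(Y)\mathcal W(u,u)(Y)\,dY,
\end{equation*}
and by the defining identity \eqref{eza654} of the Weyl quantisation this last integral equals
\begin{equation*}
\poscal{\opw{\Gamma_{\chi_{X_{0}}}}u}{u}_{L^{2}(\R^{n})}=\poscal{\opw{\chi_{X_{0}}}^{*}\opw{\chi_{X_{0}}}u}{u}_{L^{2}(\R^{n})}=\norm{\opw{\chi_{X_{0}}}u}_{L^{2}(\R^{n})}^{2}\ge 0.
\end{equation*}

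The final claim is then immediate: for $a\ge 0$ and $u\in L^{2}(\R^{n})$, Fubini together with evenness of $\Gamma_\chi$ yields $\poscal{\opw{a\ast\Gamma_\chi}u}{u}_{L^{2}(\R^{n})}=\int_{\RZ}a(X)(\mathcal W(u,u)\ast\Gamma_\chi)(X)\,dX\ge 0$. I expect no serious obstacle; the only conceptual point requiring care is the equivariance $\Gamma_\chi\circ T_{-X_{0}}=\Gamma_{\chi\circ T_{-X_{0}}}$, which is precisely what allows the whole statement to be recast as the trivial non-negativity of the operators $\opw{\chi_{X_{0}}}^{*}\opw{\chi_{X_{0}}}$.
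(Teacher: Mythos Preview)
Your proof is correct and close in spirit to the paper's, but the execution differs in a few instructive ways. The paper proves evenness and the total mass $\int\Gamma_\chi=1$ by direct computation from the integral formula \eqref{2.wecofo} for $\sharp$; your arguments via $\sigma_{0}$-covariance and via the Hilbert--Schmidt/trace identity $\trace\opw{\Gamma_\chi}=\int\Gamma_\chi$ are cleaner and more structural, avoiding any explicit oscillatory integral. For the non-negativity of $\mathcal W(u,u)\ast\Gamma_\chi$, the paper translates the \emph{state} rather than the \emph{symbol}: it writes $\mathcal W(u,u)(Y+X)=\mathcal W(\tau_{-Y}u,\tau_{-Y}u)(X)$ and obtains $\norm{\opw{\chi}\tau_{-Y}u}^{2}$, whereas you establish the equivariance $\Gamma_\chi\circ T_{-X_0}=\Gamma_{\chi\circ T_{-X_0}}$ and obtain $\norm{\opw{\chi_{X_0}}u}^{2}$. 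These are dual viewpoints (conjugating the operator vs.\ conjugating the vector) and each is equally short. For the last claim, the paper bypasses the Wigner function entirely and writes $\opw{a\ast\Gamma_\chi}=\int a(Y)\bigl[\opw{\chi(\cdot-Y)}\bigr]^{*}\opw{\chi(\cdot-Y)}\,dY$ directly as an operator-valued integral; your route through $\int a(X)\,(\mathcal W(u,u)\ast\Gamma_\chi)(X)\,dX$ is equivalent once Fubini is justified (which holds e.g.\ for $a\in L^{1}(\RZ)$, since $\mathcal W(u,u)\in L^{\infty}$ and $\Gamma_\chi\in L^{1}$).
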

 \begin{proof}
 According to  the book \cite{MR2599384}, the composition formula \eqref{2.wecofo}
 is bilinear continuous from  $\mathscr S(\RZ)^2$ into  $\mathscr S(\RZ)$ and we have also
 $$
 \overline{a\sharp b}=\bar b\sharp \bar a.
 $$
 so that $\Gamma_{\chi}$ is indeed real-valued. Moreover, we have 
 \begin{multline*}
 \int_{\RZ} \Gamma_{\chi}(X) dX
 =
 2^{2n}
\iiint_{(\RZ)^3}  e^{-4i\pi[X-Y,Y-Z]} 
\bar\chi(Y) \chi(Z) dY dZdX
\\
=\int \val{\chi(Y)}^2 dY=1,
\end{multline*}
and
\begin{multline*}
\Gamma_{\chi}(-X)=
2^{2n}
\iint_{\RZ\times\RZ}  e^{-4i\pi[-X-Y,-X-Z]} 
\bar \chi(Y) \chi(Z) dY dZ
\\
=2^{2n}
\iint_{\RZ\times\RZ}  e^{-4i\pi[-X+Y,-X+Z]} 
\bar \chi(Y) \chi(Z) dY dZ=
\Gamma_{\chi}(X).
\end{multline*}
We have also
\begin{align*}
\bigl(\mathcal W&(u,u)\ast \Gamma_{\chi}\bigr)(Y)=
\int_{\RZ} \mathcal W(u,u)(Y-X) \Gamma_{\chi}(X) dX
\\
&=\int_{\RZ} \mathcal W(u,u)(Y+X) \Gamma_{\chi}(X) dX
=\int_{\RZ} \mathcal W(u,u)(T_{Y}(X)) \Gamma_{\chi}(X) dX
\\&=
\int_{\RZ} \mathcal W(\tau_{-Y}u,\tau_{-Y}u)(X) \Gamma_{\chi}(X) dX
=
\int_{\RZ} \mathcal W(\tau_{-Y}u,\tau_{-Y}u)(X) (\bar \chi\sharp\chi)(X) dX
\\
&=\poscal{
\opw{\bar \chi\sharp\chi}
\tau_{-Y}u}{\tau_{-Y}u}_{L^2(\R^n)}
=\norm{\opw{\chi}\tau_{-Y}u}_{L^2(\R^n)}^2\ge 0,
\end{align*}
proving the first statement of non-negativity. Let $a$ be a non-negative function, say in $L^1(\RZ)$; we have 
\begin{align*}
\opw{a\ast \Gamma_{\chi}}&=2^n\iint a(Y)\Gamma_{\chi}(X-Y) \sigma_{X} dY dX
\\&=\int a(Y)\int(\bar \chi\sharp \chi)(X-Y) 2^n\sigma_{X}dX dY
\\
&=
\int a(Y)\int(\bar \chi\sharp \chi)(T_{-Y}(X)) 2^n\sigma_{X}dX dY
=
\int a(Y)\tau_{Y}
\opw{\bar \chi\sharp \chi}
\tau_{-Y} dY
\\
&=
\int a(Y)\tau_{Y}
\opw{\bar \chi}
\opw{\chi}
\tau_{-Y} dY
\\
&=\int a(Y)\underbrace{\bigl[\opw\chi\tau_{-Y}\bigr]^*\bigl[\opw\chi\tau_{-Y}\bigr] }_{\text{non-negative operator}}dY\ge 0,
\end{align*}
if $a(Y)\ge 0$ for all $Y\in \RZ$ and this concludes the proof.\end{proof}
We can write as well
\begin{equation}\label{134}
\opw{a\ast \Gamma_{\chi}}
=\int_{\RZ} a(Y)\bigl[\tau_{Y}\opw\chi\tau_{-Y}\bigr]^*
\bigl[\tau_{Y}\opw\chi\tau_{-Y}\bigr] dY=
\int_{\RZ} a(Y)\Sigma_{\chi}(Y)dY,
\end{equation}
with
\begin{multline}\label{135}
\Sigma_{\chi}(Y)=\bigl[\tau_{Y}\opw\chi\tau_{-Y}\bigr]^*
\bigl[\tau_{Y}\opw\chi\tau_{-Y}\bigr]
\\=
\bigl(\opw{\chi(\cdot-Y)}
\bigr)^*
\opw{\chi(\cdot-Y)}.
\end{multline}
\begin{rem}
 \rm The Gaussian case in the previous lemma gives rise to the standard non-negativity  properties of coherent states. In fact choosing $\chi(X)=2^ne^{-2\pi\val X^2}$, we see that $\chi$ is even, belongs to the Schwartz space and 
 $$
 \norm{\chi}_{L^2(\RZ)}^2= 2^{2n}\int_{\RZ} e^{-4\pi \val X^2} dX=2^{2n}4^{-2n/2}=1.
 $$
 We have also\footnote{\label{foot4}Proposition 4.1.1 in \cite{MR2599384} is useful to compute the Fourier transform of Gaussian functions and is a notable asset of the Fourier normalization given in Footnote \ref{foot1} page \pageref{foot1}.}
\begin{align*}
& \Gamma_{\chi}(X)=
  2^{4n}
\iint_{(\RZ)^2}  e^{-4i\pi[X-Y,X-Z]} 
e^{-2\pi(\val Y^2+\val Z^2)}dY dZ
\\
&=
 2^{3n}
\int_{\RZ}  e^{4i\pi[Y,X]} 
e^{-2\pi(\val {X+Y}^2+\val Y^{2})}dY 
=2^{3n}
\int_{\RZ}  e^{4i\pi[Y,X]} 
e^{-2\pi(\val {Y+\frac X2}^2+{\val {Y-\frac X 2}}^{2})}dY 
\\&=
 2^{3n}
 e^{-\pi\val X^{2}}
\int_{\RZ}  e^{4i\pi[Y,X]} 
e^{-4\pi\val {Y}^2}dY
= 2^{3n}e^{- \pi \val{X}^2}4^{-n}e^{- \pi \val{X}^2}=\chi(X).
\end{align*}
In that case we find that $\opw\chi$ is a rank-one orthogonal projection on the fundamental state $\Psi_{0}$ of the Harmonic Oscillator $\pi(\val{D_{x}}^2+\val x^2)$.
According to \eqref{herm1}
the one-dimensional $k$-th Hermite function is 
\begin{equation}\label{hermite}
\psi_{k}(x)=\frac{(-1)^{k}}{2^{k}\sqrt{k! }   }  2^{1/4} e^{\pi x^{2}}\left(\frac{d}{\sqrt \pi dx}\right)^{k}(e^{-2\pi x^{2}}),
\end{equation}
so that 
$
\Psi_{0}(x)=2^{n/4} e^{-\pi \val x^2}.
$
We calculate
\begin{multline*}
\Gamma(x,\xi)=\mathcal W(\Psi_{0},\Psi_{0})(x,\xi)=2^{n/2}\int_{\R^n} e^{-\pi(\val{x+z/2}^2+\val{x-z/2}^2)} e^{-2i\pi z \xi}dz
\\=
2^{n/2} e^{-2\pi \val x^2}\int_{\R^n} e^{-\pi z^2/2} e^{-2i\pi z \xi}dz
=
2^{n} e^{-2\pi \val x^2}e^{-2\pi\val \xi^2}=\chi(x,\xi).
\end{multline*}
\index{anti-Wick quantization}
\index{{~\bf Notations}!$\ops{\rm aw}{a}$}
The anti-Wick quantization  of a symbol $\text{\tt a}$ is defined as  (see e.g. M. Shubin's book \cite{MR1852334})
\begin{equation}\label{awick}
\ops{\rm aw}{\text{\tt a}}
=\int_{\RZ}\text{\tt a}(Y) \Sigma_{Y} dY,
\end{equation}
where 
$\Sigma_{Y}$ is the rank-one orthogonal projection given
by
\begin{equation}\label{}
\Sigma_{y,\eta} u=\poscal{u}{\tau_{y,\eta}\Psi_{0}}\tau_{y,\eta}\Psi_{0}.
\end{equation}
\end{rem}
\begin{rem}
 \rm
 It is interesting to notice that to produce non-negativity of the operator with Weyl symbol $\text{\tt a}\ast \Gamma_{\chi}$ when $\text{\tt a}$ is a non-negative function, we do not use the non-negativity of $\Gamma_{\chi}$ as a function, which by the way does not always hold (except in the Gaussian cases),
 but we use the fact that the quantization of $\Gamma_{\chi}$ is non-negative, as it is defined as $\opw{\bar \chi\sharp\chi}=(\opw\chi)^{*}\ \opw\chi$.
\end{rem}
\begin{rem}\label{rem115}
 \rm
Another important remark 
is concerned with the Taylor expansion of $\text{\tt a}\ast \Gamma_{\chi}$: we have 
\begin{align*}
(\text{\tt a}\ast \Gamma_{\chi})(X)&=\int \text{\tt a}(X-Y) \Gamma_{\chi}(Y) dY
=\int \text{\tt a}(X+Y) \Gamma_{\chi}(Y) dY
\\
&=\int \Bigl(\text{\tt a}(X) +\text{\tt a}'(X)Y+\int_{0}^1(1-\theta)\text{\tt a}''(X+\theta Y) Y^2\Bigr)\Gamma_{\chi}(Y) dY
\\
&=\text{\tt a}(X)+
\int\!\!\!\int_{0}^1(1-\theta)\text{\tt a}''(X+\theta Y) Y^2
\Gamma_{\chi}(Y) dY.
\end{align*}  
As a result the difference  $(\text{\tt a}\ast \Gamma_{\chi})-\text{\tt a}$ depends only on the second derivative of $\text{\tt a}$.
If for instance $\text{\tt a}$ is a semi-classical symbol, i.e. a  smooth function of $(x,\xi)$ depending on $h\in (0,1]$ such that
\begin{equation}\label{semi00}
\forall (\alpha, \beta)\in \N^n\times \N^n,\quad
\sup_{(x,\xi)\in \RZ, h\in (0,1]}\val{(\p_{x}^\alpha\p_{\xi}^\beta \text{\tt a})(x,\xi,h)}h^{-\frac {\val \alpha+\val \beta}{2}}<+\io.
\end{equation}
then the difference 
$\ops{\rm aw}{\text{\tt a}}-\opw{\text{\tt a}}$ is bounded on $L^2(\R^n)$ with an $O(h)$
operator-norm, so that if $\text{\tt a}$ happens also to be non-negative,
we find
$$
\opw{\text{\tt a}}=\underbrace{\opw{\text{\tt a}}-
\opw{\text{\tt a}\ast \Gamma_{\chi}}
}_{\substack{O(h)\\
\text{as an operator,}
\\
\text{cf. Theorem \ref{thm.117jhg}
}}}+\underbrace{
\opw{\text{\tt a}\ast \Gamma_{\chi}}
}_{\substack{\ge 0\\
\text{as an operator}
}},
$$
and we obtain  a version of the so-called Sharp G\aa rding Inequality,
\begin{equation}\label{}
\opw{\text{\tt a}} +C h\ge 0\quad \text{(as an operator)}.
\end{equation}
\end{rem}
\begin{theorem}
 Let $\chi$ be an even function in the Schwartz space $\mathscr S(\RZ)$ with $L^2(\RZ)$
 norm equal to 1 and let $\Gamma_{\chi}$ be given by \eqref{gamchi}.
 For 
 $a\in L^\io(\RZ)$, we define 
 \begin{equation}\label{}
\op{\chi,a}=\opw{ a\ast \Gamma_{\chi}}.
\end{equation}
Then $\op{\chi,a}$ is a bounded operator in $L^2(\R^n)$ and we have 
\begin{equation}\label{1311}
\norm{\op{\chi,a}}_{\mathcal B(L^2(\R^n))}\le \norm{a}_{L^\io(\RZ)}.
\end{equation}
Moreover, if $a$ is valued in some interval $J$ of the real line, we have
the operator inequalities
\begin{equation}\label{1312}
\inf J\le \op{\chi,a}\le \sup J.
\end{equation}
In particular if $a(x,\xi)\ge 0$ for all $(x,\xi)\in \RZ$, we have the operator-inequality
$\op{\chi,a}\ge 0$.  
\end{theorem}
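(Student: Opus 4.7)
The plan is to work directly from the integral representation already obtained in equations \eqref{134}--\eqref{135}, namely
\begin{equation*}
\op{\chi,a}=\opw{a\ast\Gamma_{\chi}}=\int_{\RZ}a(Y)\,\Sigma_{\chi}(Y)\,dY,
\qquad
\Sigma_{\chi}(Y)=\bigl[\opw{\chi(\cdot-Y)}\bigr]^{*}\opw{\chi(\cdot-Y)}.
\end{equation*}
Two properties of this family are decisive. First, each $\Sigma_{\chi}(Y)$ is a non-negative bounded operator on $L^{2}(\R^{n})$, since it is of the form $T^{*}T$; in particular $\poscal{\Sigma_{\chi}(Y)u}{u}=\norm{\opw{\chi(\cdot-Y)}u}_{L^{2}}^{2}\ge 0$ for every $u\in L^{2}(\R^{n})$. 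Second, I claim this family is a resolution of the identity: specializing $a\equiv 1$ and using Lemma \ref{lem112} (which says $\int_{\RZ}\Gamma_{\chi}(X)\,dX=1$), one gets $1\ast \Gamma_{\chi}=1$, hence $\int_{\RZ}\Sigma_{\chi}(Y)\,dY=\opw{1}=\Id$. Taking the matrix coefficient at a fixed $u\in L^{2}(\R^{n})$ yields $\int_{\RZ}\poscal{\Sigma_{\chi}(Y)u}{u}\,dY=\norm{u}_{L^{2}(\R^{n})}^{2}$.

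Given these two ingredients, \eqref{1312} is almost immediate. For real-valued $a$ with values in an interval $J$, one writes for every $u\in L^{2}(\R^{n})$
\begin{equation*}
\poscal{\op{\chi,a}u}{u}_{L^{2}(\R^{n})}=\int_{\RZ}a(Y)\poscal{\Sigma_{\chi}(Y)u}{u}\,dY,
\end{equation*}
and since the weight $\poscal{\Sigma_{\chi}(Y)u}{u}$ is non-negative with total mass $\norm{u}_{L^{2}(\R^{n})}^{2}$, the pointwise bounds $\inf J\le a(Y)\le \sup J$ integrate to the operator bounds $\inf J\cdot\norm{u}^{2}\le \poscal{\op{\chi,a}u}{u}\le \sup J\cdot\norm{u}^{2}$.

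For the bound \eqref{1311} valid for complex-valued $a$, I would estimate a generic sesquilinear form: for $u,v\in L^{2}(\R^{n})$,
\begin{equation*}
\val{\poscal{\op{\chi,a}u}{v}}\le \norm{a}_{L^{\io}}\int_{\RZ}\val{\poscal{\opw{\chi(\cdot-Y)}u}{\opw{\chi(\cdot-Y)}v}}\,dY,
\end{equation*}
and then apply Cauchy--Schwarz pointwise in $Y$ followed by Cauchy--Schwarz in $Y$, recognizing $\int\norm{\opw{\chi(\cdot-Y)}u}^{2}dY=\poscal{\int\Sigma_{\chi}(Y)dY\,u}{u}=\norm{u}^{2}$ and likewise for $v$. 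The final statement on non-negative $a$ is then the special case $J=[0,+\io)$ of \eqref{1312}.

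The main obstacle is really only the verification that $\int_{\RZ}\Sigma_{\chi}(Y)\,dY=\Id$ in a precise operator-theoretic sense (as opposed to a formal identity on Schwartz functions); once this is secured, both the norm bound and the sandwich inequality \eqref{1312} reduce to elementary manipulations with non-negative scalar measures on $\RZ$. One must also be a bit careful that the integral defining $\op{\chi,a}$ converges in the weak operator sense for $a\in L^{\io}$ — this is handled by first interpreting the identity against $u,v$ in $\mathscr S(\R^{n})$ (where $a\ast\Gamma_{\chi}$ still makes classical sense and has bounded derivatives, so Theorem \ref{thm.117j++} applies) and then extending by density once the uniform bound \eqref{1311} has been proven on that dense subspace.
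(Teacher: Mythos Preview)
Your proposal is correct and follows essentially the same route as the paper: both start from the representation \eqref{134}--\eqref{135}, establish the resolution of identity $\int_{\RZ}\Sigma_{\chi}(Y)\,dY=\Id$ via $1\ast\Gamma_{\chi}=1$, and then exploit the non-negativity of each $\Sigma_{\chi}(Y)$. The only cosmetic differences are that the paper obtains \eqref{1311} via the AM--GM inequality $|\poscal{\cdot}{\cdot}|\le\frac12(\nu\|\cdot\|^2+\nu^{-1}\|\cdot\|^2)$ followed by an infimum over $\nu>0$ (equivalent to your double Cauchy--Schwarz), and reduces \eqref{1312} to the non-negativity statement applied to $a-\inf J$ and $\sup J-a$, whereas you integrate the pointwise bounds directly against the non-negative measure $Y\mapsto\poscal{\Sigma_{\chi}(Y)u}{u}$.
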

\begin{nb}\rm
 The non-negativity of the anti-Wick quantization \eqref{awick} and its avatars Husimi (\cite{husimi}), Coherent States,
 Gabor wavelets (see e.g. \cite{HF-TS}),
are  particular cases of the above theorem. More information on this topic is available in Section 2.4 of the book \cite{MR2599384}. Another remark is that this result can easily be extended to matrix-valued symbols as in Remark 2 page 79 of L.~H\"ormander's \cite{MR2304165} and even to symbols valued in $\mathcal B(H)$, where $H$ is a Hilbert space.
\end{nb}
\begin{proof}
 We start with Formulas \eqref{134}, \eqref{135}, entailing
 $$
 \op{\chi,a}=
\int_{\RZ} a(Y)\Sigma_{\chi}(Y)dY,
$$
with
$
\Sigma_{\chi}(Y)=
\bigl[\opw{\chi(\cdot-Y)}\bigr]^*
\opw{\chi(\cdot-Y)}=
\tau_{Y}\opw{\bar \chi\sharp \chi}\tau_{-Y} .
$
We note that 
$$
\op{\chi,1}=\int_{\RZ}\tau_{Y}\opw{\bar \chi\sharp \chi}\tau_{-Y}dY,
$$
so has Weyl symbol
$
X\mapsto\int_{\RZ} \Gamma_{\chi}(X-Y) dY=1
$
from Lemma \ref{lem112} and thus  $\op{\chi,1}=\Id$.
We infer that for $u,v\in \mathscr S(\R^n)$, 
$$
\poscal{ \op{\chi,a} u}{v}_{L^2(\R^n)}=
\int_{\RZ} a(Y)\poscal{\opw{\chi(\cdot-Y)} u}{\opw{\chi(\cdot-Y)} v}dY,
$$
so that with any $\nu>0$,
\begin{multline*}
\val{\poscal{ \op{\chi,a} u}{v}_{L^2(\R^n)}}
\\\le \norm{a}_{L^\io(\RZ)}
\int_{\RZ}\frac12\bigl(\nu
\norm{\opw{\chi(\cdot-Y)} u}_{L^2(\R^n)}^2+\nu^{-1}\norm{\opw{\chi(\cdot-Y)} v}_{L^2(\R^n)}^2
\bigr) dY
\\
=
 \norm{a}_{L^\io(\RZ)}
 \frac12
\bigl(\nu\poscal{\op{\chi,1}u}{u}_{L^2(\R^n)}
+\nu^{-1}\poscal{\op{\chi,1}v}{v}_{L^2(\R^n)}
\bigr) 
\\=
 \norm{a}_{L^\io(\RZ)}
 \frac12
\bigl(\nu\norm{u}_{L^2(\R^n)}^2
+\nu^{-1}\norm{v}_{L^2(\R^n)}^2
\bigr),
\end{multline*}
and taking the infimum of the right-hand-side with respect to $\nu$,
we obtain
$$
\val{\poscal{ \op{\chi,a} u}{v}_{L^2(\R^n)}}
\le 
 \norm{a}_{L^\io(\RZ)}
\norm{u}_{L^2(\R^n)}\norm{v}_{L^2(\R^n)},
$$
proving 
\eqref{1311}.
To prove \eqref{1312},
it is enough to prove the last statement in the theorem
which follows immediately from \eqref{134},  \eqref{135} since each operator $\Sigma_{Y}$ is non-negative.
The proof of the theorem is complete.\end{proof}
It is nice to have examples of non-negative quantizations, but somehow more importantly, it is crucial to relate these quantizations to the mainstream quantization, that is to the Weyl quantization. This is what we do in the next theorem, dealing with semi-classical symbols.
\index{sharp G\aa rding inequality}
\begin{theorem}[Sharp G\aa rding Inequality]\label{garding} Let $a$ be a function defined on $\R^n\times\R^n\times (0,1]$ such that $a(x,\xi,h)$ is smooth for all $h\in (0,1]$ and such that
\begin{equation}\label{1314}\forall (\alpha, \beta)\in \N^n\times\N^n,\quad
\sup_{(x,\xi,h)\in\R^n\times\R^n\times(0,1]}\val{(\p_{x}^\alpha\p_{\xi}^\beta a)(x,\xi, h)} h^{-\val \beta}<+\io.
\end{equation}
Let us assume that the function $a$ is valued in $\R_{+}$. Then, there exists a constant $C$ such that 
\begin{equation}\label{}
\opw{a}+C h\ge 0.
\end{equation}
\end{theorem}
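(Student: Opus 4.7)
The plan is to reduce the problem to the symmetric semiclassical setting of Remark \ref{rem115} via an $h$-dependent symplectic rescaling, and then to transfer the non-negativity through Segal's covariance formula \eqref{segal}.

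First, I would introduce the linear symplectic map $S_h:\RZ\to\RZ$ defined by $S_h(x,\xi)=(h^{-1/2}x,h^{1/2}\xi)$, which is of the type \eqref{sym1} with $T=h^{-1/2}I_n$. Its metaplectic lift, obtained from \eqref{met1}, is the unitary $\mathcal M_h\in\meta$ given by $(\mathcal M_hu)(x)=h^{n/4}u(h^{1/2}x)$. Setting $\tilde a=a\circ S_h^{-1}$, so that $\tilde a(x,\xi,h)=a(h^{1/2}x,h^{-1/2}\xi,h)$ and $a=\tilde a\circ S_h$, Segal's formula \eqref{segal} yields
$$\opw{a}=\mathcal M_h^{*}\opw{\tilde a}\mathcal M_h.$$
The direct computation $\p_x^\alpha\p_\xi^\beta\tilde a=h^{(|\alpha|-|\beta|)/2}(\p_x^\alpha\p_\xi^\beta a)\circ S_h^{-1}$ combined with hypothesis \eqref{1314} gives
$$\sup_{(x,\xi,h)\in\RZ\times(0,1]}|\p_x^\alpha\p_\xi^\beta\tilde a(x,\xi,h)|\,h^{-(|\alpha|+|\beta|)/2}<+\infty\quad\text{for every }(\alpha,\beta)\in\N^n\times\N^n,$$
so $\tilde a$ belongs to the symmetric semiclassical class \eqref{semi00} of Remark \ref{rem115}, and $\tilde a\ge 0$ since $S_h^{-1}$ is a bijection of $\RZ$ and $a\ge 0$.

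Second, I would run the argument of Remark \ref{rem115} on $\tilde a$: fix an even Gaussian $\chi\in\mathscr S(\RZ)$ of unit $L^2$-norm, independent of $h$, and let $\Gamma_\chi=\bar\chi\sharp\chi$ as in \eqref{gamchi}. Lemma \ref{lem112} yields $\opw{\tilde a\ast\Gamma_\chi}\ge 0$. Writing the Taylor formula
$$(\tilde a\ast\Gamma_\chi-\tilde a)(X)=\int_0^1(1-\theta)\int_{\RZ}\tilde a''(X+\theta Y)Y^2\,\Gamma_\chi(Y)\,dY\,d\theta,$$
together with its analogue after applying any $\p^\gamma$ to both sides, one sees that each derivative $\p_x^\alpha\p_\xi^\beta\tilde a$ with $|\alpha|+|\beta|\ge 2$ is $O(h^{(|\alpha|+|\beta|)/2})=O(h)$ uniformly for $h\in(0,1]$, while $\Gamma_\chi$ has $h$-independent moments. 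Hence the symbol $\tilde a-\tilde a\ast\Gamma_\chi$ lies in $h\cdot C^\infty_b(\RZ)$ with its Fr\'echet seminorms bounded by constants times $h$, and Theorem \ref{thm.117j++} provides $\|\opw{\tilde a}-\opw{\tilde a\ast\Gamma_\chi}\|_{\mathcal B(L^2(\R^n))}\le Ch$. Consequently $\opw{\tilde a}+Ch\ge\opw{\tilde a\ast\Gamma_\chi}\ge 0$, and the unitary conjugation from the first step transfers this to $\opw{a}+Ch\ge 0$, which is the desired inequality.

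The step I expect to require the most care is the passage from the pointwise $O(h)$ bound on the symbol $\tilde a-\tilde a\ast\Gamma_\chi$ to the $O(h)$ bound on its operator norm: Proposition \ref{pro1717} shows that an $L^\infty$ bound on a symbol is by itself not enough, so one must verify that \emph{all} derivatives of the symbol difference are $O(h)$. This is precisely what is guaranteed by the symmetric class of Remark \ref{rem115}, in which every second or higher derivative of $\tilde a$ already carries a factor of $h$; this is the reason for reducing to that class rather than working directly in the asymmetric class \eqref{1314}.
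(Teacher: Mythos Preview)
Your proof is correct and follows essentially the same route as the paper: rescale via the symplectic map $(x,\xi)\mapsto(h^{1/2}x,h^{-1/2}\xi)$ to land in the symmetric class \eqref{semi00}, invoke the already-established inequality of Remark~\ref{rem115} for the rescaled symbol, and transfer back through Segal's covariance formula~\eqref{segal}. Your $\tilde a$ is exactly the paper's $b$, and the paper simply cites Remark~\ref{rem115} where you chose to unfold its mechanism (the $\Gamma_\chi$-convolution, the second-order Taylor remainder, and the Calder\'on--Vaillancourt-type bound of Theorem~\ref{thm.117j++}); there is no difference in substance.
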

\begin{proof}
 We have given a proof of this result in Remark \ref{rem115} but with a different definition for a semi-classical symbol (see \eqref{semi00}).
 Starting with our definition above in \eqref{1314}, we define
\begin{equation}\label{semi11}
 b(x,\xi,h)= a(h^{1/2} x, h^{-1/2} \xi, h),
\end{equation}
 and we see that $b$ satisfies the estimates  \eqref{semi00} and is a non-negative function  so that, applying Remark  \ref{rem115}, we can find a constant $C$ such that 
 $$
\opw{b}+C h\ge 0.
 $$
 We note now that Segal's formula \eqref{segal} applied to the symplectic mapping
 $$(x,\xi)\mapsto (h^{1/2}x, h^{-1/2}\xi),$$
 shows that $\opw{b}$ is unitarily equivalent to $\opw{a}$, providing the sought result.
\end{proof}
\begin{nb}\rm
 Several versions of the above theorem can be found in the literature,
 in particular Theorem 18.1.14 in \cite{MR2304165}. The first proof of this result was given in 1966 by L.~H\"ormander in \cite{MR233064} for scalar-valued symbols
and a proof for systems was given by 
P. Lax \& L. Nirenberg in \cite{MR0234105} on the same year.
Far-reaching refinements of that inequality were given by C. Fefferman \& D.H. Phong,
 who proved in \cite{MR507931} in 1978 that, under the same assumption as in Theorem \ref{garding} for scalar-valued symbols, they obtain the much stronger
\index{Fefferman-Phong inequality}
 \begin{equation}\label{fefpho}
 \opw{a}+C h^2\ge 0.
\end{equation}
 A thorough discussion of these questions is given in Section 18.6 of \cite{MR2304165} and in Section 2.5 of \cite{MR2599384} (see also \cite{MR3333053}).\end{nb}
\subsection{Examples}
\subsubsection{Hermite functions}
\index{Hermite functions}
\index{Laguerre polynomials}
We can easily calculate the Wigner distribution of Hermite functions and since the Wigner distributions respect tensor products as partial Fourier transforms, it is enough to do so in one dimension. 
With $\psi_{k}$ given in \eqref{hermite}, the Wigner distribution $\mathcal W(\psi_{k},\psi_{k})$ appears as the Weyl symbol of $\mathbb P_{k;1}=\mathbb P_{k}$ as defined in \eqref{hardec}.
We find that the Weyl   symbol of $\mathbb P_{0;n}$, following \eqref{6.knb44}, is 
$$
2^n e^{-2\pi(\val x^2+\val\xi^2)}.
$$
More generally, the paper \cite{MR1643938} provides in one dimension
\begin{equation}\label{wigherm}
\mathcal W(\psi_{k},\psi_{k})(x,\xi)=
(-1)^k 2 e^{-2\pi (x^2+\xi ^2)} L_{k}\bigl(4\pi  (x^2+\xi ^2)\bigr),
\end{equation}
where $L_{k}$ is the standard Laguerre polynomial with degree $k$ (see \eqref{laguerre}). As a result, the Weyl symbol of $\mathbb P_{k;n}$ is equal to $\pi_{k,n}(x,\xi)$ with 
\begin{equation}\label{}
\pi_{k,n}(x,\xi)=
(-1)^k 2^n e^{-2\pi (\val x^2+\val \xi ^2)}\sum_{\alpha\in \N^n, \val \alpha=k}
\prod_{1\le j\le n} L_{\alpha_{j}}\bigl(4\pi(x_{j}^2+\xi_{j}^2)\bigr).
\end{equation}
Note that the leading term in the polynomial $(-1)^kL_{k}(t)$ is $t^k/k!$ and this implies that the set $$
\{(x,\xi)\in \R^2, \mathcal W(\psi_{k},\psi_{k})(x,\xi)<0\}
$$ where  $\mathcal W(\psi_{k},\psi_{k})$ is given by \eqref{wigherm} is a relatively compact
open subset of $\R^2$:
Indeed we have
$$
W(\psi_{k}, \psi_{k})(X)=2e^{-2\pi \val X^{2}}\Bigl\{\frac{(4\pi \val X^{2})^{k}}{k!}\Bigr\}
\underbrace{\Bigl(1+\sum_{0\le l\le k-1} a_{l}(4\pi\val{X}^{2})^{-(k-l)}\Bigr)}_{
\ge 1/2 \text{ for $\val X\ge R_{0}$}
} 
$$
which implies that
$
\{X\in \R^{2}, \val X\ge \max(R_{0}, 1)\}\subset\{X\in \R^{2},W(\psi_{k}, \psi_{k})(X) >0 \}
$
and thus $\{W(\psi_{k}, \psi_{k})(X)\le 0\}\subset\{
\val X< \max(R_{0}, 1)\}.
$
\subsubsection{One-sided exponentials}
Let us define for $ a>0$,
\begin{equation}\label{}
f_{a}(t) =H(t) a^{1/2}e^{-at/2}.
\end{equation}
We have 
\begin{align}
\mathcal W(f_{a}, f_{a})(x,\xi)&=a H(x)\int_{\val z\le 2x}e^{-2i\pi z \xi}
e^{-\frac{a}{2}(x+z/2)}e^{-\frac{a}{2}(x-z/2)} dz
\notag\\
&=a H(x) e^{-x a}\int_{\val z\le 2x}e^{-2i\pi z \xi}
dz
\notag\\
&=2a H(x) e^{-x a}\int_{0}^{2x}
\cos \left(z2\pi \xi\right)
dz
\notag\\
&=a H(x) e^{-x a}
\frac{\sin \left(4\pi x \xi\right)}{\pi \xi}.
\label{exa1111}
\end{align}
We can check
$$
\iint\mathcal W(f_{a}, f_{a})(x,\xi) dx d\xi=\frac{a}{\pi}\int_{x=0}^{+\io}
e^{-ax}\int \frac{\sin \left(4\pi x \xi\right)}{ \xi} d\xi dx=1=\norm{f_{a}}_{L^2(\R)}^2,
$$
and  since 
\begin{equation}
\int_{\R}\frac{\sin^2t}{t^2} dt=\pi,
\end{equation}
we verify (see Lemma \ref{lem.113esd} and \eqref{wigner}),
$$
\iint\mathcal W(f_{a}, f_{a})(x,\xi)^2 dx d\xi=\frac{a^2}{\pi^2}\int_{x=0}^{+\io}
e^{-2ax}\int \frac{\sin^2 \left(4\pi x \xi\right)}{ \xi^2} d\xi dx=1=\norm{f_{a}}_{L^2(\R)}^4.
$$
On the other hand, the ambiguity function  $\mathcal A(f_{a},f_{a})$ is the inverse Fourier transform of $\mathcal W$ and we have 
\begin{multline*}
\mathcal A(f_{a},f_{a})(\eta, y)= \frac a \pi\iint
H(x) e^{-x (a-2i\pi \eta)}
\frac{\sin \xi}{ \xi}
e^{2i \pi \frac{y}{4\pi x}\xi} dx d\xi
\\
=  a \int_{\val y/2}^{+\io} e^{-x (a-2i\pi \eta)}
dx =\frac{a e^{-\frac12\val y(a-2i\pi \eta)}}{a-2i\pi \eta},
\end{multline*}
which corresponds to Formula (9) in \cite{GJM}
noting that with our notations, we have 
$$
\mathcal A(f,f)(\eta, y)=\wt{\mathcal A}(f,f)(y,-\eta),
$$
where $\wt{
\mathcal A}(f,f)
$ is the normalization chosen in \cite{GJM}.
Going back to the Wigner distribution, that simple example is interesting since we have
\begin{equation*}
\bigl\{(x,\xi), \mathcal W(f_{a},f_{a})(x,\xi)<0\bigr\}
=\cup_{k\in \N}\bigl\{(x,\xi)\in (0,+\io)\times \R^*,\
\frac{k}{2}+\frac{1}{4}
<x\val \xi<\frac{k}{2}+\frac{1}{2}
\bigr\},
\end{equation*}
and we see that the Lebesgue measure of
$$E_{k}=\bigl\{(x,\xi)\in (0,+\io)\times \R^*,\
\frac{k}{2}+\frac{1}{4}
<x\val \xi<\frac{k}{2}+\frac{1}{2}
\bigr\},
$$
is infinite since 
$$
\val{E_{k}}=2\int_{0}^{+\io}\frac{dx}{4x}=+\io.
$$
Moreover the function 
$\mathcal W(f_{a}, f_{a})(x,\xi) $ does not belong to $L^1(\R^2)$ since 
$$
\iint H(x) e^{-x a}
\Val{\frac{\sin \left(4\pi x \xi\right)}{\pi \xi}} dx d\xi
\ge \iint_{(0,+\io)^2}
e^{-x a}\Val{\frac{\sin \eta}{\pi\eta}} dxd\eta=+\io.
$$
As a consequence, we have, 
using the notation for $\alpha\in \R$,
\begin{equation}
\alpha_{\pm}=\max(\pm\alpha, 0),
\end{equation}
\begin{equation}\label{}
\iint \bigl(\mathcal W(f_{a}, f_{a})(x,\xi)\bigr)_{+} dx d\xi=
\iint \bigl(\mathcal W(f_{a}, f_{a})(x,\xi)\bigr)_{-} dx d\xi=+\io,
\end{equation}
since the real-valued function $\mathcal W(f_{a}, f_{a})$
does not belong to $L^1(\R^2)$ and is such that 
$$
\iint\mathcal W(f_{a}, f_{a})(x,\xi)dx d\xi= \norm{f_{a}}^2_{L^2(\R)}=1.
$$
We shall see in Section \ref{sec.jhae22}
several important consequences of that phenomenon for the quantization of the indicatrix of some subsets of $\R^2$, such as 
\begin{equation}\label{}
E_{\pm}=\bigl\{(x,\xi), \pm \mathcal W(f_{a}, f_{a})(x,\xi)>0\bigr\}.
\end{equation}
\subsubsection{Box functions}
\index{box functions}
We start with 
\begin{equation}\label{}
\beta_{0}(t)=\mathbf 1_{[-\frac 12, \frac 12]}(t),
\end{equation}
for which a straightforward calculation gives
\begin{equation}\label{}
\mathcal W(\beta_{0}, \beta_{0})(x,\xi)=\mathbf 1_{[-\frac 12, \frac 12]}(x)\frac{
\sin\bigl(2\pi(1-2\val x) \xi\bigr)
}{\pi \xi}.
\end{equation}
More generally for real parameters $a\le b$, defining
\begin{multline*}
\beta=(b-a)^{-1/2}\mathbf 1_{[a,b]}(x) e^{2i\pi \omega x}, \quad\text{we find}\\
\mathcal W(\beta,\beta)(x,\xi)=[(b-a)\pi(\xi-\omega)]^{-1}
\Bigl(
\mathbf 1_{[a,\frac{a+b}2]}(x)\sin[4\pi(\xi-\omega)(x-a)]
\\+\mathbf 1_{[\frac{a+b}2,b]}(x)\sin[4\pi(\xi-\omega)(b-x)]
\Bigr).
\end{multline*}
Checking now
\begin{equation}\label{}
\beta_{1}(t)=\mathbf 1_{[-\frac 12, \frac 12]}(t)\sign t,
\end{equation}
we find after a simple (but this time a bit  tedious) calculation
\begin{multline}\label{}
\mathcal W(\beta_{1}, \beta_{1})(x,\xi)=\mathbf 1\bigl(\val x\le \frac14\bigr)\ \frac{2\sin(4\pi \val x \xi)
-\sin\bigl(2\pi(1-2\val x) \xi\bigr)}{\pi \xi}
\\
+\mathbf 1\bigl(\frac14\le \val x\le \frac12\bigr)\ \frac{
\sin\bigl(2\pi(1-2\val x) \xi\bigr)
}{\pi \xi}.
\end{multline}
\subsection{Integrals of the Wigner distribution on subsets of the phase space}
\begin{lem}
 Let $E$ be a measurable subset with finite Lebesgue measure
 of the phase space $\R^n\times \R^n$ and let $\mathbf 1_{E}$ be the indicator function of the set $E$. Then the operator with Weyl symbol  $\mathbf 1_{E}$  is bounded self-adjoint on $L^2(\R^n)$ and for any $u\in L^2(\R^n)$, we have
 \begin{equation}\label{flandrin0}
\poscal{\opw{\mathbf 1_{E}} u}{u}_{L^2(\R^n)}=\iint _{E}\mathcal W(u,u)(x,\xi) dx d\xi.
\end{equation}
\end{lem}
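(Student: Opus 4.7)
The plan is to reduce the statement to previously established facts about the Weyl quantization in three steps. First, since $E$ has finite Lebesgue measure, $\mathbf 1_E$ belongs to $L^1(\RZ)\cap L^2(\RZ)$. The first bound in Proposition \ref{pro1717} then gives immediately that $\opw{\mathbf 1_E}$ is bounded on $L^2(\R^n)$ with $\norm{\opw{\mathbf 1_E}}_{\mathcal B(L^2(\R^n))}\le 2^n\val E$. Self-adjointness is an application of the identity $(\opw{a})^*=\opw{\bar a}$ (established just after \eqref{12hh}) to the real-valued symbol $a=\mathbf 1_E$.

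The identity \eqref{flandrin0} should be read as a direct unfolding of the very definition \eqref{eza654} of the Weyl quantization. For $u,v\in\mathscr S(\R^n)$, the real duality bracket \eqref{eza654} reads
\begin{equation*}
\poscal{\opw{a}u}{\bar v}_{\mathscr S'(\R^n),\mathscr S(\R^n)}=\poscal{a}{\mathcal W(u,v)}_{\mathscr S'(\RZ),\mathscr S(\RZ)}.
\end{equation*}
I would take $a=\mathbf 1_E$ and $v=u$; the left-hand side is then $\poscal{\opw{\mathbf 1_E}u}{u}_{L^2(\R^n)}$, and the right-hand side collapses to an honest integral, because $\mathcal W(u,u)\in\mathscr S(\RZ)$ while $\mathbf 1_E\in L^1(\RZ)$, yielding
\begin{equation*}
\poscal{\opw{\mathbf 1_E}u}{u}_{L^2(\R^n)}=\iint_E \mathcal W(u,u)(x,\xi)\,dx\,d\xi.
\end{equation*}

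Finally, I would extend this from Schwartz class to all of $L^2(\R^n)$ by density. The left-hand side is continuous in $u$ thanks to the boundedness of $\opw{\mathbf 1_E}$. For the right-hand side, one uses bilinearity together with the identity \eqref{norm} (equivalently \eqref{1225}) to get
\begin{equation*}
\norm{\mathcal W(u,u)-\mathcal W(w,w)}_{L^2(\RZ)}\le \norm{u-w}_{L^2(\R^n)}\bigl(\norm{u}_{L^2(\R^n)}+\norm{w}_{L^2(\R^n)}\bigr),
\end{equation*}
so, since $\mathbf 1_E\in L^2(\RZ)$, the map $u\mapsto\iint_E\mathcal W(u,u)$ is continuous on $L^2(\R^n)$. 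Passing to the limit on both sides along a Schwartz-approximating sequence yields \eqref{flandrin0} for every $u\in L^2(\R^n)$.

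There is no real obstacle here: the lemma is essentially a direct consequence of the definition of $\opw{\cdot}$, the $L^1$ estimate of Proposition \ref{pro1717}, and the continuity of $(u,v)\mapsto \mathcal W(u,v)$ in $L^2$.
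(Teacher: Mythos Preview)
Your proof is correct and follows the same route as the paper, which simply says ``It follows immediately from \eqref{eza654} and \eqref{norm01}.'' You have spelled out carefully what the paper leaves implicit (self-adjointness from the real-valued symbol, and the density argument using $\mathbf 1_E\in L^2(\RZ)$ together with the $L^2$-continuity of $u\mapsto\mathcal W(u,u)$), but the underlying ingredients are exactly those two references.
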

\begin{proof}
 It follows immediately from \eqref{eza654} and \eqref{norm01}.
\end{proof}
\begin{rem}
 A consequence of the above formula is that a spectral analysis of the operator
 $\opw{\mathbf 1_{E}}$
 would display interesting extremalization properties for the right-hand-side of 
 \eqref{flandrin0};
 for instance if 
 $$
 \lambda_{-}=\inf\bigl(\text{spectrum}(\opw{\mathbf 1_{E}})\bigr), 
 \quad
 \lambda_{+}=\sup\bigl(\text{spectrum}(\opw{\mathbf 1_{E}})\bigr),
 $$
 we obtain that for $u$ normalized in $L^{2}(\R^{n})$, we have 
 \begin{equation}\label{}
 \lambda_{-} \le 
 \iint _{E}\mathcal W(u,u)(x,\xi) dx d\xi\le \lambda_{+}.
\end{equation}
In particular, if $\lambda_{-}$ is an eigenvalue related to a normalized eigenfunction $u_{-}$,
 (resp. if $\lambda_{+}$ is an eigenvalue related to a normalized eigenfunction $u_{+}$),
 we get for all $u$ normalized in $L^{2}(\R^{n})$,
 \begin{multline}\label{}
 \iint _{E}\mathcal W(u_{-},u_{-})(x,\xi) dx d\xi
 \le \iint _{E}\mathcal W(u,u)(x,\xi) dx d\xi
 \\
\text{resp.}
 \le 
 \iint _{E}\mathcal W(u_{+},u_{+})(x,\xi) dx d\xi.
\end{multline}
 \end{rem}
We shall see below several examples where the operator
$\opw{\mathbf 1_{E}}$ is bounded on $ L^2(\R^n)$ with an $E$ having infinite Lebesgue measure. We may note in particular that 
$$
\opw{\mathbf 1_{\RZ}}=\Id,
$$
and for a given non-zero linear form $L(x,\xi)$ on $\RZ$ and 
\begin{equation}\label{trivial}
E=\{(x,\xi)\in \RZ, L(x,\xi) \in J\},\quad\text{where $J$ is a subset of $\R$},
\end{equation}
we may find affine  symplectic coordinates $(y,\eta)$
on $\RZ$ such that $L(x,\xi)=y_{1}$, implying with \eqref{segal} that 
$\opw{\mathbf 1_{E}}$ is unitarily equivalent to the orthogonal projection
$$
u\mapsto u(y)\mathbf 1_{J}(y_{1}).
$$
Although in that case, the quantization of the indicatrix of $E$ given by \eqref{trivial}
is trivial,
we shall see below that in many cases, including some rather explicit ones,
the Weyl  quantization of the rough Hamiltonian $\mathbf 1_{E}(x,\xi)$
could be far from a projection and may have a rather complicated spectrum
with a supremum which could be strictly larger than 1 and an infimum which could be negative.
\par
In some sense, although we have the  trivial identity 
$\mathbf 1_{E}(x,\xi)^2=\mathbf 1_{E}(x,\xi)$, we shall see that the quantization process by the Weyl formula is destroying that property; 
to understand integrals of the Wigner distribution on subsets of the phase space,
Formula \eqref{flandrin0} forces us to consider the Weyl quantization  
of the function $\mathbf 1_{E}(x,\xi)$ and the Heisenberg Uncertainty Principle shows that non-commutation properties
are  governing operators
and these properties are of course distorting the classical identities
satisfied by classical Hamiltonians.
\par
 We must point out as well 
that we do not have here at our disposal a semi-classical version 
of our quantization which could ensure some bridge between classical properties and operator-theoretic results as it is the case for the quantization of nice smooth semi-classical symbols depending on a small parameter $h$ such as a $\moo$
function $a(x,\xi, h)$ satisfying \eqref{1314}.
In particular for a symbol $a$ satisfying  \eqref{1314},
we have the following result: if for all $(x,\xi,h)\in \R^n\times\R^n\times (0,1]$ we have
$a(x,\xi,h)\le 1$, then there exists a semi-norm $C$ of the symbol $a$ such that
\begin{equation}\label{}
\Id-\opw{a}+ Ch^2\ge 0\qquad\text{i.e.}\quad
\opw{a}\le \Id + C h^2,
\end{equation}
an inequality following from the Fefferman-Phong Inequality 
(cf.\hskip1pt\eqref{fefpho})
which implies as well the following lemma.
\begin{lem}
Let $a$ be a semi-classical symbol of order 0, i.e. a smooth function satisfying 
\eqref{1314}
 such that for  all $(x,\xi,h)\in \R^n\times\R^n\times (0,1]$ we have
$$
0\le a(x,\xi, h)\le 1.
$$
Then there exists a semi-norm $C$ of the symbol $a$ such that 
$$
-Ch^2\le \opw{a}\le \Id + C h^2.
$$
 \end{lem}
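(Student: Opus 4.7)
The plan is to deduce both inequalities from two applications of the Fefferman-Phong inequality \eqref{fefpho} recalled just above the lemma, once to the symbol $a$ itself and once to the complementary symbol $1-a$. The hypothesis $0\le a(x,\xi,h)\le 1$ is exactly what allows both of these symbols to be non-negative, which is the sign condition required to invoke \eqref{fefpho}.

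First I would apply \eqref{fefpho} to $a$: since $a$ is a non-negative scalar symbol satisfying \eqref{1314}, there is a constant $C_1$, depending only on finitely many semi-norms of $a$, such that $\opw{a}+C_1 h^2\ge 0$, which gives the lower bound $\opw{a}\ge -C_1 h^2$.

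Next I would consider $b(x,\xi,h)=1-a(x,\xi,h)$. The constant function $1$ is a symbol of order zero whose Weyl quantization is $\Id$, and it trivially satisfies \eqref{1314}. For every multi-index $(\alpha,\beta)$ with $\val\alpha+\val\beta\ge 1$ we have $\p_x^\alpha\p_\xi^\beta b=-\p_x^\alpha\p_\xi^\beta a$, while $0\le b\le 1$; hence $b$ satisfies \eqref{1314} with semi-norms controlled by those of $a$. Since $b\ge 0$, a second application of \eqref{fefpho} yields $\opw{b}+C_2 h^2\ge 0$, and because $\opw{\cdot}$ is linear and $\opw{1}=\Id$, this reads $\Id-\opw{a}+C_2 h^2\ge 0$, i.e.\ $\opw{a}\le \Id+C_2 h^2$. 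Taking $C=\max(C_1,C_2)$, which is itself a semi-norm of $a$, gives the two-sided estimate claimed.

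The only real point to verify carefully is the symbol bookkeeping in the second step: one must check that the constant $C_2$ in Fefferman-Phong applied to $1-a$ is indeed controlled by a semi-norm of $a$. This is routine because only derivatives of order $\ge 1$ enter the constants in \eqref{fefpho} (the zeroth-order part is absorbed in the sign assumption), so the ``$1$'' in $1-a$ contributes nothing to the relevant semi-norms. There is no genuine obstacle here; the lemma is essentially a reformulation of \eqref{fefpho} exploiting the fact that both $a$ and $1-a$ lie in the cone of non-negative symbols.
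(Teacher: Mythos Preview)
Your proof is correct and follows exactly the approach indicated in the paper: apply the Fefferman-Phong inequality \eqref{fefpho} once to the non-negative symbol $a$ and once to the non-negative symbol $1-a$. The paper states this explicitly just before the lemma, so there is nothing to add.
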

\section{Quantization of radial functions and Mehler's formula}
\index{Mehler formula}
This section and the following  are essentially based upon the author's paper \cite{MR3951885}.
\subsection{Basic formulas in one dimension}
In this section,
 we work in one dimension and consider a function $F$ in the Schwartz class of $\R$.
 We want to calculate somewhat explicitly the Weyl quantization of $F(x^{2}+\xi^{2})$
 and also extend that computation to the case where $F$ is merely $L^{\io}(\mathbb R)$. 
 We have, say for $F$ in the Wiener algebra $\mathscr W(\R)=${ \tt Fourier}$\bigl(L^{1}(\R)\bigr)$,
 $$
 \opw{F(x^{2}+\xi^{2})}=\int_{\R}\hat F(\tau) 
 \opw{e^{2i\pi \tau(x^{2}+\xi^{2})}}
  d\tau,
 $$
as an absolutely converging integral of  a function defined on $\R$ (equipped with the Lebesgue measure)
valued in $\mathcal B(L^{2}(\R))$
(bounded endomorphisms of $L^{2}(\R)$).
In fact applying Mehler's Formula \eqref{mehler+n},
we find 
$$
\underbrace{ 
\opw{e^{2i\pi \tau(x^{2}+\xi^{2})}}
}_{\substack{\text{operator with Weyl symbol}
\\
e^{2i\pi \tau(x^{2}+\xi^{2})}
}}=\cos(\arctan \tau) \underbrace{e^{2i\pi (\arctan \tau) 
\opw{x^{2}+\xi^{2}}
}}_{\substack{
\text{exponential }e^{iM},\\\text{ with $M$ self-adjoint  operator}
\\=2\pi(\arctan \tau) \opw{x^{2}+\xi^{2}}
}},
$$
so that, using the spectral decomposition \eqref{hardec} 
of the Harmonic Oscillator $$\opw{\pi(x^{2}+\xi^{2})},$$
we get 
 \begin{align*}
\opw{F(x^{2}+\xi^{2})}
&=
 \int_{\R}\hat F(\tau) \sum_{k\ge 0}e^{2i(\arctan \tau)(k+\frac12)}\mathbb P_{k}\frac{d\tau}{\sqrt{1+\tau^{2}}}\\
 &=
  \sum_{k\ge 0}\int_{\R}\hat F(\tau) e^{2i(k+\frac12)\arctan \tau}\frac{d\tau}{\sqrt{1+\tau^{2}}}
  \mathbb P_{k},
 \end{align*}
where the use of Fubini theorem is justified by
$$
\int_{\R}\val{\hat F(\tau)}\frac{d\tau}{\sqrt{1+\tau^{2}}}<+\io,\quad \mathbb P_{k}\ge 0, \sum_{k\ge 0} \mathbb P_{k}=\Id.
$$
We have 
\begin{multline*}
\int_{\R}\hat F(\tau) e^{2i(k+\frac12)\arctan \tau}\frac{d\tau}{\sqrt{1+\tau^{2}}}
\\=\int_{\R}\hat F(\tau) 
\bigl(
\cos(\arctan \tau\bigr)+i\sin(\arctan \tau)
\bigr)^{2k+1}
\frac{d\tau}{\sqrt{1+\tau^{2}}},
\end{multline*}
and, using Section \ref{sec.arctan}, we get 
$$
\int_{\R}\hat F(\tau) e^{2i(k+\frac12)\arctan \tau}\frac{d\tau}{\sqrt{1+\tau^{2}}}
=\int_{\R}\hat F(\tau) 
\bigl(
1+i\tau
\bigr)^{2k+1}
\frac{d\tau}{(1+\tau^{2})^{k+1}}.
$$
We have proven the following lemma.
\begin{lem}\label{lemm21}
 Let $F$ be a tempered distribution on $\R$
 such that 
 $\hat F$ is locally integrable and such that
 \begin{equation}\label{condit}
\int_{\R}\val{\hat F(\tau)}\frac{d\tau}{\sqrt{1+\tau^{2}}}<+\io.
\end{equation}
 Then the operator $\opw{F(x^{2}+\xi^{2})}$ has the spectral decomposition 
 \begin{align}\label{for212}
\opw{F(x^{2}+\xi^{2})}& =\sum_{k\ge 0}
\int_{\R}
\frac{\hat F(\tau) (1+i\tau)^{2k+1} }{(1+\tau^{2})^{k+1}}d\tau
 \ \mathbb P_{k}
 \\
 & =\sum_{k\ge 0}
\int_{\R}
\frac{\hat F(\tau) (1+i\tau)^{k} }{(1-i\tau)^{k+1}}d\tau
 \ \mathbb P_{k},
\end{align}
where the orthogonal projections $\mathbb P_{k}$ are defined in \eqref{hardec}. 
\end{lem}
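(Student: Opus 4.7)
The plan is to follow, nearly verbatim, the calculation sketched in the paragraphs preceding the statement, and to explain how the weaker hypothesis \eqref{condit} still suffices. Formally writing $F$ as a distributional inverse Fourier integral and pushing Weyl quantization inside gives
\[
\opw{F(x^2+\xi^2)} = \int_\R \hat F(\tau)\, \opw{e^{2i\pi \tau(x^2+\xi^2)}}\, d\tau.
\]
Mehler's formula \eqref{mehler+n} combined with the spectral decomposition \eqref{hardec} of the harmonic oscillator rewrites $\opw{e^{2i\pi \tau(x^2+\xi^2)}}$ as
\[
\frac{1}{\sqrt{1+\tau^2}}\, \sum_{k\ge 0} e^{2i(k+\tfrac12)\arctan\tau}\, \mathbb P_k,
\]
and the identity $\cos(\arctan\tau)+i\sin(\arctan\tau) = (1+i\tau)/\sqrt{1+\tau^2}$ turns the scalar coefficient of $\mathbb P_k$ into $(1+i\tau)^{2k+1}/(1+\tau^2)^{k+1}$, which is the quantity appearing in the claimed formula.

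Next I would exchange the $\tau$-integral and the $k$-sum to form the candidate
\[
A := \sum_{k\ge 0} \mu_k\, \mathbb P_k,\qquad \mu_k := \int_\R \hat F(\tau)\, \frac{(1+i\tau)^{2k+1}}{(1+\tau^2)^{k+1}}\, d\tau.
\]
The linchpin is the \emph{pointwise identity} $|(1+i\tau)^{2k+1}|/(1+\tau^2)^{k+1} = 1/\sqrt{1+\tau^2}$, valid for every $k\in\N$ and every $\tau\in\R$. Under \eqref{condit} this gives $\sup_k |\mu_k| \le \int_\R |\hat F(\tau)|/\sqrt{1+\tau^2}\,d\tau < +\io$, so $A$ is a bounded operator on $L^2(\R)$ with norm at most this integral, the family $\{\mathbb P_k\}$ being orthogonal with $\sum_k \mathbb P_k = \Id$. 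Testing against $u,v\in\mathscr S(\R)$ expanded in the Hermite basis, the uniform bound $|c_k(\tau)|\le 1/\sqrt{1+\tau^2}$ together with the Cauchy--Schwarz estimate $\sum_k |\poscal{\mathbb P_k u}{v}|\le \norm{u}_{L^2}\norm{v}_{L^2}$ legitimates Fubini and produces
\[
\poscal{A u}{v}_{L^2(\R)} = \poscal{F(x^2+\xi^2)}{\mathcal W(u,v)}_{\mathscr S'(\R^2),\mathscr S(\R^2)},
\]
which is exactly $\poscal{\opw{F(x^2+\xi^2)}u}{v}$ by \eqref{eza654}.

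The main obstacle is to make the very first displayed formula rigorous when $\hat F$ is merely locally integrable and only satisfies \eqref{condit}, so that $\hat F$ need not lie in $L^1(\R)$ and $F$ is not given by an absolutely convergent inverse Fourier integral in $\mathscr S'(\R)$. A clean workaround is the truncation $\hat F_n := \hat F\, \mathbf 1_{[-n,n]}$: then $F_n$ lies in the Wiener algebra $\mathscr W(\R)$, the calculations of the excerpt apply verbatim to yield $\opw{F_n(x^2+\xi^2)} = \sum_k \mu_k^{(n)} \mathbb P_k$, and the numbers $\mu_k^{(n)}$ converge to $\mu_k$ by dominated convergence, once again through the uniform bound $|c_k(\tau)|\le 1/\sqrt{1+\tau^2}$. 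On the distributional side, since $\mathcal W(u,v)\in\mathscr S(\R^2)$ is rapidly decreasing, $F_n(x^2+\xi^2)$ converges to $F(x^2+\xi^2)$ when paired with $\mathcal W(u,v)$; passing to the limit in both expressions closes the proof and establishes the equivalent form with $(1+i\tau)^k/(1-i\tau)^{k+1}$ by the trivial algebraic identity $(1+i\tau)(1-i\tau) = 1+\tau^2$.
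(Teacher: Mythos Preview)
Your approach is essentially the paper's: the paragraphs preceding the lemma carry out exactly this computation (Mehler's formula, spectral decomposition of the harmonic oscillator, Fubini justified by the uniform bound $|c_k(\tau)|=(1+\tau^2)^{-1/2}$), and your truncation $\hat F_n=\hat F\,\mathbf 1_{[-n,n]}$ makes explicit the passage from the Wiener-algebra case to hypothesis \eqref{condit}, which the paper leaves implicit. One small point worth tightening: the convergence $\langle F_n(x^2+\xi^2),\mathcal W(u,v)\rangle\to\langle F(x^2+\xi^2),\mathcal W(u,v)\rangle$ is not a consequence of the rapid decay of $\mathcal W(u,v)$ alone; the cleanest route is to observe that Mehler's formula gives $\|\opw{e^{2i\pi\tau(x^2+\xi^2)}}\|_{\mathcal B(L^2(\R))}=(1+\tau^2)^{-1/2}$, so under \eqref{condit} the Bochner integral $\int_\R\hat F(\tau)\,\opw{e^{2i\pi\tau(x^2+\xi^2)}}\,d\tau$ converges absolutely in operator norm and dominated convergence applies directly at the operator level.
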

\subsection{Higher dimensional questions}
We work now  in $n$ dimensions and consider a function $F$ in the Schwartz class of $\R$.
 We want to calculate somewhat explicitly the Weyl quantization of $F\bigl(\sum_{1\le j\le n}\mu_{j}(x_{j}^{2}+\xi_{j}^{2})
 \bigr)$, where the $\mu_{j}$ are positive parameters, denoted by 
 $$
\OPW{F(\sum_{1\le j\le n}\mu_{j}(x_{j}^{2}+\xi_{j}^{2}))}, 
\quad q_{\mu}(x,\xi)=\sum_{1\le j\le n}\mu_{j}(x_{j}^{2}+\xi_{j}^{2}),
 $$
 and also extend that computation to the case where $F$ is merely $L^{\io}(\mathbb R)$. 
 We have, say for $F$ in the Wiener algebra $\mathscr{ W}(\R)=${ \tt Fourier}$\bigl(L^{1}(\R)\bigr)$,
 $$
 \OPW{F\bigl(q_{\mu}(x,\xi)\bigr)}
 =\int_{\R}\hat F(\tau) 
\OPW{ e^{2i\pi \tau\sum_{1\le j\le n}\mu_{j}(x_{j}^{2}+\xi_{j}^{2})}}
 d\tau,
 $$
as an absolutely converging integral of  a function defined on $\R$ (equipped with the Lebesgue measure)
valued in $\mathcal B(L^{2}(\R^n))$
(bounded endomorphisms of $L^{2}(\R^n)$).
In fact applying Mehler's Formula \eqref{mehler+n},
we find  by tensorisation,
\begin{equation}\label{mehler11}
\underbrace{ 
\OPW{e^{2i\pi \tau\sum_{1\le j\le n}\mu_{j}(x_{j}^{2}+\xi_{j}^{2})}}
}_{\substack{\text{operator with Weyl symbol}
\\
e^{2i\pi \tau q_{\mu}(x,\xi)}
}}=\prod_{1\le j\le n}\cos(\arctan( \tau \mu_{j})) \underbrace{e^{2i\pi (\arctan (\tau \mu_{j})) \opw{x_{j}^{2}+\xi_{j}^{2}}
}}_{\substack{
\text{exponential }e^{iM_{j}},\\\text{ with $M_{j}$ self-adjoint  operator}
\\=2\pi(\arctan (\tau \mu_{j})) 
\opw{x_{j}^{2}+\xi_{j}^{2}}
}},
\end{equation}
so that, using the spectral decomposition \eqref{app.paln}
of the Harmonic Oscillator 
we get 
 \begin{align*}
\OPW{F(q_{\mu}(x,\xi))}
&=
 \int_{\R}\hat F(\tau) \sum_{\alpha\in \N^n}
 \prod_{1\le j\le n}e^{2i(\arctan( \tau \mu_{j}))(\alpha_{j}+\frac12)}\mathbb P_{\alpha_{j}}
\frac1{\sqrt{1+(\tau \mu_{j})^{2}}} {d\tau}\\
 &=
  \sum_{\alpha\in \N^n}\int_{\R}\hat F(\tau) 
  \prod_{1\le j\le n}e^{2i(\alpha_{j}+\frac12)\arctan( \tau \mu_{j})}
  \frac1{\sqrt{1+(\tau \mu_{j})^{2}}} 
  {d\tau}
    \mathbb P_{\alpha},
 \end{align*}
where the use of Fubini theorem is justified by
$$
\int_{\R}\val{\hat F(\tau)}\frac{d\tau}{\sqrt{1+\tau^{2}}}<+\io,\quad \mathbb P_{\alpha}\ge 0, \sum_{\alpha} \mathbb P_{\alpha}=\Id.
$$
We have 
\begin{multline*}
\int_{\R}\hat F(\tau) 
\prod_{1\le j\le n}e^{2i(\alpha_{j}+\frac12)\arctan( \tau \mu_{j})}
  \frac1{\sqrt{1+(\tau \mu_{j})^{2}}} 
d\tau
\\=\int_{\R}\hat F(\tau) \prod_{1\le j\le n}
\bigl(
\cos(\arctan(\mu_{j} \tau)\bigr)+i\sin(\arctan(\mu_{j} \tau))
\bigr)^{2\alpha_{j}+1}
  \frac1{\sqrt{1+(\tau \mu_{j})^{2}}} 
d\tau
\end{multline*}
and, using Section \ref{sec.arctan}, we get 
\begin{multline*}
\int_{\R}\hat F(\tau) 
\prod_{1\le j\le n}e^{2i(\alpha_{j}+\frac12)\arctan( \tau \mu_{j})}
  \frac1{\sqrt{1+(\tau \mu_{j})^{2}}} 
d\tau
\\=\int_{\R}\hat F(\tau) 
\prod_{1\le j\le n}
\frac{(1+i\tau \mu_{j})^{2\alpha_{j}+1}}{(1+(\tau\mu_{j})^2)^{\alpha_{j}+\frac12}}
  \frac1{\sqrt{1+(\tau \mu_{j})^{2}}} 
d\tau.\end{multline*}
We have proven the following lemma.
\begin{lem}\label{lemm21n}
 Let $F$ be a tempered distribution on $\R$
 such that 
 $\hat F$ is locally integrable and such that
 \begin{equation}\label{conditn}
\int_{\R}\val{\hat F(\tau)}\frac{d\tau}{\sqrt{1+\tau^{2}}}<+\io.
\end{equation}
 Then the operator $ \OPW{F(\sum_{1\le j\le n}\mu_{j}(x_{j}^{2}+\xi_{j}^{2})
 )}$ has the spectral decomposition 
 \begin{multline}\label{for212n}
\OPW{F\bigl(\sum_{1\le j\le n}\mu_{j}(x_{j}^{2}+\xi_{j}^{2})\bigr)}
 =\sum_{\alpha\in \N^n}
\int_{\R}\hat F(\tau) 
\prod_{1\le j\le n}\frac{(1+i\tau\mu_{j})^{2\alpha_{j}+1} }{(1+\tau^{2}\mu_{j}^2)^{\alpha_{j}+1}}d\tau
 \ \mathbb P_{\alpha}
 \\
 =\sum_{\alpha\in \N^n}
\int_{\R}\hat F(\tau) 
\prod_{1\le j\le n}\frac{(1+i\tau\mu_{j})^{\alpha_{j}} }{(1-i\tau \mu_{j})^{\alpha_{j}+1}}d\tau
 \ \mathbb P_{\alpha},
\end{multline}
where $\mathbb P_{\alpha}$ is the rank-one orthogonal projection onto $\Psi_{\alpha}$ given by
\eqref{psidim}.
\end{lem}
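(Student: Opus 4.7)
The plan is to mimic the one-dimensional argument of Lemma \ref{lemm21} by exploiting the separability of $q_{\mu}$ and the tensorial nature of both the Weyl quantization and Mehler's formula. First I would apply the Fourier inversion formula to write
\[
F\bigl(q_{\mu}(x,\xi)\bigr)=\int_{\R}\hat F(\tau)\,e^{2i\pi\tau q_{\mu}(x,\xi)}\,d\tau,
\]
and then move the Weyl quantization inside the integral, which is legitimate under assumption \eqref{conditn} because, as will emerge, the operator-valued integrand is bounded in $\mathcal B(L^2(\R^n))$ uniformly in $\tau$ times a factor controlled by $(1+\tau^2)^{-1/2}$. Since $q_{\mu}(x,\xi)=\sum_j\mu_j(x_j^{2}+\xi_j^{2})$ splits as a sum of symbols in disjoint pairs of variables, the symbol $e^{2i\pi\tau q_{\mu}}$ factorizes as a tensor product, and so does its Weyl quantization, giving $\bigotimes_{j=1}^n\opw{e^{2i\pi\tau\mu_j(x_j^{2}+\xi_j^{2})}}$.

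Next I would apply the one-dimensional Mehler formula \eqref{mehler+n} to each factor, as already recorded in \eqref{mehler11}, to get
\[
\OPW{e^{2i\pi\tau q_{\mu}(x,\xi)}}=\prod_{j=1}^n\cos(\arctan(\tau\mu_j))\,e^{2i\pi\arctan(\tau\mu_j)\opw{x_j^{2}+\xi_j^{2}}}.
\]
Using the spectral decomposition of each one-dimensional harmonic oscillator from \eqref{hardec} and tensorizing yields the joint spectral decomposition $\sum_{\alpha\in\N^n}\prod_j e^{2i(\alpha_j+\frac12)\arctan(\tau\mu_j)}\cos(\arctan(\tau\mu_j))\,\mathbb P_{\alpha}$, where $\mathbb P_{\alpha}$ is the rank-one projection onto the multi-dimensional Hermite function $\Psi_{\alpha}$ given by \eqref{psidim}.

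Then I would integrate against $\hat F(\tau)$ and interchange sum and integral via Fubini. This interchange is the one technical point that requires care: it is justified by the fact that the $\mathbb P_{\alpha}$ are mutually orthogonal non-negative operators with $\sum_{\alpha}\mathbb P_{\alpha}=\Id$, that each trigonometric coefficient has modulus at most $\prod_j|\cos(\arctan(\tau\mu_j))|\le 1$ (uniformly in $\alpha$), and that the hypothesis \eqref{conditn} provides the dominating factor $(1+\tau^2)^{-1/2}$ in the integral. Finally, converting the complex exponentials by the elementary identity $\cos(\arctan s)+i\sin(\arctan s)=(1+is)/\sqrt{1+s^2}$ recalled in Section \ref{sec.arctan} rewrites each factor as the rational function $(1+i\tau\mu_j)^{2\alpha_j+1}/(1+\tau^2\mu_j^2)^{\alpha_j+1}$, which simplifies to $(1+i\tau\mu_j)^{\alpha_j}/(1-i\tau\mu_j)^{\alpha_j+1}$ upon dividing numerator and denominator by $(1-i\tau\mu_j)^{\alpha_j}$.

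I do not expect any genuine obstacle here: the one-dimensional Lemma \ref{lemm21} already contains all the analytic content, and the multi-dimensional case is obtained by a purely formal tensorization. The only place where a reader might hesitate is the justification of the Fubini step and the boundedness of the operator integrand, both of which follow from the positivity of the spectral projections and from \eqref{conditn}; an extension from $F\in\mathscr W(\R)$ to the general class in \eqref{conditn} then proceeds by approximation, since both sides of \eqref{for212n} depend continuously on $\hat F$ in the weighted $L^{1}$ norm $\int|\hat F(\tau)|(1+\tau^2)^{-1/2}d\tau$.
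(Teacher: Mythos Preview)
Your proposal is correct and follows essentially the same route as the paper: Fourier inversion, tensorized Mehler formula \eqref{mehler11}, spectral decomposition of the harmonic oscillator via \eqref{app.paln}, Fubini justified by $\mathbb P_{\alpha}\ge 0$, $\sum_{\alpha}\mathbb P_{\alpha}=\Id$ together with \eqref{conditn}, and the algebraic rewriting from Section~\ref{sec.arctan}. The only cosmetic difference is that the paper does not spell out the final density/approximation remark you add at the end; otherwise the arguments coincide.
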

\begin{lem}\label{lemm21n+}
 Let $F$ be as in Lemma \ref{lemm21n+} and let us assume that all the $\mu_{j}$ are equal to $\mu$ (positive).
 Then 
 \begin{equation}\label{}
\OPW{F\bigl(\mu\sum_{1\le j\le n}(x_{j}^{2}+\xi_{j}^{2})\bigr)}
 =\sum_{k\ge 0}\int_{\R}\hat F(\tau) \frac{(1+i\tau\mu)^{k} }{(1-i\tau \mu)^{k+n}}d\tau
 \mathbb P_{k;n},
\end{equation}
with
 \begin{equation}\label{}
  \mathbb P_{k;n}=\sum_{\substack{\alpha\in \N^n\\\val \alpha=k}}\mathbb P_{\alpha},
\end{equation}
where $\mathbb P_{\alpha}$ is the rank-one orthogonal projection onto $\Psi_{\alpha}$ given by
\eqref{psidim}.
\end{lem}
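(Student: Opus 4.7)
The plan is to derive this as an immediate specialization of Lemma \ref{lemm21n}, the main observation being that in the isotropic case the product over $j$ collapses into a factor that depends on $\alpha$ only through $\val\alpha$, so one can regroup the sum by level sets of the total degree.

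First I would apply Lemma \ref{lemm21n} directly with $\mu_j = \mu$ for all $j$. The integrand in each summand becomes
\[
\prod_{1\le j\le n}\frac{(1+i\tau\mu)^{\alpha_j}}{(1-i\tau\mu)^{\alpha_j+1}}
= \frac{(1+i\tau\mu)^{\sum_j \alpha_j}}{(1-i\tau\mu)^{\sum_j \alpha_j + n}}
= \frac{(1+i\tau\mu)^{\val\alpha}}{(1-i\tau\mu)^{\val\alpha+n}},
\]
so that the coefficient attached to $\mathbb P_\alpha$ in the spectral decomposition depends on $\alpha$ only through $k := \val\alpha$.

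Next I would split the sum over $\alpha \in \N^n$ as a double sum $\sum_{k\ge 0}\sum_{\val\alpha = k}$ and pull the common $\tau$-integral out of the inner sum. By the definition $\mathbb P_{k;n}=\sum_{\val\alpha=k}\mathbb P_\alpha$ given in the statement, this yields
\[
\OPW{F\bigl(\mu\sum_{1\le j\le n}(x_j^2+\xi_j^2)\bigr)}
= \sum_{k\ge 0}\left(\int_{\R}\hat F(\tau)\,\frac{(1+i\tau\mu)^k}{(1-i\tau\mu)^{k+n}}\,d\tau\right)\mathbb P_{k;n},
\]
which is the asserted identity. The interchange of the $\tau$-integral with the inner finite sum over $\val\alpha=k$ is trivial, and the interchange with the outer sum over $k$ is exactly the one already justified in the proof of Lemma \ref{lemm21n}, by the absolute integrability ensured by hypothesis \eqref{conditn} together with $\mathbb P_{k;n}\ge 0$ and $\sum_{k}\mathbb P_{k;n} = \Id$.

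There is essentially no obstacle here: the whole content is the algebraic identity that the product of $(1+i\tau\mu_j)^{\alpha_j}/(1-i\tau\mu_j)^{\alpha_j+1}$ collapses in the isotropic case. If one preferred an independent derivation, one could also redo the Mehler computation \eqref{mehler11} directly with all $\mu_j$ equal, noticing that $\OPW{e^{2i\pi\tau\mu\sum_j(x_j^2+\xi_j^2)}}$ acts on the eigenspace of $\OPW{\pi\sum_j(x_j^2+\xi_j^2)}$ corresponding to the eigenvalue $k+\tfrac n2$ as the scalar $\cos^n(\arctan(\tau\mu))\, e^{2i(k+n/2)\arctan(\tau\mu)}$, and then use Section \ref{sec.arctan} to rewrite this scalar as $(1+i\tau\mu)^k/(1-i\tau\mu)^{k+n}$ before integrating against $\hat F(\tau)$.
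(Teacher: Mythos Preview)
Your proposal is correct and takes essentially the same approach as the paper: specialize Lemma \ref{lemm21n} to $\mu_j=\mu$, observe that the product collapses to $(1+i\tau\mu)^{\val\alpha}/(1-i\tau\mu)^{\val\alpha+n}$, and regroup the sum over $\alpha$ by $\val\alpha=k$. The paper's proof is slightly terser, but the argument is identical.
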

\begin{proof}
 With all the $\mu_{j}$ equal to $\mu>0$, we find
 $$
 \prod_{1\le j\le n}\frac{(1+i\tau\mu_{j})^{\alpha_{j}} }{(1-i\tau \mu_{j})^{\alpha_{j}+1}}=
  \prod_{1\le j\le n}\frac{(1+i\tau\mu)^{\alpha_{j}} }{(1-i\tau \mu)^{\alpha_{j}+1}}
  =\frac{(1+i\tau\mu)^{\val\alpha} }{(1-i\tau \mu)^{\val\alpha+n}},
 $$
 which depends only on $\val \alpha$, so that  applying the previous lemma gives
 $$
 \Bigl(F\bigl(\mu\sum_{1\le j\le n}(x_{j}^{2}+\xi_{j}^{2})
 \bigr)\Bigr)^{w}
 =\sum_{k\ge 0}\int_{\R}\hat F(\tau) \frac{(1+i\tau\mu)^{k} }{(1-i\tau \mu)^{k+n}}d\tau
 \mathbb P_{k;n},
 $$
 giving the sought result.
\end{proof}
\section{Conics with eccentricity smaller than 1}
\index{indicatrix of a disc}
\subsection{Indicatrix of a disc}\label{sec.22one}
\label{secdim}
Let us assume now that, with some $a\ge 0$,
$$
F=\indic{[-\frac a{2\pi}, \frac a{2\pi}]},
\text{
so that 
}\quad 
F(x^{2}+\xi^{2})=\mathbf 1_{\{{2\pi(x^{2}+\xi^{2})\le a}\}}.
$$
According to Section \ref{12345},
we have
$
\hat F(\tau) = \frac{\sin a\tau}{\pi \tau},
$
so that \eqref{condit} holds true. We find in this case,
\begin{equation}\label{556699+}
\opw{F(x^{2}+\xi^{2})}
=\sum_{k\ge 0} F_{k}(a)\mathbb P_{k},\quad 
F_{k}(a)=
\int_{\R}
\frac{\sin a\tau}{\pi \tau}
\frac{(1+i\tau)^{k}}{(1-i\tau)^{k+1}}d\tau,
\end{equation}
so that (note that $F_{k}(a)$ is real-valued since $F$ is real-valued and thus the operator  
$\opw{F(x^{2}+\xi^{2})}$ is self-adjoint), and 
for $a>0$,
using the result \eqref{fprime} of the Appendix page \pageref{sub555+},
we obtain 
\begin{align*}
F'_{k}(a)&=\frac1{\pi}\int_{\R}
{\cos a\tau}
\frac{(1+i\tau)^{k}}{(1-i\tau)^{k+1}}d\tau
\\
&=\frac1{2\pi}\int_{\R}
e^{i a \tau}\left\{
\frac{(1+i\tau)^{k}}{(1-i\tau)^{k+1}}
+\frac{(1-i\tau)^{k}}{(1+i\tau)^{k+1}}\right\} d\tau
\\&=\frac1{2\pi}\int_{\R}
e^{i a \tau}\left\{
\frac{i^{k}(\tau-i)^{k}}{(-i)^{k+1}(\tau+i)^{k+1}}
+\frac{(-i)^{k}(\tau+i)^{k}}{i^{k+1}(\tau-i)^{k+1}}\right\} d\tau
\\&=\frac{(-1)^{k}}{2i\pi}\int_{\R}
e^{i a \tau}\left\{-
\frac{(\tau-i)^{k}}{(\tau+i)^{k+1}}
+\frac{(\tau+i)^{k}}{(\tau-i)^{k+1}}\right\} d\tau.
\end{align*}
We shall now calculate explicitly both integrals above: let $1<R$ be given and let us consider the closed path
\begin{equation}\label{pathcir}
\gamma_{R}=[-R,R]
\cup\underbrace{\{R e^{i\theta}\}_{0\le  \theta\le \pi}}_{\gamma_{2;R}}.
\end{equation}
\begin{figure}[ht]
\centering
\scalebox{0.35}{\includegraphics{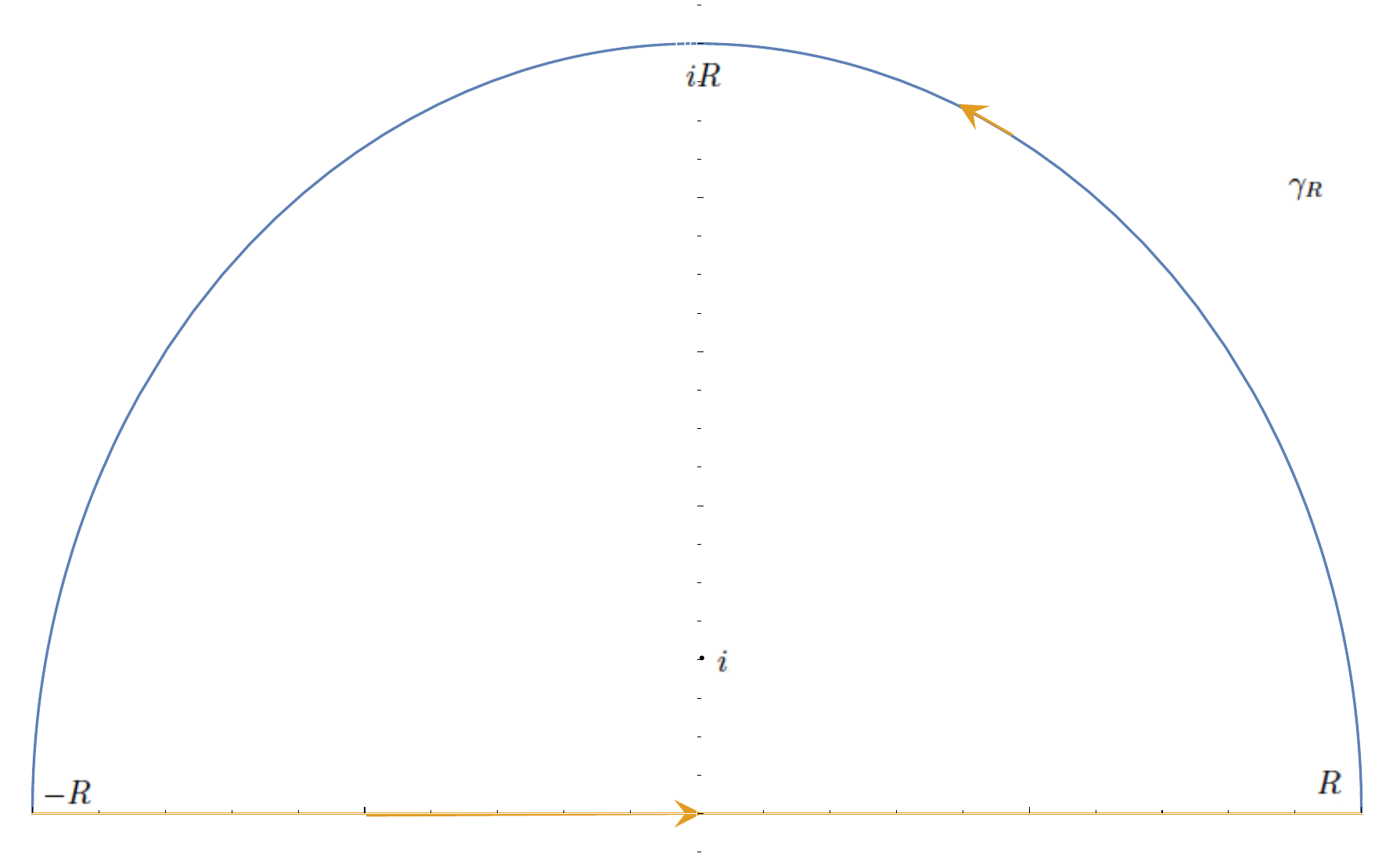}}\par
\caption{$
\gamma_{R}=[-R,R]\cup
\{R e^{i\theta}\}_{0\le  \theta\le \pi}
$}
\label{pic1}
\end{figure}
We have 
\begin{multline*}
\frac1{2 i\pi}\int_{\gamma_{R}}e^{i a \tau}\left\{-
\frac{(\tau-i)^{k}}{(\tau+i)^{k+1}}
+\frac{(\tau+i)^{k}}{(\tau-i)^{k+1}}\right\} d\tau=\text{\tt Res}(e^{ia\tau}\frac{(\tau+i)^{k}}{(\tau-i)^{k+1}} ;i)
\\
= \frac1{k!}(\frac{d}{d\tau})^{k}\bigl\{ e^{i a\tau}(\tau+i)^{k}\bigr\}_{\vert \tau=i},
\end{multline*}
and we note that, for $a>0$, 
$$
\lim_{R\rightarrow+\io}
\int_{\gamma_{2;R}}e^{i a \tau}\left\{-
\frac{(\tau-i)^{k}}{(\tau+i)^{k+1}}
+\frac{(\tau+i)^{k}}{(\tau-i)^{k+1}}\right\} d\tau=0,
$$
since for $R \ge 2$, 
\begin{multline*}
\int_{0}^{\pi} \val{e^{ia Re^{i\theta}}}
\left\vert-
\frac{(R e^{i\theta}-i)^{k}}{(R e^{i\theta}+i)^{k+1}}
+\frac{(R e^{i\theta}+i)^{k}}{(R e^{i\theta}-i)^{k+1}}\right\vert
\val{iR e^{i\theta}} d\theta
\\\le 
\int_{0}^{\pi} e^{-aR\sin \theta}\left\vert-
\frac{(e^{i\theta}-iR^{-1})^{k}}{( e^{i\theta}+iR^{-1})^{k+1}}
+\frac{(e^{i\theta}+iR^{-1})^{k}}{(e^{i\theta}-iR^{-1})^{k+1}}\right\vert d\theta
\\
\le 
\int_{0}^{\pi} e^{-aR\sin \theta}
d\theta \sup_{0\le \rho\le 1/2}\left\{
\frac{(1+\rho)^{k}}{( 1-\rho)^{k+1}}
+\frac{(1+\rho)^{k}}{(1-\rho)^{k+1}}\right\}. 
\end{multline*}
For $a>0$, we obtain $\lim_{R\rightarrow+\io }\int_{0}^{\pi} e^{-aR\sin \theta}
d\theta=0$ by dominated convergence. 
As a result, we get 
$$F'_{k}(a)=
(-1)^{k} \frac1{k!}(\frac{d}{d\tau})^{k}\bigl\{ e^{i a\tau}(\tau+i)^{k}\bigr\}_{\vert \tau=i}
=(-1)^{k} \frac1{k!}(\frac{d}{\frac i ad\epsilon})^{k}\bigl\{ e^{-a-\epsilon}(i+i\frac\epsilon a+i)^{k}\bigr\}_{\vert \epsilon=0},
$$
that is 
$$
F'_{k}(a)=\frac{(-1)^{k}}{k!} e^{-a} (\frac{d}{d\epsilon})^{k}\bigl\{ e^{-\epsilon}
(2a+\epsilon)^{k}
\bigr\}_{\vert \epsilon=0}.
$$
We note that $F'_{k}$ belongs to $L^{1}(\R_{+})$ as the product of $e^{-a}$ by a polynomial.
We have also that 
\begin{equation}\label{321654}
\lim_{a\rightarrow+\io}F_{k}(a)=1 \quad \text{(see the Appendix page \pageref{sub555}),}
\end{equation}
and this yields
$$
F_{k}(a)=1+\int_{+\io}^{a} F'_{k}(b) db
=1-\int_{a}^{+\io}
\frac{(-1)^{k}}{k!} e^{-b} (\frac{d}{d\epsilon})^{k}\bigl\{ e^{-\epsilon}
(2b+\epsilon)^{k}
\bigr\}_{\vert \epsilon=0} db,
$$
so that 
\begin{equation}\label{11111}
F_{k}(a)=1-e^{-a}P_{k}(a),
\end{equation}
with 
\begin{multline}\label{88888}
P_{k}(a)
=\frac{(-1)^{k}}{k!} \int_{0}^{+\io}
e^{-t} (\frac{d}{d\epsilon})^{k}\bigl\{ e^{-2\epsilon }
(a+t+\epsilon)^{k}
\bigr\}_{\vert \epsilon=0} dt\\
=\frac{(-1)^{k}}{k!} \int_{0}^{+\io}
e^{t} (\frac{d}{d\epsilon})^{k}\bigl\{ e^{-2\epsilon -2t}
(a+t+\epsilon)^{k}
\bigr\}_{\vert \epsilon=0} dt
\\
=\frac{(-1)^{k}}{k!} \int_{0}^{+\io}
e^{t} (\frac{d}{dt})^{k}\bigl\{ e^{-2t}
(a+t)^{k}
\bigr\} dt.
\end{multline}
We see that $P_{k}$ is a polynomial with leading monomial $\frac{2^{k}a^{k}}{k!}$
(by a direct computation)
 and $P_{k}(0)=1$
 (since $0=F_{k}(0)=1-P_{k}(0)$)
  and moreover,
using Laguerre polynomials (see e.g. \eqref{laguerre} in our Section \ref{sec.laguerre}), we obtain
\begin{align}
P_{k}(a)&=\frac{(-1)^{k}}{k!} \int_{0}^{+\io}
e^{-t}  e^{2t+2a}(\frac{d}{2dt})^{k}\bigl\{ e^{-2t-2a}
(2a+2t)^{k}
\bigr\} dt\label{onedim7}
\\
&=(-1)^{k}\int_{0}^{+\io} e^{-t}L_{k}(2t+2a) dt,\label{onedim6}
\end{align}
and this gives in particular 
\begin{multline}\label{5555}
P'_{k}(a)=(-1)^{k}\int_{0}^{+\io} e^{-t}2L'_{k}(2t+2a) dt\\=(-1)^{k}\Bigl\{
[e^{-t}L_{k}(2t+2a)]_{t=0}^{t=+\io}+\int_{0}^{+\io} e^{-t} L_{k}(2t+2a) dt 
\Bigr\}
\\=(-1)^{k+1}L_{k}(2a)+P_{k}(a).
\end{multline}
Moreover we have from  \eqref{88888}, for $k\ge 1$, 
\begin{align*}
&P'_{k}(a)=\frac{(-1)^{k}}{k!}\int_{0}^{+\io}
e^{t} (\frac{d}{dt})^{k}\bigl\{ e^{-2t}
k(a+t)^{k-1}
\bigr\} dt\\&
=\frac{(-1)^{k}}{k!}\int_{0}^{+\io}
e^{t} \frac{d}{dt}(\frac{d}{dt})^{k-1}\bigl\{ e^{-2t}
k(a+t)^{k-1}
\bigr\} dt
\\&=
\frac{(-1)^{k}}{k!}\Bigl\{
\bigl[e^{t}(\frac{d}{dt})^{k-1}\bigl\{ e^{-2t}
k(a+t)^{k-1}
\bigr\} \bigr]^{t=+\io}_{t=0}
-\int_{0}^{+\io}
e^{t} (\frac{d}{dt})^{k-1}\bigl\{ e^{-2t}
k(a+t)^{k-1}
\bigr\} dt
\Bigr\}
\\
&
=\frac{(-1)^{k-1}}{(k-1)!}
(\frac{d}{dt})^{k-1}\bigl\{ e^{-2t}
(a+t)^{k-1}
\bigr\}_{\vert t=0}
+\frac{(-1)^{k-1}}{(k-1)!}\int_{0}^{+\io}
e^{t} (\frac{d}{dt})^{k-1}\bigl\{ e^{-2t}
(a+t)^{k-1}
\bigr\} dt
\end{align*}
\begin{align*}
&=
\frac{(-1)^{k-1}}{(k-1)!}
e^{2t+2a}(\frac{d}{2dt})^{k-1}\bigl\{ e^{-2t-2a}
(2a+2t)^{k-1}
\bigr\}_{\vert t=0}
\\ &\hskip150pt
+\frac{(-1)^{k-1}}{(k-1)!}\int_{0}^{+\io}
e^{t} (\frac{d}{dt})^{k-1}\bigl\{ e^{-2t}
(a+t)^{k-1}
\bigr\} dt
\\&=(-1)^{k-1}L_{k-1}(2a)+P_{k-1}(a),
\end{align*}
so that
\begin{equation}\label{lagk-1}\forall k\ge 1,\quad
P'_{k}(a)=(-1)^{k-1}L_{k-1}(2a)+P_{k-1}(a)=(-1)^{k+1}L_{k}(2a)+P_{k}(a).
\end{equation}
This implies for $N\ge 1,$
$$
\sum_{1\le k\le N} P_{k}(a)-\sum_{1\le k\le N}(-1)^{k}L_{k}(2a)
=\sum_{0\le k\le N-1} P_{k}(a)+\sum_{0\le k\le N-1}(-1)^{k}L_{k}(2a),
$$
yielding 
\begin{equation*}
P_{N}(a)-\underbrace{P_{0}(a)}_{=1=L_{0}(a)}=\sum_{1\le k\le N}(-1)^{k}L_{k}(2a)+\sum_{0\le k\le N-1}(-1)^{k}L_{k}(2a),
\end{equation*}
and
\begin{equation}\label{222222}
P_{N}(a)=\sum_{0\le k\le N}(-1)^{k}L_{k}(2a)+\sum_{0\le k\le N-1}(-1)^{k}L_{k}(2a).
\end{equation}
Note that the previous formula holds as well for $N=0$, since $P_{0}=1=L_{0}$.
\vs
Although the function $\R_{+}\ni a \mapsto F_{k}(a)$ has no monotonicity properties, we prove below that 
$\R_{+}\ni a \mapsto P_{k}(a)$ is indeed increasing. For that purpose, let us use \eqref{lagk-1},
which implies
\begin{align*}
P'_{k}(a)&=(-1)^{k-1}L_{k-1}(2a)+P_{k-1}(a),\quad k\ge 1,\\
P_{k-1}(a)&=P_{k-2}(a)+(-1)^{k-2}L_{k-2}(2a)+(-1)^{k-1}L_{k-1}(2a), \quad k\ge 2,
\\
P'_{k}(a)&=2(-1)^{k-1}L_{k-1}(2a)+(-1)^{k-2}L_{k-2}(2a)+P_{k-2}(a),\quad k\ge 2.
\end{align*}
We claim that that for $k\ge 1$,
\begin{equation}\label{666666}
P'_{k}(a)=2\sum_{0\le l\le k-1}(-1)^{l}L_{l}(2a).
\end{equation}
That property holds for $k=1$ since $P_{1}(a)=1+2a$: we check
$
P'_{1}(a)=2.
$
Moreover we have 
\begin{align*}
P'_{k+1}(a)&=(-1)^{k}L_{k}(2a)+P_{k}(a)\qquad\text{\footnotesize (from the first equation in \eqref{lagk-1})}
\\
\text{\footnotesize (using \eqref{222222})\quad }&=(-1)^{k}L_{k}(2a)
+\sum_{0\le l\le k}(-1)^{l}L_{l}(2a)+\sum_{0\le l\le k-1}(-1)^{l}L_{l}(2a)
\\
&=2\sum_{0\le l\le k}(-1)^{l}L_{l}(2a),\quad \text{\tt qed.}
\end{align*}
As a byproduct we find from \eqref{inelag}
\begin{equation}\label{}\forall a\ge 0,\quad
P'_{k}(a)\ge 0,
\end{equation}
which implies that for $a\ge 0$, $P_{k}(a)\ge P_{k}(0)=1$.
We have proven the following
\begin{lem}\label{keylem}
 The polynomial $P_{k}(a)=e^{a}\bigl(1- F_{k}(a)\bigr)$ is increasing  on $\R_{+}$, $P_{k}(0)=1$.
\end{lem}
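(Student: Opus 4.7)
The plan is to exploit the machinery already assembled just above the statement: namely the explicit identity $F_k(a)=1-e^{-a}P_k(a)$ from \eqref{11111}, the differential recursion \eqref{lagk-1} relating $P'_k$, $P_{k-1}$ and the Laguerre polynomial $L_{k-1}(2a)$, and the telescoping identity \eqref{222222} giving $P_N$ as an alternating sum of Laguerre polynomials. The endgame is the classical non-negativity
\[
\sum_{0\le l\le k-1}(-1)^l L_l(2a)\ge 0\quad\text{for}\quad a\ge 0,
\]
referenced as \eqref{inelag}, from which the monotonicity will drop out.

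First I would settle the normalization $P_k(0)=1$. Two routes are available and I would pick whichever is cleanest in context: either observe directly from the spectral decomposition \eqref{556699+} that $F_k(0)=0$ (because $F=\indic{\{0\}}$ has vanishing partial Fourier integrals against $\tfrac{\sin(0\cdot\tau)}{\pi\tau}\equiv 0$) and then read off $P_k(0)=1$ from \eqref{11111}, or else substitute $a=0$ in the closed form \eqref{onedim6} for $P_k$ and use the standard evaluation $\int_0^\infty e^{-t}L_k(2t)\,dt=(-1)^k$ (which follows from the generating function of the Laguerre polynomials). Either way the base case is immediate.

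For the monotonicity I would proceed by induction on $k$, proving in one shot the stronger explicit identity
\[
P'_k(a)=2\sum_{0\le l\le k-1}(-1)^l L_l(2a),\qquad k\ge 1,
\]
which is exactly \eqref{666666}. The base $k=1$ reduces to $P'_1(a)=2$, which one checks from the definition of $P_1$ (leading monomial $2a$ and $P_1(0)=1$, hence $P_1(a)=1+2a$). For the induction step I would use the first form of \eqref{lagk-1}, namely $P'_{k+1}(a)=(-1)^k L_k(2a)+P_k(a)$, and then replace $P_k(a)$ by the telescoping expression \eqref{222222}; after collecting the $L_k(2a)$ terms with the leading $(-1)^k L_k(2a)$ the result is the claimed doubled alternating sum up to index $k$.

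Once the formula \eqref{666666} is established, invoking the Laguerre inequality \eqref{inelag} shows $P'_k(a)\ge 0$ on $\mathbb{R}_+$, hence $P_k$ is increasing, and combined with $P_k(0)=1$ this completes the proof. I do not foresee a serious obstacle: the only non-routine input is the positivity of the partial alternating sum of Laguerre polynomials \eqref{inelag}, but this is quoted from the appendix and is classical. The main care is bookkeeping in the telescoping step of the induction, making sure the indices in \eqref{222222} (which has two sums, one up to $N$ and one up to $N-1$) line up correctly when one plugs it into \eqref{lagk-1}.
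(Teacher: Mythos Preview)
Your proposal is correct and follows essentially the same route as the paper: the argument is precisely the derivation of \eqref{666666} via \eqref{lagk-1} and \eqref{222222}, followed by the Feldheim--Askey--Gasper inequality \eqref{inelag}, with $P_k(0)=1$ read off from $F_k(0)=0$ and \eqref{11111}. One cosmetic remark: what you call the ``induction step'' does not actually invoke the induction hypothesis (you substitute \eqref{222222} for $P_k$, not the formula for $P'_k$), so the argument is really a direct computation for each $k$---but this matches the paper's presentation exactly.
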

Let us take a look at the first  $P_{k}$: we have
\par
{\footnotesize\begin{align*}
P_{0}(a)&= 1,\\
P_{1}(a)&= 1+2 a,\\
P_{2}(a)&=  1+2 a^2,\\
P_{3}(a)&=  1+2 a-2 a^2+\frac{4 a^3}{3},\\
P_{4}(a)&=  1+4 a^2-\frac{8 a^3}{3}+\frac{2 a^4}{3},\\
P_{5}(a)&=  1+2a-4 a^2+\frac{16 a^3}{3}-2 a^4+\frac{4 a^5}{15},\\
P_{6}(a)&=  1+6 a^2-8 a^3+\frac{14 a^4}{3}-\frac{16 a^5}{15}+\frac{4 a^6}{45},\\
P_{7}(a)&=1+2
a-6 a^2+12 a^3-\frac{26 a^4}{3}+\frac{44 a^5}{15}-\frac{4 a^6}{9}+\frac{8 a^7}{315},\\
P_{8}(a)&=1+8a^2-16 a^3+\frac{44 a^4}{3}-\frac{32 a^5}{5}+\frac{64 a^6}{45}-\frac{16 a^7}{105}+\frac{2 a^8}{315},\\
P_{9}(a)&=
1+2 a-8 a^2+\frac{64 a^3}{3}-\frac{68 a^4}{3}+\frac{184
a^5}{15}-\frac{32 a^6}{9}+\frac{176 a^7}{315}-\frac{2 a^8}{45}+\frac{4 a^9}{2835}
,\\
P_{10}(a)&{\text{\small $ =
1+10 a^2-\frac{80 a^3}{3}+\frac{100 a^4}{3}-\frac{64 a^5}{3}+\frac{344
a^6}{45}-\frac{496 a^7}{315}+\frac{58 a^8}{315}-\frac{32 a^9}{2835}+\frac{4 a^{10}}{14175}
$}}
,\\
P_{11}(a)&{\text{\small $ =
1 + 2 a - 10 a^2 +\frac{ 100 a^3}3 - \frac{140 a^4}3 +  \frac{104 a^5}3 - 
\frac{664 a^6}{45} +  \frac{1184 a^7}{315} -\frac{26 a^8}{45}$}} \\&\hskip240pt {\text{\small $ 
+ \frac{148 a^9}{2835} - 
  \frac {4 a^{10}}{1575} + \frac{8 a^{11}}{155925}$}}
  ,\\
  P_{12}(a)&{\text{\small $ =
1 + 12 a^2 - 40 a^3 +   \frac{190 a^4}3 -   \frac{160 a^5}3 + 
  \frac{1184 a^6}{45} - 
    \frac {2512 a^7}{315} +   \frac{478 a^8}{315} - 
    \frac{512 a^9}{2835}$}} \\&\hskip240pt {\text{\small $ 
 +   \frac{184 a^{10}}{14175} -   \frac{16 a^{11}}{31185} + 
    \frac{4 a^{12}}{467775}.
$}}
\end{align*}}
We note as well that 
\begin{equation}\label{polpik}
P_{k}(x)=\sum_{0\le m\le k}\frac{x^{m}}{m!}\sum_{m\le l\le k} 2^{l}(-1)^{k-l}\binom{k}{l},
\end{equation}
since from \eqref{88888},
\begin{align*}
P_{k}(a)
&=\frac{(-1)^{k}}{k!} \int_{0}^{+\io}
e^{t} (\frac{d}{dt})^{k}\bigl\{ e^{-2t}
(a+t)^{k}
\bigr\} dt
\\
&= (-1)^{k}\sum_{0\le m\le k}\int_{0}^{+\io}
e^{-t} 
\frac{(-2)^{k-m}}{(k-m)!} \frac{k!}{(k-m)!m!}(a+t)^{k-m}
dt
\\
&= (-1)^{k}\sum_{0\le m\le k}\int_{0}^{+\io}
e^{-t} 
\frac{(-2)^{k-m}}{(k-m)!} \frac{k!}{(k-m)!m!}\sum_{0\le l\le k-m}
a^{l} t^{k-l-m}\binom{k-m}{l}
dt
\\
&= (-1)^{k}\sum_{\substack{0\le m\le k\\0\le l\le k-m}}
\frac{(-2)^{k-m}}{(k-m)!} \frac{k!}{(k-m)!m!}
a^{l} (k-l-m)!\binom{k-m}{l}
\\
&= \sum_{\substack{0\le l+m\le k}}
\frac{(-1)^{m}2^{k-m}}{(k-m)!} \frac{k!}{m!}
a^{l} \frac{1}{l!}
=\sum_{0\le l\le k}\frac{a^{l}}{l!}\sum_{l\le m'\le k}{(-1)^{k-m'}2^{m'}}\binom{k}{m'},\quad\text{\tt qed.}
\end{align*}
\begin{lem}\label{lem13}
 With the polynomial $P_{k}$ defined by \eqref{onedim6}, we have
 \begin{equation}\label{}
\begin{cases}
P_{k}(a)&=2\sum_{0\le l\le k-1} (-1)^{l}L_{l}(2a)+(-1)^{k}L_{k}(2a),
\\
P'_{k}(a)&=2\sum_{0\le l\le k-1} (-1)^{l}L_{l}(2a).
\end{cases}
\end{equation}
\end{lem}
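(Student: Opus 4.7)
The plan is to observe that both identities in Lemma \ref{lem13} are essentially already proven in the preceding computations, and only require a cosmetic rearrangement. The strategy is therefore purely bookkeeping rather than introducing any new analytic input.

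For the first identity, I would start from formula \eqref{222222}, which reads
$$P_{N}(a)=\sum_{0\le k\le N}(-1)^{k}L_{k}(2a)+\sum_{0\le k\le N-1}(-1)^{k}L_{k}(2a),$$
valid for every $N\ge 0$. Splitting off the top term $l=k$ from the first sum and grouping it with the second (whose range is $0\le l\le k-1$) gives precisely
$$P_{k}(a)=2\sum_{0\le l\le k-1}(-1)^{l}L_{l}(2a)+(-1)^{k}L_{k}(2a),$$
with the convention that the sum is empty (and thus zero) when $k=0$, consistent with $P_{0}=1=L_{0}$.

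For the second identity, the work has already been done in \eqref{666666}, which is precisely
$$P'_{k}(a)=2\sum_{0\le l\le k-1}(-1)^{l}L_{l}(2a),$$
and was established there by induction on $k$ using the recurrence \eqref{lagk-1}, namely $P'_{k+1}=(-1)^{k}L_{k}(2a)+P_{k}$ together with the formula for $P_{k}$ coming from \eqref{222222}; the base case $k=1$ is checked directly from $P_{1}(a)=1+2a$. For $k=0$ the claim reduces to $P'_{0}=0$, consistent with $P_{0}=1$.

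The only thing worth remarking is internal consistency: differentiating the first identity term-by-term must reproduce the second, which amounts to the classical Laguerre derivative identity
$$\frac{d}{da}\bigl[(-1)^{k}L_{k}(2a)\bigr]=2(-1)^{k}L'_{k}(2a),$$
and the telescoping it induces. Since neither identity requires fresh computation beyond what has already appeared, there is no genuine obstacle; the ``main'' step is simply to verify that the $k=0$ and $k=1$ edge cases are compatible with the empty-sum convention, which they are.
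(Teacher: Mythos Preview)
Your proposal is correct and takes essentially the same approach as the paper: the paper's proof simply says ``We may use the already proven \eqref{222222}, \eqref{666666}, but we may also prove this directly by induction on $k$,'' which is exactly the bookkeeping you describe. Your explicit splitting of the top term in \eqref{222222} and the edge-case checks are fine elaborations of what the paper leaves implicit.
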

\begin{proof}
We may use the already proven \eqref{222222}, \eqref{666666}, but we may also prove this directly by 
induction on $k$.
\end{proof}
\begin{pro}\label{pro.2424}
 Let $F_{k}$ be given by \eqref{11111} with $P_{k}$ defined by \eqref{88888}. We have
 \begin{align}
F_{k}(a)&=1-e^{-a}P_{k}(a)\le 1-e^{-a}=F_{0}(a)\quad \text{for $a\ge 0$},
\label{1214}\\
F'_{k}(a)&=e^{-a}\bigl(P_{k}(a)-P'_{k}(a)
\bigr)=e^{-a}(-1)^{k}L_{k}(2a),
\label{}\\
F'_{k}(0)&=(-1)^{k},\quad \lim_{a\rightarrow +\io} F'_{k}(a)=0_{+}, \quad F_{k}(0)=0,
\lim_{a\rightarrow +\io} F_{k}(a)=1_{-}.\label{}
\end{align}
\end{pro}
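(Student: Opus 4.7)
The proof assembles pieces already established earlier, so I would present it as a short synthesis rather than a fresh derivation. The first identity $F_k(a)=1-e^{-a}P_k(a)$ is nothing but \eqref{11111}. The inequality $F_k(a)\le 1-e^{-a}$ is then immediate from Lemma \ref{keylem}, which states that $P_k$ is increasing on $\R_+$ with $P_k(0)=1$, so $P_k(a)\ge 1$ and
\[
F_k(a)=1-e^{-a}P_k(a)\le 1-e^{-a}=F_0(a).
\]

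For the derivative, straightforward differentiation of $F_k(a)=1-e^{-a}P_k(a)$ yields
\[
F'_k(a)=e^{-a}\bigl(P_k(a)-P'_k(a)\bigr).
\]
Then I invoke Lemma \ref{lem13}, whose two formulas telescope to
\[
P_k(a)-P'_k(a)=\Bigl(2\!\!\sum_{0\le l\le k-1}\!\!(-1)^l L_l(2a)+(-1)^k L_k(2a)\Bigr)-2\!\!\sum_{0\le l\le k-1}\!\!(-1)^l L_l(2a)=(-1)^k L_k(2a),
\]
giving $F'_k(a)=e^{-a}(-1)^k L_k(2a)$, which is the second asserted formula.

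The boundary values are now routine. Since $L_k(0)=1$ (a standard fact used throughout the section on Laguerre polynomials, cf.\ \eqref{laguerre}), we get $F'_k(0)=(-1)^k$. Also $P_k(0)=1$ by Lemma \ref{keylem}, whence $F_k(0)=0$. The limit $\lim_{a\to+\infty}F_k(a)=1$ was already recorded in \eqref{321654}; that the approach is from below follows from the same inequality $P_k(a)\ge 1>0$ on $\R_+$, so that $F_k(a)=1-e^{-a}P_k(a)<1$. Finally, to check that $F'_k(a)\to 0_+$, I would note that the leading monomial of the Laguerre polynomial $L_k(t)$ is $(-t)^k/k!$, so the leading monomial of $(-1)^k L_k(2a)$ is $2^k a^k/k!>0$; multiplication by $e^{-a}$ thus gives a quantity tending to zero through positive values for $a$ large.

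There is essentially no obstacle here: every ingredient has been proven in the preceding pages, and the proposition is a clean re-packaging. The only step that needs a half-line of care is the sign of the leading term of $(-1)^k L_k(2a)$ used for the $0_+$ statement; everything else is direct substitution.
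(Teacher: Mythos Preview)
Your proof is correct and follows essentially the same approach as the paper: both assemble the already-proven ingredients \eqref{11111}, Lemma~\ref{keylem}, the formulas for $P_k$ and $P'_k$ in terms of Laguerre polynomials (your Lemma~\ref{lem13} is the paper's \eqref{222222}--\eqref{666666} repackaged), the value $L_k(0)=1$, and the leading-monomial observation for the $0_+$ limit. The only cosmetic difference is that you telescope via Lemma~\ref{lem13} whereas the paper cites \eqref{222222} and \eqref{666666} directly; the content is identical.
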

\begin{proof}
 We use  \eqref{11111}, \eqref{666666} and \eqref{222222} for the three first equalities, Lemma \ref{keylem} for the first inequality.
 The fourth equality follows from $L_{k}(0)=1$,
 while the fifth is due to the fact that the leading monomial of $(-1)^{k}L_{k}(2a)$ is $2^{k}a^{k}/k!$. The two last equalities are a consequence of the first line.
\end{proof}
\begin{rem}
{\rm The zeroes of $F'_{k}$ on the positive half-line are the positive zeroes of the Laguerre polynomial $L_{k}$
divided by $2$.
When $k$ is even (resp. odd)
the function $F_{k}$ is positive increasing  (resp. negative decreasing) near $0$, then oscillates 
with changes of monotonicity at each $a$ such that $L_{k}(2a)=0$ and when $2a$ is larger than the largest zero of $L_{k}$, the function $F_{k}$ is increasing, smaller than 1,  with limit 1 at infinity.
\par
Typically we have $F_{2l}(0)=0, F'_{2l}(0)=+1,$
\begin{equation}\label{jjjkkk}
\quad 0<a_{1,2l}<a_{2}<\dots<a_{2l-1,2l}<a_{2l,2l}\quad\text{the zeroes of $L_{2l}(2a)$,} 
\end{equation}
 $F_{2l}$ vanishes simply at $b_{0}=0$ and at  $b_{j}\in (a_{j}, a_{j+1})$ for $1\le j\le 2l-1$, also at $b_{2l}>a_{2l}$: $2l+1$ zeroes
 with a    positive (resp. negative)
 derivative at $b_{0}, b_{2},\dots,b_{2l}$
 (resp. at $b_{1}, b_{3},\dots,b_{2l-1}$).
 \par
 Moreover, we have $F_{2l+1}(0)=0, F'_{2l+1}(0)=-1,$
\begin{equation}\label{666555}
\quad 0<a_{1, 2l+1}<a_{2,2l+1}<\dots <a_{2l, 2l+1}<a_{2l+1, 2l+1},
\text{the zeroes of $L_{2l+1}(2a)$,} 
\end{equation}
 $F_{2l+1}$ vanishes simply at $b_{0}=0$ and at  $b_{j}\in (a_{j}, a_{j+1})$ for $1\le j\le 2l$, also at $b_{2l+1}>a_{2l+1}$: $2l+2$ zeroes
 with a    positive (resp. negative)
 derivative at $b_{1}, b_{3},\dots,b_{2l+1}$
 (resp. at $b_{0}, b_{2},\dots,b_{2l}$).}
\end{rem}
We note as well that a consequence of the previous remark is that 
\begin{align}
&\min _{a\ge 0} F_{2l}(a)=\min_{1\le j\le l}\{F_{2l}(a_{2j, 2l})\},
\\
&\min _{a\ge 0} F_{2l+1}(a)=\min_{0\le j\le l}\{F_{2l+1}(a_{2j+1, 2l+1})\},
\end{align}
where $(a_{p,k})_{1\le p\le k}$ are defined in \eqref{jjjkkk}, \eqref{666555}.
\begin{theorem}\label{thm-fl0001}
 Let $a\ge 0$ be given and let 
 \begin{equation}\label{fl0001}
D_{a}=\{(x,\xi)\in \R^{2}, x^{2}+\xi^{2}\le \frac{a}{2\pi}\}.
\end{equation}
 Then we have 
 \begin{equation}\label{}
\opw{\mathbf 1_{D_{a}}}=\sum_{k\ge 0}F_{k}(a)\mathbb P_{k}\le 1-e^{-a}.
\end{equation}
\end{theorem}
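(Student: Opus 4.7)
The plan is to assemble the theorem by direct specialization of Lemma \ref{lemm21} and invocation of Proposition \ref{pro.2424}, which together have already done all the serious work.

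First I would choose $F=\indic{[-a/(2\pi),\,a/(2\pi)]}$ so that, because $x^2+\xi^2\ge 0$, the composition $F(x^2+\xi^2)$ agrees with $\indic{D_a}(x,\xi)$. A short check confirms that $\hat F(\tau)=\frac{\sin a\tau}{\pi\tau}$ satisfies the integrability condition \eqref{condit}, since $|\hat F(\tau)|\le \min(a/\pi,\,1/(\pi|\tau|))$. This puts us squarely in the hypotheses of Lemma \ref{lemm21}, whose formula \eqref{for212} yields
\begin{equation*}
\opw{\indic{D_a}}=\sum_{k\ge 0}F_k(a)\,\mathbb P_k,\qquad F_k(a)=\int_\R \frac{\sin a\tau}{\pi\tau}\,\frac{(1+i\tau)^k}{(1-i\tau)^{k+1}}\,d\tau,
\end{equation*}
which is precisely \eqref{556699+}. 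Self-adjointness of $\opw{\indic{D_a}}$ (forcing $F_k(a)\in\R$) follows from the reality of $\indic{D_a}$, but is already implicit in the representation formulas derived earlier in the section.

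Second, I would invoke Proposition \ref{pro.2424}, which is established by the preceding contour-integration and Laguerre-polynomial calculations leading to the identity $F_k(a)=1-e^{-a}P_k(a)$ together with Lemma \ref{keylem} asserting that $P_k$ is increasing on $\R_+$ with $P_k(0)=1$. These give the pointwise bound $F_k(a)\le F_0(a)=1-e^{-a}$ for every $k\ge 0$ and every $a\ge 0$. Since $\sum_k \mathbb P_k=\Id$ in the strong operator topology and each $\mathbb P_k$ is a non-negative orthogonal projection, the operator inequality $\sum_k F_k(a)\mathbb P_k\le (1-e^{-a})\sum_k \mathbb P_k=(1-e^{-a})\Id$ follows immediately.

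The only potential obstacle is purely cosmetic: justifying the convergence of $\sum_k F_k(a)\mathbb P_k$ as a bounded self-adjoint operator on $L^2(\R)$. This is handled either by remarking that $(F_k(a))_{k\ge 0}$ is bounded (indeed $|F_k(a)|\le 2$ from \eqref{1214} and the fact that $F_k$ is real with limit $1$) so the series defines an element of $\mathcal B(L^2(\R))$, or by observing that $\opw{\indic{D_a}}$ is already known to be bounded self-adjoint by Proposition \ref{pro1717} applied to $a\in L^1(\RZ)\cap L^\infty(\RZ)$ with finite-measure support, while Lemma \ref{lemm21} supplies its spectral decomposition on the Hermite basis. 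Nothing else is required beyond chaining these two steps.
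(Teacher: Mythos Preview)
Your proof is correct and follows essentially the same approach as the paper: the paper's proof is the one-liner ``An immediate consequence of \eqref{556699+} and \eqref{1214},'' and you have spelled out precisely those two ingredients (the spectral decomposition from Lemma \ref{lemm21} and the eigenvalue bound from Proposition \ref{pro.2424}), together with the routine justifications the paper leaves implicit.
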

\begin{proof}
 An immediate consequence of \eqref{556699+} and \eqref{1214}.
 Note that the inequality in the above theorem is due to P.~Flandrin in \cite{conjecture}
 (see also the related references \cite{MR1643942}, \cite{MR1681043}).
\end{proof}
{\bf Curves.} 
Let us display some  curves of $\R_{+}\ni a\mapsto F_{k}(a)=1-e^{-a}P_{k}(a)$.
\begin{figure}[ht]
\centering
\scalebox{0.76}{\includegraphics[angle=90,height=620pt,width=0.85\textwidth]{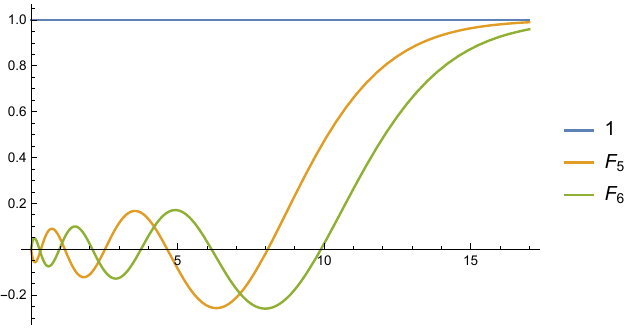}}\vs\vs
\caption{Functions $F_{5}, F_{6}$.}
\label{pic2}
\end{figure}
\vs\vs
\eject
\vs
\begin{figure}[ht]
\centering
\scalebox{1.1}{\includegraphics[angle=90,height=470pt,width=0.75\textwidth]{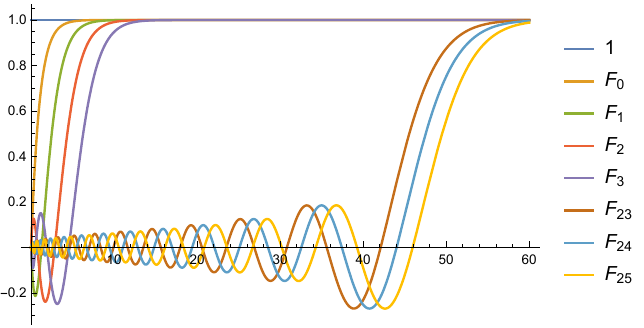}}\vs\vs
\caption{Functions $F_{k}$}
\label{665544}
\vs\vs\vs\vs
\label{pic3}
\end{figure}
\vs
\subsection{Indicatrix of an Euclidean ball}\label{balls}
\index{indicatrix of an Euclidean ball}
The following result displays an explicit spectral decomposition on the Hermite basis  for the Weyl quantization of the characteristic function of Euclidean balls.
\begin{theorem}\label{main}
Let $a\ge 0$ be given and let
 \begin{equation}\label{qanlios}
\mathcal Q_{a,n}=\opw{\mathbf 1\{{2\pi(\val x^{2}+\val\xi^{2})\le a}\}},
\end{equation}
be the Weyl quantization of the characteristic function of the Euclidean ball  of $\RZ$ with center $0$ and  radius $\sqrt{a/(2\pi)}$.
Then we have
\begin{equation}\label{}
\mathcal Q_{a,n}=\sum_{k\ge 0} F_{k;n}(a)\mathbb P_{k;n},
\end{equation}
with
$\mathbb P_{k;n}=\sum_{\alpha\in \N^{n}, \val \alpha=k}\mathbb P_{\alpha},$
where $\mathbb P_{\alpha}$ is the orthogonal projection onto $\Psi_{\alpha}$
(defined in \eqref{psidim}), with $\val \alpha=\sum_{1\le j\le n}\alpha_{j}=k$
and 
\begin{equation}\label{}
F_{k;n}(a)=
\int_{\R}
\frac{\sin a\tau}{\pi \tau}
\frac{(1+i\tau)^{k}}{(1-i\tau)^{k+n}}d\tau.
\end{equation}
\end{theorem}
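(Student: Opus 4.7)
The plan is to derive Theorem \ref{main} as an immediate specialization of Lemma \ref{lemm21n+} to the one-variable distribution $F = \mathbf 1_{[-a/(2\pi),\, a/(2\pi)]}$, with all frequencies $\mu_{j}=1$. With this choice, one has $F(|x|^{2}+|\xi|^{2})=\mathbf 1\{2\pi(|x|^{2}+|\xi|^{2})\le a\}$, so that $\opw{F(|x|^{2}+|\xi|^{2})}=\mathcal Q_{a,n}$ by the definition \eqref{qanlios}.

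The first step is to verify that $F$ satisfies the hypothesis \eqref{conditn}. A direct computation (analogous to the one-dimensional case treated in Section \ref{sec.22one}) gives
\begin{equation*}
\hat F(\tau)=\frac{\sin a\tau}{\pi\tau},
\end{equation*}
which is continuous on $\R$, with the removable singularity at $0$ giving $\hat F(0)=a/\pi$. It is therefore locally integrable. The condition \eqref{conditn} holds because
\begin{equation*}
\int_{\R}\frac{|\hat F(\tau)|}{\sqrt{1+\tau^{2}}}\,d\tau
\le \int_{\R}\frac{\min(a/\pi,\,1/(\pi|\tau|))}{\sqrt{1+\tau^{2}}}\,d\tau<+\infty.
\end{equation*}

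The second step is to apply Lemma \ref{lemm21n+} with $\mu=1$. That lemma provides
\begin{equation*}
\opw{F(|x|^{2}+|\xi|^{2})}=\sum_{k\ge 0}\left(\int_{\R}\hat F(\tau)\,\frac{(1+i\tau)^{k}}{(1-i\tau)^{k+n}}\,d\tau\right)\mathbb P_{k;n},
\end{equation*}
and substituting $\hat F(\tau)=\sin(a\tau)/(\pi\tau)$ yields exactly the formula claimed for $F_{k;n}(a)$. Here the convergence of the series as a bounded operator on $L^{2}(\R^{n})$ is controlled by the uniform bound $|F_{k;n}(a)|\le 1$ (inherited from the $L^\infty$-norm of $F$) combined with the orthogonality of the rank-one projections $\mathbb P_{\alpha}$ defining $\mathbb P_{k;n}$.

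The only delicate point is that $F$ is not smooth, in particular it is not in the Wiener algebra $\mathscr W(\R)$ that was used to derive Lemma \ref{lemm21n} via the Bochner-integral identity
\begin{equation*}
\OPW{F(q_{\mu})}=\int_{\R}\hat F(\tau)\OPW{e^{2i\pi\tau q_{\mu}}}\,d\tau,
\end{equation*}
combined with Mehler's formula \eqref{mehler11}. The anticipated main obstacle is therefore to legitimize this identity under only the weaker condition \eqref{conditn}. This is handled by a standard approximation argument: pick $\chi\in\mathscr S(\R)$ with $\chi(0)=1$, set $F_{\epsilon}(t)=\chi(\epsilon t)F(t)$, so that $F_{\epsilon}\in \mathscr W(\R)$ and $\hat F_{\epsilon}=\epsilon^{-1}\hat\chi(\cdot/\epsilon)\ast\hat F\to \hat F$ in $L^{1}_{\mathrm{loc}}(\R)$ and in the weighted space $L^{1}(\R,(1+\tau^{2})^{-1/2}d\tau)$ as $\epsilon\to 0^{+}$. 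On one hand $F_{\epsilon}(|x|^{2}+|\xi|^{2})$ converges to $F(|x|^{2}+|\xi|^{2})$ boundedly pointwise, so the associated Weyl operators converge weakly on $L^{2}(\R^{n})$ via \eqref{eza654}; on the other hand, the coefficients $\int_{\R}\hat F_{\epsilon}(\tau)\,(1+i\tau)^{k}(1-i\tau)^{-(k+n)}d\tau$ converge to $F_{k;n}(a)$ by dominated convergence. Matching both limits on each spectral subspace $\mathrm{range}(\mathbb P_{k;n})$ yields the desired identity and completes the proof.
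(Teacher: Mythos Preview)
Your proof is correct and follows exactly the paper's approach: specialize Lemma \ref{lemm21n+} to $\mu=1$ and $F=\mathbf 1_{[-a/(2\pi),\,a/(2\pi)]}$, verify \eqref{conditn} via $\hat F(\tau)=\sin(a\tau)/(\pi\tau)$, and read off the spectral coefficients. One minor slip: the bound $|F_{k;n}(a)|\le 1$ is not ``inherited from the $L^\infty$-norm of $F$'' --- Weyl quantization does not send symbols bounded by $1$ to operators of norm $\le 1$ (this is precisely the failure of Flandrin's conjecture treated later in the paper); the uniform-in-$k$ bound you actually need for strong convergence of the series is simply
\[
|F_{k;n}(a)|\le \int_{\R}|\hat F(\tau)|\,(1+\tau^2)^{-n/2}\,d\tau<\infty,
\]
which follows at once from $\bigl|(1+i\tau)^k/(1-i\tau)^{k+n}\bigr|=(1+\tau^2)^{-n/2}$ and the already-verified condition \eqref{conditn}.
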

\vs\vs
The spectral decomposition of the previous theorem
allows a simple recovery of the result of the article \cite{MR2761287} by E.~Lieb
and Y.~Ostrover.
\begin{theorem}\label{cor}
 Let $a\ge 0, \mathcal Q_{a,n}, F_{k;n}$ be defined above.
 Then we have
 \begin{equation}\label{253253}
 F_{k;n}(a)\le 
1-\frac1{\Gamma(n)}\int_{a}^{+\io} e^{-t}t^{n-1}dt=1-\frac{\Gamma(n, a)}{\Gamma(n)},
\end{equation}
and thus we have
\begin{equation}\label{325tyu}
\mathcal Q_{a,n}\le 1-\frac{\Gamma(n, a)}{\Gamma(n)},
\end{equation}
where the incomplete Gamma function $\Gamma(\cdot, \cdot)$ is defined in \eqref{incgam}.
\end{theorem}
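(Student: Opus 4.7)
The plan is to imitate the one-dimensional treatment of Section \ref{sec.22one}, taking the spectral decomposition of Theorem \ref{main} as the starting point. First I would note that Theorem \ref{main} writes $\mathcal Q_{a,n} = \sum_{k\ge 0} F_{k;n}(a)\,\mathbb P_{k;n}$ as a spectral resolution on $L^{2}(\R^{n})$, so \eqref{325tyu} is equivalent to the scalar bound \eqref{253253} for every $k\ge 0$, and one must recognize the right-hand side as $F_{0;n}(a)$: a residue argument on the contour $\gamma_R$ of Figure \ref{pic1}, applied to the formula of Theorem \ref{main} at $k=0$, should give $F'_{0;n}(a)=a^{n-1}e^{-a}/(n-1)!$, which integrates using $F_{0;n}(0)=0$ to $1-\Gamma(n,a)/\Gamma(n)$.

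Next I would replay the same residue computation for arbitrary $k$. Writing $F'_{k;n}(a)=\frac{1}{2\pi}\int_{\R} e^{ia\tau}\bigl[\frac{(1+i\tau)^{k}}{(1-i\tau)^{k+n}}+\frac{(1-i\tau)^{k}}{(1+i\tau)^{k+n}}\bigr]d\tau$ and closing in the upper half-plane, only the second summand contributes (through its single pole at $\tau=i$ of order $k+n$). Substituting $\tau=i+i\epsilon$ and expanding, exactly as in Section \ref{sec.22one}, one identifies the answer with the associated Laguerre polynomial $L_k^{(n-1)}$:
\[
F'_{k;n}(a)=\frac{(-1)^{k}\,e^{-a}\,a^{n-1}}{(n-1)!\,\binom{n+k-1}{k}}\,L_k^{(n-1)}(2a).
\]
Together with $F_{k;n}(0)=0$ this yields the integral representation
\[
F_{k;n}(a)=\frac{(-1)^{k}}{(n-1)!\,\binom{n+k-1}{k}}\int_{0}^{a}e^{-b}\,b^{n-1}\,L_k^{(n-1)}(2b)\,db,
\]
which at $k=0$ reproduces $F_{0;n}(a)=1-\Gamma(n,a)/\Gamma(n)$.

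Mirroring Proposition \ref{pro.2424}, I would then write $F_{k;n}(a)=1-e^{-a}R_{k,n}(a)$ for a polynomial $R_{k,n}$ with $R_{k,n}(0)=1$, the existence of $R_{k,n}$ being justified by the limit $F_{k;n}(+\io)=1$ (the $n$-dimensional analog of \eqref{321654}, which follows from $\mathcal Q_{a,n}\to\Id$ strongly as $a\to+\io$). Setting $E_{n-1}(a)=\sum_{j=0}^{n-1}a^{j}/j!$, we have $F_{0;n}(a)=1-e^{-a}E_{n-1}(a)$, so \eqref{253253} becomes the polynomial inequality $R_{k,n}(a)\ge E_{n-1}(a)$ for all $a\ge 0$, with equality at $a=0$.

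The hard part will be this last polynomial inequality, which generalizes Lemma \ref{keylem} (the case $n=1$, $E_{0}\equiv 1$), whose proof rested on the identity $P_{k}'(a)=2\sum_{0\le\ell\le k-1}(-1)^{\ell}L_{\ell}(2a)\ge 0$ from \eqref{inelag}. Using the first-order differential relation $R_{k,n}-R'_{k,n}=(-1)^{k}\,a^{n-1}\,L_k^{(n-1)}(2a)/[(n-1)!\binom{n+k-1}{k}]$, the inequality reduces to showing that the cumulative integral $\frac{1}{(n-1)!}\int_{0}^{a}e^{-b}b^{n-1}\bigl[1-\frac{(-1)^{k}L_k^{(n-1)}(2b)}{\binom{n+k-1}{k}}\bigr]db$ is non-negative for every $a\ge 0$; this integral vanishes at both endpoints---trivially at $a=0$, and at $a=+\io$ by the Laguerre orthogonality identity $\int_{0}^{\io}e^{-b}b^{n-1}L_k^{(n-1)}(2b)\,db=(-1)^{k}\binom{n+k-1}{k}(n-1)!$. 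My plan for establishing non-negativity between these endpoints is an iterated integration by parts based on Rodrigues' formula $L_k^{(n-1)}(x)=\frac{x^{-(n-1)}e^{x}}{k!}\bigl(\frac{d}{dx}\bigr)^{k}(e^{-x}x^{k+n-1})$, which should decompose $F_{k;n}(a)$ into $F_{0;n+k}(a)\ge 0$ plus boundary contributions expressible as a telescoping sum of values of controlled sign, parallel to the pattern of Lemma \ref{lem13}.
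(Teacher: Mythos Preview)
Your setup is correct and matches the paper's: the residue computation yields $F'_{k;n}(a)=\dfrac{(-1)^{k}k!}{(k+n-1)!}\,e^{-a}a^{n-1}L_k^{(n-1)}(2a)$, the identification $F_{0;n}(a)=1-\Gamma(n,a)/\Gamma(n)$ is right, and writing $F_{k;n}=1-e^{-a}R_{k,n}$ reduces \eqref{253253} to the polynomial inequality $R_{k,n}(a)\ge E_{n-1}(a)$ for $a\ge 0$.

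The gap is in the last paragraph. Your plan---integrate by parts $k$ times using Rodrigues' formula to write $F_{k;n}$ as $F_{0;n+k}$ plus boundary terms---does produce such a decomposition, but you have given no argument for why the boundary terms have a sign that yields $F_{k;n}\le F_{0;n}$. The boundary contributions at $b=a$ are combinations of $e^{-a}$ times derivatives $\bigl(\tfrac{d}{dx}\bigr)^{j}\!\bigl(e^{-x}x^{k+n-1}\bigr)\big|_{x=2a}$ for $0\le j\le k-1$; these oscillate in sign and do not obviously telescope. Moreover, the bulk term you isolate is $F_{0;n+k}(a)$, not $F_{0;n}(a)$, so even granting positivity of the bulk you would still need a nontrivial comparison. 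The analogy with Lemma~\ref{lem13} is misleading: there the telescoping worked because of the one-dimensional identity $P_k'=P_{k-1}+(-1)^{k-1}L_{k-1}(2a)$ together with Feldheim's inequality \eqref{inelag}, neither of which your integration-by-parts scheme reproduces.

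The paper's device is different and is the missing idea: instead of moving in $k$, it moves in the dimension $n$. Writing $P_{k;n}$ via the generalized Laguerre polynomial $L_{k+n-1}^{1-n}$ (related to your $L_k^{(n-1)}$ by the standard identity), the derivative rule $(L_m^{\alpha})'=-L_{m-1}^{\alpha+1}$ gives the clean recursion
\[
P'_{k;n}=P_{k;n-1}\qquad (n\ge 2),\qquad P_{k;n}(0)=1.
\]
This lets one integrate up from the one-dimensional case: since $P_{k;1}(a)\ge P_{0;1}(a)=1$ (Lemma~\ref{keylem}, which \emph{does} rest on Feldheim's inequality), one gets inductively $P_{k;n}(a)=1+\int_0^a P_{k;n-1}\ge 1+\int_0^a P_{0;n-1}=P_{0;n}(a)=E_{n-1}(a)$. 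Your proposal never invokes Feldheim's inequality or any substitute for it, and without such an input the inequality cannot be closed.
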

\begin{proof}[Proof of Theorems \ref{main} and \ref{cor}]
We use the results of  (the previous) Section \ref{secdim}:
Let us assume now that, with some $a\ge 0$,
$$
F=\indic{[-\frac a{2\pi}, \frac a{2\pi}]},
\text{
so that 
}\quad 
F(\val x^{2}+\val \xi^{2})=\mathbf 1\{{2\pi(\val x^{2}+\val\xi^{2})\le a}\}.
$$
According to Section \ref{12345},
we have
$
\hat F(\tau) = \frac{\sin a\tau}{\pi \tau},
$
so that \eqref{condit} holds true. We find in this case,
following the results of Lemma \ref{lemm21n+},
\begin{align}
\bigl(F(\val x^{2}+\val \xi^{2})\bigr)^{w}&=\sum_{k\ge 0} F_{k;n}(a)\mathbb P_{k;n},\quad 
\mathbb P_{k;n}=\sum_{\alpha\in \N^{n}, \val \alpha=k}\mathbb P_{\alpha},\label{111111}
\\
F_{k;n}(a)&=
\int_{\R}
\frac{\sin a\tau}{\pi \tau}
\frac{(1+i\tau)^{k}}{(1-i\tau)^{k+n}}d\tau,\label{333333}
\end{align}
where $\mathbb P_{\alpha}$ is the orthogonal projection onto $\Psi_{\alpha}$
(defined in \eqref{psidim}), with $\val \alpha=\sum_{1\le j\le n}\alpha_{j}=k$.
This completes the proof of Theorem \ref{main}.
\vs
We postpone the proof of Theorem \ref{cor} until after settling a couple of lemmas.
\begin{lem}\label{lem.kkjjhhg}
Let $(k,n)\in \N\times\N^{*}$. With $F_{k;n}(a)$ given by \eqref{333333}, we have 
\begin{align}
F_{k;n}(a)&=1-e^{-a}P_{k, n}(a),\quad\text{where $P_{k;n}$ is the  polynomial}
\\
P_{k;n}(a)&=
\frac{(-1)^{k+n-1}}{(k+n-1)!}
\int_{0}^{+\io}e^{-t}(t+a)^{n-1}
\Bigl\{e^{s}(\frac{d}{ds})^{n+k-1}\bigl[s^{k}e^{-s}\bigr]\Bigr\}_{\vert s=2t+2a} dt,
\label{329}
\\
P_{k;n}(a)&=\frac{(-1)^{k+n-1}}{(k+n-1)! 2^{n-1}}\int_{0}^{+\io}(t+a)^{n-1}
e^{t}
(\frac{d}{dt})^{n+k-1}\bigl\{(t+a)^{k} e^{-2t}\bigr\}
dt.\label{3210}
\end{align}
\end{lem}
\begin{proof}[Proof of the lemma]
 The lemma holds true for $n=1$ from Proposition \ref{pro.2424}.
We have  for $a>0$, $n\ge 2$,
\begin{align*}
F'_{k;n}(a)&=\frac1 \pi
\int_{\R}
{\cos a\tau}
\frac{(1+i\tau)^{k}}{(1-i\tau)^{k+n}}d\tau
\\&=\frac1{ 2\pi}
\int_{\R}
e^{i a\tau}
\frac{(1+i\tau)^{k}}{(1-i\tau)^{k+n}}d\tau
+
\frac1{ 2\pi}
\int_{\R}
e^{i a\tau}
\frac{(1-i\tau)^{k}}{(1+i\tau)^{k+n}}d\tau
\\&=\frac i{ 2i\pi}
\int_{\R}
e^{i a\tau}
\frac{i^{k}(\tau-i)^{k}}{(-i)^{k+n}(\tau+i)^{k+n}}d\tau
+
\frac i{ 2i\pi}
\int_{\R}
e^{i a\tau}
\frac{(-i)^{k}(\tau+i)^{k}}{i^{k+n}(\tau-i)^{k+n}}d\tau,
\end{align*}
so that 
\begin{align*}
F'_{k;n}(a)&=
i^{1-n} (-1)^{k}
\text{\tt Res}\bigl(e^{ia\tau}\frac{(\tau+i)^{k}}{(\tau-i)^{k+n}} ;i\bigr)
\\
&= \frac{i^{1-n} (-1)^{k}}{(k+n-1)!}(\frac{d}{d\tau})^{k+n-1}\bigl\{ e^{i a\tau}(\tau+i)^{k}\bigr\}_{\vert \tau=i}
\end{align*}
and thus
\begin{align*}
F'_{k;n}(a)&= \frac{i^{1-n} (-1)^{k}}{(k+n-1)!}(\frac{d}{\frac i ad\epsilon})^{k+n-1}\bigl\{ e^{-a-\epsilon}(i+i\frac\epsilon a+i)^{k}\bigr\}_{\vert \epsilon=0}
\\
&= \frac{i^{1-n} (-1)^{k}a^{n-1}}{i^{n-1}(k+n-1)!}(\frac{d}{d\epsilon})^{k+n-1}\bigl\{ e^{-a-\epsilon}(2a+\epsilon )^{k}\bigr\}_{\vert \epsilon=0}
\\
&= e^{a}\frac{(-1)^{k+n-1}a^{n-1}}{(k+n-1)!}(\frac{d}{2d\epsilon})^{k+n-1}\bigl\{ e^{-2a-2\epsilon}(2a+2\epsilon )^{k}\bigr\}_{\vert \epsilon=0},
\end{align*}
that is 
\begin{align*}
F'_{k;n}(t)&=\frac{(-1)^{k+n-1}}{(k+n-1)!} e^{t} t^{n-1}(\frac{d}{d s})^{k+n-1}\bigl\{ e^{-s}
s^{k}
\bigr\}_{\vert s=2t}
\\
&=\frac{(-1)^{k+n-1}}{(k+n-1)!2^{n-1}} e^{t} t^{n-1}(\frac{d}{d t})^{k+n-1}\bigl\{ e^{-2t}
t^{k}
\bigr\}.
\end{align*}
We have also that $\lim_{a\rightarrow+\io}F_{k;n}(a)=1$ (following the arguments of Section \ref{sec.22one}) and this yields
\begin{align*}
F_{k;n}(a)&=1-\frac{(-1)^{k+n-1}}{(k+n-1)!2^{n-1}} \int_{a}^{+\io}e^{t} t^{n-1}(\frac{d}{d t})^{k+n-1}\bigl\{ e^{-2t}t^{k}\bigr\}
dt
\\&=
1-e^{-a}\frac{(-1)^{k+n-1}}{(k+n-1)!2^{n-1}} \int_{0}^{+\io} (t+a)^{n-1}e^{t}(\frac{d}{d t})^{k+n-1}\bigl\{ e^{-2t}
(t+a)^{k}\bigr\}
dt,
\end{align*}
concluding the proof of the Lemma.
\end{proof}
Let us go back to Formula \eqref{329}, written as 
\begin{align}
&\frac{(-1)^{k+n-1}}{2^{n-1}}
\int_{0}^{+\io}e^{-t}
\Bigl\{
\frac{(2t+2a)^{n-1}}{(k+n-1)!}
(\frac{d}{d\epsilon}-1)^{n+k-1}\bigl[(\epsilon+2t+2a)^{k}\bigr]\Bigr\}_{\vert \epsilon=0} dt=
\notag\\
&P_{k;n}(a)=
\frac{(-1)^{k+n-1}}{2^{n-1}}
\int_{0}^{+\io}e^{-t}
L_{k+n-1}^{1-n}(2t+2a) dt,
\label{3211}
\end{align}
where the generalized Laguerre polynomial $L_{k+n-1}^{1-n}$
is defined  by \eqref{laggen} (note that $1-n+k+n-1=k$ which not negative).
\begin{lem}\label{ggff}
 Let $n\in \N^*$, $k\in \N$ and let $P_{k;n}$ be the polynomial defined in Lemma \ref{lem.kkjjhhg} (and thus in \eqref{3211}). Then we have
 \begin{align}
P_{k;n}(X)-P'_{k;n}(X)&=\frac{(-1)^{k+n-1}}{2^{n-1}}L_{k+n-1}^{1-n}(2X), \quad P_{k;n}(0)=1,
\label{3212}\\
\text{for $n\ge 2$},&\quad P'_{k;n}=P_{k;n-1}.
\label{3213}
\end{align}
\end{lem}
\begin{proof}
 From \eqref{3211}, we find
 \begin{equation}\label{3214}
P'_{k;n}(a)=
\frac{(-1)^{k+n-1}}{2^{n-1}}
\int_{0}^{+\io}e^{-t}
2(L_{k+n-1}^{1-n})'(2t+2a) dt
\end{equation}
\begin{align}
&=
\frac{(-1)^{k+n-1}}{2^{n-1}}\Bigl\{\bigl[
e^{-t}
(L_{k+n-1}^{1-n})(2t+2a) 
\bigr]_{t=0}^{t=+\io}
+
\int_{0}^{+\io}e^{-t}
L_{k+n-1}^{1-n}(2t+2a) dt\Bigr\}
\notag
\\&=\frac{(-1)^{k+n}}{2^{n-1}}
L_{k+n-1}^{1-n}(2a) +P_{k;n}(a),
\notag
\end{align}
and since $0=F_{k;n}(0)=1-P_{k;n}(0)$, this proves \eqref{3212}.
Using now \eqref{3211} and \eqref{958}, we find that 
 \begin{align*}
P_{k;n}(a)&=
\frac{(-1)^{k+n}}{2^{n-1}}
\int_{0}^{+\io}\frac{d}{dt}\bigl\{e^{-t}\bigr\}
L_{k+n-1}^{1-n}(2t+2a) dt
\\
&=\frac{(-1)^{k+n}}{2^{n-1}}\Bigl\{
\bigl[e^{-t}L_{k+n-1}^{1-n}(2t+2a) \bigr]_{t=0}^{t=+\io}-
\int_{0}^{+\io}e^{-t}
2(L_{k+n-1}^{1-n})'(2t+2a) dt\Bigr\}
\\
&=\frac{(-1)^{k+n}}{2^{n-1}}\Bigl\{
-L_{k+n-1}^{1-n}(2a)+
\int_{0}^{+\io}e^{-t}
2(L_{k+n-2}^{2-n})(2t+2a) dt\Bigr\}
\\
&=\underbrace{\frac{(-1)^{k+n-1}}{2^{n-1}}
L_{k+n-1}^{1-n}(2a)}_{\substack{P_{k;n}(a)-P'_{k;n}(a)\\\text{from \eqref{3212}}}}
+\underbrace{\frac{(-1)^{k+n-2}}{2^{n-2}}\int_{0}^{+\io}e^{-t}
L_{k+n-2}^{2-n}(2t+2a) dt}_{\substack{P_{k;n-1}(a)\\\text{from \eqref{3211}}}},
\end{align*}
so that  for $n\ge 2$, $k\in \N$, we obtain \eqref{3213}, completing the proof of the lemma.
\end{proof}
\begin{lem}
 Let $k,n,P_{k;n}$ be as in Lemma \ref{ggff}. Then we have 
 \begin{equation}\label{fdre}
\forall j\in \Iintv{0,n-1},
\quad  \left(\frac{d}{dX}\right)^j P_{k;n}=P_{k;n-j}.
\end{equation}
Moreover, for all $a\ge 0$ and all $k\in \N$,
\begin{equation}\label{1234*}
P_{k;n}(a)\ge P_{0;n}(a)=\frac{1}{(n-1)!}\int_{0}^{+\io}
e^{-t}(t+a)^{n-1} dt =e^a\frac{\Gamma (n,a)}{\Gamma (n)}.
\end{equation}
\end{lem}
\begin{proof}
 Formula \eqref{fdre} follows immediately by induction from \eqref{3213} since the latter is proving  \eqref{fdre} for $j=1, n\ge 2, k\in \N.$
 Assuming that \eqref{fdre} holds true for some $1\le j<n$, all $k\in \N$,
 we have
 $
 P_{k;n}^{(j)}=P_{k, n-j}
 $
 and if $j+1<n$, we obtain from \eqref{3213} that
 $$
 P_{k,n-j-1}=P'_{k, n-j}=P_{k;n}^{(j+1)},
 $$
 proving \eqref{fdre}. The property \eqref{1234*}
 holds true for $n=1$.
 From  \eqref{3213} and $P_{k;n+1}(0)=1$, we find that 
 $
 P_{k;n+1}(a)=1+\int_{0}^a P_{k;n}(s) ds
 $
 and assuming that \eqref{1234*} holds true for $n$, we obtain for $a\ge 0$,
\begin{align*}
 P_{k;n+1}(a)&\ge 1+\int_{0}^a
 \frac{1}{(n-1)!}\int_{0}^{+\io}
e^{-t}(t+s)^{n-1} dt
 ds
 \\
 &=1+
 \int_{0}^{+\io}
e^{-t}\Bigl[\frac{(t+s)^{n}}{n!}\Bigr]^{s=a}_{s=0}  dt
 \\&=
 1+\frac{1}{n!}
 \int_{0}^{+\io}
e^{-t}
\bigl((t+a)^{n}-t^n \bigr)dt=\frac{1}{n!}
 \int_{0}^{+\io}
e^{-t}
(t+a)^{n} dt,
\end{align*}
completing the proof of the lemma.
\end{proof}
We can now prove Theorem \ref{cor}: since 
$
F_{k;n}(a)=1- e^{-a} P_{k;n}(a)
$
the estimate \eqref{fdre} implies indeed
$
F_{k;n}(a)\le \frac{\Gamma(n,a)}{\Gamma(n)},
$
concluding the proof.
\end{proof}
\begin{rem}\rm
 Our methods of proof in one and more dimensions are quite similar: 
\begin{itemize}
\item  Using Mehler's Formula,
 we diagonalize in the Hermite basis the quantization of the indicatrix of the Euclidean ball 
\begin{equation*}\label{}
 D_{a;n}=\{(x,\xi)\in \RZ, 2\pi\bigl(\val x^2+\val\xi^2\bigr)\le a\}.
\end{equation*}
 \item Once we get the diagonalization
 $$
\opw{ \mathbf 1_{D_{a;n}}}=\sum_{k\in \N} F_{k;n}(a) \mathbb P_{k;n},
 $$
 we study explicitly the functions $F_{k;n}$ and prove that 
 $$
 F_{k;n}(a)= 1-e^{-a}
P_{k;n}(a), $$
where $P_{k;n}$ is a polynomial given in terms of the generalized Laguerre polynomials
$$
P_{k;n}(a)=
\frac{(-1)^{k+n-1}}{2^{n-1}}
\int_{0}^{+\io}e^{-t}
L_{k+n-1}^{1-n}(2t+2a) dt.
$$
\item
Following the Flandrin paper  \cite{conjecture},
we use Feldheim inequality in  \cite{MR0001401} to tackle the case $n=1$, and next we use an induction on $n$, made possible  by the relationship between the standard and the generalized Laguerre polynomials.
It is interesting to note that the functions $F_{k;n}$
have no monotonicity properties:
with value 0 at 0, they have an oscillatory behavior for $a\le a_{k,n}$ and for $a$ large enough, increase monotonically  to 1 (see for instance Figures \ref{pic2} and 
\ref{pic3}
in the 1D case); the inequality  $F_{k;n}(a)\le 1-e^{-a}$ holds true for all $a\ge 0$ in all dimensions. On the other hand the polynomials $P_{k;n}$ are increasing and larger than 1 on the positive half-line.\end{itemize}
\vs
 The key ingredients are thus Mehler's formula and Feldheim inequality, but it should be pointed out that  the arguments proving Feldheim inequality
 (Formula (6.8) and Theorem 12) in the R.~Askey \& G.~Gasper's article \cite{MR0430358} are also based upon a version of Mehler's Formula
which appears thus as the basic result for our investigation.
The paper \cite{MR2761287} by E.~Lieb
and Y.~Ostrover has a slightly different line of arguments and takes advantage of symmetry properties of the sphere. We shall go back to this
in a situation where the symmetry is absent, such as for some general ellipsoids.
\end{rem}
\subsection{Ellipsoids in the phase space}
\index{ellipsoids in the phase space}
\subsubsection{Preliminaries}
We provide below a couple of remarks on ellipsoids in higher dimensions.
Let us first recall a particular case  of Theorem 21.5.3 in \cite{MR2304165}.
\index{symplectic reduction of quadratic forms}
\begin{theorem}[Symplectic reduction of quadratic forms]\label{thm312}
 Let $q$ be a positive-definite quadratic form  on $\R^{n}\times \R^{n}$ equipped with the canonical symplectic form \eqref{sympfor}. Then there exists $S$ in the symplectic group $Sp(n,\R)$
 of $\RZ$
 and $\mu_{1}, \dots, \mu_{n}$
 positive  such that  for all $X=(x,\xi)\in \R^{n}\times \R^{n}$,
\begin{equation}\label{ellipse}
 q(SX)=\sum_{1\le j\le n}\mu_{j}(x_{j}^{2}+\xi_{j}^{2}).
\end{equation}
\end{theorem}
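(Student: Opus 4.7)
The plan is to establish Williamson's normal form via the fundamental matrix, echoing the construction already previewed after equation (1.2.39) in the discussion of Heisenberg Uncertainty.

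First I would represent $q$ by a symmetric positive definite matrix $Q$, setting $q(X)=\poscal{QX}{X}$, and introduce the fundamental matrix $F=\sigma^{-1}Q$. The key auxiliary object is the inner product $B(X,Y):=\poscal{QX}{Y}$ on $\RZ$. Using $Q^{T}=Q$ and $\sigma^{T}=-\sigma$, a one-line computation gives $(QF)^{T}=Q^{T}(\sigma^{-1})^{T}Q^{T}=-Q\sigma^{-1}Q=-QF$, so $QF$ is antisymmetric, which is equivalent to the statement that $F$ is skew-adjoint with respect to $B$:
\begin{equation*}
B(FX,Y)=-B(X,FY),\qquad\forall X,Y\in\RZ.
\end{equation*}

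Next I would extract the spectral information. Since $F$ is $B$-skew-adjoint and invertible (as $Q$ and $\sigma$ are), its complexified spectrum is purely imaginary and symmetric about the origin, so it has the form $\{\pm i\mu_{j}\}_{1\le j\le n}$ with $\mu_{j}>0$, and $F$ is semisimple over $\C$. The operator $F^{2}$ is $B$-self-adjoint with nonpositive eigenvalues $-\mu_{j}^{2}$, and the spectral subspaces $V_{j}:=\ker(F^{2}+\mu_{j}^{2})$ (grouped by distinct values of $\mu_{j}^{2}$) are pairwise $B$-orthogonal. They are also pairwise symplectically orthogonal: from $\sigma=QF^{-1}$ one gets for $X\in V_{j}$, $Y\in V_{k}$ with $j\ne k$,
\begin{equation*}
[X,Y]=\poscal{\sigma X}{Y}=\poscal{QF^{-1}X}{Y}=B(F^{-1}X,Y)=0,
\end{equation*}
since $F^{-1}X\in V_{j}$.

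Then I would build an explicit symplectic basis on each $V_{j}$. Choose vectors $e_{\alpha}\in V_{j}$ and set $f_{\alpha}:=\mu_{j}^{-1}Fe_{\alpha}$, so that $Fe_{\alpha}=\mu_{j}f_{\alpha}$ and $Ff_{\alpha}=-\mu_{j}e_{\alpha}$. Using $Q=\sigma F$, one computes $\poscal{Qe_{\alpha}}{f_{\alpha}}=\mu_{j}\poscal{\sigma f_{\alpha}}{f_{\alpha}}=0$ and
\begin{equation*}
\poscal{Qe_{\alpha}}{e_{\alpha}}=\poscal{\sigma Fe_{\alpha}}{e_{\alpha}}=-\mu_{j}[e_{\alpha},f_{\alpha}],\qquad
\poscal{Qf_{\alpha}}{f_{\alpha}}=-\mu_{j}[e_{\alpha},f_{\alpha}].
\end{equation*}
Positivity of $q$ forces $[e_{\alpha},f_{\alpha}]\ne 0$; after rescaling $e_{\alpha}$ and possibly changing the sign of $f_{\alpha}$ one obtains $[e_{\alpha},f_{\alpha}]=1$ and $\poscal{Qe_{\alpha}}{e_{\alpha}}=\poscal{Qf_{\alpha}}{f_{\alpha}}=\mu_{j}$, with off-diagonal $Q$-entries vanishing. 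The Gram–Schmidt-type process inside each $V_{j}$ (with respect to the symplectic form restricted to $V_{j}$, which is nondegenerate since $F|_{V_{j}}$ is) produces such an orthogonal family spanning $V_{j}$. Concatenating these bases across $j$ yields a symplectic basis of $\RZ$ in which $q(x,\xi)=\sum_{j}\mu_{j}(x_{j}^{2}+\xi_{j}^{2})$; the matrix $S$ of the inverse change of basis belongs to $Sp(n,\R)$ and satisfies \eqref{ellipse}.

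The main obstacle I expect is the bookkeeping when the $\mu_{j}^{2}$ have multiplicity greater than one: one must verify that inside each $V_{j}$ the restriction of $\sigma$ remains nondegenerate so that the symplectic Gram–Schmidt process can be carried out, and one must track sign conventions carefully to obtain a genuine symplectic basis (with $[e_{\alpha},f_{\alpha}]=+1$) rather than an anti-symplectic one. Everything else is routine linear algebra once the $B$-skew-adjointness of $F$ is established.
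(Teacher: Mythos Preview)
The paper does not prove this theorem at all: it is stated as a particular case of Theorem 21.5.3 in H\"ormander's \emph{Analysis of Linear Partial Differential Operators III}, with no argument given. Your proposal is the standard proof of Williamson's normal form via the fundamental matrix $F=\sigma^{-1}Q$, and the strategy is correct.

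One concrete point to fix: with your choice $f_{\alpha}=\mu_{j}^{-1}Fe_{\alpha}$ and $\mu_{j}>0$, your own computation gives $q(e_{\alpha})=\poscal{Qe_{\alpha}}{e_{\alpha}}=-\mu_{j}[e_{\alpha},f_{\alpha}]$, so positivity of $q$ forces $[e_{\alpha},f_{\alpha}]<0$, not $+1$. You cannot simultaneously normalise to $[e_{\alpha},f_{\alpha}]=1$ and keep $q(e_{\alpha})=\mu_{j}>0$ with that definition of $f_{\alpha}$. The fix is exactly the sign flip you allude to: take $f_{\alpha}=-\mu_{j}^{-1}Fe_{\alpha}$ (equivalently, swap the roles of $e_{\alpha}$ and $f_{\alpha}$), after which $q(e_{\alpha})=\mu_{j}[e_{\alpha},f_{\alpha}]$ and the normalisation $[e_{\alpha},f_{\alpha}]=1$ gives the desired $\mu_{j}$. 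You already flag sign bookkeeping as the main obstacle, so this is not a gap in the plan, but the displayed formulas as written are inconsistent and should be corrected. The remaining issues (nondegeneracy of $\sigma|_{V_{j}}$, symplectic Gram--Schmidt in the presence of multiplicities) are routine, as you note.
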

Note that an interesting consequence of this theorem  is that, considering a general ellipsoid in $\RZ$ (with center of gravity at $0$), 
$$\mathbb E=\{X\in \RZ, q(X)\le 1\}$$
where $q$ is a positive definite quadratic form, we are able to find symplectic coordinates
such that $q$ is given by \eqref{ellipse}.
Note however that no further simplification is possible and that the $\mu_{j}$ are symplectic invariants of $\mathbb E$.
Note that the volume of $\mathbb E$ is given by
$$
\val{\mathbb E}_{2n}=\frac{\pi^{n}}{n!{\mu_{1}\dots \mu_{n}}}.
$$
\subsubsection{Spectral decomposition for the quantization of the characteristic function of the ellipsoid}
Let $a_{1},\dots, a_{n}$ be positive numbers.
We consider the ellipsoid $E(a_{1}, \dots, a_{n})$ given by
\begin{equation}\label{ellip++}
E(a)=E(a_{1}, \dots, a_{n})=\{(x,\xi)\in \R^{n}\times\R^{n}, 2\pi\sum_{1\le j\le n}
\frac{x_{j}^{2}+\xi_{j}^{2}}{a_{j}}\le 1
\}.
\end{equation}
We define on $\R^n$
 the function
$$
F(X_{1}, \dots, X_{n})=\indic{[-1,1]}(\frac{2\pi}{a_{1}}X_{1}+\dots+
\frac{2\pi}{a_{n}}X_{n}).
$$
\begin{theorem}\label{thm.specell}
Let $a=(a_{j})_{1\le j\le n}$ be positive numbers and let $E(a)$ be defined by \eqref{ellip++}. Then we have 
\begin{equation}\label{}
\opw{\indic{E(a)}}=\sum_{\alpha\in \N^n} F_{\alpha}(a)\mathbb P_{\alpha},
\end{equation}
where $ \mathbb P_{\alpha}$ is defined in \eqref{app.paln} and 
\begin{equation}\label{}
F_{\alpha}(a)=1-K_{\alpha}(a),
\end{equation}
with
\begin{equation}\label{kalpha}
K_{\alpha}(a)=\int_{
\substack{\sum t_{j}/a_{j}\ge 1\\t_{j}\ge 0}}
e^{-(t_{1}+\dots+t_{n})}
\prod_{1\le j\le n} 
(-1)^{\alpha_{j}}L_{\alpha_{j}}(2t_{j})
dt,
\end{equation}
\end{theorem}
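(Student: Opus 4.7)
The plan combines the Mehler-based spectral decomposition of Lemma~\ref{lemm21n} with Flandrin's identity~\eqref{flandrin0} on the Hermite basis. First, I write the symbol in the one-variable form $\indic{E(a)}(x,\xi) = F\bigl(\sum_{j=1}^n \mu_j(x_j^2+\xi_j^2)\bigr)$ with $\mu_j = 2\pi/a_j$ and $F=\indic{[-1,1]}$, the identification being legitimate because $x_j^2+\xi_j^2 \ge 0$ for every $j$. The Fourier transform $\hat F(\tau) = \sin(2\pi\tau)/(\pi\tau)$ is continuous and $O(1/\tau^2)$ at infinity, so the integrability condition \eqref{conditn} holds and Lemma~\ref{lemm21n} yields at once $\opw{\indic{E(a)}} = \sum_{\alpha\in\N^n} F_\alpha(a)\,\mathbb P_\alpha$, with the coefficient $F_\alpha(a) = \langle \opw{\indic{E(a)}}\Psi_\alpha,\Psi_\alpha\rangle_{L^2(\R^n)}$ still to be evaluated.

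Second, Flandrin's formula \eqref{flandrin0} recasts this eigenvalue as $F_\alpha(a) = \iint_{E(a)}\mathcal W(\Psi_\alpha,\Psi_\alpha)\,dx\,d\xi$. Since $\Psi_\alpha = \psi_{\alpha_1}\otimes\cdots\otimes\psi_{\alpha_n}$, the Wigner distribution factors, and \eqref{wigherm} gives
\[
\mathcal W(\Psi_\alpha,\Psi_\alpha)(x,\xi) = 2^n \prod_{j=1}^n (-1)^{\alpha_j} e^{-2\pi(x_j^2+\xi_j^2)} L_{\alpha_j}\bigl(4\pi(x_j^2+\xi_j^2)\bigr).
\]
I then pass to polar coordinates $(r_j,\theta_j)$ in each symplectic plane $(x_j,\xi_j)$, integrate the $n$ angular variables (producing $(2\pi)^n$), and substitute $u_j = 2\pi r_j^2$, so that $r_j\,dr_j = du_j/(4\pi)$ and $4\pi r_j^2 = 2u_j$. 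The constants combine as $2^n \cdot (2\pi)^n \cdot (4\pi)^{-n} = 1$, giving
\[
F_\alpha(a) = \int_{\{u_j\ge 0,\ \sum_{j} u_j/a_j \le 1\}}\prod_{j=1}^n (-1)^{\alpha_j} e^{-u_j} L_{\alpha_j}(2u_j)\,du.
\]

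Finally, I invoke the identity $\int_0^\infty e^{-u} L_k(2u)\,du = (-1)^k$, which follows by integrating the Laguerre generating function $\sum_k L_k(x) s^k = (1-s)^{-1} e^{-xs/(1-s)}$ against $e^{-u}$ with $x = 2u$ and reading off the coefficients of $\sum_k (-1)^k s^k = 1/(1+s)$. Taking the product over $j$ yields $\int_{\R_+^n}\prod_j (-1)^{\alpha_j} e^{-u_j} L_{\alpha_j}(2u_j)\,du = 1$, and subtracting the preceding formula identifies $K_\alpha(a) := 1 - F_\alpha(a)$ as the integral of the same integrand on the complementary region $\{u_j\ge 0,\ \sum_j u_j/a_j \ge 1\}$, which is exactly \eqref{kalpha}. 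No step presents a real obstacle; the only point requiring some care is verifying the hypothesis of Lemma~\ref{lemm21n} for the non-smooth symbol $\indic{[-1,1]}$, which is handled by the $1/\tau^2$ decay of $\hat F$ at infinity.
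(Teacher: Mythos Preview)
Your proof is correct, with one cosmetic slip: $\hat F(\tau)=\sin(2\pi\tau)/(\pi\tau)$ is $O(1/\tau)$, not $O(1/\tau^2)$, but this is still enough for \eqref{conditn} since $|\hat F(\tau)|/\sqrt{1+\tau^2}=O(1/\tau^2)$.

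Your route is genuinely different from the paper's. The paper writes $\indic{E(a)}=F(x_1^2+\xi_1^2,\dots,x_n^2+\xi_n^2)$ with $F$ a function on $\R^n$, computes the $n$-variable Fourier transform of $F$ (a distribution supported on a line), applies Mehler in each factor, and then identifies the resulting $\tau$-integral using the explicit Fourier--Laguerre identity \eqref{foulag+} for $\widehat{M_k}$. You instead invoke Lemma~\ref{lemm21n} only to guarantee that $\opw{\indic{E(a)}}$ diagonalises on the Hermite basis, then compute the eigenvalue $F_\alpha(a)=\langle\opw{\indic{E(a)}}\Psi_\alpha,\Psi_\alpha\rangle$ directly via \eqref{flandrin0} and the closed form \eqref{wigherm} for $\mathcal W(\psi_k,\psi_k)$, followed by polar coordinates. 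Your path is shorter and more transparent once \eqref{wigherm} is available; it sidesteps the distributional Fourier computation and the identity \eqref{foulag+} entirely. The paper's route, on the other hand, stays within the Mehler machinery and produces as a by-product the one-dimensional integral representation \eqref{3316} of Lemma~\ref{lem320}, which is useful later. Both proofs meet at the same key identity $\int_0^\infty e^{-t}L_k(2t)\,dt=(-1)^k$ (equivalently $P_{k;1}(0)=1$) to pass from $F_\alpha$ to $K_\alpha$.
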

\begin{rem}\label{rem-312}\rm
 For all $\alpha\in \N^n$,
 the functions $F_{\alpha}, K_{\alpha}$ are holomorphic on
 \begin{equation}\label{336}
\mathcal U=\{a\in \C^n, \forall j\in  \Iintv{1,n}, \re a_{j}>0\}.
\end{equation}
Indeed let $K$ be a compact subset  of $\mathcal U$; there exists $\rho>0$ such that
$$
\forall (a_{1},\dots, a_{n})\in K,\quad\min_{1\le j\le n}\re a_{j}\ge \rho,
$$
and as a result for $a\in K$, we have for $s\in \R_{+}^n$
$$
\val{e^{-(a_{1}s_{1}+\dots+a_{n}s_{n})}
\prod_{1\le j\le n} 
(-1)^{\alpha_{j}}L_{\alpha_{j}}(2a_{j}s_{j})
}\le e^{-\rho(s_{1}+\dots + s_{n})}C_{K,\alpha}(1+\val s)^{\val \alpha},
$$
so that 
\begin{multline*}
\int_{
\substack{\sum s_{j}\ge 1\\s_{j}\ge 0}}\sup_{a\in K}\val{
e^{-(a_{1}s_{1}+\dots+a_{n}s_{n})}
\prod_{1\le j\le n} 
(-1)^{\alpha_{j}}L_{\alpha_{j}}(2a_{j}s_{j})}
ds
\\
\le\int_{
\substack{\sum s_{j}\ge 1\\s_{j}\ge 0}}
 e^{-\rho(s_{1}+\dots + s_{n})}C_{K,\alpha}(1+\val s)^{\val \alpha}ds
 \le C_{K,\alpha}
 \int_{
\R^n}
 e^{-\rho\sigma_{n}\val s}(1+\val s)^{\val \alpha}ds<+\io.
\end{multline*}
Since we have
 \begin{equation*}
K_{\alpha}(a)=\int_{
\substack{\sum s_{j}\ge 1\\s_{j}\ge 0}}
e^{-(a_{1}s_{1}+\dots+a_{n}s_{n})}
\prod_{1\le j\le n} 
(-1)^{\alpha_{j}}L_{\alpha_{j}}(2a_{j}s_{j})
ds a_{1}\dots a_{n},
\end{equation*}
this proves the sought  holomorphy.
\end{rem}
\begin{proof}[Proof of the theorem]
 We have 
\begin{align*}
\opw{\indic{E(a)}}&=\bigl(F(x_{1}^{2}+\xi_{1}^{2}, \dots, x_{n}^{2}+\xi_{n}^{2})\bigr)^{w}
=\int_{\R^{n}}\hat F(\tau) 
\OPW{e^{2i\pi \sum_{j}\tau_{j}(x_{j}^{2}+\xi_{j}^{2})}}
d\tau
\\&=\sum_{\alpha\in \N^{n}}
\int_{\R^{n}}
\hat F(\tau)  
\prod_{1\le j\le n} \frac{(1+i\tau_{j})^{2\alpha_{j}+1}}{(1+\tau_{j}^{2})^{\alpha_{j}+1}}d\tau \mathbb P_{\alpha}
\\&=\sum_{\alpha\in \N^{n}}
\int_{\R^{n}}
\hat F(\tau)  
\prod_{1\le j\le n} \frac{(1+i\tau_{j})^{\alpha_{j}}}{(1-i\tau_{j})^{\alpha_{j}+1}}
d\tau \mathbb P_{\alpha},
\end{align*}
where $ \mathbb P_{\alpha}$ is defined in \eqref{app.paln}.
On the other hand we have 
\begin{multline*}
\hat F(\tau)=\int e^{-2i\pi \tau\cdot x}
\indic{[-1,1]}(\frac{2\pi}{a_{1}}x_{1}+\dots+
\frac{2\pi}{a_{n}}x_{n}) dx_{1}\dots dx_{n}
\\=a_{1}\dots a_{n}(2\pi)^{-n}
\int e^{-i \sum_{j}\tau_{j}a_{j }y_{j}}
\indic{[-1,1]}(\sum y_{j}) dy,
\end{multline*}
so that,  with $M_{k}$ defined in \eqref{foulag-}, using \eqref{foulag+}, we get
\begin{align*}
&\opw{\indic{E(a)}}
\\
&=a_{1}\dots a_{n}
\sum_{\alpha\in \N^{n}}
\iint_{\R^{n}\times \R^{n}}
e^{-i 2\pi\sum_{j}\tau_{j}a_{j }y_{j}}
\indic{[-1,1]}(\sum y_{j}) dy 
\prod_{1\le j\le n} \frac{(1+i2\pi\tau_{j})^{\alpha_{j}}}{(1-i2\pi\tau_{j})^{\alpha_{j}+1}}
d\tau \mathbb P_{\alpha}
\\
&=a_{1}\dots a_{n}
\sum_{\alpha\in \N^{n}}
\int_{\R^{n}}
\int_{\R^{n}} e^{-i 2\pi\sum_{j}\tau_{j}a_{j }y_{j}}
\indic{[-1,1]}(\sum y_{j}) dy 
\prod_{1\le j\le n} \overline{\hat G_{\alpha_{j}}(\tau_{j})}
d\tau \mathbb P_{\alpha}
\\
&=a_{1}\dots a_{n}\sum_{\alpha\in \N^{n}}
\int_{\R^{n}}\indic{[-1,1]}(\sum y_{j}) 
\prod_{1\le j\le n} G_{\alpha_{j}}(a_{j}y_{j})
dy  \mathbb P_{\alpha}
\\
&=\sum_{\alpha\in \N^{n}}
\int_{\R^{n}}\indic{[-1,1]}(\sum t_{j}/a_{j}) 
\prod_{1\le j\le n} 
(-1)^{\alpha_{j}}H(t_{j})e^{-t_{j}}L_{\alpha_{j}}(2t_{j})
dt  \mathbb P_{\alpha},
\end{align*}
with
\begin{align}\label{337}
F_{\alpha}(a)&=\int_{\R^{n}}\bigl(1-\indic{[1,+\io]}(\sum t_{j}/a_{j}) \bigr)
\prod_{1\le j\le n} 
(-1)^{\alpha_{j}}H(t_{j})e^{-t_{j}}L_{\alpha_{j}}(2t_{j})
dt 
\\
&=1-\int_{\R^{n}}\indic{[1,+\io]}(\sum t_{j}/a_{j})
\prod_{1\le j\le n} 
(-1)^{\alpha_{j}}H(t_{j})e^{-t_{j}}L_{\alpha_{j}}(2t_{j})
dt,\notag
\end{align}
where we have used that $P_{k;1}(0)=1$ (see page \pageref{onedim6})
,
so that setting 
\begin{equation*}
K_{\alpha}(a)=\int_{
\substack{\sum t_{j}/a_{j}\ge 1\\t_{j}\ge 0}}
e^{-(t_{1}+\dots+t_{n})}
\prod_{1\le j\le n} 
(-1)^{\alpha_{j}}L_{\alpha_{j}}(2t_{j})
dt,
\end{equation*}
we have 
$
F_{\alpha}(a)=1-K_{\alpha}(a),$
concluding the proof of the theorem.
\end{proof}
\begin{rem}\label{rem-313}
 We have from \eqref{337}
 \begin{equation}\label{}
 F_{\alpha}(a_{1},\dots, a_{n})=
\int_{\R^{n}}\indic{[0,1]}(\sum_{1\le j\le n} s_{j}) 
\prod_{1\le j\le n} 
(-1)^{\alpha_{j}}H(s_{j})e^{-a_{j}s_{j}}L_{\alpha_{j}}(2a_{j}s_{j}) a_{j}
ds,
\end{equation}
and since  the set
\text{\small$
\{s\in \R_{+}^n, \sum_{1\le j\le n}s_{j}\le 1\}
$}
is compact,
we obtain that $F_{\alpha}$ is an entire function, as well as $K_{\alpha}$ which is indeed given by \eqref{kalpha} on the open subset $\mathcal U$ defined in \eqref{336}.
\end{rem}
\begin{lem}\label{lem320}
  With the notations of Theorem \ref{thm.specell}, we have with $\mu_{j}=1/a_{j}$,
  \begin{multline}\label{3316}
F_{\alpha}(a)=\Bigl(\prod_{1\le j\le n} a_{j}\Bigr)
\int_{\R}\frac{\sin  \tau}{\pi\tau}
\Bigl(\prod_{1\le j\le n}\frac{(a_{j}+i\tau)^{\alpha_{j}}}{
(a_{j}-i\tau)^{\alpha_{j}+1}
} \Bigr) d\tau
\\
=
\int_{\R}\frac{\sin\tau}{\pi\tau}
\Bigl(\prod_{1\le j\le n}\frac{(1+i\tau\mu_{j})^{\alpha_{j}}}{
(1-i\tau\mu_{j})^{\alpha_{j}+1}
} \Bigr) d\tau.
\end{multline}
\end{lem}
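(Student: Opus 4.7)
\textbf{Proof proposal for Lemma \ref{lem320}.} The plan is to re-derive the expression for $F_{\alpha}(a)$ directly from the intermediate formula that already appears in the proof of Theorem \ref{thm.specell}, before the explicit integration against the Fourier-Laguerre kernel $G_{\alpha_j}$ is performed. Namely, from Mehler's formula \eqref{mehler11} combined with the spectral decomposition of the harmonic oscillator, the proof of Theorem \ref{thm.specell} produces
\begin{equation*}
F_{\alpha}(a)=\int_{\R^{n}}\hat F(\tau)\prod_{1\le j\le n}\frac{(1+i\tau_{j})^{\alpha_{j}}}{(1-i\tau_{j})^{\alpha_{j}+1}}d\tau,
\end{equation*}
where $F(X)=\indic{[-1,1]}(L(X))$ with the linear form $L(X)=\sum_{j} \omega_{j}X_{j}$, $\omega_{j}=2\pi/a_{j}$. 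The second equality in \eqref{3316} is the target; the first equality will then follow by multiplying numerator and denominator of each factor by $a_j^{\alpha_j+1}$ and extracting $\prod_j a_j$.

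The heart of the proof is therefore to compute $\hat F$ as a tempered distribution and reduce the $n$-dimensional integral to one variable. Since $F$ is the pullback of the one-variable function $g=\indic{[-1,1]}$ by the linear form $L$, Fourier inversion for $g$ gives
\begin{equation*}
F(X)=\int_{\R}\hat g(\lambda)e^{2i\pi \lambda\,\omega\cdot X}d\lambda,\qquad \hat g(\lambda)=\frac{\sin(2\pi\lambda)}{\pi\lambda},
\end{equation*}
so that $\hat F(\tau)=\int_{\R}\hat g(\lambda)\,\delta(\tau-\lambda\omega)\,d\lambda$ as a distribution supported on the line $\R\omega\subset \R^n$. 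Substituting this into the displayed formula for $F_\alpha(a)$ and integrating against the $\delta$-measure collapses everything to a one-dimensional integral:
\begin{equation*}
F_{\alpha}(a)=\int_{\R}\frac{\sin(2\pi\lambda)}{\pi\lambda}\prod_{1\le j\le n}\frac{(1+i\lambda\omega_{j})^{\alpha_{j}}}{(1-i\lambda\omega_{j})^{\alpha_{j}+1}}d\lambda.
\end{equation*}
Finally one sets $\tau=2\pi\lambda$, observing that $\lambda\omega_{j}=\tau/a_{j}=\tau\mu_{j}$ and $\frac{\sin(2\pi\lambda)}{\pi\lambda}d\lambda=\frac{\sin\tau}{\pi\tau}d\tau$; this yields the second equality of \eqref{3316}.

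The only delicate point is the rigorous handling of $\hat F$, which is not an $L^{1}$ function but a distribution supported on a line (a product of a one-dimensional function by a Dirac mass in the transverse directions). I would justify the collapse by approximation: regularize $g$ by $g_{\ep}=g\ast\rho_{\ep}$ with $\rho_{\ep}\in\mathscr S(\R)$, apply the computation to $F_\ep(X)=g_\ep(L(X))$ (whose Fourier transform is an honest measure-density on the line $\R\omega$ because $g_\ep\in \mathscr S(\R)$), and then pass to the limit using dominated convergence, which is legitimate because $\hat g_\ep(\lambda)=\hat\rho_\ep(\lambda)\hat g(\lambda)$ converges pointwise to $\hat g(\lambda)$ while $|\hat g(\lambda)|\le \min(2,1/(\pi|\lambda|))$ and the rational factor decays like $|\lambda|^{-n}$ at infinity (since the denominator has strictly higher degree than the numerator for every $j$), giving integrability of the limit. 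This is the main technical step; the rest is just algebra. Once the second equality of \eqref{3316} is established, the first equality is immediate from the identity $a_j^{\alpha_j+1}(1\pm i\tau\mu_j)=\pm i\tau+a_j$ applied to numerator and denominator, producing the prefactor $\prod_j a_j$.
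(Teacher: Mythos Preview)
Your argument is correct, but the paper's route is shorter and sidesteps the distributional subtlety entirely. Instead of starting from the $n$-variable formula of Theorem~\ref{thm.specell} and collapsing the $n$-dimensional Fourier integral onto the line $\R\omega$, the paper regards the indicator directly as $F(q_{\mu}(x,\xi))$ with $q_{\mu}(x,\xi)=\sum_{j}\mu_{j}(x_{j}^{2}+\xi_{j}^{2})$ and $F(t)=\mathbf 1_{[-1,1]}(2\pi t)$ a \emph{one}-variable function; then it invokes Mehler's formula (in the tensorised form already established in Lemma~\ref{lemm21n}) to write
\[
\OPW{F(q_{\mu})}=\int_{\R}\hat F(\tau)\,\OPW{e^{2i\pi\tau q_{\mu}}}\,d\tau
=\sum_{\alpha}\int_{\R}\hat F(\tau)\prod_{j}\frac{(1+i\tau\mu_{j})^{\alpha_{j}}}{(1-i\tau\mu_{j})^{\alpha_{j}+1}}\,d\tau\;\mathbb P_{\alpha},
\]
and simply computes $\hat F(\tau)=\frac{\sin\tau}{\pi\tau}$. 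No regularisation is needed because the one-dimensional $\hat F$ is an honest locally integrable function satisfying \eqref{conditn}. Your approach re-derives the same collapse from the $n$-dimensional side; it works, but the dominated-convergence justification you outline (decay of the rational factor and the bound on $\hat g$) is extra labour that the paper avoids by never leaving one dimension. The Remark following the lemma in the paper gives yet a third verification, closer in spirit to yours, via the explicit Fourier-Laguerre kernels $G_{\alpha_{j}}$.
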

\begin{proof}
 Mehler's formula implies in one dimension that
 \begin{equation}\label{}
\opw{e^{2\pi i \tau(x^2+\xi^2)}}
=(1+\tau^2)^{-1/2}
\exp{\bigl[2\pi i(\arctan \tau)(x^2+D_{x}^2)\bigr]},
\end{equation}
and a simple tensorisation gives
$$
\opw{e^{2\pi i \tau\sum_{j}\mu_{j}(x_{j}^2+\xi_{j}^2)}}=
\prod_{j}(1+(\tau \mu_{j})^2)^{-1/2}
\exp{\bigl[2\pi i\sum_{j}(\arctan (\tau \mu_{j}))(x_{j}^2+D_{x_{j}}^2)\bigr]},
$$
so that we have 
\begin{align*}
&
\OPW{F\bigl(\sum_{j}\mu_{j}(x_{j}^2+\xi_{j}^2)\bigr)}
\\&=\int_{\R}
\hat F(\tau) 
\OPW{e^{2\pi i \tau\sum_{j}\mu_{j}(x_{j}^2+\xi_{j}^2)}} 
d\tau
\\
&=\int_{\R}
\hat F(\tau) 
\prod_{j}(1+(\tau \mu_{j})^2)^{-1/2}
\exp{\bigl[2\pi i\sum_{j}(\arctan (\tau \mu_{j}))(x_{j}^2+D_{x_{j}}^2)\bigr]}
 d\tau
 \\
& =
 \sum_{\alpha\in \N^n}
 \int_{\R}
\hat F(\tau) 
\left(\prod_{j}(1+(\tau \mu_{j})^2)^{-1/2}
\exp{\bigl[2 i(\arctan (\tau \mu_{j}))(\alpha_{j}+\frac12)\bigr]}\right)
 d\tau
 \mathbb P_{\alpha}
 \\
&=
 \sum_{\alpha\in \N^n}
 \int_{\R}
\hat F(\tau) 
\left(\prod_{j}(1+(\tau \mu_{j})^2)^{-1/2}
\frac{(1+i\tau \mu_{j})^{2\alpha_{j}+1}}{(1+(\tau\mu_{j})^2)^{\alpha_{j}+\frac12}}\right)
 d\tau
 \mathbb P_{\alpha}
 \\
 &
 = \sum_{\alpha\in \N^n}
 \int_{\R}\hat F(\tau)
\Bigl(\prod_{1\le j\le n}\frac{(1+i\tau\mu_{j})^{\alpha_{j}}}{
(1-i\tau\mu_{j})^{\alpha_{j}+1}
} \Bigr) d\tau \mathbb P_{\alpha},
\end{align*}
and for $F(t)=\mathbf 1_{[-1,1]}(2\pi t)$, we find $\hat F(\tau)=\frac{\sin \tau}{\pi \tau}$
and the sought result.
\end{proof}
\begin{rem}\rm
 It is also possible to provide a direct checking for the above lemma, since with the notations \eqref{foulag-}, \eqref{foulag+}, we have 
 $$
 \frac{(1+i\tau\mu_{j})^{\alpha_{j}}}{
(1-i\tau\mu_{j})^{\alpha_{j}+1}
}=\check{\widehat{G_{\alpha_{j}}}}\bigr(\tau \mu_{j}/(2\pi)\bigl),
 $$
 and thus
\begin{multline*}
 F_{\alpha}(a)= \int_{\R}\hat F(\tau)\prod_{j} \check{\widehat{G_{\alpha_{j}}}}\bigr(\tau \mu_{j}/(2\pi)\bigl) d\tau
 \\=
 \int_{\R}\hat F(\tau)\int_{\R^n}\prod_{j}(-1)^{\alpha_{j}}
L_{\alpha_{j}}(2t_{j}) H(t_{j})e^{-t_{j}}e^{2\pi i \tau \mu_{j}t_{j}/(2\pi)} dt d\tau
\\
=
\int_{\R^n}\prod_{j}(-1)^{\alpha_{j}}
L_{\alpha_{j}}(2t_{j}) H(t_{j})e^{-t_{j}}
F\bigl(\sum_{j}\mu_{j}t_{j}/2\pi\bigr)
 dt.
\end{multline*}
Now since we have 
$F\bigl(\sum_{j}\mu_{j}t_{j}/2\pi\bigr)=
\mathbf 1_{[-1,1]}(\sum_{j}\mu_{j}t_{j})$,
this fits with the expression of $F_{\alpha}$ in Theorem \ref{thm.specell}.
\end{rem}
\begin{rem}\rm
 Another interesting remark is that the expression \eqref{3316} depends obviously only on $\val \alpha$ and $a=a_{1}=\dots=a_{n}$ in the case where all the $a_{j}$ are equal:
 indeed in that case, we have with $\mu=1/a$,
 $$
 \prod_{1\le j\le n}\frac{(1+i\tau\mu_{j})^{\alpha_{j}}}{
(1-i\tau\mu_{j})^{\alpha_{j}+1}
} 
=\frac{(1+i\tau\mu)^{\val\alpha}}{
(1-i\tau\mu)^{\val \alpha+n}
},
 $$
 and this gives another ({\sl a posteriori}) justification  of our calculations in the isotropic case
 of Section \ref{balls}.
 On the other hand, we get also
 the identity
 \begin{equation}\label{}
F_{0_{\N^n}}(a_{1},\dots, a_{n})=
\int_{\R}\frac{\sin\tau}{\pi\tau}
\re\Bigl(\prod_{1\le j\le n}
(1-i\tau\mu_{j})^{-1}
\Bigr)d\tau,
\end{equation}
where the explicit expression \eqref{244} is given for the left-hand-side.
\end{rem}
\begin{lem}
  With the notations of Theorem $\ref{thm.specell}$, the function 
  \text{\small$K_{\alpha_{1}, \dots, \alpha_{n}} (a_{1}, \dots, a_{n})$}
   is symmetric in the variables
 \text{\small$(\alpha_{1}, a_{1};\dots; \alpha_{n}, a_{n})$}, i.e. for a permutation $\pi$ of 
 \text{\small$\{1,\dots, n\}$}, we have 
  \begin{equation}\label{}
K_{\alpha_{\pi(1)}, \dots, \alpha_{\pi(n)}} (a_{\pi(1)}, \dots, a_{\pi(n)})=K_{\alpha_{1}, \dots, \alpha_{n}} (a_{1}, \dots, a_{n}).
\end{equation}
\end{lem}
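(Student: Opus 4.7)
The plan is to reduce the symmetry statement to a change of variables that turns the defining integral into a symmetric form. The obstruction to reading off the symmetry directly from \eqref{kalpha} is that $a_j$ appears only in the constraint $\sum t_j/a_j\ge 1$, while the Laguerre factor and the exponential depend on $t_j$ without any reference to $a_j$; so the pairs $(\alpha_j, a_j)$ are not visibly ``coupled'' in the integrand. A rescaling will remedy this.

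First, I would substitute $t_j=a_j s_j$ (with $dt_j=a_j\,ds_j$) in \eqref{kalpha}. The domain becomes $\{s\in\R_+^n:\sum_j s_j\ge 1\}$, which is manifestly invariant under permutations of the coordinates $s_j$. The integrand becomes
\begin{equation*}
\prod_{1\le j\le n}\Bigl( a_j\, e^{-a_j s_j}\,(-1)^{\alpha_j}L_{\alpha_j}(2 a_j s_j)\Bigr),
\end{equation*}
which is a product of factors, each depending only on the triple $(\alpha_j, a_j, s_j)$. This is exactly the formula already noted in Remark \ref{rem-313}.

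Next, for a permutation $\pi$ of $\{1,\dots,n\}$, I would perform the corresponding change of dummy variables $s_j\mapsto s_{\pi(j)}$ in the integral for $K_{\alpha_{\pi(1)},\dots,\alpha_{\pi(n)}}(a_{\pi(1)},\dots,a_{\pi(n)})$. Since the domain is invariant and the Jacobian is $1$, and since the integrand is a product whose $j$-th factor depends exactly on the paired data $(\alpha_{\pi(j)},a_{\pi(j)},s_{\pi(j)})$, the substitution reassembles the integrand of $K_{\alpha_{1},\dots,\alpha_{n}}(a_1,\dots,a_n)$, yielding the required equality.

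There is no real obstacle here; the only thing to be careful about is confirming that Fubini's theorem applies, which follows from the absolute integrability estimate already worked out in Remark \ref{rem-312}. Alternatively, one may deduce the same symmetry instantly from Lemma \ref{lem320}: the factor $\frac{\sin\tau}{\pi\tau}$ in \eqref{3316} is independent of $(\alpha,\mu)$, and the remaining factor $\prod_{j}\frac{(1+i\tau\mu_j)^{\alpha_j}}{(1-i\tau\mu_j)^{\alpha_j+1}}$ is a product over $j$ of terms depending on the paired data $(\alpha_j,\mu_j)$, hence invariant under simultaneous permutation. Since $K_\alpha=1-F_\alpha$, the symmetry of $K_\alpha$ follows.
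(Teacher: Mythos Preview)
Your proof is correct and follows essentially the same route as the paper: the change of variables $t_j=a_j s_j$ rewrites $K_\alpha(a)$ as an integral over the permutation-invariant domain $\{s\in\R_+^n:\sum_j s_j\ge 1\}$ of a product $\prod_j\bigl(a_j e^{-a_j s_j}(-1)^{\alpha_j}L_{\alpha_j}(2a_j s_j)\bigr)$, from which the symmetry is immediate. Your alternative argument via Lemma~\ref{lem320} is a nice extra observation, and note that the rescaled formula for $K_\alpha$ you derive already appears verbatim in Remark~\ref{rem-312} (Remark~\ref{rem-313} gives the companion formula for $F_\alpha$).
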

\begin{proof}
 Formula \eqref{kalpha} yields
  \begin{equation*}
K_{\alpha}(a)=\int_{
\substack{\sum s_{j}\ge 1\\s_{j}\ge 0}}
\prod_{1\le j\le n} \bigl(e^{-a_{j}s_{j}}a_{j}
(-1)^{\alpha_{j}}L_{\alpha_{j}}(2a_{j}s_{j})\bigr)
\ ds,
\end{equation*}
and the domain of integration is invariant by permutation of the variables,
entailing the sought result.
\end{proof}
\begin{lem}\label{lem.213}
 With the notations of Theorem \ref{thm.specell}, we have 
 \begin{align*}
&K_{\alpha_{1}, \dots, \alpha_{n}}(a_{1}, \dots, a_{n})=e^{-a_{n}}P_{\alpha_{n}}(a_{n})
\\&\hskip22pt+\int_{0}^{a_{n}}(-1)^{\alpha_{n}}L_{\alpha_{n}}(2t_{n})
e^{-t_{n}}K_{\alpha_{1},\dots, \alpha_{n-1}}
\bigl(a_{1}(1-t_{n}/a_{n}),\dots, a_{n-1}(1-t_{n}/a_{n})\bigr)
dt_{n}
\\
&=e^{-a_{n}}P_{\alpha_{n}}(a_{n})
\\&\hskip22pt+\int_{0}^{1}(-1)^{\alpha_{n}}L_{\alpha_{n}}(2a_{n}\theta)
e^{-\theta a_{n}}K_{\alpha_{1},\dots, \alpha_{n-1}}
\bigl(a_{1}(1-\theta),\dots, a_{n-1}(1-\theta)\bigr)
d\theta a_{n}.
\end{align*}
\end{lem}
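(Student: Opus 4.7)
The proof will proceed by a direct Fubini/splitting argument on the definition \eqref{kalpha}, isolating the last coordinate $t_n$ and partitioning the domain of integration.

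First I would write $K_\alpha(a)$ as an iterated integral, integrating $t_n$ on the outside and $(t_1,\dots,t_{n-1})$ on the inside, so that
\begin{equation*}
K_\alpha(a)=\int_0^{+\io}(-1)^{\alpha_n}L_{\alpha_n}(2t_n)e^{-t_n}\,I(t_n)\,dt_n,
\end{equation*}
where $I(t_n)$ is the integral over $\{(t_1,\dots,t_{n-1})\in\R_+^{n-1}:\sum_{j<n}t_j/a_j\ge 1-t_n/a_n\}$ of $\prod_{j<n}(-1)^{\alpha_j}L_{\alpha_j}(2t_j)e^{-t_j}\,dt_j$. Then I would split the $t_n$-range at $a_n$, i.e. treat separately $t_n\in[0,a_n]$ (where the inner constraint $\sum_{j<n}t_j/a_j\ge 1-t_n/a_n$ is nontrivial) and $t_n\in[a_n,+\io)$ (where the constraint is vacuous).

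For the range $t_n\ge a_n$, the inner integral factors as a product of one-dimensional integrals $\int_0^\io(-1)^{\alpha_j}L_{\alpha_j}(2t_j)e^{-t_j}dt_j$. Each equals $1$, since combining \eqref{onedim6} with $P_k(0)=1$ gives precisely $(-1)^k\int_0^\io e^{-t}L_k(2t)dt=P_k(0)=1$. Hence this piece contributes
\begin{equation*}
\int_{a_n}^{+\io}(-1)^{\alpha_n}L_{\alpha_n}(2t_n)e^{-t_n}dt_n,
\end{equation*}
which, after the substitution $t_n=s+a_n$ and one more application of \eqref{onedim6}, equals exactly $e^{-a_n}P_{\alpha_n}(a_n)$.

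For the range $0\le t_n\le a_n$, I would set $b_j:=a_j(1-t_n/a_n)$ for $j<n$, observe $b_j>0$, and note that the constraint $\sum_{j<n}t_j/a_j\ge 1-t_n/a_n$ is equivalent to $\sum_{j<n}t_j/b_j\ge 1$. Thus the inner integral is precisely $K_{\alpha_1,\dots,\alpha_{n-1}}(b_1,\dots,b_{n-1})$, by the very formula \eqref{kalpha} in dimension $n-1$. Summing the two contributions yields the first identity of the lemma, and the second follows immediately by the linear change of variable $t_n=a_n\theta$. The calculation is essentially bookkeeping; the only small subtlety to watch is that the recognition of the inner integral as a value of $K_{\alpha_1,\dots,\alpha_{n-1}}$ requires $b_j>0$, which holds on the open interval $(0,a_n)$, with the boundary case $t_n=a_n$ being a negligible set in the outer $dt_n$-integration (one may also invoke the entire-function extension of $K$ from Remark \ref{rem-313} to handle the limiting values if desired).
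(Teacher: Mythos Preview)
Your proposal is correct and follows essentially the same approach as the paper: split the integration domain according to whether $t_n/a_n\le 1$ or $t_n/a_n>1$, recognize the first piece as the integral against $K_{\alpha_1,\dots,\alpha_{n-1}}$ with rescaled arguments, and identify the second piece as $e^{-a_n}P_{\alpha_n}(a_n)$ via \eqref{onedim6}. Your write-up is in fact more detailed than the paper's proof, which simply states the disjoint-union decomposition and the resulting identity without spelling out the use of $P_k(0)=1$ or the $b_j>0$ caveat.
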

\begin{proof}
The domain
of integration is the disjoint union
{\small$$
\left\{\frac{t_{1}}{a_{1}}+\dots+\frac{t_{n-1}}{a_{n-1}}\ge 1-\frac{t_{n}}{a_{n}}, t_{j}\ge 0,
0\le \frac{t_{n}}{a_{n}}\le 1\right\}
\sqcup\left\{
 \frac{t_{n}}{a_{n}}>1, t_{j}\ge 0, 1\le j\le n-1
\right\},
$$}
so that 
\begin{align*}
&K_{\alpha_{1}, \dots, \alpha_{n}}(a_{1}, \dots, a_{n})=e^{-a_{n}}P_{\alpha_{n}}(a_{n})
\\&\hskip22pt+\int_{0}^{a_{n}}(-1)^{\alpha_{n}}L_{\alpha_{n}}(2t_{n})
e^{-t_{n}}K_{\alpha_{1},\dots, \alpha_{n-1}}
\bigl(a_{1}(1-t_{n}/a_{n}),\dots, a_{n-1}(1-t_{n}/a_{n})\bigr)
dt_{n}
\\
&=e^{-a_{n}}P_{\alpha_{n}}(a_{n})
\\&\hskip22pt+\int_{0}^{1}(-1)^{\alpha_{n}}L_{\alpha_{n}}(2a_{n}\theta)
e^{-\theta a_{n}}K_{\alpha_{1},\dots, \alpha_{n-1}}
\bigl(a_{1}(1-\theta),\dots, a_{n-1}(1-\theta)\bigr)
d\theta a_{n},
\end{align*}
which is the sought result.
\end{proof}
\begin{lem}\label{lem.214}
 With the notations of Theorem \ref{thm.specell}, we have, assuming that the $(a_{j})_{1\le j\le n}$ are positive distinct numbers,
\begin{equation}\label{244}
K_{0,\dots, 0}(a_{1},\dots, a_{n})=\sum_{1\le j\le n} e^{-a_{j}} \frac{
\prod_{k\not=j} a_{k}}{\prod_{k\not=j}(a_{k}-a_{j})}.
\end{equation}
\end{lem}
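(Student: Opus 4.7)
The plan is to compute $K_{0,\dots,0}(a_1,\dots,a_n)$ by exploiting the Fourier integral representation of Lemma \ref{lem320} together with a partial fraction decomposition. Since $F_{0_{\N^n}} = 1 - K_{0,\dots,0}$, it suffices to evaluate $F_{0_{\N^n}}(a)$ in closed form. Specialising \eqref{3316} to $\alpha = 0$ gives
$$F_{0_{\N^n}}(a) = \int_\R \frac{\sin\tau}{\pi\tau}\prod_{j=1}^n \frac{1}{1 - i\tau\mu_j}\,d\tau, \qquad \mu_j = 1/a_j.$$

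The next step is to perform a partial fraction decomposition of the rational factor in the integrand. Since the $\mu_j$ are pairwise distinct, one may write
$$\prod_{j=1}^n \frac{1}{1-i\tau\mu_j} = \sum_{j=1}^n \frac{c_j}{1-i\tau\mu_j},$$
and evaluating the residue at the simple pole $\tau = -ia_j$ yields $c_j = \prod_{k\ne j} a_k/(a_k - a_j)$. Setting $\tau = 0$ in the same identity produces the auxiliary relation $\sum_j c_j = 1$, which will be needed at the end.

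Then I would interchange the finite sum with the integral and recognise each resulting term as the one-dimensional computation already performed in Section \ref{sec.22one}: namely, $\int_\R \frac{\sin\tau}{\pi\tau}\frac{d\tau}{1-i\tau\mu_j} = F_0(a_j) = 1 - e^{-a_j}$ from Proposition \ref{pro.2424} specialised to $k=0$ (where $P_0 = 1$). Summing gives
$$F_{0_{\N^n}}(a) = \sum_{j=1}^n c_j(1 - e^{-a_j}) = 1 - \sum_{j=1}^n c_j e^{-a_j},$$
and taking the complement $K_{0,\dots,0} = 1 - F_{0_{\N^n}}$ produces exactly \eqref{244}.

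There is no real obstacle in this argument; the only point needing care is the explicit residue computation and the identity $\sum_j c_j = 1$, both standard. An alternative route proceeds by induction on $n$ via Lemma \ref{lem.213} with $\alpha = 0$: the key observation is that the ratio $\prod_{k\ne j, k\le n-1} a_k/\prod_{k\ne j, k\le n-1}(a_k - a_j)$ is homogeneous of degree zero under the common rescaling $a_k \mapsto a_k(1-\theta)$, so that the inductive expression for $K_0(a_1(1-\theta),\dots,a_{n-1}(1-\theta))$ depends on $\theta$ only through the exponentials $e^{-a_j(1-\theta)}$. One then evaluates $\int_0^1 e^{-a_n \theta - a_j(1-\theta)}\,d\theta = (e^{-a_j} - e^{-a_n})/(a_n - a_j)$ and is reduced to verifying a rational identity in $a_1,\dots,a_n$; since this identity is itself most cleanly proved by partial fractions, the first approach is preferable.
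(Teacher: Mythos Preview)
Your proof is correct and takes a genuinely different route from the paper. The paper proceeds by induction on $n$ via the recursion of Lemma~\ref{lem.213}: it writes
\[
K_{0}(a_{1},\dots,a_{n})=e^{-a_{n}}+a_{n}\int_{0}^{1}e^{-\theta a_{n}}K_{0}\bigl(a_{1}(1-\theta),\dots,a_{n-1}(1-\theta)\bigr)\,d\theta,
\]
substitutes the inductive formula, evaluates the resulting elementary exponential integrals, and is then left with verifying the polynomial identity~\eqref{246}, which is handled by a degree-counting argument (a polynomial of degree $<n-1$ with $n$ distinct roots must vanish). Your approach instead exploits the Fourier representation of Lemma~\ref{lem320} directly: the partial fraction decomposition of $\prod_{j}(1-i\tau\mu_{j})^{-1}$ reduces the $n$-dimensional integral to a sum of one-dimensional ones, each already computed as $F_{0}(a_{j})=1-e^{-a_{j}}$, and the identity $\sum_{j}c_{j}=1$ closes the argument immediately. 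Your route is shorter and more conceptual; the paper's inductive route has the minor advantage of staying purely on the $K$-side (no Fourier integral), but as you observe, it ultimately needs the same rational identity that partial fractions delivers for free.
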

\begin{proof}
The latter formula is true for $n=1$ since we have $K_{0}(a_{1})= e^{-a_{1}}$.
We have also
\begin{align*}
&K_{0\in \N^n}(a_{1},\dots, a_{n})
=
 e^{-a_{ n}}
+
a_{n}\int_{0}^{1}e^{-\theta a_{n}}
K_{0\in \N^{n-1}}\bigl(
a_{1}(1-\theta)
,\dots,
a_{n-1}(1-\theta)\bigr) d\theta
\\&
=
 e^{-a_{ n}}
+
a_{n}\int_{0}^{1}e^{-\theta a_{n}}
\sum_{1\le j\le n-1} e^{-a_{j}(1-\theta)} \frac{
\prod_{k\not=j} a_{k}}{\prod_{k\not=j}(a_{k}-a_{j})}
 d\theta
\\ &=
 e^{-a_{ n}}
+
a_{n}
\sum_{1\le j\le n-1}\frac{
\prod_{k\not=j} a_{k}}{\prod_{k\not=j}(a_{k}-a_{j})}
\int_{0}^{1}e^{-\theta a_{n}}
 e^{-a_{j}(1-\theta)} 
 d\theta
 \end{align*} 
 \begin{align*}
 & =
 e^{-a_{ n}}
+
\sum_{1\le j\le n-1}\frac{a_{n}
\prod_{k\not=j} a_{k}}{\prod_{k\not=j}(a_{k}-a_{j})} e^{-a_{j}}
\int_{0}^{1}e^{\theta(a_{j}- a_{n})} 
 d\theta
  \\
  &=
 e^{-a_{ n}}
+
\sum_{1\le j\le n-1}\frac{a_{n}
\prod_{k\not=j} a_{k}}{\prod_{k\not=j}(a_{k}-a_{j})} e^{-a_{j}}
\frac{e^{a_{j}-a_{n}}-1}{a_{j}-a_{n}}
\\
 & =
 e^{-a_{ n}}
+
\sum_{1\le j\le n-1}\frac{a_{n}
\prod_{k\not=j} a_{k}}{\prod_{k\not=j}(a_{k}-a_{j})} 
\frac{e^{-a_{n}}-e^{-a_{j}}}{(a_{j}-a_{n})}
\\
 &=
 e^{-a_{ n}}\left(1+\sum_{1\le j\le n-1}\frac{a_{n}
\prod_{k\not=j} a_{k}}{\prod_{k\not=j}(a_{k}-a_{j})} 
\frac{1}{(a_{j}-a_{n})}
\right)
\\
&\hskip94pt+
\underbrace{\sum_{1\le j\le n-1}\frac{a_{n}
\prod_{k\not=j} a_{k}}{\prod_{k\not=j}(a_{k}-a_{j})} 
\frac{e^{-a_{j}}}{(a_{n}-a_{j})}}_{\text{OK}}.
\end{align*}
We need to prove that
$$
\left(1+\sum_{1\le j\le n-1}\frac{a_{n}
\prod_{k\not=j, 1\le k\le n-1} a_{k}}{\prod_{k\not=j, 1\le k\le n-1}(a_{k}-a_{j})} 
\frac{1}{(a_{j}-a_{n})}
\right)=
\frac{
\prod_{1\le l\le n-1} a_{l}}{\prod_{1\le l\le n-1}(a_{l}-a_{n})}.
$$
that is 
\begin{multline*}
\prod_{1\le l\le n-1} a_{l}
=\prod_{1\le l\le n-1}(a_{l}-a_{n})\left(1+\sum_{1\le j\le n-1}\frac{a_{n}
\prod_{k\not=j, 1\le k\le n-1} a_{k}}{\prod_{k\not=j, 1\le k\le n-1}(a_{k}-a_{j})} 
\frac{1}{(a_{j}-a_{n})}
\right),
\end{multline*}
which is 
\begin{multline*}
\prod_{1\le l\le n-1} a_{l}=\prod_{1\le l\le n-1}(a_{l}-a_{n})
+\sum_{1\le j\le n-1}\frac{a_{n}
\prod_{k\not=j, 1\le k\le n-1} a_{k}}{\prod_{k\not=j, 1\le k\le n-1}(a_{k}-a_{j})} 
\frac{\prod_{1\le l\le n-1}(a_{l}-a_{n})}{(a_{j}-a_{n})},
\end{multline*}
i.e.
\begin{align}\label{245}
\prod_{1\le l\le n-1} a_{l}=\prod_{1\le l\le n-1}(a_{l}-a_{n})
+\sum_{1\le j\le n-1}\frac{a_{n}
\prod_{k\not=j, 1\le k\le n-1} a_{k}(a_{k}-a_{n})}{\prod_{k\not=j, 1\le k\le n-1}(a_{k}-a_{j})} 
.
\end{align}
Let us reformulate \eqref{245} as an equality between polynomials (to be proven) with
\begin{align}\label{246}
\prod_{1\le l\le n-1}(a_{l}-X)
+\sum_{1\le j\le n-1}\frac{X
\prod_{k\not=j, 1\le k\le n-1} a_{k}(a_{k}-X)}{\prod_{k\not=j, 1\le k\le n-1}(a_{k}-a_{j})} 
-\prod_{1\le l\le n-1} a_{l}=0,
\end{align}
and let us assume that the $(a_{j})_{1\le j\le n-1}$ are distinct and different from $0$. The polynomial $\mathcal Q$
on the left-hand-side has degree less than $n-1$ and we have
$$
\mathcal Q(0)=0, \qquad\text{and  \quad $ \forall j\in \Iintv{1,n-1},
 $}
$$
$$
 \mathcal Q(a_{j})
=\frac{a_{j}
\prod_{k\not=j, 1\le k\le n-1} a_{k}(a_{k}-a_{j})}{\prod_{k\not=j, 1\le k\le n-1}(a_{k}-a_{j})} 
-\prod_{1\le l\le n-1} a_{l}=0,
$$
so that $\mathcal Q$ has degree less than $n-1$ with $n$ distinct roots and this proves the identity \eqref{246} when the $(a_{j})_{1\le j\le n-1}$ are distinct and all different from $0$, proving \eqref{244} in that case; of course we may assume that all $a_{j}$ are positive and noting from \eqref{kalpha} that $K_{\alpha}$ is continuous on $({\R_{+}^*})^n$,
we get Formula \eqref{244} in all cases where all the $a_{j}$ are positive,
concluding the proof of the lemma.
\end{proof}
\begin{lem}\label{lem-3320}
  With the notations of Theorem \ref{thm.specell}, we have, assuming 
$$  0<a_{1}\le\dots\le a_{n},$$
the inequality
  \begin{equation}\label{}
K_{0\in \N^n}(a_{1},\dots, a_{n})\ge \sum_{1\le j\le n}
e^{-a_{j}}\frac{\prod_{1\le l<j}a_{l}}{(j-1)!}\ge e^{-\min_{1\le j\le n} a_{j}}=\max_{1\le j\le n}e^{-a_{j}}.
\end{equation}
\end{lem}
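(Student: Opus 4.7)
The plan is to prove the first inequality by induction on $n$ and to deduce the second inequality as an immediate corollary. The base case $n=1$ follows directly from \eqref{kalpha}, which collapses to $K_0(a_1)=\int_{a_1}^{\infty}e^{-t}\,dt=e^{-a_1}$, matching the sum on the right-hand side exactly.

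For the inductive step I would apply the recursion of Lemma \ref{lem.213}, but first use the symmetry of $K_\alpha$ in its arguments to arrange for the variable being peeled off to be the \emph{smallest} one, namely $a_1$. This yields
$$K_{0\in\N^n}(a_1,\dots,a_n)=e^{-a_1}+a_1\int_0^1 e^{-\theta a_1}\,K_{0\in\N^{n-1}}\bigl(a_2(1-\theta),\dots,a_n(1-\theta)\bigr)\,d\theta.$$
Since the dilation factor $1-\theta\ge 0$ preserves the ordering $a_2\le\dots\le a_n$, the inductive hypothesis applies to the inner term and gives the lower bound
$$K_{0\in\N^{n-1}}\bigl(a_2(1-\theta),\dots,a_n(1-\theta)\bigr)\ge\sum_{k=2}^n e^{-a_k(1-\theta)}\,\frac{(1-\theta)^{k-2}\prod_{2\le l<k}a_l}{(k-2)!}.$$

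Substituting into the recursion and exchanging sum and integral, the $k$-th contribution reduces, after the change of variable $u=1-\theta$, to
$$a_1\int_0^1 e^{-\theta a_1}e^{-a_k(1-\theta)}(1-\theta)^{k-2}\,d\theta=a_1 e^{-a_1}\int_0^1 e^{(a_1-a_k)u}u^{k-2}\,du\ge \frac{a_1\,e^{-a_k}}{k-1},$$
where the last inequality uses $e^{(a_1-a_k)u}\ge e^{a_1-a_k}$ on $[0,1]$ (valid since $a_1\le a_k$) together with $\int_0^1 u^{k-2}\,du=1/(k-1)$. Combining the factor $a_1/(k-1)$ with the prefactor $\prod_{2\le l<k}a_l/(k-2)!$ from the inductive bound produces exactly $\prod_{1\le l<k}a_l/(k-1)!$, which assembles into the claimed sum.

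The second inequality is then immediate: since $a_1=\min_j a_j$, the $j=1$ summand equals $e^{-a_1}=\max_j e^{-a_j}$, and the remaining summands are non-negative. I do not expect any serious obstacle; the only subtle point is to invoke the symmetry lemma \emph{before} applying the recursion, so that the monotonicity estimate $e^{(a_1-a_k)u}\ge e^{a_1-a_k}$ has the correct sign, which rests squarely on $a_1$ being the minimum.
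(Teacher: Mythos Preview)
Your proof is correct and follows essentially the same route as the paper: both use the symmetry to peel off the smallest variable $a_1$ via Lemma \ref{lem.213}, apply the inductive hypothesis to $K_{0\in\N^{n-1}}$, and then exploit $a_1\le a_k$ to bound the resulting exponential integral below (the paper writes this as $e^{(a_j-a_1)\theta}\ge 1$, you as $e^{(a_1-a_k)u}\ge e^{a_1-a_k}$ after the substitution $u=1-\theta$, which is the same estimate). The paper treats $n=2$ separately as an illustration, but your uniform inductive step already covers it.
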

\begin{rem}\rm
 The above estimate is sharp in the sense that when all the $a_{j}$ are equal to the same $a>0$, we have proven in \eqref{253253} that
 \begin{multline*}
K_{0}(a)=\frac{e^{-a}}{(n-1)!}\int_{0}^{+\io} e^{-s} (s+a)^{n-1}
ds=e^{-a}\sum_{0\le l\le n-1} \frac{a^l}{(n-1-l)!l!}\Gamma(n-l)
\\=e^{-a}\sum_{0\le l\le n-1} \frac{a^l}{l!}
=e^{-a}\sum_{1\le j\le n} \frac{a^{j-1}}{(j-1)!}
= \sum_{1\le j\le n}
e^{-a_{j}}\frac{\prod_{1\le l<j}a_{l}}{(j-1)!}_{\vert a_{1}=\dots=a_{n}=a}.
\end{multline*}
\end{rem}
\begin{proof}The property is true for $n=1$ since $K_{0}(a_{1})=e^{-a_{1}}$.
We check the case $n=2$ with $a_{1}<a_{2}$, and we find 
\begin{multline*}
K_{(0,0)}(a_{1},a_{2})=e^{-a_{1}}+\int_{0}^{a_{1}} e^{-t_{1}}
e^{-a_{2}(1-t_{1}/a_{1})} dt_{1}
\\
=e^{-a_{1}}+ e^{-a_{2}}\frac{e^{a_{2}-a_{1}}-1}{\frac{a_{2}}{a_{1}}-1}=
e^{-a_{1}}+ e^{-a_{2}}a_{1}\frac{e^{a_{2}-a_{1}}-1}{{a_{2}}-{a_{1}}}
\ge e^{-a_{1}}+ e^{-a_{2}}a_{1}.
\end{multline*}
 Let us consider for some $n\ge 3$, $0<a_{1}<\dots<a_{n}$ and inductively,
  \begin{align*}
K_{0\in\N^{n}}&(a_{1}, \dots, a_{n})
\\&=e^{-a_{1}}P_{0}(a_{1})+\int_{0}^{a_{1}}
e^{-t_{1}}K_{0\in \N^{n-1}}
\bigl(a_{2}(1-t_{1}/a_{1}),\dots, a_{n}(1-t_{1}/a_{1})\bigr)
dt_{1}
\\&=e^{-a_{1}}P_{0}(a_{1})+a_{1}\int_{0}^{1}
e^{-a_{1}\theta}K_{0\in \N^{n-1}}
\bigl(a_{2}(1-\theta),\dots, a_{n}(1-\theta)\bigr)d\theta
\\
&\ge e^{-a_{1}}+
a_{1}\int_{0}^{1}
e^{-a_{1}\theta}
\sum_{2\le j\le n}
e^{-a_{j}(1-\theta)}\frac{\prod_{2\le l<j}a_{l}}{(j-2)!}
(1-\theta)^{j-2}d\theta
\\
&=
e^{-a_{1}}+\sum_{2\le j\le n} e^{-a_{j}}
\Bigl(\underbrace{a_{1}
\prod_{2\le l<j}a_{l}}_{\prod_{1\le k<j}a_{k}}\Bigr)
\int_{0}^{1}
e^{(a_{j}-a_{1})\theta}
\frac{1}{(j-2)!}
(1-\theta)^{j-2}d\theta
\\&\ge 
e^{-a_{1}}+\sum_{2\le j\le n} e^{-a_{j}}
\Bigl(\prod_{1\le k<j}a_{k}\Bigr)
\int_{0}^{1}
\frac{1}{(j-2)!}
(1-\theta)^{j-2}d\theta
\\
&=e^{-a_{1}}+\sum_{2\le j\le n} e^{-a_{j}}
\Bigl(\prod_{1\le k<j}a_{k}\Bigr)
\frac{1}{(j-1)!},
\end{align*} 
concluding the proof of the lemma.
\end{proof}
\begin{rem}\rm 
 The reader may have noticed that it is not obvious on Formula \eqref{244}
 \begin{equation*}
K_{0,\dots, 0}(a_{1},\dots, a_{n})=\sum_{1\le j\le n} e^{-a_{j}} \frac{
\prod_{k\not=j} a_{k}}{\prod_{k\not=j}(a_{k}-a_{j})},
\end{equation*}
that $K_{0}$ is an entire function.
Let us start with taking a look at 
\begin{align}
K_{0,0}(a_{1}, a_{2})&= \frac{e^{-a_{1}}a_{2}}{a_{2}-a_{1}}+\frac{e^{-a_{2}}a_{1}}{a_{1}-a_{2}}=\frac{a_{2}e^{-a_{1}}-a_{1}e^{-a_{2}}}{a_{2}-a_{1}}
\notag\\&=e^{-\frac{(a_{1}+a_{2})}{2}}
\frac{a_{2}e^{-\frac{a_{1}}2+\frac{a_{2}}{2}}-a_{1}e^{-\frac{a_{2}}2+\frac{a_{1}}2}}{a_{2}-a_{1}}
\notag\\
&=e^{-\frac{(a_{1}+a_{2})}{2}}
\frac{a_{2}
(\cosh \frac{a_{2}-a_{1}}{2}+\sinh  \frac{a_{2}-a_{1}}{2})
-a_{1}(\cosh \frac{a_{1}-a_{2}}{2}+\sinh  \frac{a_{1}-a_{2}}{2})
}{a_{2}-a_{1}}
\notag\\
&=e^{-\frac{(a_{1}+a_{2})}{2}}
\Bigl[\cosh (\frac{a_{2}-a_{1}}{2})
+
\frac{(a_{2}+a_{1})\sinh ( \frac{a_{2}-a_{1}}{2})}{a_{2}-a_{1}}\Bigr]
\notag\\
&
=e^{-\frac{(a_{1}+a_{2})}{2}}
\Bigl[\cosh (\frac{a_{2}-a_{1}}{2})
+
\frac{\frac12(a_{2}+a_{1})\sinh ( \frac{a_{2}-a_{1}}{2})}{\frac{a_{2}-a_{1}}2}\Bigr]
\notag\\
&
=e^{-\frac{(a_{1}+a_{2})}{2}}
\Bigl[\cosh (\frac{a_{2}-a_{1}}{2})
+
\frac12(a_{2}+a_{1})
\shc (\frac{a_{2}-a_{1}}{2})\Bigr],
\label{entire}
\end{align}
where $\shc$ stands for   the even entire function defined by 
\begin{equation}\label{shc}
\shc t=\frac{\sinh t}{t}.
\end{equation}
\index{{~\bf Notations}!$\shc t=\frac{\sinh t}{t}$}
\end{rem}
We have also from Lemma \ref{lem320}
\begin{equation}
F_{\alpha}(a)
=
\int_{\R}\frac{\sin\tau}{\pi\tau}
\Bigl(\prod_{1\le j\le n}\frac{(1+i\tau\mu_{j})^{\alpha_{j}}}{
(1-i\tau\mu_{j})^{\alpha_{j}+1}
} \Bigr) d\tau,
\end{equation}
and defining the function $F_{\alpha}(a,\lambda)$ as the absolutely converging integral,
\begin{equation}
F_{\alpha}(a,\lambda)
=
\int_{\R}\frac{\sin(\lambda\tau)}{\pi\tau}
\Bigl(\prod_{1\le j\le n}\frac{(1+i\tau\mu_{j})^{\alpha_{j}}}{
(1-i\tau\mu_{j})^{\alpha_{j}+1}
} \Bigr) d\tau,\quad F_{\alpha}(a)= F_{\alpha}(a,1),
\end{equation}
we get
\begin{align*}
&\frac{\p F_{\alpha}}{\p\lambda}(a,\lambda)
=\frac1\pi
\int_{\R}{\cos(\lambda\tau)}
\Bigl(\prod_{1\le j\le n}\frac{(1+i\tau\mu_{j})^{\alpha_{j}}}{
(1-i\tau\mu_{j})^{\alpha_{j}+1}
} \Bigr) d\tau
\\&
=\frac1{2\pi}
\int_{\R}e^{i\lambda \tau}
\Bigl(\prod_{1\le j\le n}\frac{(1+i\tau\mu_{j})^{\alpha_{j}}}{
(1-i\tau\mu_{j})^{\alpha_{j}+1}
} \Bigr) d\tau
\\&\hskip44pt+
\frac1{2\pi}
\int_{\R}e^{i\lambda \tau}
\Bigl(\prod_{1\le j\le n}\frac{(1-i\tau\mu_{j})^{\alpha_{j}}}{
(1+i\tau\mu_{j})^{\alpha_{j}+1}
} \Bigr) d\tau
\\
&=
\frac1{2\pi}
\int_{\R}e^{i\lambda \tau}
\Bigl(
\prod_{1\le j\le n}\frac{(1+i\tau\mu_{j})^{\alpha_{j}}}{
(1-i\tau\mu_{j})^{\alpha_{j}+1}
} 
+
\prod_{1\le j\le n}\frac{(1-i\tau\mu_{j})^{\alpha_{j}}}{
(1+i\tau\mu_{j})^{\alpha_{j}+1}
} 
\Bigr) d\tau
\\&=i\sum_{1\le j\le n}\text{Res}\left(
e^{i\lambda \tau}
\prod_{1\le j\le n}\frac{(1-i\tau\mu_{j})^{\alpha_{j}}}{
(1+i\tau\mu_{j})^{\alpha_{j}+1}
} ;\tau=i/\mu_{j}=ia_{j}
\right)
\\&
=i\sum_{1\le j\le n}\text{Res}\left(
e^{i\lambda \tau}
\prod_{1\le j\le n}\frac{(-i\mu_{j})^{\alpha_{j}}
(ia_{j}+\tau)^{\alpha_{j}
}
}{
(i\mu_{j})^{\alpha_{j}+1}(-ia_{j}+\tau)^{\alpha_{j}+1}
} ;\tau=ia_{j}
\right)
\\&
=\frac1{i^{n-1}}\sum_{1\le j\le n}\text{Res}\left(
e^{i\lambda \tau}
\prod_{1\le j\le n}
(-1)^{\alpha_{j}}\frac{
a_{j}
(ia_{j}+\tau)^{\alpha_{j}
}
}{
(\tau-ia_{j})^{\alpha_{j}+1}
} ;\tau=ia_{j}
\right),
\end{align*}
so that assuming  that the $a_{j}$ are positive and distinct, we get 
\begin{align*}
&\frac{\p F_{\alpha}}{\p\lambda}(a,\lambda)=
\frac1{i^{n-1}}(\prod a_{k})
\\&\times\sum_{1\le j\le n}\frac1{\alpha_{j}!}
\left(\frac{d}{d\tau}\right)^{\alpha_{j}}\left(
e^{i\lambda \tau}
(-1)^{\alpha_{j}}
(ia_{j}+\tau)^{\alpha_{j}}
\prod_{1\le k\le n, k\not=j}
(-1)^{\alpha_{k}}\frac{
(ia_{k}+\tau)^{\alpha_{k}
}
}{
(\tau-ia_{k})^{\alpha_{k}+1}
} \right)_{\vert\tau=ia_{j}}
\\
&=\frac1{i^{n-1}}(\prod_{1\le k\le n} a_{k})
\sum_{1\le j\le n}\frac1{\alpha_{j}!}
\\&\hskip44pt \times
\left(\frac{d}{id\sigma}\right)^{\alpha_{j}}\left(
e^{-\lambda\sigma}
(-1)^{\alpha_{j}}
(ia_{j}+i\sigma)^{\alpha_{j}}
\hskip-12pt
\prod_{1\le k\le n, k\not=j}
(-1)^{\alpha_{k}}\frac{
(ia_{k}+i\sigma)^{\alpha_{k}
}
}{
(i\sigma-ia_{k})^{\alpha_{k}+1}
} \right)_{\vert\sigma=a_{j}}
\end{align*}
\begin{align*}
&=(-1)^{n-1+\val \alpha}(\prod_{1\le k\le n} a_{k})
\sum_{1\le j\le n}\frac1{\alpha_{j}!}
\\&
\hskip44pt\times\left(\frac{d}{d\sigma}\right)^{\alpha_{j}}
\left(
e^{-\lambda\sigma}
(a_{j}+\sigma)^{\alpha_{j}}
\hskip-12pt
\prod_{1\le k\le n, k\not=j}
\frac{
(a_{k}+\sigma)^{\alpha_{k}
}
}{
(\sigma-a_{k})^{\alpha_{k}+1}
} \right)_{\vert\sigma=a_{j}}
\\
&=(\prod_{1\le k\le n} a_{k})
\sum_{1\le j\le n}\frac{(-1)^{\alpha_{j}}}{\alpha_{j}!}\left(\frac{d}{d\sigma}\right)^{\alpha_{j}}\left(
e^{-\lambda\sigma}
(a_{j}+\sigma)^{\alpha_{j}}
\hskip-12pt
\prod_{1\le k\le n, k\not=j}
\frac{
(a_{k}+\sigma)^{\alpha_{k}
}
}{
(a_{k}-\sigma)^{\alpha_{k}+1}
} \right)_{\vert\sigma=a_{j}}.
\end{align*}
Since $F_{\alpha}(a,+\io)=1$, thanks to Lemma \ref{lem93}, we find eventually that 
\begin{align*}
&F_{\alpha}(a)=F_{\alpha}(a,1)=\int_{+\io}^1
\frac{\p F_{\alpha}}{\p\lambda}(a,\lambda) d\lambda+1=1-K_{\alpha}(a),
\\
&K_{\alpha}(a)=
(\prod_{1\le k\le n} a_{k})
\sum_{1\le j\le n}
\frac{(-1)^{\alpha_{j}}}{\alpha_{j}!}
\\&\hskip75pt
\int^{+\io}_{1}
\left(\frac{d}{d\sigma}\right)^{\alpha_{j}}\left(
e^{-\lambda\sigma}
(a_{j}+\sigma)^{\alpha_{j}}
\hskip-12pt
\prod_{1\le k\le n, k\not=j}
\frac{
(a_{k}+\sigma)^{\alpha_{k}
}
}{
(a_{k}-\sigma)^{\alpha_{k}+1}
} \right)_{\vert\sigma=a_{j}} d\lambda
\\
&=
\sum_{1\le j\le n}
\frac{(-1)^{\alpha_{j}}}{\alpha_{j}!}
\\&\int^{+\io}_{1} e^{-\lambda a_{j}}\!\!\!
\left(\frac{d}{d\sigma}-\lambda\right)^{\alpha_{j}}\left(
(a_{j}+\sigma)^{\alpha_{j}} a_{j}
\hskip-12pt
\prod_{1\le k\le n, k\not=j}
\frac{
(a_{k}+\sigma)^{\alpha_{k}
}
a_{k}
}{
(a_{k}-\sigma)^{\alpha_{k}+1}
} \right)_{\vert\sigma=a_{j}} d\lambda.
\\
&=
\sum_{1\le j\le n}
\frac{(-1)^{\alpha_{j}}}{\alpha_{j}!}
\int^{+\io}_{1} e^{-\lambda a_{j}}
\left(\frac{d}{d\sigma}-\lambda\right)^{\alpha_{j}}\left(
(a_{j}+\sigma)^{\alpha_{j}}
\hskip-12pt
\prod_{1\le k\le n, k\not=j}
\frac{
(a_{k}+\sigma)^{\alpha_{k}
}
}{
(a_{k}-\sigma)^{\alpha_{k}+1}
} \right)_{\vert\sigma=a_{j}} d\lambda
\\
&
=
\sum_{1\le j\le n}
\frac{(-1)^{\alpha_{j}}}{\alpha_{j}!}
\int^{+\io}_{a_{j}} \!\!\! e^{-t_{j}}
\\&
\hskip32pt
\left(\frac{d}{da_{j}s}-\frac{t_{j}}{a_{j}}\right)^{\alpha_{j}}\left(
(a_{j}+a_{j}s)^{\alpha_{j}}
\hskip-12pt
\prod_{1\le k\le n, k\not=j}
\frac{ a_{k}
(a_{k}+a_{j}s)^{\alpha_{k}
}
}{
(a_{k}-a_{j}s)^{\alpha_{k}+1}
} \right)_{\vert s=1} {dt_{j}}
\end{align*}
\begin{align*}
\\&=
\sum_{1\le j\le n}
\frac{(-1)^{\alpha_{j}}}{\alpha_{j}!}
\int^{+\io}_{a_{j}} e^{-t}
\left(\frac{d}{ds}-{t}\right)^{\alpha_{j}}\left(
(1+s)^{\alpha_{j}}
\hskip-12pt
\prod_{1\le k\le n, k\not=j}
\frac{ a_{k}
(a_{k}+a_{j}s)^{\alpha_{k}
}
}{
(a_{k}-a_{j}s)^{\alpha_{k}+1}
} \right)_{\vert s=1} {dt}
\\
&=
\sum_{1\le j\le n}
\frac{(-1)^{\alpha_{j}}}{\alpha_{j}!}
\int^{+\io}_{a_{j}} e^{-t}
\left(\frac{d}{ds}-{1}\right)^{\alpha_{j}}\left(
(t+s)^{\alpha_{j}}
\hskip-12pt
\prod_{1\le k\le n, k\not=j}
\frac{ a_{k}
(a_{k}+a_{j}s/t)^{\alpha_{k}
}
}{
(a_{k}-a_{j}s/t)^{\alpha_{k}+1}
} \right)_{\vert s=t} {dt}
\\
&=
\sum_{1\le j\le n}
\frac{(-1)^{\alpha_{j}}}{\alpha_{j}!}
\int^{+\io}_{a_{j}} e^{-t}
\\
&\hskip44pt\left(\frac{d}{d(s+t)}-{1}\right)^{\alpha_{j}}\left(
(t+s)^{\alpha_{j}}
\hskip-12pt
\prod_{1\le k\le n, k\not=j}
\frac{ ta_{k}
(t(a_{k}-a_{j})+a_{j}(s+t))^{\alpha_{k}
}
}{
(t(a_{k}+a_{j})-a_{j}(s+t))^{\alpha_{k}+1}
} \right)_{\vert s+t=2t} {dt}
\end{align*}
\begin{align*}
\\
&=
\sum_{1\le j\le n}
{(-1)^{\alpha_{j}}}
\\&\times
\int^{+\io}_{a_{j}} e^{-t}
\left(\frac{d}{ds}-{1}\right)^{\alpha_{j}}\left(
\frac{s^{\alpha_{j}}}{\alpha_{j}!}
\prod_{1\le k\le n, k\not=j}
\frac{ ta_{k}
(t(a_{k}-a_{j})+a_{j}s)^{\alpha_{k}
}
}{
(t(a_{k}+a_{j})-a_{j}s)^{\alpha_{k}+1}
} \right)_{\vert s=2t} {dt}
\end{align*}
\begin{align*}
&=
\sum_{1\le j\le n}
{(-1)^{\alpha_{j}}} e^{-a_{j}}
\int^{+\io}_{0} e^{-t}
\\&\times 
\left(\frac{d}{ds}-{1}\right)^{\alpha_{j}}\left(
\frac{s^{\alpha_{j}}}{\alpha_{j}!}
\prod_{1\le k\le n, k\not=j}
\frac{ (t+a_{j})a_{k}
\bigl((t+a_{j})(a_{k}-a_{j})+a_{j}s\bigr)^{\alpha_{k}
}
}{
\bigl((t+a_{j})(a_{k}+a_{j})-a_{j}s\bigr)^{\alpha_{k}+1}
} \right)_{\vert s=2t+2a_{j}} {dt}.
\end{align*}
We have also to deal with 
$$
\prod_{1\le k\le n, k\not=j}
\frac{ (t+a_{j})a_{k}
\bigl((t+a_{j})(a_{k}-a_{j})+a_{j}s\bigr)^{\alpha_{k}
}
}{
\bigl((t+a_{j})(a_{k}+a_{j})-a_{j}s\bigr)^{\alpha_{k}+1}
} 
$$
and 
$$
\bigl((t+a_{j})(a_{k}+a_{j})-a_{j}(2t+2a_{j})\bigr)=
a_{j}(a_{k}+a_{j})-2a_{j}^2+t(a_{k}-a_{j})=(t+a_{j})(a_{k}-a_{j})
$$
$$
(t+a_{j})(a_{k}+a_{j})-a_{j}s=
(t+a_{j})(a_{k}-a_{j})+a_{j}(2t+2a_{j}-s)
$$
so that 
\begin{multline}\label{simplek}
K_{\alpha}(a)=
\sum_{1\le j\le n}
{(-1)^{\alpha_{j}}} e^{-a_{j}}
\int^{+\io}_{0} e^{-t}
\\\times\left(\frac{d}{ds}-{1}\right)^{\alpha_{j}}\!\!\!
\biggl(
\frac{s^{\alpha_{j}}}{\alpha_{j}!}
\!\!\prod_{1\le k\le n, k\not=j}\!\!\!\!\!\!\!\!\!
\frac{ (t+a_{j})a_{k}
\bigl((t+a_{j})(a_{k}+a_{j})+a_{j}(s-2t -2a_{j})\bigr)^{\alpha_{k}
}
}{
\bigl(
(t+a_{j})(a_{k}-a_{j})-a_{j}(s-2t-2a_{j})
\bigr)^{\alpha_{k}+1}
} \biggr)_{\vert s=2t+2a_{j}} \hskip-20pt{dt}.
\end{multline}
\vs
\subsection{A conjecture on integrals of products of Laguerre polynomials}\label{sec.oiu666}
We formulate in this section  a conjecture on the behaviour of the functions $K_{\alpha}(a)$;
 as displayed in the previous sections, we know several  useful elements for the analysis of these functions, including some quite explicit expression. However, in the non-isotropic case, we were not able to prove the estimate $F_{\alpha}(a)\le 1$, equivalent to $K_{\alpha}(a)\ge 0$, except for the case $\alpha=0$. We are thus reduced to conjectural statements.
 \begin{conjecture}\label{conj-quad}Let $n\ge 1$ be an integer and let $\alpha=(\alpha_{1},\dots, \alpha_{n})\in \N^n$. For $a=(a_{1}, \dots, a_{n})\in (0,+\io)^n$ we define
 \begin{equation}\label{}
K_{\alpha}(a)=\int_{
\substack{t=(t_{1},\dots, t_{n})\in \R_{+}^n\\\sum_{1\le j\le n} t_{j}/a_{j}\ge 1}}
e^{-(t_{1}+\dots+t_{n})}
\prod_{1\le j\le n} 
(-1)^{\alpha_{j}}L_{\alpha_{j}}(2t_{j})
dt,
\end{equation}
where $L_{k}$ stands for the classical Laguerre polynomial
\begin{equation}\label{}
L_{k}(X)=\Bigl(\frac{d}{dX}-1\Bigr)^k\frac{X^k}{k!}.
\end{equation}
Then we conjecture that, assuming $0< a_{1}\le \dots\le a_{n}$, we have 
\begin{equation}\label{343}
K_{\alpha}(a)\ge
\sum_{1\le j\le n}
e^{-a_{j}}\frac{\prod_{1\le l<j}a_{l}}{(j-1)!}.
\end{equation}
\end{conjecture}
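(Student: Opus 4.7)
The plan is to establish the stronger assertion $K_\alpha(a) \ge K_0(a)$ for all $\alpha \in \N^n$ and all $a \in (0,+\io)^n$, from which \eqref{343} would follow immediately by combining with the bound on $K_0$ already proven in Lemma \ref{lem-3320}. This reformulation is natural because in the isotropic limit $a_1 = \dots = a_n = a$ it coincides with the Lieb--Ostrover inequality $F_{k;n}(a) \le F_{0;n}(a)$ of Theorem \ref{cor}; the conjecture is thus an anisotropic generalization of that result. The base case $n=1$ is immediate from Lemma \ref{keylem}, since $K_k(a) = e^{-a} P_k(a) \ge e^{-a} = K_0(a)$.

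For the inductive step, I would exploit the identity
$$
(-1)^{\alpha_n} L_{\alpha_n}(2a_n\theta)\, e^{-a_n\theta} = -\frac{1}{a_n}\,\frac{d}{d\theta}\bigl[e^{-a_n\theta} P_{\alpha_n}(a_n\theta)\bigr],
$$
which follows at once from the relation $P_k(a) - P_k'(a) = (-1)^k L_k(2a)$ derived in Lemma \ref{lem13}. Inserting this into the recursion of Lemma \ref{lem.213} and integrating by parts (using $P_{\alpha_n}(0) = 1$ together with the elementary limit $\lim_{a'\to 0}K_{\alpha'}(a') = 1$, a consequence of $\int_0^{+\io} e^{-t}(-1)^k L_k(2t)\,dt = 1$) yields the clean representation
$$
K_\alpha(a) = K_{\alpha'}(a') + \int_0^1 e^{-a_n\theta} P_{\alpha_n}(a_n\theta)\, \frac{d}{d\theta}\bigl[K_{\alpha'}(a'(1-\theta))\bigr]\,d\theta,
$$
with $\alpha' = (\alpha_1,\dots,\alpha_{n-1})$ and $a' = (a_1,\dots,a_{n-1})$. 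Subtracting the analogous identity with $\alpha_n$ replaced by $0$ (where $P_{\alpha_n}$ becomes $1$) converts the problem into showing
$$
\int_0^1 e^{-a_n\theta}\bigl(P_{\alpha_n}(a_n\theta) - 1\bigr)\, \frac{d}{d\theta}\bigl[K_{\alpha'}(a'(1-\theta))\bigr]\,d\theta \ge 0,
$$
a quantity in which the first factor is non-negative by Lemma \ref{keylem}. A telescopic decomposition $K_\alpha - K_0 = \sum_{j=1}^n \bigl(K_{(\alpha_1,\dots,\alpha_j,0,\dots,0)} - K_{(\alpha_1,\dots,\alpha_{j-1},0,\dots,0)}\bigr)$, combined with the symmetry of $K_\alpha$ in the pairs $(\alpha_j,a_j)$, would reduce the general statement to the case where only one of the $\alpha_j$ is nonzero.

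The main obstacle lies in controlling the sign of $\frac{d}{d\theta}\bigl[K_{\alpha'}(a'(1-\theta))\bigr]$, which is oscillatory as soon as $|\alpha'| \ge 1$, reflecting the oscillatory behaviour of Laguerre polynomials visible in the zero patterns \eqref{jjjkkk}--\eqref{666555}. To push past this, I would iterate the integration by parts, transferring all the derivatives onto the smooth, monotone polynomial factor $P_{\alpha_n}(a_n\theta)$, whose cumulative behaviour is tightly controlled by Lemma \ref{lem13}. As a parallel strategy, the operator-theoretic reformulation $F_\alpha(a) = \poscal{\opw{\mathbf{1}_{E(a)}}\Psi_\alpha}{\Psi_\alpha}_{L^2(\R^n)}$ recasts the conjecture as the statement that the ground state $\Psi_0$ of the anisotropic harmonic oscillator $\opw{\pi q_\mu}$ maximizes the Wigner mass on the ellipsoid $E(a)$ among Hermite functions; in the isotropic situation Lieb and Ostrover exploit the $O(2n)$-symmetry of the Euclidean ball, and any substitute for that argument in the anisotropic case appears to require either an anisotropic Mehler-type identity or a new positivity formula for integrals of products of shifted Laguerre polynomials.
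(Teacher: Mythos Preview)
The statement you are addressing is a \emph{conjecture}, not a theorem: the paper does not prove it and explicitly labels it as open. What the paper does establish (in the theorem immediately following the conjecture) is a list of special cases --- $n=1$, the isotropic case $a_1=\dots=a_n$, the case $\alpha=0$, and the case $n=2$ with $\min(\alpha_1,\alpha_2)=0$ --- together with the remark that the stronger form $K_\alpha(a)\ge K_0(a)$ you adopt is indeed the natural sharpening. So there is no ``paper's proof'' to compare against; your proposal is an attack on an open problem.

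Your integration-by-parts identity
\[
K_\alpha(a)=K_{\alpha'}(a')+\int_0^1 e^{-a_n\theta}P_{\alpha_n}(a_n\theta)\,\frac{d}{d\theta}\bigl[K_{\alpha'}\bigl(a'(1-\theta)\bigr)\bigr]\,d\theta
\]
is correct and is a genuinely useful reformulation; it cleanly isolates the difficulty. However, two points deserve correction. First, your telescoping does \emph{not} reduce to the case ``only one $\alpha_j$ nonzero'': in the $j$-th summand $K_{(\alpha_1,\dots,\alpha_j,0,\dots,0)}-K_{(\alpha_1,\dots,\alpha_{j-1},0,\dots,0)}$, the residual multi-index $\beta$ appearing in your IBP integral is $(\alpha_1,\dots,\alpha_{j-1},0,\dots,0)$, which may have $j-1$ nonzero entries. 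So you are back to controlling $\frac{d}{d\theta}K_\beta(\cdot)$ for general $\beta$. Second, and more fundamentally, the proposal to ``iterate the integration by parts, transferring all the derivatives onto $P_{\alpha_n}$'' does not resolve the oscillation: each further integration by parts produces a boundary term and a new integrand whose sign is equally uncontrolled, and the derivatives of $P_{\alpha_n}$ are themselves oscillatory combinations of Laguerre polynomials (cf.\ Lemma~\ref{lem13}). You have correctly located the obstruction --- the lack of monotonicity of $K_{\alpha'}$ along rays when $|\alpha'|\ge 1$ --- but neither of your two suggested remedies (iterated IBP, or an anisotropic symmetry argument \`a la Lieb--Ostrover) is developed past the level of a wish. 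The conjecture remains open.
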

\begin{rem}\rm
 A slightly stronger and more symmetrical version of the above conjecture
 is that for $n, \alpha,a, K_{\alpha}$ as above, we have  
 \begin{equation}\label{343+}
K_{\alpha}(a)\ge
K_{0}(a).
\end{equation}
It is indeed stronger since we have proven in Lemma $\ref{lem-3320}$ that $K_{0}(a)$
 is greater than the right-hand-side of \eqref{343}.
\end{rem}
\begin{theorem}
 The previous conjecture is a proven theorem in the following cases.
 \begin{itemize}
\item[(1)]  When $n=1$.
\item[(2)]  For all $n\ge 1$, when all the $a_{j}$ are equal.
\item[(3)]  For all $n\ge 1$, when $\alpha=0_{\N^n}$.
\item[(4)]  When $n=2$ and $\min(\alpha_{1},\alpha_{2})=0$.
\end{itemize}
\end{theorem}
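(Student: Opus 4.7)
The plan is to treat the four items separately, invoking progressively deeper pieces of the machinery already developed in Sections~2 and~3.

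Item~(1) is immediate: for $n=1$ we have $K_k(a)=e^{-a}P_k(a)$ by Proposition~\ref{pro.2424}, and Lemma~\ref{keylem} states $P_k(a)\ge P_k(0)=1$, so $K_k(a)\ge e^{-a}$, which is exactly the right-hand side of \eqref{343} when $n=1$. For Item~(2), set $a_1=\dots=a_n=a$. Lemma~\ref{lemm21n+} makes $K_\alpha(a,\dots,a)$ depend only on $\val\alpha$, and the representation in Lemma~\ref{lem.kkjjhhg} gives $K_\alpha(a,\dots,a)=e^{-a}P_{\val\alpha;n}(a)$. The inequality \eqref{1234*} produces $P_{\val\alpha;n}(a)\ge P_{0;n}(a)=e^{a}\Gamma(n,a)/\Gamma(n)$, and expanding the incomplete Gamma function yields $\Gamma(n,a)/\Gamma(n)=e^{-a}\sum_{j=1}^n a^{j-1}/(j-1)!$, which matches the right-hand side of \eqref{343} in the isotropic case. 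Item~(3) is precisely the statement of Lemma~\ref{lem-3320}, whose inductive proof is already presented in the text.

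The bulk of the work lies in Item~(4). I will peel off the index with $\alpha_i=0$ using the recursive formula of Lemma~\ref{lem.213} and reduce everything to the elementary bound $e^u\ge 1+u$. Two symmetric subcases arise. If $\alpha_2=0$, Lemma~\ref{lem.213} reads
\begin{equation*}
K_{\alpha_1,0}(a_1,a_2)=e^{-a_2}+\int_0^{a_2}e^{-t}\,K_{\alpha_1}\bigl(a_1(1-t/a_2)\bigr)\,dt,
\end{equation*}
and Item~(1) applied under the integral sign gives $K_{\alpha_1}(s)\ge e^{-s}$; an explicit integration then yields
\begin{equation*}
K_{\alpha_1,0}(a_1,a_2)\;\ge\; e^{-a_2}+\frac{a_2\bigl(e^{-a_1}-e^{-a_2}\bigr)}{a_2-a_1}.
\end{equation*}
A short algebraic manipulation shows the desired lower bound $e^{-a_1}+a_1e^{-a_2}$ is equivalent to $e^{a_2-a_1}\ge 1+(a_2-a_1)$, which is standard. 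The limiting case $a_1=a_2$ is recovered by continuity. If instead $\alpha_1=0$, the symmetry of $K_\alpha$ established in the text gives $K_{0,\alpha_2}(a_1,a_2)=K_{\alpha_2,0}(a_2,a_1)$, and the same recursion, now peeled off the index carrying $a_1$, followed by the same elementary inequality, finishes the argument.

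The main obstacle is not in the four cases listed — each is close to routine given the earlier results — but in the cases of the conjecture not covered by this theorem, for instance $n=2$ with both $\alpha_j\ge 1$, or general $n\ge 3$ with any mixed $\alpha$. There the recursion of Lemma~\ref{lem.213} does not collapse after one step, and the sign-oscillations of $(-1)^{\alpha_j}L_{\alpha_j}(2t_j)$ no longer disappear against a simple exponential; a uniform positivity statement for the relevant products of shifted Laguerre polynomials, of the type underlying Feldheim's inequality, would be required and is currently missing.
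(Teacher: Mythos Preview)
Your proof is correct and follows essentially the same path as the paper's. The only noticeable variation is in Item~(4): the paper peels off the zero index via Lemma~\ref{lem.213}, uses $P_{\alpha_j}\ge 1$ inside the integral to obtain the stronger inequality $K_{0,\alpha_2}(a_1,a_2)\ge K_{0,0}(a_1,a_2)$ directly, and then invokes Lemma~\ref{lem-3320}; you instead bound $K_{\alpha_1}(s)\ge e^{-s}$, evaluate the resulting integral explicitly, and compare against the closed form $e^{-a_1}+a_1e^{-a_2}$ via $e^u\ge 1+u$. These are the same computation in different clothing, and both are valid.
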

\begin{proof}
 $\mathbf {(1)}$
 When $n=1$, we have proven above (in Proposition \ref{pro.2424})
that for $\alpha\in \N$, $a>0$,
\begin{equation}\label{}
 K_{\alpha}(a)=e^{-a}P_{\alpha}(a)\ge e^{-a},
\end{equation}
which is indeed \eqref{343+} in that case.
With the notations of Theorem \ref{thm-fl0001}
 (and in particular  where $D_{a}$ is defined in 
 \eqref{fl0001})
this implies
 \begin{equation}\label{}
\opw{\mathbf 1_{D_{a}}}
\le 1-e^{-a},
\end{equation}
an inequality due to P.~Flandrin in the 1988 paper \cite{conjecture}.
\par\no
$\mathbf {(2)}$ Assuming that all the $a_{j}$ are  equal to $a>0$,
 we have proven in Theorem \ref{cor} that for $\alpha\in \N^n$, $\val \alpha=\sum_{1\le j\le n}\alpha_{j}$,
 \begin{equation}\label{}
K_{\alpha}(a, \dots, a)\ge \frac{\Gamma(n, a)}{\Gamma(n)}=
e^{-a}\sum_{1\le j\le n}\frac{a^{j-1}}{(j-1)!}=K_{0}(a,\dots,a)
\end{equation}
since  from \eqref{kalpha}, we have 
\begin{align*}
K_{0}(a,\dots, a)&=\int_{
\substack{\sum t_{j}\ge a\\t_{j}\ge 0}}
e^{-(t_{1}+\dots+t_{n})}
dt\\&=\int_{\substack{t_{n}\ge a\\ t_{j}\ge 0}}e^{-(t_{1}+\dots+t_{n})}
dt+\int_{0}^{a} e^{-t_{n}}\int_{\sum t_{j}\ge a-t_{n}}
e^{-(t_{1}+\dots+t_{n-1})} dt
\\\text{\footnotesize (inductively)\quad}&=e^{-a}+
\int_{0}^{a} e^{-t_{n}} e^{-(a-t_{n})}
\sum_{1\le j\le n-1}\frac{(a-t_{n})^{j-1}}{(j-1)!} dt_{n}
\\
&=e^{-a}\Bigl(1+\sum_{1\le j\le n-1}\frac{a^j}{j!}\Bigr)=
e^{-a}\sum_{1\le j\le n}\frac{a^{j-1}}{(j-1)!},
\end{align*}
proving  \eqref{343+} in that case.
With \begin{equation}\label{fl0001+}
D_{(a)}=\{(x,\xi)\in \R^{2n}, 2\pi \frac{\val x^{2}+\val\xi^{2}}a\le 1\},
\end{equation}
this implies that
\begin{equation}\label{}
\opw{\mathbf 1_{D_{(a)}}}
\le 1-e^{-a}\sum_{1\le j\le n}\frac{a^{j-1}}{(j-1)!},
\end{equation}
an inequality proven in the  2010 article \cite{MR2761287} by E.~Lieb
and Y.~Ostrover.
\par\no
$\mathbf {(3)}$ When $\alpha=0_{\N^n}$, we have proven \eqref{343} in Lemma \ref{lem-3320}.
 \par\no
$\mathbf {(4)}$ When $n=2$, from the case $n=1$ we have 
 $
 K_{\alpha_{2}}(a_{2})= e^{-a_{2}} P_{\alpha_{2}}(a_{2}),
 $
 so that  from Lemma \ref{lem.213}, we obtain 
 $$
 K_{\alpha_{1},\alpha_{2}}(a_{1}, a_{2})=e^{-a_{1}} P_{\alpha_{1}}(a_{1})
 +a_{1}\int_{0}^1 e^{-\theta a_{1}-(1-\theta) a_{2}}
 (-1)^{\alpha_{1}} L_{\alpha_{1}}(2\theta a_{1})P_{\alpha_{2}}(a_{2}(1-\theta)) d\theta,
 $$
 and if $\alpha_{1}$=0, it means that 
  \begin{multline*}
 K_{0,\alpha_{2}}(a_{1}, a_{2})=e^{-a_{1}} 
 +a_{1}\int_{0}^1 e^{-\theta a_{1}-(1-\theta) a_{2}}
P_{\alpha_{2}}(a_{2}(1-\theta)) d\theta
\\\ge 
e^{-a_{1}} 
 +a_{1}\int_{0}^1 e^{-\theta a_{1}-(1-\theta) a_{2}}
 d\theta=K_{0,0}(a_{1},a_{2}),
\end{multline*}
and the reasoning is identical for $\alpha_{2}=0$, 
concluding  the proof of the theorem.
\end{proof}
We are interested in the Weyl quantization of the indicatrix of 
\begin{equation}\label{}
D_{a_{1},\dots, a_{n}}=\{(x,\xi)\in \R^{2n}, 2\pi \sum_{1\le j\le n}\frac{x_{j}^{2}+\xi_{j}^{2}}{a_{j}}\le 1\}, \quad a_{j}>0,
\end{equation}
and
we have a weaker conjecture.
 \begin{conjecture}[A weak form of Conjecture \ref{conj-quad}] \label{conj-quad--}
 With
 $n,\alpha, a, K_{\alpha}$ as in Conjecture \ref{conj-quad},
 we conjecture that 
 \begin{equation}\label{343--}
K_{\alpha}(a)\ge
0.\end{equation}
Note that Inequality \eqref{343--} is equivalent to
\begin{equation}\label{}
\opw{\mathbf 1_{D_{a_{1},\dots, a_{n}}}}
\le 1.
\end{equation}
\end{conjecture}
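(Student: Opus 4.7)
The plan is to reduce Conjecture \ref{conj-quad--} to the non-negativity $K_{\alpha}(a)\ge 0$ for all $\alpha\in\N^{n}$ and all $a\in(0,+\io)^{n}$: by the spectral decomposition of Theorem \ref{thm.specell}, the bound $\opw{\mathbf 1_{E(a)}}\le 1$ is equivalent to $F_{\alpha}(a)\le 1$ for every $\alpha$, i.e.\ to $K_{\alpha}(a)\ge 0$. Starting from Lemma \ref{lem320} and writing $\frac{\sin\tau}{\pi\tau}=\frac{1}{2\pi}\int_{-1}^{1}e^{iy\tau}dy$, I would apply Fubini to obtain
\begin{equation*}
F_{\alpha}(a)=\int_{-1}^{0}\psi_{\alpha}(y)dy,
\qquad
K_{\alpha}(a)=\int_{-\io}^{-1}\psi_{\alpha}(y)dy,
\end{equation*}
where $\psi_{\alpha}$ is the inverse Fourier transform of the rational function $\phi_{\alpha}(\tau)=\prod_{j}(1+i\tau\mu_{j})^{\alpha_{j}}/(1-i\tau\mu_{j})^{\alpha_{j}+1}$ with $\mu_{j}=1/a_{j}$. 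A contour-integration argument shows that $\psi_{\alpha}$ is supported in $(-\io,0]$ (all poles of $\phi_{\alpha}$ lying in the lower half-plane) and that $\int_{-\io}^{0}\psi_{\alpha}=\phi_{\alpha}(0)=1$, so the conjecture becomes the tail-integral positivity $\int_{-\io}^{-1}\psi_{\alpha}(y)dy\ge 0$.

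Partial fractions then yield a convolution structure $\psi_{\alpha}=\psi^{(1)}_{\alpha_{1}}\ast\cdots\ast\psi^{(n)}_{\alpha_{n}}$, where each piece has the explicit form $\psi^{(j)}_{k}(y)=\mu_{j}^{-1}q^{(j)}_{k}(\val{y}/\mu_{j})e^{y/\mu_{j}}H(-y)$ for a polynomial $q^{(j)}_{k}$ related to the Laguerre polynomials through $\mathcal F^{-1}[(1-i\tau\mu)^{-m}](y)=\val{y}^{m-1}\mu^{-m}(m-1)!^{-1}e^{y/\mu}H(-y)$. I would then attempt to prove the stronger form $K_{\alpha}(a)\ge K_{0}(a)$ of \eqref{343+} by induction on $\val\alpha$: the base case $\val\alpha=0$ is Lemma \ref{lem-3320}, and the inductive step would rest on the recursion of Lemma \ref{lem.213} together with careful sign-tracking of the oscillatory factor $(-1)^{\alpha_{n}}L_{\alpha_{n}}(2a_{n}\theta)e^{-\theta a_{n}}$, trying to reabsorb the Laguerre oscillations into the exponential decay.

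A complementary attempt would rely on the generating-function identity
\begin{equation*}
\sum_{\alpha\in\N^{n}}K_{\alpha}(a)\prod_{j}s_{j}^{\alpha_{j}}
=\prod_{j}\frac{1}{1+s_{j}}\int_{E(a)^{c}}\exp\Bigl(-\sum_{j}\frac{1-s_{j}}{1+s_{j}}v_{j}\Bigr)dv,
\end{equation*}
obtained by inserting the Laguerre generating series $\sum_{k}(-1)^{k}L_{k}(x)t^{k}=(1+t)^{-1}e^{xt/(1+t)}$ into the definition \eqref{kalpha} and changing variables. This reformulates the conjecture as the non-negativity of the Taylor coefficients at $s=0$ of the explicit meromorphic function on the right-hand side; combined with the closed form for $K_{0}$ in Lemma \ref{lem.214} this might be amenable to a residue calculation, in the spirit of the manipulations leading to \eqref{simplek}.

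The main obstacle is that each one-dimensional factor $\psi^{(j)}_{\alpha_{j}}$ genuinely changes sign on $(-\io,0)$, mirroring the Laguerre oscillations, so the tail-integral non-negativity cannot be controlled term-by-term and the oscillations propagate through the $n$-fold convolution. A continuous-deformation strategy connecting an isotropic configuration (where Theorem \ref{cor} gives $K_{\alpha}\ge 0$) to a general $(a_{1},\dots,a_{n})$ through the real-analytic family $a(t)$ of Remark \ref{rem-312} would succeed only if one could rule out zero-crossings of $t\mapsto K_{\alpha}(a(t))$, which in turn requires controlling this function at its minima and seems beyond direct techniques. Consequently, I expect that the crux of a complete proof along these lines will be a genuinely new algebraic identity expressing $K_{\alpha}(a)$ as a manifestly non-negative sum, analogous to the Feldheim inequality used by Flandrin in the one-dimensional case and by Lieb--Ostrover in the isotropic case, but adapted to an anisotropic product of Laguerre polynomials.
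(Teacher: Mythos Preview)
This statement is a \emph{conjecture}, not a theorem: the paper provides no proof and explicitly flags it as open (see the remark immediately following Conjecture~\ref{conj-quad--}, where the author notes that no direct proof of \eqref{343--} is known even in one dimension). The only substantive claim inside the statement is the equivalence between $K_{\alpha}(a)\ge 0$ and $\opw{\mathbf 1_{D_{a_{1},\dots,a_{n}}}}\le 1$, and you handle that correctly via the spectral decomposition of Theorem~\ref{thm.specell}.

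Your proposal is therefore not a proof but a research outline, and it is an honest one: the Fourier-transform/convolution reformulation and the generating-function identity are both reasonable rewritings, in the same spirit as the paper's Lemma~\ref{lem320} and formula~\eqref{simplek}. Your final diagnosis---that the sign changes of the one-dimensional Laguerre factors propagate through the convolution and that what is missing is an anisotropic analogue of the Feldheim/Askey--Gasper inequality---coincides exactly with the paper's own assessment in Section~\ref{sec.oiu666}. So there is no gap to name beyond the one both you and the paper already acknowledge: the problem is open, and neither the continuous-deformation argument nor the generating-function approach you sketch is known to close it.
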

\begin{rem}\rm
 In the first place, although the second conjecture is much weaker than the first, there is no reason to believe that the weak conjecture should be easier to prove than the first: in particular, in the known cases, it is indeed the proof of the precise statement \eqref{343} which leads to \eqref{343--} and we are not aware of a direct proof of \eqref{343--}, even in one dimension. 
\end{rem}
\vs\no
{\bf A summary of our knowledge on the functions $K_{\alpha}$}.
As proven in Remarks \ref{rem-312} and  \ref{rem-313}, the functions $K_{\alpha}$ are entire functions given on the open subset \eqref{336} by Formula \eqref{kalpha} (see also Formula \eqref{entire}).
Moreover the function $F_{\alpha}(a)=1-K_{\alpha}(a)$ can be expressed as a simple integral for $a_{j}>0$,
\begin{equation}
F_{\alpha}(a_{1}, \dots, a_{n})
=
\int_{\R}\frac{\sin\tau}{\pi\tau}
\Bigl(\prod_{1\le j\le n}\frac{(1+i\tau\mu_{j})^{\alpha_{j}}}{
(1-i\tau\mu_{j})^{\alpha_{j}+1}
} \Bigr) d\tau,
\quad
\mu_{j}=\frac{1}{a_{j}},
\end{equation}
and we have an explicit expression of the function $K_{\alpha}$ as a sum of simple integrals in \eqref{simplek}. However, having an explicit expression
does not mean much and for instance, we do have several explicit expressions for the Laguerre
 polynomials but Inequality \eqref{inelag} remains very hard work,
 requiring a deep understanding of these polynomials.
We have also an induction formula
in Lemma \ref{lem.213}.
 As a further remark, we have the following
 \begin{lem}
 Let $n, \alpha, a, K_{\alpha}$ as in Conjecture $\ref{conj-quad}$. Then we have 
 \begin{align}
&\lim_{a_{n}\rightarrow +\io} K_{\alpha_{1},\dots, \alpha_{n-1}, \alpha_{n}}(a_{1},\dots, a_{n-1}, a_{n})=
K_{\alpha_{1},\dots, \alpha_{n-1}}(a_{1},\dots, a_{n-1}),
\label{1111}
\\
&\lim_{a_{1}\rightarrow {0}_{+}} K_{\alpha_{1}, \alpha_{2},\dots, \alpha_{n}}(a_{1},a_{2},\dots, a_{n})=
1.
\label{2222}
\end{align}
 \end{lem}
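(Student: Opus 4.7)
The plan is to reduce both limits to corollaries of the induction identity of Lemma~\ref{lem.213}, combined with two elementary ingredients: $P_{\alpha}(0)=1$ and $\int_{0}^{+\infty}(-1)^{\alpha}L_{\alpha}(2s)e^{-s}\,ds=1$. The second integral is nothing but $P_{\alpha}(0)$ read from Formula~\eqref{onedim6}.

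For the first limit \eqref{1111}, I would apply Lemma~\ref{lem.213} in its natural form (with the $n$-th variable singled out):
\begin{equation*}
K_{\alpha_1,\dots,\alpha_n}(a_1,\dots,a_n) = e^{-a_n}P_{\alpha_n}(a_n) + a_n\!\int_{0}^{1}\!(-1)^{\alpha_n}L_{\alpha_n}(2a_n\theta)\,e^{-\theta a_n} K_{\alpha_1,\dots,\alpha_{n-1}}\!\bigl(a_1(1{-}\theta),\dots,a_{n-1}(1{-}\theta)\bigr)d\theta.
\end{equation*}
The first term vanishes exponentially as $a_n\to+\infty$ since $P_{\alpha_n}$ is a polynomial. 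In the integral, the change of variables $s=a_n\theta$ gives
\begin{equation*}
\int_{0}^{a_n}(-1)^{\alpha_n}L_{\alpha_n}(2s)\,e^{-s}\,K_{\alpha_1,\dots,\alpha_{n-1}}\!\bigl(a_1(1-s/a_n),\dots,a_{n-1}(1-s/a_n)\bigr)ds.
\end{equation*}
Dominated convergence then applies: for $s\in[0,a_n]$, the argument of $K_{\alpha_1,\dots,\alpha_{n-1}}$ lies in the compact set $\prod_{j<n}[0,a_j]$, on which this function is continuous (being entire, Remark~\ref{rem-313}), hence bounded by some $M>0$; the dominating function $|L_{\alpha_n}(2s)|e^{-s}M$ is integrable on $\R_+$; and pointwise the integrand converges to $(-1)^{\alpha_n}L_{\alpha_n}(2s)e^{-s}K_{\alpha_1,\dots,\alpha_{n-1}}(a_1,\dots,a_{n-1})$. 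The limit is therefore $K_{\alpha_1,\dots,\alpha_{n-1}}(a_1,\dots,a_{n-1})\cdot\int_{0}^{+\infty}(-1)^{\alpha_n}L_{\alpha_n}(2s)e^{-s}ds = K_{\alpha_1,\dots,\alpha_{n-1}}(a_1,\dots,a_{n-1})\cdot P_{\alpha_n}(0)$, which yields \eqref{1111}.

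For the second limit \eqref{2222}, I would use the symmetry of $K_{\alpha}$ in the pairs $(\alpha_j,a_j)$ to single out $a_1$ in the same formula, obtaining
\begin{equation*}
K_{\alpha_1,\dots,\alpha_n}(a_1,\dots,a_n) = e^{-a_1}P_{\alpha_1}(a_1) + a_1\!\int_{0}^{1}\!(-1)^{\alpha_1}L_{\alpha_1}(2a_1\theta)\,e^{-\theta a_1} K_{\alpha_2,\dots,\alpha_n}\!\bigl(a_2(1{-}\theta),\dots,a_n(1{-}\theta)\bigr)d\theta.
\end{equation*}
As $a_1\to 0_+$, the first term tends to $P_{\alpha_1}(0)=1$. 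The integrand of the remaining integral is uniformly bounded for $(a_1,\theta)\in(0,1]\times[0,1]$, thanks to continuity of $L_{\alpha_1}$ on $[0,2]$ and of the entire function $K_{\alpha_2,\dots,\alpha_n}$ on the compact set $\prod_{j\ge 2}[0,a_j]$, so the prefactor $a_1$ forces the remainder to $0$. This gives \eqref{2222}.

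There is no essential obstacle here: the hard work is already packaged in the induction formula of Lemma~\ref{lem.213} and in the fact that $K_{\alpha}$ extends to an entire function (Remark~\ref{rem-313}), which is precisely what allows the dominated convergence step to tolerate the arguments $a_j(1-\theta)$ reaching the boundary value $0$.
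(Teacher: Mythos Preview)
Your proof is correct. For \eqref{2222} it coincides verbatim with the paper's argument. For \eqref{1111} there is a minor difference in packaging: the paper applies dominated convergence directly to the $n$-dimensional defining integral \eqref{kalpha} (the constraint $\sum_j t_j/a_j\ge 1$ relaxes to $\sum_{j<n} t_j/a_j\ge 1$ as $a_n\to+\infty$, and the $t_n$-integral then factors out to $P_{\alpha_n}(0)=1$), whereas you first invoke the induction formula of Lemma~\ref{lem.213} and then apply dominated convergence to the resulting one-dimensional integral. Both routes are short and rest on the same two ingredients you identify; the paper's is marginally more direct, while yours makes the role of $P_{\alpha_n}(0)=1$ explicit rather than implicit.
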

 \begin{proof}
Formula \eqref{kalpha}  and Lebesgue Dominated Convergence Theorem imply the first equality \eqref{1111}.
Lemma \ref{lem.213}, in which we may swap the variables $a_{1}$ and $a_{n}$ gives for $a_{1}>0$
\begin{multline*}
K_{\alpha_{1}, \alpha_{2},\dots, \alpha_{n}}(a_{1},a_{2},\dots, a_{n})=e^{-a_{1}}P_{\alpha_{1}}(a_{1})
\\+a_{1}\int_{0}^{1} e^{-\theta a_{1}} (-1)^{\alpha_{1}}L_{\alpha_{1}}(2a_{1}\theta) K_{\alpha_{2},\dots, \alpha_{n}}
\bigl(a_{2}(1-\theta),\dots, a_{n}(1-\theta)\bigr) d\theta,
\end{multline*}
and since $P_{\alpha_{1}}$ is a polynomial such that $P_{\alpha_{1}}(0)=1$, we get \eqref{2222}.
\end{proof}
\vs\no
{\bf Reasons to believe in the conjecture}.
This is true in one dimension, also in $n$ dimensions for spheres and it is a quadratic problem in the sense that ellipsoids are convex subsets of $\RZ$ characterized by an inequality 
$$
\{X\in \RZ, p(X)\le 0\},
$$
where 
 $p$ is a  polynomial of degree 2 with a positive-definite quadratic part.
 We shall see below in this paper that convexity of a set $A$ does not guarantee that the quantization 
 $\opw{\mathbf 1_{A}}$ is smaller than 1 as an operator and that  Flandrin's conjecture is not true,
 but it is hard to believe that  such a phenomenon could occur for ellipsoids.
 We must point out a specific feature of anisotropy related to Mehler's formula \eqref{mehler11}: if all the $\mu_{j}$ are equal to the same $\mu>0$ (this is the isotropic case),
 then,  with $q_{\mu}(x,\xi)=\mu(\val x^2+\val \xi^2)$, we have 
 $$
 \opw{e^{2i\pi \tau q_{\mu}(x,\xi)}}
 = \phi(\tau \mu) e^{2i\arctan(\tau \mu)\sum_{1\le j\le n}\pi(x_{j}^2+D_{j}^2)},
 $$
 where $\phi(\tau \mu)$ is a scalar quantity. As a consequence, if we quantize $F(q_{\mu}(x,\xi))$, we get
 $$
 \OPW{F\bigl(q_{\mu}(x,\xi)\bigr)}
 =\int_{\R}\hat F(\tau)\phi(\tau \mu)
 e^{2i\frac{\arctan(\tau \mu)}{\mu} \pi \opw{q_{\mu}}} d\tau,
 $$
 and thus
$$
\OPW{F\bigl(q_{\mu}(x,\xi)\bigr)}
 =\wt{F}( \opw{q_{\mu}}),
 \qquad 
 \wt{F}(\lambda)=\int_{\R}\hat F(\tau)\phi(\tau \mu) e^{2i\pi \frac{\arctan(\tau\mu)}{\mu}\lambda} d\tau,
$$
and $\OPW{F\bigl(q_{\mu}(x,\xi)\bigr)}$ appears as a function of the self-adjoint operator $\opw{q_{\mu}}$.
Following the same route in the anisotropic case, we get,
with
\begin{align}
q_{\mu}(x,\xi)&=\sum_{1\le j\le n}\mu_{j}(x_{j}^2+\xi_{j}^2),
\\
 \OPW{F\bigl(q_{\mu}(x,\xi)\bigr)}&=
 \int_{\R}\hat F(\tau)\phi(\tau \mu)
 e^{2i\pi \sum_{1\le j\le n}(\frac{\arctan(\tau \mu_{j}}{\mu_{j}})\mu_{j}(x_{j}^2+D_{j}^2)} d\tau,
\label{rrae34}\end{align}
and since  $\frac1{\mu_{j}}\arctan(\tau \mu_{j})$ does depend on $\mu_{j}$ (and not only on
$\tau$), the operator 
$\OPW{F\bigl(q_{\mu}(x,\xi)\bigr)}$ is not a function  of 
the self-adjoint operator $\opw{q_{\mu}}$.
\vs
As a final comment on the strongest form of the Conjecture \eqref{343+},
we would say that it could be seen as a property of the Laguerre polynomials, known in the case  $n=1$, where it stands as follows: 
we define for $k\in \N$, the polynomial $P_{k}$ by 
\begin{equation}\label{}
P_{k}(x)=\int_{0}^{+\io} e^{-t}(-1)^kL_{k}(2x+2t) dt,
\end{equation}
and we have $ P_{k}(0)=1$
 from \eqref{foulag+}. Moreover, we have the inequality (equivalent to \eqref{343+} for $n=1$)
\begin{equation}\label{iuytre}
\forall x\ge 0, \quad P_{k}(x)\ge P_{k}(0).
\end{equation}
We note that 
$
e^{-x} P_{k}(x)=\int_{x}^{+\io} e^{-s}(-1)^kL_{k}(2s) ds,
$
so that the unique solution $P_{k}$
of the Initial Value Problem for the ODE
\begin{equation}\label{dfretz}
P_{k}(x)-P'_{k}(x)=(-1)^kL_{k}(2x), \quad P_{k}(0)=1, 
\end{equation}
does satisfy \eqref{iuytre}.
We note that  from Lemma \ref{lem13}, we have 
$$
P'_{k}(X)=2\sum_{0\le l<k}(-1)^lL_{l}(2X),
$$
so that \eqref{iuytre} is a consequence of Feldheim Inequality
\eqref{inelag}. Let us reformulate  \eqref{343+}, using the polynomials $P_{k}$: for $a_{j}\ge 0$, 
 \begin{multline}\label{}
K_{\alpha}(a)=\int_{
\substack{t=(t_{1},\dots, t_{n})\in \R_{+}^n\\\sum_{1\le j\le n} t_{j}/a_{j}\ge 1}}
\prod_{1\le j\le n} \frac{\p}{\p t_{j}}\Bigl\{-e^{-t_{j}}
P_{\alpha_{j}}(t_{j})\Bigr\}
dt 
\\\ge K_{0}(a)= \int_{
\substack{t=(t_{1},\dots, t_{n})\in \R_{+}^n\\\sum_{1\le j\le n} t_{j}/a_{j}\ge 1}}
\prod_{1\le j\le n} \frac{\p}{\p t_{j}}\Bigl\{-e^{-t_{j}}
\Bigr\}
dt,
\end{multline}
which is equivalent to 
 \begin{multline}\label{}
\int
H\bigl(1-\sum_{1\le j\le n} s_{j}\bigr)
\prod_{1\le j\le n}H(s_{j}) \frac{\p}{\p s_{j}}\Bigl\{-e^{-a_{j}s_{j}}
P_{\alpha_{j}}(a_{j}s_{j})\Bigr\}
ds
\\\le\int H\bigl(1-\sum_{1\le j\le n} s_{j}\bigr)
\prod_{1\le j\le n} a_{j} H(s_{j})e^{-
a_{j}s_{j}}
ds,
\end{multline}
\index{{~\bf Notations}!$H(x)=\mathbf 1_{[0,+\io)}(x)$}
\index{Heaviside function}
where $H=\mathbf 1_{\R_{+}}$ (Heaviside function).
This  is equivalent to 
 \begin{multline}\label{}
\int
H\bigl(1-\sum_{1\le j\le n} s_{j}\bigr)
\prod_{1\le j\le n}H(s_{j}) e^{-a_{j}s_{j}}\left(a_{j}-\frac{\p}{\p s_{j}}\right)\Bigl\{
P_{\alpha_{j}}(a_{j}s_{j})\Bigr\}
ds
\\\le\int H\bigl(1-\sum_{1\le j\le n} s_{j}\bigr)
\prod_{1\le j\le n} a_{j}H(s_{j})e^{-
a_{j}s_{j}}
ds,
\end{multline}
i.e. to
 \begin{multline}\label{}
\int H\bigl(1-\sum_{1\le j\le n} s_{j}\bigr)
\prod_{1\le j\le n} H(s_{j})e^{-
a_{j}s_{j}}
\\
\times\left(\prod_{1\le j\le n}a_{j}-
\prod_{1\le j\le n}\left(a_{j}-\frac{\p}{\p s_{j}}\right)\Bigl\{
P_{\alpha_{j}}(a_{j}s_{j})\Bigr\}
\right)
ds\ge 0.
\end{multline}
Note that for $n=1$, it means  for $a\ge 0$, 
\begin{multline*}
\int_{0}^1e^{-as}\bigl(a-aP_{k}(as)+aP'_{k}(as)\bigr)ds
={1-e^{-a}}+\int_{0}^1\frac{d}{ds}\bigl\{
e^{-as}P_{k}(as)\bigr\}
\\=
{1-e^{-a}}+e^{-a}P_{k}(a)-P_{k}(0)
=e^{-a}\bigl(P_{k}(a)-1\bigr)\ge 0,
\end{multline*}
which holds true from \eqref{iuytre}.
\begin{rem}\rm
 There are several classical results on products of Laguerre polynomials, in particular the article \cite{MR1574144},
 {\it On some expansions in Laguerre polynomials} by A.~Erd\'elyi
 and also the paper \cite{MR3187821},
 {\it  Linearization of the products of the generalized Lauricella polynomials and the multivariate Laguerre polynomials via their integral representations}
 by
 Shuoh-Jung Liu, 
 Shy-Der Lin, 
 Han-Chun Lu
  and H. M. Srivastava.
  However it seems that the non-negativity of the polynomials
  $P_{\alpha;1}, P'_{\alpha;1}$ do not suffice to tackle the conjecture in two dimensions and more.
 \end{rem}
\section{Parabolas}
\index{epigraph of a parabola}
\subsection{Preliminary remarks}
We start with a picture, demonstrating that the epigraph
of a parabola is an increasing union of ellipses.
\begin{figure}[ht]
\centering
\vskip-22pt
\scalebox{0.45}{\includegraphics{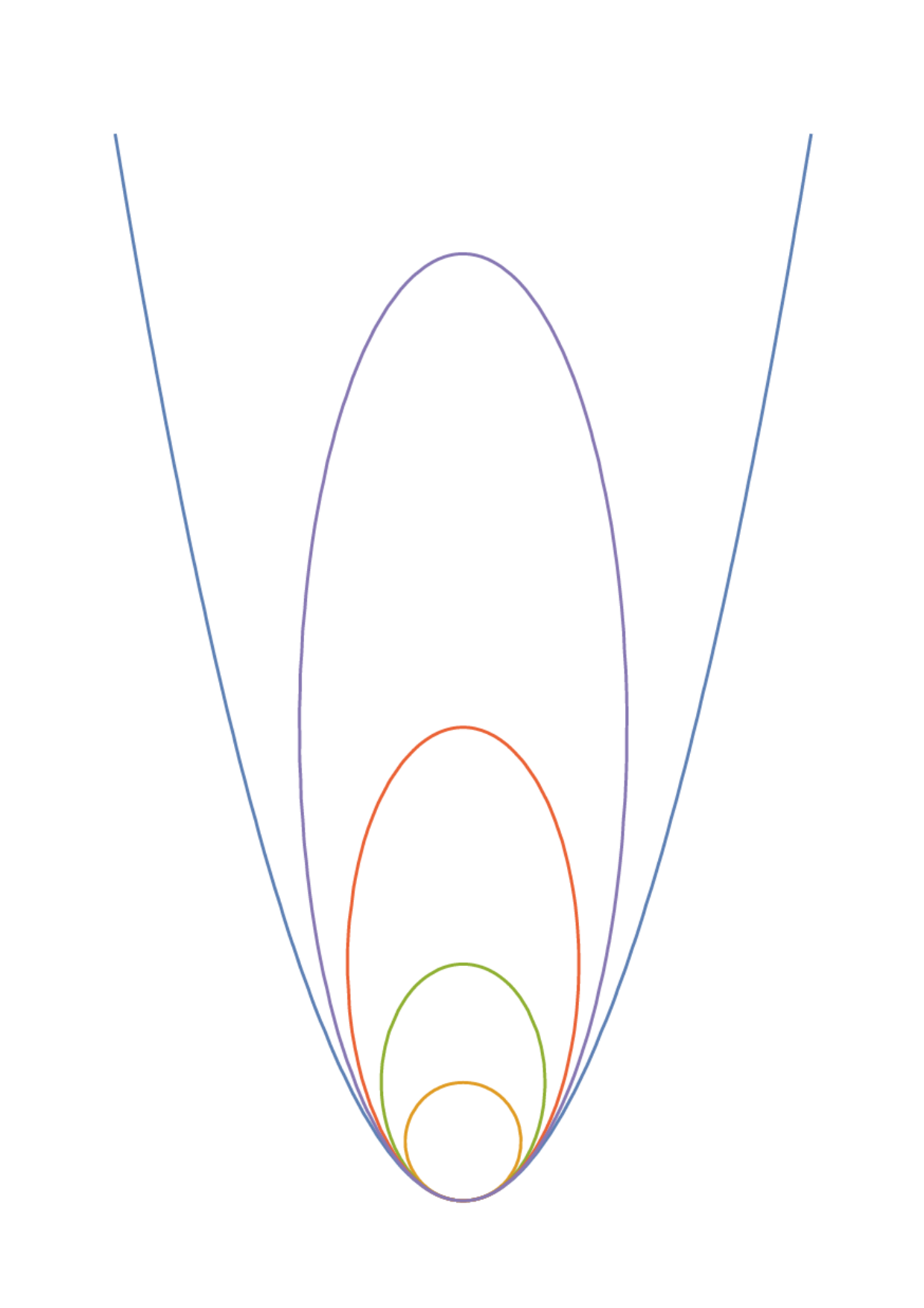}}\par
\vskip-22pt
\caption{The epigraph of a  parabola is an increasing union of ellipses.}
\label{pic4}
\end{figure}
It is easy to see that the epigraph of a parabola, i.e. the set
$
\{(x,\xi)\in \mathbb R^2, \xi> x^2\}
$
 is a countable increasing union of ellipses
in the sense that
\begin{equation}\label{ellunie}
\mathcal P=\{(x,\xi)\in \mathbb R^2, \xi> x^2\}=\cup_{k\ge 1}\underbrace{\{(x,\xi)\in \mathbb R^2, \xi> x^2+k^{-2} \xi^2\}}_{\mathcal E_{k}}.
\end{equation}
Note that for $k\ge 1$ we have $\mathcal E_{k}\subset \mathcal E_{k+1}\subset \mathcal P$ since 
$ x^2+k^{-2} \xi^2\ge x^2+(k+1)^{-2} \xi^2> x^2,
$
from the fact that  $\xi>0$ on $\mathcal E_{k}$. Moreover, if $\xi>x^2$ and $k>{\xi}/{\sqrt{\xi-x^2}}$, we get $(x,\xi)\in \mathcal E_{k}$.
\begin{rem}\label{rem.ttrree} \rm
 The ellipse $\mathcal E_{k}$ is symplectically equivalent to a circle with area $\frac{\pi k^3}{4}$
 since
 \begin{multline*}
 x^2+k^{-2}\xi^2-\xi=x^{2}+k^{-2}\bigl(\xi-\frac{k^2}{2}\bigr)^2-\frac{k^2}4
 =(\lambda^{-1} y)^{2}+k^{-2}\bigl(\lambda \eta-\frac{k^2}{2}\bigr)^2-\frac{k^2}4
 \\=\lambda^{-2} y^{2}+\lambda^2k^{-2}\bigl(\eta-\frac{k^2}{2\lambda}\bigr)^2-\frac{k^2}4,
\end{multline*}
so that choosing $\lambda$ such that $\lambda^{-2}=\lambda^2 k^{-2}$, e.g. $\lambda=\sqrt k$, we get
$$
 x^2+k^{-2}\xi^2-\xi=k^{-1}\bigl( y^2+(\eta-\frac{k^2}{2\lambda})^2\bigr)-\frac{k^2}4,
$$
and
$
\mathcal E_{k}=\{(y,\zeta)\in \R^2, y^2+\zeta^2<\frac{k^3}{4}\},
$
where $(y,\zeta)$ are the affine symplectic coordinates
$$
y= xk^{1/2}, \quad \zeta=\xi k^{-1/2}-\frac{k^{3/2}}{2}.
$$
\end{rem}
\begin{lem}\label{lem.hh88}
 Let $u\in \mathscr S(\R)$. Then $\mathcal W(u,u)$ belongs to $\mathscr S (\R^2)$
 and with $\mathcal {E},\mathcal E_{k}$ defined by \eqref{ellunie}, we have
 $$
 \iint_{\xi> x^2} \mathcal W(u,u)(x,\xi) dx d\xi=\lim_{k\rightarrow+\io}
  \iint_{\mathcal E_{k}} \mathcal W(u,u)(x,\xi) dx d\xi\le \norm{u}_{L^2(\R)}^2.
 $$
\end{lem}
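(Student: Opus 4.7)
The plan is as follows. First, since $u\in\mathscr S(\R)$, the function $\Omega(u,u)(x,z)=u(x+z/2)\bar u(x-z/2)$ belongs to $\mathscr S(\R^2)$, and by \eqref{wigner} the Wigner distribution $\mathcal W(u,u)$, being the partial Fourier transform of $\Omega$ in $z$, also lies in $\mathscr S(\R^2)\subset L^1(\R^2)$. In particular $\val{\mathcal W(u,u)}$ is an integrable dominating function on $\R^2$. Combining this with the monotone increasing union \eqref{ellunie}, the Dominated Convergence Theorem applied to $\indic{\mathcal E_k}\,\mathcal W(u,u)$ (pointwise convergent to $\indic{\mathcal P}\mathcal W(u,u)$, and dominated by $\val{\mathcal W(u,u)}$) immediately yields
\[
\iint_{\xi>x^2}\mathcal W(u,u)(x,\xi)\,dx\,d\xi
=\lim_{k\to+\io}\iint_{\mathcal E_k}\mathcal W(u,u)(x,\xi)\,dx\,d\xi.
\]

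For the upper bound, I would argue ellipse by ellipse. By Remark \ref{rem.ttrree}, the affine symplectic change of coordinates
$(x,\xi)\mapsto (y,\zeta)=(k^{1/2}x,\,k^{-1/2}\xi-k^{3/2}/2)$
maps $\mathcal E_k$ onto the disc $\{y^2+\zeta^2<k^3/4\}$, i.e.\ onto the disc $D_{a_k}$ of Theorem \ref{thm-fl0001} with parameter $a_k=\pi k^3/2$. This transformation is the composition of a linear symplectic map of type \eqref{sym1} and a phase translation, hence belongs to the affine metaplectic framework $\metaa$; by the covariance identity \eqref{segal+} extended to phase translations, there exists a unitary $\mathcal M_k$ on $L^2(\R)$ with $\mathcal W(\mathcal M_k u,\mathcal M_k u)(y,\zeta)=\mathcal W(u,u)(x,\xi)$. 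Since symplectic (and affine symplectic) transformations preserve Lebesgue measure, the change of variables gives
\[
\iint_{\mathcal E_k}\mathcal W(u,u)(x,\xi)\,dx\,d\xi
=\iint_{D_{a_k}}\mathcal W(\mathcal M_k u,\mathcal M_k u)(y,\zeta)\,dy\,d\zeta.
\]

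Now \eqref{flandrin0} identifies the right-hand side with $\poscal{\opw{\indic{D_{a_k}}}\mathcal M_k u}{\mathcal M_k u}_{L^2(\R)}$, and Theorem \ref{thm-fl0001} furnishes the operator inequality $\opw{\indic{D_{a_k}}}\le 1-e^{-a_k}\le 1$. Since $\mathcal M_k$ is unitary,
\[
\iint_{\mathcal E_k}\mathcal W(u,u)\le (1-e^{-a_k})\norm{\mathcal M_k u}_{L^2(\R)}^2\le \norm{u}_{L^2(\R)}^2,
\]
and passing to the limit $k\to+\io$ gives the conclusion. The only mildly delicate point is to verify cleanly that the composition of the dilation of type \eqref{sym1} with the $\zeta$-translation is indeed implemented by a unitary operator $\mathcal M_k\in\metaa$ acting as in \eqref{segal+}; this is the content of the discussion around \eqref{phtrans}--\eqref{1224} and the affine extension \eqref{segal+}, so no real obstacle arises.
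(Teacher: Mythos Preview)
Your proof is correct and follows essentially the same route as the paper: Dominated Convergence (since $\mathcal W(u,u)\in\mathscr S(\R^2)\subset L^1$) for the limiting equality, and Flandrin's disc bound (Theorem~\ref{thm-fl0001}) combined with the affine symplectic equivalence of Remark~\ref{rem.ttrree} for the inequality. The only cosmetic difference is that the paper writes the ellipse integral directly as $\poscal{\opw{\mathbf 1_{\mathcal E_k}}u}{u}$ and invokes the unitary equivalence $\opw{\mathbf 1_{\mathcal E_k}}\cong\opw{\mathbf 1_{D_{a_k}}}$, whereas you push the affine symplectic map through the Wigner distribution via \eqref{segal+} and its affine extension; these are two phrasings of the same computation.
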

\begin{proof}
 Since $\mathcal W(u,u)$ belongs to $\mathscr S(\RZ)\subset L^1(\RZ)$, we may apply the Lebesgue Dominated Convergence Theorem and \eqref{ellunie} to obtain the equality in the lemma. On the other hand Theorem \ref{thm-fl0001} and Remark \ref{rem.ttrree} imply
 $$
  \iint_{\mathcal E_{k}} \mathcal W(u,u)(x,\xi) dx d\xi=
  \poscal{
 \opw{ \mathbf 1_{\mathcal E_{k}}}
   u}{u}\le (1-e^{-\frac{\pi k^3}{2}})\norm{u}_{L^2(\R)}^2\le \norm{u}_{L^2(\R)}^2,
 $$
 and the sought result. 
 \end{proof}
 \begin{rem}\rm
Moreover, Theorem \ref{thm-fl0001} and the expression of $F_{0}(a)=1-e^{-a}$ imply that with $\psi_{0}$ defined in \eqref{herm1}, we have 
 $$
  \iint_{\mathcal E_{k}} \mathcal W(\psi_{0},\psi_{0})(x,\xi) dx d\xi=
  \poscal{\opw{\mathbf 1_{\mathcal E_{k}}} \psi_{0}}{\psi_{0}}=\norm{\psi_{0}}_{L^2(\R)}^2
  (1-e^{-\pi k^3/3}),
 $$
 so that from Lemma \ref{lem.hh88}, we have 
 $
  \iint_{\mathcal P} \mathcal W(\psi_{0},\psi_{0})(x,\xi) dx d\xi=\norm{\psi_{0}}_{L^2(\R)}^2,
 $
 entailing
 \begin{equation}\label{flanpa}
 \sup_{\phi\in \mathscr S(\R), \norm{\phi}_{L^2(\R)}=1}\iint_{\mathcal P} \mathcal W(\phi,\phi)(x,\xi) dx d\xi=1.
\end{equation}
 \end{rem}
 \begin{rem}\rm
 We want to study the operator with Weyl symbol $H(\xi-x^2)$ ($H=\mathbf 1_{\R_{+}}$ is the Heaviside function) and since $\xi-x^2$ is a polynomial with degree less than 2,
 see from \eqref{gfcd44+}
that $\OPW{H(\xi-x^2)}$ commutes with $D_{x}-x^2= e^{2\pi ix^3/3}D_{x}e^{-2\pi ix^3/3}$, and the latter has (continuous) spectrum $\R$:
we expect thus that $\OPW{H(\xi-x^2)}$ should have continuous spectrum and be conjugated to a Fourier multiplier.
\end{rem}
\subsection{Calculation of the kernel}
The  Weyl symbol of the operator $\opw{\mathbf 1_{\mathcal P}}$  is 
$$
H(\xi-x^2),
$$
($\mathcal P$ is defined in \eqref{ellunie},
$H$ is the Heaviside function $H=\mathbf 1_{\R_{+}}$),
corresponding to the distribution kernel
$
k_{\mathcal P}(x,y)
$
obtained from Proposition \ref{pro1716} by (we use freely integrals meaning only Fourier transform in the distributional sense),
\begin{multline*}k_{\mathcal P}(x,y)=
\int e^{2i\pi (x-y)\xi}H(\xi-(\frac{x+y}{2})^2) d\xi
= \int e^{2i\pi (x-y)(\xi+(\frac{x+y}{2})^2)}H(\xi) d\xi
\\
=e^{2i\pi (x-y)(\frac{x+y}{2})^2}\frac12\bigl(\delta_{0}(y-x)+\frac{1}{i\pi(y-x)}\bigr)
\\=
\frac{\delta_{0}(y-x)}2+\frac{e^{2i\pi (x-y)(\frac{x+y}{2})^2}}{2i\pi(y-x)}.
\end{multline*}
We have 
\begin{multline*}
4 (x-y)(\frac{x+y}{2})^2=(x^2-y^2)(x+y)=x^3-y^3+x^2 y-y^2 x
\\
=\frac43(x^3-y^3)+\frac13(y-x)^3,
\end{multline*}
so that
\begin{equation}\label{unipar}
k_{\mathcal P}(x,y)=e^{i\frac{2\pi}3 x^3}\left(
\frac{\delta_{0}(y-x)}2+\frac{e^{i\frac\pi 2
\frac13(y-x)^3
}}{2i\pi(y-x)}\right)e^{-i\frac{2\pi}3y^3},
\end{equation}
and the operator $\opw{\mathbf 1_{\mathcal P}}$ is unitarily equivalent to the operator  with kernel
\begin{equation}\label{}
\tilde k(x,y)=\frac{\delta_{0}(y-x)}2+\frac{e^{i\frac\pi 6
(y-x)^3}}{2i\pi(y-x)}.
\end{equation}
We have proven the following result.
\begin{lem}\label{lem45}
 The operator with Weyl symbol $\R^2\ni(x,\xi)\mapsto\mathbf 1_{\R_{+}}(\xi-x^2)$ has the distribution kernel
 $$k_{\mathcal P}(x,y)=e^{i\frac{2\pi}3 x^3}
 \left(\frac{\delta_{0}(y-x)}2+\frac{e^{i\frac\pi 6
(y-x)^3}}{2i\pi(y-x)}\right)e^{-i\frac{2\pi}3 y^3},
 $$
 and is thus unitarily equivalent to 
 \begin{equation}\label{422}
\frac{\Id}{2}+\text{convolution with\quad }\frac{ie^{-i\pi t^3/6}}{2\pi}\textrm{\rm pv}\frac1{ t}.
\end{equation}
\end{lem}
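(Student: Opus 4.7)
The plan is to apply Proposition \ref{pro1716} directly to the Weyl symbol $a(x,\xi)=H(\xi-x^{2})$ (with $H$ the Heaviside function), which tells us that the distribution kernel is the partial Fourier transform of $a$ in the $\xi$ variable, evaluated at $(\tfrac{x+y}{2},y-x)$. Concretely,
\begin{equation*}
k_{\mathcal P}(x,y)=\int e^{2i\pi(x-y)\xi}\,H\bigl(\xi-(\tfrac{x+y}{2})^{2}\bigr)\,d\xi,
\end{equation*}
interpreted as an oscillatory integral in the sense of tempered distributions.

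Next I would translate the variable $\xi\mapsto\xi+(\tfrac{x+y}{2})^{2}$ to pull out the phase factor $e^{2i\pi(x-y)(\tfrac{x+y}{2})^{2}}$, so that what remains is the inverse Fourier transform of $H$ evaluated at $y-x$. This Fourier transform is classical: $\int e^{2i\pi t\xi}H(\xi)\,d\xi=\tfrac{1}{2}\delta_{0}(t)+\tfrac{1}{2i\pi}\operatorname{pv}\tfrac{1}{t}$. Substituting $t=y-x$ gives $\tfrac{1}{2}\delta_{0}(y-x)+\tfrac{1}{2i\pi(y-x)}$ times the phase factor computed above.

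The decisive step is the algebraic identity
\begin{equation*}
4(x-y)\bigl(\tfrac{x+y}{2}\bigr)^{2}=(x^{2}-y^{2})(x+y)=x^{3}-y^{3}+x^{2}y-xy^{2}=\tfrac{4}{3}(x^{3}-y^{3})+\tfrac{1}{3}(y-x)^{3},
\end{equation*}
which splits the cubic phase into a purely $x$-dependent part, a purely $y$-dependent part, and a function of $y-x$. Dividing by $4$ and inserting the factor of $2\pi$, the phase $e^{2i\pi(x-y)(\frac{x+y}{2})^{2}}$ factors as $e^{i\frac{2\pi}{3}x^{3}}\,e^{i\frac{\pi}{6}(y-x)^{3}}\,e^{-i\frac{2\pi}{3}y^{3}}$, which after regrouping yields the claimed expression for $k_{\mathcal P}(x,y)$. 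This identity is the only nontrivial point; the rest is bookkeeping.

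For the unitary equivalence, I would observe that the pointwise multiplication operator $M:u(x)\mapsto e^{-i\frac{2\pi}{3}x^{3}}u(x)$ is unitary on $L^{2}(\R)$, so the operator $M^{*}\opw{\mathbf 1_{\mathcal P}}M$ is unitarily equivalent to $\opw{\mathbf 1_{\mathcal P}}$ and has kernel $\tilde k(x,y)=\tfrac{1}{2}\delta_{0}(y-x)+\tfrac{e^{i\pi(y-x)^{3}/6}}{2i\pi(y-x)}$. Writing $y-x=-(x-y)$ and using that the cubic is odd while the principal value is odd, this kernel depends only on $x-y$ and hence represents $\tfrac{1}{2}\Id$ plus convolution with $\tfrac{ie^{-i\pi t^{3}/6}}{2\pi}\operatorname{pv}\tfrac{1}{t}$, matching \eqref{422}. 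The main obstacle, if any, is ensuring the manipulations of distributions (the principal value, the Fourier transform of $H$) are carried out rigorously, but this is standard.
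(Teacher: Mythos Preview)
Your proof is correct and follows essentially the same route as the paper: compute the kernel via Proposition~\ref{pro1716}, translate in $\xi$ to extract the phase, invoke the Fourier transform of $H$, and then use the cubic identity $4(x-y)(\tfrac{x+y}{2})^{2}=\tfrac{4}{3}(x^{3}-y^{3})+\tfrac{1}{3}(y-x)^{3}$ to factor the phase and read off the conjugation by $e^{i\frac{2\pi}{3}x^{3}}$. Your explicit naming of the unitary $M$ is a minor presentational addition, but the argument is the same.
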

\begin{lem}\label{lem46}
 The distribution $\frac{ie^{-i\pi t^3/6}}{2\pi}\textrm{\rm pv}\frac1{ t}$
 has the Fourier transform
 \begin{equation}\label{}
\frac{1}{2\pi}\int\frac{\sin(2\pi as\tau+\frac{s^3}{3})}{s}ds,\quad a=(2/\pi)^{1/3}.
\end{equation}
The operator \eqref{422} is the Fourier multiplier $\omega(D_{t})$ with 
\begin{equation}\label{}
\omega(\tau)=\frac12\left(1+\frac 1\pi\int_{-\io}^{+\io}
\frac{\sin(s\eta+\frac{s^3}{3})}{s}ds\right), \quad \eta
=2^{4/3}\pi^{2/3}\tau.
\end{equation}
\end{lem}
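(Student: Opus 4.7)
The plan is to compute the Fourier transform directly and identify the multiplier of the convolution operator in \eqref{422}.

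\textbf{Step 1 (reduce to a sine integral).} Set $f(t)=\frac{i e^{-i\pi t^{3}/6}}{2\pi}\operatorname{pv}\frac{1}{t}$. Using the normalization of the Fourier transform in footnote \ref{foot1} I would write
\[
\widehat{f}(\tau)=\frac{i}{2\pi}\int e^{-2i\pi t\tau-i\pi t^{3}/6}\operatorname{pv}\frac{1}{t}\,dt
=\frac{i}{2\pi}\int\bigl[\cos\!\bigl(2\pi t\tau+\tfrac{\pi t^{3}}{6}\bigr)-i\sin\!\bigl(2\pi t\tau+\tfrac{\pi t^{3}}{6}\bigr)\bigr]\operatorname{pv}\frac{1}{t}\,dt.
\]
Since $\operatorname{pv}\frac{1}{t}$ is an odd real tempered distribution, the integral of the (even) cosine term against it vanishes, and only the (odd) sine term contributes:
\[
\widehat{f}(\tau)=\frac{1}{2\pi}\int\frac{\sin\!\bigl(2\pi t\tau+\frac{\pi t^{3}}{6}\bigr)}{t}\,dt,
\]
where this integral is to be understood as a conditionally convergent oscillatory integral, or, equivalently, in the sense of tempered distributions. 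The integrand is an odd function of $t$, so there is no singularity at $t=0$ in the principal value sense.

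\textbf{Step 2 (normalize the cubic).} To put the cubic phase in canonical form I perform the change of variable $t=as$ with $a=(2/\pi)^{1/3}$, chosen exactly so that $\pi a^{3}/6=1/3$. This gives $\frac{\pi t^{3}}{6}=\frac{s^{3}}{3}$, and since $dt/t=ds/s$,
\[
\widehat{f}(\tau)=\frac{1}{2\pi}\int\frac{\sin\!\bigl(2\pi a s\tau+\tfrac{s^{3}}{3}\bigr)}{s}\,ds,
\]
which is exactly the first claimed expression.

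\textbf{Step 3 (multiplier of the convolution operator).} The operator in \eqref{422} is $\tfrac{1}{2}\Id+C_{f}$, where $C_{f}$ denotes convolution with $f$. Since the Fourier transform conjugates convolution to multiplication, $C_{f}$ is the Fourier multiplier $\widehat{f}(D_{t})$, so \eqref{422} is the multiplier $\tfrac{1}{2}+\widehat{f}(\tau)$. Using Step 2 I obtain
\[
\omega(\tau)=\frac{1}{2}+\frac{1}{2\pi}\int_{-\infty}^{+\infty}\frac{\sin\!\bigl(2\pi a s\tau+\tfrac{s^{3}}{3}\bigr)}{s}\,ds
=\frac{1}{2}\Bigl(1+\frac{1}{\pi}\int_{-\infty}^{+\infty}\frac{\sin(s\eta+\tfrac{s^{3}}{3})}{s}\,ds\Bigr),
\]
where $\eta=2\pi a\tau=2\pi(2/\pi)^{1/3}\tau=2^{4/3}\pi^{2/3}\tau$ (this last identity is a pure arithmetic check).

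\textbf{Main obstacle.} The only real subtlety is the rigorous treatment of the oscillatory integrals: the principal value at $t=0$ is handled because only the odd (sine) part survives, but one must also justify convergence at infinity, where the cubic phase $s^{3}/3$ dominates and provides convergence via a standard stationary phase / van der Corput argument. This is routine but must be spelled out if one wants the computation to be more than formal; it is also the reason why one should regard $\widehat{f}$ first as a tempered distribution and then verify that it is actually given by the explicit (Airy-like) oscillatory integral displayed above.
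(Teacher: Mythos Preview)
Your proof is correct and follows essentially the same route as the paper: compute the Fourier transform in the distributional sense, discard the cosine contribution by the parity of $\operatorname{pv}(1/t)$, and substitute $t=as$ with $a=(2/\pi)^{1/3}$ to normalize the cubic phase, then read off the multiplier with $\eta=2\pi a\tau=2^{4/3}\pi^{2/3}\tau$. The only cosmetic difference is that the paper performs the substitution before the parity reduction rather than after.
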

\begin{proof}
We calculate  in the distribution sense ($t=as, a=(2/\pi)^{1/3}$),
\begin{multline*}
\int e^{-2i\pi t\tau}i\frac{e^{-i\pi t^3/6}}{2\pi t} dt =\frac{i}{2\pi}\int
e^{-2i\pi as\tau}\frac{e^{-i\pi a^3s^3/6}}{ s} ds
=\frac{i}{2\pi}\int
\frac{(-i)\sin(\frac{s^3}{3}+2\pi as\tau)}{ s} ds
\\
=\frac{1}{2\pi}\int\frac{\sin(2\pi as\tau+\frac{s^3}{3})}{s}ds,
\end{multline*}
so that 
with $\eta=2\pi a\tau$, we get
$$
\omega(\tau)=\frac12\left(1+\frac 1\pi\int_{-\io}^{+\io}
\frac{\sin(s\eta+\frac{s^3}{3})}{s}ds\right)=\frac12\bigl(1-F(\eta)\bigr)=G(\eta),
$$
proving the lemma.
\end{proof}
\begin{lem}\label{lem.nnbbxx} We have, with $\eta= 2^{4/3}\pi^{2/3} \tau$,
\begin{align}
&\omega(\tau)=
\frac12\left(1+\frac 1\pi\int_{-\io}^{+\io}
\frac{\sin(s\eta+\frac{s^3}{3})}{s}ds\right)=G(\eta), \quad \omega(0)=\frac23=G(0),\label{425}
\\
&G'(\eta)=\frac{1}{2\pi}\int_{\R}
\cos(s\eta+\frac{s^3}{3}) ds=\re \frac{1}{2\pi}\int_{\R}
\exp{i(s\eta+\frac{s^3}{3})} ds=\ai(\eta),\label{426}
\\
&G(\eta)=\frac{2}{3}+\int_{0}^{\eta}\ai(\xi) d\xi, \label{427}\end{align}
 where $\ai$ is the Airy function defined as the  inverse Fourier transform of $t\mapsto e^{i(2\pi t)^3/3}$.
\end{lem}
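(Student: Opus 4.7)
The plan is to treat the three parts of Lemma \ref{lem.nnbbxx} as (a) the evaluation $G(0)=2/3$, (b) the differential identity $G'(\eta)=\ai(\eta)$, and (c) the integral representation \eqref{427}, which will follow from (a) and (b) by the fundamental theorem of calculus. The first equality in \eqref{425} merely rewrites Lemma \ref{lem46} after the change of variable $\eta=2^{4/3}\pi^{2/3}\tau$, so there is nothing to prove there.

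For (a), I would use that $s\mapsto\sin(s^3/3)/s$ is even, continuous at $s=0$ with value $0$, and improperly integrable at infinity (a single integration by parts on $|s|\ge 1$, the phase derivative $s^2$ growing, suffices). Hence the integral in \eqref{425} at $\eta=0$ equals $2\int_0^{+\io}\sin(s^3/3)\,ds/s$, and the substitution $u=s^3/3$, $du=s^2\,ds$, reduces this to $\tfrac 13\int_0^{+\io}(\sin u)/u\,du=\pi/6$; thus $G(0)=\tfrac 12(1+\tfrac 1\pi\cdot\tfrac\pi 3)=\tfrac 23$.

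For (b), formal differentiation under the integral sign gives $G'(\eta)=\tfrac 1{2\pi}\int_\R\cos(s\eta+s^3/3)\,ds$. To identify this with $\ai(\eta)$ under the normalization stated, I would substitute $s=2\pi t$ in the defining integral $\ai(\eta)=\int e^{2i\pi\eta t}e^{i(2\pi t)^3/3}dt$, obtaining $\tfrac 1{2\pi}\int_\R e^{i(s\eta+s^3/3)}ds$, whose imaginary part vanishes by the symmetry $s\mapsto -s$.

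The principal obstacle is justifying the differentiation under the integral sign, since both $G$ and its putative derivative are only conditionally convergent oscillatory integrals. I would regularize by a Gaussian cut-off: setting
\[
G_\epsilon(\eta)=\tfrac 12+\tfrac 1{2\pi}\int_\R\frac{\sin(s\eta+s^3/3)}{s}e^{-\epsilon s^2}\,ds,\qquad \epsilon>0,
\]
ordinary dominated convergence authorizes termwise differentiation, yielding $G'_\epsilon(\eta)=\tfrac 1{2\pi}\int_\R\cos(s\eta+s^3/3)e^{-\epsilon s^2}ds$. A single integration by parts on $|s|\ge 1$, exploiting the cubic growth of the phase, then gives estimates uniform in $\epsilon\in(0,1]$ that allow passing to the limit $\epsilon\to 0_+$ in both $G_\epsilon(\eta)$ and $G'_\epsilon(\eta)$, locally uniformly in $\eta$. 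Passing to the limit in the identity $G_\epsilon(\eta)=G_\epsilon(0)+\int_0^\eta G'_\epsilon(\xi)\,d\xi$ then delivers \eqref{426} and \eqref{427} simultaneously.
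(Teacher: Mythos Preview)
Your proposal is correct and covers all three parts soundly; the Gaussian regularization with integration-by-parts tail estimates is a valid way to justify differentiation under the conditionally convergent integral, and your computation of $G(0)=2/3$ via the substitution $u=s^3/3$ matches the paper's.

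The paper, however, takes a somewhat different route for \eqref{426}. Rather than regularize, it recognizes from Lemma~\ref{lem46} that $G(\eta)-\tfrac12$ is the imaginary part of the inverse Fourier transform of the tempered distribution $y\mapsto e^{i(2\pi y)^3/3}\,\mathrm{pv}\bigl(\tfrac{1}{2\pi y}\bigr)$. Differentiation in $\eta$ then amounts, on the Fourier side, to multiplication by $2\pi i y$, which kills the $\mathrm{pv}$ factor and leaves exactly the inverse Fourier transform of $i\,e^{i(2\pi y)^3/3}$; taking the imaginary part yields $\ai(\eta)$ directly. This sidesteps all limiting arguments by working in $\mathscr S'(\R)$, where the manipulations are automatic. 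Your approach is more elementary in that it avoids distribution theory, at the cost of having to produce and check the uniform oscillatory estimates; the paper's is shorter but presupposes comfort with distributional Fourier calculus. Either is acceptable here.
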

\begin{proof} We have 
\begin{multline}\label{sin3}
 \frac 1\pi\int_{-\io}^{+\io}
\frac{\sin(\frac{s^3}{3})}{s}ds= \frac 1\pi\int_{-\io}^{+\io}
\frac{\sin(\sigma)}{3^{1/3} \sigma^{1/3}}
3^{1/3} \frac13 \sigma^{-2/3} d\sigma
\\=\frac{1}{3\pi}\int_{-\io}^{+\io}\frac{\sin \sigma}{\sigma} d\sigma=\frac13,
\end{multline}
proving \eqref{425}.
We have also
$$
G(\eta)=\frac12+\im\Bigl\{\text{Inverse Fourier Transform}\big\{
y\mapsto e^{i(2\pi y)^3/3}\text{pv}(\frac1{2\pi y})
\bigr\}\Bigr\},
$$
and thus
\begin{multline*}
G'(\eta)=\im\Bigl\{\text{Inverse Fourier Transform}\big\{
y\mapsto e^{i(2\pi y)^3/3}i
\bigr\}\Bigr\}
\\=\im\left(
\int e^{2i\pi y\eta} e^{i(2\pi y)^3/3} i dy\right)=
\im\left(\frac1{2\pi}\int e^{i t\eta} e^{it^3/3} idt\right)=\ai(\eta),
\end{multline*}
which is \eqref{426}, implying \eqref{427}.
\end{proof}
\begin{lem}\label{lem48}
 With $G$ defined in Lemma \ref{lem.nnbbxx}, we get that $G$ is an entire function, real-valued on the real line such that 
 \begin{equation}\label{428}
\lim_{\eta\rightarrow+\io} G(\eta)=1, \quad \lim_{\eta\rightarrow-\io} G(\eta)=0, 
\end{equation}
and moreover with $\eta_{0}$ the largest zero of the Airy function ($\eta_{0}\approx-2.33811$), the function $G$ has an absolute minimum at $\eta_{0}$ with $G(\eta_{0})\approx -0.274352$,
\begin{equation}\label{429}
\forall \eta\in \R, \quad G(\eta_{0})\le G(\eta)<1.
\end{equation}
\end{lem}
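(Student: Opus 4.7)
The function $\ai$ extends to an entire function of order $3/2$ (a classical fact readily deduced from its Fourier representation \eqref{426}), and is real-valued on the real axis by the cosine form of that same representation; hence $G(\eta)=\tfrac{2}{3}+\int_{0}^{\eta}\ai(\xi)\,d\xi$ is entire and real on $\R$. For the limits at $\pm\infty$ the heart of the matter is to establish the two Airy integrals
\begin{equation*}
\int_{0}^{+\infty}\ai(\xi)\,d\xi=\tfrac{1}{3},\qquad \int_{-\infty}^{0}\ai(\xi)\,d\xi=\tfrac{2}{3},
\end{equation*}
whose sum $\hat{\ai}(0)=1$ reflects the very definition of $\ai$ as the inverse Fourier transform of $t\mapsto e^{i(2\pi t)^{3}/3}$. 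To obtain the first, I would plug the symmetrized form $\ai(\eta)=\pi^{-1}\int_{0}^{\infty}\cos(s\eta+s^{3}/3)\,ds$ into $\int_{0}^{R}\ai(\eta)\,d\eta$, interchange the order after an Abel-type regularization to produce $\pi^{-1}\int_{0}^{\infty}s^{-1}\bigl[\sin(sR+s^{3}/3)-\sin(s^{3}/3)\bigr]\,ds$, and let $R\to+\infty$: the leading term reduces to the Dirichlet integral $\pi^{-1}\int_{0}^{\infty}\sin(sR)/s\,ds\to\tfrac{1}{2}$, while the fixed remainder $-\tfrac{1}{6}$ is read off from \eqref{sin3}. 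This yields $G(+\infty)=1$ and $G(-\infty)=0$.

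The critical set of $G$ on $\R$ is precisely the zero set $\eta_{0}>\eta_{1}>\eta_{2}>\cdots$ of $\ai$ (all real and negative, a classical fact). Sign alternation of $\ai$ together with positivity on $(\eta_{0},+\infty)$ shows that the even-indexed $\eta_{2m}$ are local minima of $G$ and the odd-indexed $\eta_{2m+1}$ are local maxima. On $(\eta_{0},+\infty)$ the function $G$ is strictly increasing from $G(\eta_{0})$ to $1$, so $G(\eta)\in[G(\eta_{0}),1)$ there. On $(-\infty,\eta_{0}]$ I would introduce the positive quantities $I_{k}=\bigl|\int_{\eta_{k+1}}^{\eta_{k}}\ai(\xi)\,d\xi\bigr|$; the telescoping identity $G(\eta_{m+1})-G(\eta_{m})=(-1)^{m}I_{m}$ together with $G(\eta_{m})\to 0$ produces the alternating series $\sum_{k\ge 0}(-1)^{k}I_{k}=-G(\eta_{0})>0$. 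Provided $I_{k}$ is monotonically decreasing in $k$, the Leibniz criterion then gives $G(\eta_{2m})\in[G(\eta_{0}),0]$ and $G(\eta_{2m+1})\in[0,G(\eta_{1})]$; the monotonicity of $G$ between consecutive zeros propagates these bounds to all of $(-\infty,\eta_{0}]$, and the crude estimate $G(\eta_{1})\le G(\eta_{0})+(\eta_{0}-\eta_{1})\ai(0)<1$ (from $I_{0}\le(\eta_{0}-\eta_{1})\sup\ai$) yields the strict upper bound.

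The main obstacle is thus the monotonicity $I_{0}>I_{1}>I_{2}>\cdots$. This is classical Airy lore, reflecting the growth of the oscillation frequency $\sqrt{-\eta}$ in $\ai''=\eta\ai$; I would establish it by the Liouville substitution $\zeta=\tfrac{2}{3}|\eta|^{3/2}$, which turns the Airy equation on $(-\infty,0)$ into a perturbed harmonic oscillator $y''(\zeta)+(1-r(\zeta))y(\zeta)=0$ with an explicitly decreasing perturbation $r(\zeta)=O(\zeta^{-2})$, and then apply a Sonin--P\'olya type comparison to conclude that the areas under successive half-waves strictly decrease for all sufficiently large $k$; the finitely many initial cases are verified directly from standard numerical tables for $\ai$ at its first few zeros. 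The numerical value $G(\eta_{0})\approx-0.274352$ is finally a direct evaluation of $\tfrac{2}{3}-\int_{\eta_{0}}^{0}\ai(\xi)\,d\xi$.
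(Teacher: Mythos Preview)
Your overall architecture is sound and genuinely different from the paper's. The paper does not use the alternating-series structure of $G(\eta_m)$; instead it derives an explicit asymptotic expansion $G(-\lambda)=\pi^{-1/2}\lambda^{-3/4}\bigl(\cos(\tfrac{\pi}{4}+\tfrac{2}{3}\lambda^{3/2})+\tilde S_1(\lambda)\bigr)$ with a \emph{numerical} remainder bound $|\tilde S_1(\lambda)|\le 1.803\,\lambda^{-3/2}$, so that for $\lambda\ge 12$ one reads off $|G(-\lambda)|\le 0.0914<|G(\eta_0)|$, and then simply tabulates $G(\eta_0),\dots,G(\eta_9)$. Your Leibniz framing is more conceptual and, if the monotonicity of $I_k$ is granted, gives the cleaner conclusion $G(\eta_{2m})\in[G(\eta_0),0]$, $G(\eta_{2m+1})\in[0,G(\eta_1)]$ in one stroke.

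The gap is precisely there. The Liouville substitution $\zeta=\tfrac{2}{3}|\eta|^{3/2}$ sends $\ai(-\lambda)$ to $w(\zeta)=\lambda^{1/4}\ai(-\lambda)$ solving $w''+\bigl(1+\tfrac{5}{36\zeta^2}\bigr)w=0$. Sonin's theorem, applied to this equation with \emph{decreasing} coefficient, says the successive extrema of $|w|$ \emph{increase}, not decrease. What you need is the monotonicity of $I_k=\bigl|\int_{\eta_{k+1}}^{\eta_k}\ai\bigr|=\bigl|\int_{\zeta_k}^{\zeta_{k+1}}w(\zeta)\lambda^{-3/4}\,d\zeta\bigr|$, which pits the (increasing) amplitude of $w$ against the (decreasing) weight $\lambda^{-3/4}\sim\zeta^{-1/2}$. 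This does come out the right way asymptotically, but extracting it rigorously requires exactly the kind of explicit expansion with controlled remainder that the paper uses for $G$ itself --- so your route does not really avoid that work, and you still have to pin down the threshold $k$ beyond which the comparison bites and check the earlier cases numerically.

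A minor slip: your bound $I_0\le(\eta_0-\eta_1)\ai(0)$ is false. On $(\eta_1,\eta_0)$ the function $\ai$ is negative with $\sup|\ai|=|\ai(\eta'_1)|\approx 0.419$, which exceeds $\ai(0)\approx 0.355$. The conclusion $G(\eta_1)<1$ is of course correct (numerically $G(\eta_1)\approx 0.192$), but it needs either the global bound $\sup_{\R}|\ai|\approx 0.536$ together with $\eta_0-\eta_1\approx 1.75$, or simply a direct evaluation.
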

\begin{proof}
 The first statements follow from  Lemma \ref{lem.nnbbxx} and \eqref{428} is implied by
 \eqref{427} 
 and   \eqref{996}, \eqref{99o}. 
 The strict inequality in \eqref{429} follows for $\eta\ge 0$ from \eqref{426} since $\ai$ is positive on $[0,+\io)$ so that $G$ is strictly increasing there from $G(0)=2/3$ to $G(+\io)=1$.
 The other statements are proven in Section \ref{secairy} of the Appendix.
\end{proof}
\subsection{The main result}\label{secmainpara}
Collecting the results of Lemmas \ref{lem45}, \ref{lem46}, \ref{lem.nnbbxx}, \ref{lem48} and of Section \ref{secairy} in the Appendix,
we have proven the following theorem.
\begin{theorem} Let $H(\xi-x^2)=\mathbf 1\{(x,\xi)\in \R^2, \xi\ge x^2\}$ be the indicatrix
 of the epigraph of the parabola with equation $\xi= x^2$. Then the operator with Weyl symbol $H(\xi-x^2)$ is unitary equivalent to the Fourier multiplier 
 $G(2^{4/3}\pi^{2/3} \tau)$ where 
 \begin{equation}\label{}
 G(\eta)=\frac23+\int_{0}^\eta \ai(\xi) d\xi=\int_{-\io}^\eta \ai(\xi) d\xi, \qquad\text{($\ai$ is the Airy function)}.
\end{equation}
The function $G$ is entire on $\C$, real valued on the real line and such that $$G(\R)=[G(\eta_{0}), 1),$$ where $\eta_{0}$ is the largest zero of the Airy function
 \begin{equation}\label{}
 \text{we have $\eta_{0}\approx -2.338107410,\qquad G(\eta_{0})\approx -0.2743520591$.}
\end{equation}
The operator with Weyl symbol $H(\xi-x^2)$ is self-adjoint bounded on $L^2(\R)$ with norm $1$,
with  spectrum equal to  $[G(\eta_{0}), 1]$ (continuous spectrum) and
\begin{equation}\label{}
 \forall u\in L^2(\R),\qquad
G(\eta_{0})\norm{u}_{L^2(\R)}^2\le  \iint_{\xi\ge x^2}\mathcal W(u,u)(x,\xi) dxd\xi\le \norm{u}_{L^2(\R)}^2.
\end{equation}
\end{theorem}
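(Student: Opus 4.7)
The bulk of the theorem has already been assembled in Lemmas~\ref{lem45}--\ref{lem48}; it remains only to package these statements and to read off the spectral consequences. First, by Lemma~\ref{lem45}, the operator $\opw{H(\xi-x^2)}$ is unitarily conjugate via multiplication by $e^{2i\pi x^3/3}$ to the convolution operator $\tfrac12\Id + K$, with $K$ of kernel $\tfrac{ie^{-i\pi t^3/6}}{2\pi}\,\mathrm{pv}\tfrac1t$. Lemma~\ref{lem46} then rewrites this as the Fourier multiplier $G(2^{4/3}\pi^{2/3}\tau)$, and Lemma~\ref{lem.nnbbxx} identifies $G' = \ai$ with $G(0) = 2/3$, giving the two expressions
\[G(\eta) = \tfrac{2}{3} + \int_0^\eta \ai(\xi)\,d\xi = \int_{-\infty}^\eta \ai(\xi)\,d\xi,\]
the second equality following from $\int_{-\infty}^0 \ai = 2/3$, which is essentially \eqref{sin3}.

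For the spectral analysis I would split $\R$ at $0$: on $[0,+\infty)$ the Airy function $\ai$ is positive, so $G$ strictly increases from $2/3$ to the limit $1$ (from \eqref{99o}); on $(-\infty,0]$ it oscillates with simple zeros $\eta_0 > \eta_1 > \cdots$, so that $G$ alternates between strict local minima at $\eta_{2k}$ and strict local maxima at $\eta_{2k+1}$. Since the operator is unitarily equivalent to a real-valued continuous bounded Fourier multiplier $m$, it is automatically self-adjoint, its operator norm equals $\sup|m|$, and its spectrum is $\overline{m(\R)}$. Moreover the spectrum is purely continuous because $m'=2^{4/3}\pi^{2/3}\ai(2^{4/3}\pi^{2/3}\tau)$ vanishes only on a discrete set, so every level set $\{m=c\}$ has Lebesgue measure zero. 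Combining these observations with \eqref{flandrin0} yields the final Wigner-integral bound.

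\textbf{The main obstacle} is to justify that the global minimum of $G$ on $\R$ is attained at the \emph{largest} negative zero $\eta_0$ of $\ai$, and not at some deeper trough $\eta_{2k}$, $k\ge 1$. By the standard WKB asymptotics $\ai(\eta) \sim \pi^{-1/2}|\eta|^{-1/4}\sin\bigl(\tfrac{2}{3}|\eta|^{3/2} + \tfrac\pi 4\bigr)$ as $\eta\to-\infty$, the oscillations of $\ai$ have an amplitude envelope decreasing like $|\eta|^{-1/4}$, so that the extremal values $G(\eta_{2k})$ and $G(\eta_{2k+1})$ converge to a common limit with monotonically decreasing deviations. The deepest trough is therefore the first one, at $\eta_0$. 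The numerical values $\eta_0\approx -2.338107$ and $G(\eta_0)\approx -0.274352$ are then obtained by numerical integration of $\ai$, a calculation I would defer to the Appendix section on Airy functions.
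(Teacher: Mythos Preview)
Your proposal is essentially correct and follows the paper's own route: the theorem is assembled from Lemmas~\ref{lem45}--\ref{lem48} together with the Airy-function material in Section~\ref{secairy} of the Appendix, exactly as you outline.

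The one place where your sketch is looser than the paper is the justification that $G(\eta_0)$ is the \emph{global} minimum. Your WKB argument (``amplitude envelope decreasing like $|\eta|^{-1/4}$, hence monotonically decreasing deviations'') is only asymptotic and does not by itself establish strict monotonicity of $|G(\eta_k)|$ for \emph{all} $k$; indeed the paper explicitly declines to prove the full monotonicity \eqref{stuff++}. What is actually done in Lemma~\ref{lem920} is a hybrid: an \emph{effective} bound (Lemma~\ref{lem918}, with explicit constants rather than $O$-terms) shows $|G(-\lambda)|<0.0914$ for all $\lambda\ge 12$, and a direct numerical tabulation of $G(\eta_0),\dots,G(\eta_9)$ covers the finite range $\eta_9\approx -12.83<\eta\le\eta_0$. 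Your deferral ``to the Appendix'' is fine provided you understand that what the Appendix supplies is this effective-plus-finite-check argument, not a purely asymptotic one.
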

\begin{figure}[ht]
\centering
{\includegraphics[angle=0,height=300pt,width=0.9\textwidth]{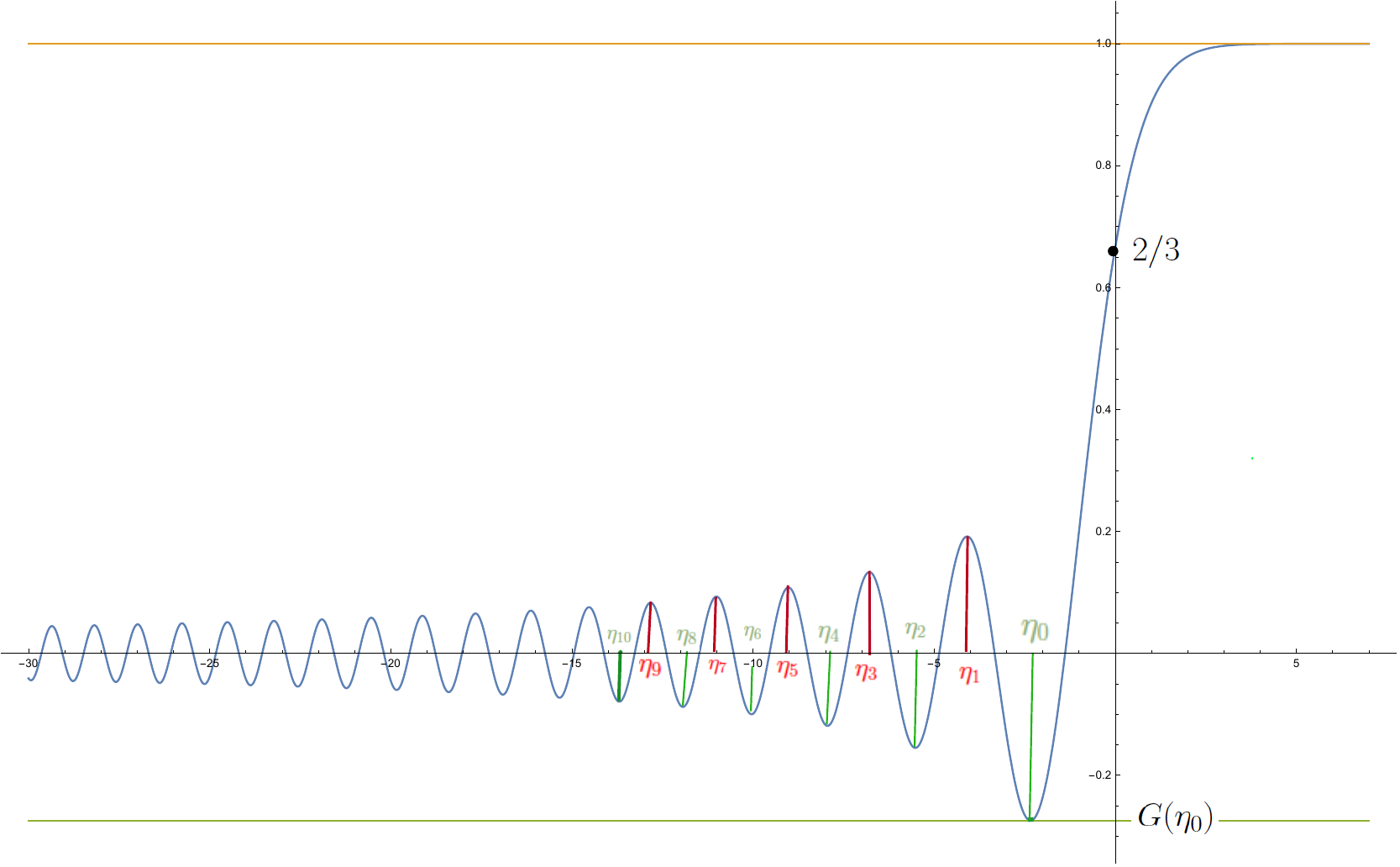}}\par
\caption{The function $G$. More details on $G$  are given  in our Appendix \ref{secairy}.}
\label{pic5}
\end{figure}
\subsection{Paraboloids, a conjecture}\label{sec.parcj1}
We are interested now in multi-dimensional versions of the previous results, namely, we would like to find a bound for integrals of the Wigner distribution  on paraboloids of $\RZ$ for $n\ge 2$.
Let us start with recalling Theorem 21.5.3 in \cite{MR2304165}, a version of which was given in our Theorem \ref{thm312} in the positive-definite case.
\subsubsection{On non-negative quadratic forms}
\begin{theorem}[Symplectic reduction of quadratic forms, Theorem 21.5.3 in \cite{MR2304165}]
 Let $q$ be a non-negative quadratic form  on $\R^{n}\times \R^{n}$ equipped with the canonical symplectic form \eqref{sympfor}. Then there exists $S$ in the symplectic group $Sp(n,\R)$
 of $\RZ$,
 $$\text{
 $r\in\{0,\dots,n\}$,
 $\mu_{1}, \dots, \mu_{r}$
 positive,
 and $s\in \N$  such that $r+s\le n$,}
 $$ so that  for all $X=(x,\xi)\in \R^{n}\times \R^{n}$,
\begin{equation}\label{ellipse+}
 q(SX)=\sum_{1\le j\le r}\mu_{j}(x_{j}^{2}+
 \xi_{j}^{2})+\sum_{r+1\le j\le r+s}x_{j}^{2}.
\end{equation}
\end{theorem}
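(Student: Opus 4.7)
The plan is to reduce the general semi-definite case to the positive-definite Theorem \ref{thm312} by decomposing $\R^{2n}$ as a symplectic direct sum adapted to the radical of $q$. Write $q(X)=\langle QX,X\rangle$ with $Q$ symmetric positive semi-definite, set $N=\ker Q$ (the radical of $q$), and define $N_0=N\cap N^{\sigma}$. Since the restriction of $[\cdot,\cdot]$ to $N$ has kernel precisely $N_0$, this subspace is isotropic; put $s=\dim N_0$ and write $\dim N=s+2t$, so the symplectic form on $N$ has rank $2t$.

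First I would build a Darboux basis of $\R^{2n}$ adapted to the filtration $N_0\subset N$. Pick a basis $e_1,\dots,e_s$ of $N_0$; by the standard symplectic extension procedure, choose partners $f_1,\dots,f_s$ (necessarily outside $N$) satisfying $[e_i,f_j]=\delta_{ij}$ and $[f_i,f_j]=0$, together with a symplectic basis $(g_i,h_i)_{1\le i\le t}$ of a complement $N_1$ of $N_0$ inside $N$, all arranged so that $W_1=\mathrm{span}(e_j,f_j)$, $N_1$, and their joint symplectic complement $M$ (of dimension $2r=2(n-s-t)$) are pairwise symplectically orthogonal. Because $q$ vanishes on $N=N_0\oplus N_1$ and the polarization $b(X,Y)=\langle QX,Y\rangle$ satisfies $b(n,\cdot)=0$ for $n\in N$, the form $q$ depends only on the components of $X$ along $\{f_j\}\cup M$, and is positive-definite on $\mathrm{span}(f_j)\oplus M$ (the latter meets $N$ only at $0$).

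The second step decouples the three blocks by making them simultaneously $q$-orthogonal. The nontrivial couplings are $b(f_j,m)$ for $m\in M$. I would remove them by the shift $f_j\mapsto f_j+v_j$, where $v_j\in M$ is the unique solution of the linear system $b(v_j,m)=-b(f_j,m)$ for all $m\in M$, uniquely solvable because $b$ is positive-definite on $M$. This shift preserves $[e_i,f_j+v_j]=\delta_{ij}$ since $v_j\in M\subseteq W_1^{\sigma}$, but may destroy isotropy of the $f_j$'s, producing $[f_i+v_i,f_j+v_j]=[v_i,v_j]$. Restore isotropy by a further shift $f_j\mapsto f_j+v_j+\sum_l\beta_{jl}e_l$ with $\beta_{ij}=\tfrac{1}{2}[v_j,v_i]$; since $e_l\in N=\ker Q$, the extra terms do not alter the already-established orthogonality $b(\,\cdot\,,m)=0$, and they enforce $[f_i^{\mathrm{final}},f_j^{\mathrm{final}}]=0$. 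After this adjustment, $\R^{2n}=W_1\oplus N_1\oplus M$ is both symplectically and $q$-orthogonal.

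Finally, apply Theorem \ref{thm312} on the symplectic subspace $M$, where $q$ is positive-definite, to obtain symplectic coordinates in which $q|_M=\sum_{j=1}^{r}\mu_j(x_j^2+\xi_j^2)$ with $\mu_j>0$. On $W_1$, taking $e_j$ as the $\xi_j$-axis and $f_j^{\mathrm{final}}$ as the $x_j$-axis for $r<j\le r+s$, the form $q$ depends only on the $x_j$-coordinates (since $e_j\in N$); an orthogonal change among the $f_j^{\mathrm{final}}$, mirrored by the inverse orthogonal change on the $e_j$ (so as to preserve the pairing $[e_i,f_j]=\delta_{ij}$), diagonalizes it to $\sum d_jx_j^2$, and symplectic rescalings $(x_j,\xi_j)\to(\lambda_j x_j,\lambda_j^{-1}\xi_j)$ normalize the diagonal entries to $1$. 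The remaining $2t$ directions in $N_1$ yield free symplectic coordinates $(x_j,\xi_j)$ with $j>r+s$ on which $q$ vanishes; combining the three normal forms gives \eqref{ellipse+}. The main obstacle is the decoupling of the third paragraph: the cross-term cancellation with $v_j\in M$ is incompatible at first glance with keeping the $f_j$'s isotropic, and the resolution rests on the subtle point that a subsequent shift by elements of $N_0$ can restore isotropy without spoiling the $q$-orthogonality to $M$, precisely because $N_0\subset\ker Q$.
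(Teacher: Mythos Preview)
The paper does not supply its own proof of this theorem: it is stated as a citation of H\"ormander's Theorem 21.5.3 and used without argument, so there is no ``paper's proof'' to compare against. Your sketch is therefore supplying something the paper omits, and the overall strategy --- isolate the radical $N=\ker Q$, split off a symplectic piece $N_1\subset N$ and an isotropic piece $N_0=N\cap N^{\sigma}$, then invoke Theorem~\ref{thm312} on a complementary symplectic block --- is sound and is one of the standard routes to this normal form.

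There is, however, a genuine slip in the decoupling paragraph. After you replace $f_j$ by $f_j^{\mathrm{final}}=f_j+v_j+\sum_l\beta_{jl}e_l$ with $v_j\in M$, you assert that ``$\R^{2n}=W_1\oplus N_1\oplus M$ is both symplectically and $q$-orthogonal.'' It is not: for $m\in M$ one has
\[
[f_j^{\mathrm{final}},m]=[f_j,m]+[v_j,m]+\sum_l\beta_{jl}[e_l,m]=[v_j,m],
\]
which is generically nonzero since $v_j\in M$ and $M$ is symplectic. So the new $W_1=\mathrm{span}(e_j,f_j^{\mathrm{final}})$ is $q$-orthogonal to $M$ but no longer symplectically orthogonal to it. The fix is exactly the trick you already used once: shift each basis vector $m$ of $M$ by an element of $N_0$, namely $m\mapsto m+\sum_j[v_j,m]\,e_j$. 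Because $e_j\in N_0\subset\ker Q$, this preserves all $q$-pairings (so $q|_M$ and the $q$-orthogonality $b(f_j^{\mathrm{final}},M)=0$ survive), it preserves $[e_i,m]=0$ and $[N_1,m]=0$, it leaves the symplectic form on $M$ intact, and it kills $[f_j^{\mathrm{final}},m]$. With this extra step the three blocks are simultaneously symplectically and $q$-orthogonal, and the rest of your argument goes through.
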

\begin{defi}
 Let $n\in \N^*$ and let $\R^{2n}$ be equipped with the canonical
 symplectic form \eqref{sympfor}.  
 Let $q$ be a non-negative quadratic form on $\R^{2n}$ with rank $2n-1$ and $T$ be a non-zero vector in $\RZ$ such that $q(\sigma T)=0$.
 A paraboloid $\mathcal P$ of $\RZ$ with vertex $0$ and shape $(q,T)$
 is defined  by
 \begin{equation}\label{}
\mathcal P=\{X\in \RZ, q(X)\le [X,T]\}.
\end{equation}
A paraboloid $\mathcal Q$ with vertex $m\in \RZ$ and shape $(q,T)$
is defined as
\begin{equation}\label{}
\mathcal Q=\mathcal P+m,
\end{equation}
where  $\mathcal P$ is a paraboloid  with vertex $0$ and shape $(q,T)$.
\end{defi}
\begin{rem}\rm
 We can find some symplectic coordinates such that 
 $$
 q(X)-[X,T]=\sum_{1\le j\le r}\mu_{j}(x_{j}^{2}+
 \xi_{j}^{2})+\sum_{r+1\le j\le r+s}x_{j}^{2}+\sum_{1\le j\le n} (x_{j} \tau_{j}-\xi_{j} t_{j}),
 $$
 with $2r+s=2n-1$.
 We can get rid of the linear terms $x_{j} \tau_{j}-\xi_{j} t_{j}$
 when $1\le j\le r$ by writing
 $$
 \mu_{j}(x_{j}^{2}+
 \xi_{j}^{2})+x_{j} \tau_{j}-\xi_{j} t_{j}=\mu_{j}\bigl(x_{j}+\frac{\tau_{j}}{2\mu_{j}}
 \bigr)^2+
 \mu_{j}\bigl(\xi_{j}-\frac{t_{j}}{2\mu_{j}}
 \bigr)^2-\frac1{4\mu_{j}}(t_{j}^2+\tau_{j}^2),
 $$
 and also of $x_{j} \tau_{j}$ for $r+1\le j\le r+s$, since
 $$
x_{j}^2+ x_{j}\tau_{j}=(x_{j}+\frac{\tau_{j}}2)^2-\frac{\tau_{j}^2}{4}.
 $$
 We are left with using affine symplectic coordinates $(y,\eta)$ so that 
 \begin{multline*}
 q(X)-[X,T]=\sum_{1\le j\le r}\mu_{j}(y_{j}^{2}+
 \eta_{j}^{2})+\sum_{r+1\le j\le r+s}y_{j}^{2}\\-\sum_{r+1\le j\le r+s} \eta_{j} t_{j}
 +\sum_{r+s+1\le j\le n}(y_{j}\tau_{j}-\eta_{j}t_{j})-a.
\end{multline*}
 Since we have $2r+s=2n-1$, we get $r+s+1=2n-r$: we cannot have 
$ r+s+1\le n$
since it would imply that $2n-r\le n$ and thus $r\ge n$, which is incompatible with 
$2r+s=2n-1, r, s\ge 0.$
We get then that
$
s=2l+1, r=n-1-l
$
and since $r+s\le n, 1\le s$, we have $l=0$, 
$s=1$, $r=n-1$,
 and
 $$
 q(X)-[X,T]=
 \sum_{1\le j\le n-1}\mu_{j}(y_{j}^{2}+
 \eta_{j}^{2})+y_{n}^2-\eta_{n}t_{n}-a,
 $$
 and $t_{n}\in \R^{*}$.
 With $y_{n}=t^{1/3} \tilde y_{n}, \eta_{n}=t^{-1/3}\tilde \eta_{n}$,
 we get
 $$
 q(X)-[X,T]= \sum_{1\le j\le n-1}\mu_{j}(y_{j}^{2}+\eta_{j}^{2})+t^{2/3}(\tilde y_{n}^{2}-\tilde \eta_{n}-a t^{-2/3}),
 $$
 and the inequality $q(X)-[X,T]\le 0$ is equivalent to 
 $$
 \sum_{1\le j\le n-1}t^{-2/3}\mu_{j}(y_{j}^{2}+\eta_{j}^{2})+\tilde y_{n}^{2}\le \tilde \eta_{n}+a t^{-2/3}.
 $$
 We can  thus assume {\sl ab initio} that our paraboloid is given by the inequality 
 \begin{equation}\label{}
  \sum_{1\le j\le n-1}\nu_{j}(x_{j}^{2}+
 \xi_{j}^{2})+x_{n}^2\le \xi_{n}.
\end{equation}
 \end{rem}
 \subsubsection{On the kernel for the paraboloid}
 We shall consider the paraboloid
 \begin{equation}\label{}
\mathcal P_{n}=\{(x,\xi)\in\RZ,\ x_{n}^2+\sum_{1\le j\le n-1}(x_{j}^2+\xi_{j}^2)\le \xi_{n} \}.
\end{equation}
We have, with $X'=(x';\xi')=(x_{1},\dots, x_{n-1};\xi_{1},\dots, \xi_{n-1})$,
\begin{align*}
&P=\OPW{H(\xi_{n}-x_{n}^2-\val{X'}^2)}
=
\int_{\R}\hat H(\tau) 
\opw{e^{2i\pi \tau(\xi_{n}-x_{n}^2)}}
\opw{e^{-2i\pi \tau\val{X'}^2}}
d\tau
\\
&=\sum_{k\ge 0}\int_{\R}\hat H(\tau) 
 \mathbb P_{k; n-1}\otimes
\opw{e^{2i\pi \tau(\xi_{n}-x_{n}^2)}}
e^{-i(\arctan \tau)(2k+n-1)}(1+\tau^2)^{-\frac{(n-1)}{2}} d\tau
\\
&=\frac12 \Id
\\
&\hs\hs+\frac1{2i\pi}\sum_{k\ge 0}
 \mathbb P_{k; n-1}\otimes 
\int_{\R} 
\opw{e^{2i\pi \tau(\xi_{n}-x_{n}^2)}}
\frac1\tau
\Bigl(\frac{1-i\tau}
{(1+\tau^2)^{1/2}}\Bigr)^{2k+{n-1}}
(1+\tau^2)^{-\frac{(n-1)}{2}} d\tau
\\
&=\frac12 \Id+\frac1{2}\sum_{k\ge 0}
\mathbb P_{k; n-1}\otimes 
\int_{\R} 
\opw{e^{2i\pi \tau(\xi_{n}-x_{n}^2)}}
\frac{(1-i\tau)^k}{i\pi \tau(1+i\tau)^{k+n-1}}
d\tau.
\end{align*}
Let $k(x_{n}, y_{n})$ be the kernel of the operator  in the integral: we have 
$$
k(x_{n}, y_{n})=e^{\frac{2i\pi}{3}(x_{n}^3-y_{n}^3)}e^{-\frac{i\pi}{6}(x_{n}-y_{n})^3}
\frac{i}{\pi(x_{n}-y_{n})}
\frac{(1+i(x_{n}-y_{n}))^k}{(1-i(x_{n}-y_{n}))^{k+n-1}}.
$$
As a result, we find that $P$ is unitarily equivalent to $\tilde P$, with
\begin{equation}\label{}
2\tilde P=\sum_{k\ge 0}
 \mathbb P_{k; n-1}\otimes
\bigl(
I_{n}+\text{convolution with }
\frac{ie^{-\frac{i\pi}{6}x_{n}^3}}{\pi x_{n}}
\frac{(1+ix_{n})^k}{(1-ix_{n})^{k+n-1}}
\bigr)
.
\end{equation}
We define
\begin{multline}\label{}
\omega_{k,n-1}(\tau)=\frac12+\int 
\frac{ie^{-\frac{i\pi}{6}t^3}}{2\pi t}
\frac{(1+it)^k}{(1-it)^{k+n-1}} e^{-2i\pi t\tau} dt
\\
=\frac12+\int 
\frac{e^{\frac{i\pi}{6}t^3}}{2i\pi t}
\frac{(1-it)^k}{(1+it)^{k+n-1}} e^{2i\pi t\tau} dt,
\end{multline}
and we get that 
\begin{equation}\label{}
\tilde P=\sum_{k\ge 0}\mathbb P_{k; n-1}\otimes 
\omega_{k,n-1}(D_{x_{n}}).
\end{equation}
We note that for $n=1$, the sum is reduced to $k=0$ with $\mathbb P_{0;0}=I$,
so that we recover Formula \eqref{425} with $\omega_{0,0}=\omega$.
We find also that 
\begin{equation}\label{449}
\omega_{k,n-1}'(\tau)=\int 
e^{\frac{i\pi}{6}t^3}
\frac{(1-it)^k}{(1+it)^{k+n-1}} e^{2i\pi t\tau} dt,
\end{equation}
in the sense that the inverse Fourier transform 
of $t\mapsto e^{\frac{i\pi}{6}t^3}
\frac{(1-it)^k}{(1+it)^{k+n-1}}$ is the distribution derivative of $\omega_{k,n-1}$.
Going back to the normalization of 
Lemma \ref{lem.nnbbxx}, we have, with $\eta= 2^{4/3}\pi^{2/3} \tau$,
\begin{align}\label{}
G_{k,n-1}(\eta)&=\omega_{k,n-1}(\tau),
\\
G_{k,n-1}'(\eta)&=2^{-4/3}\pi^{-2/3}
\int 
e^{\frac{i\pi}{6}t^3}
\frac{(1-it)^k}{(1+it)^{k+n-1}} e^{2^{-\frac13}i\pi^{\frac13} t\eta} dt,
\notag
\\
&\underbrace{=}_{t=\pi^{-\frac13} 2^{\frac13} s}\frac{1}{2\pi}
\int 
e^{\frac{is^3}{3}}
\frac{(1-i\pi^{-1/3} 2^{1/3} s)^k}{(1+i
\pi^{-1/3} 2^{1/3} s)^{k+n-1}} e^{is\eta} ds:=A_{k,n-1}(\eta).
\end{align}
We have $A_{0,0}=\ai$ and $A_{k,n-1}$ is an entire function, real-valued on the real line;
we have 
$$
G_{k,n-1}(\eta)=\int_{-\infty}^\eta A_{k,n-1}(\xi)d\xi, \quad G_{k,n-1}(+\io)=1.
$$
\begin{rem}\rm We claim that
 the asymptotic properties of the functions $A_{k,n-1}$ are analogous to the properties of the standard Airy function and we have indeed from \eqref{449},
 \begin{equation}\label{}
 \omega_{k,n-1}'(\tau)=(1-iD)^k(1+iD)^{-k-n+1}\mathcal F^{-1}(
 e^{\frac{i\pi}{6}t^3}).
\end{equation}
We claim as well that 
$$
-\frac12<\inf_{k\ge 0,\eta \in \R} G(\eta)<0, \quad \sup_{k\ge 0,\eta \in \R} G(\eta)=1,
$$
so that $\tilde P$ is bounded on $L^2(\R^n)$ and 
\begin{equation}\label{}
\int_{\xi_{n}\ge x_{n}^2+\sum_{1\le j\le n-1}(x_{j}^2+\xi_{j}^2)}
\mathcal W(u,u)(x,\xi) dx d\xi\le \norm{u}_{L^2(\R^n)}^2.
\end{equation}
\end{rem}
\section{Conics with eccentricity greater than 1}
\index{hyperbolic convex sets}
We want to consider now integrals of the Wigner distribution on ``hyperbolic'' 
convex subsets of the plane such as 
\begin{equation}\label{5014}
\mathcal C_{\sigma}=\{(x,\xi)\in \R^2, x\xi\ge \sigma, x\ge 0\},
\end{equation}
where $\sigma$ is a non-negative parameter.
It is convenient to start with the limit-case where $\sigma=0$ and 
$\mathcal C_{0}=\{(x,\xi)\in \R^2, x\ge 0, \xi\ge 0\}$
(we will label $\mathcal C_{0}$ as the \emph{quarter-plane}).
The indicator function of $\mathcal C_{0}$ is $H(x) H(\xi)$ where $H=\mathbf 1_{\R_{+}}$ is the Heaviside function.
\par\no
{\bf Acknowledgements.}
The author is grateful to Thomas Duyckaerts for sharp comments on a first version of this section.
\begin{nb}\rm
The  reader will see a great similarity between our calculations below in this section  and the J.G. Wood \& A.J. Bracken paper \cite{MR2131219} (see also \cite{MR1728097}).
This article   is 
 very important for the problem at stake -- Integrating the Wigner distribution on subsets of the phase space -- and was a wealthy source of information for us, although as a mathematician,
 the author  has a quite rigid relationship with calculations, and feels the need to justify formal manipulations;
for instance, we may point out that the test functions used in  \cite{MR2131219} are homogeneous distributions of type
 $$
 x_{\pm}^{-\frac12+i\omega}, \quad \omega\in \R,
 $$
  which are not in $L^{2}(\R)$ (not even in $L^{2}_{\textrm{loc}}$),
 a situation which raises some difficulties, first when you try to normalize in $L^{2}$ these test functions and also when trying to give a non-formal meaning to their images under the operator with Weyl symbol $H(x) H(\xi)$, images which are not clearly defined.
 In  our  joint  paper \cite{DDL}
with B.~Delourme and T.~Duyckaerts, proving that Flandrin's conjecture is not true, we followed numerical arguments which were quite  apart from 
the arguments of \cite{MR2131219}.
However,  in this article, we do follow many of the arguments of \cite{MR2131219},
along with avoiding formal calculations.
\end{nb}
\subsection{The quarter-plane,
a counterexample to Flandrin's conjecture}\label{sec.51}
\index{quarter-plane}
\index{counterexample to Flandrin conjecture}
\subsubsection{Preliminaries}\label{sec.prelim}
We study in this section the operator 
\begin{equation}\label{quarter-plane}
A_{0}=\opw{H(x)H(\xi)}
\end{equation}
where $H=\mathbf 1_{\R_{+}}$, 
that is the Weyl quantization of the characteristic function of the  first quarter of the plane.\begin{lem}\label{lem.51fdsq}
 The operator $A_{0}$ given by \eqref{quarter-plane} is bounded self-adjoint on $L^{2}(\R)$.
 \end{lem}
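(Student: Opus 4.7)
The plan is to reduce the problem to a concrete integral operator by computing the distribution kernel of $A_0$ explicitly, and then to split that kernel into pieces that can be controlled by classical results. Using Proposition \ref{pro1716} and the distributional identity
$$
\widehat H(\eta)=\tfrac12\delta_0(\eta)+\tfrac1{2i\pi}\,\mathrm{pv}\tfrac1\eta,
$$
I will obtain
$$
k_{A_0}(x,y)=\tfrac12 H(x)\,\delta_0(y-x)+\frac{H\!\left(\tfrac{x+y}{2}\right)}{2i\pi(y-x)}.
$$
Self-adjointness is then immediate from the Hermitian symmetry $\overline{k_{A_0}(y,x)}=k_{A_0}(x,y)$ (equivalently, from the fact that the symbol $H(x)H(\xi)$ is real-valued and $(\opw a)^*=\opw{\bar a}$ in the weak sense \eqref{eza654}).

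For boundedness I will use the fact that $k_{A_0}$ vanishes whenever $x<0$ and $y<0$, which suggests a $2\times 2$ block decomposition of $A_0$ with respect to the orthogonal splitting $L^2(\R)=L^2(\R_+)\oplus L^2(\R_-)$. Writing $A_0=(A_{ij})_{i,j\in\{+,-\}}$, one has $A_{--}=0$ and, by Hermitian symmetry, $A_{-+}=A_{+-}^*$, so it suffices to bound $A_{++}$ and $A_{+-}$. For $x,y>0$ the kernel of $A_{++}$ equals $\tfrac12\delta_0(y-x)-\tfrac1{2i\pi(x-y)}$, which identifies $A_{++}$ with $\tfrac12(I+iH^+)$ where $H^+$ is the truncated Hilbert transform on $L^2(\R_+)$; extending functions on $\R_+$ by zero to $\R$ and applying the classical $L^2$-boundedness of the Hilbert transform on $\R$ yields $\|H^+\|\le 1$ and hence $\|A_{++}\|\le 1$.

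For the cross-block $A_{+-}:L^2(\R_-)\to L^2(\R_+)$, the kernel
$\displaystyle\frac{\mathbf 1_{-x\le y<0}}{2i\pi(y-x)}$
carries no singularity, since $y<0<x$. After the isometric change of variable $z=-y\in(0,x)$ and $h(z)=g(-z)$, the operator takes the form
$$
Th(x)=-\frac{1}{2i\pi}\int_0^{x}\frac{h(z)}{x+z}\,dz,\qquad x>0,
$$
a positive kernel on $\R_+\times\R_+$. I will apply Schur's test with the weight $w(x)=x^{-1/2}$: a direct computation (set $z=xt^2$, resp.\ $x=yu$) gives
$$
\int_0^{x}\frac{w(z)\,dz}{x+z}=\frac{\pi}{2}\,w(x),
\qquad
\int_y^{+\infty}\frac{w(x)\,dx}{x+y}=\frac{\pi}{2}\,w(y),
$$
yielding $\|T\|\le \pi/2$ and therefore $\|A_{+-}\|\le 1/4$. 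Combining the three blocks shows $A_0$ is bounded on $L^2(\R)$, and self-adjointness then upgrades from the formal level to the honest sense.

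The only subtle point is the control of $A_{+-}$: a naive Schur estimate against the symbol $H(x)H(\xi)$ itself clearly fails because the symbol is not in $L^1(\RZ)$, and a naive principal-value manipulation on $A_{++}$ can produce spurious divergences if the components $u_\pm$ are handled separately without respecting cancellation. The choice of the Mellin-type weight $x^{-1/2}$ for the off-diagonal block (natural because $A_0$ commutes with the dilations $(\mathcal M_\lambda u)(x)=\lambda^{-1/2}u(\lambda^{-1}x)$, $\lambda>0$, by symplectic invariance of $H(x)H(\xi)$ under $(x,\xi)\mapsto(\lambda x,\lambda^{-1}\xi)$) is the key ingredient that makes the Schur inequality close. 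Once boundedness is established, the same Mellin structure will be the natural tool for the sequel of Section \ref{sec.51}, where eigenvalues exceeding $1$ have to be exhibited in order to disprove Flandrin's conjecture.
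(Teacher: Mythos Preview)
Your approach is essentially the same as the paper's: split $A_0$ into a diagonal block $HA_0H = H\,H(D)\,H$ (bounded via the Hilbert transform/projection onto positive frequencies) and off-diagonal blocks controlled by a Hardy-type operator with kernel $\frac{1}{x+y}$ on $\R_+\times\R_+$. Your Schur test with weight $x^{-1/2}$ reproduces the bound $\|A_{+-}\|\le 1/4$, which matches the paper's Lemma~\ref{lem.52ezrr}, and your identification of $A_{++}$ is exactly Lemma~\ref{lem.52swxq}.

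There is, however, a genuine rigor gap that the paper singles out explicitly (Remark~\ref{rem.5487} and the accompanying footnote). You perform the block decomposition by multiplying the distributional kernel
\[
k_0(x,y)=H\!\Big(\frac{x+y}{2}\Big)\Big(\tfrac12\delta_0(y-x)+\tfrac1{2i\pi}\,\mathrm{pv}\tfrac1{y-x}\Big)
\]
by $H(x)H(y)$. That product is not defined near $(0,0)$ by standard distribution theory: the wave-front set of $\mathrm{pv}\frac{1}{y-x}$ at the origin is the conormal to the diagonal, while the wave-front set of $H(x)H(y)$ at $(0,0)$ contains all directions, so the H\"ormander product criterion fails. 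Equivalently, writing $A_0=\sum P_i A_0 P_j$ with $P_\pm$ the projections onto $L^2(\R_\pm)$ presupposes that $A_0$ is already bounded on $L^2$---which is what you are proving.

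The paper closes this gap by introducing the truncation $A_{0,\lambda}=\opw{H(x)\mathbf 1_{[0,\lambda]}(\xi)}$, whose kernel is an $L^\infty$ function and can therefore be split pointwise without any distributional subtlety; the blocks are then bounded uniformly in $\lambda$ and one passes to the limit using $\poscal{A_0\phi}{\psi}=\lim_\lambda\poscal{A_{0,\lambda}\phi}{\psi}$ for $\phi,\psi\in\mathscr S$. An alternative fix, closer to your write-up, would be to carry out your computation first for $u,v\in C_c^\infty(\R\setminus\{0\})$ (where $u=u_++u_-$ with $u_\pm\in\mathscr S$, the supports of $W(u_+,v_+)$ lie in $\{x>\varepsilon\}$, and the off-diagonal kernels are smooth), obtain the bound there, and then extend by density of $C_c^\infty(\R\setminus\{0\})$ in $L^2(\R)$. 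Either device is needed to make your argument complete.
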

\begin{proof}
Since the Weyl symbol of $A_{0}$ is real-valued, $A_{0}$ is formally self-adjoint and it is enough to prove that $A_{0}$ is bounded on $L^{2}(\R)$.
Let us start with recalling the classical formulas
\begin{equation}\label{}
\hat H(t)=\frac{\delta_{0}(t)}{2}+\frac{1}{2i \pi}\text{\rm pv}\left(\frac1t\right),
\quad
\widehat{\sign}=\frac{1}{i \pi}\text{\rm pv}\left(\frac1t\right),
\end{equation}
useful below.
 The kernel\footnote{There is no difficulty at defining the product $S\bigl((x+y)/2\bigr) T(x-y)$
 for $S,T$ tempered distributions on the real line  since we may use the tensor product
 with
 $$
 \poscal{S(\frac{x+y}2) T(x-y)}{\Phi(x,y)}_{\mathscr S'(\R^{2}), \mathscr S(\R^{2})}
 = \poscal{S(x_{1}) \otimes T(x_{2})}{\Phi(x_{1}+\frac{x_{2}}{2},x_{1}-\frac{x_{2}}2)}_{\mathscr S'(\R^{2}), \mathscr S(\R^{2})}.
 $$
 However, we shall not use directly Formula \eqref{513iop}, since want to avoid formal manipulation involving for instance meaningless products such as $H(x) H(y) k_{0}(x,y)$.
 We refer the reader  to footnote \ref{footwave} on page \pageref{footwave} and to Remark \ref{rem.5487} for more details on this matter.
 } of $A_{0}$
is
\begin{equation}\label{513iop}
k_{0}(x,y)= H(x+y)\hat H(y-x)=H(x+y)\frac12\Bigl(\delta_{0}(y-x)+\frac{1}{i\pi}
\text{pv}\frac{1}{y-x}\Bigr).
\end{equation}
For $\lambda>0$, we define
$
A_{0,\lambda}=\left(H(x)\indic{[0,\lambda]}(\xi)\right)^{w},
$
whose distribution-kernel is the $L^{\io}(\RZ)$ function
$$
k_{0,\lambda}(x,y)=H(x+y)e^{i\pi(x-y)\lambda}\frac{\sin(\pi(x-y)\lambda)}{\pi(x-y)}.
$$
We can thus notice that 
\begin{multline}\label{}
k_{0,\lambda}(x,y)= \overbracket[0.5pt][3pt]{H(x)H(y)
e^{i\pi(x-y)\lambda}\frac{\sin(\pi(x-y)\lambda)}{\pi(x-y)}}^{k_{0,\lambda}^{\flat}(x,y)}
\\+\underbracket[0.5pt][3pt]{H(x+y)\bigl(H(-x) H(y)+H(x)H(-y)\bigr)
\frac{\sin(\pi(x-y)\lambda)}{\pi(x-y)}
e^{i\pi(x-y)\lambda}
}_{
k_{0,\lambda}^{\sharp}(x,y)
},
\end{multline}
and the operator with distribution-kernel $k_{0,\lambda}^{\flat}$ is 
$$
H\opw
{\indic{[0,\lambda]}(\xi)}H,\text{\ that is\quad  $H\indic{[0,\lambda]}(D)H$,}
$$
 where $H$ stands for the operator of  multiplication
by the Heaviside function $H$.
On the other hand, the operator with distribution kernel $k_{0,\lambda}^{\sharp}$
is such that
\begin{multline*}
\val{k_{0,\lambda}^{\sharp}(x,y)}\le H(x+y)\frac{H(-x) H(y)+H(x)H(-y)}{\pi\val{x-y}}
\\=H(x+y)\frac{H(-x) H(y)}{\pi(y-x)}+H(x+y)\frac{H(x)H(-y)}{\pi(x-y)}.
\end{multline*}
According to 
Proposition \ref{pro.hardy} in our Appendix,
the Hardy operator and the modified Hardy operators are bounded on $L^{2}(\R)$
and 
we obtain 
that, for $\phi,\psi\in \mathscr S(\R^{n})$, with $H=H(x), \check H=H(-x)$, 
\begin{multline}
\Val{\iint H(x) \indic{[0,\lambda]}(\xi)W(\phi,\psi)(x,\xi) dx d\xi}
\\
\le \norm{H \phi}_{L^{2}(\R)} \norm{H \psi}_{L^{2}(\R)} 
+ \frac12\norm{H \phi}_{L^{2}(\R)}  
\norm{\check H \psi}_{L^{2}(\R)}+\frac12 \norm{\check H \phi}_{L^{2}(\R)}  
\norm{H \psi}_{L^{2}(\R)},
\end{multline}
so that 
\begin{multline}\label{515ert}
\val{\poscal{A_{0}\phi}{\psi}_{\mathscr S^{*}(\R), \mathscr S(\R)}}
\\=
\Bigl\vert\iint H(x)H(\xi)\overbracket[0.5pt][3pt]{W(\phi,\psi)(x,\xi)}^{\in \mathscr S(\R^{2})} dx d\xi
\Bigr\vert=
\lim_{\lambda\rightarrow+\io}\Val{\iint H(x) \indic{[0,\lambda]}(\xi)W(\phi,\psi)(x,\xi) dx d\xi}
\\
\le \norm{H \phi}_{L^{2}(\R)} \norm{H \psi}_{L^{2}(\R)} 
+\frac12 \norm{H \phi}_{L^{2}(\R)}  
\norm{\check H \psi}_{L^{2}(\R)}
+\frac12 \norm{\check H \phi}_{L^{2}(\R)}  
\norm{H \psi}_{L^{2}(\R)},
\end{multline}
yielding the $L^{2}$-boundedness of the operator $A_{0}$, and this concludes the proof of the lemma.
\end{proof}
\begin{rem}\label{rem.5487}{\rm
That cumbersome detour with the operator $A_{0,\lambda}$ is useful to ensure that the operator $A$ is indeed bounded on $L^{2}(\R)$.
The kernel $k_{0}$
of $A_{0}$ is a distribution of order 1 and the product $H(x) H(y) k_{0}(x,y)$ is not a priori
meaningful, even when $k$ is a Radon measure\footnote{\label{footwave}Even a wave-front-set approach, which would allow the product
$H(x)\text{pv}(1/(y-x))$, does not offer a meaning for the product
$H(x)H(y)\text{pv}(1/(y-x))$ since the wave-front-set
of $\text{pv}(1/(y-x))$ is located on the conormal of the first diagonal (i.e. $\{(x,x;\xi,-\xi)\}_{x\in \R, \xi\in \R^{*}}$),
whereas the wave-front set at $(0,0)$ of $H(x) H(y)$ contains all directions and in particular is antipodal to the conormal of the diagonal at $(0,0)$.
}. However with the proven $L^{2}$-boundedness of $A_{0}$, the products of operators
$HA_{0}H$,
$\check H A_{0} H$, 
$HA_{0}\check H$, $\check H A_{0}\check H$
make sense and for instance we may approximate in the strong-operator-topology the operator 
$HA_{0}H$ by the operator 
$
\chi(\cdot/\varepsilon) A\chi(\cdot/\varepsilon),
$
where $\chi$ is a smooth function supported in $[1,+\io)$ and equal to $1$ on $[2,+\io)$.
We have indeed
$$
HAH=\bigl(H-\chi(\cdot/\varepsilon)\bigr) A H+\chi(\cdot/\varepsilon) A \bigl(H-\chi(\cdot/\varepsilon)\bigr) 
+\chi(\cdot/\varepsilon)A \chi(\cdot/\varepsilon),
$$
so that for $u\in L^{2}(\R)$,
$
HAH u=\lim_{\varepsilon\rightarrow 0_{+}}\chi(\cdot/\varepsilon)A \chi(\cdot/\varepsilon)u.
$
The operator with kernel
$$
H(x+y)\chi(x/\varepsilon)\chi(y/\varepsilon) \textrm{\rm pv}\frac{1}{i\pi(y-x)}=\chi(x/\varepsilon)\chi(y/\varepsilon)  \text{\rm pv}\frac{1}{i\pi(y-x)},
$$
converges strongly towards  the operator 
$H(\sign D) H$.
}
\end{rem}
\begin{pro}\label{pro.hgf654}
 Let $A_{0}=\opw{H(x)H(\xi)}$ be 
  the operator
with Weyl symbol
$H(x) H(\xi)$, a priori sending $\mathscr S(\R)$ into 
$\mathscr S'(\R)$. Then $A_{0}$ can be uniquely extended to a self-adjoint bounded operator
on $L^{2}(\R)$
with
\begin{equation}\label{511rez}
\norm{A_{0}}_{\mathcal B(L^{2}(\R))}\le \frac{1+\sqrt{2}}{2}\approx 
1.207
\end{equation}
\end{pro}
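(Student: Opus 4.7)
The estimate to exploit is \eqref{515ert}, which was proved in Lemma \ref{lem.51fdsq} by decomposing the kernel of $A_{0}$ into an on-diagonal piece (handled by the spectral calculus of $H(D)$) and two off-diagonal pieces (handled by the Hardy and modified Hardy operators of Proposition \ref{pro.hardy}). Granted \eqref{515ert}, the entire content of Proposition \ref{pro.hgf654} reduces to an elementary finite-dimensional optimisation plus a density/self-adjointness wrap-up.

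The plan is as follows. For $\phi,\psi\in \mathscr S(\R)$, introduce the four non-negative numbers
\begin{equation*}
a=\norm{H\phi}_{L^{2}(\R)},\quad b=\norm{\check H\phi}_{L^{2}(\R)},\quad c=\norm{H\psi}_{L^{2}(\R)},\quad d=\norm{\check H\psi}_{L^{2}(\R)},
\end{equation*}
which, because $L^{2}(\R)=L^{2}(\R_{+})\oplus L^{2}(\R_{-})$ orthogonally, satisfy $a^{2}+b^{2}=\norm{\phi}_{L^{2}(\R)}^{2}$ and $c^{2}+d^{2}=\norm{\psi}_{L^{2}(\R)}^{2}$. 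The inequality \eqref{515ert} then reads
\begin{equation*}
\val{\poscal{A_{0}\phi}{\psi}_{\mathscr S^{*}(\R),\mathscr S(\R)}}\le ac+\tfrac12 ad+\tfrac12 bc=\poscal{M\binom{c}{d}}{\binom{a}{b}}_{\R^{2}},\qquad M=\mat22{1}{1/2}{1/2}{0}.
\end{equation*}

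The main (and only nontrivial) step is to compute $\norm{M}_{\mathcal B(\R^{2})}$. Since $M$ is symmetric, its norm is the largest absolute value of its eigenvalues; these are the roots of $\lambda^{2}-\lambda-\tfrac14=0$, namely $\lambda_{\pm}=(1\pm\sqrt 2)/2$. Thus $\norm{M}=(1+\sqrt 2)/2$, which by Cauchy--Schwarz in $\R^{2}$ yields
\begin{equation*}
\val{\poscal{A_{0}\phi}{\psi}_{\mathscr S^{*}(\R),\mathscr S(\R)}}\le \tfrac{1+\sqrt 2}{2}\sqrt{a^{2}+b^{2}}\sqrt{c^{2}+d^{2}}=\tfrac{1+\sqrt 2}{2}\norm{\phi}_{L^{2}(\R)}\norm{\psi}_{L^{2}(\R)}.
\end{equation*}

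To close the argument I would first promote this Schwartz-class estimate to the conclusion by density: $\mathscr S(\R)$ is dense in $L^{2}(\R)$, so the sesquilinear form $\poscal{A_{0}\cdot}{\cdot}$ extends uniquely and continuously to $L^{2}(\R)\times L^{2}(\R)$, giving a unique bounded operator on $L^{2}(\R)$ of norm at most $(1+\sqrt 2)/2$. Self-adjointness of this extension is inherited from the fact that $H(x)H(\xi)$ is a real Weyl symbol, so $A_{0}^{*}=A_{0}$ already as an operator $\mathscr S(\R)\to\mathscr S'(\R)$, and uniqueness of the extension transfers the identity to $\mathcal B(L^{2}(\R))$. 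I do not anticipate any genuine obstacle: the Hardy-type work was done in Lemma \ref{lem.51fdsq}, and the present statement is really a sharpening of that boundedness by an explicit spectral-radius computation for the $2\times 2$ matrix $M$; the mild subtlety worth flagging is the care required in passing from the a priori meaning of $A_{0}$ (discussed in Remark \ref{rem.5487}) to the genuine bounded operator, which is handled by the same strong-operator approximation described there.
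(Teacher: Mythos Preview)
Your proof is correct and follows essentially the same route as the paper: both reduce \eqref{511rez} to the elementary computation that the symmetric matrix $M=\mat22{1}{1/2}{1/2}{0}$ has eigenvalues $(1\pm\sqrt 2)/2$, starting from the key estimate \eqref{515ert}. The only cosmetic difference is that the paper applies \eqref{515ert} with $\phi=\psi=u$ and bounds the quadratic form $\poscal{A_{0}u}{u}$ directly (which suffices since $A_{0}$ is self-adjoint), whereas you keep $\phi,\psi$ distinct and bound the bilinear form via Cauchy--Schwarz; the linear algebra and the resulting constant are identical.
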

\begin{nb}
 The bound above can be  significantly improved (see Proposition 
 \ref{pro.521kjs}
 for optimal bounds)
  and moreover we will show below that the spectrum of $A_{0}$ actually  intersects $(1,+\io)$.
  In fact it is easier to start with the information that $A_{0}$ is indeed bounded on $L^{2}(\R)$.
\end{nb}
\begin{proof}
The $L^{2}(\R)$-boundedness of $A_{0}$ is given by Lemma \ref{lem.51fdsq}.
We are left with proving the bound \eqref{511rez}: we note that \eqref{515ert}
implies
$$
\val{\poscal{A_{0} u}{u}_{L^{2}(\R)}}\le \norm{Hu}_{L^{2}(\R)}^{2}+
\norm{Hu}_{L^{2}(\R)}\norm{\check Hu}_{L^{2}(\R)},
$$
proving the proposition, since the eigenvalues of the quadratic form $\R^{2}\ni(x_{1}, x_{2})
\mapsto x_{1}^{2}+ x_{1}x_{2}$
are $(1\pm \sqrt 2)/2$.
\end{proof}
We can do much better and actually diagonalize the operator $A_{0}$, using as in Proposition \ref{pro.hardy} logarithmic coordinates on each half-line.
We state a lemma on ``diagonal'' terms whose proof is already given above.
\begin{lem}[Diagonal terms]\label{lem.52swxq}
Let $A_{0}$ be the operator with Weyl symbol $H(x) H(\xi)$.
With $H$ standing as well for the operator of multiplication by $H(x)$, we have 
\begin{gather}
HA_{0}H=H H(D) H=H\frac{(\Id+\sign D)}2H.
\label{516yov}
\end{gather}
\end{lem}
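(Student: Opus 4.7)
My plan is to extract the identity from the frequency-truncation approximation already set up in the proof of Lemma \ref{lem.51fdsq}. The point is that the pointwise product $H(x)H(y)\cdot k_{0}(x,y)$ of the Heaviside cut-offs with the distribution kernel of $A_{0}$ is not meaningful as it stands (cf.\ Remark \ref{rem.5487}), so I would first regularize in the frequency variable, do the kernel computation at the level of honest locally $L^{\infty}$ kernels, and only then pass to the limit.

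First, I would bring back $A_{0,\lambda}=\opw{H(x)\indic{[0,\lambda]}(\xi)}$ for $\lambda>0$, whose kernel
$$
k_{0,\lambda}(x,y)=H(x+y)\,e^{i\pi(x-y)\lambda}\,\frac{\sin(\pi(x-y)\lambda)}{\pi(x-y)}
$$
is an actual locally bounded function. Hence the pointwise product $H(x)H(y)\,k_{0,\lambda}(x,y)$ is well defined and tempered. Now on the closed quadrant $\{x\ge 0,\ y\ge 0\}$ one has $H(x+y)=1$, so
$$
H(x)H(y)\,k_{0,\lambda}(x,y)=H(x)H(y)\,e^{i\pi(x-y)\lambda}\,\frac{\sin(\pi(x-y)\lambda)}{\pi(x-y)}.
$$
But the right-hand side is nothing else than the kernel of $H\,\opw{\indic{[0,\lambda]}(\xi)}\,H=H\,\indic{[0,\lambda]}(D)\,H$; equivalently, the ``$\sharp$'' contribution in the splitting $k_{0,\lambda}=k_{0,\lambda}^{\flat}+k_{0,\lambda}^{\sharp}$ introduced in the proof of Lemma \ref{lem.51fdsq} is supported where at least one of $x,y$ is negative, and so it disappears after multiplication by $H(x)H(y)$. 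Consequently, as operators on $\mathscr S(\R)$,
$$
H\,A_{0,\lambda}\,H=H\,\indic{[0,\lambda]}(D)\,H.
$$

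Second, I would let $\lambda\to+\infty$. By the estimate \eqref{515ert} in the proof of Lemma \ref{lem.51fdsq}, the operators $A_{0,\lambda}$ are uniformly bounded on $L^{2}(\R)$ and converge to $A_{0}$ in the weak sense (tested against pairs in $\mathscr S(\R)$), so by density and uniform bound this convergence extends to weak-operator convergence on $L^{2}(\R)$; multiplying left and right by the bounded operator $H$ preserves weak-operator convergence, hence $HA_{0,\lambda}H\rightharpoonup HA_{0}H$. On the other hand, by Plancherel, $\indic{[0,\lambda]}(D)\to H(D)=\tfrac12(\Id+\sign D)$ strongly on $L^{2}(\R)$ as $\lambda\to+\infty$, so $H\,\indic{[0,\lambda]}(D)\,H\to H\,H(D)\,H$ strongly, hence weakly. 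Comparing the two limits yields the claimed identity
$$
HA_{0}H=H\,H(D)\,H=H\cdot\tfrac{\Id+\sign D}{2}\cdot H.
$$

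The only delicate point is the legitimacy of multiplying a tempered distribution of order $1$ by the discontinuous factor $H(x)H(y)$, and the truncation in $\xi$ is precisely what sidesteps it: once $k_{0,\lambda}$ is a locally bounded function the pointwise product is unambiguous, and the rest is just a matter of identifying limits in two compatible operator topologies using Plancherel and the uniform bound already supplied by the Hardy-inequality argument.
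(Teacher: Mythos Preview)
Your proof is correct and follows essentially the same approach as the paper: the paper declares the lemma's proof ``already given above,'' pointing to the truncation $A_{0,\lambda}$ and the splitting $k_{0,\lambda}=k_{0,\lambda}^{\flat}+k_{0,\lambda}^{\sharp}$ set up in the proof of Lemma~\ref{lem.51fdsq} (and to the approximation-in-$H$ variant sketched in Remark~\ref{rem.5487}). You have simply written out the limiting argument in full, using the uniform bound on $A_{0,\lambda}$ (the inequality just before \eqref{515ert}) and the strong convergence $\indic{[0,\lambda]}(D)\to H(D)$, which is exactly what the paper leaves implicit.
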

\begin{lem}[Off-diagonal terms]\label{lem.52ezrr}
 Let $B_{0}=2\re \check H A_{0}H=\check H A_{0}H+H A_{0}\check H$. 
 Then we have for all $u\in L^{2}(\R)$,
\begin{equation}\label{519jhg}
 \val{\poscal{B_{0} u}{u}_{L^{2}(\R)}}\le\frac12\norm{Hu}_{L^{2}(\R)}\norm{\check Hu}_{L^{2}(\R)}.
\end{equation}
\end{lem}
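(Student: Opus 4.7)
The plan is to reduce the quadratic form $\langle B_0 u,u\rangle$ to a one-sided Hilbert-type integral on the open cross-quadrant $\{x<0<y\}$, diagonalize it by a Mellin-type change of variables as in Proposition~\ref{pro.hardy}, and then close the bound with Young's convolution inequality. Since $A_0$ is self-adjoint (Lemma~\ref{lem.51fdsq}) and the projections $H,\check H$ are self-adjoint, writing $u_+=Hu,\ u_-=\check H u$ gives
$\langle B_0 u,u\rangle=\langle A_0 u_+,u_-\rangle+\langle A_0 u_-,u_+\rangle=2\re\langle A_0 u_+,u_-\rangle,$
so it suffices to prove the (seemingly stronger) inequality
$|\langle A_0 u_+,u_-\rangle|\le \tfrac14\,\|u_+\|_{L^2(\R)}\|u_-\|_{L^2(\R)}.$

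Next I would identify the kernel of $\check H A_0 H$. Using the formula \eqref{513iop} for $k_0$ and the fact that on the open set $\{x<0,\,y>0\}$ one has $y-x>0$, the Dirac mass $\delta_0(y-x)$ drops out and the principal value becomes a genuine integral; the Heaviside factor $H(x+y)$ forces $y>-x$. Thus for test functions we obtain
\begin{equation*}
\langle A_0 u_+,u_-\rangle=\frac{1}{2i\pi}\int_{0}^{+\infty}\!\!\!\int_{-x}^{+\infty}\frac{u_+(y)\,\overline{u_-(x)}}{y-x}\,dy\,(-dx),
\end{equation*}
and after the harmless substitution $a=-x>0$, $b=y>0$, with $v(a):=u_-(-a)$ and $w(b):=u_+(b)$ (both preserving $L^2$ norms),
$\langle A_0 u_+,u_-\rangle=\frac{1}{2i\pi}\iint_{0<a<b}\frac{w(b)\,\overline{v(a)}}{a+b}\,da\,db.$
The careful justification that this integral representation faithfully encodes $\langle\check H A_0 H u,u\rangle$ despite the singular nature of $k_0$ uses the smooth cutoff approximation described in Remark~\ref{rem.5487}; this is the only technical point.

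Then I would apply the logarithmic change of variables $a=e^s,\ b=e^t$, together with the isometry $v_1(s)=e^{s/2}v(e^s)$, $w_1(t)=e^{t/2}w(e^t)$ of $L^2(0,+\infty)$ onto $L^2(\R)$. The identity $e^s+e^t=2e^{(s+t)/2}\cosh\bigl(\tfrac{t-s}{2}\bigr)$ turns the Hilbert-type kernel into a one-sided convolution kernel, yielding
\begin{equation*}
\langle A_0 u_+,u_-\rangle=\frac{1}{2i\pi}\iint_{s<t}\frac{w_1(t)\,\overline{v_1(s)}}{2\cosh\bigl(\tfrac{t-s}{2}\bigr)}\,ds\,dt.
\end{equation*}
Setting $K(u):=\mathbf 1_{u>0}/(2\cosh(u/2))$, the inner double integral equals $\langle K^{\vee}*w_1,\,v_1\rangle_{L^2(\R)}$ where $K^{\vee}(u)=K(-u)$. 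A direct computation gives
$\|K\|_{L^1(\R)}=\int_0^{+\infty}\frac{du}{2\cosh(u/2)}=\bigl[\arctan(\sinh(u/2))\bigr]_0^{+\infty}=\tfrac{\pi}{2}.$

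Finally, Young's convolution inequality combined with Cauchy--Schwarz yields
$\bigl|\langle K^{\vee}*w_1,v_1\rangle\bigr|\le \|K\|_{L^1}\|w_1\|_{L^2}\|v_1\|_{L^2}=\tfrac{\pi}{2}\|u_+\|_{L^2}\|u_-\|_{L^2},$
and multiplying by $1/(2\pi)$ provides $|\langle A_0 u_+,u_-\rangle|\le \tfrac14\|u_+\|_{L^2}\|u_-\|_{L^2}$. Plugging this into the reduction of the first paragraph gives the announced bound \eqref{519jhg}. The main obstacle is really only the kernel identification step: once one admits the representation of $\check H A_0 H$ by its manifestly integrable off-diagonal kernel, the rest is the standard Mellin-diagonalization of a scale-invariant Hilbert operator, and the sharp constant $\pi/2$ arises as the $L^1$ norm of the convolution kernel, beating the classical Hilbert constant $\pi$ by a factor of two thanks to the one-sidedness $s<t$.
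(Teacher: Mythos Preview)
Your proof is correct and follows essentially the same route as the paper's own argument: identify the off-diagonal kernel on $\{x<0<y\}$ as the bona fide function $H(x+y)\bigl(2i\pi(y-x)\bigr)^{-1}$, pass to logarithmic coordinates to turn the bilinear form into a convolution, and bound by the $L^{1}$ norm of the kernel $\int_{0}^{+\infty}\frac{dt}{4\pi\cosh(t/2)}=\tfrac14$. The paper treats the two terms of $B_{0}$ simultaneously (obtaining two convolution kernels $S_{0},\tilde S_{0}$, each with $L^{1}$ norm $\tfrac14$), while you bound $\langle A_{0}u_{+},u_{-}\rangle$ alone by $\tfrac14\|u_{+}\|\|u_{-}\|$ and double; this is the same computation repackaged. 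One cosmetic point: your displayed integral $\int_{0}^{+\infty}\!\int_{-x}^{+\infty}\cdots\,dy\,(-dx)$ is notationally garbled (the $x$-variable lives in $(-\infty,0)$); the subsequent substitution $a=-x$ makes clear what you intend, but you should clean that line up. The paper handles the kernel-identification technicality by first restricting to $u\in\mathscr S(\R)$ with $0\notin\supp u$ and then extending by density, which is equivalent to your appeal to Remark~\ref{rem.5487}.
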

\begin{proof}[Proof of the Lemma]
For $u\in \mathscr S(\R)$ such that $0\notin \supp u$, we define for $t\in \R$,
\begin{equation}\label{519jhf}
\phi_{1}(t)= u(e^{t}) e^{t/2}, \quad \phi_{2}(t)=u(-e^{t}) e^{t/2},
\end{equation}
so that 
\begin{equation}
\norm{Hu}_{L^{2}(\R)}^{2}=\norm{\phi_{1}}_{L^{2}(\R)}^{2}, \quad \norm{\check Hu}_{L^{2}(\R)}^{2}=\norm{\phi_{2}}_{L^{2}(\R)}^{2}.
\end{equation}
We have 
 \begin{align}
 &\poscal{B_{0} u}{u}_{L^{2}(\R)}=
 \iint 
 \frac{H(x+y)\bigl(\check H(x) H(y)+H(x)\check H(y)\bigr)}{2i\pi (y-x)} u(y) \bar u(x) dy dx
 \notag\\&\hskip22pt =
 \iint \frac{H(-e^{s}+e^{t}) e^{\frac{s+t}2}}{2i\pi(e^{t}+e^{s})}\phi_{1} (t)
 \bar \phi_{2}(s) ds dt 
 -
  \iint \frac{H(e^{s}-e^{t}) e^{\frac{s+t}2}}{2i\pi(e^{t}+e^{s})}\phi_{2} (t)
 \bar \phi_{1}(s) ds dt 
 \notag\\
 &\hskip22pt =
  \iint \frac{H(t-s)}{4i\pi
  \cosh(\frac{s-t}2)
  }\phi_{1} (t)
 \bar \phi_{2}(s) ds dt 
 -
  \iint \frac{H(s-t)}{4i\pi\cosh(\frac{s-t}2)}\phi_{2} (t)
 \bar \phi_{1}(s) ds dt 
 \notag
 \end{align}
so that 
\begin{gather}
\poscal{B_{0} u}{u}_{L^{2}(\R)}=
 \poscal{\tilde S_{0}\ast \phi_{1}}{\phi_{2}}_{L^{2}(\R)}
 +\poscal{S_{0}\ast \phi_{2}}{\phi_{1}}_{L^{2}(\R)},
 \\
 \tilde S_{0}(t)=\frac{\check H(t)}{4i\pi\cosh (t/2)}, \quad S_{0}(t)=\frac{ iH(t)}{4\pi\cosh (t/2)}.
\label{5110kj}\end{gather}
We calculate
$$
\int_{0}^{+\io}\frac{dt}{4\pi \cosh (t/2)}=\frac{1}{2\pi}[\arctan(\sinh (t/2))]_{0}^{+\io}=\frac14=
\int_{-\io}^{0}\frac{dt}{4\pi \cosh (t/2)},
$$
so that 
\begin{equation}
\val{\poscal{B_{0} u}{u}_{L^{2}(\R)}}\le \frac12\norm{\phi_{1}}_{L^{2}(\R)}
\norm{\phi_{2}}_{L^{2}(\R)}=\frac12\norm{Hu}_{L^{2}(\R)}\norm{\check Hu}_{L^{2}(\R)},
\end{equation}
proving the estimate of the lemma for $u\in \mathscr S(\R)$ such that $0\notin \supp u$.
We use now that we already know that $B_{0}$ is a bounded self-adjoint operator on $L^{2}(\R)$: let $u$ be a function in $L^{2}(\R)$ and let $(\phi_{k})_{k\ge 1}$ be a sequence\footnote{Such a sequence is easy to find: a first step is to find  a sequence $(\tilde \phi_{k})_{k\ge 1}$ in the Schwartz space converging in $L^{2}(\R)$ towards $u$, then consider with a given $\omega\in \moo(\R;[0,1])$ such that $\omega(t)=0$ for $\val t\le 1$ and $\omega(t)=1$ for $\val t\ge 2$, 
$
\phi_{k}(x) =\omega(kx)\tilde \phi_{k}(x).
$} in $\mathscr S(\R)$ such that each $\phi_{k}$ vanishes in a neighborhood of 0 so that $\lim_{k}\phi_{k}=u$ in $L^{2}(\R)$.
We find 
that 
\begin{multline*}
\val{\poscal{B_{0}u}{u}_{L^{2}(\R)}}\le \val{\poscal{B_{0}(u-\phi_{k})}{u}_{L^{2}(\R)}}
+
 \val{\poscal{B_{0}\phi_{k}}{u-\phi_{k}}_{L^{2}(\R)}}
+
 \val{\poscal{B_{0}\phi_{k}}{\phi_{k}}_{L^{2}(\R)}}
\\ \le  \norm{B_{0}}_{\mathcal B(L^{2}(\R))}
\bigl(
\norm{u-\phi_{k}}_{L^{2}(\R)}\norm{u}_{L^{2}(\R)}
+
\norm{u-\phi_{k}}_{L^{2}(\R)}\norm{\phi_{k}}_{L^{2}(\R)}\bigr)
\\+\frac12 \norm{H\phi_{k}}_{L^{2}(\R)} \norm{\check H\phi_{k}}_{L^{2}(\R)},
\end{multline*}
providing readily the result of the lemma since the multiplication by $H$ and $\check H$ are bounded operators on $L^{2}(\R)$.
\end{proof}
\begin{rem}\rm
 The estimate \eqref{519jhg} and Lemma \ref{lem.52swxq} are already improving
 \eqref{511rez},
 since  the eigenvalues of the quadratic form $\R^{2}\ni(x_{1}, x_{2})
\mapsto x_{1}^{2}+ \frac12x_{1}x_{2}$
are $(2\pm \sqrt 5)/4$, so that the right-hand-side of  \eqref{511rez} can be replaced by
$(2+ \sqrt 5)/4\approx 1.059$. Anyhow, we shall provide below a diagonalization of $A_{0}$ and optimal bounds.
\end{rem}
\begin{nb}\rm 
 We shall be a little faster in the sequel on the ``cumbersome'' detours to avoid formal multiplication of kernels by Heaviside functions but the reader should keep in mind that it is an important point to secure $L^{2}(\R)$-boundedness \emph{before} any further manipulation of the kernels.
\end{nb}
\subsubsection{An isometric isomorphism}
\begin{rem}\label{rem.kjhg43}\rm
 The mapping  $\Psi$ defined by 
 \begin{equation}\label{5113oi}
\begin{matrix}
\Psi:\ L^{2}(\R)&\longrightarrow &L^{2}(\R;\C^{2})\\
u&\mapsto& \Bigl((Hu)(e^{t}) e^{t/2}, (\check Hu)(-e^{t}) e^{t/2}\Bigr)
\end{matrix}
\end{equation}
is an isometric isomorphism of Hilbert spaces:
indeed we have
$$
\norm{u}^{2}_{L^{2}(\R)}=
\int_{\R}\val{u(e^{t})}^{2} e^{t}dt+\int_{\R}\val{u(-e^{t})}^{2} e^{t}dt.
$$
Moreover if $(\phi_{1}, \phi_{2})\in L^{2}(\R;\C^{2})$, we may define  for $x\in \R^{*}$
$$
u(x)=H(x) \phi_{1}(\ln x)x^{-1/2}+\check H(x) \phi_{2}(\ln \val x)\val x^{-1/2},
$$
and we have 
$
\Psi (u)(t)= \bigl(\phi_{1}(t),\phi_{2}(t)\bigr).
$
\end{rem}
\begin{rem}\label{rem.jhgf55}\rm
 Using Lemma \ref{lem.52swxq} and Notations \eqref{519jhf}
 we see that 
 \begin{align}
 \poscal{HA_{0}Hu}{u}_{L^{2}(\R)}&=\frac12 \norm{\phi_{1}}^{2}_{L^{2}(\R)}
 +\iint\frac{1}{2i\pi}\text{pv}\frac{e^{(s+t)/2}}{e^{t}-e^{s}}\phi_{1}(t) \bar \phi_{1}(s) dsdt
\notag\\
&=\frac12 \norm{\phi_{1}}^{2}_{L^{2}(\R)}
 +\iint\frac{1}{4i\pi}\text{pv}\frac{1}{\sinh(\frac{t-s}2)}\phi_{1}(t) \bar \phi_{1}(s) dsdt
 \notag\\&=
 \int_{\R}\val{\hat\phi_{1}(\tau)}^{2}\bigl(\frac12+\hat T_{0}(\tau)\bigr) d\tau,
\label{65tgfd}\end{align}
with 
\begin{equation}\label{5114pi}
T_{0}(t)=\frac{t}{4\sinh(t/2)}\text{pv}\frac{i}{\pi t}.
\end{equation}
We have 
\begin{gather}\label{5116gg}
\hat T_{0}=\sign\ast \rho_{0}, \quad \text{with\quad }\rho_{0}(\tau)=\int\frac{t}{4\sinh (t/2)}
e^{-2i\pi t\tau} dt,
\end{gather}
and we note that the function $\rho_{0}$ belongs to $\mathscr S(\R)$,
as the Fourier transform of a function  in $\mathscr S(\R)$.
Also we have 
$$
\int \rho_{0}(\tau) d\tau=\hat \rho_{0}(0)=\frac12,
$$
and this yields with 
$
\frac{d}{d\tau}\left\{\frac 12+\hat T_{0}\right\}=2\rho_{0}
$
(which follows from \eqref{5116gg})
and
\begin{equation}\label{5117yg}
\frac 12+\hat T_{0}(\tau)=1-\int_{\tau}^{+\io} 2\rho_{0}(\tau') d\tau',
\end{equation}
since 
$$
\frac{d}{d\tau}\left\{\frac 12+\hat T_{0}+\int_{\tau}^{+\io} 2\rho_{0}(\tau') d\tau'\right\}= 0
\quad\text{and\quad}
\lim_{\tau\rightarrow+\io} (\sign\ast \rho_{0})(\tau)=\frac12.
$$
\end{rem}
\begin{theorem}\label{thm.calculs--}
Let $A_{0}$ be the operator with Weyl symbol $H(x) H(\xi)$.
 The operator $A_{0}$ is bounded self-adjoint  on $L^{2}(\R)$ so that we may define,
 with $\Psi$ defined in \eqref{5113oi}, 
 \begin{equation}\label{}
\wt A_{0}=\Psi A_{0}\Psi^{-1}.
\end{equation}
The operator $\wt A_{0}$ is the Fourier multiplier on $L^{2}(\R;\C^2)$ given by the matrix
\begin{equation}\label{5219+--}
\renewcommand\arraystretch{1.7}
\mathcal M_{0}(\tau)=
\begin{pmatrix}
\frac12+\hat T_{0}(\tau)&\hat S_{0}(\tau)\\
\overline{\hat S_{0}(\tau)}&0
\end{pmatrix},
\end{equation}
where $T_{0}, S_{0}$ are defined respectively in 
\eqref{5114pi},
\eqref{5110kj}.
 In particular we have
with $\Phi=(\phi_{1}, \phi_{2})\in L^{2}(\R;\C^{2})$,
\begin{equation}\label{5219--}
\poscal{\wt A_{0}\Phi}{\Phi}_{L^{2}(\R;\C^{2})}
=\int_{\R}
e^{2i\pi t\tau}
\poscal{\mathcal M_{0}(\tau)\hat \Phi(\tau)}{\hat \Phi(\tau)}_{\C^{2}} d\tau.
\end{equation}
\end{theorem}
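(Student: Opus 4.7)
The first statement, boundedness and self-adjointness of $A_{0}$, is already established in Lemma \ref{lem.51fdsq} and Proposition \ref{pro.hgf654}, so the content to be proven is the matricial Fourier-multiplier representation of $\wt A_{0}=\Psi A_{0}\Psi^{-1}$. My plan is to split $A_{0}$ via the orthogonal decomposition $\Id=H+\check H$ (as multiplication operators) into four pieces
\begin{equation*}
A_{0}=HA_{0}H+HA_{0}\check H+\check HA_{0}H+\check HA_{0}\check H,
\end{equation*}
identify each piece after conjugation by $\Psi$ with an entry of the $2\times2$ matrix $\mathcal M_{0}(\tau)$, and then assemble them. Since $\Psi$ sends $u$ to the pair $(\phi_{1},\phi_{2})$ with $\phi_{1}(t)=(Hu)(e^{t})e^{t/2}$ and $\phi_{2}(t)=(\check Hu)(-e^{t})e^{t/2}$, the pieces $HA_{0}H$ and $\check HA_{0}\check H$ correspond to the diagonal $(1,1)$- and $(2,2)$-entries, while $\check HA_{0}H$ and its adjoint $HA_{0}\check H$ correspond respectively to the $(2,1)$- and $(1,2)$-entries.

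First I handle the diagonal entries. For the $(1,1)$-entry: Lemma \ref{lem.52swxq} gives $HA_{0}H=H\,\tfrac12(\Id+\sign D)H$, and the explicit computation carried out in Remark \ref{rem.jhgf55} already yields
\begin{equation*}
\poscal{HA_{0}Hu}{u}_{L^{2}(\R)}=\int_{\R}\bigl(\tfrac12+\hat T_{0}(\tau)\bigr)\,|\hat\phi_{1}(\tau)|^{2}\,d\tau,
\end{equation*}
so under conjugation by $\Psi$ this is exactly the Fourier multiplier by $\frac12+\hat T_{0}(\tau)$ acting on the first component. For the $(2,2)$-entry: the distribution kernel of $A_{0}$ from \eqref{513iop} contains the factor $H(x+y)$, which vanishes identically on the quadrant $\{x<0,y<0\}$. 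Hence $\check HA_{0}\check H=0$ as a bilinear form on $\mathscr S(\R)\times\mathscr S(\R)$ avoiding $0$, and by the approximation argument of Remark \ref{rem.5487} this extends to $\check HA_{0}\check H=0$ on $L^{2}(\R)$, giving the $(2,2)$-entry $0$.

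Next I handle the off-diagonal entries. The self-adjointness of $A_{0}$ guarantees that $(HA_{0}\check H)^{*}=\check HA_{0}H$, so it suffices to compute one of them and take the adjoint. The computation performed in Lemma \ref{lem.52ezrr} for $B_{0}=\check HA_{0}H+HA_{0}\check H$ yields, after the logarithmic change of variables encoded by $\Psi$,
\begin{equation*}
\poscal{B_{0}u}{u}_{L^{2}(\R)}=\poscal{\tilde S_{0}\ast\phi_{1}}{\phi_{2}}_{L^{2}(\R)}+\poscal{S_{0}\ast\phi_{2}}{\phi_{1}}_{L^{2}(\R)},
\end{equation*}
with $S_{0}, \tilde S_{0}$ given in \eqref{5110kj}. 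A direct check shows $\tilde S_{0}(t)=\overline{S_{0}(-t)}$, so on the Fourier side $\widehat{\tilde S_{0}}(\tau)=\overline{\hat S_{0}(\tau)}$. Applying Plancherel converts the two terms above into
\begin{equation*}
\int_{\R}\overline{\hat S_{0}(\tau)}\,\hat\phi_{1}(\tau)\overline{\hat\phi_{2}(\tau)}\,d\tau+\int_{\R}\hat S_{0}(\tau)\,\hat\phi_{2}(\tau)\overline{\hat\phi_{1}(\tau)}\,d\tau,
\end{equation*}
which is precisely the contribution of the off-diagonal entries of $\mathcal M_{0}(\tau)$.

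Adding the four contributions yields \eqref{5219--}, and since the identity holds for all $u\in L^{2}(\R)$ (equivalently all $\Phi\in L^{2}(\R;\C^{2})$) we conclude that $\wt A_{0}$ is the Fourier multiplier with matricial symbol $\mathcal M_{0}(\tau)$. The main subtlety is not computational but conceptual: one must justify that the formal splitting into four pieces is legitimate as an identity of bounded operators on $L^{2}(\R)$, since the kernel $k_{0}(x,y)$ is only a distribution of order one and products such as $H(x)H(y)k_{0}(x,y)$ are not a priori defined. Here the already-established $L^{2}$-boundedness of $A_{0}$ is crucial, and the cut-off approximation procedure described in Remark \ref{rem.5487} handles the difficulty uniformly for all four pieces.
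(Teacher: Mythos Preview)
Your proof is correct and follows exactly the route the paper intends: the paper's own proof is the single sentence ``The proof follows readily from Remarks \ref{rem.kjhg43}, \ref{rem.jhgf55} and Lemmas \ref{lem.52swxq}, \ref{lem.52ezrr}'', and you have simply spelled out how those four ingredients (the isometry $\Psi$, the diagonal computation, and the off-diagonal convolution identities) assemble into the matricial multiplier $\mathcal M_{0}(\tau)$. Your verification that $\tilde S_{0}(t)=\overline{S_{0}(-t)}$ and hence $\widehat{\tilde S_{0}}=\overline{\hat S_{0}}$, together with the explicit mention of the approximation argument from Remark \ref{rem.5487} to justify the splitting as bounded operators, makes the one-line proof fully rigorous.
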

\begin{rem}\label{rem.57uchg}
As a consequence of Theorem \ref{thm.calculs--}, we find that the spectrum of the self-adjoint bounded operator $A_{0}$ is the closure of the set of eigenvalues of the matrices $\mathcal M_{0}(\tau)$ when $\tau$ runs on the real line.
 \end{rem}
\begin{proof}
 The proof follows readily from Remarks \ref{rem.kjhg43}, \ref{rem.jhgf55} and Lemmas
\ref{lem.52swxq},
\ref{lem.52ezrr}.
\end{proof}
\begin{lem}\label{lem.510--}
 Let $\mathcal N$ be   a $2\times 2$ Hermitian matrix
 $$
 \renewcommand\arraystretch{1.2}
\mathcal N=
\begin{pmatrix}
a_{11}&
a_{12}
\\
\overline{a_{12}}&
0
\end{pmatrix}.
 $$
 Then the eigenvalues $\lambda_{-}\le \lambda_{+}$ of $\mathcal N$ are such that
 \begin{equation}\label{goal00}
\lambda_{-}<0<1< \lambda_{+},
\end{equation}
if and only if
\begin{equation}\label{cond00}
a_{12}\not=0\text{ \quad and \quad}\val{a_{12}}^2>1-a_{11}.
\end{equation}
\end{lem}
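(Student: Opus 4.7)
The plan is to exploit the very simple structure of a $2\times 2$ Hermitian matrix by reducing everything to trace/determinant identities and the sign of the characteristic polynomial at the point $1$.

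First I would note that the characteristic polynomial of $\mathcal N$ is
\begin{equation*}
p(\lambda)=\lambda^{2}-a_{11}\lambda-\val{a_{12}}^{2},
\end{equation*}
so that
\begin{equation*}
\lambda_{-}+\lambda_{+}=a_{11},\qquad \lambda_{-}\lambda_{+}=-\val{a_{12}}^{2}.
\end{equation*}
This immediately handles the condition $\lambda_{-}<0<\lambda_{+}$: since $\lambda_{-}\lambda_{+}=-\val{a_{12}}^{2}\le 0$ with equality iff $a_{12}=0$, the pair of eigenvalues strictly straddles $0$ if and only if $a_{12}\neq 0$ (when $a_{12}=0$ one eigenvalue is $0$ and the other is $a_{11}$, so $0$ is not strictly between them).

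Next I would show that, under $a_{12}\neq 0$, the additional strict inequality $\lambda_{+}>1$ is equivalent to $p(1)<0$. Indeed, $p$ is a parabola opening upwards with two real roots $\lambda_{-}<\lambda_{+}$, hence $p(t)<0$ precisely when $\lambda_{-}<t<\lambda_{+}$. Since we already have $\lambda_{-}<0<1$, the condition $\lambda_{-}<1<\lambda_{+}$ reduces to $\lambda_{+}>1$. Evaluating gives
\begin{equation*}
p(1)=1-a_{11}-\val{a_{12}}^{2},
\end{equation*}
so $p(1)<0$ is exactly $\val{a_{12}}^{2}>1-a_{11}$, yielding \eqref{cond00}.

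Conversely, assuming \eqref{cond00}, the same two observations run in reverse: $\val{a_{12}}^{2}>0$ forces $\det \mathcal N<0$, hence $\lambda_{-}<0<\lambda_{+}$; and $p(1)<0$ places $1$ strictly between the two roots, forcing $\lambda_{+}>1$, which combined gives \eqref{goal00}. There is no real obstacle here — the whole argument is a one-line computation with trace, determinant, and the sign of $p(1)$; the only thing worth being careful about is distinguishing the strict inequalities (the degenerate case $a_{12}=0$, in which $\mathcal N$ has $0$ as an eigenvalue, must be excluded, which is why the hypothesis $a_{12}\neq 0$ appears explicitly in \eqref{cond00}).
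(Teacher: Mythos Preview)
Your proof is correct and follows essentially the same approach as the paper: both compute the characteristic polynomial $p(\lambda)=\lambda^{2}-a_{11}\lambda-\val{a_{12}}^{2}$ and read off the equivalence from the signs of $p(0)=-\val{a_{12}}^{2}$ and $p(1)=1-a_{11}-\val{a_{12}}^{2}$. Your presentation via trace/determinant and the observation that $p(t)<0$ iff $\lambda_{-}<t<\lambda_{+}$ is a mild rephrasing of exactly the same computation.
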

\begin{proof}
 The characteristic polynomial of $\mathcal N$
 is $p(\lambda)=
 \lambda^2-a_{11}\lambda-\val{a_{12}}^2$ and since $a_{11}$ is real-valued,
 has two real roots $\lambda_{-}\le \lambda_{+}$. If \eqref{cond00} holds true, the roots are distinct and
 $$
 p(0)=-\val{a_{12}}^2<0, \quad p(1)=1-a_{11}-\val{a_{12}}^2<0,
 $$
 implying \eqref{goal00}. Conversely, if \eqref{goal00} is satisfied, then $p(0), p(1)$ are both negative,  implying \eqref{cond00}, completing the proof of the lemma.
\end{proof}
\begin{lem}\label{lem.51--}
 Let us define for $\omega\in \R$,
\begin{equation}\label{}
I(\omega)=\frac{1}{4\pi}
\int_{0}^{+\io}\frac{\sin(t\omega)}
{\cosh(t/2)} 
dt.
\end{equation}
Then we have
\begin{equation}\label{}
I(\omega)=\frac{1}{4\pi \omega}+O(\omega^{-3}), \quad \val \omega\rightarrow+\io.
\end{equation}
\end{lem}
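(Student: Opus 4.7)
The plan is to prove the asymptotic by repeated integration by parts on the oscillatory integral. Write $f(t)=\operatorname{sech}(t/2)=1/\cosh(t/2)$; this is an even, smooth function, exponentially decaying together with all of its derivatives, so in particular $f^{(j)}\in L^{1}([0,+\infty))$ for every $j\ge 0$, and all boundary terms at $t=+\infty$ will vanish in what follows. A first integration by parts gives
\begin{equation*}
\int_{0}^{+\infty}\sin(t\omega)\, f(t)\,dt = \frac{f(0)}{\omega} + \frac{1}{\omega}\int_{0}^{+\infty}\cos(t\omega)\,f'(t)\,dt,
\end{equation*}
and since $f(0)=1$, division by $4\pi$ produces the announced main term $1/(4\pi\omega)$.

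Next I would show the remainder is $O(\omega^{-3})$ by pushing two more integrations by parts. The crucial point is that $f$ is even, so $f'(0)=0$ and the boundary term in the second IBP vanishes:
\begin{equation*}
\int_{0}^{+\infty}\cos(t\omega)\,f'(t)\,dt = -\frac{1}{\omega}\int_{0}^{+\infty}\sin(t\omega)\,f''(t)\,dt.
\end{equation*}
A third IBP then yields
\begin{equation*}
\int_{0}^{+\infty}\sin(t\omega)\,f''(t)\,dt = \frac{f''(0)}{\omega} + \frac{1}{\omega}\int_{0}^{+\infty}\cos(t\omega)\,f'''(t)\,dt,
\end{equation*}
and the last integral is bounded by $\|f'''\|_{L^{1}}$, hence $O(1)$; combining these three steps gives
\begin{equation*}
\int_{0}^{+\infty}\cos(t\omega)\,f'(t)\,dt = -\frac{f''(0)}{\omega^{2}} + O(\omega^{-3}).
\end{equation*}

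Putting the pieces together,
\begin{equation*}
I(\omega) = \frac{1}{4\pi\omega} - \frac{f''(0)}{4\pi\omega^{3}} + O(\omega^{-4}),
\end{equation*}
which establishes the claim (with an explicit second-order coefficient $-f''(0)/(4\pi)=1/(16\pi)$ coming from the Taylor expansion $\operatorname{sech}(t/2)=1-t^{2}/8+O(t^{4})$). There is essentially no obstacle here: the argument is a routine stationary-phase--style asymptotic, and all the steps are clean because $f$ and its derivatives are Schwartz-like on $[0,+\infty)$. The only thing to watch is the evenness of $f$, which makes $f'(0)=0$ and is exactly what allows us to gain an extra power of $\omega^{-1}$ and land at $\omega^{-3}$ rather than $\omega^{-2}$.
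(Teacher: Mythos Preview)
Your proof is correct and follows essentially the same route as the paper: repeated integration by parts, using $f(0)=1$ for the leading term and the evenness of $f=\operatorname{sech}(\cdot/2)$ (hence $f'(0)=0$) to gain the extra power of $\omega^{-1}$. One small bookkeeping slip: with only the three integrations by parts you wrote out and the crude bound $\bigl|\int\cos(t\omega)f'''(t)\,dt\bigr|\le\|f'''\|_{L^1}$, the remainder in $\int_0^{+\infty}\cos(t\omega)f'(t)\,dt$ is $O(\omega^{-2})$, not $O(\omega^{-3})$, and correspondingly the final error is $O(\omega^{-3})$, not $O(\omega^{-4})$; this is exactly what the lemma asks, so nothing is lost. (If you do want the sharper $O(\omega^{-4})$, one more integration by parts using $f'''(0)=0$ --- again by evenness --- gives it.)
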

\begin{proof}
Indeed we have for $\omega\in \R^{*}$,
\begin{multline*}
I(\omega)=-\frac{1}{4\pi \omega}
\int_{0}^{+\io}\frac{\frac{d}{dt}\cos(t\omega)}
{\cosh(t/2)} 
dt
=\frac{1}{4\pi \omega}\Bigl(1
-\int_{0}^{+\io}\frac{\cos(t\omega)}
{(\cosh(t/2))^{2}} \frac12\sinh(t/2)
dt\Bigr)
\\
=\frac{1}{4\pi \omega}\bigl(1+g(\omega)\bigr), \end{multline*}
with
\begin{align*}
g(\omega)&=
-\int_{0}^{+\io}{
\frac{d}{\omega dt}\{\sin(t\omega)\}}
{\sech(t/2)} \frac12\tanh(t/2)
dt
\\&=\frac1{2\omega}\int_{0}^{+\io}\sin(t\omega)\frac{d}{dt}\bigl\{{\sech(t/2)} \tanh(t/2)\bigr\} dt
\\
&=-\frac1{2\omega^{2}}\int_{0}^{+\io}\frac{d}{dt}
\bigl\{\cos(t\omega)\bigr\}\frac{d}{dt}\bigl\{{\sech(t/2)} \tanh(t/2)\bigr\} dt
\\
&=\frac1{2\omega^{2}}\left\{\int_{0}^{+\io}
\cos(t\omega)\frac{d^{2}}{dt^{2}}\bigl\{{\sech(t/2)} \tanh(t/2)\bigr\} dt+\frac12\right\}=O(\omega^{-2}),
\end{align*}
proving the lemma.
\end{proof}
\begin{pro}\label{pro.510uhc}
 The matrix $\mathcal M_{0}(\tau)$ defined in \eqref{5219+--} is equal to 
\begin{equation}\label{5123hg}
 \mathcal M_{0}(\tau)=\renewcommand\arraystretch{1.7}\mat22{a_{11}(\tau)}{a_{12}(\tau)}{\overline{a_{12}(\tau)}}{0}
\end{equation}
with
 \begin{equation}\label{5124hg}
 1-a_{11}(\tau)=\int_{\tau}^{+\io} 2\rho_{0}(\tau') d\tau', \quad 
 a_{12}(\tau)=\frac{i}{4\pi}\int_{0}^{+\io}\frac{ 1}{\cosh (t/2)} e^{-2i\pi \tau t} dt.
\end{equation}
We have 
\begin{align}
1-a_{11}(\tau)&=O(\tau^{-N}) \quad \text{for any $N$ when $\tau\rightarrow+\io$},
\label{5127tt}\\
\re (a_{12}(\tau))&=\frac{1}{8\pi^{2}\tau}+O(\tau^{-3})
\quad\text{when $\tau\rightarrow+\io$.}
\label{5128hh}\end{align}
\end{pro}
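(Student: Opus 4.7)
The plan is to unpack the entries of $\mathcal M_{0}(\tau)$ in terms of the quantities already introduced in Remark \ref{rem.jhgf55} and Lemmas \ref{lem.52ezrr}, \ref{lem.51--}, and then to read off the claimed asymptotics as corollaries of elementary facts about the Schwartz class.

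First I would identify the entries. From \eqref{5219+--} and \eqref{5219--}, we have $a_{11}(\tau)=\tfrac12+\hat T_{0}(\tau)$ and $a_{12}(\tau)=\hat S_{0}(\tau)$, and the lower-right entry is $0$. The identity for $a_{11}$ in \eqref{5124hg} is then nothing more than \eqref{5117yg}, which was derived in Remark \ref{rem.jhgf55} from the fact that $\hat T_{0}=\sign\ast\rho_{0}$ and $\int\rho_{0}=\tfrac12$. The identity for $a_{12}$ is the direct Fourier transform of $S_{0}(t)=\frac{iH(t)}{4\pi\cosh(t/2)}$ given in \eqref{5110kj}; since $S_{0}\in L^{1}(\R)$ its Fourier transform is the absolutely convergent integral exhibited in \eqref{5124hg}.

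Next I would verify the asymptotic \eqref{5127tt}. The function $t\mapsto t/(4\sinh(t/2))$ extends to an even smooth function on $\R$ whose derivatives decay like $|t|^{k}e^{-|t|/2}$ at infinity, hence it lies in $\mathscr S(\R)$. Its Fourier transform $\rho_{0}$ is therefore also in $\mathscr S(\R)$, so the tail integral $\int_{\tau}^{+\io}2\rho_{0}(\tau')d\tau'$ is $O(\tau^{-N})$ for every $N$, which is exactly the required estimate for $1-a_{11}(\tau)$.

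Finally, for \eqref{5128hh}, I would take the real part of $a_{12}(\tau)$: since the integrand $\frac{i}{4\pi\cosh(t/2)}e^{-2i\pi\tau t}$ has real part $\frac{\sin(2\pi\tau t)}{4\pi\cosh(t/2)}$, we obtain $\re a_{12}(\tau)=I(2\pi\tau)$ with $I$ as in Lemma \ref{lem.51--}. Applying that lemma gives $\re a_{12}(\tau)=\frac{1}{4\pi\cdot 2\pi\tau}+O(\tau^{-3})=\frac{1}{8\pi^{2}\tau}+O(\tau^{-3})$. The proof contains no real obstacle: the only slightly delicate point is the verification that $t/\sinh(t/2)$, smoothly extended across the removable singularity at $0$, belongs to $\mathscr S(\R)$, which is a routine check of smoothness at $0$ together with the exponential decay of all derivatives at infinity.
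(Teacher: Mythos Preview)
Your proof is correct and follows essentially the same route as the paper: the identification of the entries via Theorem~\ref{thm.calculs--}, \eqref{5117yg} and \eqref{5110kj}; the decay of $1-a_{11}$ from $\rho_{0}\in\mathscr S(\R)$; and the recognition $\re a_{12}(\tau)=I(2\pi\tau)$ to invoke Lemma~\ref{lem.51--}. You have merely unpacked what the paper states in one sentence, and your computations are accurate.
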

\begin{proof}
 Formulas \eqref{5123hg}, \eqref{5124hg} follow from  Theorem \ref{thm.calculs--},
 \eqref{5117yg} and \eqref{5110kj}.
 The estimates \eqref{5127tt} follow from the fact that $\rho_{0}$ belongs to the Schwartz class
 and  \eqref{5128hh} is a reformulation of Lemma \ref{lem.51--}.
 \end{proof}
\begin{theorem}\label{thm.54}
 Let $A_{0}$ be the operator  with Weyl symbol $H(x) H(\xi)$, where $H$ is the Heaviside function. Then $A_{0}$ is a bounded self-adjoint operator on $L^{2}(\R)$ such that
 \begin{equation}\label{5314}
\inf\bigl(\text{\rm spectrum}(A_{0})\bigr)<0<1<\sup\bigl(\text{\rm spectrum}(A_{0})\bigr).
\end{equation}
\end{theorem}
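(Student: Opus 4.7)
The plan is to combine the three previously established ingredients: the spectral description in Remark \ref{rem.57uchg}, the algebraic criterion of Lemma \ref{lem.510--}, and the asymptotic estimates of Proposition \ref{pro.510uhc}. Boundedness and self-adjointness of $A_{0}$ have already been secured by Proposition \ref{pro.hgf654}, so only the strict inclusion of the spectrum beyond $[0,1]$ remains.

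My first step will be to recall that Theorem \ref{thm.calculs--} identifies $A_{0}$, up to the unitary conjugation $\Psi$, with the Fourier multiplier whose symbol is the $2\times2$ Hermitian matrix $\mathcal M_{0}(\tau)$ of \eqref{5219+--}. By Remark \ref{rem.57uchg}, the spectrum of $A_{0}$ coincides with the closure in $\R$ of
\begin{equation*}
\bigcup_{\tau\in\R}\bigl\{\lambda_{-}(\tau),\lambda_{+}(\tau)\bigr\},
\end{equation*}
where $\lambda_{\pm}(\tau)$ are the two eigenvalues of $\mathcal M_{0}(\tau)$. Thus it suffices to exhibit a single $\tau_{\ast}\in\R$ for which $\lambda_{-}(\tau_{\ast})<0$ and $\lambda_{+}(\tau_{\ast})>1$.

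I would then invoke Lemma \ref{lem.510--}: because $\mathcal M_{0}(\tau)$ has vanishing lower-right entry and $a_{11}(\tau)$ is real, the two eigenvalues straddle the interval $[0,1]$ as soon as
\begin{equation*}
a_{12}(\tau_{\ast})\neq 0\qquad\text{and}\qquad \val{a_{12}(\tau_{\ast})}^{2}>1-a_{11}(\tau_{\ast}).
\end{equation*}
The asymptotic content of Proposition \ref{pro.510uhc} is precisely tailored for this: as $\tau\to+\infty$, the quantity $1-a_{11}(\tau)$ decays faster than any power of $\tau^{-1}$ (since $\rho_{0}\in\mathscr S(\R)$), whereas
\begin{equation*}
\val{a_{12}(\tau)}^{2}\ge \bigl(\re a_{12}(\tau)\bigr)^{2}=\frac{1}{64\pi^{4}\tau^{2}}+O(\tau^{-4}),
\end{equation*}
so $\val{a_{12}(\tau)}^{2}$ is of order $\tau^{-2}$. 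In particular, for all sufficiently large $\tau_{\ast}>0$ one has simultaneously $a_{12}(\tau_{\ast})\neq 0$ (its real part is non-zero) and $\val{a_{12}(\tau_{\ast})}^{2}>1-a_{11}(\tau_{\ast})$. Lemma \ref{lem.510--} then yields $\lambda_{-}(\tau_{\ast})<0<1<\lambda_{+}(\tau_{\ast})$, and these two values belong to the spectrum of $A_{0}$, which establishes \eqref{5314}.

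There is no real obstacle left at this stage: all the delicate analytic work (the $L^{2}$-boundedness via the Hardy-type estimates, the logarithmic-coordinate diagonalization, the explicit computation of the off-diagonal symbol through Lemma \ref{lem.51--}) has already been carried out. The only thing to be careful about is making sure the asymptotic comparison $\tau^{-2}\gg \tau^{-N}$ is used with the correct direction; since $\re a_{12}$ decays strictly slower than $1-a_{11}$, the inequality is automatic for $\tau_{\ast}$ large enough, and the argument closes. A natural follow-up remark (not needed for \eqref{5314} but worth noting) is that the same reasoning applied to $\tau\to -\infty$ would enlarge the set of witnesses; quantitative lower bounds on $\sup\spectrum(A_{0})-1$ and $-\inf\spectrum(A_{0})$ would require refining the $O(\tau^{-3})$ remainder in \eqref{5128hh}, but this is not part of the statement to be proved here.
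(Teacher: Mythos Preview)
Your proposal is correct and follows essentially the same route as the paper: the paper's proof is a single sentence invoking Remark~\ref{rem.57uchg} and Proposition~\ref{pro.510uhc} to verify that conditions~\eqref{cond00} of Lemma~\ref{lem.510--} hold for $\tau$ large enough, and you have spelled out exactly this argument in detail. The only cosmetic difference is that you make the comparison $\tau^{-2}\gg O(\tau^{-N})$ explicit, whereas the paper leaves it implicit.
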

\begin{proof}
Using Remark \ref{rem.57uchg} and Proposition \ref{pro.510uhc}
we find that for $\tau$ large enough, Conditions
\eqref{cond00} are satisfied, proving readily 
\eqref{5314}.
\end{proof}
\begin{cor}[A counterexample to Flandrin's conjecture]\label{cor.55}
There exists a function $\phi_{0}\in \mathscr S(\R)$, with $L^{2}(\R)$ norm equal to 1 such that
  \begin{equation}\label{}
\iint_{x\ge 0, \xi\ge 0} \mathcal W(\phi_{0}, \phi_{0})(x,\xi) dx d\xi >1.
\end{equation}
There exists $a>0$ such that 
$\iint_{0\le x\le a, 0\le \xi\le a} \mathcal W(\phi_{0}, \phi_{0})(x,\xi) dx d\xi >1.$ 
\end{cor}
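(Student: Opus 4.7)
The plan is to deduce the corollary directly from Theorem \ref{thm.54} by combining three soft ingredients: the spectral statement, a density argument, and a truncation argument.

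First, since $A_{0}$ is bounded self-adjoint on $L^{2}(\R)$ and $\sup(\text{\rm spectrum}(A_{0}))>1$ by Theorem \ref{thm.54}, the variational characterization of the top of the spectrum of a self-adjoint operator yields a unit vector $u\in L^{2}(\R)$ with $\poscal{A_{0}u}{u}_{L^{2}(\R)}>1$. I would then invoke density of $\mathscr S(\R)$ in $L^{2}(\R)$ together with continuity of the quadratic form $v\mapsto \poscal{A_{0}v}{v}_{L^{2}(\R)}$ (which follows from the $L^{2}$-boundedness of $A_{0}$ established in Lemma \ref{lem.51fdsq} and Proposition \ref{pro.hgf654}) to pick $\phi_{0}\in \mathscr S(\R)$ with $\norm{\phi_{0}}_{L^{2}(\R)}=1$ and $\poscal{A_{0}\phi_{0}}{\phi_{0}}_{L^{2}(\R)}>1$. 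By \eqref{flandrin0} (applied with $E=\R_{+}\times\R_{+}$, whose quantization $A_{0}$ is bounded) this gives the first inequality
\[
\iint_{x\ge 0,\,\xi\ge 0}\mathcal W(\phi_{0},\phi_{0})(x,\xi)\,dxd\xi=\poscal{A_{0}\phi_{0}}{\phi_{0}}_{L^{2}(\R)}>1.
\]

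For the second statement, I would exploit that $\phi_{0}\in \mathscr S(\R)$ forces $\mathcal W(\phi_{0},\phi_{0})\in \mathscr S(\R^{2})$, in particular $\mathcal W(\phi_{0},\phi_{0})\in L^{1}(\R^{2})$. For each $a>0$ the set $[0,a]^{2}$ has finite Lebesgue measure, so \eqref{flandrin0} again applies and
\[
\iint_{[0,a]^{2}}\mathcal W(\phi_{0},\phi_{0})(x,\xi)\,dxd\xi=\poscal{\opw{\mathbf 1_{[0,a]^{2}}}\phi_{0}}{\phi_{0}}_{L^{2}(\R)}.
\]
Since $\mathbf 1_{[0,a]^{2}}(x,\xi)\mathcal W(\phi_{0},\phi_{0})(x,\xi)\to \mathbf 1_{\R_{+}\times\R_{+}}(x,\xi)\mathcal W(\phi_{0},\phi_{0})(x,\xi)$ pointwise as $a\to+\io$, dominated by the integrable function $\val{\mathcal W(\phi_{0},\phi_{0})}$, Lebesgue's dominated convergence theorem yields
\[
\lim_{a\rightarrow+\io}\iint_{[0,a]^{2}}\mathcal W(\phi_{0},\phi_{0})(x,\xi)\,dxd\xi=\iint_{x\ge 0,\,\xi\ge 0}\mathcal W(\phi_{0},\phi_{0})(x,\xi)\,dxd\xi>1,
\]
so the strict inequality persists for $a$ large enough, which is the sought conclusion.

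The substance of the proof is already entirely contained in Theorem \ref{thm.54}; the present corollary is a soft spectral/approximation consequence. The only point requiring a modicum of care is the passage from an $L^{2}$ vector to a Schwartz vector, which could in principle fail if the form were merely densely defined or unbounded, but here the uniform $L^{2}$-boundedness of $A_{0}$ (Proposition \ref{pro.hgf654}) makes the approximation step routine. The truncation step is likewise immediate thanks to $\mathcal W(\phi_{0},\phi_{0})\in \mathscr S(\R^{2})$. Thus no serious obstacle is expected; the genuine mathematical content lies upstream, in the spectral analysis of $A_{0}$ carried out in Theorem \ref{thm.54}.
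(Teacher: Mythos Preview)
Your proposal is correct and follows essentially the same route as the paper: pass from the spectral statement of Theorem~\ref{thm.54} to a Schwartz vector by density and continuity of the bounded quadratic form $v\mapsto\poscal{A_{0}v}{v}$, then truncate using dominated convergence and $\mathcal W(\phi_{0},\phi_{0})\in\mathscr S(\R^{2})$. One small quibble: \eqref{flandrin0} is stated for sets of finite Lebesgue measure, so for $E=\R_{+}\times\R_{+}$ you should instead invoke \eqref{eza654} directly (valid since $H(x)H(\xi)\in\mathscr S'(\R^{2})$ and $\mathcal W(\phi_{0},\phi_{0})\in\mathscr S(\R^{2})$), which is what the paper does implicitly.
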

\begin{rem}\rm
On page 2178 of \cite{conjecture},
we find the sentence
 {\it ``it is conjectured that 
\begin{equation}\label{5133yu}
\forall u\in L^{2}(\R), \quad
\iint_{\mathcal C} \mathcal W(u,u)(x,\xi) dx d\xi\le \norm{u}_{L^{2}(\R)}^{2},
\end{equation}
is true for any convex domain $\mathcal C$''},
a quite mild commitment for the validity of \eqref{5133yu},
although that statement was referred to later on as {\it Flandrin's conjecture}
in the literature. 
The second part of the above corollary is providing a disproof of that conjecture based upon an ``abstract'' argument used in the proof of Theorem \ref{thm.54}; the result of that corollary was already known 
via a numerical analysis argument
after our joint work \cite{DDL} with B.~Delourme and T.~ Duyckaerts.\end{rem}
\begin{proof}
 From Theorem \ref{thm.54}, we find $u_{0}\in L^{2}(\R)$ such that 
 $$
 \norm{u_{0}}_{L^{2}(\R)}^{2}<\poscal{A_{0}u_{0}}{u_{0}}. 
 $$
 Let $\psi\in \mathscr S(\R)$: we have 
 \begin{multline*}
 \val{\poscal{A_{0}u_{0}}{u_{0}}-\poscal{A_{0}\psi}{\psi}}=\val{\poscal{A_{0}(u_{0}-\psi)}{u_{0}}+\poscal{A_{0}\psi}{u_{0}-\psi}}
 \\
 \le \norm{A_{0}}_{\mathcal B(L^{2}(\R))}\norm{u_{0}-\psi}_{L^{2}(\R)}
 \bigl(\norm{u_{0}}_{L^{2}(\R)}+
\norm{\psi}_{L^{2}(\R)}
 \bigr),
\end{multline*}
and thus if $(\psi_{k})_{k\ge 1}$ is a sequence of $\mathscr S(\R)$ converging towards $u_{0}$ in $L^{2}(\R)$, we get
\begin{multline*}
\norm{u_{0}}_{L^{2}(\R)}^{2}<\poscal{A_{0}u_{0}}{u_{0}}
\\\le \poscal{A_{0}\psi_{k}}{\psi_{k}}+
\underbrace{\norm{A_{0}}_{\mathcal B(L^{2}(\R))}\norm{u_{0}-\psi_{k}}_{L^{2}(\R)}
 \bigl(\norm{u_{0}}_{L^{2}(\R)}+
\norm{\psi_{k}}_{L^{2}(\R)}
 \bigr)}_{= \sigma_{k}, \text{ goes to 0 when $k\rightarrow+\io$.}}.
\end{multline*}
There exists $k_{0}\ge 1$ such that for $k\ge k_{0}$, we have 
$$
0\le \sigma_{k}\le \frac12\bigl(
\poscal{A_{0}u_{0}}{u_{0}}-\norm{u_{0}}_{L^{2}(\R)}^{2}
\bigr)=\frac{\varepsilon_{0}}2, \quad \varepsilon_{0}>0.
$$ 
We obtain that for $k\ge k_{0}$, 
$$
\norm{u_{0}}_{L^{2}(\R)}^{2}<\poscal{A_{0}u_{0}}{u_{0}}\le 
\poscal{A_{0}\psi_{k}}{\psi_{k}}+\frac{\varepsilon_{0}}2,
$$
and thus
\begin{multline*}
\norm{\psi_{k}}_{L^{2}(\R)}^{2}=
\underbrace{ \norm{\psi_{k}}_{L^{2}(\R)}^{2}-
\norm{u_{0}}_{L^{2}(\R)}^{2}}_{=\theta_{k}, \text{ goes to 0 when $k\rightarrow+\io$}}+\norm{u_{0}}_{L^{2}(\R)}^{2}
\\= \theta_{k}
+\poscal{A_{0}u_{0}}{u_{0}}-\varepsilon_{0}\le \theta_{k}+ 
\poscal{A_{0}\psi_{k}}{\psi_{k}}+\frac{\varepsilon_{0}}2-\varepsilon_{0}
\\=\poscal{A_{0}\psi_{k}}{\psi_{k}}+\theta_{k}-\frac{\varepsilon_{0}}2.
\end{multline*}
Choosing now $k\ge k_{0}$ and $k$ large enough to have $\theta_{k}<\varepsilon_{0}/4$,
we get
$$
\norm{\psi_{k}}_{L^{2}(\R)}^{2}\le \poscal{A_{0}\psi_{k}}{\psi_{k}}-\frac{\varepsilon_{0}}4< \poscal{A_{0}\psi_{k}}{\psi_{k}},
$$
and since for $\tilde \phi=\psi_{k}$, the Wigner distribution  $\mathcal W(\tilde \phi, \tilde \phi)$ belongs to 
$\mathscr S(\R^{2})$,
we have 
$$\norm{\tilde \phi}_{L^{2}(\R)}^{2}<
\poscal{A_{0}\tilde \phi}{\tilde \phi}=\iint H(x) H(\xi) \mathcal W(\tilde \phi, \tilde \phi)(x,\xi) dx d\xi,
$$
and noting that  this strict inequality above  implies that $\tilde \phi\not=0$, we may set $\phi_{0}=\tilde \phi/\norm{\tilde \phi}$ and get
the first statement in the corollary.
\begin{nb}\rm
 The proof above is complicated by the fact that the identity
 $$
 \poscal{a^{w} u}{u}_{L^{2}(\R^{n})}=\iint_{\RZ} a(x,\xi) \mathcal W(u,u)(x,\xi) dx d\xi,
 $$
 is valid a priori for $u\in \mathscr S(\R^{n})$ (and in that case $\mathcal W(u,u)$ belongs to
 $\mathscr S(\R^{2n})$), but could be meaningless as a Lebesgue integral even for $\opw{a}$ bounded on $L^{2}(\R^{n})$ and $u\in L^{2}(\R^{n})$, since we shall have $\mathcal W(u,u)\in L^{2}(\RZ)$ but not in $L^{1}(\RZ)$
 (we shall see in Section \ref{sec.baire} that generically the Wigner distribution of a pulse $u$  in $L^{2}(\R^{n})$ does {\bf not} belong to $L^{1}(\RZ)$).
\end{nb}
Since $\mathcal W(\phi, \phi)$ belongs to the Schwartz space of $\R^{2}$, the Lebesgue Dominated Convergence Theorem  provides the last statement in the Corollary.
\end{proof}
\begin{nb}\rm
 The reader will  notice that the results
 of the incoming Section \ref{sec.52} in the special case $\sigma=0$
 imply the results of Section \ref{sec.51}, which could be then erased, say at the second reading.
 However, as far as the first -- and maybe only -- reading is concerned, we checked that most of the computational arguments in the next section are much more involved 
 and it seemed worth while to the author to avoid unnecessary complications
 for the disproof of Flandrin's conjecture 
 via the quarter-plane example and set apart the more involved examples of the hyperbolic regions tackled in Section \ref{sec.52}.\end{nb}
\subsection{Hyperbolic regions}\label{sec.52}
\index{hyperbolic regions}
We consider in this section  the \eqref{5014}
set $\mathcal C_{\sigma}$ with  a non-negative $\sigma$. 
\subsubsection{A preliminary observation}
We want  to consider the operator $A_{\sigma}$
with Weyl symbol $H(x) H(x\xi-\sigma)$
and as in Section \ref{sec.prelim}, we would like to secure the fact that $A_{\sigma}$ is bounded on $L^{2}(\R)$. 
\begin{claim}\label{cla.516aze}
 For all $\sigma\ge 0$ the operator $A_{\sigma}$ is bounded self-adjoint on $L^{2}(\R)$.
\end{claim}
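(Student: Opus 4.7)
The plan is to adapt the approach of Lemma \ref{lem.51fdsq} from $\sigma=0$ to general $\sigma \ge 0$. Since the Weyl symbol $H(x)H(x\xi-\sigma)$ is real-valued, $A_\sigma$ is formally self-adjoint on $\mathscr S(\R)$, so I only need to establish $L^2(\R)$-boundedness. First I would compute the distribution kernel via Proposition \ref{pro1716}: the partial Fourier transform in $\xi$ of $H\bigl(((x+y)/2)\xi-\sigma\bigr)$ at the point $\eta=y-x$ equals $e^{-4i\pi\sigma(y-x)/(x+y)}\hat H(y-x)$ on $\{x+y>0\}$, which yields $k_\sigma\equiv 0$ on $\{x+y<0\}$ and
\begin{equation*}
k_\sigma(x,y) = \tfrac12 H(x+y)\delta_0(y-x) + \frac{H(x+y)\,e^{4i\pi\sigma(x-y)/(x+y)}}{2i\pi(y-x)}\quad\text{on }\{x+y>0\},
\end{equation*}
the only new feature compared to the case $\sigma=0$ being the oscillating phase. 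The diagonal delta-piece will give the bounded multiplication operator $\tfrac12 H(x)\,\Id$. To avoid formal multiplication of singular distributions in the principal-value piece, I would first regularize by $A_{\sigma,\lambda,R} = \opw{H(x)H(x\xi-\sigma)\mathbf 1_{[0,\lambda]}(\xi)\mathbf 1_{[0,R]}(x)}$, whose symbol lies in $L^1(\R^2)$ (its support is bounded) and hence produces a bounded operator by Proposition \ref{pro1717}; the goal is then to derive bounds uniform in $(\lambda,R)$ and pass to the limit on $\mathscr S(\R)$-pairings via dominated convergence, using $\mathcal W(\phi,\psi)\in\mathscr S(\R^2)$, exactly as at the end of the proof of Lemma \ref{lem.51fdsq}.

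To bound the principal-value piece I would split it according to the signs of $x$ and $y$ and transport each piece via the isometry $\Psi$ of \eqref{5113oi}. On the quadrant $\{x>0,y>0\}$, the substitution $x=e^s, y=e^t$ together with the identities $e^t-e^s=2e^{(s+t)/2}\sinh((t-s)/2)$ and $(e^s-e^t)/(e^s+e^t)=-\tanh((t-s)/2)$ would turn the kernel into the translation-invariant
\begin{equation*}
\mathcal K_\sigma(r) = \frac{e^{-4i\pi\sigma\tanh(r/2)}}{4i\pi\sinh(r/2)}, \qquad r = t-s,
\end{equation*}
so the quadrant piece reduces to a convolution on $L^2(\R)$. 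Picking a smooth compactly supported cutoff $\chi$ equal to $1$ near $0$, the expansions $\tanh(r/2)=r/2+O(r^3)$ and $\sinh(r/2)=r/2+O(r^3)$ would provide the decomposition $\mathcal K_\sigma(r) = \chi(r)e^{-2i\pi\sigma r}\tfrac{1}{2i\pi}\mathrm{pv}(1/r) + R_\sigma(r)$, where $R_\sigma$ is bounded at $0$ (the $1/r$ singularity cancels) and decays exponentially at infinity (since $\sinh(r/2)$ does), hence $R_\sigma \in L^1(\R)$; the first piece is a compactly supported tempered distribution whose Fourier transform is $\hat\chi$ convolved with the bounded function $-\tfrac12\sign(\cdot+\sigma)$, so it defines a bounded Fourier multiplier. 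Both pieces would therefore give bounded convolution operators on $L^2(\R)$.

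On the mixed-sign region $\{x>0,y<0,x+y>0\}$ I would set $x=e^s, y=-e^t$ (so $H(x+y)=H(s-t)$); the identities $e^s+e^t=2e^{(s+t)/2}\cosh((s-t)/2)$ and $(e^s+e^t)/(e^s-e^t)=\coth((s-t)/2)$ would convert the kernel into the translation-invariant
\begin{equation*}
-\frac{H(s-t)\,e^{4i\pi\sigma\coth((s-t)/2)}}{4i\pi\cosh((s-t)/2)},
\end{equation*}
whose modulus is dominated by the integrable function $\tfrac{1}{4\pi\cosh(r/2)}$; Young's inequality will then deliver $L^2$-boundedness directly, and the symmetric region $\{x<0,y>0,x+y>0\}$ is treated identically. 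Assembling these estimates should give the uniform bound $|\poscal{A_{\sigma,\lambda,R}u}{v}_{L^2(\R)}| \le C\|u\|_{L^2(\R)}\|v\|_{L^2(\R)}$, after which $\lambda,R\to+\infty$ completes the proof. The main obstacle will be the wild oscillation of $e^{4i\pi\sigma\coth(r/2)}$ near $r=0$ in the mixed-sign pieces (since $\coth(r/2)\to\pm\infty$); fortunately this phase has modulus $1$ and the prefactor $1/\cosh$ is already integrable, so no refined oscillatory-integral estimate is required, and the one genuinely delicate computational step will be the exact cancellation of the $1/r$ singularity in the quadrant piece by subtracting the correct translate $e^{-2i\pi\sigma r}\mathrm{pv}(1/r)$.
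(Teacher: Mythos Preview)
Your approach is correct and takes a genuinely different route from the paper. The paper does not analyze $A_\sigma$ directly; instead it bounds the \emph{difference} $A_0-A_\sigma=\opw{H(x)H(\xi)H(\sigma-x\xi)}$, whose kernel (after a smooth cutoff $\chi_0(x/\epsilon)$ in the $x$-variable) is
\[
\chi_0\Bigl(\frac{x+y}{2\epsilon}\Bigr)\,e^{2i\pi\sigma\frac{x-y}{x+y}}\,\frac{\sin\bigl(2\pi\sigma(x-y)/(x+y)\bigr)}{\pi(x-y)}.
\]
On the diagonal quadrant $\{x,y>0\}$ the $\sinc$ factor yields the pointwise bound $2\sigma/(x+y)$, and on the mixed-sign quadrants the bound $1/(\pi|x-y|)$; both feed directly into the Hardy estimates of Proposition~\ref{pro.hardy}, giving boundedness of $A_0-A_\sigma$ and hence of $A_\sigma$ (since $A_0$ is already handled by Lemma~\ref{lem.51fdsq}). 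Your route, by contrast, is exactly the computation the paper performs \emph{after} boundedness is secured (Remark~\ref{rem.jhgf55} and Lemma~\ref{lem.52ezrr}, leading to Theorem~\ref{thm.calculs}): you transport the quadrant pieces through $\Psi$ and recognize them as the convolution operators with symbols $T_\sigma$ and $S_\sigma$. This is more informative---it already sets up the diagonalization---while the paper's argument is shorter but leans on the $\sigma=0$ case.

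One technical comment: your proposed regularization $A_{\sigma,\lambda,R}$ is not well-suited to the convolution analysis, because the truncation $\mathbf 1_{[0,\lambda]}(\xi)$ introduces an extra term $e^{2i\pi(x-y)\lambda}/(2i\pi(x-y))$ in the kernel which, in logarithmic coordinates, is \emph{not} translation-invariant (it depends on $e^s-e^t$, not on $t-s$), so your multiplier bound for $\mathcal K_\sigma$ does not apply to the regularized diagonal piece. The clean fix---which the paper itself uses in Lemma~\ref{lem.52ezrr}---is to drop the regularization entirely and test against $\phi,\psi\in C_c^\infty(\R\setminus\{0\})$: on such test functions the quadrant splitting of $k_\sigma$ is legitimate (the phase $e^{4i\pi\sigma(x-y)/(x+y)}$ is smooth on each quadrant away from the origin, and the pv pairing is well-defined), your convolution bounds go through verbatim, and density of $C_c^\infty(\R\setminus\{0\})$ in $L^2(\R)$ finishes the job.
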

\begin{proof}[Proof of the Claim]
 Let us choose 
 \begin{equation}\label{521cut}
\text{$\chi_{0}\in\moo(\R;[0,1])$ with }
\begin{cases} 
\text{$\chi_{0}(t)=0$, for $t\le 1$,}
\\
\text{$\chi_{0}(t)=1$,  for $t\ge 2$.}
\end{cases}
\end{equation}
For $\phi, \psi\in \mathscr S(\R)$, we have
\begin{multline}\label{521aze}
\poscal{(A_{0}-A_{\sigma})\phi}{\psi}_{\mathscr S^{*}(\R), \mathscr S(\R)}=\iint H(x) H(\xi)H(\sigma-x\xi)\underbrace{\mathcal W(\phi, \psi)(x,\xi)}_{\in \mathscr S(\R^{2})} dx d\xi
\\
=\lim_{\epsilon\rightarrow 0_{+}}\iint\chi_{0}(x/\epsilon)
H(\xi)H(\sigma-x\xi)\mathcal W(\phi, \psi)(x,\xi) dx d\xi.
\end{multline}
The kernel $k_{\sigma, \epsilon}$ of the operator with Weyl symbol $\chi_{0}(x/\epsilon)
H(\xi)H(\sigma-x\xi)$
is
\begin{equation}\label{}
\ell_{\sigma, \epsilon}(x,y)=
\chi_{0}\bigl(\frac{x+y}{2\epsilon}\bigr)e^{2i\pi\sigma\frac{x-y}{x+y}}
\frac{\sin(\frac{2\pi \sigma(x-y)}{x+y})}{\pi(x-y)},
\end{equation}
and we have 
\begin{align}
\iint \ell_{\sigma,\epsilon}(x,y)&\phi(y) \bar \psi(x) dy dx
=\iint\chi_{0}\bigl(\frac{x+y}{2\epsilon}\bigr)e^{2i\pi\sigma\frac{x-y}{x+y}}
\frac{\sin(\frac{2\pi \sigma(x-y)}{x+y})}{\pi(x-y)}\phi(y) \bar \psi(x) dxdy
\notag\\
&=\iint\chi_{0}\bigl(\frac{x+y}{2}\bigr)e^{2i\pi\sigma\frac{x-y}{x+y}}
\frac{\sin(\frac{2\pi \sigma(x-y)}{x+y})}{\pi\epsilon(x-y)}\phi(\epsilon y) \bar \psi(\epsilon x) \epsilon^{2}dxdy
\notag\\&=
\iint
\underbrace{\chi_{0}\bigl(\frac{x+y}{2}\bigr)e^{2i\pi\sigma\frac{x-y}{x+y}}
\frac{\sin(\frac{2\pi \sigma(x-y)}{x+y})}{\pi(x-y)}}_{m_{\sigma}(x,y)}
\underbrace{\phi(\epsilon y)\epsilon^{1/2} }_{\phi_{\epsilon}(y)}
\underbrace{\bar \psi(\epsilon x) \epsilon^{1/2}}_{\bar \psi_{\epsilon}(x)}dy dx.
\label{523aze}
\end{align}
We note that, assuming as we may that $\sigma>0$, 
\begin{multline}\label{524aze}
\val{m_{\sigma}(x,y) H(x) H(y)}=\chi_{0}\bigl(\frac{x+y}{2}\bigr)
\Val{
\frac{\sin(\frac{2\pi \sigma(x-y)}{x+y})}{\frac{2\pi\sigma(x-y)}{x+y}}}
\frac{2\sigma H(x) H(y)}{x+y}
\\\le \frac{2\sigma H(x) H(y)}{x+y},
\end{multline}
and 
\begin{multline}\label{525aze}
\val{m_{\sigma}(x,y) \check H(x) H(y)}=\chi_{0}\bigl(\frac{x+y}{2}\bigr)
\Val{
\frac{\sin(\frac{2\pi \sigma(x-y)}{x+y})}{{\pi (x-y)}}}
{ \check H(x) H(y)}
\\\le \frac{ \check H(x) H(y)}{\pi(y-x)},
\end{multline}
as well as
\begin{multline}\label{526aze}
\val{m_{\sigma}(x,y) \check H(y) H(x)}=\chi_{0}\bigl(\frac{x+y}{2}\bigr)
\Val{
\frac{\sin(\frac{2\pi \sigma(x-y)}{x+y})}{{\pi (x-y)}}}
{ \check H(y) H(x)}
\\\le \frac{ \check H(y) H(x)}{\pi(x-y)}.
\end{multline}
As a consequence, since we have also
$
m_{\sigma}(x,y) \check H(x) \check H(y)\equiv 0,
$
the inequalities \eqref{524aze},  \eqref{525aze}, \eqref{526aze}, 
the identities 
\eqref{523aze}, \eqref{521aze} and 
Proposition \ref{pro.hardy}
imply that 
\begin{multline*}
\val{\poscal{(A_{0}-A_{\sigma})\phi}{\psi}_{\mathscr S^{*}(\R), \mathscr S(\R)}}
\le 2\pi \sigma \underbrace{\norm{H\phi_{\epsilon}}_{L^{2}(\R)}}_{\norm{H\phi_{}}_{L^{2}(\R)}}
\norm{H\psi_{\epsilon}}_{L^{2}(\R)}
+
 \underbrace{\norm{\check H\phi_{\epsilon}}_{L^{2}(\R)}}_{\norm{\check H\phi}_{L^{2}(\R)}}
\norm{H\psi_{\epsilon}}_{L^{2}(\R)}
\\+
 \norm{H\phi_{\epsilon}}_{L^{2}(\R)}
\norm{\check H\psi_{\epsilon}}_{L^{2}(\R)},
\end{multline*}
proving that 
$A_{0}-A_{\sigma}$ is bounded on $L^{2}(\R)$; with Proposition \ref{pro.hgf654}, this implies that 
$A_{\sigma}$ is also  bounded on $L^{2}(\R)$, proving the claim.
\end{proof}
\begin{nb}\rm
 With that important piece of information in Claim \ref{cla.516aze}, we shall be less strict in our manipulations of the kernels and accept below  some abuse of language in these matters. 
\end{nb}
The Weyl quantization of $\mathbf 1_{\mathcal C_{\sigma}}$ has the kernel
\begin{equation}\label{521+}
k_{\sigma}(x,y)=H(x+y) e^{4i\pi\sigma(\frac{x-y}{x+y})}\frac12\Bigl(\delta_{0}(y-x)+\frac{1}{i\pi}\text{pv}\frac1{y-x}\Bigr),
\end{equation}
a formula to be compared to \eqref{513iop}.
Using the Schwartz function $\phi_{0}$ of Corollary \ref{cor.55}, we get from the Lebesgue Dominated Convergence Theorem that  for $\sigma$ small enough,
\begin{equation}\label{}
\poscal{
\opw{\mathbf 1_{\mathcal C_{\sigma}}}
\phi_{0}}{\phi_{0}}_{L^{2}(\R)}=
\iint_{x\xi\ge \sigma, x>0} \mathcal W(\phi_{0}, \phi_{0})(x,\xi) dx d\xi >1.
\end{equation}
However, this argument does not work for large positive $\sigma$ and we must go back to a direct calculation.
 \subsubsection{Diagonal terms}
Denoting by $A_{\sigma}$ the operator with kernel \eqref{521+}
(and Weyl symbol $H(x\xi-\sigma) H(x)$),
we find that  for 
$u\in \mathscr S(\R)$, $u_{+}=Hu$,
we have 
\begin{align*}
\poscal{A_{\sigma}Hu}{Hu}_{L^{2}(\R)}
&=\iint
e^{4i\pi\sigma(\frac{x-y}{x+y})}\frac12\Bigl(\delta_{0}(y-x)+\frac{1}{i\pi}\text{pv}\frac1{y-x}\Bigr)
u_{+}(y)\bar u_{+}(x) dy dx
\\
&=\frac12\norm{u_{+}}^{2}_{L^{2}(\R_{+})}
+\iint_{\R^{2}}
e^{4i\pi\sigma(\frac{e^{s}-e^{t}}{e^{s}+e^{t}})}\frac{1}{2i\pi}\text{pv}\frac1{e^{t}-e^{s}}
u_{+}(e^{t})\bar u_{+}(e^{s}) e^{s+t}ds dt
\\&=
\frac12\norm{u_{+}}^{2}_{L^{2}(\R_{+})}
+\iint_{\R^{2}}
e^{4i\pi\sigma
\tanh(\frac{s-t}2)
}\frac{1}{2i\pi}\text{pv}\frac{e^{(s+t)/2}}{e^{t}-e^{s}}
\phi_{1}(t)\bar \phi_{1}(s)ds dt,
\end{align*}
with \begin{equation}\label{}
\phi_{1}(t)=u_{+}(e^{t}) e^{t/2},
\qquad\text{so that 
$
\norm{\phi_{1}}_{L^{2}(\R)}=\norm{u_{+}}_{L^{2}(\R_{+})}.
$}
\end{equation}
We get 
$$
\poscal{A_{\sigma}Hu}{Hu}_{L^{2}(\R)}
=
\frac12\norm{u}^{2}_{L^{2}(\R_{+})}
+\frac{1}{4i\pi}\iint_{\R^{2}}
\frac{e^{4i\pi\sigma
\tanh(\frac{s-t}2)
}}{\sinh(\frac{t-s}2)}
\phi(t)\bar \phi(s)ds dt,
$$
and noting that 
$
\sinh x=x C(x),
$
with $C$ even such that $1/C\in \mathscr S(\R)$,
we find 
\begin{align}
\poscal{A_{\sigma}Hu}{Hu}_{L^{2}(\R)}
&=
\frac12\norm{\phi_{1}}^{2}_{L^{2}(\R)}
-\frac{1}{2i\pi}\iint_{\R^{2}}
\frac{e^{4i\pi\sigma
\tanh(\frac{s-t}2)
}}{(s-t)C(\frac{s-t}2)}
\phi(t)\bar \phi(s)ds dt
\notag\\
&=\frac12\norm{\phi_{1}}^{2}_{L^{2}(\R)}
+\poscal{T_{\sigma}\ast \phi_{1}}{\phi_{1}}_{L^{2}(\R)}
\notag\\
&=\int_{\R} \val{\hat \phi_{1}(\tau)}^{2}\bigl(\frac12+\hat T_{\sigma}(\tau)\bigr) d\tau,
\label{524}
\end{align}
with 
\begin{equation}\label{525+}
T_{\sigma}(t)=
\frac{1}{4}\frac{t e^{4i\pi\sigma
\tanh(\frac{t}2)
}}{\sinh(t/2)}\text{pv}{\frac{i}{\pi t}}.
\end{equation}
We note that 
\begin{align}
&\hat T_{\sigma}(\tau)=\sign\ast \rho_{\sigma},\quad\text{with}
\label{526-+}
\\&\rho_{\sigma}(\tau)=\frac{1}{4}
\int\frac{t e^{4i\pi\sigma
\tanh(\frac{t}2)
}}{\sinh(t/2)} e^{-2i\pi t \tau} dt,\quad\rho_{\sigma}\in \mathscr S(\R),
\label{526-}
\end{align}
since the function $\R\ni t\mapsto \frac{te^{4i\pi\sigma
\tanh(\frac{t}2)
}}{\sinh(t/2)} $ belongs to the Schwartz space\footnote{Indeed, the iterated derivatives of $\tanh$ are  polynomials of $\tanh$ (check this by induction on the order of derivatives) and thus bounded on the real line; since the function $t\mapsto t/\sinh(t/2)$ belongs to the Schwartz space, this proves that the above product is in $\mathscr S(\R)$.}. 
Note also that the function $\rho_{\sigma}$ is real-valued on the real line.
This entails that 
\begin{equation}\label{526,5}
\frac{d}{d\tau}\Bigl\{\frac12+\hat T_{\sigma}\Bigr\}=2\rho_{\sigma},
\end{equation}
and since 
$$
\rho_{\sigma}(\tau)=\frac14
\mathcal F\Bigl\{t\mapsto \frac{t
e^{4i\pi\sigma \tanh(t/2)}
}{\sinh({t}/2)} \Bigr\},
\quad\text{implying }\quad \int_{\R} \rho_{\sigma}(\tau) d\tau=\frac12,
$$
we get that 
\begin{equation}\label{526}
\lim_{\tau\rightarrow\pm\io}\hat T_{\sigma}(\tau)=\pm \frac12.
\end{equation}
This yields that 
\begin{equation}\label{529}
\frac12+\hat T_{\sigma}(\tau)-1=\int_{+\io}^{\tau} 2\rho_{\sigma}(\tau') d\tau'=-1+
\int_{-\io}^{\tau} 2\rho_{\sigma}(\tau') d\tau',
\end{equation}
where the last equality follows from \eqref{526}: indeed we have for $\tau>0$, from \eqref{526,5},
\begin{equation}\label{6543}
\frac12+\hat T_{\sigma}(\tau)-1=\int_{+\io}^{\tau} 2\rho_{\sigma}(\tau') d\tau'=-1+
\int_{-\io}^{\tau} 2\rho_{\sigma}(\tau') d\tau',
\end{equation}
and for $\tau<0$,
$$
\frac12+\hat T_{\sigma}(\tau)=\int_{-\io}^{\tau}2 \rho_{\sigma}(\tau') d\tau'
=
1+
\int_{+\io}^{\tau}2 \rho_{\sigma}(\tau') d\tau'.
$$
We note that
\begin{equation}\label{5210}
\forall N\in \N, \quad
\sup_{\tau\in \R}\val \tau^{N}\Val{\frac12+\hat T_{\sigma}(\tau)-H(\tau)}<+\io.
\end{equation}
Indeed  for $\tau>0$, we have, using $\rho_{\sigma}\in \mathscr S(\R)$, 
$$
\Val{\tau^{N}\int_{+\io}^{\tau} \rho_{\sigma}(\tau') d\tau'}\le \int_{\tau}^{+\io}\val{\rho_{\sigma}(\tau')} {\tau'}^{N} d\tau'\le 
\int_{0}^{+\io}\val{\rho_{\sigma}(\tau')} {\tau'}^{N} d\tau<+\io.
$$
Also,  for $\tau<0$,
we have 
$$
\Val{\tau^{N}\int_{-\io}^{\tau} \rho_{\sigma}(\tau') d\tau'}\le \int_{-\io}^{\tau}\val{\rho_{\sigma}(\tau')} \val{\tau'}^{N} d\tau'\le 
\int_{-\io}^{0}\val{\rho_{\sigma}(\tau')} \val{\tau'}^{N} d\tau<+\io.
$$
This means that the Fourier multiplier $\frac12+\hat T_{\sigma}(\tau)$ is somehow ``exponentially close'' to $H(\tau)$ for large  values of $\val\tau$ and in particular close to 1 for large positive values of $\tau$.
We have also
\begin{multline}\label{5211}
\hat T_{\sigma}(\tau)=\frac i{4\pi}\int_{\R}e^{-2i\pi \tau t}
\frac{e^{4i\pi\sigma
\tanh(\frac{t}2)
}}{\sinh(t/2)}
dt
\\=
\frac 1{2\pi}\int_{0}^{+\io}
\frac{
\sin(2\pi t\tau-4\pi \sigma\tanh(t/2))
}{\sinh(t/2)}
dt.
\end{multline}
The next lemma  provides more precise estimates than \eqref{5210}.
\begin{lem}\label{lem.57} Let $\tau>0, \sigma\ge 0$.
 Defining $a_{11}(\tau, \sigma)=\frac12+\hat T_{\sigma}(\tau)$ as given by \eqref{525+}, we have 
 \begin{equation}\label{5214++}
\val{1-a_{11}(\tau, \sigma)}\le 
2e^{-{\pi^2\tau}} e^{4\pi \sigma}.
\end{equation}
\end{lem}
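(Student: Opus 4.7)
The plan is to derive an exponentially small pointwise estimate on $\rho_\sigma(\tau)$ for large $\tau>0$ by shifting the contour of the Fourier integral \eqref{526-} into the lower half‑plane, and then to integrate this estimate using \eqref{529}. Indeed, \eqref{529} combined with the limit $\lim_{\tau\to+\infty}\hat T_\sigma(\tau)=\tfrac12$ from \eqref{526} gives, for $\tau>0$, the identity $1-a_{11}(\tau,\sigma)=2\int_\tau^{+\infty}\rho_\sigma(\tau')\,d\tau'$, reducing matters to an estimate on $\rho_\sigma$ itself. The key observation is that the function
$$
g_\sigma(z):=\frac{z\,e^{4i\pi\sigma\tanh(z/2)}}{4\sinh(z/2)},
$$
whose Fourier transform is $\rho_\sigma$ (by \eqref{526-}), is holomorphic in the horizontal strip $\Omega_\pi=\{z\in\C:|\Im z|<\pi\}$: the factor $z/\sinh(z/2)$ is entire away from the zeros $z=2i\pi k$, $k\in\Z^*$, of $\sinh(z/2)$, and $\tanh(z/2)$ has its nearest poles at $z=\pm i\pi$, sitting precisely on $\partial\Omega_\pi$.

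First I would use the bound $|1/\sinh(z/2)|\lesssim e^{-|\Re z|/2}$, valid uniformly on the slab $\{|\Im z|\le \pi/2\}$, to justify shifting the contour of integration from $\R$ down to $\R-i\pi/2$ by Cauchy's theorem, yielding
$$
\rho_\sigma(\tau)=e^{-\pi^2\tau}\int_\R g_\sigma(x-i\pi/2)\,e^{-2i\pi x\tau}\,dx.
$$
Then, the elementary identities (from $\tanh=\sinh/\cosh$ and the addition formulas)
$$
\tanh\!\Bigl(\frac{u+iv}{2}\Bigr)=\frac{\sinh u+i\sin v}{\cosh u+\cos v},\qquad \Bigl|\sinh\!\Bigl(\frac{x-i\pi/2}{2}\Bigr)\Bigr|^{2}=\frac{\cosh x}{2},
$$
give in particular $\Im\tanh\!\bigl((x-i\pi/2)/2\bigr)=-1/\cosh x$, and therefore
$$
\bigl|e^{4i\pi\sigma\tanh((x-i\pi/2)/2)}\bigr|=e^{4\pi\sigma/\cosh x}\le e^{4\pi\sigma}.
$$
Assembling the two estimates yields
$$
|\rho_\sigma(\tau)|\le e^{-\pi^2\tau}e^{4\pi\sigma}\,C_0, \qquad C_0:=\int_\R\frac{\sqrt{x^2+\pi^2/4}}{4\sqrt{\cosh(x)/2}}\,dx,
$$
and the crude majorations $\cosh x\ge e^{|x|}/2$ together with $\sqrt{x^2+\pi^2/4}\le |x|+\pi/2$ give $C_0\le 4+\pi$. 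Integrating $|\rho_\sigma(\tau')|$ over $\tau'\in[\tau,+\infty)$ finally produces
$$
|1-a_{11}(\tau,\sigma)|\le \frac{2 C_0}{\pi^2}\,e^{-\pi^2\tau}e^{4\pi\sigma}\le 2\,e^{-\pi^2\tau}e^{4\pi\sigma},
$$
since $2(4+\pi)/\pi^2\approx 1.45<2$, which is the claimed bound \eqref{5214++}.

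\textbf{Main obstacle.} The substantive step is the contour deformation: one must close off a large rectangle $[-R,R]\times[-\pi/2,0]$ and verify that the contributions from the two vertical sides at $\Re z=\pm R$ tend to $0$ as $R\to+\infty$. This is where $\sigma$ enters non‑trivially, since $|e^{4i\pi\sigma\tanh(z/2)}|$ must be controlled uniformly on the slab; but the explicit formula for $\Im\tanh((u+iv)/2)$ above shows that this imaginary part stays bounded (by $1$) on $|\Im z|\le\pi/2$, so the exponential decay of $1/\sinh(z/2)$ dominates and the vertical contributions vanish. Beyond this geometric/holomorphic point, every remaining step is an elementary computation; tracking constants carefully is required only to confirm that the universal prefactor $2C_0/\pi^2$ is genuinely $\le 2$.
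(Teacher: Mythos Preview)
Your proof is correct and is essentially the same as the paper's: the paper also writes $1-a_{11}(\tau,\sigma)=2\int_\tau^{+\infty}\rho_\sigma(\tau')\,d\tau'$ and then bounds $\rho_\sigma$ pointwise (its Lemma~\ref{lem.912}) by shifting the Fourier integral~\eqref{526-} off the real line, obtaining the same exponential factor $e^{-\pi^2\tau}e^{4\pi\sigma}$. The only cosmetic differences are that the paper works in the substituted variable $z=t/2$ (so the strip becomes $|\Im z|<\pi/2$ and the shift is by $i\pi/4$) and, via the reality of $\rho_\sigma$, moves the contour upward rather than downward; its constant tracking yields $12/\pi^2$ in place of your $2(4+\pi)/\pi^2$, both comfortably below~$2$.
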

\begin{proof}
Using \eqref{6543} and Lemma \ref{lem.912}, we find that for $\tau>0$,
\begin{multline*}
\val{1-a_{11}(\tau, \sigma)}\le 2\int_{\tau}^{+\io}
\val{\rho_{\sigma}(\tau')} d\tau'
\le 2\int_{\tau}^{+\io}
\val{\rho_{\sigma}(\tau')} d\tau'
\\\le
12e^{4\pi \sigma}\int_{\tau}^{+\io}e^{-{\pi^2\tau'}} d\tau'
=e^{4\pi \sigma}\frac{12}{\pi^{2}}e^{-{\pi^2\tau}}, 
\end{multline*}
 entailing the sought result.
 \end{proof}
 \subsubsection{Off-diagonal terms}
We want now to check the off-diagonal terms:
we have with $u\in \mathscr S(\R)$, 
\begin{align}
&u_{+}=Hu,\ u_{-}=\check H u,
\\
&\phi_{1}(t)= u_{+}(e^{t}) e^{t/2}, \quad \phi_{2}(t)= u_{-}(-e^{t}) e^{t/2}
\end{align}
\begin{align}
&\poscal{A_{\sigma}\check Hu}{Hu}_{L^{2}(\R)}
\notag\\
&=\iint
e^{4i\pi\sigma(\frac{x-y}{x+y})}
\frac{H(x+y)\check H(y) H(x)}{2i\pi}\text{pv}\frac1{y-x}
u_{-}(y)\bar u_{+}(x) dy dx
\notag\\
&=\iint
e^{4i\pi\sigma(\frac{e^{s}+e^{t}}{e^{s}-e^{t}})}
\frac{H(e^{s}-e^{t})}{2i\pi}\text{pv}\frac1{-e^{t}-e^{s}}
\phi_{2}(t)\bar \phi_{1}(s) e^{\frac{t+s}2}dt ds
\notag\\
&=\iint
e^{4i\pi\sigma\text{coth}(\frac{s-t}2)}
\ \frac{iH(s-t)}{4\pi}\frac1{\cosh(\frac{t-s}2)}
\phi_{2}(t)\bar \phi_{1}(s) dt  ds
\notag\\
&=\frac{i}{4\pi}\iint
e^{4i\pi\sigma\text{coth}(\frac{s-t}2)}
\ 
{H(s-t)}\frac1{\cosh(\frac{s-t}2)}
\phi_{2}(t)\bar \phi_{1}(s) dt  ds
\notag\\
&=\poscal{S_{\sigma}\ast \phi_{2}}{\phi_{1}}_{L^{2}(\R)},
\label{524++}
\end{align}
with 
\begin{equation}\label{5213}
S_{\sigma}(t)=\frac{i}{4\pi}H(t)
\frac{e^{4i\pi\sigma\text{coth}(\frac{t}2)}}{\cosh(\frac{t}2)},
\end{equation}
and 
\begin{multline}\label{5215++}
\hat S_{\sigma}(\tau)=
\frac{i}{4\pi}
\int H(t)
\frac{e^{4i\pi\sigma\text{coth}(\frac{t}2)}}{\cosh(\frac{t}2)} e^{-2i\pi t\tau} dt 
\\
=
\frac{i}{4\pi}
\int_{0}^{+\io} 
\frac{\cos(4\pi \sigma \coth(t/2)-2\pi t\tau)}{\cosh(\frac{t}2)}  dt 
-
\frac{1}{4\pi}
\int_{0}^{+\io} 
\frac{\sin(4\pi \sigma \coth(t/2)-2\pi t\tau)}{\cosh(\frac{t}2)}  dt 
\\
=
\frac{i}{4\pi}
\int_{0}^{+\io} 
\frac{\cos(2\pi t\tau-4\pi \sigma \coth(t/2))}{\cosh(\frac{t}2)}  dt 
+
\frac{1}{4\pi}
\int_{0}^{+\io} 
\frac{\sin(2\pi t\tau-4\pi \sigma \coth(t/2))}{\cosh({t}/2)}  dt. 
\end{multline}
Note that from \eqref{525+}, \eqref{526-}, we have 
\begin{align*}
\hat T_{\sigma}(\tau)=
\frac{i}{4\pi}\int\frac{e^{4i\pi\sigma
\tanh(\frac{t}2)
}}{\sinh(t/2)}e^{-2i\pi t\tau} dt
=
\frac{1}{2\pi}\int_{0}^{+\io}\frac{
\sin(2\pi t\tau-4\pi \sigma\tanh(t/2))}{\sinh(t/2)}dt.
\end{align*}
\subsubsection{An isometric isomorphism}
\begin{theorem}\label{thm.calculs}
 Let $\sigma\ge 0$ be given, let $\mathcal C_{\sigma}$ be the set defined by \eqref{5014} and let $A_{\sigma}$ be the operator with Weyl symbol $\mathbf 1_{\mathcal C_{\sigma}}$,
 (whose kernel is given by \eqref{521+}). 
 The operator $A_{\sigma}$ is bounded self-adjoint  on $L^{2}(\R)$ so that we may define,
 with $\Psi$ defined in \eqref{5113oi},
 \begin{equation}\label{}
\wt A_{\sigma}=\Psi A_{\sigma}\Psi^{-1}.
\end{equation}
The operator $\wt A_{\sigma}$ is the Fourier multiplier on $L^{2}(\R;\C^2)$ given by the matrix
\begin{equation}\label{5219+}
\renewcommand\arraystretch{1.7}
\mathcal M_{\sigma}(\tau)=
\begin{pmatrix}
\frac12+\hat T_{\sigma}(\tau)&\hat S_{\sigma}(\tau)\\
\overline{\hat S_{\sigma}(\tau)}&0
\end{pmatrix},
\end{equation}
where $T_{\sigma}, S_{\sigma}$ are defined respectively in \eqref{525+},
\eqref{5211},
 \eqref{5213}.
 In particular we have
with $\Phi=(\phi_{1}, \phi_{2})\in L^{2}(\R;\C^{2})$,
\begin{equation}\label{5219}
\poscal{\wt A_{\sigma}\Phi}{\Phi}_{L^{2}(\R;\C^{2})}
=\int_{\R}
e^{2i\pi t\tau}
\poscal{\mathcal M_{\sigma}(\tau)\hat \Phi(\tau)}{\hat \Phi(\tau)}_{\C^{2}} d\tau.
\end{equation}
\end{theorem}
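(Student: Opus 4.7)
The boundedness and self-adjointness of $A_{\sigma}$ on $L^{2}(\R)$ are already secured by Claim \ref{cla.516aze} (combined with the fact that the Weyl symbol $\mathbf{1}_{\mathcal{C}_{\sigma}}$ is real-valued, so $A_{\sigma}$ is formally self-adjoint). Consequently $\wt A_{\sigma}=\Psi A_{\sigma}\Psi^{-1}$ is a well-defined bounded self-adjoint operator on $L^{2}(\R;\C^{2})$, and it suffices to identify its quadratic form on a dense subspace. The plan is to take a test function $u\in\mathscr S(\R)$ vanishing in a neighborhood of $0$, decompose $u=u_{+}+u_{-}$ with $u_{\pm}=\chi_{\R_{\pm}}u$, expand
\[
\poscal{A_{\sigma}u}{u}_{L^{2}(\R)}=\poscal{A_{\sigma}u_{+}}{u_{+}}+\poscal{A_{\sigma}u_{-}}{u_{-}}+\poscal{A_{\sigma}u_{-}}{u_{+}}+\poscal{A_{\sigma}u_{+}}{u_{-}},
\]
and match each of the four pieces with an entry of $\mathcal M_{\sigma}(\tau)$.

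First, the vanishing $\poscal{A_{\sigma}u_{-}}{u_{-}}_{L^{2}(\R)}=0$ should be established: on the support of $u_{-}\otimes\bar u_{-}$ we have $x<0$ and $y<0$, so $x+y<0$ and the factor $H(x+y)$ in the kernel \eqref{521+} kills the integral (for $u$ vanishing near $0$ this is an honest integral and the distributional principal value causes no difficulty; the global claim for all $L^{2}$ then follows by density, using that $A_{\sigma}$ is $L^{2}$-bounded and the multiplication by $\check H$ is a projection). This is exactly why $\mathcal M_{\sigma}$ has a zero in the $(2,2)$ slot.

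Next, the diagonal $(1,1)$ contribution is precisely the computation already carried out in \eqref{524}--\eqref{525+}: passing to the logarithmic variable $t=\ln x$ (so $\phi_{1}(t)=u_{+}(e^{t})e^{t/2}$) turns the kernel into a convolution operator whose symbol is $\tfrac12+\hat T_{\sigma}(\tau)$ with $T_{\sigma}$ given by \eqref{525+}. The off-diagonal contribution $\poscal{A_{\sigma}u_{-}}{u_{+}}$ is computed in \eqref{524++}: after the change of variables $x=e^{s}$, $y=-e^{t}$, the kernel restricted to $\{x>0,\,y<0,\,x+y>0\}$ (and its counterpart with the roles swapped) becomes the convolution against $S_{\sigma}$ defined in \eqref{5213}. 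Its symbol is $\hat S_{\sigma}(\tau)$; the symmetric contribution $\poscal{A_{\sigma}u_{+}}{u_{-}}$ is its complex conjugate by self-adjointness of $A_{\sigma}$, yielding the $(2,1)$ entry $\overline{\hat S_{\sigma}(\tau)}$.

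Assembling the four pieces, for $\Phi=\Psi u=(\phi_{1},\phi_{2})$ with $u\in\mathscr S(\R)$ vanishing near $0$ we obtain
\[
\poscal{\wt A_{\sigma}\Phi}{\Phi}_{L^{2}(\R;\C^{2})}=\int_{\R}\poscal{\mathcal M_{\sigma}(\tau)\hat\Phi(\tau)}{\hat\Phi(\tau)}_{\C^{2}}\,d\tau,
\]
which is formula \eqref{5219} (the exponential factor $e^{2i\pi t\tau}$ in the statement is a typographical artifact, as the identity is manifestly a Plancherel pairing). The density in $L^{2}(\R)$ of Schwartz functions vanishing in a neighborhood of $0$, together with the $L^{2}$-boundedness of $A_{\sigma}$, of $\Psi^{\pm 1}$, and of the Fourier multiplier by $\mathcal M_{\sigma}(\tau)$ (the entries of which are manifestly bounded: $\tfrac12+\hat T_{\sigma}\in L^{\infty}$ by \eqref{5210} and $\hat S_{\sigma}\in L^{\infty}$ as the Fourier transform of an $L^{1}$ function), extends the identity to arbitrary $\Phi\in L^{2}(\R;\C^{2})$. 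The main technical point is the honest treatment of the principal-value singularity along the diagonal $x=y$ in the off-diagonal term, which is why we insist on test functions vanishing near $0$ so that the substitutions $x=\pm e^{t}$ are smooth diffeomorphisms; everything then reduces to the bookkeeping already done in \eqref{524}--\eqref{5215++}.
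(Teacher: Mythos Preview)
Your proposal is correct and follows essentially the same route as the paper: both arguments decompose $A_{\sigma}$ according to $H$ and $\check H$, read off $\check H A_{\sigma}\check H=0$ from the factor $H(x+y)$ in the kernel, and then cite the earlier computations \eqref{524}--\eqref{525+} and \eqref{524++}--\eqref{5213} to identify the remaining three entries of $\mathcal M_{\sigma}(\tau)$; your observation that the factor $e^{2i\pi t\tau}$ in \eqref{5219} is a typographical slip is also correct.
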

\begin{proof}
 We have 
 \begin{align*}
 \text{kernel}(HA_{\sigma}H)&=e^{4i\pi\sigma\frac{x-y}{x+y}} H(x)H(y)\hat H(y-x),\\
 \text{kernel}(\check H A_{\sigma}H+ HA_{\sigma}\check H)&=e^{4i\pi\sigma\frac{x-y}{x+y}} 
 H(x+y)\bigl(\check H(x)H(y)+H(x)\check H(y)\bigr)\frac{1}{2i\pi(y-x)},\\
 \check H A_{\sigma}\check H&=0.
 \end{align*}
 Proposition \ref{pro.hardy} in our Appendix is readily giving the $L^{2}$-boundedness (and self-adjointness) of $\check H A_{\sigma}H+ HA_{\sigma}\check H$. We find also that $HA_{\sigma}H-\frac{H}{2}$ has kernel
 $$
e^{4i\pi\sigma\frac{x-y}{x+y}} H(x)H(y)\frac{1}{2i\pi(y-x)},
 $$
 and thus it is enough to study the operator with kernel 
 $$
e^{4i\pi\sigma\frac{e^{s}-e^{t}}{e^{s}+e^{t}}} \frac{e^{\frac{s+t}2}}{2i\pi (e^{t}-e^{s})}=e^{4i\pi \sigma\tanh(\frac{s-t}2)}\frac{1}{4i\pi \sinh(\frac{t-s}2)},
 $$
 which is a convolution operator by
 $$
T_{\sigma}(t)=e^{4i\pi \sigma\tanh(\frac{t}2)}\frac{t}{4 \sinh(\frac{t}2)} \text{pv}\frac{i}{\pi t},
 $$
 given by \eqref{525+}. Formula \eqref{526-} implies  in particular that $\hat T_{\sigma}$ is bounded (and real-valued) on the real line, entailing eventually the boundedness and self-adjoint\-ness of $A_{\sigma}$. 
 Formulas \eqref{524}, \eqref{524++} and \eqref{5213} are providing \eqref{5219},
 completing the proof of the theorem.
\end{proof}
\subsubsection{The main result on hyperbolic regions}
\begin{theorem}\label{thm.59}
 Let $\sigma\ge 0$ be given and let $A_{\sigma}$ be the operator defined in Theorem \ref{thm.calculs}.
 Then $A_{\sigma}$ is a bounded self-adjoint operator on $L^{2}(\R)$ such that
 \begin{equation}\label{5314++}
\inf\bigl(\text{\rm spectrum}(A_{\sigma})\bigr)<0<1<\sup\bigl(\text{\rm spectrum}(A_{\sigma})\bigr).
\end{equation}
The spectrum of $A_{\sigma}$ is the closure of the set  of eigenvalues of $\mathcal M_{\sigma}(\tau)$ for $\tau$ running on the real line.
\end{theorem}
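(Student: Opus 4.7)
The plan is to reduce the whole statement to a single inequality on the matrix-valued symbol $\mathcal M_{\sigma}(\tau)$, and then to locate a point $\tau$ where that inequality holds by combining the exponential smallness of $1-a_{11}(\tau,\sigma)$ at $+\infty$ with a non-exponential lower bound on $|a_{12}(\tau,\sigma)|=|\hat S_{\sigma}(\tau)|$. The \textbf{first step} is structural: by Theorem \ref{thm.calculs}, $A_{\sigma}$ is unitarily equivalent, via $\Psi$, to the Fourier multiplier on $L^{2}(\R;\C^{2})$ with symbol $\mathcal M_{\sigma}$. Because $\tau\mapsto \mathcal M_{\sigma}(\tau)$ is continuous and bounded, a standard argument (for matrix-valued Fourier multipliers, $\lambda\in \text{sp}(A_{\sigma})$ if and only if $\inf_{\tau}\text{dist}\bigl(\lambda,\text{sp}(\mathcal M_{\sigma}(\tau))\bigr)=0$) yields that
$$
\text{sp}(A_{\sigma})=\overline{\bigcup_{\tau\in\R}\text{sp}\bigl(\mathcal M_{\sigma}(\tau)\bigr)},
$$
proving the last assertion of the theorem.

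The \textbf{second step} is to apply Lemma \ref{lem.510--} to each matrix $\mathcal M_{\sigma}(\tau)$, which has exactly the form required there with $a_{11}=\frac12+\hat T_{\sigma}(\tau)$, $a_{12}=\hat S_{\sigma}(\tau)$. Producing even one $\tau_{0}\in\R$ with $\hat S_{\sigma}(\tau_{0})\neq 0$ and $|\hat S_{\sigma}(\tau_{0})|^{2}>1-a_{11}(\tau_{0},\sigma)$ is then enough to conclude that both $\inf\text{sp}(A_{\sigma})<0$ and $\sup\text{sp}(A_{\sigma})>1$.

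The \textbf{third step} is quantitative. On the right-hand side, Lemma \ref{lem.57} already provides the essentially optimal bound
$$
0\le 1-a_{11}(\tau,\sigma)\le \tfrac{12}{\pi^{2}}\,e^{4\pi\sigma}e^{-\pi^{2}\tau}\qquad(\tau>0).
$$
On the left-hand side, we must analyse
$$
\hat S_{\sigma}(\tau)=\frac{i}{4\pi}\int_{0}^{+\infty}\frac{e^{4i\pi\sigma\coth(t/2)-2i\pi\tau t}}{\cosh(t/2)}\,dt,
$$
and show it does not decay exponentially as $\tau\to+\infty$. My plan is to deform the contour $(0,+\infty)$ into the lower half-plane $\{\operatorname{Im}(t)<0\}$, where $e^{-2i\pi\tau t}$ becomes a decaying factor. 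The integrand extends meromorphically away from $t=0$, with the nearest pole (of $1/\cosh(t/2)$) at $t=-i\pi$ and residue $2i$ for $1/\cosh(t/2)$; since $\coth(-i\pi/2)=0$, the residue contribution to $\hat S_{\sigma}(\tau)$ is of size $O(e^{-2\pi^{2}\tau})$, with an explicit coefficient. The remaining piece is a contour integral around an indentation of the essential singularity at $t=0$: after the local change of variable $u=\coth(t/2)-1$ (or equivalently a rescaling $t=\sqrt{\sigma/\tau}\,s$), this contour integral takes the shape of a Hankel/modified-Bessel oscillatory integral, whose magnitude is of order $\tau^{-1/2}$ (not exponentially small). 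In particular, $|\hat S_{\sigma}(\tau)|\gtrsim c_{\sigma}\tau^{-1/2}$ for large $\tau$, so $|\hat S_{\sigma}(\tau_{0})|^{2}$ dominates $1-a_{11}(\tau_{0},\sigma)$ as soon as $\tau_{0}$ is taken large enough. Lemma \ref{lem.510--} then yields both strict inequalities in \eqref{5314++}.

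The \textbf{main obstacle} is the lower bound on $|\hat S_{\sigma}(\tau)|$ in Step 3: one cannot simply integrate by parts and extract a boundary term at $t=0$ as in the case $\sigma=0$ (Lemma \ref{lem.51--}), because the factor $e^{4i\pi\sigma\coth(t/2)}$ oscillates infinitely fast as $t\to 0_+$ and destroys any direct boundary evaluation. The essential singularity has to be handled by contour deformation / saddle-point analysis, or — and this is the cleanest route — by identifying $\hat S_{\sigma}$ explicitly (via the substitution $u=\coth(t/2)$) with a modified Bessel function of purely imaginary argument, whose known asymptotic $\sim\tau^{-1/2}$ as $\tau\to+\infty$ gives the required non-exponential behaviour directly. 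This corresponds to the explicit spectral formulas appearing in the Wood--Bracken analysis \cite{MR2131219}.
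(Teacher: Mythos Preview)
Your structural Steps 1 and 2 are exactly right and match the paper: the spectrum of $A_{\sigma}$ is the closure of $\cup_{\tau}\text{sp}(\mathcal M_{\sigma}(\tau))$, and Lemma \ref{lem.510--} reduces everything to finding one $\tau$ with $a_{12}(\tau,\sigma)\neq 0$ and $\val{a_{12}(\tau,\sigma)}^{2}>1-a_{11}(\tau,\sigma)$.

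The genuine gap is in Step 3, and it is precisely the point the paper singles out as the heart of the matter. Your claimed lower bound $\val{\hat S_{\sigma}(\tau)}\gtrsim c_{\sigma}\tau^{-1/2}$ is \emph{false} for $\sigma>0$. In fact $\hat S_{\sigma}(\tau)=O_{\sigma}(\tau^{-N})$ for every $N$ as $\tau\to+\infty$: near $t=0$ the phase behaves like $8\pi\sigma/t-2\pi\tau t$, whose derivative $-8\pi\sigma/t^{2}-2\pi\tau$ never vanishes on $(0,\infty)$, so iterated integration by parts (with vanishing boundary terms, because $t^{2}/\psi'(t)\to 0$ at $t=0^{+}$) gives rapid decay. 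The paper discusses this explicitly in Section \ref{sec531}: for $\sigma>0$ the $C^{\infty}$ wave-front set of $g_{\sigma}(t)=H(t)\sech(t/2)e^{-4i\pi\sigma\coth(t/2)}$ at $0$ lies only in the \emph{negative} frequencies, so its Fourier transform decays faster than any polynomial in the $+\infty$ direction. Your Bessel identification actually confirms this: writing the local model as $\int_{0}^{\infty}e^{-a/t-bt}\,dt$ with $a=-8i\pi\sigma$, $b=2i\pi\tau$, the Bessel argument is $2\sqrt{ab}=8\pi\sqrt{\sigma\tau}$, which is \emph{real positive}, not purely imaginary. Hence $K_{\nu}(8\pi\sqrt{\sigma\tau})\sim (\sigma\tau)^{-1/4}e^{-8\pi\sqrt{\sigma\tau}}$ and the contribution is sub-exponential, not polynomial.

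What the paper does (Lemma \ref{keylem++} and Lemma \ref{lem.514}) is to carry out the contour shift carefully---for $a_{21}=\overline{a_{12}}$ into the upper strip $0<\operatorname{Im} t<\pi$, which is the conjugate of your lower half-plane move---and extract from the explicit formula \eqref{keyasy} the lower bound
\[
\re a_{21}(\tau,\sigma)\ \ge\ \frac{e^{-8\pi\sqrt{\sigma\tau}}}{8\pi^{3}\tau}\ -\ \frac{1}{2\pi}e^{-2\pi^{2}\tau}
\qquad(\tau\ge 1).
\]
This is only sub-exponential in $\tau$, but that is exactly enough: squaring gives $\val{a_{12}}^{2}\gtrsim \tau^{-2}e^{-16\pi\sqrt{\sigma\tau}}$, which for large $\tau$ dominates the genuinely exponential $\val{1-a_{11}}\le 2e^{4\pi\sigma}e^{-\pi^{2}\tau}$ from Lemma \ref{lem.57}. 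So the comparison you need is not ``polynomial vs.\ exponential'' but ``$e^{-c\sqrt\tau}$ vs.\ $e^{-c'\tau}$'', and getting the first inequality with an explicit constant is where all the work lies.
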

\begin{rem}\label{keyrem}
It is enough to prove that, with a given $\sigma\ge 0$,
there exists $\tau\in \R$ such that 
$\mathcal M_{\sigma}(\tau)$ satisfies  \eqref{cond00}.
\end{rem}
\begin{proof}
We have from \eqref{5219+}, \eqref{5211}, \eqref{5215++},
\begin{gather}\label{2525}
\renewcommand\arraystretch{2.5}
\mathcal M_{\sigma}(\tau)=
\begin{pmatrix}
\frac12+\frac 1{2\pi}\int_{0}^{+\io}
\frac{
\sin(2\pi t\tau-4\pi \sigma\tanh(t/2))
}{\sinh(t/2)}
dt&\cdot&\frac{i}{4\pi}
\int_{0}^{+\io}
\frac{e^{{-2i\pi}(t\tau-\frac{2\sigma}{\tanh(t/2)})}}{\cosh({t}/2)} dt\\
\frac{1}{4i\pi}
\int_{0}^{+\io}
\frac{e^{{2i\pi}(t\tau-\frac{2\sigma}{\tanh(t/2)})}}{\cosh({t}/2)} dt 
&\cdot&0
\end{pmatrix}
\\=
\begin{pmatrix}
 a_{11}(\tau,\sigma)& a_{12}(\tau,\sigma)\\
  a_{21}(\tau,\sigma)& a_{22}(\tau,\sigma)
\end{pmatrix}.
\notag
\end{gather}
On the other hand we have 
\begin{equation}\label{526526}
\overline{a_{12}}=a_{21}=\frac{1}{4i\pi}
\int_{0}^{+\io}
\frac{e^{{2i\pi}(t\tau-\frac{2\sigma}{\tanh(t/2)})}}{\cosh({t}/2)} dt,
\end{equation}
so that 
\begin{equation}\label{5229}
\re a_{12}(\tau, \sigma)=\frac1{4\pi}
\int_{0}^{+\io}
\frac{\sin[{{2\pi}(t\tau-\frac{2\sigma}{\tanh({\frac t2})})]}}{\cosh(\frac t2)} dt.
\end{equation}
We note that the function 
$$t\mapsto
\frac{e^{{2i\pi}(t\tau-\frac{2\sigma}{\tanh(t/2)})}}{\cosh({t}/2)},$$
is holomorphic on $\C\backslash i\pi\Z$, with
simple poles at
$(2\Z+1)i\pi$ (zeroes of $\cosh(t/2)$) and essential singularities at $2\Z i\pi$
 (zeroes of $\sinh(t/2)$).
 We shall need a more explicit quantitative  expression for $a_{21}$ to obtain a precise asymptotic result which could be compared to the estimate \eqref{5214++}.
 The next lemma is proven in \cite{MR2131219}; we provide a proof here for the convenience of the reader.
\begin{lem}\label{keylem++}
 Let $\tau>0, \sigma\ge 0$ be given and let $a_{21}(\tau, \sigma)$ be given by 
 \eqref{526526}. We have 
 \begin{align}\label{keyasy}
\re a_{21}(\tau, \sigma) 
&=\frac{e^{-2\pi^2 \tau}}{4\pi}
\biggl\{\int_{0}^\pi
\Bigl(\frac{e^{2\pi(t\tau-2\sigma\tan(t/2))}-1}{\sin(t/2)}+\frac{\sinh(t/2)-\sin(t/2)}{
\sinh(t/2)\sin(t/2)
}\Bigr)dt
\\
&\hskip75pt+\int_{0}^\pi\frac{1-\cos2\pi(t\tau-2\sigma\tanh(t/2))}{\sinh(t/2)}
dt
\notag
\\
&\hskip110pt-
\int_{\pi}^{+\io}\frac{\cos2\pi(t\tau-2\sigma\tanh(t/2))}{\sinh(t/2)}
dt
\biggr\}.
\notag
\end{align}
\end{lem}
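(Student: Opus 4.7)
The strategy is to apply Cauchy's theorem to the meromorphic function
$$f(z)=\frac{e^{2i\pi(z\tau-2\sigma/\tanh(z/2))}}{\cosh(z/2)},\qquad \int_{0}^{+\io}\!\!\!f(t)\,dt=4i\pi\,a_{21}(\tau,\sigma),$$
on the rectangular contour with corners $\rho,R,R+i\pi,i\pi$ (with $0<\rho\ll 1\ll R$), detoured by inward quarter-circles of radii $\rho$ and $\delta$ around the two boundary singularities: an essential singularity at $z=0$ (due to $1/\tanh(z/2)\sim 2/z$) and a simple pole at $z=i\pi$ (due to $\cosh(z/2)$, where the exponential factor is regular because $1/\tanh(i\pi/2)=0$). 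Inside the rectangle $f$ is holomorphic, so the six boundary segments sum to zero. I first dispose of the four routine contributions: on $\re z=R$, the bound $|e^{2i\pi z\tau}|\,|1/\cosh(R/2)|\lesssim e^{-R/2}$ gives $o(1)$; on the arc around $0$, the uniform bound $|e^{-4i\pi\sigma/\tanh(z/2)}|=e^{-8\pi\sigma\sin\theta/\rho}\le 1$ and $|1/\cosh(z/2)|\lesssim 1$ force the integral to be $O(\rho)$; on the arc around $i\pi$, using $g(i\pi)=e^{-2\pi^2\tau}$ (since $1/\tanh(i\pi/2)=0$) and $(\cosh)'(i\pi/2)=i\sinh(i\pi/2)/2\cdot 2=i$, one gets $\text{Res}(f,i\pi)=-2ie^{-2\pi^2\tau}$, so the clockwise quarter-circle limit is $-\tfrac{i\pi}{2}\cdot(-2ie^{-2\pi^2\tau})=-\pi e^{-2\pi^2\tau}$.

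I then parameterize the two nontrivial segments. On the shifted top, $z=s+i\pi$ with $s\in[\delta,R]$, the identities $\cosh((s+i\pi)/2)=i\sinh(s/2)$ and $1/\tanh((s+i\pi)/2)=\tanh(s/2)$ yield
$$f(s+i\pi)=-i\,e^{-2\pi^2\tau}\,\frac{e^{2i\pi(s\tau-2\sigma\tanh(s/2))}}{\sinh(s/2)}.$$
On the left side, $z=iu$ with the substitution $u=\pi-v$ (so $\cos((\pi-v)/2)=\sin(v/2)$ and $\cot((\pi-v)/2)=\tan(v/2)$) gives
$$f(i(\pi-v))=e^{-2\pi^2\tau}\,\frac{e^{2\pi(v\tau-2\sigma\tan(v/2))}}{\sin(v/2)},$$
which is real, while $f(s+i\pi)\sim -2i e^{-2\pi^2\tau}/s$ as $s\to 0^{+}$ and $f(i(\pi-v))\sim 2 e^{-2\pi^2\tau}/v$ as $v\to 0^{+}$.

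Both of these integrals, taken separately, diverge logarithmically at the common endpoint corresponding to $z=i\pi$; the crucial technical point is that the two $\mathcal O(\log\delta)$ divergences appearing in $\int_{\delta}^{\cdot}f(s+i\pi)\,ds$ and $\int_{\rho}^{\pi-\delta}f(iu)\,du$ match exactly in magnitude and opposite in sign. I would verify this cancellation by subtracting the common asymptotic principal part $\mp 2e^{-2\pi^2\tau}/\text{dist}$ from each integrand to produce genuinely convergent integrals, then using Cauchy's identity to solve for $4i\pi\,a_{21}$ in terms of the finite quantities plus the explicit residue term $-\pi e^{-2\pi^2\tau}$. Taking the imaginary part (which extracts $4\pi\,\re a_{21}$; note the real residue contribution drops) gives
$$4\pi\,\re a_{21}=e^{-2\pi^2\tau}\!\left[\int_{0}^{\pi}\!\frac{e^{2\pi(v\tau-2\sigma\tan(v/2))}}{\sin(v/2)}\,dv-\int_{0}^{+\io}\!\frac{\cos 2\pi(s\tau-2\sigma\tanh(s/2))}{\sinh(s/2)}\,ds\right]\!,$$
both integrals understood in the regularized sense. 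Splitting $\int_{0}^{+\io}=\int_{0}^{\pi}+\int_{\pi}^{+\io}$ and applying the elementary identity
$$\frac{A}{\sin(v/2)}-\frac{B}{\sinh(v/2)}=\frac{A-1}{\sin(v/2)}+\frac{\sinh(v/2)-\sin(v/2)}{\sin(v/2)\sinh(v/2)}+\frac{1-B}{\sinh(v/2)}$$
with $A=e^{2\pi(v\tau-2\sigma\tan(v/2))}$ and $B=\cos 2\pi(v\tau-2\sigma\tanh(v/2))$ redistributes the $1/v$-singularity at $v=0$ across three individually convergent integrands and produces exactly the three summands in the statement of the lemma; the $[\pi,+\io)$-piece carries the minus sign as written. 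The main obstacle is purely analytic: rigorously performing the regularized limit $\delta\to 0$ so that the two logarithmic singularities annihilate, which requires uniform control of the $O(1)$-remainder in the local pole expansion of $f$ near $i\pi$ along both the horizontal and vertical approach.
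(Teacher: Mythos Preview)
Your plan is correct and follows essentially the same route as the paper's proof: the same rectangular contour $\epsilon\to R\to R+i\pi\to i\pi$ with quarter-circle indentations at the essential singularity $z=0$ and the simple pole $z=i\pi$, the same parametrizations of the top and left edges via $\cosh((s+i\pi)/2)=i\sinh(s/2)$ and the substitution $u=\pi-v$, and the same observation that the log-divergences of these two edges at $i\pi$ cancel (the paper expresses this by combining $I_{3}+I_{5}$ into a single integral over $[\epsilon,\pi-\epsilon]$ whose integrand is bounded at $0$). Your final algebraic splitting $\frac{A}{\sin(v/2)}-\frac{B}{\sinh(v/2)}=\frac{A-1}{\sin(v/2)}+\frac{\sinh(v/2)-\sin(v/2)}{\sin(v/2)\sinh(v/2)}+\frac{1-B}{\sinh(v/2)}$ is exactly how the paper arrives at the three individually convergent summands in \eqref{keyasy}; one cosmetic point is that your displayed derivative ``$(\cosh)'(i\pi/2)=i\sinh(i\pi/2)/2\cdot 2=i$'' is garbled (the correct value is $\tfrac12\sinh(i\pi/2)=i/2$), though your resulting residue $-2ie^{-2\pi^{2}\tau}$ and arc contribution $-\pi e^{-2\pi^{2}\tau}$ are right.
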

\begin{proof}[Proof of Lemma \ref{keylem++}]
Let $0<\epsilon<\pi/2<\pi<R$ be given. We consider the closed path $\gamma_{\epsilon, R}$
of $\C\backslash i\pi \Z$ with $\text{index}_{\gamma_{\epsilon, R}}(i\pi\Z)\equiv 0$,
 \begin{multline}\label{2929}
\gamma_{\epsilon, R}=[\epsilon, R]\cup[R, R+i\pi]\cup[R+i\pi, \epsilon+i\pi]
\\
\cup\{i\pi+\epsilon e^{i\theta}\}_{0\ge \theta\ge -\pi/2}
\cup i[\pi-\epsilon, \epsilon]
\cup\{\epsilon e^{i\theta}\}_{\pi/2\ge \theta\ge0},
\end{multline}
and we have 
\begin{equation}\label{3030}
\oint_{\gamma_{\epsilon, R}}
\frac{e^{{2i\pi}(z\tau-\frac{2\sigma}{\tanh(z/2)})}}{\cosh({z}/2)} dz=0.
\end{equation}
We note as well that 
\begin{multline}\label{I22222}
I_{2}=\oint_{[R, R+i\pi]}
\frac{e^{{2i\pi}(z\tau-\frac{2\sigma}{\tanh(z/2)})}}{\cosh({z}/2)} dz=
i\int_{0}^\pi
\frac{e^{{2i\pi}((R+it)\tau-\frac{2\sigma}{\tanh(\frac{R+it}{2})})}}{\cosh(\frac{R+it}{2})} dt
\\
=i e^{2i\pi R\tau}\int_{0}^\pi
e^{-2\pi t\tau} e^{-4i\pi \sigma\frac{1+e^{-R-it}}{1-e^{-R-it}}}
\frac{2dt}{e^{\frac{R+it}{2}}(1+e^{-R-it})},
\end{multline}
so that
$$
\val{I_{2}}\le 2  e^{-R/2}\int_{0}^\pi
e^{4\pi \sigma\im\left(\frac{1+e^{-R-it}}{1-e^{-R-it}}\right)}
\frac{dt}{\val{1-e^{-R}}},
$$
and since 
$$
\im\left(\frac{1+e^{-R-it}}{1-e^{-R-it}}\right)
=
\im\frac{(1+e^{-R-it})(1-e^{-R+it})}{\val{1-e^{-R-it}}^2}
=
\frac{-2e^{-R}\sin t}{\val{1-e^{-R-it}}^2}\le 0,
$$
we get 
\begin{equation}\label{3232}
\val{I_{2}}\le e^{-R/2}  \frac{2\pi }{1-e^{-R}},
\quad\text{where $I_{2}$ is defined in \eqref{I22222}}.
\end{equation}
Let us now check\footnote{Let us note for future reference the standard formulas
\begin{equation}\label{formfo}
\cosh \bigl(\frac{i\pi}{2}+z\bigr)=i\sinh z,\quad
\sinh \bigl(\frac{i\pi}{2}+z\bigr)=i\cosh z, \quad \tanh
\bigl(\frac{i\pi}{2}+z\bigr)=\coth z.
\end{equation}
}
\begin{multline}\label{3434}
I_{4}=-\int_{-\pi/2}^0
\frac{e^{{2i\pi}((i\pi+\epsilon e^{i\theta})\tau-{2\sigma}{\coth(\frac{i\pi+\epsilon e^{i\theta}}{2})})}}{\cosh
\frac{i\pi+\epsilon e^{i\theta}}{2}
} i\epsilon e^{i\theta} d\theta
\\
=
-e^{-2\pi^2 \tau}\int_{-\pi/2}^0
\frac{e^{{2i\pi}\left(\epsilon e^{i\theta}\tau-{2\sigma}{\tanh(\frac{\epsilon e^{i\theta}}{2})}\right)}}{i\sinh
\frac{\epsilon e^{i\theta}}{2}
} i\epsilon e^{i\theta} d\theta,
\end{multline}
and since 
\begin{equation*}
\Val{\frac{e^{{2i\pi}\left(\epsilon e^{i\theta}\tau-{2\sigma}{\tanh(\frac{\epsilon e^{i\theta}}{2})}\right)}}{i\sinh
\frac{\epsilon e^{i\theta}}{2}
} i\epsilon e^{i\theta}}\le 2\max_{\val z\le \pi/2}\val{\frac{z}{\sinh z}}
e^{\pi^2 \tau}
e^{4\pi \sigma\sup_{\val z\le \pi/4}\left\vert\frac{\sinh z}{\cosh z}\right\vert
},
\end{equation*}
the Lebesgue Dominated Convergence Theorem gives
\begin{equation}\label{3535}
\lim_{\epsilon\rightarrow 0_{+}} I_{4}=-\pi e^{-2\pi^2 \tau}.
\end{equation}
Defining now 
\begin{equation}\label{3636}
I_{6}=
-\int_0^{\pi/2}
\frac{e^{{2i\pi}(\epsilon e^{i\theta}\tau-{2\sigma}{\coth(\frac{\epsilon e^{i\theta}}{2})})}}{\cosh
\frac{\epsilon e^{i\theta}}{2}
} i\epsilon e^{i\theta} d\theta,
\end{equation}
and noting that 
\begin{align*}
4\pi \sigma\im& \coth(
\frac{\epsilon e^{i\theta}}{2}
)=4\pi \sigma \im\frac{1+e^{-\epsilon e^{i\theta}}}{1-e^{-\epsilon e^{i\theta}}}
=4\pi \sigma \im\frac{(1+e^{-\epsilon e^{i\theta}})(
1-e^{-\epsilon e^{-i\theta}}
)}{\val{1-e^{-\epsilon e^{i\theta}}}^2}
\\
&=4\pi \sigma \im\frac{
e^{-\epsilon e^{i\theta}}
-e^{-\epsilon e^{-i\theta}}
}{\val{1-e^{-\epsilon e^{i\theta}}}^2}
=4\pi \sigma \im\frac{
e^{-\epsilon \cos \theta}
(e^{-i\epsilon\sin \theta}-e^{i\epsilon\sin \theta})
}{\val{1-e^{-\epsilon e^{i\theta}}}^2}
\\&=4\pi \sigma e^{-\epsilon \cos \theta}
\im \frac{
(-2i)\sin(\epsilon \sin \theta)
}{\val{1-e^{-\epsilon e^{i\theta}}}^2}
=-4\pi \sigma e^{-\epsilon \cos \theta}
\frac{
2\sin(\epsilon \sin \theta)
}{\val{1-e^{-\epsilon e^{i\theta}}}^2}\le 0,
\end{align*}
we get that 
$$
\val{I_{6}}\le \int_{0}^{\pi/2}\frac{e^{-2\pi \epsilon\tau\sin \theta}}{\min_{\val{z}\le \pi/4}\val{\cosh z}} d\theta \epsilon\le  \epsilon\frac{\pi/2}{\min_{\val{z}\le \pi/4}\val{\cosh z}},
$$
entailing
\begin{equation}\label{I66666}
\lim_{\epsilon\rightarrow 0_{+}} I_{6}=0.
\end{equation}
With 
\begin{equation}\label{3838}
I_{1}=\oint_{[\epsilon, R]}
\frac{e^{{2i\pi}(z\tau-\frac{2\sigma}{\tanh(z/2)})}}{\cosh({z}/2)} dz,
\end{equation}
we have from \eqref{526526} 
\begin{equation}\label{3939}
\lim_{\substack{\epsilon\rightarrow 0_{+}\\
R\rightarrow+\io}} I_{1}=4i\pi a_{21}.
\end{equation}
We define now
\begin{align*}
&I_{5}=-\oint_{[i\epsilon, i(\pi-\epsilon)]}
\frac{e^{{2i\pi}(z\tau-\frac{2\sigma}{\tanh(z/2)})}}{\cosh({z}/2)} dz
=-\int_{\epsilon}^{\pi-\epsilon}
\frac{e^{{2i\pi}(it\tau-\frac{2\sigma}{\tanh(it/2)})}}{\cosh({it}/2)} idt
\notag\\
&=-
\int_{\epsilon}^{\pi-\epsilon} e^{-2\pi t\tau}
\frac{e^{\frac{-4i\pi \sigma}{i\tan(t/2)}}}{\cos({t}/2)} idt
=-i \int_{\epsilon}^{\pi-\epsilon} e^{-2\pi t\tau}
\frac{e^{\frac{-4\pi \sigma}{\tan(t/2)}}}{\cos({t}/2)} dt
\notag\\&=-i \int_{\epsilon}^{\pi-\epsilon} e^{-2\pi (\pi-s)\tau}
\frac{e^{-\frac{4\pi \sigma}{\tan((\pi-s)/2)}}}{\cos({(\pi-s)}/2)} ds
=-
i e^{-2\pi^2\tau}\int_{\epsilon}^{\pi-\epsilon} e^{2\pi s\tau}
\frac{e^{-\frac{4\pi \sigma \sin(s/2)}{\cos(s/2)}}}{\sin({s}/2)} ds,
\end{align*}
so that 
\begin{align}\label{4040}
\notag\\I_{5}=&
-
i e^{-2\pi^2\tau}\int_{\epsilon}^{\pi-\epsilon} e^{2\pi s\tau}
\frac{e^{-{4\pi \sigma \tan(s/2)}{}}}{\sin({s}/2)} ds.
\end{align}
We have also 
\begin{equation}\label{I33333}
I_{3}=\oint_{[R+i\pi, \epsilon+i\pi]}
\frac{e^{{2i\pi}(z\tau-\frac{2\sigma}{\tanh(z/2)})}}{\cosh({z}/2)} dz
=-\int_{\epsilon}^R
\frac{e^{{2i\pi}((t+i\pi)\tau-\frac{2\sigma}{\tanh((t+i\pi)/2)})}}{\cosh({(t+i\pi)}/2)} dt,
\end{equation}
so that using Formulas \eqref{formfo}, we get
$$
I_{3}=-e^{-2\pi^2\tau}
\int_{\epsilon}^R
\frac{e^{{2i\pi}(t\tau-{2\sigma}{\tanh(t/2)})}}{i\sinh(t/2)} dt,
$$
and
\begin{multline}\label{4242}
I_{3}+I_{5}=i e^{-2\pi^2\tau}\left(
\int_{\epsilon}^R
\frac{e^{{2i\pi}(t\tau-{2\sigma}{\tanh(t/2)})}}{\sinh(t/2)} dt-
\int_{\epsilon}^{\pi-\epsilon} e^{2\pi t\tau}
\frac{e^{-{4\pi \sigma \tan(t/2)}{}}}{\sin({t}/2)} dt
\right)
\\=
i e^{-2\pi^2\tau}\bigg\{
\int_{\epsilon}^{\pi-\epsilon}\Bigl(
\frac{e^{{2i\pi}(t\tau-{2\sigma}{\tanh(t/2)})}}{\sinh(t/2)} -
\frac{e^{2\pi (t\tau
 -2 \sigma \tan(t/2)
 )}}{\sin({t}/2)}\Bigr) dt
\\
+\int_{\pi-\epsilon}^R
\frac{e^{{2i\pi}(t\tau-{2\sigma}{\tanh(t/2)})}}{\sinh(t/2)} dt
\biggr\}.
\end{multline}
From \eqref{3030}, \eqref{2929}, \eqref{I22222}, \eqref{3434},
\eqref{3636}, \eqref{3838}, \eqref{4040},
\eqref{I33333}, we find that
$$
I_{1}=-I_{2}-(I_{3}+I_{5})-I_{4}-I_{6},
$$
so that taking the  limit of both sides\footnote{$I_{1},I_{2}, I_{4}, I_{6}, I_{3}+I_{5}$ do have limits when $\epsilon\rightarrow0_{+}, R\rightarrow+\io$.}  when $\epsilon\rightarrow0_{+}, R\rightarrow+\io$
we get, thanks to 
 \eqref{3939},  \eqref{3232}, \eqref{4242},
  \eqref{3535}, \eqref{I66666},
  \begin{multline}\label{}
4i\pi a_{21}=
\\
-i e^{-2\pi^2\tau}\bigg\{
\int_{0}^{\pi}\Bigl(
\frac{e^{{2i\pi}(t\tau-{2\sigma}{\tanh(t/2)})}}{\sinh(t/2)} -
\frac{e^{2\pi (t\tau
 -2 \sigma \tan(t/2)
 )}}{\sin({t}/2)}\Bigr) dt
+\int_{\pi}^{+\io}
\frac{e^{{2i\pi}(t\tau-{2\sigma}{\tanh(t/2)})}}{\sinh(t/2)} dt
\biggr\}\\
+\pi e^{-2\pi^2 \tau},
\end{multline}
implying that 
  \begin{multline*}\label{}
a_{21}=
\\
\frac{e^{-2\pi^2\tau}}{4\pi} \bigg\{
\int_{0}^{\pi}\Bigl(
-\frac{e^{{2i\pi}(t\tau-{2\sigma}{\tanh(t/2)})}}{\sinh(t/2)} +
\frac{e^{2\pi (t\tau
 -2 \sigma \tan(t/2)
 )}}{\sin({t}/2)}\Bigr) dt
-\int_{\pi}^{+\io}
\frac{e^{{2i\pi}(t\tau-{2\sigma}{\tanh(t/2)})}}{\sinh(t/2)} dt
\biggr\}\\
-\frac{i}{4} e^{-2\pi^2 \tau}
\end{multline*}
that is 
\begin{multline}\label{4444}
a_{21}=
\frac{e^{-2\pi^2\tau}}{4\pi} 
\int_{0}^{\pi}\Bigl(
\frac{e^{2\pi (t\tau
 -2 \sigma \tan(t/2)
 )}}{\sin({t}/2)}
-\frac{\cos{{2\pi}(t\tau-{2\sigma}{\tanh(t/2)})}}{\sinh(t/2)} 
\Bigr) dt
\\
-\frac{e^{-2\pi^2\tau}}{4\pi} \int_{\pi}^{+\io}
\frac{\cos{{2\pi}(t\tau-{2\sigma}{\tanh(t/2)})}}{\sinh(t/2)} dt
\\
-i\frac{e^{-2\pi^2\tau}}{4\pi} 
\int_{0}^{\pi}
\frac{\sin{{2\pi}(t\tau-{2\sigma}{\tanh(t/2)})}}{\sinh(t/2)} dt 
-\frac{i}{4} e^{-2\pi^2 \tau}
\\
-i\frac{e^{-2\pi^2\tau}}{4\pi} \int_{\pi}^{+\io}
\frac{\sin{{2\pi}(t\tau-{2\sigma}{\tanh(t/2)})}}{\sinh(t/2)} dt,
\end{multline}
yielding
\begin{multline}
\re a_{21}=
\frac{e^{-2\pi^2\tau}}{4\pi} 
\int_{0}^{\pi}\Bigl(
\frac{e^{2\pi (t\tau
 -2 \sigma \tan(t/2)
 )}}{\sin({t}/2)}
-\frac{\cos{{2\pi}(t\tau-{2\sigma}{\tanh(t/2)})}}{\sinh(t/2)} 
\Bigr) dt
\\
-\frac{e^{-2\pi^2\tau}}{4\pi} \int_{\pi}^{+\io}
\frac{\cos{{2\pi}(t\tau-{2\sigma}{\tanh(t/2)})}}{\sinh(t/2)} dt,
\end{multline}
completing the proof of Lemma  \ref{keylem++}.
\end{proof}
\begin{rem}\rm
 Formula \eqref{4444} also yields
 \begin{multline*}
\im a_{12}=-\im a_{21}=\frac{e^{-2\pi^2\tau}}{4\pi} \Bigl\{
\int_{0}^{\pi}
\frac{\sin{{2\pi}(t\tau-{2\sigma}{\tanh(t/2)})}}{\sinh(t/2)} dt 
+\pi 
\\
+ \int_{\pi}^{+\io}
\frac{\sin{{2\pi}(t\tau-{2\sigma}{\tanh(t/2)})}}{\sinh(t/2)} dt\Bigr\},
\end{multline*}
and since from \eqref{2525}, we have 
\begin{equation}\label{}
a_{11}=
\frac12+\frac 1{2\pi}\int_{0}^{+\io}
\frac{
\sin(2\pi t\tau-4\pi \sigma\tanh(t/2))
}{\sinh(t/2)}
dt,
\end{equation}
this gives
\begin{equation}\label{zeze}
\im a_{12}=
\frac{e^{-2\pi^2\tau}}{4\pi}\bigl(2\pi(a_{11}-\frac12) +\pi\bigr)=
\frac{e^{-2\pi^2\tau}}{2} a_{11}.
\end{equation}
 \end{rem}
To complete the proof of Theorem \ref{thm.59},  it will be enough, according to Lemma
\ref{lem.510--}, to prove that, for $\tau\rightarrow+\io$,
$
\val{a_{12}}^2\gg 1-a_{11}.
$
To achieve that, we note from \eqref{zeze} that the imaginary part of $a_{12}$ is useless and we shall prove simply that 
$$
{(\re a_{12})}^2\gg 1-a_{11}.
$$ 
To get this we are going to use \eqref{5214++}  and a precise asymptotic behavior for 
${(\re a_{12})}^2$ displayed in the next lemma and issued from the explicit formula
\eqref{keyasy}.
\begin{lem}\label{lem.514}
 Let $\tau\ge 1, \sigma\ge 0$ be given and let $a_{21}(\tau, \sigma)$ be given by 
 \eqref{526526}. We have then 
 \begin{align}\label{final}
\re a_{21}(\tau,\sigma)\ge
\frac{e^{-8\pi\sqrt\tau
\sqrt\sigma}}{8\pi^{3}\tau}
- \frac{1}{2\pi}e^{-2\pi^{2}\tau}.
\end{align}
 \end{lem}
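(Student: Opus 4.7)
The starting point is the explicit representation of $\re a_{21}(\tau,\sigma)$ provided by the preceding lemma, which displays $4\pi e^{2\pi^{2}\tau}\re a_{21}(\tau,\sigma)$ as a sum of three integrals. Two of the integrands are manifestly non-negative: on $[0,\pi]$ one has $\sinh(t/2)\ge\sin(t/2)$ (so $\frac{\sinh(t/2)-\sin(t/2)}{\sinh(t/2)\sin(t/2)}\ge 0$) and $1-\cos(\cdots)\ge 0$, and discarding both only weakens the lower bound. The tail integral over $[\pi,+\io)$ is bounded in absolute value by $\int_{\pi}^{+\io}dt/\sinh(t/2)$, a numerical constant easily checked to be at most $2$, and which after restoration of the $e^{-2\pi^{2}\tau}/(4\pi)$ prefactor yields the subtractive term $e^{-2\pi^{2}\tau}/(2\pi)$ of the statement.

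The heart of the proof is then the lower bound of
\begin{equation*}
I:=\int_{0}^{\pi}\frac{e^{2\pi(t\tau-2\sigma\tan(t/2))}-1}{\sin(t/2)}\,dt.
\end{equation*}
I will perform the symmetric substitution $t=\pi-2s$, $s\in(0,\pi/2)$, transforming the exponent into $F(s)=2\pi^{2}\tau-4\pi s\tau-4\pi\sigma\cot s$ and $\sin(t/2)$ into $\cos s\le 1$, so that $I\ge 2\int_{0}^{\pi/2}(e^{F(s)}-1)\,ds$. A direct computation gives $F(\pi/2)=0$, identifies (when $\sigma\le\tau$) the critical point $s_{0}=\arcsin\sqrt{\sigma/\tau}$ as the unique maximum of $F$ on $(0,\pi/2]$, and shows that $F\ge 0$ and $F$ is monotone decreasing on $[s_{0},\pi/2]$. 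Restricting to this subinterval, where the integrand is non-negative, and using the pointwise bound $|F'(s)|=4\pi\tau|1-\sin^{2}s_{0}/\sin^{2}s|\le 4\pi\tau$, the change of variable $\phi=F(s_{0})-F(s)$ yields
\begin{equation*}
\int_{s_{0}}^{\pi/2}e^{F(s)}\,ds\ge \frac{e^{F(s_{0})}-1}{4\pi\tau}.
\end{equation*}

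The remaining ingredient is the estimate $F(s_{0})\ge 2\pi^{2}\tau-8\pi\sqrt{\sigma\tau}$. Setting $u=\sqrt{\sigma/\tau}\in[0,1]$ and using the identity $\arctan\sqrt{(\tau-\sigma)/\sigma}=\arcsin u$, a short computation gives
\begin{equation*}
F(s_{0})=2\pi^{2}\tau-4\pi\sqrt{\sigma\tau}\,g(u),\qquad g(u):=\frac{\arcsin u}{u}+\sqrt{1-u^{2}}.
\end{equation*}
The elementary inequality $g(u)\le 2$ on $[0,1]$ (which follows from $u\sqrt{1-u^{2}}\le\arcsin u$ on $[0,1]$, itself immediate from differentiation, implying $g$ is decreasing with $g(0^{+})=2$) then gives the claim. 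Combining everything with the prefactor $e^{-2\pi^{2}\tau}/(4\pi)$ and subtracting the tail produces the stated inequality.

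The principal difficulty will be the careful bookkeeping of constants: the $|F'|\le 4\pi\tau$ argument naturally produces a coefficient $1/(8\pi^{2}\tau)$ on the positive side, and one must exploit the slack $(\pi-1)/\pi$ between $1/(8\pi^{2}\tau)$ and the target $1/(8\pi^{3}\tau)$ to absorb the correction $-(\pi-2s_{0})\ge-\pi$ coming from the ``$-1$'' piece of $e^{F(s)}-1$ together with the lower-order error terms of size $e^{-2\pi^{2}\tau}/\tau$. The regime $\sigma>\tau$, where $F$ has no interior critical point on $(0,\pi/2)$, requires a separate argument; in this range the right-hand side of the lemma is negative (since $8\pi>2\pi^{2}$ forces $e^{-8\pi\sqrt{\sigma\tau}}/(8\pi^{3}\tau)<e^{-2\pi^{2}\tau}/(2\pi)$ for $\tau\ge 1$), so the inequality reduces to a uniform absolute lower bound $\re a_{21}\ge -e^{-2\pi^{2}\tau}/(2\pi)$ that can be extracted from the preceding lemma by controlling the absolute sizes of the three integrals directly.
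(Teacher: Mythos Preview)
Your overall route is the paper's: drop the two non-negative integrands in the preceding lemma, bound the tail by $\int_\pi^{+\io}dt/\sinh(t/2)$, and reduce everything to the main integral $I$. Your substitution $t=\pi-2s$ is simply the reflection $s\mapsto\pi/2-s$ of the paper's choice $s=t/2$: under it $F(s)=2\omega\,\phi_\nu(\pi/2-s)$ with $\nu=\sqrt{\sigma/\tau}$, your maximum $s_0=\arcsin\nu$ equals $\pi/2-s_\nu$ for the paper's $s_\nu=\arccos\nu$, and your $\cos s$ is the paper's $\sin s$. So the two computations coincide.

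The gap is the step $I\ge 2\int_{0}^{\pi/2}(e^{F(s)}-1)\,ds$. As $s\to 0^{+}$ you have $\cot s\to+\io$, hence $F(s)\to-\io$, and $e^{F(s)}-1<0$ on an interval $(0,s_1)$ with $s_1<s_0$; there $1/\cos s\ge 1$ makes $(e^{F(s)}-1)/\cos s$ \emph{more} negative than $e^{F(s)}-1$, not less. The subsequent restriction to $[s_0,\pi/2]$ compounds the error, since $\int_0^{s_0}(e^{F(s)}-1)\,ds$ is not non-negative either. The paper meets the same difficulty in its variable (the negative region being $(t_\nu,\pi/2)$) and disposes of it by a separate crude bound $\int_{t_\nu}^{\pi/2}ds/\sin s\le\int_{\pi/3}^{\pi/2}ds/\sin s=(\ln 3)/2$, which requires the extra hypothesis $\nu\le 1/2$, i.e.\ $\sigma\le\tau/4$ (see \eqref{9611} and the derivation of \eqref{9521}). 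In your variable this reads $s_1<s_0\le\pi/6$ and hence $\int_0^{s_1}\sec s\,ds\le(\ln 3)/2$. Without that control the negative contribution is $e^{-2\pi^2\tau}$ times a quantity that blows up as $\nu\to 1^-$ and is certainly not of size $e^{-2\pi^2\tau}/\tau$ as you announce; the slack $(\pi-1)/\pi$ cannot absorb it. Your $|F'|\le 4\pi\tau$ device on $[s_0,\pi/2]$ is otherwise a clean alternative to the paper's combination of Claim~\ref{claim1} and Lemma~\ref{lem.915}, and does give the same leading term.
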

\begin{proof}[Proof of the lemma]
 Since for $t\ge 0$ we have
 $\sinh (t/2)-\sin(t/2)\ge 0$,
 we get from  \eqref{keyasy},
 \begin{multline}\label{}
\re a_{21}(\tau,\sigma)\ge
\frac{e^{-2\pi^2 \tau}}{4\pi}
\biggl\{\int_{0}^\pi
\frac{e^{2\pi(t\tau-2\sigma\tan(t/2))}-1}{\sin(t/2)}
dt
-
\int_{\pi}^{+\io}\frac{1}{\sinh(t/2)}
dt
\biggr\}
\\
=
\frac{e^{-2\pi^2 \tau}}{4\pi}
\int_{0}^\pi
\frac{e^{2\pi(t\tau-2\sigma\tan(t/2))}-1}{\sin(t/2)}
dt
-
\frac{e^{-2\pi^2 \tau}}{2\pi}\ln\bigl(\coth\frac \pi 4\bigr).
\end{multline}
Let us define 
\begin{equation}\label{5251}
\omega=2\pi \tau,\quad \kappa=2\pi \sigma,\quad \nu=\kappa^{1/2}\omega^{-1/2},
\quad
\phi_{\nu}(s)= s-\nu^2\tan s.
\end{equation}
We have 
$$
2\pi\bigl( t\tau-2\sigma\tan(t/2))=2\pi\tau\bigl(t-2\nu^2\tan(t/2)\bigr)=
4\pi\tau\bigl(\frac t2-\nu^2\tan\frac t2\bigr)=2\omega\phi_{\nu}(t/2).
$$
We have thus 
\begin{equation}\label{5252}
\re a_{21}(\tau,\sigma)\ge
\frac{e^{-\pi \omega}}{2\pi}
\int_{0}^{\pi/2}
\frac{e^{2\omega \phi_{\nu}(s)}-1}{\sin s}
ds
-
\frac{e^{-\pi \omega}}{2\pi}
\underbrace{\ln\bigl(\coth\frac \pi 4\bigr)}_{\approx 0.421908}.
\end{equation}
Defining
\begin{equation}\label{5254}
\psi_{\nu}(\omega)
=
\frac{e^{-\pi \omega}}{2\pi}
\int_{0}^{\pi/2}
\frac{e^{2\omega \phi_{\nu}(s)}-1}{\sin s}
ds,
\end{equation}
we can use \eqref{5251}, \eqref{5252} and \eqref{9521} to get
whenever  $\tau>0$,
 \begin{align*}
2\pi\re a_{21}(\tau,\sigma)\ge
\frac{e^{-8\pi\sqrt\tau
\sqrt\sigma}}{\pi^{2}\tau}
 \Bigl(\frac{1}{2}
-\frac{1}{4\tau}\Bigr)
- e^{-2\pi^{2}\tau}, 
\end{align*}
so that  for $\tau\ge 1$ we find 
\begin{align}
2\pi\re a_{21}(\tau,\sigma)\ge
\frac{e^{-8\pi\sqrt\tau
\sqrt\sigma}}{4\pi^{2}\tau}
- e^{-2\pi^{2}\tau},
\end{align}
yielding the lemma.
\end{proof}
We  eventually go back  to the proof of Theorem \ref{thm.59}: let $\sigma>0$ be given.
From Lemma \ref{lem.514}  and \eqref{5214++}, 
we have  for $\tau\ge 1$,
\begin{align*}
&\val{1-a_{11}(\tau, \sigma)}\le 
2e^{-{\pi^2\tau}} e^{4\pi \sigma},
\\
&\re a_{21}(\tau,\sigma)\ge
\frac{e^{-8\pi\sqrt\tau
\sqrt\sigma}}{8\pi^{3}\tau}
- \frac{1}{2\pi}e^{-2\pi^{2}\tau}
=
\frac{e^{-8\pi\sqrt\tau
\sqrt\sigma}}{8\pi^{3}\tau}\Bigl(1-\frac{4\pi^{2}\tau e^{8\pi\sqrt\tau
\sqrt\sigma}}{e^{2\pi^{2}\tau}}\Bigr).
\end{align*}
This entails that  for $\tau\ge \tau_{0}(\sigma)$, we have 
\begin{equation}\label{boua21}
\re a_{21}(\tau,\sigma)\ge\frac{e^{-8\pi\sqrt\tau
\sqrt\sigma}}{16\pi^{3}\tau}, 
\end{equation}
and thus  $a_{21}\not=0$ and 
\begin{equation}\label{tryye}
\val{a_{21}(\sigma,\tau)}^{2}\ge \frac{e^{-16\pi\sqrt\tau
\sqrt\sigma}}{2^{8}\pi^{6}\tau^{2}}>\val{1-a_{11}(\tau, \sigma)},
\end{equation}
where the last inequality above  holds true (thanks to \eqref{5214++}) whenever
$$
2e^{-{\pi^2\tau}} e^{4\pi \sigma}<\frac{e^{-16\pi\sqrt\tau
\sqrt\sigma}}{2^{8}\pi^{6}\tau^{2}},
$$
which is indeed true for $\tau\ge \tau_{1}(\sigma)$.
As a result for $\tau\ge \max(4\sigma, 4, \tau_{0}(\sigma), \tau_{1}(\sigma))$,
we obtain that \eqref{tryye} is satisfied so that Remark \ref{keyrem}
implies the result of Theorem \ref{thm.59}, completing our proof.
 \end{proof}
\begin{rem}\rm
 The functions $\tau_{0}(\sigma), \tau_{1}(\sigma)$ can be determined rather easily, the first one by the condition
 $$
 \tau\ge \tau_{0}(\sigma)\Longrightarrow\frac{4\pi^{2}\tau e^{8\pi\sqrt\tau
\sqrt\sigma}}{e^{2\pi^{2}\tau}}\le \frac12,
 $$
 whereas the second one should satisfy
 $$
 \tau\ge \tau_{1}(\sigma)\Longrightarrow
 e^{4\pi \sigma}
 2^{9}\pi^{6}\tau^{2}
 e^{16\pi\sqrt\tau
\sqrt\sigma}
 <
e^{{\pi^2\tau}}.
 $$
\end{rem}
\subsection{Comments and further results}
\subsubsection{Qualitative explanations on the various computations}\label{sec531}
We would like to go back to our proofs that 
\begin{equation}\label{master11}
\val{a_{12}(\tau,\sigma)}^2\gg \val{1-a_{11}(\tau,\sigma)}, \quad \tau\rightarrow+\io,
\end{equation}
which is our key argument via Lemma  \ref{lem.510--} and give a couple of qualitative explanations which may enlighten the calculations. It is of course much simpler to begin with the case $\sigma=0$: in that case, according to Proposition \ref{pro.510uhc} and \eqref{5116gg},
 we have 
\begin{equation}\label{check1}
1-a_{11}(\tau,0)=\int_{\tau}^{+\io}2\rho_{0}(\tau') d\tau,
\quad
2\rho_{0}(\tau)=
\int
\underbrace{\left(\frac{t/2 }{\sinh (t/2)}\right)}_{\substack{=f_{0}(t),\
\text{$f_{0}\in \mathscr S(\R)$}
\\\text{holomorphic }
\\\text{on $\val{ \im t}<2\pi.$}
}}e^{-2i\pi t \tau} ds,\end{equation}
so that 
$
2\rho_{0}(\tau)=\hat f_{0} (\tau).
$
We get thus readily that $\rho_{0}$ belongs to the Schwartz space, as the Fourier transform
of a function in the Schwartz space and this implies  in particular that $1-a_{11}(\tau,0)$ has fast decay towards 0 when $\tau\rightarrow+\io$, as proven in Proposition \ref{pro.510uhc}.
We note also that \eqref{zeze}
gives
$
\im a_{12}(\tau,0)^{2}=
{e^{-4\pi^2\tau}} a_{11}(\tau,0)^{2}/4,
$
and since the limit of $a_{11}$ is 1, we do not expect any help from the imaginary part of $a_{12}$ to proving \eqref{master11}.
Turning our attention to $\re a_{12}$ in \eqref{5128hh},
we have, 
\begin{equation}\label{sigma0}
4\pi\re a_{21}(\tau,0)=
\int_{0}^{+\io}
\frac{\sin{(2\pi t\tau)}}
{\cosh({t}/2)} dt,
\end{equation}
which is the sine-Fourier transform of the function $t\mapsto H(t)\sech(t/2)=g_{0}(t)$, which has a singularity at $t=0$: as a consequence, thanks to Lemma \ref{lem.9110}, the Fourier transform 
$\widehat{g_{0}}$ cannot be rapidly decreasing, cannot even belong to  $L^{1}(\R)$ (that would imply that $g_{0}$ is continuous). Moreover the sine-Fourier transform above is the Fourier transform of the odd part of $g_{0}$, $g_{\text{odd}}(t)=\sech(t/2) \sign t $, which is also singular at 0,
thus $\widehat{g_{\text{odd}}}$ cannot be rapidly decreasing and is an odd function,
which is enough to prove, without more calculations, that \eqref{master11} holds true.
In Section \ref{sec.51}, we used a more explicit argument, with providing an equivalent   of \eqref{sigma0} equal to $1/(2\pi \tau)$ near $+\io$.
Summing-up, \eqref{master11} in the case $\sigma=0$
follows from the existence of a singularity of the function $g_{0}$ above, which is discontinuous at 0.
\par
Let us now take a look at the case $\sigma>0$, which turns out to be more
computationally involved.
We have from \eqref{526526}
\begin{align}\label{}
4\pi i{a_{21}}(\tau, \sigma)
&=
\int_{\R}H(t) \sech(t/2) e^{-i4\pi \sigma\coth(t/2)}
e^{2i\pi t\tau}
dt=\check{\widehat{g_{\sigma}}}(\tau),
\\
g_{\sigma}(t)&=H(t) \sech(t/2) e^{-i4\pi \sigma\coth(t/2)}.
\end{align}
The single discontinuity at $t=0$ of $g_{\sigma}$ when $\sigma>0$ is much wilder than for $\sigma=0$: in the latter case, we had only a jump discontinuity with different limits on both sides,
whereas when $\sigma>0$, we have an essential discontinuity with an oscillatory behaviour in $(-1,+1)$ when $t\rightarrow 0_{+}$ for the real and imaginary parts of $a_{12}$.
However,  $g_{\sigma}$ belongs to all $L^{p}(\R), p\in[1,+\io]$, so that its Fourier transform
belongs to $L^{p}(\R), p\in[2,+\io]$:
we expect then that both sides of \eqref{master11} have limit $0$ for $\tau\rightarrow +\io$
and we must prove that 
$1-a_{11}$ decays much faster  than $a_{12}$.
Looking at a slightly simplified model and using the notations \eqref{5251},
we define for $\omega, \nu$ positive, a function $\alpha$ presumably close to $4\pi i a_{21}$,
given by 
\begin{equation}
\alpha(\omega, \nu)=\int_{0}^{+\io}
e^{i2\omega\mu_{\nu}(s)} \sech(s)  ds, \quad \mu_{\nu}(s)=s-\frac{ \nu^{2}} s, \quad \mu'_{\nu}(s)=1+\frac{\nu^{2}}{s^{2}}.
\end{equation}
Trying our hand with  the stationary phase method, we look at 
\begin{multline*}
\alpha(\omega, \nu)=\frac{1}{2i\omega}\int_{0}^{+\io}
\frac{d}{ds}\left\{e^{i2\omega\mu_{\nu}(s)}\right\} \frac{\sech(s) }{\mu'_{\nu}(s)} ds
\\
=\frac{1}{2i \omega}\int_{0}^{+\io}
\frac{d}{ds}\left\{e^{i2\omega\mu_{\nu}(s)}\right\} \frac{s^{2}\sech(s) }{s^{2}+\nu^{2}} ds
\\=\frac{i}{2\omega}\int_{0}^{+\io}
e^{i2\omega\mu_{\nu}(s)}
\frac{d}{ds}\left\{\frac{s^{2}\sech(s) }{s^{2}+\nu^{2}} 
\right\},
\end{multline*}
since the boundary term vanishes.
Iterating that computation shows that $\alpha(\omega,\nu)= O_{\sigma}(\omega^{-N})$ for all $N$
when $\omega\rightarrow+\io$,
meaning that the information of fast decay for $1-a_{11}$ will not suffice to get \eqref{master11}.
Also, it is worth noticing  that no fast decay of the function $\alpha$
occurs when $\omega\rightarrow-\io$, otherwise Lemma \ref{lem.9110} would give smoothness for
the function $s\mapsto e^{-2i\kappa/s}H(s)\sech s$:
in fact we see also that for $\sigma>0$, $\tau=-\lambda$, $\lambda>0$, we have 
$$
2\pi i{a_{21}}(-\lambda, \sigma)
=
\int_{0}^{+\io} \sech(s) e^{-i4\pi \sigma\coth(s)}
e^{-4i\pi s\lambda}
ds,
$$
and the phase function is
$
\tilde \mu(s)=-4i\pi(s\lambda+\sigma\coth(s))
$
and we have 
$$
\frac{d}{ds}\bigl\{s\lambda+\sigma\coth(s)\bigr\}=\lambda-\frac{\sigma(1-\tanh^{2}s)}{\tanh^{2}s}
=\frac{(\lambda+\sigma) \tanh^{2}s-\sigma}{\tanh^{2}s},
$$
which does vanish at $\tanh s=\sigma/(\lambda+\sigma)$.
As a result we could say that, for $\sigma>0$,
 the $\moo$
 wave-front-set (see e.g. Section 8.1 in \cite{MR1996773})
 of the function $g_{\sigma}$ is reduced to $\{0\}\times (-\io,0)$.
 It turns out that we can show that the Gevrey-2 wave-front-set of $g_{\sigma}$
 is
 $\{0\}\times\R^{*}$,
 and it is expressed via the lowerbound estimate \eqref{final};
 the route that we took for proving this was an explicit calculation of $\re a_{12}$,
following the paper \cite{MR2131219}.
Finally the upper bound \eqref{5214++} can be improved as
\begin{equation}\label{5214++++}
\val{1-a_{11}(\tau, \sigma)}\le 
C_{\sigma,\epsilon}e^{-{
(\pi-\epsilon)2\pi\tau}}, \quad \epsilon>0, 
\end{equation}
and is expressing the fact the the function
$t\mapsto \frac{te^{4i\pi\sigma
\tanh(\frac{t}2)
}}{\sinh(t/2)} $ is analytic on the real line,
with a radius of convergence on the real line  bounded below by $\pi$
(cf. Proposition \ref{pro.92}).
 \subsubsection{More results and examples: $\ell^{p}$ balls, corners}\label{sec.532zae}
 \index{corners}
For $a,\phi_{0}$ like in  Corollary \ref{cor.55},
defining
$$
\Omega_{p}=\{(x,\xi)\in \R^{2}, \val{x-\frac a2}^{p}+\val{\xi-\frac a2}^{p}<\bigl(\frac a 2\bigr)^{p}\},
$$
since $\mathcal W(\phi_{0},\phi_{0})\in \mathscr S(\R^{2})$, we get  
$$
\lim_{p\rightarrow+\io}\iint_{\Omega_{p}}  \mathcal W(\phi_{0},\phi_{0})(x,\xi) dx d\xi
=\iint_{[0,a]^{2}} \mathcal W(\phi_{0},\phi_{0})(x,\xi) dx d\xi>\norm{\phi_{0}}_{L^{2}(\R)}^{2},
$$
proving that the spectrum of $\opw{\indic{\Omega_{p}}}$ intersects $(1,+\io)$ for $p$ large enough,
showing that a counterexample to Flandrin's conjecture can be a convex analytic open bounded set.
Moreover, defining
$$
Q_{a}=
\{(x,\xi) \in \R^{2}, \val x+\val \xi\le a/\sqrt 2\},
$$
we note that $Q_{a}$ is obtained  by rotation and translation of $[0,a]^{2}$
so that 
we can find $\phi_{1}$ in the Schwartz space such that 
$$
\iint_{Q_{a}} \mathcal W(\phi_{1},\phi_{1})(x,\xi) dx d\xi>\norm{\phi_{1}}_{L^{2}(\R)}^{2}.
$$
Since we have  
$$
\lim_{p\rightarrow 1}\hskip-13pt\iint_{\substack{\val x^{p}+\val \xi^{p}\le (a/\sqrt 2)^{p}}}
\hskip-12pt \mathcal W(\phi_{1},\phi_{1})(x,\xi) dx d\xi=
 \iint_{Q_{a}} \mathcal W(\phi_{1},\phi_{1})(x,\xi) dx d\xi>\norm{\phi_{1}}_{L^{2}(\R)}^{2},
$$
we get that for $p-1$ small enough we have
\begin{equation}
\iint_{\substack{\val x^{p}+\val \xi^{p}\le (a/\sqrt 2)^{p}}}
\hskip-12pt \mathcal W(\phi_{1},\phi_{1})(x,\xi) dx d\xi>\norm{\phi_{1}}_{L^{2}(\R)}^{2},
\end{equation}
proving that $\ell^{p}$ balls are counterexamples to Flandrin's conjecture for $p-1$ or $1/p$ small enough.
\vs
\index{convex cones}
{\it Convex affine cones with aperture strictly less than $\pi$ of $\R^{2}$} are translations and rotations of
\begin{equation}\label{539iua}
\Sigma_{\theta_{0}}=\{(x,\xi)\in \R^{2}\backslash (\R_{-}\times \{0\}),\ \arg (x+i\xi)\in (0,\theta_{0}) \}, \quad \text{for some $\theta_{0}\in (0,\pi)$.}
\end{equation}
The vertex of $\Sigma_{\theta_{0}}$ and its rotations is defined as  0
 and the vertex of the translation of vector $T_{0}$ of
 $\Sigma_{\theta_{0}}$ is defined as $T_{0}$. 
 We note that all
convex affine cones with aperture strictly less than $\pi$  are symplectically equivalent in $\R^{2}$,
since $\Sigma_{\theta_{0}}$ is symplectically equivalent to (the interior of)
the quarter plane $\Sigma_{\pi/2}$:
indeed let $\theta_{0}$ be in $(0,\pi)$; the symplectic matrix $M_{\theta_{0}}$ defined by
$$
M_{\theta_{0}}=\mat22{1}{-\cotan \theta_{0}}{0}{1},
$$
is such that 
$
M_{\theta_{0}}\matdu{1}{0}=\matdu{1}{0}, \quad 
M_{\theta_{0}}\matdu{\cos \theta_{0}}{\sin \theta_{0}}=\matdu{0}{\sin \theta_{0}},
$
proving that  $$M_{\theta_{0}}\Sigma_{\theta_{0}}=\Sigma_{\pi/2}.$$
The next  result follows from Theorem 1.3 in \cite{DDL}
and
shows that many counterexamples to 
 Flandrin's conjecture can be be obtained.
\begin{theorem}\label{thm.517ytu}
 Let $K$ be a subset of the closure of a convex affine cone with aperture strictly less than $\pi$ 
 and vertex $X_{0}$ such that $K$ contains a neighborhood of the vertex in the cone\footnote{We shall say that the set $K$ has a corner.}. Then there exists
 $\lambda>0$ such that, with 
 $$
 K_{\lambda}=X_{0}+\lambda(K-X_{0}),
 $$
 there exists $\phi\in \mathscr S(\R)$ such that
 \begin{align}\label{539}
\iint_{K_{\lambda}} \mathcal W(\phi, \phi)(x,\xi) dx d\xi>\norm{\phi}_{L^{2}(\R)}^{2}.
\end{align}
\end{theorem}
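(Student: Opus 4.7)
The plan is to reduce, via affine symplectic covariance, to the standard situation in which the vertex sits at the origin and the cone is the closed quarter plane $\overline{\Sigma_{\pi/2}}$, and then to combine the quarter-plane counter-example to Flandrin's conjecture provided by Corollary \ref{cor.55} with a Schwartz tail estimate for the Wigner distribution.

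First I would use the phase translation $\tau_{X_0}$ of \eqref{phtrans} composed with the linear symplectic transformation $M_{\theta_0}$ displayed just before \eqref{539iua}, producing an affine symplectic map $\Phi\colon X\mapsto S(X-X_0)$ sending $\overline{C}$ onto $\overline{\Sigma_{\pi/2}}$. The affine version of Segal's formula \eqref{segal+} furnishes a unitary operator $\mathcal{U}$ on $L^{2}(\R)$ (the composition of a phase translation with a metaplectic operator) preserving $\mathscr{S}(\R)$ and satisfying $\mathcal{W}(\mathcal{U}\phi,\mathcal{U}\phi)=\mathcal{W}(\phi,\phi)\circ\Phi^{-1}$. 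Since $\Phi$ is area-preserving, and since a direct computation yields $\Phi(K_{\lambda})=\lambda\,\Phi(K)$, the change of variables $Y=\Phi(X)$ gives
\begin{equation*}
\iint_{K_\lambda}\mathcal{W}(\phi,\phi)(X)\,dX=\iint_{\lambda E}\mathcal{W}(\mathcal{U}\phi,\mathcal{U}\phi)(Y)\,dY,\qquad E:=\Phi(K)\subset\overline{\Sigma_{\pi/2}}.
\end{equation*}
The hypothesis that $K$ contains a neighborhood of $X_{0}$ in the cone transfers to $E\supset B(0,r)\cap\overline{\Sigma_{\pi/2}}$ for some $r>0$, whence $\lambda E\supset B(0,\lambda r)\cap\overline{\Sigma_{\pi/2}}$ for every $\lambda>0$. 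It therefore suffices to exhibit $\psi\in\mathscr{S}(\R)$ and $\lambda>0$ such that $\iint_{\lambda E}\mathcal{W}(\psi,\psi)>\norm{\psi}_{L^{2}(\R)}^{2}$; the required $\phi$ is then $\mathcal{U}^{-1}\psi\in\mathscr{S}(\R)$.

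Next I would invoke Corollary \ref{cor.55}, which supplies $\psi\in\mathscr{S}(\R)$ with $\norm{\psi}_{L^{2}(\R)}=1$ and
\begin{equation*}
\varepsilon:=\iint_{\overline{\Sigma_{\pi/2}}}\mathcal{W}(\psi,\psi)(X)\,dX-1>0.
\end{equation*}
Because $\mathcal{W}(\psi,\psi)\in\mathscr{S}(\R^{2})\subset L^{1}(\R^{2})$, absolute continuity of the Lebesgue integral yields $R>0$ with $\iint_{\val{X}>R}\val{\mathcal{W}(\psi,\psi)(X)}\,dX<\varepsilon/2$. Pick any $\lambda>R/r$. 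From $\lambda E\subset\overline{\Sigma_{\pi/2}}$ together with $\lambda E\supset B(0,R)\cap\overline{\Sigma_{\pi/2}}$ one obtains $\overline{\Sigma_{\pi/2}}\setminus\lambda E\subset\{\val{X}>R\}$, and hence
\begin{equation*}
\iint_{\lambda E}\mathcal{W}(\psi,\psi)\ge (1+\varepsilon)-\iint_{\val{X}>R}\val{\mathcal{W}(\psi,\psi)}>1+\frac{\varepsilon}{2}>1,
\end{equation*}
which after the reduction delivers \eqref{539}.

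The only delicate point is the affine symplectic reduction: one must assemble the covariance of $\mathcal{W}$ under the full affine metaplectic group from \eqref{segal+}, \eqref{phtrans}, and the explicit description of $M_{\theta_0}$ given around \eqref{539iua}, and check that $\mathcal{U}$ preserves $\mathscr{S}(\R)$ while $\Phi$ preserves the inclusion $K\subset\overline{C}$ and the neighborhood-of-vertex condition. Once this setup is in place, the rest of the argument is a short Schwartz tail estimate sitting on top of the quarter-plane counter-example of Theorem \ref{thm.54} and Corollary \ref{cor.55}.
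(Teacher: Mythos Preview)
Your proof is correct and follows essentially the same approach as the paper: both reduce via affine symplectic covariance to the standard quarter-plane configuration, invoke Corollary~\ref{cor.55} to obtain a Schwartz function whose Wigner integral over $\overline{\Sigma_{\pi/2}}$ exceeds $1$, and then use the integrability of $\mathcal W(\psi,\psi)\in\mathscr S(\R^{2})$ to pass to a large but finite dilation. The paper phrases the last step as a dominated-convergence limit $\iint_{K_\lambda}\mathcal W(\phi_0,\phi_0)\to\iint_{\Sigma_{\pi/2}}\mathcal W(\phi_0,\phi_0)$ whereas you write out an explicit tail estimate, but this is only a cosmetic difference.
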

\begin{nb}
 Note that \eqref{539} implies that $\phi$ is not the zero function. Also, taking $K$ convex produces another counterexample to Flandrin's conjecture since $K_{\lambda}$ will be then convex, but
 we do not need that assumption to proving the result.
\end{nb}
\begin{proof}
 There is no loss of generality at assuming $X_{0}=0$ and  $$
[0,\rho_{0}]^{2}\subset K\subset\overline{\Sigma}_{\pi/2}, \quad \rho_{0}>0.
 $$
 Using Corollary \ref{cor.55}, we find $\phi_{0}\in \mathscr S(\R)$ (so that 
 $\mathcal W(\phi_{0},\phi_{0})\in \mathscr S(\R^{2})$) such that
 $$\lim_{\lambda\rightarrow+\io}
 \iint_{K_{\lambda}}\mathcal W(\phi_{0},\phi_{0})(x,\xi) dx d\xi=
 \iint_{{\Sigma}_{\pi/2}}\mathcal W(\phi_{0},\phi_{0})(x,\xi) dx d\xi>\norm{\phi_{0}}_{L^{2}(\R)}^{2},
 $$
 implying for $\lambda$ large enough that 
 $
 \iint_{K_{\lambda}}\mathcal W(\phi_{0},\phi_{0})(x,\xi) dx d\xi>\norm{\phi_{0}}_{L^{2}(\R)}^{2},
 $
 which is the sought result.
\end{proof}
\subsection{Numerics}
\begin{defi}\label{def.518}
Let $\sigma\ge 0$ be given.
 With the $2\times 2$ Hermitian matrix $\mathcal M_{\sigma}$ given by \eqref{2525},
 we define for $\tau\in \R$, 
 \begin{align}
\lambda_{+}(\tau, \sigma)
&=\frac12\left(a_{11}(\tau, \sigma)\!+\!\sqrt{a_{11}^{2}(\tau,\sigma)
+4\val{a_{12}(\tau,\sigma)}^{2}
}\right),
\label{est001-}\\
\lambda_{-}(\tau, \sigma)
&=\frac12\left(a_{11}(\tau, \sigma)\!-\!\sqrt{a_{11}^{2}(\tau,\sigma)
+4\val{a_{12}(\tau,\sigma)}^{2}
}\right).\label{est002-}
\end{align}
 \end{defi}
 \begin{rem}According to \eqref{zeze}, we have
\begin{align}
\lambda_{+}(\tau, \sigma)
&=\frac12\left(a_{11}(\tau, \sigma)\!+\!\sqrt{a_{11}^{2}(\tau,\sigma)
\bigl(1+e^{-4\pi^{2}\tau}\bigr)
+4{ (\re a_{12}(\tau,\sigma)})^{2}
}\right),
\label{est001++}\\
\lambda_{-}(\tau, \sigma)
&=\frac12\left(a_{11}(\tau, \sigma)-\sqrt{a_{11}^{2}(\tau,\sigma)
\bigl(1+e^{-4\pi^{2}\tau}\bigr)
+4{ (\re a_{12}(\tau,\sigma)})^{2}
}\right),\label{est002++}
\end{align} 
so that the knowledge of $a_{11}$ and $\re{a_{12}}$ suffices for  expressing  $\lambda_{\pm}$.
\end{rem}
An immediate consequence of Theorem \ref{thm.59} is 
\begin{theorem}
Let $\sigma\ge 0$ be given and let $A_{\sigma}$ be the self-adjoint operator bounded in $L^{2}(\R)$ defined in Theorem \ref{thm.59}. With the notations of Definition \ref{def.518},
we have 
\begin{align}
M_{\sigma}:=\sup\{\text{\rm spectrum} (A_{\sigma})\}
&=\sup_{\tau\in \R}
\lambda_{+}(\tau, \sigma),
\label{est001}\\
m_{\sigma}:=\inf\{\text{\rm spectrum} (A_{\sigma})\}
&=\inf_{\tau\in \R}
\lambda_{-}(\tau, \sigma).
\label{est002}
\end{align}
Moreover for all $\sigma\ge 0$ we have 
\begin{equation}
m_{\sigma}<0<1<M_{\sigma}.
\end{equation}

\end{theorem}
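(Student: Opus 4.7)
The plan is to read off the spectrum of $A_\sigma$ from its unitary model as a matrix-valued Fourier multiplier, and then invoke the already-established qualitative result (Theorem \ref{thm.59}) for the strict inequalities.

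First, by Theorem \ref{thm.calculs}, the unitary $\Psi\colon L^{2}(\R)\to L^{2}(\R;\C^{2})$ defined in \eqref{5113oi} intertwines $A_\sigma$ with the Fourier multiplier $\widetilde A_\sigma$ associated with the Hermitian matrix symbol $\mathcal M_\sigma(\tau)$ given in \eqref{5219+}; hence $A_\sigma$ and $\widetilde A_\sigma$ have the same spectrum. For such a bounded matrix-valued Fourier multiplier with continuous symbol on $\R$, I would use the standard fact that
$$
\mathrm{spectrum}(\widetilde A_\sigma)=\overline{\bigcup_{\tau\in\R}\mathrm{spectrum}(\mathcal M_\sigma(\tau))},
$$
already recorded in Remark \ref{rem.57uchg}. (The ``$\supseteq$'' direction is proved by producing quasimodes concentrated in frequency near a fixed $\tau_0$ where an eigenvalue $\lambda$ is attained; the ``$\subseteq$'' direction follows from functional calculus for Fourier multipliers, since away from the range of eigenvalues, $(\mathcal M_\sigma(\tau)-\lambda)^{-1}$ exists as a uniformly bounded symbol and thus defines a bounded inverse multiplier.)

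Next, since $\mathcal M_\sigma(\tau)$ has the specific form $\begin{pmatrix}a_{11}&a_{12}\\\overline{a_{12}}&0\end{pmatrix}$, its characteristic polynomial is $\lambda^{2}-a_{11}(\tau,\sigma)\lambda-|a_{12}(\tau,\sigma)|^{2}$, so the two (real) eigenvalues are precisely $\lambda_{\pm}(\tau,\sigma)$ from Definition \ref{def.518}. Combining this with the continuity of $\tau\mapsto\mathcal M_\sigma(\tau)$ (clear from the defining integrals \eqref{5211} and \eqref{5215++}), I obtain
$$
M_\sigma=\sup\,\mathrm{spectrum}(A_\sigma)=\sup_{\tau\in\R}\lambda_{+}(\tau,\sigma),\qquad
m_\sigma=\inf\,\mathrm{spectrum}(A_\sigma)=\inf_{\tau\in\R}\lambda_{-}(\tau,\sigma),
$$
where the closure in the spectral formula is absorbed by the $\sup$ and $\inf$.

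For the strict inequalities $m_\sigma<0<1<M_\sigma$, there is essentially nothing new to do: Theorem \ref{thm.59} already asserts that $\mathrm{spectrum}(A_\sigma)$ meets both $(-\infty,0)$ and $(1,+\infty)$, which immediately yields $m_\sigma<0$ and $M_\sigma>1$. Concretely, this was obtained by showing that for $\tau$ large enough (depending on $\sigma$), Lemma \ref{lem.510--}'s criterion $|a_{12}(\tau,\sigma)|^{2}>1-a_{11}(\tau,\sigma)$ holds, via the exponential upper bound \eqref{5214++} on $1-a_{11}$ together with the lower bound \eqref{boua21} on $\re a_{21}$; this forces $\lambda_{-}(\tau,\sigma)<0<1<\lambda_{+}(\tau,\sigma)$ for such $\tau$. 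The main obstacle throughout is not in this final packaging step but in the delicate asymptotic estimate on $\re a_{21}$ that made Theorem \ref{thm.59} possible; once that is granted, the present theorem is essentially a spectral-theoretic reformulation.
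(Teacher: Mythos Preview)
Your proposal is correct and follows the paper's approach: the paper states this result as ``an immediate consequence of Theorem \ref{thm.59}'', and you have simply spelled out the details---the spectral description at the end of Theorem \ref{thm.59} identifies the spectrum of $A_\sigma$ with the closure of the eigenvalues $\lambda_\pm(\tau,\sigma)$ of $\mathcal M_\sigma(\tau)$, giving \eqref{est001}--\eqref{est002}, while the strict inequalities are exactly \eqref{5314++}. One minor point: the general-$\sigma$ version of the spectral fact you cite from Remark \ref{rem.57uchg} (which is the $\sigma=0$ case) is actually stated in the last sentence of Theorem \ref{thm.59} itself.
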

\subsubsection{The quarter-plane: \texorpdfstring{$\sigma=0$}{sigma=0}}
Of course, as shown by the respective calculations of Sections \ref{sec.51} and \ref{sec.52}, the  case $\sigma=0$, dealing with the quarter-plane 
is much simpler than the cases where $\sigma>0$. Nonetheless we know explicitly a spectral decomposition of the operator with Weyl symbol
$H(x) H(\xi)$ from Theorem \ref{thm.calculs},
but we can calculate without difficulty
numerical expressions of $M_{0},m_{0}$ as defined in \eqref{est001}, \eqref{est002}.
\begin{pro}\label{pro.521kjs}We have  from \eqref{9536}, \eqref{5229},
$$
 a_{11}(\tau,0)
 =\frac{1}{1+e^{-4\pi^{2}\tau}}, \quad 
 \re a_{12}(\tau,0)=\frac1{4\pi}
\int_{0}^{+\io}
\sin(2\pi t\tau)
\sech(t/2) dt,
$$
and we can use these formulas and \eqref{est001++}, \eqref{est002++},
\eqref{est001}, \eqref{est002} to calculate numerically 
\begin{align}
{\color{red} M_{0}}&{\color{red}\approx \mathbf {  1.00767997007003}}, \quad (\text{$\lambda_{+}(\tau,0)$ at  $\tau\approx 0.138815397930141$}),
\label{m05dfte}\\
m_{0}&\approx -0.155939843191243,\quad (\text{$\lambda_{-}(\tau,0)$ at  $\tau\approx -0.0566304954736227).$}
\label{m05df++}\end{align}
\end{pro}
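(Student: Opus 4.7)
The plan falls naturally into three steps, two analytical and one numerical.

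\textbf{Step 1: the closed form for $a_{11}(\tau,0)$.} From \eqref{529} and Theorem \ref{thm.calculs} one has $a_{11}(\tau,0) = \frac12 + \hat T_0(\tau)$, and setting $\sigma=0$ in \eqref{525+} reduces $T_0$ to the principal-value distribution $t\mapsto \frac{i}{4\pi\sinh(t/2)}$ (the factor $t/\sinh(t/2)$ is smooth and non-vanishing at $0$, so no $\mathrm{pv}$ ambiguity remains). I would then compute $\hat T_0(\tau)$ by writing $e^{-2i\pi t\tau}=\cos(2\pi t\tau)-i\sin(2\pi t\tau)$: the cosine piece is a $\mathrm{pv}$-integral of the odd function $\cos(2\pi t\tau)/\sinh(t/2)$ and vanishes, while the sine piece reduces to the classical evaluation (quoted in the appendix as \eqref{9536})
\[
\int_0^{+\io}\frac{\sin(at)}{\sinh(bt)}\,dt = \frac{\pi}{2b}\tanh\!\Bigl(\frac{\pi a}{2b}\Bigr),
\]
applied with $a=2\pi\tau,\ b=1/2$. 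This gives $\hat T_0(\tau)=\tfrac12\tanh(2\pi^2\tau)$, whence $a_{11}(\tau,0)=\tfrac12(1+\tanh(2\pi^2\tau)) = 1/(1+e^{-4\pi^2\tau})$, matching the announced formula.

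\textbf{Step 2: the formula for $\re a_{12}(\tau,0)$.} This is immediate from \eqref{5229}: with $\sigma=0$ the term $-4\pi\sigma/\tanh(t/2)$ disappears from the phase and one is left precisely with the asserted integral. No further work is needed here.

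\textbf{Step 3: numerical evaluation of $M_0$ and $m_0$.} By Theorem \ref{thm.59} and Remark \ref{rem.57uchg}, the spectrum of $A_0$ is the closure of the set of eigenvalues of $\mathcal M_0(\tau)$, so $M_0 = \sup_\tau \lambda_+(\tau,0)$ and $m_0 = \inf_\tau \lambda_-(\tau,0)$. Identity \eqref{zeze} expresses $\im a_{12}$ algebraically in terms of $a_{11}$, hence the formulas \eqref{est001++}--\eqref{est002++} reduce $\lambda_\pm(\tau,0)$ to explicit elementary expressions in the two scalars computed in Steps~1--2. The explicit form of $a_{11}$ shows that $1-a_{11}(\tau,0)=O(e^{-4\pi^2\tau})$ as $\tau\to+\io$ and $a_{11}(\tau,0)=O(e^{4\pi^2\tau})$ as $\tau\to-\io$, while Lemma \ref{lem.51--} (at $\sigma=0$) gives $\re a_{12}(\tau,0)=\frac{1}{8\pi^2\tau}+O(\tau^{-3})$ at infinity. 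These asymptotics confine the extrema of $\lambda_\pm(\cdot,0)$ to an effectively computable bounded interval of $\tau$; a standard root-finding procedure for $\partial_\tau\lambda_\pm(\tau,0)=0$ on that interval then produces the critical points $\tau\approx 0.1388\ldots$ and $\tau\approx -0.0566\ldots$ and the corresponding values $M_0$ and $m_0$.

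There is no real obstacle: the only analytic input is the residue-based evaluation in Step~1, which is classical, and everything else is either algebraic manipulation using already established identities of the paper (\eqref{5229}, \eqref{zeze}, \eqref{est001++}--\eqref{est002++}) or a finite-precision numerical search confined to a bounded region by the decay estimates above.
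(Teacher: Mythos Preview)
Your argument is correct, but there is one misattribution worth flagging. Equation \eqref{9536} is \emph{not} the classical integral $\int_0^{+\io}\frac{\sin(at)}{\sinh(bt)}\,dt=\frac{\pi}{2b}\tanh(\frac{\pi a}{2b})$; it is the residue identity of Proposition~\ref{pro.921},
\[
a_{11}(\tau,\sigma)=\frac{1}{1+e^{-2\pi\omega}}+\frac{e^{-\pi\omega}}{1+e^{-2\pi\omega}}\,\im\Bigl\{\res{\tfrac{e^{2i(\omega z-\kappa\coth z)}}{\cosh z},0}\Bigr\},
\]
and the paper's intended one-line derivation is simply to set $\kappa=2\pi\sigma=0$ there, so that the function inside the residue becomes holomorphic at $0$ and the second term vanishes. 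Your route---evaluating $\hat T_{0}(\tau)$ directly from \eqref{5211} via the classical sine-over-$\sinh$ integral---is a genuinely different and more elementary path that avoids the contour-integration machinery of Proposition~\ref{pro.921} altogether; it works perfectly well here and yields the same $\frac12\tanh(2\pi^{2}\tau)$, but you should cite the classical integral on its own merits rather than as \eqref{9536}. Steps~2 and~3 are exactly what the paper does.
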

\vs\vs
\begin{figure}[ht]
\centering
\scalebox{0.5}{\includegraphics{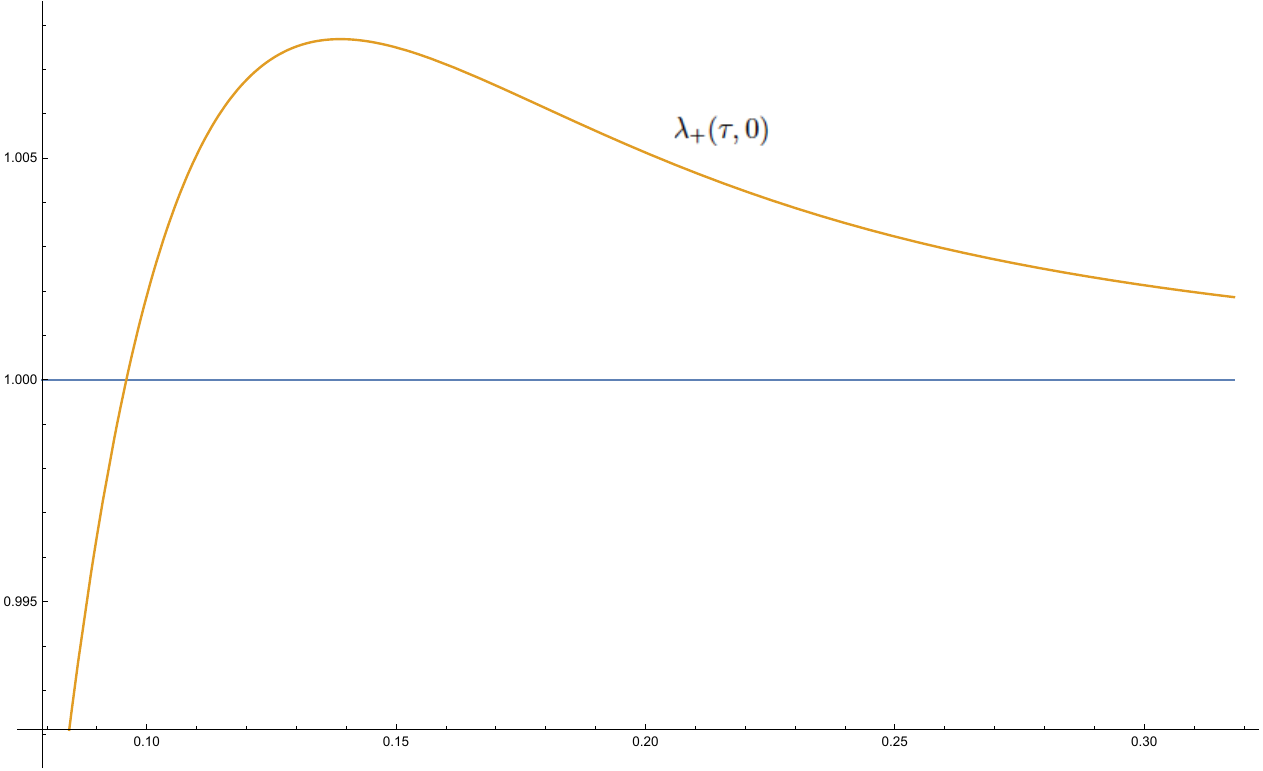}}\par
\caption{
The function $\tau\mapsto \lambda_{+}(\tau, 0)$ near its maximum, well above 1.
}
\label{pic6}
\end{figure}
\begin{figure}[ht]
\centering
\scalebox{0.5}{\includegraphics{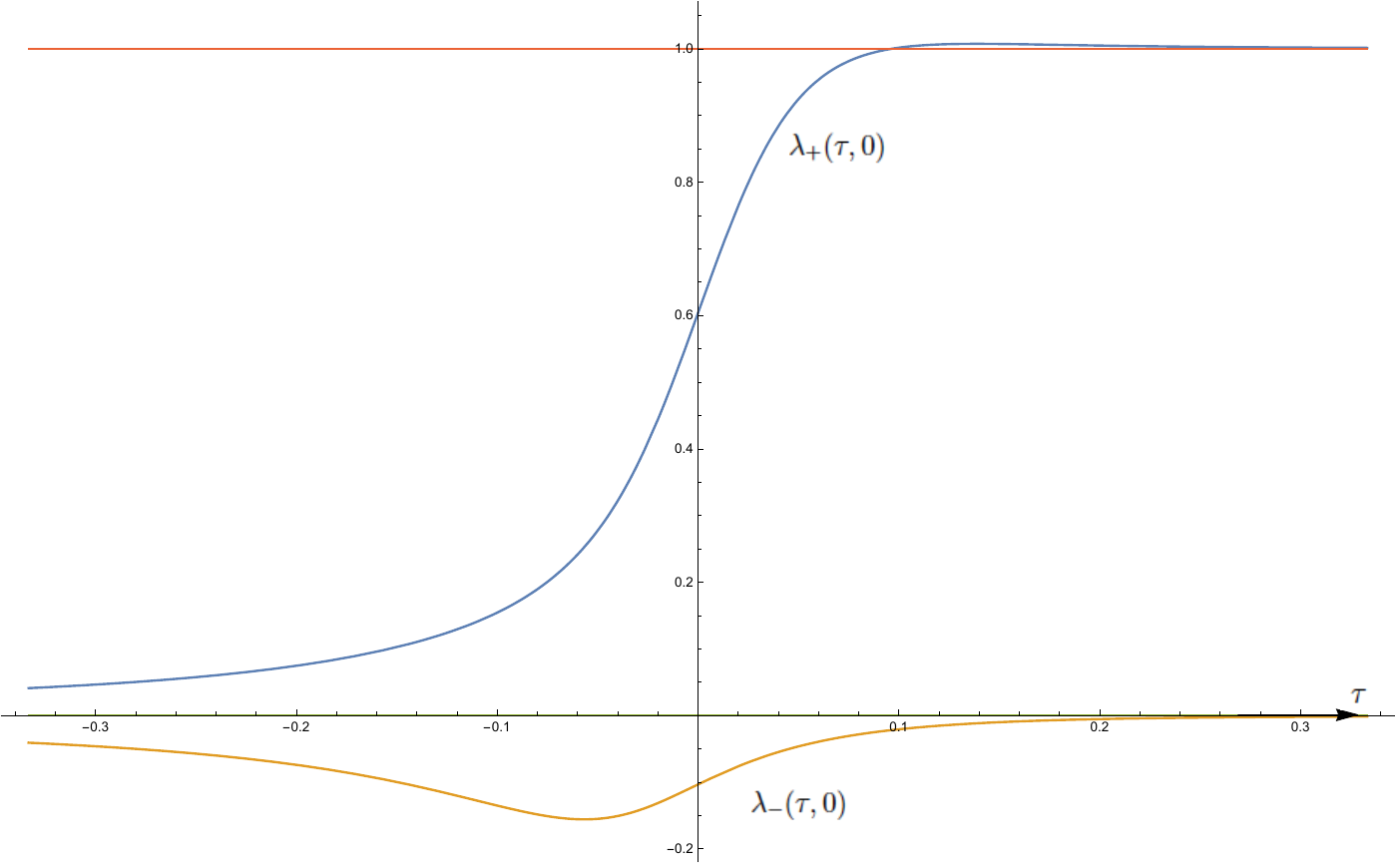}}\par
\caption{
The functions $\tau\mapsto \lambda_{+}(\tau, 0), \lambda_{-}(\tau,0)$.
}
\label{pic7}
\end{figure}
\subsubsection{On hyperbolic regions}
We want now to tackle the case $\sigma>0$.
In order to use the expressions 
\eqref{9536}, \eqref{keyasy} 
respectively for $a_{11}$ and $a_{12}$,
we need first to evaluate  the residue term in \eqref{9536}.
The mapping $z\mapsto \tanh z$ is a biholomorphism of neighborhoods of $0$ in the complex plane, so that  
we have for $z$ near the origin,
\begin{align}
&\zeta=\tanh z,\  d\zeta=(1-\zeta^{2}) dz,\quad
 z=\arcth \zeta=\frac12\ln\left(\frac{1+\zeta}{1-\zeta}\right),
 \\
& \frac{e^{2i\omega z -2i\kappa\coth z}}{\cosh z} dz
=\left(\frac{1+\zeta}{1-\zeta}\right)^{i\omega} e^{-2i\frac\kappa\zeta}
\ \frac{2}{\left(\frac{1+\zeta}{1-\zeta}\right)^{1/2}+\left(\frac{1-\zeta}{1+\zeta}\right)^{1/2}}
\frac{d\zeta}{(1-\zeta^{2})}
\notag\\
&\hskip88pt
=(1+\zeta)^{-\frac12+i\omega}(1-\zeta)^{-\frac12-i\omega}e^{-2i\frac\kappa\zeta}
d\zeta,
\end{align}
so that 
\begin{equation}\label{resnum}
\res{ \frac{e^{2i\omega z -2i\kappa\coth z}}{\cosh z} ,0}
=\res{(1+\zeta)^{-\frac12+i\omega}(1-\zeta)^{-\frac12-i\omega}e^{-2i\frac\kappa\zeta},0}.
\end{equation}
\vs
\begin{pro}
Let $\sigma\ge 0$ be given.
Then for any $\tau\in \R$, 
using  the no\-ta\-tions, 
$\omega=2\pi \tau$, $\kappa=2\pi \sigma$,
we have, 
 for any $\rho\in (0,1)$, 
 \begin{multline}\label{num111}
 a_{11}(\tau, \sigma)
 =\frac{1}{1+e^{-2\pi \omega}}
\\
+\frac{e^{-\pi \omega}}{1+e^{-2\pi \omega}}
\frac{\rho}{2\pi}
\im\left\{
\int_{-\pi}^{\pi}
\exp\left(i\omega\Lg\Bigl(\frac{1+\rho e^{i\theta}}{1-\rho e^{i\theta}}\Bigr)\right)
\frac{e^{-\frac{2i\kappa e^{-i\theta}}{\rho}}  e^{i\theta}}{\sqrt{1-\rho^{2} e^{2i\theta}}}
\  d\theta
\right\}.
\end{multline}
\begin{multline}\label{num222}
\re a_{21}(\tau, \sigma) 
=\frac{e^{-\pi \omega}}{2\pi}
\biggl\{2\int_{0}^{\pi/2}
\frac{e^{(s\omega-\kappa\tan s )}\sinh(s\omega-\kappa\tan s )}{\sin s}
ds
\\
+
\ln\bigl(\coth\frac \pi 4\bigr)
+2\int_{0}^{\pi/2}\frac{\sin^{2}(s\omega-\kappa\tanh s)}{\sinh s}
ds
-
\int_{\pi/2}^{+\io}\frac{\cos2(s\omega-\kappa\tanh s)}{\sinh s}
ds
\biggr\},
\end{multline}
\begin{align}
\im a_{12}(\tau,\sigma)&=
\frac{e^{-\pi \omega}}{2} a_{11}(\tau,\sigma).\hskip33pt&\hskip66pt~
\label{rappel}
\end{align}
\end{pro}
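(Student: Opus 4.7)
The proposition splits into three independent assertions, each of which will be established by invoking an earlier computation in the excerpt and carrying out bookkeeping on top of it.

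Formula \eqref{rappel} is immediate: the identity $\im a_{12}=-\im a_{21}$ together with \eqref{zeze} gives exactly the claim, so nothing more is needed.

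For \eqref{num222} the plan is to start from the explicit representation \eqref{keyasy} of $\re a_{21}(\tau,\sigma)$ furnished by Lemma \ref{keylem++} and perform the change of variable $s=t/2$ throughout. Using $\omega=2\pi\tau$, $\kappa=2\pi\sigma$, this turns the phase $2\pi(t\tau-2\sigma\tan(t/2))$ into $2(s\omega-\kappa\tan s)$ and likewise for the $\tanh$ version; it sends $\sin(t/2)$, $\sinh(t/2)$ to $\sin s$, $\sinh s$, and replaces the integration ranges $[0,\pi]$ and $[\pi,+\io)$ by $[0,\pi/2]$ and $[\pi/2,+\io)$. I then apply $e^{2x}-1=2e^x\sinh x$ in the first integrand to match the $e^{s\omega-\kappa\tan s}\sinh(s\omega-\kappa\tan s)$ numerator, and $1-\cos 2x=2\sin^2 x$ in the second integrand. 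The remaining ``universal'' piece $(\sinh s-\sin s)/(\sinh s\,\sin s)=1/\sin s-1/\sinh s$ I integrate using antiderivatives $\ln\tan(s/2)$ and $\ln\tanh(s/2)$; the $1/s$ divergences at $0$ cancel, and since $\tan(\pi/4)=1$ this yields $-\ln\tanh(\pi/4)=\ln\coth(\pi/4)$. Combining the Jacobian factor $2$ from the substitution with the prefactor $e^{-2\pi^2\tau}/(4\pi)=e^{-\pi\omega}/(4\pi)$ of \eqref{keyasy} then reconstructs every term of \eqref{num222}.

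For \eqref{num111} the plan is to start from the residue identity \eqref{9536}, which expresses $a_{11}(\tau,\sigma)-(1+e^{-2\pi\omega})^{-1}$ as a real prefactor times the imaginary part of the residue at $z=0$ of $\frac{e^{2i\omega z-2i\kappa\coth z}}{\cosh z}$. The biholomorphism $\zeta=\tanh z$, with the computation \eqref{resnum} already carried out in the excerpt, rewrites that residue as $\res{(1+\zeta)^{-\tfrac12+i\omega}(1-\zeta)^{-\tfrac12-i\omega}e^{-2i\kappa/\zeta},\,0}$. Choosing principal branches of the two non-integer powers, each factor $(1\pm\zeta)^{\alpha}$ is holomorphic and single-valued on the open unit disk since $\re(1\pm\zeta)>0$ there; so the integrand is holomorphic on $\{0<|\zeta|<1\}$ (with the essential singularity lumped at $0$), and by Cauchy's theorem the residue coincides with $\tfrac{1}{2\pi i}\oint_{|\zeta|=\rho}(\cdots)\,d\zeta$ for any $\rho\in(0,1)$. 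Parametrising $\zeta=\rho e^{i\theta}$, $\theta\in[-\pi,\pi]$, $d\zeta=i\rho e^{i\theta}\,d\theta$, and grouping $(1+\zeta)^{-\tfrac12+i\omega}(1-\zeta)^{-\tfrac12-i\omega}=(1-\zeta^2)^{-\tfrac12}\exp\bigl(i\omega\,\Lg\tfrac{1+\zeta}{1-\zeta}\bigr)$, the factor $\tfrac{1}{2\pi i}$ together with taking $\im$ (which is permitted because $a_{11}$ is real) converts the expression into the $\tfrac{\rho}{2\pi}\,\im\{\cdots\}$ form of \eqref{num111}.

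The only genuinely delicate step is the contour manipulation just described: one must verify that the branch cuts of $(1\pm\zeta)^{-1/2\pm i\omega}$ can be placed outside the closed disk $|\zeta|\le\rho$, which the principal-branch choice guarantees since $\re(1\pm\zeta)\ge 1-\rho>0$ on that circle; once this is secured, the $\rho$-independence asserted implicitly by \eqref{num111} is simply Cauchy's theorem applied inside the punctured disk. Everything else -- in particular the entire derivation of \eqref{num222} -- is routine bookkeeping on top of Lemma \ref{keylem++}.
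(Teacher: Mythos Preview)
Your proposal is correct and follows essentially the same route as the paper's own proof: \eqref{rappel} is simply \eqref{zeze} (the remark about $\im a_{12}=-\im a_{21}$ is superfluous but harmless), \eqref{num222} is \eqref{keyasy} after the substitution $t=2s$ with the $\csc s-\operatorname{csch} s$ integral evaluated exactly as in Lemma~\ref{lem.913}, and \eqref{num111} comes from plugging the residue identity \eqref{resnum} into \eqref{9536} and parametrising the circle $|\zeta|=\rho$. Your branch-cut discussion and the grouping $(1+\zeta)^{-1/2+i\omega}(1-\zeta)^{-1/2-i\omega}=(1-\zeta^2)^{-1/2}\exp\bigl(i\omega\,\Lg\tfrac{1+\zeta}{1-\zeta}\bigr)$ are sound on $|\zeta|<1$ since $\re(1\pm\zeta)>0$ there, which keeps all principal determinations compatible.
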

\begin{proof}
 Formula \eqref{num111} follows from \eqref{resnum} and \eqref{9536}
 whereas \eqref{num222}
 is \eqref{keyasy} after  a change of variable $t=2s$,
  where the second integral term inside the brackets is evaluated (cf. Lemma \ref{lem.913}); Formula
 \eqref{rappel} is a reminder of \eqref{zeze}.
\end{proof}
\begin{nb}\rm
 Our choice for $\rho$ in the numerical calculations of \eqref{num111}  is $\rho=3/4$,
 which is a good compromise between using a value of $\rho$ clearly away from $1$ (to avoid singularities coming from small denominators in the $\Lg$ term) and minimize the oscillations  and size
 coming from the term $\exp(-2i \kappa \rho^{-1}e^{-i\theta})$; note that the modulus of the latter is
 $$
 \exp(-2 \kappa \rho^{-1}\sin \theta),
 $$
 which is a smooth function of $\rho$ (flat at 0) when $\theta\in [0,\pi]$, but is unbounded for $\rho\rightarrow 0_{+}$  when $\theta\in (-\pi,0)$. There is no surprise here since although the residue does not depend on the choice of $\rho\in (0,1)$, we cannot get the value of that residue by letting $\rho$ go to 0 because of the part of the path in the lower half-plane.
 The argument of $\exp(-2i \kappa \rho^{-1}e^{-i\theta})$ is 
 $
 -2\kappa\rho^{-1}\cos \theta
 $
 and taking $\rho$ too small would be de\-vastating for the calculations
 because of the strong oscillations triggered by the term
 $
 \exp( -2i\kappa\rho^{-1}\cos \theta)
 $
 all over the circle.
 Of course for the evaluation of 
 $\Lg\Bigl(\frac{1+\rho e^{i\theta}}{1-\rho e^{i\theta}}\Bigr)$
 is easier for $\rho$ small, but we have to take into account the constraints in that direction mentioned above.
\end{nb}
\begin{rem}\rm
 It seems easier numerically for the evaluation of $a_{11}$
 to use \eqref{num111}
 rather than any other expression
 (see e.g. Lemma \ref{lem.57}, \eqref{2525}, \eqref{9522}).
 However the following formula could be interesting, theoretically and numerically:
  recalling that $\sinc x=\frac{\sin x}{x}$, 
  we have from \eqref{2525}
\begin{multline}\label{num111+++}
a_{11}(\tau, \sigma)=\frac12+\frac{2\omega}\pi
\int_{0}^{+\io}
\sinc(2\omega s)\frac{s}{\sinh s} \cos (2\kappa \tanh s) ds
\\
-\frac{2\kappa}\pi
\int_{0}^{+\io}
\sinc(2\kappa s)\frac{1}{\cosh s} \cos (2\omega s) ds,
\end{multline}
 but it turns out that numerical calculations involving \eqref{num111+++}
 seem to be less reliable than the methods using \eqref{num111}.
 \end{rem}
 We can also take a look at the following curves.
 \begin{figure}[ht]
\centering
\scalebox{0.65}{\includegraphics[angle=0,height=400pt,width=1.5\textwidth]{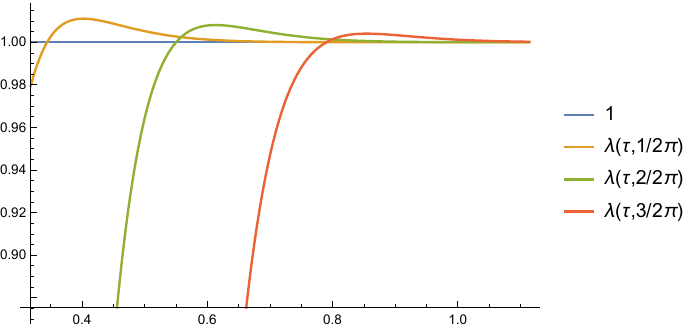}}\par
\caption{
Functions $\lambda_{+}(\tau,\kappa/2\pi)$ with $\kappa=1,2,3$: their maxima are strictly greater than 1.
}
\label{pic8}
\end{figure}
\begin{rem}\rm
 In the above figure, in order to put the three curves on the same picture, we have used three different logarithmic scales on the vertical axis, namely we have drawn
 $$
 \tau\mapsto 1+ \alpha_{j}\Lg\bigl(\lambda_{+}(\tau, \sigma_{j})\bigr), \quad 1\le j\le 3,
 \sigma_{j}=j/2\pi,
 \alpha_{1}=20, \alpha_{2}=100, \alpha_{3}=500.
 $$
 Of course we have 
 $$
 1+ \alpha_{j}\Lg\bigl(\lambda_{+}(\tau, \sigma_{j})\bigr)>1\Longleftrightarrow
 \Lg\bigl(\lambda_{+}(\tau, \sigma_{j})\bigr)>0\Longleftrightarrow
 \lambda_{+}(\tau, \sigma_{j})>1,
 $$
 so that the piece of curves in Figure \ref{pic8} which are above 1 are indeed
 corresponding to curves of $\tau\mapsto \lambda_{+}(\tau,\sigma_{j})$ which go strictly above the threshold 1. We have also
 \begin{align*}\label{}
\max_{\tau}\lambda_{+}(\tau,\sigma_{1})&\approx1+55{\scriptstyle\times}10^{-5}
\hs\text{ at $\tau\approx 0.402030$, }
\\
\max_{\tau}\lambda_{+}(\tau,\sigma_{2})&\approx1+8{\scriptstyle\times}10^{-5}
\hs\text{ at $\tau\approx 0.613262$,}
\\
\max_{\tau}\lambda_{+}(\tau,\sigma_{3})&\approx1+10^{-5}
\hs\text{ at $\tau\approx 0.854746$.}
\end{align*}
We are glad to have a theoretical proof of Theorem \ref{thm.59}
since the numerical analysis of cases where $\sigma$ is large, say larger than 10, seem to be very difficult to achieve,
at least through a standard use of {\tt Mathematica}.
The reason for that is quite clear since using our Lemma \ref{lem.510--}, we did study the function
$\beta$ defined by 
\begin{equation}\label{funbet}
\beta(\tau,\sigma)=\val{a_{12}(\tau,\sigma)}^{2}+a_{11}(\tau,\sigma)-1,
\end{equation}
and proved that for each $\sigma\ge 0$ there exists $T_{0}(\sigma)$ such that for all $\tau\ge T_{0}
(\sigma)$ we have $\beta(\tau,\sigma)>0$ and $a_{12}(\tau,\sigma)\not=0$.
Thanks to Lemma \ref{lem.57} and \eqref{boua21} we knew that for $\tau\ge T_{0}
(\sigma)$, we had 
$$
\val{1-a_{11}}\le 2e^{-{\pi^2\tau}} e^{4\pi \sigma}\ll
\frac{e^{-16\pi\sqrt\tau
\sqrt\sigma}}{2^{8}\pi^{6}\tau^{2}}\le (\re a_{21})^{2}\le \val{a_{12}}^{2},
$$
where the second inequality $\ll$ is in fact comparing for $\sigma$ fixed two exponential decays. The numerical analysis of that inequality is certainly quite difficult when $\sigma$ and $\tau$ are large since both sides are converging to zero quite fast for $\sigma$ fixed and $\tau\rightarrow +\io$;
of course taking the logarithm of both sides 
looks quite reasonable, but in practice does not seem really easy numerically.
When $\sigma=0$, the situation is much better,
since we had to compare (cf. Subsection \ref{sec531}) an exponential decay 
$\val{1-a_{11}}\le 2e^{-{\pi^2\tau}}$
to a polynomial decay
$$
\val{\re a_{12}}^{2}\sim \frac{1}{2^{6} \pi^{4} \tau^{2}},\quad \tau\rightarrow +\io,
$$
and this could be an {\sl a posteriori} explanation for which our numerical argument in \cite{DDL}
worked smoothly  to disprove Flandrin's conjecture.
So to pick-up the quarter-plane (\eqref{5014} with $\sigma=0$)
to produce a counterexample to that conjecture was indeed a very wise choice:
if you choose instead $\mathcal C_{\sigma}$ for $\sigma$ large, our Theorem \ref{thm.59} shows that it is also a counterexample to Flandrin's conjecture\footnote{As a convex subset of the plane on which the integral of the Wigner distribution of some normalized pulse is $>1$. }, but we have a theoretical proof  for that Theorem and if we were depending on a numerical analysis, it is quite likely that checking numerically the positivity of the function $\beta$ defined in \eqref{funbet}
could be rather difficult, even say for $\sigma=10$.
\end{rem}
\section{Unboundedness is Baire generic}\label{sec.baire}
\index{Baire genericity}
In this section we show that for plenty of subsets $E$ of the phase space $\RZ$, the operator
$\opw{\mathbf 1_{E}}$ is not bounded on $L^2(\R^n)$.
\par\no
{\bf Acknowledgements.}
The author is grateful to H.G.~Feichtinger and K.~Gr\"ochenig for sharp comments on a first version of this section.
\subsection{Preliminaries}
\subsubsection{Prolegomena}
\begin{lem}\label{lem51}
 Let $u,v\in L^2(\R^n)$ and let $\mathcal W(u,u),$
 $\mathcal W(v,v),$ be their    Wigner distributions.
 Then 
 we have 
 $$
 \norm{\mathcal W(u,u)-\mathcal W(v,v)}_{L^2(\RZ)}
 \le \norm{u-v}_{L^2(\R^n)}\bigl(\norm{u}_{L^2(\R^n)}
    +
    \norm{v}_{L^2(\R^n)}\bigr).
 $$
 As a consequence if a sequence $(u_{k})$ is converging in $L^2(\R^n)$, then the sequence  $(\mathcal W(u_{k}, u_{k}))$ converges in $L^2(\RZ)$ towards $\mathcal W(u,u)$.
\end{lem}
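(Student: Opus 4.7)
The plan is to exploit the sesquilinearity of $(u,v)\mapsto \mathcal W(u,v)$ together with the isometry identity \eqref{norm} already established in the excerpt. Observe that
\begin{equation*}
\mathcal W(u,u)-\mathcal W(v,v)=\mathcal W(u-v,u)+\mathcal W(v,u-v),
\end{equation*}
which is obtained directly by writing $\mathcal W(u,u)=\mathcal W\bigl((u-v)+v,u\bigr)=\mathcal W(u-v,u)+\mathcal W(v,u)$ and then $\mathcal W(v,u)=\mathcal W(v,(u-v)+v)=\mathcal W(v,u-v)+\mathcal W(v,v)$, using sesquilinearity (linear in the first slot, antilinear in the second) of $\mathcal W$, which is inherited from the definition \eqref{funcome}--\eqref{wigner}.

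Next I would apply the triangle inequality in $L^2(\RZ)$ and then invoke \eqref{norm}, which gives $\norm{\mathcal W(f,g)}_{L^2(\RZ)}=\norm{f}_{L^2(\R^n)}\norm{g}_{L^2(\R^n)}$ for all $f,g\in L^2(\R^n)$. Thus
\begin{align*}
\norm{\mathcal W(u,u)-\mathcal W(v,v)}_{L^2(\RZ)}
&\le \norm{\mathcal W(u-v,u)}_{L^2(\RZ)}+\norm{\mathcal W(v,u-v)}_{L^2(\RZ)}\\
&=\norm{u-v}_{L^2(\R^n)}\norm{u}_{L^2(\R^n)}+\norm{v}_{L^2(\R^n)}\norm{u-v}_{L^2(\R^n)}\\
&=\norm{u-v}_{L^2(\R^n)}\bigl(\norm{u}_{L^2(\R^n)}+\norm{v}_{L^2(\R^n)}\bigr),
\end{align*}
which is the claimed inequality.

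For the consequence, suppose $u_k\to u$ in $L^2(\R^n)$. Then $(\norm{u_k}_{L^2(\R^n)})_{k\ge 1}$ is bounded by some constant $M$, and the estimate just proved yields
\begin{equation*}
\norm{\mathcal W(u_k,u_k)-\mathcal W(u,u)}_{L^2(\RZ)}\le \norm{u_k-u}_{L^2(\R^n)}\bigl(M+\norm{u}_{L^2(\R^n)}\bigr)\xrightarrow[k\to\infty]{}0,
\end{equation*}
so $\mathcal W(u_k,u_k)\to\mathcal W(u,u)$ in $L^2(\RZ)$. There is no real obstacle here: the proof is a one-line algebraic identity followed by a direct application of the Wigner isometry; the only thing to be slightly careful about is getting the decomposition of the difference right, since $\mathcal W$ is antilinear in the second argument so naive linearity cannot be invoked without specifying the slot.
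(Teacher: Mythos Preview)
Your proof is correct and follows exactly the same approach as the paper: the decomposition $\mathcal W(u,u)-\mathcal W(v,v)=\mathcal W(u-v,u)+\mathcal W(v,u-v)$ via sesquilinearity, followed by the triangle inequality and the isometry identity \eqref{norm}. You even spell out the consequence for sequences in slightly more detail than the paper does.
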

\begin{proof}
 We have by sesquilinearity
 $$
 \mathcal W(u,u)-\mathcal W(v,v)= \mathcal W(u-v,u)+ \mathcal W(v,u-v),
 $$
 so that
 \begin{multline*}
 \norm{\mathcal W(u,u)-\mathcal W(v,v)}_{L^2(\RZ)}\le 
   \norm{\mathcal W(u-v,u)}_{L^2(\RZ)}
   +
    \norm{\mathcal W(v,u-v)}_{L^2(\RZ)}
    \\
    \underbracket[0.75pt][0.5pt]{\hskip3pt=\hskip3pt}_{\eqref{norm}}\norm{u-v}_{L^2(\R^n)}\bigl(\norm{u}_{L^2(\R^n)}
    +
    \norm{v}_{L^2(\R^n)}\bigr),
\end{multline*}
proving the lemma.
\end{proof}
\begin{lem}\label{lem.62llkk}
Let $(u_{k})$ be a converging sequence in $L^2(\R^n
)$  with limit $u$.
Let us assume that there exists $C_{0}\ge 0$ such that 
\begin{equation}\label{}
\forall k\in \N, \quad\iint \vert\mathcal W(u_{k}, u_{k})(x,\xi)\vert dx d\xi\le C_{0}.
\end{equation}
Then we have 
$
\iint \vert\mathcal W(u, u)(x,\xi)\vert dx d\xi\le C_{0}
.
$
\end{lem}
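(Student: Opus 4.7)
The plan is to combine the $L^2$ convergence provided by Lemma \ref{lem51} with Fatou's lemma, using the standard extraction of an almost everywhere convergent subsequence. Since $(u_k)$ converges to $u$ in $L^2(\R^n)$, Lemma \ref{lem51} immediately gives that $\mathcal W(u_k,u_k)$ converges to $\mathcal W(u,u)$ in $L^2(\R^{2n})$.

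Next I would invoke the classical fact that any $L^2$-convergent sequence admits a subsequence converging almost everywhere to the same limit. Thus there exists a subsequence $(u_{k_j})_{j\in\N}$ such that
\begin{equation*}
\mathcal W(u_{k_j},u_{k_j})(x,\xi)\longrightarrow \mathcal W(u,u)(x,\xi)\quad\text{for a.e.\ }(x,\xi)\in\R^{2n},
\end{equation*}
and therefore, by continuity of the absolute value,
\begin{equation*}
\vert\mathcal W(u_{k_j},u_{k_j})(x,\xi)\vert\longrightarrow \vert\mathcal W(u,u)(x,\xi)\vert\quad\text{for a.e.\ }(x,\xi)\in\R^{2n}.
\end{equation*}

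Finally, applying Fatou's lemma to the non-negative sequence $\vert\mathcal W(u_{k_j},u_{k_j})\vert$ yields
\begin{equation*}
\iint\vert\mathcal W(u,u)(x,\xi)\vert\, dx\, d\xi\le \liminf_{j\to+\io}\iint\vert\mathcal W(u_{k_j},u_{k_j})(x,\xi)\vert\, dx\, d\xi\le C_0,
\end{equation*}
which is exactly the desired conclusion. The argument is essentially routine: the only real point is that we do not have $L^1$-convergence of $\mathcal W(u_k,u_k)$ towards $\mathcal W(u,u)$ (the whole point of Section \ref{sec.baire} is that $L^1$-membership of the Wigner distribution is a rather scarce phenomenon), so we cannot pass to the limit directly under the integral sign; Fatou's lemma combined with the pointwise convergence of a subsequence provides the correct tool, and the uniform bound $C_0$ survives the $\liminf$.
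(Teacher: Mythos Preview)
Your proof is correct and complete. It takes a genuinely different route from the paper's argument, though both are short and standard.

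The paper does not pass to an a.e.\ convergent subsequence and does not invoke Fatou's lemma. Instead, it truncates to balls: for each fixed $R>0$ it uses the Cauchy--Schwarz inequality on the set $\{\val x^2+\val\xi^2\le R^2\}$ together with the $L^2$ estimate of Lemma \ref{lem51} to bound
\[
\iint_{\val x^2+\val\xi^2\le R^2}\val{\mathcal W(u,u)}\,dx\,d\xi
\le C_0+\sqrt{\val{\mathbb B^{2n}}R^{2n}}\,\norm{u-u_k}_{L^2}\bigl(\norm{u}_{L^2}+\norm{u_k}_{L^2}\bigr),
\]
then lets $k\to\infty$ along the full sequence to get the bound $C_0$ on each ball, and finally lets $R\to\infty$. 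Your approach is arguably the more natural one for proving lower semicontinuity of an $L^1$ norm under $L^2$ convergence; the paper's approach has the minor advantage of avoiding the subsequence extraction and working with the whole sequence throughout, at the cost of the extra truncation parameter $R$. Neither argument is deeper than the other.
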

\begin{proof}
 Let $R>0$ be given. We check
\begin{multline*}
 \iint_{\val x^2+\val \xi^2\le R^2} \vert\mathcal W(u, u)(x,\xi)
 -\mathcal W(u_{k}, u_{k})(x,\xi)
 \vert dx d\xi
\\ \le
  \iint_{\val x^2+\val \xi^2\le R^2}  \vert\mathcal W(u-u_{k}, u)(x,\xi)\vert dx d\xi
  + \iint_{\val x^2+\val \xi^2\le R^2}  \vert\mathcal W(u_{k}, u-u_{k})(x,\xi)\vert dx d\xi
  \\
  \le \sqrt{\val{\mathbb B^{2n} }R^{2n}}
  \bigl(\norm{\mathcal W(u-u_{k}, u)}_{L^2(\RZ)}
  +
  \norm{\mathcal W(u_{k}, u-u_{k})}_{L^2(\RZ)}
  \bigr)
  \\
 =
   \sqrt{\val{\mathbb B^{2n} }R^{2n}}
   \norm{u-u_{k}}_{L^2(\R^n)}
  \bigl(\norm{u}_{L^2(\R^n)}
  +
  \norm{u_{k}}_{L^2(\R^n)}
  \bigr),
\end{multline*}
and thus
\begin{align*}
& \iint_{\val x^2+\val \xi^2\le R^2} \vert\mathcal W(u, u)(x,\xi)\vert dxd\xi
 \le  \iint_{\val x^2+\val \xi^2\le R^2} \vert\mathcal W(u_{k}, u_{k})(x,\xi)\vert dxd\xi
\\ &\hskip155pt +
  \sqrt{\val{\mathbb B^{2n} }R^{2n}}
   \norm{u-u_{k}}_{L^2(\R^n)}
  \bigl(\norm{u}_{L^2(\R^n)}
  +
  \norm{u_{k}}_{L^2(\R^n)}
  \bigr)
  \\
  &\hskip75pt \le C_{0}
  +
  \sqrt{\val{\mathbb B^{2n} }R^{2n}}
   \norm{u-u_{k}}_{L^2(\R^n)}
  \bigl(\norm{u}_{L^2(\R^n)}
  +
  \norm{u_{k}}_{L^2(\R^n)}
  \bigr),
\end{align*}
implying  for all $R>0$,
$$
 \iint_{\val x^2+\val \xi^2\le R^2} \vert\mathcal W(u, u)(x,\xi)\vert dxd\xi
 \le C_{0},
$$
and thus the sought result.
\end{proof}
\subsubsection{An explicit construction}\label{sec.612tre}
We just calculate
$\mathcal W(v_{0},v_{0})$ for 
\begin{equation}\label{explicit}                 
v_{0}=\mathbf 1_{[-1/2,1/2]}.
\end{equation}
\begin{rem}\label{rem.6363}
 When $u$ is  supported in a closed convex set $J$, we have in the integral \eqref{wigner}
 defining $\mathcal W$, 
 $
 x\pm\frac z2\in J\Longrightarrow x\in J,
 $
 so that 
 $\supp \mathcal W(u,u)\subset J\times\R^{n}$.
\end{rem}
We have
\begin{align*}
\mathcal W(v_{0},v_{0})(x,\xi)=\int_{\substack{-1/2\le x+z/2\le 1/2\\
-1/2\le x-z/2\le 1/2
}} e^{2i\pi z\xi}dz,
\end{align*}
and the integration domain is 
$$
-\min(1-2x,1+2x)=\max(-1-2x,2x-1)\le z\le\min( 1-2x, 1+2x),
$$
which is empty unless $1-2x, 1+2x\ge 0$ i.e. $x\in [-1/2, +1/2]$, and moreover we have the equivalence
$$
1-2x\le 1+2x\Longleftrightarrow x\ge 0,
$$
so that 
\begin{multline}\label{ex1234}
\mathcal W(v_{0},v_{0})(x,\xi)=H(x)\int_{-(1-2x)}^{1-2x}e^{2i\pi z\xi}dz
+H(-x)\int_{-(1+2x)}^{1+2x}e^{2i\pi z\xi}dz
\\
=H(x)\frac{e^{2i\pi \xi(1-2x)}-e^{-2i\pi\xi(1-2x)}}{2i\pi \xi}+H(-x)
\frac{e^{2i\pi \xi(1+2x)}-e^{-2i\pi\xi(1+2x)}}{2i\pi \xi}
\\=
\mathbf 1_{[0,1/2]}(x)\frac{\sin(2\pi \xi(1-2x))}{\pi \xi}+
\mathbf 1_{[-1/2,0]}
\frac{\sin(2\pi \xi(1+2x))}{\pi \xi}.
\end{multline}
More generally for $a,b,\omega$ real numbers with $a<b$
and 
\begin{equation}\label{54gfdd}
u_{a,b,\omega}(x)=(b-a)^{-1/2}\mathbf 1_{[a,b]}(x) e^{2i\pi \omega x},
\end{equation}
we have 
\begin{multline}
\mathcal W(u_{a,b,\omega},u_{a,b,\omega})(x,\xi)
\\=
\frac{\bigl(
\mathbf 1_{[a,\frac{a+b}2]}(x)\sin[4\pi(\xi-\omega)(x-a)]
+\mathbf 1_{[\frac{a+b}2,b]}(x)\sin[4\pi(\xi-\omega)(b-x)]
\bigr)}{(b-a)\pi(\xi-\omega)}.
\end{multline}
We check now, using \eqref{ex1234}, for $N>0$,
\begin{align*}
\iint\val{ \mathcal W(v_{0},v_{0})(x,\xi)} dx d\xi&\ge\int_{0\le x\le 1/4}
\int_{0}^N\Val{\frac{\sin(2\pi \xi(1-2x))}{\pi \xi}} d\xi dx
\\
&=
\int_{0\le x\le 1/4}
\int_{0}^{N2\pi (1-2x)}\Val{\frac{\sin\eta}{\pi \eta}} d\eta dx
\\&\ge 
\int_{0\le x\le 1/4}
\int_{0}^{N\pi }\Val{\frac{\sin\eta}{\pi \eta}} d\eta dx
=\frac14\int_{0}^{N\pi }\Val{\frac{\sin\eta}{\pi \eta}} d\eta,
\end{align*}
so that 
\begin{equation}\label{gdfr44}
\iint\val{ \mathcal W(v_{0},v_{0})(x,\xi)} dx d\xi=+\infty.
\end{equation}
\begin{pro}
 Let $a,b,\omega$ be real numbers with $a<b$ and let us define $u_{a,b,\omega}$ by 
 \eqref{54gfdd}.
 Then we have
 \begin{equation}\label{626yyy}
\iint \left\vert\mathcal W(u_{a,b,\omega}, u_{a,b,\omega})(x,\xi)\right\vert dx d\xi=+\io.
\end{equation}
\end{pro}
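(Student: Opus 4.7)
The plan is to reduce the general case to the box function $v_{0}=\mathbf{1}_{[-1/2,1/2]}$, for which divergence was established in \eqref{gdfr44}, by exploiting the invariance of $\iint\vert\mathcal{W}(u,u)\vert dxd\xi$ under the affine metaplectic group $\metaa$.

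First I will write $u_{a,b,\omega}$ as the image of $v_{0}$ under an explicit element of $\metaa$. Setting $c=(a+b)/2$ and $\ell=b-a$, a direct check shows that, up to a constant phase factor of modulus one which does not affect the Wigner distribution,
\begin{equation*}
u_{a,b,\omega}=\tau_{(c,\omega)}\mathcal{M}v_{0},\qquad(\mathcal{M}v_{0})(x)=\ell^{-1/2}v_{0}(x/\ell),
\end{equation*}
where $\tau_{(c,\omega)}$ is the phase translation \eqref{phtrans} and $\mathcal{M}$ is the unitary transformation of type \eqref{met1} associated with $T=\ell$, i.e.\ the metaplectic lift of the symplectic map $S(x,\xi)=(\ell x,\xi/\ell)$ of \eqref{sym1}.

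Second, the Segal covariance formula \eqref{segal+}, applied to $S$, combined with the phase-space translation rule $\mathcal{W}(\tau_{Y}u,\tau_{Y}u)(X)=\mathcal{W}(u,u)(X-Y)$ recorded in the note immediately after \eqref{segal+}, gives
\begin{equation*}
\mathcal{W}(u_{a,b,\omega},u_{a,b,\omega})(x,\xi)=\mathcal{W}(v_{0},v_{0})\bigl(\ell^{-1}(x-c),\ell(\xi-\omega)\bigr).
\end{equation*}
The affine transformation $(x,\xi)\mapsto\bigl(\ell^{-1}(x-c),\ell(\xi-\omega)\bigr)$ is symplectic, hence has Jacobian $1$, and performing this change of variables in the integral yields
\begin{equation*}
\iint_{\R^{2}}\bigl\vert\mathcal{W}(u_{a,b,\omega},u_{a,b,\omega})(x,\xi)\bigr\vert dxd\xi=\iint_{\R^{2}}\bigl\vert\mathcal{W}(v_{0},v_{0})(x,\xi)\bigr\vert dxd\xi,
\end{equation*}
which equals $+\infty$ by \eqref{gdfr44}.

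There is no serious obstacle in this argument: it is entirely a reduction via symplectic covariance to the already-established case. The only items requiring care are the factorization of $u_{a,b,\omega}$ as $\tau_{(c,\omega)}\mathcal{M}v_{0}$, which is a one-line substitution, and the two covariance identities, which are recalled in the paper. As an alternative one can bypass the covariance machinery entirely, insert the explicit expression for $\mathcal{W}(u_{a,b,\omega},u_{a,b,\omega})$ displayed just before the proposition, and imitate the lower bound leading to \eqref{gdfr44} on the strip $\{a\le x\le(3a+b)/4,\ \xi-\omega>0\}$, obtaining $+\infty$ directly.
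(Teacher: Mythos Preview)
Your proof is correct and follows essentially the same route as the paper: both reduce to the base case $v_{0}$ via the affine metaplectic covariance \eqref{segal+}, with your version being somewhat more explicit in factoring the transformation as $\tau_{(c,\omega)}\mathcal{M}$ and writing out the resulting Wigner distribution before changing variables.
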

\begin{nb} Since $u_{a,b,\omega}$ is  a normalized $L^{2}(\R)$ function,
we also have  from \eqref{norm}, \eqref{hg55ss} that the real-valued $
W(u_{a,b,\omega}, u_{a,b,\omega})$
 does satisfy
 \begin{align*}
&\int\left\vert\int {W(u_{a,b,\omega}, u_{a,b,\omega})(x,\xi) }dx \right\vert d\xi=
\int\left\vert\int {W(u_{a,b,\omega}, u_{a,b,\omega})(x,\xi) }d\xi \right\vert dx
\\&\hskip285pt =\norm{u_{a,b,\omega}}_{L^{2}(\R)}^{2}=1,
\\
 &\hskip55pt \iint {W(u_{a,b,\omega}, u_{a,b,\omega})(x,\xi) }^2dx d\xi=\norm{u_{a,b,\omega}}_{L^{2}(\R)}^{4}=1.
\end{align*}
We shall see in the next sections that most of the time in the Baire Category sense, we have  for $u\in L^2(\R^n)$,
$
\iint \val{W(u,u)(x,\xi) }dx d\xi=+\io.
$
\end{nb}
\begin{proof}
 The proof is already given above for $v_{0}=u_{-1/2, 1/2, 0}$. Moreover we have with 
 $$
 \alpha=\frac{1}{b-a}, \quad \beta=\frac{b+a}{2(a-b)},
 $$
 the formula
 $
 v_{0}(y)=e^{-2i\pi \omega(y-\beta)\alpha^{-1}} u_{a,b,\omega}\bigl((y-\beta) \alpha^{-1}\bigr) \alpha^{-1/2},
 $
 so that 
 $u_{a,b,\omega}=\mathcal M v_{0}$,
  where $\mathcal M$ belongs to the group $Mp_{a}(n)$.
 (cf. Section \ref{sec.weyl}) and the covariance property \eqref{segal+} shows that the already proven \eqref{gdfr44} implies  \eqref{626yyy}.
\end{proof}
\subsection{Modulation spaces}
\index{modulation spaces}
\index{Feichtinger algebra}
\index{{~\bf Notations}!$M^{1}(\R^{n})$}
In this section, we use the Feichtinger algebra $M^{1}$, introduced in \cite{MR643206}
(the terminology {\it Feichtinger algebra} goes back to the book \cite{MR1802924}). The survey article \cite{MR3881843} by M.S. Jakobsen
 is a good source for recent developments of the theory
as well as Chapter 12 in the K.~Gr\"ochenig's book \cite{MR1843717}.
We refer the reader to the paper \cite{MR2015328} by K.~Gr\"ochenig \& M. Leinert as well as to 
J. Sj\"ostrand's article \cite{MR1362552} for the use of modulation spaces to proving a non-commutative Wiener lemma.
\subsubsection{Preliminary lemmas}
The following lemmas in this subsection  are well-known (see e.g. Theorem 11.2.5 in \cite{MR1843717}). However we provide a proof for the self-contained\-ness of our survey.
\begin{lem}\label{lem.611mlk} Let $\phi_{0}$ be a non-zero function  in $\mathscr S(\R^{n})$. For 
 $u\in \mathscr S'(\R^{n})$ the following properties are equivalent:
 \begin{itemize}
 \item[(i)] $u\in \mathscr S(\R^{n})$.
 \item[(ii)] $\mathcal W(u,\phi_{0})\in \mathscr S(\R^{2n})$.
 \item[(iii)] $\forall N\in\N, \quad\sup_{X\in \RZ}\val{\mathcal W(u,\phi_{0})(X)}(1+\val X)^{N}<+\io.$
\end{itemize}
\end{lem}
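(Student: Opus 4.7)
The plan is to establish the cycle by proving (i) $\Rightarrow$ (ii) $\Rightarrow$ (iii) as the easy direction, and then reducing the substantive implication (iii) $\Rightarrow$ (i) to an application of the reconstruction formula \eqref{623new}.

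For (i) $\Rightarrow$ (ii): when $u \in \mathscr S(\R^n)$, the tensor product $u(x_1) \overline{\phi_0}(x_2)$ lies in $\mathscr S(\R^{2n})$, hence so does $\Omega(u,\phi_0)(x,z) = u(x+z/2)\overline{\phi_0}(x-z/2)$ after composition with the linear isomorphism $(x,z)\mapsto(x+z/2, x-z/2)$. Since $\mathcal W(u,\phi_0) = \mathcal F_2 \Omega(u,\phi_0)$ and the partial Fourier transform preserves $\mathscr S(\R^{2n})$, we obtain (ii). The implication (ii) $\Rightarrow$ (iii) is immediate from the definition of the Schwartz space.

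For (iii) $\Rightarrow$ (i): I would start from the weak reconstruction formula \eqref{623new} with $v = \phi_0$,
\[
u(x_1)\otimes \overline{\phi_0}(x_2) = \int_{\R^n} \mathcal W(u,\phi_0)\Bigl(\frac{x_1+x_2}{2},\xi\Bigr) e^{2i\pi(x_1-x_2)\cdot \xi}\, d\xi,
\]
which holds in $\mathscr S'(\R^{2n})$. Testing the $x_2$-variable against $\phi_0(x_2)$ and then performing the change of variable $z = (x+y)/2$ yields the candidate pointwise formula
\[
\|\phi_0\|_{L^2(\R^n)}^2\, u(x) = 2^n \iint_{\R^{2n}} \mathcal W(u,\phi_0)(z,\xi)\, \phi_0(2z-x)\, e^{4i\pi(x-z)\cdot \xi}\, dz\, d\xi.
\]
Since $\phi_0\neq 0$, $\|\phi_0\|_{L^2}^2 >0$. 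Under (iii), $\mathcal W(u,\phi_0)(z,\xi)$ has arbitrary polynomial decay in $(z,\xi)$, while $\phi_0(2z-x)$ is Schwartz in $2z-x$. The integral therefore converges absolutely, and differentiating under the integral sign in $x$ is justified to any order, producing a smooth function. To control $\sup_x(1+|x|)^M|\partial_x^\alpha u(x)|$ one brings the weight $(1+|x|)^M$ inside the integral and uses the triangle inequality $|x| \le |2z-x| + 2|z|$: the $|z|$-part is absorbed by the rapid decay of $\mathcal W(u,\phi_0)$ in $z$, while the $|2z-x|$-part is absorbed by the Schwartz decay of $\phi_0$. The derivatives $\partial_x^\alpha$ produce a polynomial in $\xi$ (from the oscillatory factor) and derivatives of $\phi_0(2z-x)$, both controlled by the rapid decay of $\mathcal W(u,\phi_0)$ in $\xi$ and by the Schwartz nature of $\phi_0$ respectively.

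The main obstacle is the rigorous justification that the candidate formula above actually represents the original tempered distribution $u$ (and not merely some function coinciding with $u$ in an \emph{a posteriori} sense). This requires interpreting \eqref{623new} as a genuine identity in $\mathscr S'(\R^{2n})$, verifying that pairing the $x_2$-slot against $\phi_0$ is a legitimate distributional operation (which it is, since $u \otimes \overline{\phi_0}$ is a tempered distribution and $\phi_0 \in \mathscr S$), and that the change of variable $z=(x+y)/2$ commutes with the pairing. Once these Fubini-type manipulations are certified, the rapid decay hypothesis on $\mathcal W(u,\phi_0)$ upgrades $u$ from a tempered distribution to a Schwartz function, completing the equivalence.
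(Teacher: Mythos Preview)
Your proof is correct and follows essentially the same route as the paper: (i)$\Rightarrow$(ii) via $\Omega(u,\phi_0)\in\mathscr S$ and partial Fourier transform, (ii)$\Rightarrow$(iii) trivially, and (iii)$\Rightarrow$(i) by pairing the reconstruction formula \eqref{623new} against $\phi_0$ in the $x_2$-slot to obtain exactly the integral representation $\|\phi_0\|_{L^2}^2\,u(x)=2^n\iint \mathcal W(u,\phi_0)(y,\xi)\,\phi_0(2y-x)\,e^{4i\pi(x-y)\cdot\xi}\,dy\,d\xi$, then differentiating under the integral. You supply more detail than the paper on the Schwartz seminorm bounds and on the distributional justification of the pairing, but the argument is the same.
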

\begin{proof}
Let us assume  (i) holds true; with $\Omega(u,\phi_{0})$ defined in \eqref{funcome},
we find that $\Omega(u,\phi_{0})$ belongs to $\mathscr S(\RZ)$, thus as well as its partial Fourier transform $\mathcal W(u,\phi_{0})$, proving (ii).
We have obviously that (ii) implies (iii). Let us now assume that (iii) holds true.
Using \eqref{623new}, we find
$$
u(x_{1}) \bar \phi_{0}(x_{2})
=\int \mathcal W(u,\phi_{0})(\frac{x_{1}+x_{2}}2,\xi) e^{2i\pi (x_{1}-x_{2}) \xi} d\xi,
$$
and thus
\begin{multline*}
u(x_{1}) \norm{\phi_{0}}_{L^{2}(\R^{n})}^{2}
=\iint \mathcal W(u,\phi_{0})(\frac{x_{1}+x_{2}}2,\xi) e^{2i\pi (x_{1}-x_{2}) \xi}\phi_{0}(x_{2}) d\xi dx_{2}
\\=
\iint \mathcal W(u,\phi_{0})(y,\xi) e^{4i\pi (x_{1}-y) \xi} \phi_{0}(2y-x_{1})d\xi dy 2^{n},
\end{multline*}
so that the latter equality, the fact that $\phi_{0}$ belongs to $\mathscr S(\R^{n})$
imply (i)
by differentiation under the integral sign, concluding the proof of the lemma.
\end{proof}
\begin{lem}\label{lem.21kjhg}
Let $\phi_{0}, \phi_{1}$ be non-zero functions in $L^{2}(\R^{n})$.
 Let $u\in L^{2}(\R^{n})$ such that $\mathcal W(u,\phi_{0})$ belongs to $L^{1}(\RZ)$.
 Then $\mathcal W(u,\phi_{1})$  belongs as well to $L^{1}(\RZ)$.
\end{lem}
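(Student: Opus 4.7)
The approach I would take is to leverage the bilinear convolution estimate of Lemma~\ref{lem.54jhgf}, which asserts that for any $u_0, u_1, u_2, u_3 \in L^2(\R^n)$ and every $X \in \R^{2n}$,
$$|\langle u_1, u_2\rangle_{L^2(\R^n)}|\,|\mathcal W(u_0, u_3)(X)| \leq 2^n \bigl(|\mathcal W(u_0, u_2)| * |\mathcal W(\check u_1, u_3)|\bigr)(X).$$
Specializing to $u_0 = u$, $u_2 = \phi_0$, $u_3 = \phi_1$, and any auxiliary $u_1 \in L^2(\R^n)$ satisfying $\langle u_1, \phi_0\rangle \neq 0$, this becomes
$$|\mathcal W(u, \phi_1)(X)| \leq \frac{2^n}{|\langle u_1, \phi_0\rangle|}\bigl(|\mathcal W(u, \phi_0)| * |\mathcal W(\check u_1, \phi_1)|\bigr)(X).$$
The first factor in the convolution is $L^1(\R^{2n})$ by hypothesis, so Young's inequality $\|f*g\|_{L^1} \leq \|f\|_{L^1}\|g\|_{L^1}$ would immediately give the conclusion, provided the second factor can be arranged to be $L^1$ as well.

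The entire proof therefore reduces to producing a single $u_1 \in L^2(\R^n)$ with (i) $\langle u_1, \phi_0\rangle \neq 0$ and (ii) $\mathcal W(\check u_1, \phi_1) \in L^1(\R^{2n})$. Condition (i) is mild: since $\phi_0 \neq 0$, the set of admissible $u_1$ is a dense open subset of $L^2(\R^n)$, and moreover (i) is stable under small phase-space translations $u_1 \mapsto \tau_{Y} u_1$ (cf. \eqref{phtrans}) by continuity of the ambiguity function at $Y=0$. The serious issue lies entirely with (ii).

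\textbf{Main obstacle.} Condition (ii) is the crux: requiring $\mathcal W(\check u_1, \phi_1) \in L^1$ is a statement of the same nature as the conclusion we wish to reach, so merely knowing $\phi_1 \in L^2$ is not obviously sufficient. The standard resolution, consistent with the Feichtinger-algebra context of this section, is to select $u_1$ from a fixed class of well-behaved windows (a Gaussian, for instance, or any element of $M^1(\R^n)$) in which such mixed Wigner distributions are automatically integrable. Concretely, I would take $u_1$ to be a (suitably normalized) Gaussian and, if needed, replace it by a phase-space translate $\tau_Y u_1$ to guarantee (i); by the affine symplectic covariance \eqref{segal+}, this replacement only shifts the argument of $\mathcal W(\check u_1, \phi_1)$, preserving its $L^1$ norm, so (ii) is unaffected. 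Combining this choice with the pointwise bound above delivers $\mathcal W(u, \phi_1) \in L^1(\R^{2n})$. If one wishes to push $\phi_1$ beyond the ``good'' window class, the same inequality together with a density argument and the $L^2$-continuity of $v \mapsto \mathcal W(\cdot, v)$ supplied by Lemma~\ref{lem51} allows one to conclude for general admissible $\phi_1$, which is the window-independence phenomenon at the heart of the Feichtinger algebra.
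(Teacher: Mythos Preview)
Your approach via Lemma~\ref{lem.54jhgf} is exactly the paper's, but you overcomplicate the choice of the auxiliary $u_1$. The paper simply sets $u_1 = u_2 = \phi_0$, so that $\langle u_1,\phi_0\rangle = \|\phi_0\|_{L^2}^2 \neq 0$ is automatic, and the bound reads
\[
\|\phi_0\|_{L^2}^2\,|\mathcal W(u,\phi_1)(X)| \le 2^n\bigl(|\mathcal W(u,\phi_0)|*|\mathcal W(\check\phi_0,\phi_1)|\bigr)(X).
\]
No auxiliary Gaussian and no phase-space translation are needed.

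That said, you have correctly located the real difficulty, and your proposed resolution does not work. Taking $u_1 = \psi_0$ a Gaussian gives $\mathcal W(\check u_1,\phi_1) = \mathcal W(\psi_0,\phi_1)$ (since $\psi_0$ is even), and by~\eqref{complex} this lies in $L^1$ if and only if $\mathcal W(\phi_1,\psi_0) \in L^1$, which is \emph{precisely} the condition $\phi_1 \in M^1(\R^n)$ --- not assumed. The density argument cannot close this gap: Lemma~\ref{lem51} supplies only $L^2$-continuity of $v \mapsto \mathcal W(u,v)$, and $L^2$-convergence of approximants gives neither $L^1$-convergence nor a uniform $L^1$-bound.

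In fact the lemma is false at the stated generality: take $u = \phi_0 = \psi_0$ and $\phi_1 = \mathbf 1_{[-1/2,1/2]}$; then $\mathcal W(u,\phi_0) \in \mathscr S \subset L^1$, yet $\mathcal W(u,\phi_1) \notin L^1$ since $\phi_1 \notin M^1$ (cf.~\eqref{gdfr44} together with Definition~\ref{defi64}). The paper's one-line argument is sound as soon as $\mathcal W(\check\phi_0,\phi_1) \in L^1$; this holds trivially when $\phi_0,\phi_1 \in \mathscr S(\R^n)$, which is the only setting in which the lemma is actually invoked (Lemma~\ref{lem.879kol}).
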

\begin{proof}
According to Lemma \ref{lem.54jhgf} applied to 
$u_{0}=u, u_{1}=u_{2}=\phi_{0}, u_{3}=\phi_{1}$, we have 
$$
\norm{\phi_{0}}_{L^{2}}^{2}\mathcal W(u,\phi_{1})\in L^{1}(\RZ),
$$ 
since $\mathcal W(u,\phi_{0})$ belongs to $L^{1}(\RZ)$ as well as  $\mathcal W(\check \phi_{0}, \phi_{1})$.
\end{proof}
\begin{lem}\label{lem.879kol}
 Let $u\in L^{2}(\R^{n})$.
 The following properties are equivalent.
\begin{itemize}
 \item[(i)] For all $\phi\in \mathscr S(\R^{n})$, we have $\mathcal W(u,\phi)\in L^{1}(\RZ)$.
 \item[(ii)] For a non-zero $\phi\in \mathscr S(\R^{n})$, we have $\mathcal W(u,\phi)\in L^{1}(\RZ)$.
 \item[(iii)] $\mathcal W(u,u)$ belongs to $L^{1}(\RZ)$.
\end{itemize}
 \end{lem}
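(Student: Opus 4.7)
The plan is to obtain all three equivalences as essentially direct consequences of the preceding Lemma \ref{lem.21kjhg}, with only the trivial zero cases requiring separate attention. The implication (i)$\Rightarrow$(ii) is immediate: the Schwartz class contains non-zero elements (e.g.\ a Gaussian), so specializing (i) to any such $\phi$ gives (ii). Conversely, for (ii)$\Rightarrow$(i), fix the non-zero $\phi_{0}\in\mathscr S(\R^{n})$ furnished by (ii); given an arbitrary $\phi\in\mathscr S(\R^{n})$, either $\phi\equiv 0$ (in which case $\mathcal W(u,\phi)\equiv 0\in L^{1}(\RZ)$), or $\phi\neq 0$, and Lemma \ref{lem.21kjhg} applied with $\phi_{1}=\phi$ yields $\mathcal W(u,\phi)\in L^{1}(\RZ)$.

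Next I would prove (ii)$\Leftrightarrow$(iii). If $u=0_{L^{2}(\R^{n})}$, then $\mathcal W(u,u)\equiv 0$ and $\mathcal W(u,\phi)\equiv 0$ for every $\phi$, so (ii) and (iii) both hold trivially. Assume therefore $u\neq 0$. For (iii)$\Rightarrow$(ii), fix any non-zero $\phi\in\mathscr S(\R^{n})$ and apply Lemma \ref{lem.21kjhg} with $\phi_{0}=u$ (allowed since $u\neq 0$) and $\phi_{1}=\phi$; we obtain $\mathcal W(u,\phi)\in L^{1}(\RZ)$, which is (ii). For (ii)$\Rightarrow$(iii), take the non-zero $\phi_{0}\in\mathscr S(\R^{n})$ from (ii) and apply Lemma \ref{lem.21kjhg} with $\phi_{1}=u$ (permitted since $u\neq 0$); this produces $\mathcal W(u,u)\in L^{1}(\RZ)$.

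There is no genuine obstacle in this argument, since the real work — namely the convolution estimate
\[
\val{\poscal{u}{\phi_{0}}_{L^{2}}}\,\val{\mathcal W(u,\phi_{1})(X)}\le 2^{n}\bigl(\val{\mathcal W(u,\phi_{0})}\ast\val{\mathcal W(\check\phi_{0},\phi_{1})}\bigr)(X)
\]
coming from Lemma \ref{lem.54jhgf}, together with the fact that $\mathcal W(\check\phi_{0},\phi_{1})\in\mathscr S(\R^{2n})\subset L^{1}(\RZ)$ whenever $\phi_{0},\phi_{1}\in\mathscr S(\R^{n})$ — has already been done in the proof of Lemma \ref{lem.21kjhg}. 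The only mild subtlety is that Lemma \ref{lem.21kjhg} requires both test functions to be non-zero in $L^{2}(\R^{n})$, so one has to bookkeep the zero-function degenerate cases (which are trivial) in order to chain the implications cleanly. This is why the sole non-bookkeeping input needed is the existence of at least one non-zero element of $\mathscr S(\R^{n})$.
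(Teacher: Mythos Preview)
Your handling of (i)$\Leftrightarrow$(ii) matches the paper's exactly. The trouble lies in (ii)$\Leftrightarrow$(iii), where you invoke Lemma~\ref{lem.21kjhg} with $u$ itself in the role of one of the windows $\phi_0,\phi_1$. As you yourself note in your closing paragraph, the \emph{proof} of Lemma~\ref{lem.21kjhg} needs $\mathcal W(\check\phi_0,\phi_1)\in L^1(\RZ)$, and this is only available because both windows are in $\mathscr S(\R^n)$. In fact the lemma as stated for arbitrary nonzero $\phi_0,\phi_1\in L^2$ is too strong: with $u=\phi_0=\psi_0$ the Gaussian and $\phi_1=v_0$ the box function of \eqref{explicit}, one has $\mathcal W(u,\phi_0)\in\mathscr S(\RZ)$ but $\mathcal W(u,\phi_1)\notin L^1(\RZ)$ (since $v_0\notin M^1(\R)$ by \eqref{gdfr44} and the very lemma under discussion). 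So your black-box citation is not justified.

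Concretely, for (iii)$\Rightarrow$(ii) with $\phi_0=u$, unpacking Lemma~\ref{lem.54jhgf} gives
\[
\norm{u}_{L^2}^2\,\val{\mathcal W(u,\phi)(X)}\le 2^n\bigl(\val{\mathcal W(u,u)}\ast\val{\mathcal W(\check u,\phi)}\bigr)(X),
\]
and you would need $\mathcal W(\check u,\phi)\in L^1(\RZ)$; up to the reflection $X\mapsto -X$ this is the conclusion you are after, so the argument is circular. The paper breaks the circle by a different choice of indices in Lemma~\ref{lem.54jhgf}: taking $u_0=u_2=u$ and $u_1=\psi,\ u_3=\phi$ with $\psi,\phi\in\mathscr S(\R^n)$ yields
\[
\val{\poscal{\psi}{u}_{L^2}}\,\val{\mathcal W(u,\phi)(X)}\le 2^n\bigl(\val{\mathcal W(u,u)}\ast\val{\mathcal W(\check\psi,\phi)}\bigr)(X),
\]
where now the second factor lies in $\mathscr S(\RZ)$, and one picks $\psi$ with $\poscal{\psi}{u}\neq 0$. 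Your step (ii)$\Rightarrow$(iii) can be salvaged (after passing through (i) it becomes the paper's argument verbatim), but (iii)$\Rightarrow$(ii) genuinely requires this different choice. A minor point: in the inequality you display, the left-hand factor should be $\norm{\phi_0}_{L^2}^2$, not $\val{\poscal{u}{\phi_0}}$.
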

 \begin{proof}
 We have obviously (i) $\Longrightarrow$ (ii) and, conversely, Lemma \ref{lem.21kjhg} yields 
 (ii) $\Longrightarrow$ (i).
 Assuming (i) and using Lemma \ref{lem.54jhgf}
 with $u_{0}=u_{3}=u$, $u_{1}=u_{2}=\phi\in \mathscr S(\R^{n})$,
 we get
 $$
  \norm{\phi}_{L^{2}}^{2}
\val{\mathcal W(u,u)(X)}\le 2^{n}
\bigl(\val{\mathcal W(u, \phi)}\ast\val{\mathcal W(\check \phi, u)}\bigr)(X),
$$
so that choosing a non-zero $\phi$ in the Schwartz space,
we obtain (iii).
Conversely, assuming (iii) and using again Lemma \ref{lem.54jhgf}
with $u_{0}=u_{2}=u$, $u_{3}=\phi\in \mathscr S(\R^{n})$, $u_{1}=\psi\in \mathscr S(\R^{n})$,
 we find 
\begin{equation}\label{641mmm}
 \val{\poscal{\psi}{u}_{L^{2}}}
\val{\mathcal W(u,\phi)(X)}\le 2^{n}
\bigl(\underbrace{\val{\mathcal W(u, u)}}_{\in L^{1}(\RZ)}\ast\val{\underbrace{\mathcal W(\check \psi, \phi)}_{\in \mathscr S(\RZ)}}\bigr)(X).
\end{equation}
Assuming as we may $u\not=0$,  we can choose $\psi\in \mathscr S(\R^{n})$ such that $\poscal{\psi}{u}_{L^{2}}\not=0$,
so that \eqref{641mmm} implies (i).
\end{proof}
\index{inversion formula}
\begin{lem}\label{lem.614inv}
 Let $u_{1}, u_{2}, u_{3}\in L^{2}(\R^{n})$. Then we have the \emph{inversion formula},
 \begin{equation}\label{}
\opw{\mathcal W(u_{1}, u_{2})} u_{3}=\poscal{u_{3}}{u_{2}}_{L^{2}(\R^{n})} u_{1}.
\end{equation}
\end{lem}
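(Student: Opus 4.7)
The plan is to observe that this inversion formula is essentially a reformulation of Lemma \ref{lem.113esd}, combined with the defining identity \eqref{eza654} of the Weyl quantization. The key point is that $\mathcal W(u_{1},u_{2})$ belongs to $L^{2}(\RZ)$ by \eqref{norm}, so that $\opw{\mathcal W(u_{1},u_{2})}$ is a Hilbert--Schmidt operator on $L^{2}(\R^{n})$, and the pairing against any $v\in L^{2}(\R^{n})$ is well-defined.

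First I would test the identity weakly: fix $v\in \mathscr S(\R^{n})$ (arbitrary) and compute the scalar product $\poscal{\opw{\mathcal W(u_{1},u_{2})}u_{3}}{v}_{L^{2}(\R^{n})}$. Using \eqref{eza654} with the symbol $a=\mathcal W(u_{1},u_{2})\in L^{2}(\RZ)$ and the Plancherel-type identity \eqref{1225}, the bracket $\poscal{a}{\mathcal W(u_{3},v)}_{\mathscr S'(\RZ), \mathscr S(\RZ)}$ coincides with the honest $L^{2}(\RZ)$ pairing (no complex conjugation issue, because \eqref{eza654} uses $\bar v$ on the left), giving
\begin{equation*}
\poscal{\opw{\mathcal W(u_{1},u_{2})}u_{3}}{v}_{L^{2}(\R^{n})}
=\iint_{\RZ}\mathcal W(u_{1},u_{2})(x,\xi)\,\mathcal W(u_{3},v)(x,\xi)\,dx\,d\xi .
\end{equation*}

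Next I would apply Lemma \ref{lem.113esd}, specifically Formula \eqref{131131}, with the substitution $u\leftarrow u_{3}$, $v\leftarrow v$, $f\leftarrow u_{1}$, $g\leftarrow u_{2}$. The lemma gives
\begin{equation*}
\iint_{\RZ}\mathcal W(u_{3},v)(x,\xi)\,\mathcal W(u_{1},u_{2})(x,\xi)\,dx\,d\xi
=\poscal{u_{3}}{u_{2}}_{L^{2}(\R^{n})}\,\poscal{u_{1}}{v}_{L^{2}(\R^{n})}.
\end{equation*}
Combining the two displays,
\begin{equation*}
\poscal{\opw{\mathcal W(u_{1},u_{2})}u_{3}}{v}_{L^{2}(\R^{n})}
=\poscal{u_{3}}{u_{2}}_{L^{2}(\R^{n})}\,\poscal{u_{1}}{v}_{L^{2}(\R^{n})}
=\poscal{\poscal{u_{3}}{u_{2}}_{L^{2}(\R^{n})}\,u_{1}}{v}_{L^{2}(\R^{n})}.
\end{equation*}
Since this holds for every $v$ in the dense subspace $\mathscr S(\R^{n})\subset L^{2}(\R^{n})$, the identity
$\opw{\mathcal W(u_{1},u_{2})}u_{3}=\poscal{u_{3}}{u_{2}}_{L^{2}(\R^{n})}\,u_{1}$ holds in $L^{2}(\R^{n})$.

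There is essentially no obstacle to this proof; the only point requiring mild care is the bookkeeping between the real duality $\poscal{\cdot}{\cdot}_{\mathscr S',\mathscr S}$ employed in \eqref{eza654} and the Hermitian $L^{2}$ inner product appearing in the statement, but thanks to the $\bar v$ on the left of \eqref{eza654} these match without any conjugation mismatch. Once this is observed, Lemma \ref{lem.614inv} is simply the pointwise (in $u_{3}$) version of the statement, already present in Lemma \ref{lem.113esd}, that $\mathcal W(u_{1},u_{2})$ is the Weyl symbol of the rank-one operator $w\mapsto \poscal{w}{u_{2}}_{L^{2}}u_{1}$.
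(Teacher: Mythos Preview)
Your proof is correct and takes exactly the same approach as the paper, which simply says ``It is an immediate consequence of Lemma \ref{lem.113esd}.'' You have merely spelled out the weak-testing argument and the density step that the paper leaves implicit; the last sentence of Lemma \ref{lem.113esd} already identifies $\mathcal W(u_{1},u_{2})$ as the Weyl symbol of the rank-one operator $w\mapsto\poscal{w}{u_{2}}_{L^{2}}u_{1}$, which is precisely the content of the present lemma.
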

\begin{proof}
 It is an immediate consequence of Lemma \ref{lem.113esd}.
\end{proof}
\subsubsection{The space $M^{1}(\R^{n})$}
\begin{defi}\label{defi64}
The space  $M^{1}(\R^{n})$ is defined as the set of $u\in L^{2}(\R^{n})$ such that, for all $\phi\in \mathscr S(\R^{n})$, 
$\mathcal W(u , \phi)$ belongs to $L^{1}(\RZ)$.
According to Lemma \ref{lem.879kol}, $M^{1}(\R^{n})$ is also the set of $u\in L^{2}(\R^{n})$ such that $\mathcal W(u,u)\in L^{1}(\RZ)$ as well as the
the set of $u\in L^{2}(\R^{n})$ such that, for a non-zero $\phi\in \mathscr S(\R^{n})$, 
$\mathcal W(u , \phi)$ belongs to $L^{1}(\RZ)$.
 \end{defi}
 \index{{~\bf Notations}!$\psi_{0}$}
 \begin{pro}\label{pro.54lkqs} Let $\psi_{0}$ be the standard fundamental state of the Harmonic Oscillator 
 $\pi(D_{x}^{2}+x^{2})$
 given by 
 \begin{equation}\label{psi000}
\psi_{0}(x)=2^{n/4} e^{-\pi\val x^{2}}.
\end{equation}
Then $M^{1}(\R^{n})\ni u\mapsto \norm{\mathcal W(u,\psi_{0})}_{L^{1}(\RZ)}$ is a norm on $M^{1}(\R^{n})$.
Let $\psi$ be a non-zero function in $\mathscr S(\R^{n})$: then 
$M^{1}(\R^{n})\ni u\mapsto \norm{\mathcal W(u,\psi)}_{L^{1}(\RZ)}$
is a norm on $M^{1}(\R^{n})$,
equivalent to the previous norm.
\end{pro}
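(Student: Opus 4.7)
The plan is to verify separately the three properties of a norm and then to establish the equivalence of the norms attached to different windows via the convolution estimate of Lemma \ref{lem.54jhgf}. Homogeneity and the triangle inequality are essentially cosmetic: since the joint Wigner distribution is linear in its first argument, we have $\mathcal W(\lambda u_{1}+\mu u_{2},\psi_{0})=\lambda\mathcal W(u_{1},\psi_{0})+\mu\mathcal W(u_{2},\psi_{0})$, and both properties then descend from the corresponding properties of the $L^{1}(\RZ)$-norm.

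The delicate point in the first part is the separation property: if $\norm{\mathcal W(u,\psi_{0})}_{L^{1}(\RZ)}=0$, I would argue that $\mathcal W(u,\psi_{0})$ vanishes almost everywhere, and then, since Theorem \ref{thm776655} guarantees that this function is continuous on $\RZ$, in fact $\mathcal W(u,\psi_{0})\equiv 0$. Applying the inversion formula of Lemma \ref{lem.614inv} with $u_{1}=u$, $u_{2}=\psi_{0}$, $u_{3}=\psi_{0}$ gives $0=\opw{\mathcal W(u,\psi_{0})}\psi_{0}=\poscal{\psi_{0}}{\psi_{0}}_{L^{2}}u=\norm{\psi_{0}}_{L^{2}(\R^{n})}^{2}\,u$, so that $u=0$ since $\psi_{0}\not=0$. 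The exact same argument works verbatim for any non-zero $\psi\in\mathscr S(\R^{n})$, which proves that $u\mapsto\norm{\mathcal W(u,\psi)}_{L^{1}(\RZ)}$ is a norm on $M^{1}(\R^{n})$ for every admissible window.

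For the equivalence statement, I would specialize Lemma \ref{lem.54jhgf} by picking $u_{0}=u$, $u_{3}=\psi$, $u_{1}=u_{2}=\psi_{0}$, which yields the pointwise inequality
\begin{equation*}
\norm{\psi_{0}}_{L^{2}(\R^{n})}^{2}\,\val{\mathcal W(u,\psi)(X)}\le 2^{n}\bigl(\val{\mathcal W(u,\psi_{0})}\ast\val{\mathcal W(\check\psi_{0},\psi)}\bigr)(X).
\end{equation*}
Since $\psi_{0}$ is even one has $\check\psi_{0}=\psi_{0}$, and since $\psi_{0},\psi\in\mathscr S(\R^{n})$, Lemma \ref{lem.611mlk} shows that $\mathcal W(\psi_{0},\psi)$ belongs to $\mathscr S(\RZ)\subset L^{1}(\RZ)$. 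An application of Young's convolution inequality in $L^{1}(\RZ)$ therefore gives
\begin{equation*}
\norm{\mathcal W(u,\psi)}_{L^{1}(\RZ)}\le \frac{2^{n}\norm{\mathcal W(\psi_{0},\psi)}_{L^{1}(\RZ)}}{\norm{\psi_{0}}_{L^{2}(\R^{n})}^{2}}\,\norm{\mathcal W(u,\psi_{0})}_{L^{1}(\RZ)}.
\end{equation*}
The reverse inequality is obtained symmetrically by swapping the roles of $\psi$ and $\psi_{0}$ in the choice of $u_{1},u_{2},u_{3}$ (which requires $\psi\not=0$, exactly the standing assumption), and thus the two norms are comparable with explicit constants depending only on $\psi$.

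The main obstacle is really just a bookkeeping one, namely being careful that the convolution estimate of Lemma \ref{lem.54jhgf} is applied with the right quadruple $(u_{0},u_{1},u_{2},u_{3})$ so that the auxiliary Wigner distribution appearing on the right-hand side involves only Schwartz functions and is therefore automatically in $L^{1}(\RZ)$; once that is arranged, Young's inequality closes the argument in one line. I do not anticipate any analytic difficulty beyond this, and both the normed character and the equivalence of the windowed norms follow from material already established earlier in the excerpt.
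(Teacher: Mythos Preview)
Your proof is correct and follows essentially the same approach as the paper's: the separation argument via the inversion formula is a one-line variant of the paper's computation $0=\poscal{\opw{\mathcal W(u,\psi)}\psi}{u}=\norm{u}^{2}\norm{\psi}^{2}$, and the equivalence of norms is obtained exactly as in the paper by specializing Lemma~\ref{lem.54jhgf} with $u_{0}=u$, $u_{1}=u_{2}=\psi_{0}$, $u_{3}=\psi$ (and symmetrically). The only cosmetic difference is that the paper silently uses $\norm{\psi_{0}}_{L^{2}}=1$, whereas you keep that factor explicit.
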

\begin{proof}
 The homogeneity and triangle inequality are immediate, let us check the separation: let $u\in L^{2}(\R^{n})$ such that $\mathcal W(u,\psi)=0$. Then we have
 $$
0=\poscal{ \opw{\mathcal W(u,\psi)}\psi}{u}_{L^{2}(\R^{n})}=\norm{u}^{2}_{L^{2}(\R^{n})}\norm{\psi}^{2}_{L^{2}(\R^{n})},
 $$
 proving the sought result. Let $\psi$ be a non-zero function in $\mathscr S(\R^{n})$;
 according to Lemma \ref{lem.54jhgf} applied to 
$u_{0}=u, u_{1}=u_{2}=\psi_{0}, u_{3}=\psi$, 
 we find 
\begin{equation}\label{}
\val{\mathcal W(u,\psi)(X)}\le 2^{n}
\bigl(\val{\mathcal W(u, \psi_{0})}\ast\val{\mathcal W(\psi_{0}, \psi)}\bigr)(X),
\end{equation}
so that we have 
\begin{align}
&\norm{\mathcal W(u,\psi)}_{L^{1}(\RZ)}\le 2^{n}\norm{\mathcal W(\psi_{0}, \psi)}_{L^{1}(\RZ)}
\norm{\mathcal W(u, \psi_{0})}_{L^{1}(\RZ)},
\label{645lkj}
\\
&\norm{\mathcal W(u,\psi_{0})}_{L^{1}(\RZ)}\le 2^{n}\norm{\mathcal W(\psi, \psi_{0})}_{L^{1}(\RZ)}
\norm{\mathcal W(u, \psi)}_{L^{1}(\RZ)},
\label{646lkj}
\end{align}
proving the equivalence of norms.
\end{proof}
\begin{pro}The space 
 $M^{1}(\R^{n})$, equipped with the equivalent norms of Proposition \ref{pro.54lkqs},
 is a Banach space.
The space $\mathscr S(\R^{n})$ is dense in $M^{1}(\R^{n})$.
\end{pro}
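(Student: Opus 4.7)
Both assertions rest on a single structural observation: the map $V\colon M^{1}(\R^{n})\to L^{1}(\R^{2n})$, $Vu=\mathcal W(u,\psi_0)$, is an isometry for the chosen norm, and the inversion formula of Lemma \ref{lem.614inv} supplies a one-sided inverse $T\colon L^{1}(\R^{2n})\to L^{2}(\R^{n})$, $TF=\|\psi_0\|_{L^{2}}^{-2}\opw{F}\psi_0$, satisfying $T\circ V=\mathrm{Id}$ on $M^{1}$. Using Proposition \ref{pro1717} one gets
\[
\|TF\|_{L^{2}}\le \|\psi_0\|_{L^{2}}^{-1}\|\opw{F}\|_{\mathcal B(L^{2})}\le 2^{n}\|\psi_0\|_{L^{2}}^{-1}\|F\|_{L^{1}},
\]
so in particular the $M^{1}$-norm dominates the $L^{2}$-norm.

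\textbf{Completeness.} Given a Cauchy sequence $(u_{k})$ in $M^{1}$, the above inequality makes it Cauchy in $L^{2}$, with limit $u\in L^{2}(\R^{n})$. Simultaneously $\bigl(\mathcal W(u_{k},\psi_0)\bigr)$ is Cauchy in $L^{1}$, converging to some $F\in L^{1}$. By Lemma \ref{lem51}, $\mathcal W(u_{k},\psi_0)\to \mathcal W(u,\psi_0)$ in $L^{2}$; extracting a common subsequence converging almost everywhere identifies $F=\mathcal W(u,\psi_0)$ a.e. Hence $u\in M^{1}$ and $\|u_{k}-u\|_{M^{1}}\to 0$.

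\textbf{Density of $\mathscr S(\R^{n})$.} Given $u\in M^{1}$, set $F=\mathcal W(u,\psi_0)\in L^{1}\cap L^{2}$. A standard mollification-plus-truncation argument produces $F_{\varepsilon}\in C^{\infty}_{c}(\R^{2n})\subset\mathscr S(\R^{2n})$ with $\|F_{\varepsilon}-F\|_{L^{1}}\to 0$. Put $u_{\varepsilon}:=\opw{F_{\varepsilon}}\psi_0$; since the kernel of $\opw{F_{\varepsilon}}$ is Schwartz (Proposition \ref{pro1716}) and $\psi_0\in\mathscr S$, we have $u_{\varepsilon}\in\mathscr S(\R^{n})$. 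The crux is then the estimate
\begin{equation}\label{eq:key}
\|\mathcal W(\opw{G}\psi_0,\psi_0)\|_{L^{1}(\R^{2n})}\le C_{\psi_0}\|G\|_{L^{1}(\R^{2n})}, \quad G\in L^{1}(\R^{2n}),
\end{equation}
applied to $G=F_{\varepsilon}-F$ (with $TF=u$ by Lemma \ref{lem.614inv}, normalising so that $\|\psi_0\|_{L^{2}}=1$): it yields $\|u_{\varepsilon}-u\|_{M^{1}}\le C_{\psi_0}\|F_{\varepsilon}-F\|_{L^{1}}\to 0$.

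\textbf{The key estimate and main obstacle.} To prove \eqref{eq:key} I combine \eqref{12hh}, $\opw{G}=2^{n}\int G(Y)\sigma_{Y}\,dY$, with \eqref{1224+} and the product formula \eqref{r}, $\sigma_{X}\sigma_{Y}=\tau_{2X-2Y}\,e^{4i\pi[Y,X]}$, obtaining
\[
\mathcal W(\opw{G}\psi_0,\psi_0)(X)=2^{2n}\int_{\R^{2n}} G(Y)\,e^{4i\pi[Y,X]}\,\bigl\langle\tau_{2(X-Y)}\psi_0,\psi_0\bigr\rangle_{L^{2}(\R^{n})}dY.
\]
The function $Z\mapsto\langle\tau_{2Z}\psi_0,\psi_0\rangle$ is (up to a character) the ambiguity function of the Gaussian coherent state, hence a Gaussian in $L^{1}(\R^{2n})$; taking absolute values and applying Fubini yields \eqref{eq:key} with $C_{\psi_0}=2^{2n}\|\langle\tau_{2\,\cdot}\psi_0,\psi_0\rangle\|_{L^{1}}$. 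The only place where computation is unavoidable is this ``twisted convolution'' identity: the phase $e^{4i\pi[Y,X]}$ inherited from \eqref{r} must be handled carefully, but it disappears under the absolute value, leaving a genuine convolution estimate. This is the main technical point; everything else is a routine consequence of Lemmas \ref{lem51}, \ref{lem.614inv} and Proposition \ref{pro1717}.
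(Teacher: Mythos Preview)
Your proof is correct and essentially follows the paper's approach. The completeness argument is identical in substance: both use the inversion formula $u=\opw{\mathcal W(u,\psi_0)}\psi_0$ together with Proposition~\ref{pro1717} to deduce $L^2$-Cauchyness from $M^1$-Cauchyness, then identify the $L^1$-limit with $\mathcal W(u,\psi_0)$ via the $L^2$-continuity of $\mathcal W(\cdot,\psi_0)$.

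For density, the core mechanism is the same---both reduce to the formula $\mathcal W(\sigma_Y\psi_0,\psi_0)(X)=2^{n}e^{-2\pi|X-Y|^2}e^{-4i\pi[X,Y]}$ (the paper's Claim \eqref{648poi}, which is exactly your computation of $\langle\tau_{2(X-Y)}\psi_0,\psi_0\rangle$ up to the phase)---but the packaging differs slightly. The paper multiplies $\mathcal W(u,\psi_0)$ by a Gaussian cutoff $e^{-\varepsilon|Y|^2}$ and applies dominated convergence directly to the resulting double integral; you instead isolate the abstract operator bound \eqref{eq:key}, which says $G\mapsto\opw{G}\psi_0$ is bounded $L^1(\R^{2n})\to M^1(\R^n)$, and then feed in any $L^1$-approximation of $\mathcal W(u,\psi_0)$ by Schwartz functions. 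Your formulation is a bit more conceptual (it makes clear that $M^1$ is a continuous image of $L^1$ under a linear map that also restricts to $\mathscr S\to\mathscr S$), while the paper's is more hands-on; neither buys a real advantage over the other.
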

\begin{proof}
 Let $(u_{k})_{k\ge 1}$ be a Cauchy sequence  in $M^{1}(\R^{n})$:
 it means that
 $(\mathcal W(u_{k},\psi_{0}))_{k\ge 1}$ is a Cauchy sequence in $L^{1}(\RZ)$,
 thus such that
 \begin{equation}\label{644poi}
\lim_{k}\mathcal W(u_{k},\psi_{0})=U\quad\text{in $L^{1}(\RZ)$.}
\end{equation}
On the other hand, from Lemma \ref{lem.113esd},
 we have
 \begin{equation}\label{645mlk}
 u_{k}-u_{l}=\OPW{\mathcal W(u_{k}-u_{l},\psi_{0})}\psi_{0},
\end{equation}
so that 
\begin{multline*}
  \norm{u_{k}-u_{l}}_{L^{2}(\R^{n})}\le \norm{\OPW{\mathcal W(u_{k}-u_{l},\psi_{0})}}_{\mathcal B(L^{2}(\R^{n}))}
 \\\underbrace{ \le}_{\text{cf. \eqref{norm01}}} 2^{n}
  \norm{\mathcal W(u_{k}-u_{l},\psi_{0})}_{L^{1}(\RZ)},
 \end{multline*}
 implying that 
 $(u_{k})_{k\ge 1}$ is a Cauchy sequence in $L^{2}(\R^{n})$, thus converging towards a function $u$ in $L^{2}(\R^{n}).$ 
 Since from \eqref{norm}, we have  
 $$
 \norm{\mathcal W(u_{k}-u,\psi_{0})}_{L^{2}(\RZ)}=\norm{u_{k}-u}_{L^{2}(\R^{n})},
 $$
 we obtain as well that 
 \begin{equation}\label{limit1}
 \lim_{k}\mathcal W(u_{k},\psi_{0})=\mathcal W(u,\psi_{0})\quad\text{in $L^{2}(\RZ)$},
\end{equation}
and this implies along with \eqref{644poi} that $U=\mathcal W(u,\psi_{0})$ in $\mathscr S'(\RZ)$.
As a result, we have $\mathcal W(u,\psi_{0})\in L^{1}(\RZ)$, so that $u\in M^{1}(\R^{n})$
 and 
$$
\lim_{k}\mathcal W(u_{k},\psi_{0})=\mathcal W(u,\psi_{0})\quad\text{in $L^{1}(\RZ)$},
$$
entailing convergence towards $u$ for the sequence $(u_{k})_{k\ge 1}$ in $M^{1}(\R^{n})$ and the sought completeness.
We are left with the density question and we start with a calculation.
\begin{claim}With the phase symmetry $\sigma_{y,\eta}$ given by \eqref{phase} and $\psi_{0}$ by 
\eqref{psi000} we have for $X, Y\in \RZ$, 
\begin{equation}\label{648poi}
\mathcal W(\sigma_{Y}\psi_{0}, \psi_{0})(X)=2^{n}e^{-2\pi\val{X-Y}^{2}} e^{-4i\pi[X,Y]},
\end{equation}
where the symplectic form is given in \eqref{sympfor}.
\end{claim}
\begin{proof}[Proof of the Claim]
 We have indeed
 \begin{align*}
 \mathcal W(\sigma_{y,\eta}\psi_{0},& \psi_{0})(x,\xi)=\int 
 \bigl(\sigma_{y,\eta}\psi_{0}\bigr)(x+\frac z2)
 \psi_{0}(x-\frac z2)e^{-2i\pi z\cdot \xi} dz
 \\&=
 \int 
\psi_{0}(2y-x-\frac z2) e^{4i\pi \eta\cdot(x+\frac z2-y)}
 \psi_{0}(x-\frac z2)e^{-2i\pi z\cdot \xi} dz
 \\
 &=2^{n/2}\int 
 e^{-\pi\left(\val{2y-x-\frac z2}^{2}
 +\val{x-\frac z2}^{2}
 \right)}
 e^{2i\pi z\cdot(\eta-\xi)} dz
e^{4i\pi \eta\cdot (x-y)}
\\&=
2^{n/2}e^{4i\pi \eta\cdot (x-y)}\int e^{-\frac\pi 2\left(\val{2y-z}^{2}+\val{2(y-x)}^{2}\right)}e^{2i\pi z\cdot(\eta-\xi)} dz
\\
&=2^{n/2}e^{4i\pi \eta\cdot (x-y)}e^{-2\pi\val{y-x}^{2}}
e^{4i\pi y\cdot(\eta-\xi)} 
2^{n/2}
e^{-2\pi\val{\eta-\xi}^{2}},
\end{align*} 
which is the sought formula.
\end{proof}
Let $u$ be a function in $M^{1}(\R^{n})$. For $\varepsilon>0$ we define
$$
u_{\varepsilon}(x)=\int_{\RZ}\mathcal W(u,\psi_{0})(Y) e^{-\varepsilon\val Y^{2}}
2^{n}(\sigma_{Y}\psi_{0})(x) dY,
$$
and we have 
$$
\mathcal W(u_{\varepsilon},\psi_{0})(X)=\int_{\RZ}\mathcal W(u,\psi_{0})(Y) e^{-\varepsilon\val Y^{2}}
2^{n}\mathcal W(\sigma_{Y}\psi_{0}, \psi_{0})(X) dY,
$$
so that Lemma
\ref{lem.611mlk}  and \eqref{648poi} imply readily that $u_{\varepsilon}$ belongs to the Schwartz space.
Moreover we have $u=\opw{\mathcal W(u,\psi_{0})}\psi_{0},$
from Lemma \ref{lem.614inv} and thus 
$$
\mathcal W(u,\psi_{0})(X)=
\int_{\RZ}\mathcal W(u,\psi_{0})(Y)
2^{n}\mathcal W(\sigma_{Y}\psi_{0}, \psi_{0})(X) dY,
$$
so that 
\begin{multline*}
\int_{\RZ}\val{\mathcal W(u_{\varepsilon},\psi_{0})(X)-\mathcal W(u,\psi_{0})(X)}
dX
\\\le2^{n}
\iint_{\RZ\times \RZ}\underbrace{\val{\mathcal W(u,\psi_{0})(Y)}
\val{\mathcal W(\sigma_{Y}\psi_{0}, \psi_{0})(X) }}_{
\substack
{\in L^{1}(\R^{4n}) \text{ from \eqref{648poi}}
\text{ and $u\in M^{1}(\R^{n})$}}
}
\underbrace{\bigl(1-e^{-\varepsilon\val Y^{2}}\bigr)}_{\in [0,1]}dY
dX. 
\end{multline*}
The Lebesgue Dominated Convergence Theorem shows that the integral above tends to $0$ when $\varepsilon\rightarrow 0_{+}$,
proving the convergence in $M^{1}(\R^{n})$ of the sequence $(u_{\varepsilon})$, which completes the proof of the density.
\end{proof}
\begin{theorem} Let $\mathcal M$ be an element of the metaplectic group $Mp(n)$ (Definition \ref{def.41fdhh}). Then $\mathcal M$ is an isomorphism of $M^{1}(\R^{n})$ and we have  for $u\in M^{1}(\R^{n})$, $\phi\in \mathscr S(\R^{n})$,
\begin{equation}\label{647ppp} 
\mathcal W(\mathcal M u, \mathcal M \phi)=\mathcal W(u,\phi)\circ S^{-1},
\end{equation}
where $\mathcal M$ is in the fiber of the  symplectic transformation $S$.
In particular, the space $M^{1}(\R^{n})$
is invariant by the Fourier transformation and partial Fourier transformations, by the rescaling \eqref{mety02},
by the transformations  \eqref{mety01},
 \eqref{mety03} and also
by the phase translations \eqref{phtrans}
and phase symmetries \eqref{phase}. 
\end{theorem}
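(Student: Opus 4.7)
The plan is to reduce the theorem to the already-established Segal covariance \eqref{segal+}, which holds for all $u,v\in L^{2}(\R^{n})$, combined with the characterization of $M^{1}(\R^{n})$ given by Lemma \ref{lem.879kol} (namely: $u\in M^{1}$ iff $\mathcal W(u,\phi)\in L^{1}$ for some --- equivalently, every --- non-zero $\phi\in \mathscr S(\R^{n})$).

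First I would check that each of the generators \eqref{met1}--\eqref{met4} of $\meta$, as well as the phase translations \eqref{phtrans} and phase symmetries \eqref{phase}, preserves $\mathscr S(\R^{n})$: this is immediate from their explicit expressions (rescaling, partial Fourier transform, multiplication by a Gaussian factor, scalar, translation, reflection with a quadratic exponential). Consequently every $\mathcal M\in\meta$ restricts to an automorphism of $\mathscr S(\R^{n})$, and so does $\mathcal M^{-1}\in\meta$. Since $M^{1}(\R^{n})\subset L^{2}(\R^{n})$ and $\mathscr S(\R^{n})\subset L^{2}(\R^{n})$, identity \eqref{647ppp} is nothing but the specialization of \eqref{segal+} to $u\in M^{1}$ and $\phi\in\mathscr S$.

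Next I would prove that $\mathcal M$ sends $M^{1}(\R^{n})$ into itself. Given $u\in M^{1}(\R^{n})$, set $\psi=\mathcal M^{-1}\psi_{0}\in \mathscr S(\R^{n})$, which is non-zero because $\mathcal M^{-1}$ is unitary. Applying \eqref{647ppp} with $\phi=\psi$ yields
\begin{equation*}
\mathcal W(\mathcal M u,\psi_{0})=\mathcal W(\mathcal M u,\mathcal M\psi)=\mathcal W(u,\psi)\circ S^{-1}.
\end{equation*}
Since $S\in\mathrm{Sp}(n,\R)$ has determinant one, the change of variable $Y=S^{-1}X$ gives
\begin{equation*}
\norm{\mathcal W(\mathcal M u,\psi_{0})}_{L^{1}(\RZ)}=\norm{\mathcal W(u,\psi)}_{L^{1}(\RZ)},
\end{equation*}
which is finite by Lemma \ref{lem.879kol}, since $u\in M^{1}(\R^{n})$ and $\psi\in\mathscr S(\R^{n})$. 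Hence $\mathcal M u\in M^{1}(\R^{n})$, and the same identity shows that $\mathcal M$ is bounded on $M^{1}$ (using Proposition \ref{pro.54lkqs} to compare the two $M^{1}$-norms built from $\psi_{0}$ and $\psi$). Running the argument with $\mathcal M^{-1}$ in place of $\mathcal M$ gives bijectivity, so $\mathcal M$ is an isomorphism of $M^{1}(\R^{n})$.

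For the ``in particular'' statement, the cases of \eqref{met1}--\eqref{met4} are already covered. Phase symmetries $\sigma_{Y}$ are handled by the same argument via \eqref{segalsym}. Phase translations require the affine variant recorded after \eqref{segal+}, namely $\mathcal W(\tau_{Y}u,\tau_{Y}v)(X)=\mathcal W(u,v)(X-Y)$: for $u\in M^{1}$ and $\phi\in\mathscr S$, writing $\phi=\tau_{Y}(\tau_{-Y}\phi)$ with $\tau_{-Y}\phi\in\mathscr S$ yields
\begin{equation*}
\mathcal W(\tau_{Y}u,\phi)(X)=\mathcal W(u,\tau_{-Y}\phi)(X-Y),
\end{equation*}
whose $L^{1}(\RZ)$-norm equals $\norm{\mathcal W(u,\tau_{-Y}\phi)}_{L^{1}(\RZ)}<\infty$ by translation invariance of Lebesgue measure. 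Thus $\tau_{Y}u\in M^{1}(\R^{n})$. The only ``obstacle'' here is organizational --- verifying that the generators preserve $\mathscr S$ and applying the $L^{1}$-isometry disguised in the symplectic change of variable --- since the genuine analytic content is contained already in Segal's formula \eqref{segal+} and the equivalent-norm statement of Proposition \ref{pro.54lkqs}.
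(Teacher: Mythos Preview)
Your proof is correct and follows essentially the same approach as the paper: both invoke the Segal covariance \eqref{segal+}, use that $\meta$ preserves $\mathscr S(\R^{n})$, and exploit $\det S=1$ to transfer the $L^{1}$ condition; the only cosmetic difference is that the paper tests $\mathcal Mu$ against $\mathcal M\psi_{0}$ while you test against $\psi_{0}$ via $\psi=\mathcal M^{-1}\psi_{0}$. Your treatment of the phase translations via the affine variant of \eqref{segal+} is also correct and slightly more explicit than the paper's.
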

\begin{proof} Formula \eqref{647ppp} follows readily from 
 \eqref{segal+} and if $u$ belongs to $M^{1}(\R^{n})$,
 we find that 
 $$
 \mathcal W(\mathcal M u, \underbrace{\mathcal M \psi_{0}}_{\in \mathscr S(\R^{n})})=\underbrace{\mathcal W(u,\psi_{0})}_{\in L^{1}(\RZ)}\circ S^{-1},
 $$
 and since $\det S=1$, we have 
 $
\norm{ \mathcal W(\mathcal M u, \mathcal M \psi_{0})}_{L^{1}(\RZ)}=\norm{\mathcal W(u,\psi_{0})}_{L^{1}(\RZ)},
 $
 implying that 
 $
  \mathcal W(\mathcal M u, \mathcal M \psi_{0})
 $
 belongs to $L^{1}(\RZ)$ so that, thanks to Definition \ref{defi64},
 we get that $\mathcal M u$ belongs to $M^{1}(\R^{n})$.
 The same properties are true for $\mathcal M^{-1}$.
\end{proof}
\begin{rem}\rm
From  Definition \ref{defi64}, we see that, for $u\in M^{1}(\R^{n})$, we have  $$\mathcal W(u,u)\in L^{1}(\RZ),$$ and this implies, thanks to Theorem \ref{thm.112lkj}, that $M^{1}(\R^{n})\subset L^{1}(\R^{n})$.
Moreover
we have
$$
\mathcal F\bigl(M^{1}(\R^{n})\bigr)\subset M^{1}(\R^{n}),
$$
since for $u\in M^{1}(\R^{n})$, we have 
$
\mathcal W(\hat u,\psi_{0})=\mathcal W(\hat u,\hat\psi_{0})
$
and thanks to \eqref{647ppp},
$$
\norm{\mathcal W(\hat u,\hat \psi_{0})}_{L^{1}(\RZ)}=\norm{\mathcal W(u,\psi_{0})}_{L^{1}(\RZ)}.
$$
As a consequence we find
$$
\mathcal F\bigl(M^{1}(\R^{n})\bigr)\subset M^{1}(\R^{n})=\mathcal F^{2}\mathcal C (M^{1}(\R^{n}))
=\mathcal F^{2}(M^{1}(\R^{n}))\subset \mathcal F\bigl(M^{1}(\R^{n})\bigr),
$$ 
and consequently
\begin{equation}\label{}
M^{1}(\R^{n})=
\mathcal F(M^{1}(\R^{n}))\subset \mathcal F(L^{1}(\R^{n}))\subset C_{(0)}(\R^{n}),
\end{equation}
where the latter inclusion is due to the Riemann-Lebesgue Lemma
with 
$C_{(0)}(\R^{n})$ standing for space of 
continuous functions  
with limit $0$ at infinity.
Moreover, for $u\in M^{1}(\R^{n})$ and $\psi_{0}$ given by \eqref{psi000}, we get from 
\eqref{623new},
\begin{equation}
u(x_{1}) \bar \psi_{0}(x_{2})
=\int \mathcal W(u,\psi_{0})(\frac{x_{1}+x_{2}}2,\xi) e^{2i\pi (x_{1}-x_{2})\cdot\xi} d\xi,
\end{equation}
so that 
$$
u(x_{1}) =\iint \mathcal W(u,\psi_{0})(y, \eta) e^{4i\pi (x_{1}-y)\cdot\eta} \bar \psi_{0}(2y-x_{1}) dy d\eta 2^{n},
$$
implying
\begin{equation}\label{6414mm}
\norm{u}_{L^{1}(\R^{n})}\le \norm{\mathcal W(u,\psi_{0})}_{L^{1}(\RZ)} 2^{\frac{5n}4},
\end{equation}
and similarly for $p\in [1,+\io]$,
\begin{equation}\label{}
\norm{u}_{L^{p}(\R^{n})}\le \norm{\mathcal W(u,\psi_{0})}_{L^{1}(\RZ)} 2^{\frac{5n}4} p^{-\frac{n}{2p}},
\end{equation}
yielding the continuous injection of $M^{1}(\R^{n})$ into $L^{p}(\R^{n})$.
\end{rem}
\begin{theorem}
 The space $M^{1}(\R^{n})$ is a Banach algebra for convolution and  for pointwise multiplication.
\end{theorem}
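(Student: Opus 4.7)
The plan is to exploit the Fourier invariance of $M^1(\R^n)$ noted just before the theorem. Since $\mathcal F(u*v)=\hat u\cdot \hat v$ and $\mathcal F(u\cdot v)=\hat u*\hat v$, and since $\mathcal F$ is an isometry of $M^1$ when one uses the Gaussian window $\psi_0$ (by \eqref{647ppp}, because $\hat\psi_0=\psi_0$ and $|\det S|=1$ for $S\in Sp(n,\R)$), the convolution-algebra and multiplication-algebra statements are equivalent. Hence it suffices to prove stability under convolution: for $u,v\in M^1$ I will show $\|u*v\|_{M^1}\le C_n\|u\|_{M^1}\|v\|_{M^1}$.

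Fix $u,v\in M^1(\R^n)$. By Lemma \ref{lem.614inv} and \eqref{12hh}, with $\psi_0(x)=2^{n/4}e^{-\pi|x|^2}$ and the phase symmetry $\sigma_X$ from \eqref{phase}, every $f\in M^1$ admits the coherent-state reconstruction
\[
f=2^n\int_{\RZ}\mathcal W(f,\psi_0)(X)\,\sigma_X\psi_0\,dX,
\]
as an absolutely convergent Bochner integral in $L^2(\R^n)$ (since $\mathcal W(f,\psi_0)\in L^1$ and $\|\sigma_X\psi_0\|_{L^2}=1$). Substituting for both $u$ and $v$ and convolving yields
\[
u*v=2^{2n}\iint_{\RZ\times\RZ}\mathcal W(u,\psi_0)(X)\,\mathcal W(v,\psi_0)(Y)\,\Phi_{X,Y}\,dX\,dY,
\qquad \Phi_{X,Y}:=(\sigma_X\psi_0)*(\sigma_Y\psi_0).
\]
A direct completion-of-squares Gaussian computation (writing $X=(x,\xi)$, $Y=(y,\eta)$) gives the closed form
\[
\Phi_{X,Y}(t)=e^{-2\pi|\xi-\eta|^2}\,e^{2i\pi(x+y)(\eta-\xi)}\,\bigl(\tau_{(2(x+y),\xi+\eta)}g\bigr)(t),\qquad g(t):=e^{-\pi|t|^2/2}.
\]
Now $g=\mathcal M\psi_0$ for the metaplectic dilation $T=\sqrt 2\,I$ in \eqref{met1}, and phase translations lie in $\metaa$; since $\meta$ and $\metaa$ act isometrically on $M^1$ (by \eqref{647ppp} with window $\psi_0$ and the invariance of the $L^1$-norm under volume-preserving linear changes of variable on $\RZ$), one obtains the uniform bound $\|\Phi_{X,Y}\|_{M^1}\le C_n\,e^{-2\pi|\xi-\eta|^2}$, with $C_n=\|g\|_{M^1}<\infty$ (an explicit Gaussian constant, in fact of order $1$).

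Taking $\mathcal W(\cdot,\psi_0)$ of the double-integral expression for $u*v$, applying the triangle inequality pointwise in the output variable, and using Fubini (justified a posteriori by the finiteness of the resulting bound), one arrives at
\[
\|u*v\|_{M^1}\le 2^{2n}C_n\iint|\mathcal W(u,\psi_0)(x,\xi)|\,|\mathcal W(v,\psi_0)(y,\eta)|\,e^{-2\pi|\xi-\eta|^2}\,dx\,d\xi\,dy\,d\eta.
\]
The $x,y$ integrals decouple, and the Gaussian coupling in $(\xi,\eta)$ is handled by the elementary estimate $\sup_{\xi}\int e^{-2\pi|\xi-\eta|^2}d\eta=2^{-n/2}$, giving $\|u*v\|_{M^1}\le C_n'\|u\|_{M^1}\|v\|_{M^1}$. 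Fourier duality then transfers the same estimate to pointwise products, yielding the Banach algebra property in both operations. The only computational obstacle is the explicit Gaussian identity for $\Phi_{X,Y}$, which is where the crucial decay factor $e^{-2\pi|\xi-\eta|^2}$ becomes visible; once this is in hand, everything reduces to a routine application of Minkowski's integral inequality, Fubini, and the already established isometric invariance of $M^1$ under the affine metaplectic group $\metaa$.
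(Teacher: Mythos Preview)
Your argument is correct, but it follows a genuinely different path from the paper's. The paper proves the convolution estimate more directly: it writes $\mathcal W(u*v,\psi_0)(x,\xi)=\int u(y)\,\mathcal W(\tau_yv,\psi_0)(x,\xi)\,dy$, observes that $|\mathcal W(\tau_yv,\psi_0)|=|\mathcal W(v,\tau_y\psi_0)|$, and then invokes the window-change estimate \eqref{645lkj} together with the explicit computation $\|\mathcal W(\psi_0,\tau_y\psi_0)\|_{L^1}=1$. This yields $\|u*v\|_{M^1}\le 2^n\|u\|_{L^1}\|v\|_{M^1}$, and the $L^1$-embedding \eqref{6414mm} closes the loop. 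Both proofs finish the multiplication case via Fourier duality, exactly as you do.

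Your coherent-state approach is pleasant and self-contained, but note that you have worked harder than necessary: once you know $\Phi_{X,Y}$ is, up to a unimodular factor, a phase translate of a fixed Gaussian $g$, the bound $\|\Phi_{X,Y}\|_{M^1}\le\|g\|_{M^1}$ is already uniform in $(X,Y)$, and the final double integral factors trivially by $e^{-2\pi|\xi-\eta|^2}\le 1$. The Gaussian decay factor you isolate is genuine but not ``crucial'' --- the sentence about handling the $(\xi,\eta)$ coupling via $\sup_\xi\int e^{-2\pi|\xi-\eta|^2}d\eta$ is a red herring. What your route buys is a somewhat more symmetric treatment of $u$ and $v$ and an explicit Gaussian picture; what the paper's route buys is brevity, since it leverages the already-proven $M^1\hookrightarrow L^1$ and the norm-equivalence lemma instead of recomputing a two-Gaussian convolution.
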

\begin{proof}
Let $u, v\in M^{1}(\R^{n})$; then the convolution $u\ast v$ makes sense and belongs to all $L^{p}(\R^{n})$
for $p\in [1,+\io]$, since we have $u\in L^{1}(\R^{n})$. We calculate
$$
\mathcal W (u\ast v, \psi_{0})(x,\xi)=\int_{\R^{n}} u(y) \mathcal W(\tau_{y}v, \psi_{0})(x,\xi) dy, \quad (\tau_{y}v)(x)=v(x-y),
$$
so that 
$
\norm{\mathcal W (u\ast v, \psi_{0})}_{L^{1}(\RZ)}\le \int_{\R^{n}}\val{u(y)}
\norm{\mathcal W (\tau_{y}v, \psi_{0})}_{L^{1}(\RZ)} dy,
$
and since we have 
$$
\mathcal W (\tau_{y}v, \psi_{0})(x,\xi)=
\mathcal W (v, \tau_{y}\psi_{0})(x,\xi) e^{-4i\pi y\cdot \xi},
$$
we get
\begin{equation}\label{}
\norm{\mathcal W (u\ast v, \psi_{0})}_{L^{1}(\RZ)}\le \int_{\R^{n}}\val{u(y)}
\norm{\mathcal W (v, \tau_{y}\psi_{0})}_{L^{1}(\RZ)} dy,
\end{equation}
so that using \eqref{645lkj},
we obtain
$$
\norm{\mathcal W (u\ast v, \psi_{0})}_{L^{1}(\RZ)}\le \int_{\R^{n}}\val{u(y)}
2^{n} 
\norm{\mathcal W(\psi_{0}, \tau_{y}\psi_{0})}_{L^{1}(\RZ)}
dy
\norm{\mathcal W(v, \psi_{0})}_{L^{1}(\RZ)}.
$$
 We can check now that 
 $$
 \mathcal W(\psi_{0}, \tau_{y}\psi_{0})(x,\xi)= 2^{n} e^{-2\pi\left(\xi^{2}+(x-\frac y2)^{2}\right)} e^{2i\pi \xi y},
 $$
 so that 
 \begin{multline}\label{6417jj}
\norm{\mathcal W (u\ast v, \psi_{0})}_{L^{1}(\RZ)}\le 2^{n}\norm{u}_{L^{1}(\R^{n})}
\norm{\mathcal W (v, \psi_{0})}_{L^{1}(\RZ)}
\\
\underbrace{\le }_{\eqref{6414mm}}
2^{\frac{9n}4}
\norm{\mathcal W (u, \psi_{0})}_{L^{1}(\RZ)}
\norm{\mathcal W (v, \psi_{0})}_{L^{1}(\RZ)},
\end{multline}
proving that $M^{1}(\R^{n})$ is a Banach algebra for convolution
when equipped with the norm
\begin{equation}\label{norm49}
N(u)=2^{\frac{9n}4}\norm{\mathcal W(u,\psi_{0})}_{L^{1}(\RZ)}.
\end{equation}
On the other hand, for $u,v\in M^{1}(\R^{n})$, the pointwise product $u\cdot v$ makes sense and belongs to $L^{1}(\R^{n})$ (since both functions are in $L^{2}(\R^{n})$) and we have 
$$
u\cdot v=\mathcal C\mathcal F(\hat u\ast \hat v),
$$ 
so that 
$$
\mathcal W(u\cdot v,\psi_{0})(x,\xi)=\mathcal W\bigl(\mathcal C\mathcal F(\hat u\ast \hat v),\psi_{0}\bigr)(x,\xi)
=
\mathcal W\bigl(\mathcal F(\hat u\ast \hat v),\check\psi_{0}\bigr)(-x,-\xi),
$$
and since $\psi_{0}=\hat \psi_{0}$ is also even, we get 
\begin{multline*}
\norm{\mathcal W(u\cdot v,\psi_{0})}_{L^{1}(\RZ)}=\norm{\mathcal W\bigl(\mathcal F(\hat u\ast \hat v),\mathcal F\psi_{0}\bigr)}_{L^{1}(\RZ)}
\underbrace{=}_{\text{cf. }\eqref{segal+}}
\norm{\mathcal W\bigl(\hat u\ast \hat v,\psi_{0}\bigr)}_{L^{1}(\RZ)}
\\\underbrace{\le}_{\eqref{6417jj}} 
2^{\frac{9n}4}\norm{\mathcal W(\hat u,\hat\psi_{0})}_{L^{1}(\RZ)}
\norm{\mathcal W(\hat v,\hat \psi_{0})}_{L^{1}(\RZ)}
\\=2^{\frac{9n}4}\norm{\mathcal W(u,\psi_{0})}_{L^{1}(\RZ)}
\norm{\mathcal W(v,\psi_{0})}_{L^{1}(\RZ)},
\end{multline*}
proving as well that 
$M^{1}(\R^{n})$ is a Banach algebra for pointwise multiplication with the norm \eqref{norm49}.
\end{proof}
\subsection{Most pulses give rise to non-integrable Wigner distribution}\label{sec.meager}
In the sequel, $n$ is an integer $\ge 1$.
\begin{lem}\label{lem.616uyt}
We have  with $\psi_{0}$ given by \eqref{psi000},
 \begin{equation}\label{631gfd}
M^{1}(\R^{n})=\{u\in L^2(\R^n), \iint_{\RZ}\val{\mathcal W(u,\psi_{0})(x,\xi)} dxd\xi< +\io\}.
\end{equation}
Then $M^{1}(\R^{n})$
  is an $F_{\sigma}$ of $L^2(\R^n)$ with empty interior.
\end{lem}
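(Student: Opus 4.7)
The equality \eqref{631gfd} is a direct restatement of Definition \ref{defi64} combined with the equivalences in Lemma \ref{lem.879kol}: the condition $\iint|\mathcal W(u,\psi_0)|<+\io$ is precisely property (ii) of Lemma \ref{lem.879kol} applied to the non-zero Schwartz function $\psi_0$. So nothing beyond quoting these results is required.

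The plan for the $F_\sigma$ structure is to write
\begin{equation*}
M^{1}(\R^{n})=\bigcup_{N\in \N^*} F_{N},\qquad
F_{N}=\Bigl\{u\in L^{2}(\R^{n}):\ \iint_{\RZ}\bigl\vert\mathcal W(u,\psi_0)(x,\xi)\bigr\vert\,dxd\xi\le N\Bigr\},
\end{equation*}
and to show each $F_N$ is closed in $L^2(\R^n)$. For this I would take a sequence $(u_k)$ in $F_N$ with $u_k\to u$ in $L^2(\R^n)$; by the sesquilinearity of $\mathcal W$ and the isometry identity \eqref{norm}, one has
\begin{equation*}
\norm{\mathcal W(u_k,\psi_0)-\mathcal W(u,\psi_0)}_{L^2(\RZ)}=\norm{u_k-u}_{L^2(\R^n)}\norm{\psi_0}_{L^2(\R^n)}\longrightarrow 0.
\end{equation*}
Passing to a subsequence converging almost everywhere and applying Fatou's lemma to $|\mathcal W(u_k,\psi_0)|$ gives $\iint|\mathcal W(u,\psi_0)|\le N$, so $u\in F_N$.

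For empty interior, the plan is to exhibit, for every $\varepsilon>0$, a function $w\in L^2(\R^n)\setminus M^1(\R^n)$ with $\norm{w}_{L^2}<\varepsilon$; then, given any $u\in M^1(\R^n)$, the function $u+w$ lies in $L^2(\R^n)$, satisfies $\norm{(u+w)-u}_{L^2}<\varepsilon$, and cannot belong to $M^1(\R^n)$ — otherwise $w=(u+w)-u$ would lie in the vector space $M^1(\R^n)$, contradicting the choice of $w$. In dimension one, the explicit computation in Section~\ref{sec.612tre}, culminating in \eqref{gdfr44}, shows that $v_0=\mathbf 1_{[-1/2,1/2]}$ belongs to $L^2(\R)$ but not to $M^1(\R)$; in $n$ dimensions one simply takes the tensor product $v_0\otimes\cdots\otimes v_0$ (the Wigner distribution of a tensor product is the product of one-dimensional Wigner distributions, so non-integrability is preserved). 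Taking $w=\delta\,v_0^{\otimes n}$ with $\delta=\varepsilon/(2\norm{v_0^{\otimes n}}_{L^2})$ does the job, since $\mathcal W(w,w)=\delta^2 \mathcal W(v_0^{\otimes n},v_0^{\otimes n})\notin L^1(\RZ)$ forces $w\notin M^1(\R^n)$ by the equivalence (iii)$\Leftrightarrow$(ii) of Lemma \ref{lem.879kol}.

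No step looks genuinely hard here: the closedness of the $F_N$ is a one-line Fatou argument once the $L^2$-continuity of $u\mapsto \mathcal W(u,\psi_0)$ is noted, and the empty-interior part reduces to the already-computed counterexample of Section~\ref{sec.612tre} together with the vector-space structure of $M^1(\R^n)$. The only minor subtlety is making sure the higher-dimensional counterexample is produced correctly, but a tensor product handles this immediately.
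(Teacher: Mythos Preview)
Your proposal is correct and follows the same overall strategy as the paper: write $M^{1}$ as the countable union of the sublevel sets $F_N$, show each is closed, and use the explicit counterexample of Section~\ref{sec.612tre} to rule out non-empty interior. The technical execution differs in two minor respects. For closedness, the paper truncates to balls of radius $R$ and uses Cauchy--Schwarz together with \eqref{norm} to bound $\iint_{\val{(x,\xi)}\le R}\val{\mathcal W(u-u_k,\psi_0)}$ by $\norm{u-u_k}_{L^2}(\val{\mathbb B^{2n}}R^{2n})^{1/2}$, then lets $k\to\infty$ and $R\to\infty$; your Fatou argument after extracting an a.e.\ convergent subsequence is an equally valid and arguably cleaner shortcut. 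For empty interior, the paper argues that each $F_N$ is convex and symmetric, so a non-empty interior would force $0$ to be interior, yielding a continuous inclusion $L^2\hookrightarrow M^1$ and hence $L^2=M^1$, contradicted by \eqref{gdfr44}; it then invokes Baire to conclude for the union. Your direct perturbation argument (any $u\in M^1$ is approximated by $u+w\notin M^1$) bypasses Baire entirely and gives the empty interior of $M^1$ in one stroke, which is a small simplification.
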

\begin{proof}
\index{the space $M^{1}(\R^{n})$ is an $F_{\sigma}$ of $L^{2}(\R^{n})$}
 We have $M^{1}(\R^{n})=\cup_{N\in \N}\Phi_{N}$ with
 $$
  \Phi_{N}=\{u\in L^2(\R^n), \iint_{\RZ}\val{\mathcal W(u,\psi_{0})(x,\xi)} dxd\xi\le N\}.
 $$
 The set $\Phi_{N}$ is a closed subset of $L^2(\R^n)$ since if $(u_{k})_{k\ge 1}$ is a 
 sequence  in $\Phi_{N}$
 which 
 converges in $L^2(\R^n)$ with limit $u$, we get for $R\ge 0$,
 \begin{multline*}
  \iint_{\val{(x,\xi)}\le R}\val{\mathcal W(u,\psi_{0})(x,\xi)} dxd\xi
  \\\le  
   \iint_{\val{(x,\xi)}\le R}\val{\mathcal W(u-u_{k},\psi_{0})(x,\xi)} dxd\xi
   + \iint_{\val{(x,\xi)}\le R}\val{\mathcal W(u_{k},\psi_{0})(x,\xi)} dxd\xi
   \\
 \le \norm{u-u_{k}}_{L^2(\R^n)}(\val{\mathbb B^{2n}} R^{2n})^{1/2}+N,
\end{multline*}
implying
$
  \iint_{\val{(x,\xi)}\le R}\val{\mathcal W(u,\psi_{0})(x,\xi)} dxd\xi\le N,$ and this for any $R$, so that we obtain
$u\in \Phi_{N}$.
The interior of $\Phi_{N}$ is empty, since if it were not the case, as $\Phi_{N}$ is also convex and symmetric, $0$ would be an interior point of $\Phi_{N}$ in $L^{2}(\R^{n})$
and we would find $\rho_{0}>0$ such that
$$
\norm{u}_{L ^2(\R^n)}\le \rho_{0}\Longrightarrow  \iint_{\RZ}\val{\mathcal W(u,\psi_{0})(x,\xi)} dxd\xi\le N,
$$
and thus for any non-zero $u\in L ^2(\R^n)$, we would have 
$$
 \iint_{\RZ}\val{\mathcal W(u,\psi_{0})(x,\xi)} dxd\xi\norm{u}_{L ^2(\R^n)}^{-1}\rho_{0}\le N
 \quad
 \text{and thus}\quad
 \norm{u}_{M^1(\R^n)}\le N \rho_{0}^{-1}  \norm{u}_{L ^2(\R^n)},
$$
implying as well
$
{L ^2(\R^n)}={M ^1(\R^n)}
$
which is untrue,
thanks to the examples of Section \ref{sec.612tre},
e.g.
\eqref{gdfr44},
and this 
proves that the interior of $\Phi_{N}$ is actually empty.
Now the Baire Category Theorem implies that the $F_{\sigma}$ set $M^{1}(\R^{n})$ is a subset of $L^2(\R^n)$ with empty interior.
\end{proof}
Let us give another decomposition of the space $M^{1}(\R^{n})$.
\begin{lem}\label{lem.66hhfg}
According to Lemma \ref{lem.879kol},
we have 
 $$
M^{1}(\R^{n})=\{u\in L^2(\R^n), \iint_{\R^n\times \R^n}
 \val{\mathcal W(u,u)(x,\xi)}
  dxd\xi< +\io\}.
 $$
 Then defining
 \begin{equation}\label{lkqsaz--}
 \mathcal F_{N}=\{u\in L^2(\R^n), \iint_{\R^n\times \R^n}
 \val{\mathcal W(u,u)(x,\xi)}
  dxd\xi\le N\},
\end{equation}
each $\mathcal F_{N}$ is a closed subset of $L^{2}(\R^{n})$ with empty interior.
\end{lem}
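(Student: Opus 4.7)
The two assertions---set equality and the topological properties of the $\mathcal F_N$---can be handled separately, and the deepest ingredients have already been established earlier in the section.

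The identification $M^{1}(\R^n)=\{u\in L^2 : \iint |\mathcal W(u,u)|<+\infty\}$ is merely a restatement of the equivalence (ii)$\Leftrightarrow$(iii) in Lemma~\ref{lem.879kol}: the characterization of $M^{1}(\R^n)$ in Definition~\ref{defi64} can be read either with a nonzero Schwartz window or, equivalently, with $u$ itself in place of the window. So nothing needs to be done here beyond invoking that lemma.

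For the closedness of $\mathcal F_N$ in $L^2(\R^n)$, I would simply apply Lemma~\ref{lem.62llkk} verbatim. If $(u_k)_{k\ge 1}\subset\mathcal F_N$ converges in $L^2(\R^n)$ to some $u$, then the hypotheses of Lemma~\ref{lem.62llkk} are satisfied with $C_0=N$, and the conclusion is precisely $\iint |\mathcal W(u,u)|\le N$, i.e.\ $u\in\mathcal F_N$.

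For the empty-interior statement, rather than re-running the convexity/symmetry argument used in Lemma~\ref{lem.616uyt} (which is not directly available because $u\mapsto \mathcal W(u,u)$ is sesquilinear rather than linear, so $\mathcal F_N$ is not convex), I would exploit the obvious inclusion
\begin{equation*}
\mathcal F_N\subset M^{1}(\R^n),
\end{equation*}
which follows immediately from the set equality just recalled. Since the interior operator is monotone for the topology of $L^2(\R^n)$,
\begin{equation*}
\mathrm{int}(\mathcal F_N)\subset \mathrm{int}\bigl(M^{1}(\R^n)\bigr),
\end{equation*}
and Lemma~\ref{lem.616uyt} already furnishes $\mathrm{int}(M^{1}(\R^n))=\emptyset$. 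Hence $\mathrm{int}(\mathcal F_N)=\emptyset$, and one obtains as a bonus the second $F_\sigma$-decomposition $M^{1}(\R^n)=\bigcup_{N\in\N}\mathcal F_N$ in a form better suited to the Wigner distribution of $u$ itself. There is no genuine obstacle here: the only point requiring mild care is to make sure we invoke Lemma~\ref{lem.62llkk} and Lemma~\ref{lem.616uyt} in their stated forms, and not to attempt a direct convexity argument on $\mathcal F_N$, which would fail.
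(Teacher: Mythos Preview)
Your proof is correct and follows the paper's argument essentially verbatim: the paper also invokes Lemma~\ref{lem.62llkk} with $C_0=N$ for closedness, and for the empty interior it uses exactly the inclusion $\mathrm{int}_{L^2}(\mathcal F_N)\subset \mathrm{int}_{L^2}(M^1(\R^n))=\emptyset$ via Lemma~\ref{lem.616uyt}. Your aside about why a direct convexity argument on $\mathcal F_N$ would fail is a welcome clarification not made explicit in the paper.
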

\begin{proof}
 We have 
 $\mathscr F=M^{1}(\R^{n})=\cup_{N\in \N}\mathcal F_{N}$.
  The set $\mathcal F_{N}$ is a closed subset of $L^2(\R^n)$ since if $(u_{k})_{k\ge 1}$ is a sequence in $\mathcal F_{N}$ which converges in $L^2(\R^n)$ with limit $u$,  we have 
  $$\forall k\ge 1, \quad
   \iint_{\R^n\times \R^n}
 \val{\mathcal W(u_{k},u_{k})(x,\xi)}
  dxd\xi\le N,
  $$
  so that  we may apply Lemma \ref{lem.62llkk} with $C_{0}=N$,
  and readily get that $u$ belongs to $\mathcal F_{N}$.
  We have also that $
  \text{interior}_{L^{2}(\R^{n})}
  \left(\mathcal F_{N}\right)
  \subset \text{interior}_{L^{2}(\R^{n})}\left(M^{1}(\R^{n})\right)=\emptyset$.
\end{proof}
\begin{theorem}\label{thm.67kjhg}
Defining \begin{equation}\label{67kjhg}
\mathscr G=
 \{u\in L^2(\R^n), \iint_{\R^n\times \R^n}
 \val{\mathcal W(u,u)(x,\xi)}
  dxd\xi= +\io\}=L^{2}(\R^{n})\backslash M^{1}(\R^{n}),
\end{equation}
we obtain that the set $\mathscr G$ is a dense $G_{\delta}$ subset of $L^2(\R^n)$.
\end{theorem}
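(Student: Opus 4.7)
The plan is to deduce the theorem directly from Lemma \ref{lem.66hhfg} via the Baire Category Theorem, since the Hilbert space $L^2(\R^n)$ is a complete metric space and thus a Baire space. First I would rewrite the complement as
\begin{equation*}
\mathscr{G} = L^2(\R^n) \setminus M^1(\R^n) = L^2(\R^n) \setminus \bigcup_{N \in \N} \mathcal{F}_N = \bigcap_{N \in \N} \bigl(L^2(\R^n) \setminus \mathcal{F}_N\bigr),
\end{equation*}
using the decomposition of $M^1(\R^n)$ from \eqref{lkqsaz--}.

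Next I would invoke Lemma \ref{lem.66hhfg}, which tells us that each $\mathcal{F}_N$ is closed in $L^2(\R^n)$ with empty interior. Consequently, each $U_N := L^2(\R^n) \setminus \mathcal{F}_N$ is open (as the complement of a closed set) and dense (as the complement of a closed set with empty interior, for if $U_N$ failed to be dense, its complement $\mathcal{F}_N$ would contain a nonempty open set, contradicting the emptiness of its interior).

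Therefore $\mathscr{G} = \bigcap_{N \in \N} U_N$ is by definition a $G_\delta$ set, expressed as a countable intersection of open sets. Since $L^2(\R^n)$ is a complete metric space, the Baire Category Theorem applies and yields that the countable intersection of open dense subsets $U_N$ is itself dense in $L^2(\R^n)$. This concludes the proof. There is no main obstacle: all the substantive analytic work — the $L^2$-closedness of $\mathcal{F}_N$ (which required Lemma \ref{lem.62llkk} on the $L^1$-lower semicontinuity of the Wigner integral under $L^2$-convergence) and the emptiness of its interior (which relied on producing an explicit $u \in L^2(\R^n) \setminus M^1(\R^n)$, as furnished by the box function example of Section \ref{sec.612tre}, together with a convexity-symmetry argument) — is already contained in Lemmas \ref{lem51}, \ref{lem.62llkk}, \ref{lem.616uyt}, and \ref{lem.66hhfg}.
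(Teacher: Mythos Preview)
Your proof is correct and follows essentially the same approach as the paper: both deduce the result directly from Lemma~\ref{lem.66hhfg} and the Baire Category Theorem, writing $\mathscr G=\bigcap_{N}\mathcal F_{N}^{c}$ as a countable intersection of open dense sets. The paper's version is slightly terser (it uses the identity $\{\mathring A\}^{c}=\overline{A^{c}}$ applied to $A=\cup_{N}\mathcal F_{N}$), but the argument is identical in substance.
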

\begin{proof}
 It follows immediately from Lemma \ref{lem.66hhfg}
 and formula
 $
 \bigl\{\mathring A\bigr\}^{c}=\overline{A^{c}},
 $
 yielding for $\mathcal F_{N}$ defined in \eqref{lkqsaz--},
 $
 L^{2}(\R^{n})=\bigl\{\text{interior} ({\cup_{\N}\mathcal F_{N})}\bigr\}^{c}=\overline{\cap_{\N} \mathcal F_{N}^{c}}.
 $
\end{proof}
\index{the space $M^{1}(\R^{n})$ is a dual space}
\begin{rem}\rm
 It is interesting to note that the space $M^{1}(\R^{n})$ is not reflexive,
 as it can be identified to $\ell^{1}$ via Wilson bases, but it is a dual space.
 It turns out that both properties are linked to the fact that $M^{1}(\R^{n})$ is an $F_{\sigma}$ of $L^{2}(\R^{n})$
 as proven by Lemmas \ref{lem.616uyt} and \ref{lem.66hhfg}:
 if $\mathbb X$ is a reflexive Banach space continuously included in a Hilbert space $\mathbb H$,
 it is always an $F_{\sigma}$ of $\mathbb H$, since we may write 
 $$
 \mathbb X=\cup_{N\in \N} N\mathbf B_{\mathbb X},
 $$
 where $\mathbf B_{\mathbb X}
$
is the closed unit ball of $\mathbb X$ and $N\mathbf B_{\mathbb X}$ is $\mathbb H$-closed
since it is weakly compact (for the topology $\sigma(\mathbb H, \mathbb H)$);
we cannot use that abstract argument in the case of the non-reflexive 
$M^{1}(\R^{n})$,
so we produced a direct elementary proof above.
Also it can be proven that  
 if $\mathbb X$ is a Banach space continuously included in a Hilbert space $\mathbb H$,
 so that $\mathbb X$ is an $F_{\sigma}$ of $\mathbb H$, then $\mathbb X$ must have a predual.
As a result, the fact that $M^{1}(\R^{n})$ has a predual
appears as a consequence of the fact that $M^{1}(\R^{n})$
is an $F_{\sigma}$ of $L^{2}(\R^{n})$.
\end{rem}
\subsection{Consequences on integrals of the Wigner distribution}\label{sec.jhae22}
\begin{lem}\label{lem.hfgap9}
 Let $\mathscr G$ be defined in \eqref{67kjhg} and let $u\in \mathscr G$. Then the positive and negative part of the real-valued $\mathcal W(u,u)$ are such that
 \begin{equation}\label{}
\iint \mathcal W(u,u)_{+}(x,\xi) dx d\xi=\iint \mathcal W(u,u)_{-}(x,\xi) dx d\xi=+\io.
\end{equation}
\end{lem}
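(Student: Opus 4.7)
\begin{proof}[Proof proposal]
The idea is to use a smooth, phase-space cutoff to integrate $\mathcal{W}(u,u)$ against a monotonely increasing family of symbols converging pointwise to $1$, and to extract the finiteness (or not) of $\iint\mathcal{W}(u,u)_\pm$ from the Weyl pairing together with monotone convergence. Since $u\in\mathscr{G}$ forces $\mathcal{W}(u,u)\notin L^1$, the only consistent outcome will be that both $\iint\mathcal{W}(u,u)_+$ and $\iint\mathcal{W}(u,u)_-$ are infinite.

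Concretely, fix $\chi\in\mathscr{S}(\RZ)$ radial and radially decreasing with $0\le\chi\le 1$ and $\chi(0)=1$, and set $\chi_\varepsilon(y)=\chi(\varepsilon y)$ for $\varepsilon\in(0,1]$. Then $\chi_\varepsilon\nearrow 1$ pointwise as $\varepsilon\to 0_+$, and the family $\{\chi_\varepsilon\}$ is bounded in $C^\infty_b(\RZ)$, so Theorem \ref{thm.117j++} gives $\sup_{\varepsilon\in(0,1]}\norm{\opw{\chi_\varepsilon}}_{\mathcal B(L^2(\R^n))}\le C<+\io$. For $v\in\mathscr{S}(\R^n)$ one has $\mathcal{W}(v,v)\in\mathscr{S}(\RZ)\subset L^1(\RZ)$, hence by \eqref{eza654} and dominated convergence
\[
\poscal{\opw{\chi_\varepsilon}v}{v}_{L^2(\R^n)}=\iint\chi_\varepsilon\mathcal{W}(v,v)\,dxd\xi\xrightarrow[\varepsilon\to 0_+]{}\iint\mathcal{W}(v,v)\,dxd\xi=\norm{v}_{L^2(\R^n)}^2.
\]
By density of $\mathscr{S}(\R^n)$ in $L^2(\R^n)$ together with the uniform $L^2$-bound on the operators, an $\varepsilon/3$-argument extends this limit to every $u\in L^2(\R^n)$:
\[
\lim_{\varepsilon\to 0_+}\poscal{\opw{\chi_\varepsilon}u}{u}_{L^2(\R^n)}=\norm{u}_{L^2(\R^n)}^2.
\]

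For an arbitrary $u\in L^2(\R^n)$, since $\chi_\varepsilon\in L^2(\RZ)$ and $\mathcal{W}(u,u)\in L^2(\RZ)$ by \eqref{norm}, the Weyl pairing \eqref{eza654} gives
\[
\poscal{\opw{\chi_\varepsilon}u}{u}_{L^2(\R^n)}=\iint\chi_\varepsilon\mathcal{W}(u,u)_+\,dxd\xi-\iint\chi_\varepsilon\mathcal{W}(u,u)_-\,dxd\xi,
\]
where both summands are finite non-negative numbers because $\chi_\varepsilon\in L^2\cap L^\io$ while $\mathcal{W}(u,u)_\pm\in L^2$ (and $\chi_\varepsilon$ has the Gaussian-type decay inherited from $\chi$, so the products are integrable). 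Since $\chi_\varepsilon\nearrow 1$ pointwise, monotone convergence yields
\[
\iint\chi_\varepsilon\mathcal{W}(u,u)_\pm\,dxd\xi\;\nearrow\;\iint\mathcal{W}(u,u)_\pm\,dxd\xi\in[0,+\io].
\]

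Now suppose for contradiction that $\iint\mathcal{W}(u,u)_+<+\io$. Then the first summand converges to a finite limit, and since the left-hand side converges to $\norm{u}_{L^2(\R^n)}^2<+\io$, the second summand must also converge to a finite limit, namely $\iint\mathcal{W}(u,u)_+-\norm{u}_{L^2(\R^n)}^2$. By monotone convergence this is precisely $\iint\mathcal{W}(u,u)_-$, so $\iint\mathcal{W}(u,u)_-<+\io$ as well, giving $\iint|\mathcal{W}(u,u)|<+\io$ and contradicting $u\in\mathscr{G}$. Hence $\iint\mathcal{W}(u,u)_+=+\io$, and the symmetric argument (swapping $+$ and $-$) gives $\iint\mathcal{W}(u,u)_-=+\io$.
\end{proof}

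The only mildly delicate point is the uniform $L^2$-boundedness of $\opw{\chi_\varepsilon}$, which is immediate from Calder\'on--Vaillancourt (Theorem \ref{thm.117j++}) since the $C^\infty_b$-seminorms of $\chi_\varepsilon$ are controlled by those of $\chi$. Once this is in hand, the core mechanism is the three-line identity comparing $\poscal{\opw{\chi_\varepsilon}u}{u}$ with $\iint\chi_\varepsilon\mathcal{W}(u,u)_+-\iint\chi_\varepsilon\mathcal{W}(u,u)_-$, and monotone convergence does the rest.
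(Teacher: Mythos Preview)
Your proof is correct and follows essentially the same approach as the paper: a monotone family of Schwartz cutoffs with uniformly bounded Weyl quantizations, the identity $\poscal{\opw{\chi_\varepsilon}u}{u}=\iint\chi_\varepsilon\mathcal W(u,u)$, and monotone convergence on the positive and negative parts. The paper uses the specific Gaussian $a(x,\xi,h)=e^{-h(|x|^2+|\xi|^2)}$ (invoking Theorem~\ref{thm.117jhg} for the semi-classical uniform bound) and only needs the uniform estimate $|\poscal{\opw{a}u}{u}|\le\sigma_n\|u\|^2$ rather than your sharper limit $\poscal{\opw{\chi_\varepsilon}u}{u}\to\|u\|^2$, but the mechanism is identical.
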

\begin{proof}
 For $h\in (0,1]$, we define the symbol 
 \begin{equation}\label{}
a(x,\xi, h)= e^{-h(x^{2}+\xi^{2})},
\end{equation}
and we see that it is a semi-classical symbol in the sense \eqref{semi00++}.
Let us start a {\it reductio ad absurdum} and assume that 
$\iint \mathcal W(u,u)_{-}(x,\xi) dx d\xi<+\io,$
(which implies since $u\in \mathcal G$,
$\iint \mathcal W(u,u)_{+}(x,\xi) dx d\xi=+\io$).
We note that 
$$
\poscal{\opw{a(x,\xi, h)} u}{u}_{L^{2}(\R^{n})}=\iint \underbrace{a(x,\xi, h)}_{\in L^{2}(\RZ)} 
\underbrace{\mathcal W(u,u)(x,\xi)}_{\in L^{2}(\RZ)}  dx d\xi,
$$
and thanks to Theorem \ref{thm.117jhg} we have also 
$$
\sup_{h\in (0,1]}\val{\poscal{\opw{a(x,\xi, h)} u}{u}_{L^{2}(\R^{n})}}\le \sigma_{n}\norm{u}^{2}_{L^{2}(\R^{n})},
$$
so that 
\begin{multline*}
\iint {e^{-h(x^{2}+\xi^{2})}}
{\mathcal W(u,u)(x,\xi)}dx d\xi
+\iint {e^{-h(x^{2}+\xi^{2})}}
{\mathcal W(u,u)_{-}(x,\xi)}dx d\xi
\\
=\iint {e^{-h(x^{2}+\xi^{2})}}
{\mathcal W(u,u)_{+}(x,\xi)}dx d\xi,
\end{multline*}
and thus with $\theta_{h}\in [-1,1]$, we have 
\begin{multline}\label{ta8chq}
\theta_{h} \sigma_{n}\norm{u}^{2}_{L^{2}(\R^{n})}
+\iint {e^{-h(x^{2}+\xi^{2})}}
{\mathcal W(u,u)_{-}(x,\xi)}dx d\xi
\\=\iint {e^{-h(x^{2}+\xi^{2})}}
{\mathcal W(u,u)_{+}(x,\xi)}dx d\xi.
\end{multline}
Choosing $h=1/m, m\in \N^{*}$, we note that 
$$
 {e^{-\frac 1m(x^{2}+\xi^{2})}}
{\mathcal W(u,u)_{+}(x,\xi)}\le {e^{-\frac 1{m+1}(x^{2}+\xi^{2})}}
{\mathcal W(u,u)_{+}(x,\xi)}.
$$
From the Beppo-Levi Theorem (see e.g. Theorem 1.6.1 in \cite{MR3309446}) we get that 
$$
\lim_{m\rightarrow+\io}
\iint {e^{-\frac1m(x^{2}+\xi^{2})}}
{\mathcal W(u,u)_{+}(x,\xi)}dx d\xi=\iint
{\mathcal W(u,u)_{+}(x,\xi)}dx d\xi=+\io.
$$
However the left-hand-side of  \eqref{ta8chq} is bounded above by
$$
\sigma_{n}\norm{u}^{2}_{L^{2}(\R^{n})}+\iint 
{\mathcal W(u,u)_{-}(x,\xi)}dx d\xi, \quad \text{which is finite,}
$$
triggering a contradiction.
We may now study the case where 
$$\iint \mathcal W(u,u)_{+}(x,\xi) dx d\xi<+\io,\quad
\iint \mathcal W(u,u)_{-}(x,\xi) dx d\xi=+\io.
$$
The identity \eqref{ta8chq}  still holds true 
with a left-hand-side going to $+\io$ when $h$ goes to 0 whereas the right-hand side is bounded.
This concludes the proof of the lemma.
\end{proof}
\begin{nb}
 \rm A shorter \emph{heuristic} argument would be that the identity  $\iint \mathcal W(u,u)(x,\xi) dx d\xi=\norm{u}_{L^{2}(\R^{n})}^{2}$ and 
 $\iint \val{\mathcal W(u,u)(x,\xi) }dx d\xi=+\io$ should imply the lemma, but the former integral is not absolutely converging, so that argument fails to be completely convincing
 since we need to give a meaning to the first integral.
\end{nb}
\index{the set $L^{2}(\R^{n})\backslash M^{1}(\R^{n})$ is a dense $G_{\delta}$ set of $L^{2}(\R^{n})$}
\begin{theorem}\label{thm.69gapp}
Defining
$
\mathscr G=L^{2}(\R^{n})\backslash M^{1}(\R^{n})
$
 (cf. \eqref{67kjhg})
  we find that the set $\mathscr G$ is a dense $G_{\delta}$ set in $L^{2}(\R^{n})$ and for all 
  $u\in \mathscr G$, we have\footnote{Note that $\mathcal W(u,u)$ is real-valued.} 
   \begin{align}\label{fapo66}
&\iint \mathcal W(u,u)_{+}(x,\xi) dx d\xi=\iint \mathcal W(u,u)_{-}(x,\xi) dx d\xi=+\io,
\end{align}
Defining\footnote{
Thanks to Theorem \ref{thm776655}, the function $\mathcal W(u,u)$ is a continuous function, so it makes sense to consider its pointwise values. } 
\begin{equation}\label{645yrp}
E_{\pm}(u)=\{(x,\xi)\in \RZ, \pm\mathcal W(u,u)(x,\xi)>0  \},
\end{equation}
we have for all $u\in \mathscr G$,
\begin{equation}\label{ats646} 
\iint_{E_{\pm}(u)} \mathcal W(u,u)(x,\xi) dx d\xi=\pm\io,
\end{equation}
and both sets $E_{\pm}(u)$ are open subsets of $\RZ$ with infinite Lebesgue measure.
\end{theorem}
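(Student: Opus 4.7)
The plan is to assemble the conclusions of the two preceding results rather than prove anything genuinely new. First, the assertion that $\mathscr G$ is a dense $G_\delta$ in $L^2(\R^n)$ is exactly the content of Theorem \ref{thm.67kjhg}, so nothing further is required for that part. Second, for any $u\in\mathscr G$, identity \eqref{fapo66} is precisely the statement of Lemma \ref{lem.hfgap9}, which we invoke verbatim.

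To obtain \eqref{ats646}, I would exploit the pointwise identities
\begin{equation*}
\mathbf 1_{E_+(u)}(x,\xi)\,\mathcal W(u,u)(x,\xi)=\mathcal W(u,u)_+(x,\xi),\quad
\mathbf 1_{E_-(u)}(x,\xi)\,\mathcal W(u,u)(x,\xi)=-\mathcal W(u,u)_-(x,\xi),
\end{equation*}
which are immediate from the definition \eqref{645yrp} and the fact that $\mathcal W(u,u)$ vanishes on the boundary set $\{\mathcal W(u,u)=0\}$. Integrating and using \eqref{fapo66} then yields
$\iint_{E_+(u)}\mathcal W(u,u)\,dx\,d\xi=+\infty$ and $\iint_{E_-(u)}\mathcal W(u,u)\,dx\,d\xi=-\infty$. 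This step is completely routine and does not require any Baire or semi-classical input beyond what is already stored in Lemma \ref{lem.hfgap9}.

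Finally, for the topological and measure-theoretic claims about $E_\pm(u)$: the openness of both sets is a direct consequence of the continuity (in fact uniform continuity) of $\mathcal W(u,u)$, which is provided by Theorem \ref{thm776655}. For the infiniteness of their Lebesgue measure I would argue by contradiction. Suppose $|E_+(u)|_{2n}<+\infty$; by Theorem \ref{thm776655} one has the uniform bound $\|\mathcal W(u,u)\|_{L^\infty(\R^{2n})}\le 2^n\|u\|_{L^2(\R^n)}^2$, so
\begin{equation*}
\iint_{E_+(u)}\mathcal W(u,u)(x,\xi)\,dx\,d\xi\le 2^n\|u\|_{L^2(\R^n)}^2\,|E_+(u)|_{2n}<+\infty,
\end{equation*}
contradicting what we just established. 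The same argument applied to $-\mathcal W(u,u)$ on $E_-(u)$ handles the negative part.

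In short, every ingredient is already on the table, and the only mild obstacle is the bookkeeping relating $E_\pm(u)$ to the positive/negative parts $\mathcal W(u,u)_\pm$; the genuine work has been done in Theorem \ref{thm.67kjhg} (Baire category argument on the $F_\sigma$ decomposition of $M^1(\R^n)$) and in Lemma \ref{lem.hfgap9} (the semi-classical Gaussian truncation together with Beppo--Levi used to rule out integrability of either $\mathcal W(u,u)_+$ or $\mathcal W(u,u)_-$ separately).
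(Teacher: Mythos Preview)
Your proof is correct and follows essentially the same route as the paper's: both assemble Theorem \ref{thm.67kjhg} and Lemma \ref{lem.hfgap9} for the first two claims, derive \eqref{ats646} from the pointwise identification of $\mathcal W(u,u)_\pm$ with the restriction to $E_\pm(u)$, and use the continuity of $\mathcal W(u,u)$ from Theorem \ref{thm776655} for openness. The only cosmetic difference is in the infinite-measure argument: the paper uses that $\mathcal W(u,u)\in L^2(\R^{2n})$ (Cauchy--Schwarz would give finiteness of $\int_{E_+(u)}\mathcal W(u,u)_+$ if $|E_+(u)|<\infty$), whereas you use the $L^\infty$ bound from Theorem \ref{thm776655}; both are equally valid instances of H\"older's inequality.
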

\begin{proof}
 The first statements  follow from Theorem
 \ref{thm.67kjhg} and Lemma \ref{lem.hfgap9}. As far as \eqref{ats646} is concerned, we note that $\mathcal W(u,u) >0$
 (resp. $<0$) on $E_{+}(u)$ (resp. $E_{-}(u)$), so that 
 Theorem \ref{thm.67kjhg} implies   \eqref{ats646}.
 Moreover
  $E_{\pm}(u)$ are open subsets of $\RZ$ since, thanks to Theorem \ref{thm776655},
the function $\mathcal W(u,u)$ is continuous; also,  both subsets have infinite Lebesgue measure from 
\eqref{fapo66} since $\mathcal W(u,u)$ belongs to $L^{2}(\RZ)$.
\end{proof}
\begin{rem}\rm 
 There are many other interesting properties and generalizations of the space $M^{1}$
 and in particular a close link between the Bargmann transform, the Fock spaces and modulation spaces:
 we refer the reader to Remark 5 on page 243 in Section 11.4 of \cite{MR1843717},
 to our Section \ref{sec.nonneg} in this article and to Section 2.4 of 
 \cite{MR2599384}.
\end{rem}
 \begin{rem}\rm 
 As a consequence of the previous theorem, we could say that for any \emph{generic} $u$ in $L^{2}(\R^{n})$
 (i.e. any  $u\in \mathscr G=L^{2}(\R^{n})\backslash M^{1}(\R^{n})$), we can find  open sets $E_{+}, E_{-}$ such that  the real-valued $\pm\mathcal W(u,u)$ is positive on $E_{\pm}$ and 
 $$
 \iint_{E_{\pm}}\mathcal W(u,u)(x,\xi) dx d\xi=\pm\io.
 $$
 We shall see in the next section some results on polygons in the plane and for instance, we shall be able to prove that there exists a ``universal number''
 $\mu_{3}^{+}>1$ such that 
 for any triangle\footnote{We define a triangle as the intersection of three half-planes, which includes  of course the convex envelope of three points, but also the  set with infinite area
 $\{(x,\xi)\in \R^{2},  x\ge 0, \xi\ge 0, x+\xi\ge \lambda\}$ for some $\lambda>0$. \label{fn.triangle}} $\mathcal T$ in the plane, we have 
 \begin{equation}\label{ga51fg}\forall u\in L^{2}(\R),\quad
\iint_{\mathcal T} \mathcal W(u,u)(x,\xi) dx d\xi\le \mu_{3}^{+}\norm{u}_{L^{2}(\R)}^{2}.
\end{equation}
Note in particular  that we will show that \eqref{ga51fg} holds true regardless of the area of the triangle (which could be infinite according to our definition of a triangle in our footnote \ref{fn.triangle}).
 Although that type of result may look pretty weak, it gets enhanced by Theorem \ref{thm.69gapp}
 which proves that no triangle in the plane could be a set $E_{+}(u)$ (cf. \eqref{645yrp}) for a generic $u$ in $L^{2}(\R)$.
\end{rem}
\section{Convex polygons of the plane}
\subsection{Convex Cones}\label{sec.cvx345}
\index{convex cones}
We have seen  in Proposition \ref{pro.521kjs} and Theorem  \ref{thm.59}
that the self-adjoint bounded operator with Weyl symbol
$H(x) H(\xi)$ does satisfy
\begin{multline}\label{711aze}
\mu_{2}^{-}=m_{0}= \lambda_{\text{min}}\bigl(
\opw{H(x) H(\xi)}\bigr)
\le \opw{H(x) H(\xi)}
\\\le \lambda_{\text{max}}\bigl(\opw{H(x) H(\xi)}\bigr)= M_{0}=\mu_{2}^{+},
\end{multline}
\begin{gather}
[\mu_{2}^{-}, \mu_{2}^{+}]=\text{spectrum}\left(\opw{H(x) H(\xi)}\right),
\end{gather}
with
\begin{equation}\label{713ezj}
\mu_{2}^{-}\approx -0.155939843191243,\qquad \mu_{2}^{+}\approx1.00767997007003.
\end{equation}
This result is true as well for the  characteristic function of any convex cone (which is not a half-plane nor the full plane) in the plane since we can map it to the quarter plane by a transformation in $\text{Sl}(2,\R)=\text{Sp}(1,\R)$.
On the other hand a concave cone is the complement of a convex cone and the diagonalization offered by Theorem \ref{thm.calculs} proves that the spectrum of the Weyl quantization of the indicatrix of a concave cone is $$
1-\text{Spectrum}\left( \opw{H(x) H(\xi)}\right).
$$
We may sum-up the situation by the following theorem.
\begin{theorem}\label{thm.71aqsd}
 Let $\Sigma_{\theta}$ be a convex cone in $\R^{2}$ with aperture $\theta\in [0,2\pi]$ (cf. \eqref{539iua})
 and let $\mathcal A_{\theta}$ be the self-adjoint bounded operator with the indicator function of $\Sigma_{\theta}$ as a Weyl symbol.
 \begin{enumerate}
 \item If $\theta=0$, we have $\mathcal A_{\theta}=0$.
 \item If $\theta\in (0,\pi)$, the operator $\mathcal A_{\theta}$ is unitarily equivalent to $
 \opw{H(x) H(\xi)}$, thus with spectrum
 $[\mu_{2}^{-}, \mu_{2}^{+}]$ with $\mu_{2}^{-}<0<1<\mu_{2}^{+}$
 as given in Theorem \ref{thm.59}. 
  \item If $\theta=\pi$, $\Sigma_{\pi}$ is a half-space and $\mathcal A_{\pi}$ is a proper orthogonal projection, thus with spectrum $\{0,1\}$.
   \item If $\theta\in(\pi, 2\pi)$, $\Sigma_{\theta}$ is a concave cone  and the operator $\mathcal A_{\theta}$ is unitarily equivalent to $\Id-\opw{H(x) H(\xi)}$, thus with spectrum $[1-\mu_{2}^{+}, 1-\mu_{2}^{-}]$.\footnote{So that  we have in particular,
   from (2), the inequalities 
   $1-\mu_{2}^{+}<0<1<1-\mu_{2}^{-}$.}
  \item If $\theta=2\pi$, we have $\mathcal A_{2\pi}=\Id$.
  \end{enumerate}
\end{theorem}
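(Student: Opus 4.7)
The strategy is to reduce each non-degenerate case to one of the two spectral calculations already carried out in Sections 5.1--5.2 by invoking the symplectic covariance formula \eqref{segal}. The endpoint cases (1) and (5) are immediate: by the definition \eqref{539iua}, $\Sigma_0$ is empty (so its indicator is the zero function and $\mathcal{A}_0 = \opw{0} = 0$), and $\Sigma_{2\pi}$ differs from $\R^2$ by a Lebesgue-null half-line (so $\mathbf{1}_{\Sigma_{2\pi}} = 1$ a.e. and $\mathcal{A}_{2\pi} = \opw{1} = \Id$). Throughout the proof, the Lebesgue-null boundary of each cone can be disregarded when manipulating $L^\infty$ Weyl symbols.

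For case (3) with $\theta = \pi$: any planar rotation lies in $SO(2) \subset SL(2,\R) = Sp(1,\R)$, so I first use such a rotation to identify $\Sigma_\pi$ symplectically with the half-plane $\{(x,\xi)\in\R^2 : x > 0\}$. The symbol $H(x)$ depends only on the position variable, hence its Weyl quantization coincides with ordinary multiplication by $H(x)$, which is the orthogonal projection of $L^2(\R)$ onto $L^2(\R_+)$. Formula \eqref{segal} transfers this conclusion to $\mathcal{A}_\pi$, yielding spectrum $\{0,1\}$.

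For case (2) with $\theta \in (0,\pi)$: I combine a planar rotation with the shear matrix $M_{\theta_0}$ exhibited explicitly at the end of Section 5.3.2 to build an element $S \in Sp(1,\R)$ mapping $\Sigma_\theta$ onto the open quarter-plane $\Sigma_{\pi/2}$. Up to the boundary (Lebesgue-null), one has $\mathbf{1}_{\Sigma_\theta} \circ S^{-1} = H(x)H(\xi)$ pointwise a.e., so \eqref{segal} gives that $\mathcal{A}_\theta$ is unitarily equivalent to $\opw{H(x)H(\xi)}$, with spectrum $[\mu_2^-, \mu_2^+]$ supplied by Theorem \ref{thm.59} and Proposition \ref{pro.521kjs}. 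Finally, case (4) with $\theta \in (\pi, 2\pi)$ follows by complementarity: the set-theoretic complement of $\Sigma_\theta$ is, up to a null set, the convex cone $\widetilde\Sigma$ of aperture $2\pi - \theta \in (0,\pi)$ obtained by rotating the configuration appropriately. Writing $\mathbf{1}_{\Sigma_\theta} = 1 - \mathbf{1}_{\widetilde\Sigma}$ a.e., linearity of the Weyl quantization yields $\mathcal{A}_\theta = \Id - \opw{\mathbf{1}_{\widetilde\Sigma}}$, and case (2) applied to $\widetilde\Sigma$ shows that this is unitarily equivalent to $\Id - \opw{H(x)H(\xi)}$; its spectrum is thus $[1 - \mu_2^+, 1 - \mu_2^-]$, which in particular straddles the interval $(0,1)$ as the inequalities \eqref{713ezj} predict.

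\textbf{Main obstacle.} There is no genuine analytic difficulty here; the proof is entirely bookkeeping around symplectic equivalences of cones, combined with the two nontrivial spectral computations for $\opw{H(x)H(\xi)}$ and orthogonal projections onto a half-line already recorded earlier. The only point requiring mild care is to verify that the explicit rotation-plus-shear used in case (2) genuinely lies in $Sp(1,\R) = SL(2,\R)$ (both factors have determinant one) and that Lebesgue-null boundaries of the various cones may be ignored so that the pulled-back symbol coincides a.e. with $H(x)H(\xi)$.
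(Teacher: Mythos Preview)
Your proposal is correct and follows the same approach as the paper: symplectic covariance via the shear $M_{\theta_0}$ from Section 5.3.2 reduces every convex cone of aperture in $(0,\pi)$ to the quarter-plane (whose spectrum is computed in Theorem \ref{thm.59} and Proposition \ref{pro.521kjs}), the half-space case is handled as a multiplication/projection operator, and concave cones are treated by complementarity $\mathbf 1_{\Sigma_\theta}=1-\mathbf 1_{\widetilde\Sigma}$. One harmless redundancy: since $\Sigma_\theta$ in \eqref{539iua} already has one edge along the positive $x$-axis, the shear $M_\theta$ alone suffices in case (2) and your preliminary rotation is not needed.
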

\begin{rem}\label{rem.72poiu}
It is only in   the trivial cases $\theta\in \{0,\pi, 2\pi \}$ that
$A_{\theta}$ is an orthogonal projection. These cases are also characterized (among cones) by the fact that the spectrum of $\mathcal A_{\theta}$ is included in $[0,1]$.
\end{rem}
\begin{rem}\rm 
 It is interesting to remark that 
 all operators $\mathcal A_{\theta}$ for $\theta\in (0,\pi)$ are unitarily equivalent and thus with constant spectrum $[\mu_{2}^{-}, \mu_{2}^{+}]$ as given in Theorem \ref{thm.59}. Nevertheless the sequence $(\mathcal A_{\theta})_{0<\theta<\pi}$ is weakly converging to the orthogonal projection $\mathcal A_{\pi}$ whose spectrum is $\{0,1\}$:
 indeed for $\phi \in \mathscr S(\R)$, $\psi\in \mathscr S(\R)$, we have 
 $$
 \poscal{\mathcal A_{\theta}\phi}{\psi}_{L^{2}(\R)}=\iint_{\Sigma_{\theta}}\underbrace{\mathcal W(\phi, \psi)}_{\in \mathscr S(\R^{2})}(x,\xi) dx d\xi,
 $$
 and thus the Lebesgue Dominated Convergence Theorem implies that 
 \begin{equation}\label{714erz}
 \lim_{\theta\rightarrow \pi_{-}} \poscal{\mathcal A_{\theta}\phi}{\psi}_{L^{2}(\R)}= \poscal{\mathcal A_{\pi}\phi}{\psi}_{L^{2}(\R)}.
\end{equation}
 On the other hand for $u, v\in L^{2}(\R)$ and sequences  $(\phi_{k})_{k\ge 1}, (\psi_{k})_{k\ge 1}$
  in $\mathscr S(\R)$ with respective limits $u, v$
  in $L^{2}(\R)$,
  we have 
$$
 \poscal{\mathcal A_{\theta} u}{v}_{L^{2}(\R)}=
 \poscal{\mathcal A_{\theta} (u-\phi_{k})}{v}_{L^{2}(\R)}
 +\poscal{\mathcal A_{\theta} \phi_{k}}{v-\psi_{k}}_{L^{2}(\R)}
 +\poscal{\mathcal A_{\theta} \phi_{k}}{\psi_{k}}_{L^{2}(\R)},
$$
so that 
\begin{multline*}
\poscal{\mathcal A_{\theta} u}{v}_{L^{2}(\R)}-\poscal{\mathcal A_{\pi} u}{v}_{L^{2}(\R)}
\\=
 \poscal{\mathcal A_{\theta} (u-\phi_{k})}{v}_{L^{2}(\R)}
 +\poscal{\mathcal A_{\theta} \phi_{k}}{v-\psi_{k}}_{L^{2}(\R)}
 +\poscal{\mathcal A_{\theta} \phi_{k}}{\psi_{k}}_{L^{2}(\R)},
 \\
 -\poscal{\mathcal A_{\pi} (u-\phi_{k})}{v}_{L^{2}(\R)}
 -\poscal{\mathcal A_{\pi} \phi_{k}}{v-\psi_{k}}_{L^{2}(\R)}
 -\poscal{\mathcal A_{\pi} \phi_{k}}{\psi_{k}}_{L^{2}(\R)},
\end{multline*}
implying
\begin{multline*}\label{}
\val{\poscal{\mathcal A_{\theta} u}{v}_{L^{2}(\R)}-\poscal{\mathcal A_{\pi} u}{v}_{L^{2}(\R)}}
\\
\le (\mu_{2}^{+}+1)\bigl(\norm{u-\phi_{k}}_{L^{2}(\R)}\norm{v}_{L^{2}(\R)}
+\norm{v-\psi_{k}}_{L^{2}(\R)}\norm{\phi_{k}}_{L^{2}(\R)}
\bigr)
\\
+\val{\poscal{\mathcal A_{\theta} \phi_{k}}{\psi_{k}}_{L^{2}(\R)}-\poscal{\mathcal A_{\pi} \phi_{k}}{\psi_{k}}_{L^{2}(\R)}},
\end{multline*}
and thus, using \eqref{714erz}, we get 
\begin{multline*}
\limsup_{\theta\rightarrow 0_{+}}\val{\poscal{\mathcal A_{\theta} u}{v}_{L^{2}(\R)}-\poscal{\mathcal A_{\pi} u}{v}_{L^{2}(\R)}}
\\\le
 (\mu_{2}^{+}+1)\bigl(\norm{u-\phi_{k}}_{L^{2}(\R)}\norm{v}_{L^{2}(\R)}
+\norm{v-\psi_{k}}_{L^{2}(\R)}\norm{\phi_{k}}_{L^{2}(\R)}
\bigr).
\end{multline*}
Taking now the infimum with respect to $k$ of the right-hand-side in  the above inequality,
we obtain indeed the weak convergence
\begin{equation}
\lim_{\theta\rightarrow 0_{+}}
\poscal{\mathcal A_{\theta} u}{v}_{L^{2}(\R)}=\poscal{\mathcal A_{\pi} u}{v}_{L^{2}(\R)}.
\end{equation}
Of course we cannot have strong convergence of the bounded self-adjoint $\mathcal A_{\theta}$ towards (the bounded self-adjoint) $A_{\pi}$ because of their respective spectra
and the same lines can be written on the weak limit $0$ when $\theta\rightarrow 0_{+}$ of 
$\mathcal A_{\theta}$.
 \end{rem}
\subsection{Triangles}
\index{triangles}
We may consider general ``triangles'' in the plane that we define as
\begin{equation}
\mathcal T_{L_{1},L_{2},L_{3}}^{c_{1},c_{2},c_{3}}=
\bigl\{(x,\xi)\in \R^{2}, L_{j}(x,\xi)\ge c_{j}, j\in\{1,2,3\}\bigr\},
\end{equation}
$c_{j}$ are real numbers and $L_{j}$ are linear forms. To avoid degenerate situations, we shall assume that 
\begin{equation}\label{tri45-}
\text{for $j\not=k$}, \quad dL_{j}\wedge dL_{k}\not=0,\quad
\val{\mathcal T_{L_{1},L_{2},L_{3}}^{c_{1},c_{2},c_{3}}}>0
\text{ \quad and $\mathcal T_{L_{1},L_{2},L_{3}}^{c_{1},c_{2},c_{3}}$ is not a cone}.
\end{equation}
Note that this includes standard triangles (convex envelope of three non-colinear points) but also sets with infinite area such as 
\begin{equation}\label{areai}
\{(x,\xi)\in \R^{2}, x\ge 0, \xi\ge 0, x+\xi\ge \lambda\}, \quad\text{where $\lambda$ is a positive parameter.}
\end{equation}
Without loss of generality, we may assume that $L_{1}(x,\xi)-c_{1}=x, L_{2}(x,\xi) -c_{2}=\xi$,
so that 
$$
\mathcal T_{L_{1},L_{2},L_{3}}^{c_{1},c_{2},c_{3}}=\{(x,\xi)\in \R^{2}, x\ge 0, \xi\ge 0, ax +b \xi\ge \nu\},
$$
where $a,b,\lambda$ are real parameters with $a\not=0, b\not =0$ from the assumption \eqref{tri45-}; using the symplectic mapping
$(x,\xi)\mapsto (\mu x, \xi/\mu)$ with $\mu=\sqrt{\val{b/a}}$, we see that the condition $ax+b\xi\ge \nu$ becomes
$$
x\sign a +\xi\sign b \ge \lambda=\nu/\sqrt{\val{ab}}\quad\text{i.e\quad}
\begin{cases}
x+\xi&\ge  \tilde \nu,
 \\
x-\xi&\ge  \tilde \nu,
 \\
-x+\xi &\ge  \tilde \nu,
 \\
-x-\xi&\ge\tilde \nu.
\end{cases}
$$
The first case requires $\tilde \nu >0$ and the other cases
$\tilde \nu<0$. The only case with finite area is the fourth case
\begin{equation}\label{trifin}\mathcal T_{4,\lambda}=
\{(x,\xi)\in \R^{2}, x\ge 0, \xi\ge 0, x+\xi\le \lambda\} \ \text{ triangle with area $\lambda^{2}/2$, $\lambda>0$.}
\end{equation}
The second case is 
\begin{equation}\label{cas222}
\mathcal T_{2,\lambda}=
\{(x,\xi)\in \R^{2}, x\ge 0, \xi\ge 0, x-\xi\ge -\lambda\},\quad \lambda>0,
\end{equation}
The third case is 
\begin{equation}\label{cas333}\mathcal T_{3,\lambda}=
\{(x,\xi)\in \R^{2}, x\ge 0, \xi\ge 0, \xi-x\ge -\lambda\},\quad \lambda>0,
\end{equation}
and the first case is 
\begin{equation}\label{726+++}\mathcal T_{1,\lambda}=
\{(x,\xi)\in \R^{2}, x\ge 0, \xi\ge 0, \xi+x\ge \lambda\},\quad \lambda>0.
\end{equation}
\begin{pro}\label{pro.jkgh++}
Let  $\mathcal T_{4,\lambda}$ be a triangle with finite non-zero area in the plane given by \eqref{trifin},
where $\lambda$ is a positive parameter.
Then the operator $\opw{\mathbf 1_{\mathcal T_{4,\lambda}}}$ is unitarily equivalent to the operator with kernel 
\begin{equation}\label{727hgf}
\tilde k_{4,\lambda}(x,y)=\mathbf 1_{[0,\lambda]}\bigl(\frac{x+y}2\bigr)
\frac{\sin\bigl(\pi(x-y)(\lambda-\frac{x+y}2)\bigr)}{\pi (x-y)}.
\end{equation}
The operator $\opw{\mathbf 1_{\mathcal T_{4,\lambda}}}$ is self-adjoint and bounded on $L^{2}(\R)$ so that 
\begin{equation}\label{728mlk}
\norm{\opw{\mathbf 1_{\mathcal T_{4,\lambda}}}}_{\mathcal B(L^{2}(\R))}\le \frac12\left(
\mu_{2}^{+}+\sqrt{1+(\mu_{2}^{+})^{2}}\right):=\tilde \mu_{3},
\end{equation}
where $\mu_{2}^{+}$ is given in \eqref{711aze}.
\end{pro}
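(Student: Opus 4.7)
The statement splits cleanly into an explicit kernel computation and a universal norm estimate.

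\emph{Step 1 (kernel formula).} I would apply Proposition~\ref{pro1716} with $a=\mathbf 1_{\mathcal T_{4,\lambda}}$: the partial Fourier transform reduces to the finite integral
\[\widehat a^{[2]}(s,z)=\mathbf 1_{[0,\lambda]}(s)\int_{0}^{\lambda-s}e^{-2i\pi z\xi}\,d\xi
=\mathbf 1_{[0,\lambda]}(s)\,e^{-i\pi z(\lambda-s)}\,\frac{\sin[\pi z(\lambda-s)]}{\pi z}.\]
Substituting $s=(x+y)/2$ and $z=y-x$, the exponential prefactor becomes $e^{i\pi(x-y)(\lambda-(x+y)/2)}$, which factorizes as $e^{i\phi(x)-i\phi(y)}$ with $\phi(x)=\pi\lambda x-\tfrac{\pi}{2}x^{2}$. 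Since $\phi$ is the sum of a linear term (a phase translation, cf.~\eqref{phtrans}) and a quadratic one (the metaplectic chirp of generator~\eqref{met3}), the unitary $U\colon u\mapsto e^{i\phi}u$ lies in the affine metaplectic group $\metaa$. Conjugation by $U$ removes the phase factor and produces exactly the kernel $\tilde k_{4,\lambda}$ of~\eqref{727hgf}; equivalently, $\opw{\mathbf 1_{\mathcal T_{4,\lambda}}}$ is symplectically equivalent to $\opw{\mathbf 1_{T'}}$ for the isoceles triangle $T'=\{(x,\xi):0\le x\le\lambda,\ |\xi|\le(\lambda-x)/2\}$.

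\emph{Step 2 (self-adjointness and block decomposition).} Self-adjointness is immediate from the real-valued Weyl symbol, and $L^{2}$-boundedness is automatic since $\tilde k_{4,\lambda}$ is bounded and supported in the strip $\{0\le x+y\le 2\lambda\}$, hence Hilbert--Schmidt for each $\lambda$. The crux is the \emph{uniform} bound~\eqref{728mlk}, for which I would adapt the scheme of Section~\ref{sec.51}. Writing $\tilde A_{4,\lambda}$ as a $2\times 2$ block operator against the orthogonal projections $H$ and $\check H$ onto $L^{2}(\R_{+})$ and $L^{2}(\R_{-})$, the support condition $(x+y)/2\ge 0$ in the kernel forces $\check H\tilde A_{4,\lambda}\check H=0$, so that $\tilde A_{4,\lambda}=H\tilde A_{4,\lambda}H+2\operatorname{Re}(\check H\tilde A_{4,\lambda}H)$.

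\emph{Step 3 (the two block bounds and conclusion).} For the diagonal block, the metaplectic covariance~\eqref{segal+} yields $\poscal{H\tilde A_{4,\lambda}H\,u}{u}=\iint_{T'}\mathcal W(Uu,Uu)\,dx\,d\xi$ for $u$ supported in $\R_{+}$; writing $\mathbf 1_{T'}=\mathbf 1_{\Sigma_{\pi/2}}-\mathbf 1_{\Sigma_{\pi/2}\setminus T'}$ and observing that $Uu$ is concentrated in the right half-line, I would extract the bound $\mu_{2}^{+}\|u\|^{2}$ from Theorem~\ref{thm.54}, using that the complementary piece $\mathbf 1_{\Sigma_{\pi/2}\setminus T'}$ contributes with the correct sign on such test functions. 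For the off-diagonal block I would pass to the logarithmic coordinates $\phi_{1}(t)=(Hu)(e^{t})e^{t/2}$, $\phi_{2}(s)=(\check H u)(-e^{s})e^{s/2}$ of Lemma~\ref{lem.52ezrr}: a direct change of variables expresses the operator as a convolution supported on $\{t\ge s\}$ whose profile, after accounting for the oscillation of $\sin[\pi(e^{s}+e^{t})(\lambda-\tfrac{e^{t}-e^{s}}{2})]$, has $L^{1}$-norm $\tfrac14$ on each half-line, yielding the bound $\tfrac12\|Hu\|\,\|\check H u\|$ uniformly in $\lambda$. Combining,
\[|\poscal{\tilde A_{4,\lambda}u}{u}|\le\mu_{2}^{+}\|Hu\|^{2}+\|Hu\|\,\|\check H u\|,\]
and the supremum of the right-hand side over the unit sphere of $L^{2}(\R)$ is the largest eigenvalue of the Hermitian matrix $\mat22{\mu_{2}^{+}}{1/2}{1/2}{0}$, namely $\tilde\mu_{3}=\tfrac12(\mu_{2}^{+}+\sqrt{1+(\mu_{2}^{+})^{2}})$.

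\emph{Main obstacle.} The delicate point is extracting the sharp off-diagonal constant $\tfrac12$. The naive pointwise estimate $|\sin\alpha|\le 1$ bounds the logarithmic-coordinate kernel by $1/(2\pi\cosh((t-s)/2))$, whose $L^{1}$-norm on a half-line is $\tfrac12$ rather than $\tfrac14$; this would only yield the weaker bound $\bigl(\mu_{2}^{+}+\sqrt{(\mu_{2}^{+})^{2}+4}\bigr)/2$. To recover the correct $\tilde\mu_{3}$ one must exploit the structure of the sine factor — specifically, the identity $\frac{\sin[\pi z(\lambda-s)]}{\pi z}=\int_{-(\lambda-s)/2}^{(\lambda-s)/2}e^{-2i\pi z\xi}\,d\xi$ and the resulting cancellation in the convolution — paralleling the principal-value argument used in the quarter-plane case of Lemma~\ref{lem.52ezrr}. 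Establishing this sharp estimate uniformly in $\lambda$ is the main technical challenge.
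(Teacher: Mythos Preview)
Your Step~1 is correct and coincides with the paper's computation. The genuine gap is in Step~3, in both blocks.

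\textbf{Diagonal block.} Your sign argument for $H\tilde A_{4,\lambda}H$ does not work: there is no reason for $\opw{\mathbf 1_{\Sigma_{\pi/2}\setminus T'}}$ to have a definite sign when sandwiched between projections onto $L^{2}(\R_{+})$, and in fact we know from Section~\ref{sec.51} that such quantized indicators are typically sign-indefinite. The paper avoids this entirely by \emph{not} stripping the phase. Working with the original kernel $k_{4,\lambda}$, one observes that $k_{4,\lambda}(x,y)=H(x+y)\,\ell_{\lambda}(x,y)$ where $\ell_{\lambda}$ is the kernel of $\opw{H(\xi)H(\lambda-\xi-x)}$, i.e.\ the quantization of the \emph{cone} $\{\xi\ge 0,\ x+\xi\le\lambda\}$. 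Since $H(x)H(y)H(x+y)=H(x)H(y)$, the diagonal block is exactly $H\,\opw{\text{cone}}\,H$, and the cone being symplectically equivalent to the quarter-plane gives $|\langle H\,\opw{\text{cone}}\,H\,u,u\rangle|\le\mu_{2}^{+}\|Hu\|^{2}$ immediately. The phase you removed is precisely what packages the diagonal block as a recognizable cone operator.

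\textbf{Off-diagonal block.} Your ``main obstacle'' is a phantom, caused by miscounting constants. The naive pointwise bound $|\sin\alpha|\le 1$ bounds the off-diagonal kernel (in the original $x,y$ variables) by $\frac{H(x+y)[H(x)\check H(y)+\check H(x)H(y)]}{\pi\,|x-y|}=\frac{H(x+y)[H(x)\check H(y)+\check H(x)H(y)]}{\pi(|x|+|y|)}$, and Proposition~\ref{pro.hardy}\,[2] gives norm $\tfrac12$ for each of the two modified Hardy pieces, hence $|\langle\Omega_{\lambda}u,u\rangle|\le\|Hu\|\,\|\check Hu\|$ with coefficient $1$---which is exactly what produces $\tilde\mu_{3}$. (Equivalently, in log coordinates the bound is $1/(2\pi\cosh((t-s)/2))$ restricted to a half-line, with $L^{1}$-norm $\tfrac12$; summing the two half-line pieces gives $1$, not $2$.) You do not need the sharper constant $\tfrac14$, and your proposed route to it via ``cancellation in the convolution'' would fail anyway: the off-diagonal kernel in log coordinates is \emph{not} a convolution, since the argument of the sine depends on $e^{s}$ and $e^{t}$ separately, not just on $t-s$.
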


\begin{proof}
 The kernel $k_{4,\lambda}$ of $\opw{\mathbf 1_{\mathcal T_{4,\lambda}}}$  is such that 
 \begin{multline*}
 k_{4,\lambda}(x,y)= \mathbf 1_{[0,\lambda]}\bigl(\frac{x+y}2\bigr)\int_{0}^{\lambda-\frac {x+y}2}
 e^{2i\pi (x-y)\xi}d\xi
\\ =\mathbf 1_{[0,\lambda]}\bigl(\frac{x+y}2\bigr)
 \frac{\bigl(e^{2i\pi (x-y)(\lambda-\frac{x+y}2) }-1\bigr)}{2i\pi (x-y)}
 \\
 =
 e^{i\pi (\lambda x-\frac{x^{2}}2)}\mathbf 1_{[0,\lambda]}\bigl(\frac{x+y}2\bigr)
 \frac{\sin(\pi (x-y)(\lambda-\frac{x+y}2))}{\pi (x-y)}
 e^{-i\pi( \lambda y-\frac{y^{2}}2)},
\end{multline*}
proving \eqref{727hgf}.
 We note now that the kernel of the operator with Weyl symbol $H(\xi) H(\lambda-\xi-x)$
 is 
 \begin{equation}
 \ell_{\lambda}(x,y)=
 e^{i\pi (\lambda x-\frac{x^{2}}2)}
H(\lambda-\frac{x+y}2)\frac{\sin(\pi (x-y)(\lambda-\frac{x+y}2))}{\pi (x-y)}
e^{-i\pi (\lambda y-\frac{y^{2}}2)},
\end{equation}
and that 
$$
\opw{H(\xi) H(\lambda-\xi-x)}
$$
is unitarily equivalent to  the operator $\opw{H(x) H(\xi)}$ as given by Theorem \ref{thm.71aqsd}.
We get then 
 \begin{multline}
k_{4,\lambda}(x,y)
= H(x+y)\ell_{\lambda}(x,y)
=H(x) \ell_{\lambda}(x,y)H(y)
\\+H(x+y)\bigl(H(x) \check H(y)+\check H(x) H(y)\bigr)
H(\lambda-\frac{x+y}2)\\
\times\frac{\sin(\pi (x-y)(\lambda-\frac{x+y}2))}{\pi (x-y)}
\times
e^{i\pi (\lambda x-\frac{x^{2}}2)}
e^{-i\pi (\lambda y-\frac{y^{2}}2)},
\end{multline}
and we have thus
\begin{equation}
\opw{\mathbf 1_{\mathcal T_{4,\lambda}}}=
H\OPW{H(\xi) H(\lambda-\xi-x)} H +\Omega_{\lambda},
\end{equation}
where the kernel $\omega_{\lambda}(x,y)$ of the operator $\Omega_{\lambda}$ verifies
\begin{multline*}
\val{\omega_{\lambda}(x,y)}\le \frac{H(x+y)\bigl(H(x) \check H(y)+\check H(x) H(y)\bigr)}
{\pi\val{x-y}}
\\=
 \frac{H(x+y)\bigl(H(x) \check H(y)+\check H(x) H(y)\bigr)}
{\pi(\val{x}+\val {y})}.
\end{multline*}
 We obtain thanks to Proposition
\ref{pro.hardy} [2] that 
\begin{equation}\label{}
\iint \val{\omega_{\lambda}(x,y)}
\val{u(y)}\val{u(x)} dy dx\le  \norm{ \check H u}_{L^{2}(\R)}\norm{ H u}_{L^{2}(\R)}.
\end{equation}
 As a result, we find that 
 $$
\val{\poscal{ \opw{\mathbf 1_{\mathcal T_{4,\lambda}}} u}{u}_{L^{2}(\R)}}\le \mu_{2}^{+}\norm{Hu}^{2}_{L^{2}(\R)}
+\norm{ \check H u}_{L^{2}(\R)}\norm{ H u}_{L^{2}(\R)},
 $$
 proving \eqref{728mlk}.
 \end{proof}
 \begin{pro}\label{pro.jkg+++}
Let  $\mathcal T_{1,\lambda}$ be a triangle with infinite area in the plane given by \eqref{726+++},
where $\lambda$ is a positive parameter.
Then the operator $\opw{\mathbf 1_{\mathcal T_{1,\lambda}}}$ is unitarily equivalent to the operator with kernel 
\begin{equation}\label{727h++}
\tilde k_{1,\lambda}(x,y)=\mathbf 1_{[0,\lambda]}\bigl(\frac{x+y}2\bigr)
\frac{\sin\bigl(\pi(x-y)(\lambda-\frac{x+y}2)\bigr)}{\pi (x-y)}.
\end{equation}
The operator $\opw{\mathbf 1_{\mathcal T_{1,\lambda}}}$ is self-adjoint and bounded on $L^{2}(\R)$ so that 
\begin{equation}\label{728m++}
\norm{\opw{\mathbf 1_{\mathcal T_{1,\lambda}}}}_{\mathcal B(L^{2}(\R))}\le \frac12\left(
\mu_{2}^{+}+\sqrt{\frac14+(\mu_{2}^{+})^{2}}\right)\approx
1.066294188078,
\end{equation}
where $\mu_{2}^{+}$ is given in \eqref{711aze}.
\end{pro}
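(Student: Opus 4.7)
The plan rests on the a.e.\ identity $\mathbf 1_{\mathcal T_{1,\lambda}}+\mathbf 1_{\mathcal T_{4,\lambda}}=H(x)H(\xi)$ on $\R^{2}$, which gives the operator decomposition
\begin{equation*}
\opw{\mathbf 1_{\mathcal T_{1,\lambda}}}=\opw{H(x)H(\xi)}-\opw{\mathbf 1_{\mathcal T_{4,\lambda}}},
\end{equation*}
so that $L^{2}$-boundedness and self-adjointness follow at once from Theorem \ref{thm.71aqsd} (the quarter-plane case) and from Proposition \ref{pro.jkgh++}. I would then compute the distribution kernel $k_{1,\lambda}$ directly via Proposition \ref{pro1716}, partitioning according to whether $\frac{x+y}{2}$ lies in $[0,\lambda]$ or in $[\lambda,+\io)$; in the first region the $\xi$-integral runs from $\lambda-\frac{x+y}{2}\ge 0$ to $+\io$, while in the second it runs from $0$ to $+\io$. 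Using the same algebraic identity
\begin{equation*}
(x-y)\bigl(\lambda-\tfrac{x+y}{2}\bigr) = \bigl(\lambda x-\tfrac{x^2}{2}\bigr) - \bigl(\lambda y-\tfrac{y^2}{2}\bigr)
\end{equation*}
that was exploited in the proof of Proposition \ref{pro.jkgh++}, conjugation by multiplication by $\Phi(x)=e^{i\pi(\lambda x-x^2/2)}$ removes the oscillating exponential, yielding the kernel advertised in \eqref{727h++}.

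For the norm bound I would mimic the diagonal/off-diagonal splitting of the proof of Proposition \ref{pro.jkgh++}: writing $\Id=H+\check H$ on both arguments of the conjugated kernel, the block $\check H\cdot\check H$ vanishes because $\supp k_{1,\lambda}\subset\{x+y\ge 0\}$, and the block $H\cdot H$ is bounded by $\mu_{2}^{+}\norm{Hu}_{L^{2}(\R)}^{2}$ thanks to Theorem \ref{thm.71aqsd} (up to a smooth piece supported on $\{\tfrac{x+y}{2}\ge\lambda\}$ which is already part of the quarter-plane contribution). The two mixed blocks $H\cdot\check H$ and $\check H\cdot H$ combine into an off-diagonal integral operator, and it is here that the improvement of \eqref{728m++} over \eqref{728mlk} has to come from.

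The main obstacle is precisely this off-diagonal step. Rather than bounding pointwise $|\omega_{1,\lambda}(x,y)|\le \frac{H(x+y)(H(x)\check H(y)+\check H(x)H(y))}{\pi|x-y|}$ and invoking Proposition \ref{pro.hardy} [2] with constant $1$ (which would only reproduce the weaker bound $\tilde\mu_{3}$ of \eqref{728mlk}), one must combine the off-diagonal parts of $\opw{H(x)H(\xi)}$ and $\opw{\mathbf 1_{\mathcal T_{4,\lambda}}}$ \emph{before} estimation. Applying the logarithmic change of coordinates of Remark \ref{rem.kjhg43} recasts the combined off-diagonal block as a convolution operator of the type \eqref{5110kj}, with kernel $\frac{iH(t)e^{i\chi_{\lambda}(t)}}{4\pi\cosh(t/2)}$ for some real phase $\chi_\lambda$; its $L^{1}(\R)$ norm equals $\frac{1}{4}+\frac{1}{4}=\frac{1}{2}$, which is the sharp value already obtained in Lemma \ref{lem.52ezrr}. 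Putting the pieces together yields
\begin{equation*}
|\langle \opw{\mathbf 1_{\mathcal T_{1,\lambda}}}u,u\rangle_{L^{2}(\R)}|\le \mu_{2}^{+}\norm{Hu}_{L^{2}(\R)}^{2}+\tfrac{1}{2}\norm{Hu}_{L^{2}(\R)}\norm{\check Hu}_{L^{2}(\R)},
\end{equation*}
and the largest eigenvalue of the quadratic form $\mu_{2}^{+}t_{1}^{2}+\tfrac{1}{2}t_{1}t_{2}$ on $\R^{2}$, namely $\tfrac{1}{2}(\mu_{2}^{+}+\sqrt{(\mu_{2}^{+})^{2}+1/4})$, closes the proof of \eqref{728m++}. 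The delicate point to check carefully is thus that the two principal-value contributions combine into a genuinely integrable convolution kernel after the logarithmic change of variables, so that the factor $\frac{1}{2}$ -- and hence the numerical improvement over $\tilde\mu_{3}$ -- is legitimate.
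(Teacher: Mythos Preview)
Your overall architecture (compute the kernel, split $\Id=H+\check H$ on both sides, note $\check H\cdot\check H=0$, bound $H\cdot H$ by a cone, estimate the off-diagonal block) is exactly the paper's. The difference is in how the two pieces are handled, and your version introduces a detour that is both unnecessary and not quite correct.

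For the diagonal block the paper does not use the quarter-plane minus $\mathcal T_{4,\lambda}$ decomposition. It observes directly that
\[
k_{1,\lambda}(x,y)=H(x+y)\,e^{2i\pi(x-y)\max\bigl(0,\lambda-\tfrac{x+y}{2}\bigr)}\,\hat H(y-x)
=H(x+y)\,\ell_{1}(x,y),
\]
where $\ell_{1}$ is the kernel of the cone operator $\opw{H(\xi)H(x+\xi-\lambda)}$, itself unitarily equivalent to $\opw{H(x)H(\xi)}$. Thus $H\opw{\mathbf 1_{\mathcal T_{1,\lambda}}}H=H\opw{H(\xi)H(x+\xi-\lambda)}H$, and the $\mu_{2}^{+}\norm{Hu}^{2}$ bound follows at once.

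Your ``main obstacle'' is a non-obstacle. The off-diagonal part of $k_{1,\lambda}$ is
\[
\omega_{1,\lambda}(x,y)=H(x+y)\bigl(H(x)\check H(y)+\check H(x)H(y)\bigr)\frac{e^{2i\pi(x-y)\max(0,\lambda-\tfrac{x+y}{2})}}{2i\pi(y-x)},
\]
whose modulus is $\dfrac{1}{2\pi\val{x-y}}=\dfrac{1}{2\pi(\val x+\val y)}$, \emph{not} $\dfrac{1}{\pi\val{x-y}}$. The extra $\tfrac12$ you are looking for is already in the pointwise bound: it comes from the $\tfrac{1}{2i\pi}\mathrm{pv}$ in $\hat H$, in contrast with the $\tfrac{\sin(\cdot)}{\pi(x-y)}$ kernel of $\mathcal T_{4,\lambda}$. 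Proposition~\ref{pro.hardy}~[2] then gives directly $\val{\poscal{\Omega_{1,\lambda}u}{u}}\le \tfrac12\norm{Hu}\norm{\check Hu}$, and the quadratic-form argument finishes \eqref{728m++}. No combination of off-diagonal pieces and no cancellation is needed.

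Your proposed fix via logarithmic coordinates is also flawed as stated: the phase $2\pi(x-y)\max(0,\lambda-\tfrac{x+y}{2})$ depends on $\tfrac{x+y}{2}=\tfrac{e^{s}-e^{t}}{2}$ as well as on $x-y$, so after the change of variables it is \emph{not} a function of $s-t$ alone, and the resulting operator is not a convolution. (Bounding the modulus would still give $\tfrac{1}{4\pi\cosh((s-t)/2)}$ and hence the factor $\tfrac12$, but that is just the pointwise bound again.)

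Finally, your conjugation by $\Phi(x)=e^{i\pi(\lambda x-x^{2}/2)}$ does not yield the kernel displayed in \eqref{727h++}: on the range $\tfrac{x+y}{2}\ge\lambda$ the phase factor in $k_{1,\lambda}$ is $1$, not $e^{2i\pi(x-y)(\lambda-(x+y)/2)}$, so the conjugation cannot remove it uniformly. In fact \eqref{727h++} is verbatim \eqref{727hgf} and the paper's proof never returns to it; the substantive content of the proposition is the norm bound \eqref{728m++}.
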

\begin{proof}
The kernel $k_{1,\lambda}$
of $\opw{\mathbf 1_{\mathcal T_{1,\lambda}}}$  is such that 
\begin{align*}
k_{1,\lambda}(x,y)&=H(x+y)
e^{2i\pi(x-y)\max(0, \lambda-\frac{x+y}2)}
\frac12\Bigl(\delta_{0}(y-x)+\frac{1}{i\pi(y-x)}\Bigr)
\\
&=
H(x)\frac12\delta_{0}(x-y) H(y)
+H(x)\frac{e^{2i\pi(x-y)\max(0, \lambda-\frac{x+y}2)}}{2i\pi(y-x)}H(y)
\\
&\hskip33pt+H(x+y)\bigl(H(x) \check H(y)+
\check H(x)  H(y)\bigr)\frac{e^{2i\pi(x-y)\max(0, \lambda-\frac{x+y}2)}}{2i\pi(y-x)}.
\end{align*}
 We note that the kernel of the operator
 $\opw{H(x+\xi-\lambda)H(\xi)}$ is 
 $$
 \ell_{1}(x,y)=e^{2i\pi(x-y)\max(0, \lambda-\frac{x+y}2)}
\frac12\Bigl(\delta_{0}(y-x)+\frac{1}{i\pi(y-x)}\Bigr),
 $$
 so that 
 \begin{equation}\label{765rez} 
 \opw{\mathbf 1_{\mathcal T_{1,\lambda}}}=
 H\underbrace{\opw{H(x+\xi-\lambda)H(\xi)}}_{
 \substack{
 \text{unitarily equivalent to}\\\opw{H(x)H(\xi)}
 }}H +\Omega_{1,\lambda},
\end{equation}
 where the kernel $\omega_{1,\lambda}$
 of the operator $\Omega_{1,\lambda}$ is such that
 $$
 \val{\omega_{1,\lambda}(x,y)}\le H(x+y)\frac{\bigl(H(x) \check H(y)+
\check H(x)  H(y)\bigr)}{2\pi(\val{x}+\val{y})},
 $$
 and, thanks to Proposition
\ref{pro.hardy} [2], we get from \eqref{765rez}  that 
 $$
\val{\poscal{ \opw{\mathbf 1_{\mathcal T_{1,\lambda}}} u}{u}_{L^{2}(\R)}}\le \mu_{2}^{+}\norm{Hu}^{2}_{L^{2}(\R)}
+\frac12\norm{ \check H u}_{L^{2}(\R)}\norm{ H u}_{L^{2}(\R)},
 $$
which gives \eqref{728m++}.
\end{proof}
We leave for the reader to check the two other cases \eqref{cas222}, \eqref{cas333},
which are very similar as well as the degenerate cases excluded by \eqref{tri45-}, which are in fact easier to tackle. 
\begin{theorem}\label{thm.75fdgy}
Let $\mathscr T=\bigl\{\mathcal T_{L_{1},L_{2},L_{3}}^{c_{1},c_{2},c_{3}}\bigr\}_{\substack{c_{j}\in \R,\ L_{j}\\\text{linear form on $\R^{2}$}
}}$ be the set of triangles of $\R^{2}$.
For all $\mathcal T\in \mathscr T$,
the operator $\opw{\mathbf 1_{\mathcal T}}$ is bounded on $L^{2}(\R)$, self-adjoint and 
we have
\begin{multline}
1.007680\approx\mu_{2}^{+}
=\sup_{\mathcal C\text{ cone}}\norm{\opw{\mathbf 1_{\mathcal C}}}_{\mathcal B(L^{2}(\R))}
\\\le \mu_{3}^{+}=
\sup_{\mathcal T\text{ triangle}}\norm{\opw{\mathbf 1_{\mathcal T}}}_{\mathcal B(L^{2}(\R))}
\le \tilde \mu_{3}\approx
1.213668.
\end{multline}
\end{theorem}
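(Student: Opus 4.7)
The plan is to reduce an arbitrary triangle satisfying \eqref{tri45-} to one of the four canonical models $\mathcal T_{j,\lambda}$, $j\in\{1,2,3,4\}$, already isolated in the paragraphs preceding \eqref{trifin}, then to bound the Weyl quantization of each model by $\tilde\mu_3$, and finally to recover the lower bound $\mu_2^+\le\mu_3^+$ by approximating the quarter-plane cone by a family of triangles. Starting from a general $\mathcal T=\mathcal T_{L_1,L_2,L_3}^{c_1,c_2,c_3}$ satisfying \eqref{tri45-}, the preceding discussion shows that a translation followed by a suitable element of $\operatorname{Sp}(1,\R)=\operatorname{Sl}(2,\R)$ brings $\mathcal T$ onto one of $\mathcal T_{1,\lambda},\mathcal T_{2,\lambda},\mathcal T_{3,\lambda},\mathcal T_{4,\lambda}$ for some $\lambda>0$. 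Through \eqref{segal} combined with \eqref{phtrans}, the operator $\opw{\mathbf 1_\mathcal T}$ is unitarily equivalent, via an element of $\metaa$, to the corresponding $\opw{\mathbf 1_{\mathcal T_{j,\lambda}}}$, so that their operator norms coincide; self-adjointness is automatic, since the Weyl symbol is real-valued and the operator is bounded on $L^2(\R)$.

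For the upper bound I invoke Proposition \ref{pro.jkgh++} for $j=4$ — which produces the largest bound $\tilde\mu_3$ — and Proposition \ref{pro.jkg+++} for $j=1$, which yields a strictly smaller one. The remaining types $j\in\{2,3\}$ are handled by repeating verbatim the decomposition used in those two propositions: I split the kernel according to the four sign-quadrants $(H\otimes H),(H\otimes\check H),(\check H\otimes H),(\check H\otimes\check H)$ of the variables $(x,y)$; the $(H\otimes H)$ piece coincides, up to the unitary equivalence provided by Theorem \ref{thm.71aqsd}, with the quantization of a convex cone, and is therefore controlled by $\mu_2^+$; the two mixed pieces admit a pointwise majoration by the kernel $H(x+y)/\bigl(\pi(\val x+\val y)\bigr)$ of a modified Hardy operator, bounded on $L^2(\R)$ through Proposition \ref{pro.hardy}; and the $(\check H\otimes\check H)$ piece vanishes identically. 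The resulting estimate is dominated by $\tilde\mu_3=\tfrac12\bigl(\mu_2^+ + \sqrt{1+(\mu_2^+)^2}\bigr)$ uniformly in $\lambda>0$ and in the type $j\in\{1,2,3,4\}$.

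For the lower bound I approximate the quarter-plane $\Sigma_{\pi/2}$ by the increasing family $(\mathcal T_{4,\lambda})_{\lambda>0}$. Using that $\mu_2^+=\sup\bigl(\text{spectrum}(\opw{\mathbf 1_{\Sigma_{\pi/2}}})\bigr)$ and adapting the density-in-$\mathscr S(\R)$ argument inside the proof of Corollary \ref{cor.55}, for every $\alpha<\mu_2^+$ I produce $\phi\in\mathscr S(\R)$ with $\norm{\phi}_{L^2(\R)}=1$ and $\poscal{\opw{\mathbf 1_{\Sigma_{\pi/2}}}\phi}{\phi}>\alpha$. Since $\mathcal W(\phi,\phi)\in\mathscr S(\R^2)$ and $\mathbf 1_{\mathcal T_{4,\lambda}}\nearrow \mathbf 1_{\Sigma_{\pi/2}}$ pointwise as $\lambda\to+\infty$, the Lebesgue Dominated Convergence Theorem gives
\begin{equation*}
\lim_{\lambda\to+\infty}\poscal{\opw{\mathbf 1_{\mathcal T_{4,\lambda}}}\phi}{\phi}=\poscal{\opw{\mathbf 1_{\Sigma_{\pi/2}}}\phi}{\phi},
\end{equation*}
so that $\mu_3^+\ge \norm{\opw{\mathbf 1_{\mathcal T_{4,\lambda}}}}\ge\poscal{\opw{\mathbf 1_{\mathcal T_{4,\lambda}}}\phi}{\phi}>\alpha$ for $\lambda$ large enough; letting $\alpha\to\mu_2^+$ and combining with the upper bound and with Theorem \ref{thm.71aqsd} closes the sandwich $\mu_2^+\le\mu_3^+\le\tilde\mu_3$.

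The main obstacle I anticipate is the upper bound for the unbounded types $j\in\{2,3\}$, where the triangle escapes to infinity along an unbounded strip rather than through a single corner: one has to verify both that the off-diagonal kernel still obeys the Hardy-type majoration with the correct constant and that the extra Heaviside factor coming from the third half-plane does not spoil the reduction of the diagonal piece to a convex-cone quantization. Once this is checked the constant $\tilde\mu_3$ follows at once; by contrast, improving it — and in particular deciding whether $\mu_3^+=\mu_2^+$, which would be the natural sharp statement — would require a genuinely different argument and is outside the scope of this plan.
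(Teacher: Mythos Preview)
Your proposal is correct and follows the same strategy as the paper: the upper bound cites Propositions \ref{pro.jkgh++} and \ref{pro.jkg+++} (with the cases $j\in\{2,3\}$ left as the analogous computation, exactly as the paper does in the paragraph preceding the theorem), and the lower bound is the dominated-convergence approximation of the quarter-plane by large triangles, which is precisely the mechanism behind Theorem \ref{thm.517ytu} that the paper invokes. Your write-up of the lower bound is in fact more explicit than the paper's one-line citation, correctly extracting the full inequality $\mu_2^+\le\mu_3^+$ rather than merely $\mu_3^+>1$.
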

\begin{nb}\rm
 The $L^{2}$ boundedness is easy to prove  since it is obvious for triangles with finite areas and in the  case of triangles with infinite area, we may note that in the case \eqref{726+++} (resp. \eqref{cas222},
 \eqref{cas333}) they are the union of two cones (resp. one cone) with a strip $[0,1]\times \R_{+}$.
 What matters most in the above statement is the effective explicit bound.
Our result does not give an explicit value for $\mu_{3}^{+}$ and it is quite likely that the bound given by $\tilde \mu_{3}$ is way too large.
\end{nb}
\begin{proof}
 The second inequality is proven in Propositions \ref{pro.jkgh++} \& \ref{pro.jkg+++} and the first inequality is a consequence of Theorem \ref{thm.517ytu}.
\end{proof}
\begin{rem}
 This implies that  for any $u\in L^{2}(\R)$ and any $\mathcal T\in \mathscr T$, we have 
 \begin{equation}
\Bigl\vert{\iint_{\mathcal T}\mathcal W(u,u)(x,\xi) dx d\xi}\Bigr\vert \le \tilde \mu_{3}\norm{u}_{L^{2}(\R)}^{2},\qquad
\text{with
$ \tilde \mu_{3}\approx
1.213668$}.
\end{equation}
\end{rem}
 \subsection{Convex Polygons}
 \index{convex polygons}
We want to tackle now the general case of a convex polygon in the plane. We consider
$$
L_{1}, \dots, L_{N},
$$
to be $N$  linear forms of $x,\xi$ ($L_{j}(x,\xi)=a_{j} \xi-\alpha_{j}x=[(x,\xi);(a_{j}, \alpha_{j})]$) and $c_{1}, \dots, c_{N}$ some real constants.
We consider the convex polygon
\begin{equation}\label{731uyt}
\mathcal P=\{(x,\xi) \in \R^{2}, \ \forall j\in \{1,\dots, N\}, L_{j}(x,\xi)-c_{j}\ge 0\},
\end{equation}
so that 
$$
\mathbf 1_{\mathcal P}(x,\xi)=\prod_{1\le j\le N}H\bigl(L_{j}(x,\xi)-c_{j}\bigr).
$$
\begin{defi}\label{def.77hgfd}
Let $N\in \N^{*}$, let $L_{1}, \dots L_{N}$ be linear forms on $\R^{2}$ and let $c_{1},\dots, c_{N}$ be real numbers. The polygon with $N$ sides $\mathcal P^{c_{1},\dots, c_{N}}_{L_{1}, \dots, L_{N}}$ is defined by \eqref{731uyt}. We shall denote by $\mathscr P_{N}$ the set of all polygons with $N$ sides.
\end{defi}
\begin{nb}
 Since we may take some $L_{j}=0$ in \eqref{731uyt}, we see that $\mathscr P_{N}\subset \mathscr P_{N+1}$.
\end{nb}
Note as above that it includes some convex subsets of the plane with infinite area such as \eqref{areai}.
\begin{theorem}\label{thm.77ytre}
Let $\mathscr P_{N}$ be  the set  of convex polygons with $N$ sides of the plane $\R^{2}$. We define 
\begin{equation}
\mu_{N}^{+}=
\sup_{\mathcal P\in \mathscr P_{N}}\norm{\opw{\mathbf 1_{\mathcal P}}}_{\mathcal B(L^{2}(\R))}
\end{equation}
Then $\mu_{2}^{+}$ is given by Theorem \ref{thm.59} and 
\begin{equation}
\forall N\ge 3, \quad \mu_{N}^{+}\le \sqrt{N/2}.
\end{equation}
\end{theorem}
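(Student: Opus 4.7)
The polygon $\mathcal{P}=\bigcap_{j=1}^N H_j$ has the indicator $\mathbf{1}_{\mathcal{P}}=\prod_{j=1}^N H(L_j-c_j)$, a product of $N$ half-plane indicators, each of whose Weyl quantization is an orthogonal projection $P_j$ (by Theorem 7.1, case $\theta=\pi$). The naive triangle inequality from any decomposition gives bounds linear in $N$, so the square-root improvement in $\sqrt{N/2}$ must come from exploiting cancellations or from a Cauchy--Schwarz / Cotlar--Stein type estimate.

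The natural starting point I would use is the two-dimensional Brianchon--Gram identity for a bounded convex polygon with vertices $v_1,\dots,v_N$ and edges $e_1,\dots,e_N$ (and its appropriate modification for unbounded polygons using the classification of the four generic cases \eqref{trifin}--\eqref{726+++}):
\begin{equation*}
 \mathbf{1}_{\mathcal{P}}= \mathbf{1}_{\R^2}+\sum_{j=1}^N \mathbf{1}_{T_{v_j}}-\sum_{j=1}^N \mathbf{1}_{T_{e_j}},
\end{equation*}
where $T_{v_j}$ is the tangent (convex) cone at $v_j$ and $T_{e_j}$ is the tangent half-plane at $e_j$. Quantizing, each $\opw{\mathbf{1}_{T_{v_j}}}$ has norm $\le \mu_2^+$ (Theorem 7.1) and each $\opw{\mathbf{1}_{T_{e_j}}}$ is an orthogonal projection. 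Since every edge half-plane $T_{e_j}$ is precisely the intersection of the two adjacent vertex cones' underlying half-planes (i.e.\ $T_{v_j}=T_{e_{j-1}}\cap T_{e_j}$), I would regroup the $2N+1$ terms symmetrically into $N$ per-vertex ``corner operators''
\begin{equation*}
 W_j:=\opw{\mathbf{1}_{T_{v_j}}}-\tfrac12\opw{\mathbf{1}_{T_{e_{j-1}}}}-\tfrac12\opw{\mathbf{1}_{T_{e_j}}}+\tfrac{1}{N}\,\Id,
\end{equation*}
so that $\opw{\mathbf{1}_{\mathcal{P}}}=\sum_{j=1}^N W_j$. The pairing of each half-plane projection equally between its two neighboring vertices is what introduces the $\tfrac12$ factor that ultimately appears under the square root.

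The second step would be a quadratic-form / Cauchy--Schwarz bound: for a unit vector $u\in L^2(\R)$, writing $\poscal{\opw{\mathbf{1}_{\mathcal{P}}}u}{u}=\sum_j \poscal{W_j u}{u}$, one applies Cauchy--Schwarz as $\bigl|\sum_j \poscal{W_j u}{u}\bigr|^2\le N\sum_j |\poscal{W_j u}{u}|^2$ and estimates each $|\poscal{W_j u}{u}|$ by a constant bounded by $1/\sqrt{2}$ (using the explicit kernel analysis near $v_j$ in the spirit of Propositions~7.3 and~7.4, where the Hardy-type off-diagonal estimate contributes the factor $1/2$ coming from $\int_0^{+\infty}\frac{dt}{4\pi\cosh(t/2)}=\tfrac14$). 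Summing yields $|\poscal{\opw{\mathbf{1}_{\mathcal{P}}}u}{u}|\le \sqrt{N\cdot \tfrac12}\,\|u\|^2$, i.e.\ the announced bound $\mu_N^+\le\sqrt{N/2}$.

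\textbf{Main obstacle.} The delicate step is the uniform per-vertex bound $|\poscal{W_j u}{u}|\le \tfrac1{\sqrt{2}}\|u\|^2$ (or, equivalently, the Cotlar--Stein estimate controlling the cross-terms $\|W_j^*W_k\|$ for $j\neq k$): this must be proved \emph{independently of $N$ and of the geometry of the polygon} (in particular independently of its area and of the angles at $v_j$). This is conceivable because $W_j$ is geometrically local near the vertex $v_j$---its symbol is supported in the small region where $\mathbf{1}_{\mathcal{P}}$ differs from the two neighboring half-plane indicators---but proving the uniform bound requires the same careful Hardy-type kernel decomposition as in Section~7.2, now carried out at each of the $N$ corners simultaneously. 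The variants corresponding to unbounded polygons (types \eqref{cas222}--\eqref{726+++}) would be handled by replacing some of the $W_j$ with analogous half-plane contributions, leaving the final count unchanged.
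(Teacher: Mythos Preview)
Your Cauchy--Schwarz arithmetic does not close. From
\[
\Bigl|\sum_{j=1}^N\poscal{W_j u}{u}\Bigr|^2\le N\sum_{j=1}^N|\poscal{W_j u}{u}|^2
\]
together with a uniform per-term bound $|\poscal{W_j u}{u}|\le 1/\sqrt{2}$, you only get
$|\poscal{\opw{\mathbf 1_{\mathcal P}}u}{u}|^2\le N\cdot N\cdot\tfrac12$, i.e.\ $\mu_N^+\le N/\sqrt{2}$, which is linear in $N$, not $\sqrt{N/2}$. To reach $\sqrt{N/2}$ via Cauchy--Schwarz you would need the much stronger almost-orthogonality
$\sum_j|\poscal{W_j u}{u}|^2\le 1/2$ uniformly in $u$ and $N$, and nothing in your outline supplies this. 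Your intuition that $W_j$ is ``geometrically local near $v_j$'' is also incorrect: the Weyl symbol of $W_j$ equals $1/N$ on $T_{v_j}$ and on the opposite cone, and $-1/2+1/N$ on the two remaining sectors, so it is nowhere small and has no support localisation. The corresponding operators therefore have no evident Cotlar--Stein decay $\|W_j^*W_k\|\to 0$ as the vertices separate.

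The paper's argument is entirely different and avoids any decomposition over vertices. It proceeds by \emph{induction on $N$}: after an affine symplectic change putting the last side on $\{x=0\}$, the kernel identity $k_N(x,y)=H(x+y)\,k_{N-1}(x,y)$ yields
$A_N=H A_{N-1}H+2\re(\check H A_N H)$ with $\check H A_N\check H=0$. The off-diagonal piece has a kernel dominated by $\dfrac{\check H(x)H(y)+H(x)\check H(y)}{\pi|x-y|}$, so Hardy's inequality (Proposition~\ref{pro.hardy}) gives
$|\poscal{A_N u}{u}|\le \mu_{N-1}^+\|Hu\|^2+\|Hu\|\,\|\check Hu\|$ and hence the recurrence
\[
\mu_N^+\le \tfrac12\Bigl(\mu_{N-1}^+ +\sqrt{(\mu_{N-1}^+)^2+1}\Bigr).
\]
The bound $\mu_N^+\le\sqrt{N/2}$ then follows by induction, using concavity of $\sqrt{\cdot}$. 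The square-root growth thus comes from iterating a one-step recursion, not from any $N$-term Cauchy--Schwarz.
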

\begin{proof}
 Using an affine  symplectic transformation, we may assume that $L_{N}(x,\xi)-c_{N}=x$,
so that 
$$
\mathbf 1_{\mathcal P}(x,\xi)=H(x)\prod_{1\le j\le N-1}H\bigl(a_{j}\xi-\alpha_{j} x-c_{j}\bigr).
$$
and the kernel of the operator $\opw{\mathbf 1_{\mathcal P}}$ is 
$$
k_{N}(x,y)=H(x+y)
\int e^{2i\pi (x-y)\xi}\prod_{1\le j\le N-1}H\bigl(a_{j}\xi-\alpha_{j}(\frac{x+y}2)-c_{j}\bigr) d\xi.
$$
As a result, we have 
$$
k_{N}(x,y)=H(x+y) k_{N-1}(x,y),
$$
where $k_{N-1}$ is the kernel of $\opw{\mathbf 1_{\mathcal P_{N-1}} }$,
where 
$$
\mathcal P_{N-1}=\{(x,\xi) \in \R^{2}, \quad \forall j\in \{1,\dots, N-1\}, L_{j}(x,\xi)-c_{j}\ge 0\}.
$$
We may assume inductively that for any convex polygon $\mathcal P_{k}$ with $k\le N-1$ sides,
there exist $\mu_{k}^{+}$ such that 
\begin{equation}
\opw{\mathbf 1_{\mathcal P_{k}}}\le \mu_{k}^{+},
\end{equation}
where $\mu_{k}^{+}$ depends only on $k$ and not on the area of the polygon, a fact already proven 
for $k=1,2,3$.
We note that with $A_{N}=\opw{\mathbf 1_{\mathcal P_{N}}}$,
we have with $H$ standing for the operator of multiplication by $H(x)$,
$$
HA_{N}H=HA_{N-1}H, \quad A_{N-1}=\opw{\mathbf 1_{\mathcal P_{N-1}} },
$$
since the kernel of $HA_{N}H$ is
$$
H(x) H(y)k_{N}(x,y)=H(x+y) H(x) H(y)k_{N-1}(x,y)=H(x) H(y)k_{N-1}(x,y).
$$
Also we have, with $\check H(x)=H(-x)$, that 
$$
\check HA_{N}\check H=0,
$$
since the kernel of that operator is 
$
\check H(x) \check H(y) H(x+y) k_{N-1}(x,y)=0.
$
We have thus
\begin{equation}\label{531}
A_{N}=HA_{N-1}H+2\re \check HA_{N}H,
\end{equation}
and the kernel of $2\re \check HA_{N}H$ is
\begin{equation}\label{}
\omega_{N}(x,y)= H(x+y)\bigl(\check H(x) H(y)+\check H(y) H(x)\bigr)k_{N-1}(x,y).
\end{equation}
We calculate now
\begin{equation}\label{}
k_{N-1}(x,y)=
\int e^{2i\pi (x-y)\xi}\prod_{1\le j\le N-1}H\bigl(a_{j}\xi-\alpha_{j}(\frac{x+y}2)-c_{j}\bigr) d\xi.
\end{equation}
We check first the $j$
 such that $a_{j}=0$ (and thus $\alpha_{j}\not=0$)\footnote{In this induction proof, we may assume that all the linear forms $L_{j}$, $1\le j\le N$ are  different from 0, otherwise we may use the induction hypothesis.}. Without loss of generality, we may assume that this happens for 
 $1\le j<N_{0}$ so that with some interval $J$ of the real line,
 $\tilde \alpha_{j}=\alpha_{j}/a_{j}, \tilde c_{j}=c_{j}/a_{j}$,
 \begin{multline*}
k_{N-1}(x,y)=\mathbf 1_{J}(\frac{x+y}2)
 \int e^{2i\pi (x-y)\xi}\prod_{\substack{N_{0}\le j\le N-1\\a_{j}>0}}H\bigl(\xi-\tilde \alpha_{j}(\frac{x+y}2)-\tilde c_{j}\bigr)
\\ \times \prod_{\substack{N_{0}\le j\le N-1\\a_{j}<0}}\check H\bigl(\xi-\tilde \alpha_{j}(\frac{x+y}2)-\tilde c_{j}\bigr)  d\xi.
\end{multline*}
 We note that the integration domain is 
 $$
 \psi(\frac{x+y}2)=\max_{\substack{N_{0}\le j\le N-1\\a_{j}>0}}\bigl(\tilde \alpha_{j}(\frac{x+y}2)+\tilde c_{j}\bigr)\le \xi\le \min_{\substack{N_{0}\le j\le N-1\\a_{j}<0}}\tilde \alpha_{j}(\frac{x+y}2)+\tilde c_{j}=-\phi(\frac{x+y}2),
 $$
 with $\phi, \psi$ convex piecewise affine functions;
 since $\phi+\psi$ is also a convex function,
 we get the -- convex -- constraint $(\phi+\psi)((x+y)/2)\le 0$, so that $(x+y)/2$ must belong to a subinterval $\tilde J$ of the interval $J$.
 As a result we get that 
 \begin{align*}
k_{N-1}(x,y)&=\mathbf 1_{\tilde J}(\frac{x+y}2)\frac{e^{-2i\pi(x-y)\phi(\frac{x+y}2)}-e^{2i\pi(x-y)\psi(\frac{x+y}2)}}{2i\pi (x-y)}
\\
&=\mathbf 1_{\tilde J}(\frac{x+y}2) e^{{-i\pi (x-y)(\phi-\psi)(\frac{x+y}2)}}\frac{e^{-i\pi(x-y)(\phi+\psi)(\frac{x+y}2)}-e^{i\pi(x-y)(\phi+\psi)(\frac{x+y}2)}}{2i\pi (x-y)}
\\
&=\mathbf 1_{\tilde J}(\frac{x+y}2) e^{{-i\pi (x-y)(\phi-\psi)(\frac{x+y}2)}}\frac{\sin\bigl(
\pi(x-y)(\phi+\psi)(\frac{x+y}2)
\bigr)}{\pi(y-x)},
\end{align*}
and thus the kernel of $2\re \check HA_{N}H$ is
\begin{multline*}
\omega_{N}(x,y)=H(x+y)\bigl(\check H(x) H(y)+\check H(y) H(x)\bigr)
\mathbf 1_{\tilde J}(\frac{x+y}2) \\\times
e^{{-i\pi (x-y)(\phi-\psi)(\frac{x+y}2)}}\frac{\sin\bigl(
\pi(x-y)(\phi+\psi)(\frac{x+y}2)
\bigr)}{\pi(y-x)},
\end{multline*}
so that, thanks to Proposition \ref{pro.hardy} [2],
\begin{equation}
2\re \poscal{\check HA_{N}H u}{u}\le \norm{Hu}\norm{\check H u},
\end{equation}
and  with \eqref{531},
$$
\poscal{A_{N}u}{u}\le \mu_{N-1}^{+}\norm{Hu}^{2}+\norm{Hu}\norm{\check H u},
$$
we get
\begin{equation}
\mu_{N}^{+}\le  \frac{\mu_{N-1}^{+}+\sqrt{(\mu_{N-1}^{+})^{2}+1}}{2},
\end{equation}
which implies that
\begin{equation}\label{534}\forall N\ge 3,\quad
\mu_{N}^{+}\le \sqrt{N/2},
\end{equation}
since it is true for $N=3$ and\footnote{Indeed we have 
$
\mu_{3}^{+}\le \tilde \mu_{3}< 1.2137<1.2247\approx\sqrt{3/2}
$.}  if we assume that it is true for some $N\ge 3$,
we get 
$$
\mu_{N+1}^{+}\le\frac{\mu_{N}^{+}+\sqrt{(\mu_{N}^{+})^{2}+1}}{2}\le
\frac12\bigl(\sqrt{\frac{N}2}+\sqrt{\frac{N+2}2}\bigr)\le \sqrt{\frac{N+1}2},$$
where the latter inequality follows from the concavity of the square-root function since we have for a concave function $F$,  
$$
\frac12\frac{N}2+\frac12\frac{N+2}{2}=\frac{N+1}2\text{\quad and thus \quad}
\frac12F\bigl(\frac{N}2\bigr)+\frac12F\bigl(\frac{N+2}{2}\bigr)\le F\bigl(\frac{N+1}2\bigr).
$$
The proof of Theorem \ref{thm.77ytre} is complete.
\end{proof}
\begin{rem}\rm
 The above result is weak by its dependence on the number of sides, but it should be pointed out that it is independent of the area of the polygon (which could be infinite). Another general comment is concerned with convexity: although Flandrin's conjecture is not true,
 there is still something special about convex subsets of the phase space and it is in particular interesting that an essentially explicit calculation of the kernel of the operator $\opw{\mathbf 1_{\mathcal P}}$ 
 is tractable when $\mathcal P$ is a polygon with $N$ sides of $\R^{2}$.
\end{rem}
\subsection{Symbols supported in a half-space}
\index{symbols supported in a half-space}
\begin{theorem}\label{thm.78ytre}~
\par\no
$\mathbf{ [1]}$ Let $A$ be a bounded self-adjoint operator on $L^{2}(\R^{n})$
such that its Weyl symbol $a(x,\xi)$ is supported in $\R_{+}\times \R^{2n-1}$.
Then with $\check H$ standing for the orthogonal projection onto
\begin{equation}
\{u\in L^{2}(\R^{n}),\ \supp u\subset \R_{-}\times \R^{n-1}\},
\end{equation}
we have 
$
\check H A\check H=0.
$
\par\no
$\mathbf{ [2]}$ Let $A$ be as above;  if $A$ is a non-negative operator, then with $H=I-\check H$, we have  $$\check H A=A\check H=0, \quad
A=HAH,$$
\end{theorem}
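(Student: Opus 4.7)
The plan is to prove both parts by exploiting the compatibility between support in physical space for a function $u$ and support in the $x$-variable for the Wigner distribution $\mathcal W(u,u)$. The key observation is that if $u \in \mathscr S(\R^n)$ has support in $\{x_1 \le -\varepsilon\}$ for some $\varepsilon > 0$, then $\mathcal W(u,u)$ is supported in $\{x_1 \le -\varepsilon\} \times \R^n$: from \eqref{wigner} and \eqref{funcome}, the function $\Omega(u,u)(x,z) = u(x+z/2)\bar u(x-z/2)$ requires both $x_1 + z_1/2 \le -\varepsilon$ and $x_1 - z_1/2 \le -\varepsilon$, forcing $x_1 \le -\varepsilon$, and the partial Fourier transform in $z$ preserves the support in the $x$-variable. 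Since by hypothesis $\supp a \subset \{x_1 \ge 0\} \times \R^n$, the supports of $a$ and $\mathcal W(u,u)$ are disjoint in $\R^{2n}$, hence by \eqref{eza654}
$$
\poscal{Au}{u}_{L^2(\R^n)} = \poscal{a}{\mathcal W(u,u)}_{\mathscr S'(\R^{2n}), \mathscr S(\R^{2n})} = 0.
$$

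For part [1], I would first reduce the identity $\check H A \check H = 0$ to showing $\poscal{Au}{u} = 0$ for every $u$ in the range of $\check H$: since $\check H A \check H$ is self-adjoint, a vanishing diagonal form forces the operator to be zero by polarization. Next, given $u \in L^2(\R^n)$ supported in $\R_- \times \R^{n-1}$, I would approximate $u$ in $L^2$-norm by Schwartz functions $u_{\varepsilon,R}$ whose supports lie strictly in $\{x_1 \le -\varepsilon/2\}$, for instance
$$
u_{\varepsilon, R}(x) = \chi_R(x)\, \bigl( (u(\cdot + \varepsilon e_1)) * \rho_{\varepsilon/4} \bigr)(x),
$$
where $\rho_{\varepsilon/4}$ is a non-negative smooth mollifier supported in the ball of radius $\varepsilon/4$ and $\chi_R \in C_c^\infty(\R^n)$ equals $1$ on $\{\val x \le R\}$. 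Each $u_{\varepsilon, R}$ lies in $C_c^\infty(\R^n) \subset \mathscr S(\R^n)$ with support in $\{x_1 \le -3\varepsilon/4\}$, and $u_{\varepsilon, R} \to u$ in $L^2(\R^n)$ as $\varepsilon \to 0^+$ and $R \to +\infty$. The preliminary observation gives $\poscal{Au_{\varepsilon,R}}{u_{\varepsilon,R}} = 0$, and the $L^2$-boundedness of $A$ lets me pass to the limit to conclude $\poscal{Au}{u} = 0$.

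For part [2], assuming further $A \ge 0$, the sesquilinear form $(u,v) \mapsto \poscal{Au}{v}$ is non-negative and therefore satisfies the Cauchy--Schwarz inequality
$$
\val{\poscal{A\check H u}{v}}^{2} \le \poscal{A\check H u}{\check H u}\ \poscal{Av}{v}.
$$
Applying part [1] to $\check H u$ shows that the first factor on the right vanishes, so $\poscal{A\check H u}{v} = 0$ for every $v \in L^2(\R^n)$, whence $A\check H = 0$; taking adjoints gives $\check H A = 0$, and expanding $A = (H + \check H) A (H + \check H)$ then yields $A = HAH$.

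The only delicate point is the regularity of the symbol: since $a$ is merely a tempered distribution, the pairing $\poscal{a}{\mathcal W(u,u)}$ is guaranteed a priori only for $u \in \mathscr S(\R^n)$, and this is why one must approximate by Schwartz functions whose supports lie strictly inside the \emph{open} half-space $\{x_1 < 0\}$ rather than merely in its closure. The mollification-and-truncation step is standard but essential here, as it is what converts the closed-support condition on $u \in L^2$ into the strict separation from $\supp a$ that makes the duality bracket vanish for purely distributional reasons.
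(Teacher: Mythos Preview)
Your proof of [1] follows essentially the same route as the paper: you both argue that the Wigner distribution of Schwartz functions supported strictly in $\{x_1 < 0\}$ has support disjoint from $\supp a$, so the duality bracket in \eqref{eza654} vanishes, and then pass to general $u$ in the range of $\check H$ by density and boundedness of $A$. The paper works directly with the off-diagonal $\mathcal W(\phi,\psi)$ for two test functions, while you reduce to the diagonal $\mathcal W(u,u)$ via polarization; either variant is fine.

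For [2] the approaches diverge. The paper writes $A = B^2$ with $B$ bounded self-adjoint (the square root of a non-negative operator), expands $\poscal{Au}{u}$ using $I = H + \check H$, and from the already-proven $\check H A \check H = 0$ deduces $\norm{B\check H u}^2 = 0$, hence $B\check H = 0$ and thus $A\check H = \check H A = 0$. Your argument via the Cauchy--Schwarz inequality for the non-negative sesquilinear form $(u,v)\mapsto\poscal{Au}{v}$ is a cleaner way to the same conclusion: it avoids invoking the square root and reaches $A\check H = 0$ in one line from $\poscal{A\check Hu}{\check Hu} = 0$. Both are standard; your route is slightly more direct.
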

 \begin{nb}
 We have seen explicit examples of bounded self-adjoint operators such that the Weyl symbol is supported in $x\ge 0$ but for which $\check H A H\not =0$: the quarter-plane operator (see Section
 \ref{sec.51})
has the Weyl symbol $H(x) H(\xi)$, the kernel of 
$$\check H \opw{H(x)H(\xi)}H
\quad\text{ is }\quad
 \check H(x) H(y) H(x+y)\frac{1}{2i\pi}\text{pv}\frac{1}{y-x},
 $$
 which is not the zero distribution and, according to the above result, this alone implies that $\opw{H(x)H(\xi)}$ cannot be non-negative.
\end{nb}
\begin{proof}
Let us prove first that 
$\check H A\check H=0$;
let $\phi, \psi\in \mooc(\R^{n})$ such that 
$$
\supp \phi\cup \supp \psi\subset (-\io,0)\times \R^{n-1}.
$$
Since the Wigner distribution $\mathcal W(\phi, \psi)$ 
belongs to $\mathscr S(\RZ)$
and is given by the integral
$$
\mathcal W(\phi, \psi)(x,\xi)=\int_{\R^{n}} \phi(x+\frac z2)\bar \psi(x-\frac z 2) e^{-2i\pi z\cdot \xi} dz,
$$
we infer right away\footnote{In the integrand, we must have,
$x_{1}+\frac {z_{1}}2\le -\epsilon_{0}<0, x_{1}-\frac {z_{1}}2\le -\epsilon_{1}<0$ and thus $x_{1}\le -(\epsilon_{0}+\epsilon_{1})/2$} that $\supp\mathcal W(\phi, \psi)\subset(-\io, 0)\times \R^{2n-1}$. We know also that 
$$
\poscal{A \phi}{\psi}_{L^{2}(\R^{n})}=
\poscal{A \phi}{\psi}_{\mathscr S'(\R^{n}),\mathscr S(\R^{n})}
=\poscal{a}{\mathcal W(\phi,\psi)}_
{\mathscr S'(\R^{2n}),\mathscr S(\R^{2n})}=0.
$$
As  a result, the $L^{2}(\R^{n})$ bounded operator $\check H A\check H$ is such that,
for $u,v\in L^{2}(\R^{n})$, $\phi, \psi$ as above,
\begin{multline*}
\poscal{\check H A\check Hu}{v}_{L^{2}(\R^{n})}=\poscal{\check H A\check H\check Hu}{\check H v}_{L^{2}(\R^{n})}
\\=
\poscal{\check H A\check H (\check Hu-\phi)}{\check H v}_{L^{2}(\R^{n})}+
\poscal{\check H A\check H \phi }{\check H v-\psi}_{L^{2}(\R^{n})}
+\underbrace{\poscal{\check H A\check H \phi }{\psi}_{L^{2}(\R^{n})}}_{
\poscal{A\phi }{\psi}_{L^{2}(\R^{n})}=0
},
\end{multline*}
so that
\begin{multline*}
\val{\poscal{\check H A\check Hu}{v}_{L^{2}(\R^{n})}}\le \norm{A}_{\mathcal B(L^{2}(\R^{n}))}
\Bigl(\norm{\check H u-\phi}_{L^{2}(\R^{n})}\norm{v}_{L^{2}(\R^{n})}
\\+
\norm{\check H v-\psi}_{L^{2}(\R^{n})}\norm{\phi}_{L^{2}(\R^{n})}
\Bigr).
\end{multline*}
Using now that the set $\{\phi\in \mooc(\R^{n}), \supp \phi\subset (-\io,0)\times \R^{n-1}\}$
is dense\footnote{Let $\chi_{0}$ be a function satisfying \eqref{521cut}
and let $w$ be in the set \eqref{742aze}. Let $(\phi_{k})_{k\ge 1}$ be a sequence in $\mooc(\R^{n})$
converging in $L^{2}(\R^{n})$
towards $w$;
the function  defined by 
$$
\tilde\phi_{k}(x)=\chi_{0}(-kx_{1})\phi_{k}(x),
$$
belongs to $\mooc(\R^{n})$,
is supported in $(-\io,-1/k]\times \R^{n-1}$, and that sequence converges in $L^{2}(\R^{n})$
towards $w$ since 
$$
\norm{\tilde \phi_{k}-w}_{L^{2}(\R^{n})}\le 
\underbrace{\norm{\chi_{0}(-kx_{1})\bigl(\phi_{k}(x)-w(x)\bigr)}_{L^{2}(\R^{n})}}_{\le 
\norm{\phi_{k}-w}_{L^{2}(\R^{n})}\rightarrow\ 0\ \text{when $k\rightarrow +\io$.}
}+
\norm{(\chi_{0}(-kx_{1})-1) w(x)}_{L^{2}(\R^{n})}
$$
and 
$
\norm{(\chi_{0}(-kx_{1})-1) w(x)}_{L^{2}(\R^{n})}^{2}\le \int\mathbf 1\bigl\{-\frac2k\le x_{1}\le 0\bigr\}\val{w(x)}^{2}dx
$
which has also limit 0 when $k$ goes to $+\infty$
by the Lebesgue Dominated Convergence Theorem.
} in 
\begin{equation}\label{742aze}
\{w\in L^{2}(\R^{n}),\ \supp w\subset(-\io, 0]\times \R^{n-1}\},
\end{equation}
we obtain that 
$\poscal{\check H A\check Hu}{v}_{L^{2}(\R^{n})}=0$ and the first result.
Let us assume that  the operator $A$ is  non-negative.
We have
 $$
 A=B^{2},\quad B=B^{*} \text{ bounded self-adjoint.}
 $$
 It implies  with $L^{2}(\R^{n})$ norms and dot-products,
 \begin{align*}
\poscal{Au}{u}&=\poscal{HAH u}{u}+2 \re \poscal{\check H A Hu}{\check H u}
 \\&=\poscal{HBBHu}{u}+
 2 \re \poscal{\check H BBHu}{\check H u}
 \\
 &=\norm{BH u}^{2}+2\re\poscal{BHu}{B\check H u}
 \\
 &=\norm{BH u+B\check H u}^{2}-\norm{B\check H u}^{2}
 \\
 &=\norm{Bu}^{2}-\norm{B\check H u}^{2}=\poscal{Au}{u}-\norm{B\check H u}^{2},
\end{align*} 
and thus 
$B\check H=0$, so that $\check H B=0$ and thus $\check H B^{2}=\check H A=0=A\check H$,
so that $\check H AH=0=HA\check H$,
and 
$
A=H A H,
$
concluding the proof of [2].
\end{proof}
\begin{cor}
Let $A$ be a bounded self-adjoint operator on $L^{2}(\R^{n})$ such that its Weyl symbol is supported in $\R_{+}\times \R^{2n-1}$ and 
 such that $\re(\check H A H)\not=0$, then the spectrum of $A$ intersects $(-\io, 0)$.
 \end{cor}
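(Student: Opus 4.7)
The plan is to argue by contraposition, reducing the claim directly to part [2] of Theorem~\ref{thm.78ytre}. Suppose for contradiction that $\spectrum(A)\cap(-\io,0)=\emptyset$. Since $A$ is bounded and self-adjoint, its spectrum is a compact subset of $\R$, so this assumption gives $\spectrum(A)\subset[0,+\io)$. By the continuous functional calculus (apply $f(t)=t$ on a compact set of nonnegative reals, or equivalently write $A=B^{2}$ with $B=\sqrt{A}$ defined through the spectral theorem), this is equivalent to $A$ being a non-negative operator on $L^{2}(\R^{n})$.

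Now the Weyl symbol of $A$ is by hypothesis supported in $\R_{+}\times\R^{2n-1}$, so the non-negativity just derived places $A$ in the scope of part [2] of Theorem~\ref{thm.78ytre}. That theorem yields $\check H A = A\check H = 0$, hence in particular $\check H A H = 0$, so that $\re(\check H A H)=0$. This contradicts the standing hypothesis $\re(\check H A H)\neq 0$, and therefore $\spectrum(A)\cap(-\io,0)\neq\emptyset$, as claimed.

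There is no genuine obstacle here: the corollary is a one-line consequence of the already-established Theorem~\ref{thm.78ytre}~[2] together with the standard spectral-theoretic fact that a bounded self-adjoint operator with spectrum in $[0,+\io)$ is non-negative. The only point worth flagging is the role of the real part in the hypothesis: the conclusion of [2] is the strong vanishing $\check H A=0$, which kills $\check H A H$ outright, so that requiring merely $\re(\check H A H)\neq 0$ in the corollary is already enough to force the contradiction. In particular the corollary applies to the quarter-plane example of Section~\ref{sec.51}, where an explicit computation of the kernel of $\check H\, \opw{H(x)H(\xi)}\,H$ shows $\re(\check H A H)\neq 0$, recovering (independently of the spectral analysis carried out in that section) that $\opw{H(x)H(\xi)}$ must have negative spectrum.
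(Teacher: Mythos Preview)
Your proof is correct and follows essentially the same route as the paper: contrapose, observe that a bounded self-adjoint operator with no negative spectrum is non-negative, then invoke part [2] of Theorem~\ref{thm.78ytre} to get $A\check H=0$, hence $\re(\check H A H)=0$. The paper additionally records the decomposition $A=HAH+2\re HA\check H$ coming from part [1], but this is contextual rather than logically required for the contradiction, so your streamlined version is equivalent.
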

 \begin{proof}
 We have from $[1]$ in the previous
 theorem, $$A=(H+\check H)A(H+\check H)=HAH+ 2\re HA\check H,$$
 and from $[2]$, if $A$ were non-negative, we would have $A\check H =0$ and $\re HA\check H=0$,
 contradicting the assumption.
\end{proof}
\begin{rem}\rm
 If $\mathcal C$ is a compact convex body of $\R^{2n}$, we may use the fact (see e.g. \cite{MR0274683}) that 
 $$
 \mathcal C=\bigcap _{\substack{\mathfrak H_{j}\ \text{closed half-spaces}\\\text{containing $K$}}}\mathfrak H_{j}.
 $$
Then of course $\opw{\mathbf 1_{\mathcal C}}$ 
 is a bounded self-adjoint 
  operator on $L^{2}(\R^{n})$, and if $\mathfrak H_{j}$ is defined by
  $$
  \mathfrak H_{j}=\{(x,\xi)\in \R^{2}, L_{j}(x,\xi)\ge c_{j}\},
  $$
  where $L_{j}$ is a linear form on $\R^{2}$ and $c_{j}$ a real constant, we obtain with the symplectic covariance of the Weyl calculus, setting 
  $$
  \mathcal H_{j}(x,\xi)= H(L_{j}(x,\xi)-c_{j}),
  $$
  that for all $\mathfrak H_{j}$ closed half-spaces containing $K$, we have 
  \begin{equation}
\opw{\mathbf 1_{K}}=\opw{H_{j}}\opw{\mathbf 1_{K}}\opw{H_{j}}
+2\re \opw{\check H_{j}}\opw{\mathbf 1_{K}}\opw{H_{j}},
\end{equation}
where $\check H(x,\xi)= H(-L_{j}(x,\xi)+c_{j})$.
    \end{rem}
 \section{Open questions \& Conjectures}
In this section we review the rather long list of conjectures formulated in the text and we try to classify their statements by rating their respective interest, relevance and difficulty.
We should keep in mind that the study of $\opw{\mathbf 1_{E}}$ for a subset $E$ of the phase space  is highly correlated  to some particular set of special functions related to $E$:
Hermite functions and Laguerre polynomials for ellipses,
Airy functions for parabolas,
homogeneous distributions for hyperbolas and so on. It is quite likely
that the ``shape'' of $E$ will determine the type of special functions to be studied
to getting a diagonalization of the operator $\opw{\mathbf 1_{E}}$.
\subsection{Anisotropic Ellipsoids \& Paraboloids}
\index{conjecture on anisotropic ellipsoids}
\begin{conjecture}\label{cj8777}
 Let $E$ be an ellipsoid in $\RZ$ equipped with its canonical symplectic structure. Then the operator
 $\opw{\mathbf 1_{E}}$ is bounded on $L^{2}(\R^{n})$ (which is obvious from \eqref{norm01}) and we have 
\begin{equation}\label{fla897}
 \opw{\mathbf 1_{E}}\le \Id.
\end{equation}
\end{conjecture}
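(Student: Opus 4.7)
The plan is to deduce Conjecture \ref{cj8777} from (the weak form of) Conjecture \ref{conj-quad--}, which has already been formulated in Section \ref{sec.oiu666}. After an affine symplectic translation, whose quantization is unitary by \eqref{phtrans} and \eqref{1224}, we may assume $E$ is centered, so $E=\{X\in\RZ:q(X)\le 1\}$ for some positive-definite quadratic form $q$. By the symplectic reduction Theorem \ref{thm312} there exists $S\in Sp(n,\R)$ with $q\circ S(x,\xi)=\sum_{j=1}^n\mu_j(x_j^2+\xi_j^2)$, and Segal's covariance \eqref{segal+} implies that $\opw{\mathbf 1_E}$ is unitarily equivalent to $\opw{\mathbf 1_{E(a)}}$ with $a_j=1/\mu_j$ and $E(a)$ as in \eqref{ellip++}. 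The desired bound \eqref{fla897} thus reduces to $\opw{\mathbf 1_{E(a)}}\le \Id$ for arbitrary $a\in(0,+\io)^n$.

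Theorem \ref{thm.specell} provides the spectral decomposition
\[
\opw{\mathbf 1_{E(a)}}=\sum_{\alpha\in\N^n}(1-K_\alpha(a))\,\mathbb P_\alpha,
\]
so what must be shown is exactly $K_\alpha(a)\ge 0$ for all $\alpha\in\N^n$ and $a\in(0,+\io)^n$, i.e.\ Conjecture \ref{conj-quad--}. The four cases already settled there — $n=1$ (Flandrin), isotropic $a_1=\cdots=a_n$ (Lieb--Ostrover), $\alpha=0$, and $n=2$ with $\min(\alpha_1,\alpha_2)=0$ — give the base of an induction on the dimension $n$, to be driven by the recursion of Lemma \ref{lem.213}:
\[
K_{\alpha_1,\dots,\alpha_n}(a)=e^{-a_n}P_{\alpha_n}(a_n)+a_n\!\int_0^1(-1)^{\alpha_n}L_{\alpha_n}(2a_n\theta)\,e^{-\theta a_n}\,K_{\alpha_1,\dots,\alpha_{n-1}}\!\bigl(a_1(1-\theta),\dots,a_{n-1}(1-\theta)\bigr)\,d\theta.
\]
The boundary term is strictly positive by Lemma \ref{keylem}, so the whole induction rests on controlling the sign of the integral term.

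The hard part will be precisely the sign of that integral: the Laguerre factor $(-1)^{\alpha_n}L_{\alpha_n}(2a_n\theta)$ oscillates, and pointwise positivity of the inductive hypothesis $K_{\alpha_1,\dots,\alpha_{n-1}}\ge 0$ is not enough. I would pursue two complementary routes. The first is to go after the stronger symmetric comparison $K_\alpha(a)\ge K_0(a)$ of \eqref{343+}, which, as sketched at the end of Section \ref{sec.oiu666}, can be recast via the first-order ODE \eqref{dfretz} for the one-dimensional polynomials $P_k$ and their monotonicity $P_k'\ge 0$ (a consequence of Feldheim's inequality \eqref{inelag}) as an inequality on an iterated integral of $\prod_{1\le j\le n}\partial_{s_j}\bigl(-e^{-a_js_j}P_{\alpha_j}(a_js_j)\bigr)$ over the complement of the simplex; an iterated integration by parts should then convert the Laguerre oscillation into a telescoping sum with a manifest sign. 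The second route is to use Mehler's formula \eqref{mehler11} together with the integral representation \eqref{3316}: the anisotropic symbol $e^{2i\pi\tau q_\mu}$ factors as a tensor product of one-dimensional oscillators, each handled by the isotropic Lieb--Ostrover estimate \eqref{325tyu}, and the obstruction — that $\mu_j^{-1}\arctan(\tau\mu_j)$ depends on $\mu_j$ — would be treated perturbatively by exploiting the monotonicity of this function in $\mu_j$ combined with Lemma \ref{ggff} relating $P_{k;n}$ to generalized Laguerre polynomials. Either way, the genuine nucleus of difficulty is a multi-parameter extension of Feldheim's inequality for \emph{products} of Laguerre polynomials — a positivity statement which, per the remark closing Section \ref{sec.oiu666}, appears to require tools genuinely beyond the one-dimensional theory.
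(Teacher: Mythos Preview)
The statement you are addressing is a \emph{conjecture} in the paper, not a theorem: the paper explicitly records it as open in Section~8.1, saying ``Without isotropy, it remains a conjecture,'' and offers no proof. So there is no proof in the paper to compare your proposal against.

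Your reduction is exactly the one the paper makes. Translating to center the ellipsoid, applying symplectic reduction (Theorem~\ref{thm312}) and Segal covariance to reach the normal form $E(a)$, and then invoking the spectral decomposition of Theorem~\ref{thm.specell} to reduce \eqref{fla897} to $K_\alpha(a)\ge 0$ --- this is precisely the content of Section~\ref{sec.oiu666}, and the equivalence you state is the paper's Conjecture~\ref{conj-quad--}. The recursion from Lemma~\ref{lem.213} and the positivity of the boundary term $e^{-a_n}P_{\alpha_n}(a_n)$ are also in the paper.

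Where your proposal stops being a proof is exactly where the paper stops: the sign of the integral term. You say this yourself --- ``the whole induction rests on controlling the sign of that integral,'' and the oscillation of $(-1)^{\alpha_n}L_{\alpha_n}$ defeats the na\"\i ve inductive hypothesis. Your two ``routes'' are programmatic, not arguments: the first (``an iterated integration by parts should then convert the Laguerre oscillation into a telescoping sum with a manifest sign'') is a hope, not a computation, and the paper's closing remark in Section~\ref{sec.oiu666} already notes that the one-dimensional positivity of $P_{\alpha;1}$ and $P'_{\alpha;1}$ does not seem to suffice in $n\ge 2$; the second (perturbing off the isotropic Lieb--Ostrover case via monotonicity in $\mu_j$) does not explain how a perturbative argument near the isotropic diagonal could yield a global inequality for all $a\in(0,+\infty)^n$. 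Your final sentence concedes the point: you have located the obstruction --- a multivariate Feldheim-type inequality --- but not removed it. That is consistent with the paper's own assessment, and the statement remains open.
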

A sharp version of this result was proven for $n=1$
 in the 1988  P. Flandrin's article  \cite{conjecture}, and was improved to an isotropic higher dimensional setting  in the 
 paper \cite{MR2761287} by E.~Lieb
and Y.~Ostrover. Without isotropy, it remains a conjecture. As described in more details in
Section \ref{sec.oiu666}, it can be reformulated as a problem on Laguerre polynomials.
That conjecture is a very natural one and it would be quite surprising that a counterexample to \eqref{fla897} could occur from an anisotropic ellipsoid\footnote{We mean by anisotropic ellipsoid a set of type \eqref{ellip++} where  $0<a_{1}<a_{2}<\dots<a_{n}.$}.
We introduced in Section \ref{sec.parcj1} a conjecture on anisotropic paraboloids
directly related to Conjecture \ref{cj8777}.
\index{conjecture on anisotropic paraboloids}
\begin{conjecture}\label{cj877++}
 Let $E$ be an anisotropic paraboloid in $\RZ$ equipped with its canonical symplectic structure. Then the operator
 $\opw{\mathbf 1_{E}}$ is bounded on $L^{2}(\R^{n})$ 
  and we have 
\begin{equation}\label{fla89p}
 \opw{\mathbf 1_{E}}\le \Id.
\end{equation}
\end{conjecture}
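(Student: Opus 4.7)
The plan is to reduce Conjecture 8.2 to the anisotropic ellipsoid Conjecture 8.1 by realising any anisotropic paraboloid as a countable increasing union of anisotropic ellipsoids, in direct analogy with the one-dimensional argument of Section 4.1 (see Figure \ref{pic4} and Lemma \ref{lem.hh88}), which derived the 1D parabola bound from Flandrin's disc bound. First I would invoke the symplectic reduction performed at the start of Section \ref{sec.parcj1} to put $E$ into the standard model
\begin{equation*}
\mathcal{P}_{\nu} = \bigl\{(x,\xi) \in \RZ : x_n^2 + \sum_{1 \le j \le n-1} \nu_j(x_j^2 + \xi_j^2) \le \xi_n\bigr\}, \quad \nu_j > 0.
\end{equation*}
The covariance formulas \eqref{segal+} and \eqref{1224} show that $\opw{\mathbf{1}_E}$ and $\opw{\mathbf{1}_{\mathcal{P}_\nu}}$ are unitarily equivalent, so it is enough to prove \eqref{fla89p} for $\mathcal{P}_\nu$.

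Next, for each integer $k \ge 1$ I would introduce
\begin{equation*}
\mathcal{E}_{k,\nu} = \bigl\{(x,\xi) \in \RZ :\ x_n^2 + k^{-2}\xi_n^2 + \sum_{1 \le j \le n-1} \nu_j(x_j^2 + \xi_j^2) \le \xi_n\bigr\}.
\end{equation*}
Completing the square in $\xi_n$ and then applying the symplectic scaling $(x_n, \xi_n - k^2/2) = (y_n/\sqrt{k},\ \sqrt{k}\,\eta_n)$ exhibits $\mathcal{E}_{k,\nu}$, after an affine symplectic transformation, as a genuine ellipsoid in the standard form \eqref{ellipse} of Theorem \ref{thm312}, with symplectic frequencies proportional to $1/k^3$ in the $n$th block and to $\nu_j/k^2$ in the transverse blocks. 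One checks directly that $\mathcal{E}_{k,\nu} \subset \mathcal{E}_{k+1,\nu} \subset \mathcal{P}_\nu$ and $\bigcup_{k \ge 1} \mathcal{E}_{k,\nu} = \mathcal{P}_\nu$ (the inclusion is immediate from $k^{-2}\xi_n^2 \ge 0$, and exhaustion follows from choosing $k > \xi_n / \sqrt{\xi_n - x_n^2 - \sum \nu_j(x_j^2 + \xi_j^2)}$ in the interior).

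Granting Conjecture \ref{cj8777}, the inequality $\opw{\mathbf{1}_{\mathcal{E}_{k,\nu}}} \le \Id$ holds for every $k$, so for $u \in \mathscr{S}(\R^n)$ — for which $\mathcal{W}(u,u) \in \mathscr{S}(\RZ) \subset L^1(\RZ)$ — we have $\iint_{\mathcal{E}_{k,\nu}} \mathcal{W}(u,u) \le \norm{u}_{L^2(\R^n)}^2$. The Lebesgue Dominated Convergence Theorem (majorant $|\mathcal{W}(u,u)| \in L^1$) then yields $\iint_{\mathcal{P}_\nu} \mathcal{W}(u,u) \le \norm{u}_{L^2(\R^n)}^2$. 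Boundedness of $\opw{\mathbf{1}_{\mathcal{P}_\nu}}$ on $L^2(\R^n)$ follows from a monotone-convergence argument applied to the bounded family $\opw{\mathbf{1}_{\mathcal{E}_{k,\nu}}}$ (their numerical ranges are contained in $[-C,1]$), after which density of $\mathscr{S}(\R^n)$ in $L^2(\R^n)$ extends \eqref{fla89p} to all $u \in L^2(\R^n)$.

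The main obstacle is of course that this entire reduction is conditional: Conjecture \ref{cj8777} is itself open outside the isotropic and one-dimensional cases. An unconditional attack would instead generalise the explicit spectral decomposition sketched in Section \ref{sec.parcj1}, tensorising Mehler's formula \eqref{mehler11} over the distinct frequencies $\nu_j$ to diagonalise $\opw{\mathbf{1}_{\mathcal{P}_\nu}}$ as a Fourier multiplier in the last variable with matrix coefficients indexed by $\alpha \in \N^{n-1}$. The hard step is then to show that the corresponding antiderivative-of-Airy-type functions $G_{\alpha,\nu}$ still satisfy $\sup_{\alpha,\eta} G_{\alpha,\nu}(\eta) \le 1$; in the anisotropic setting one loses the symmetry that in Section \ref{sec.parcj1} collapses everything to the single function $G(\eta) = \int_{-\infty}^\eta \ai(\xi)\,d\xi$, and one is left with a full family of generalised Airy transforms with weights $(1 - it\nu_j)^{\alpha_j}(1 + it\nu_j)^{-\alpha_j - 1}$ whose uniform control appears at least as delicate as Conjecture \ref{cj8777} itself.
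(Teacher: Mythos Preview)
The statement you are addressing is a \emph{conjecture} in the paper, not a theorem: Section~8 lists it among the open problems, and the paper offers no proof. So there is no ``paper's own proof'' to compare against, and you are right to present your argument as conditional.

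Your reduction of Conjecture~\ref{cj877++} to Conjecture~\ref{cj8777} via the exhaustion $\mathcal P_{\nu}=\bigcup_{k}\mathcal E_{k,\nu}$ is correct and is exactly the higher-dimensional analogue of the paper's one-dimensional argument in Section~4.1 (equation~\eqref{ellunie} and Lemma~\ref{lem.hh88}). The paper itself notes that the two conjectures are ``directly related'' but does not spell out this implication, so your observation is a genuine (if expected) addition. Your second paragraph, sketching the unconditional route through a tensorised Mehler decomposition, is precisely what the paper begins in Section~\ref{sec.parcj1}: it derives the Fourier-multiplier structure $\tilde P=\sum_{k}\mathbb P_{k;n-1}\otimes\omega_{k,n-1}(D_{x_{n}})$ and then leaves the required uniform bound $\sup_{k,\eta}G_{k,n-1}(\eta)\le 1$ as a claim in a Remark. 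So on the substantive mathematics you and the paper are aligned.

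One technical point deserves care. In your passage from the Schwartz-class inequality to $L^{2}$ boundedness you assert that the numerical ranges of $\opw{\mathbf 1_{\mathcal E_{k,\nu}}}$ lie in $[-C,1]$ for a \emph{uniform} $C$. Conjecture~\ref{cj8777} supplies only the upper bound~$1$; a uniform lower bound for anisotropic ellipsoids of growing volume is not available from anything in the paper (the $L^{1}$ estimate \eqref{norm01} blows up with the volume, and the explicit diagonalisation in Theorem~\ref{thm.specell} does not come with a uniform-in-$a$ lower bound). Without it, the polarisation/density step does not close, and you have only established $\poscal{\opw{\mathbf 1_{\mathcal P_{\nu}}}u}{u}\le\norm{u}^{2}$ for $u\in\mathscr S(\R^{n})$. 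The boundedness of $\opw{\mathbf 1_{\mathcal P_{\nu}}}$ is in fact asserted in the paper only through the direct spectral picture of Section~\ref{sec.parcj1}, not via the ellipsoid exhaustion; if you want to keep your conditional reduction self-contained you should either supply an independent uniform lower bound for the ellipsoid operators or state the boundedness as an additional hypothesis.
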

In terms of special functions, it is related to a property of Airy-type functions. As a contrast with ellipses, we do not expect  \eqref{fla89p} to leave any room for improvement
whereas \eqref{fla897} can certainly be improved with its  right-hand-side replaced by a smaller operator as in \eqref{325tyu}.
\subsection{Balls for the \texorpdfstring{$\ell^{p}$}{lp} norm}
We have seen in Section \ref{sec.532zae} that the quantization of the indicatrix of a $\ell^{p}$ 
ball could have a spectrum intersecting $(1,+\io)$ when $p\not=2$.
More generally  one could raise the following question.
\begin{question}
 Let $p\in [1,+\io]$, $p\not=2$ and let $\mathbb B^{2n}_{p}$ be the unit $\ell^{p}$ ball in $\RZ$. For $\lambda>0$,
 we define the operator
 \begin{equation}
P_{n,p,\lambda}=\opw{\mathbf 1_{\lambda\mathbb B^{2n}_{p}}}.
\end{equation}
Is it possible to say something on the spectrum of the  operator $P_{n,p,\lambda}$,
even in a two-dimensional phase space ($n=1$)?
Is there an asymptotic behaviour for the upper bound of the spectrum of
$P_{n,p,\lambda}$
when $\lambda$ goes to $+\infty$?
\end{question}
\subsection{On generic pulses in  \texorpdfstring{$L^{2}(\R^{n})$}{l2}}
We have seen that the set $\mathcal G$ defined in \eqref{67kjhg}
is generic in the  Baire category sense, but our explicit examples were quite 
simplistic.
\begin{question}
Let $\mathcal G$ be defined in \eqref{67kjhg}. Does there exist $u\in \mathcal G$ such that
the set $E_{+}(u)$ 
(defined in \eqref{645yrp})
is connected?
\end{question}
\subsection{On convex bodies}
\begin{conjecture}\label{65kjhg}
For $N\ge 2$, we define
\begin{equation}
\mu_{N}^{+}=\sup_{\substack{\text{$\mathcal P$ convex bounded}\\\text{ polygon with $N$ sides}}}\text{\rm Spectrum}\left(\opw{\mathbf 1_{\mathcal P}}\right).
\end{equation}
Then the  sequence $(\mu_{N}^{+})_{N\ge 2}$ is increasing\footnote{According to our Definition \ref{def.77hgfd} of the set $\mathscr P_{N}$ of polygons with $N$ sides is increasing with respect to $N$. } 
 and there exists $\alpha>0$
such that 
\begin{equation}\label{}
\forall N\ge 2, \quad
\mu_{N}^{+}\le \alpha \ln N.
\end{equation}
\end{conjecture}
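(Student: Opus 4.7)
The monotonicity $\mu_N^+ \le \mu_{N+1}^+$ is immediate from Definition \ref{def.77hgfd}: one always has $\mathscr P_N \subset \mathscr P_{N+1}$, since any polygon in $\mathscr P_N$ can also be viewed in $\mathscr P_{N+1}$ by adjoining the trivially satisfied constraint $L_{N+1} = 0$, $c_{N+1} \le 0$. Taking the supremum over the larger family then gives $\mu_N^+ \le \mu_{N+1}^+$, with no additional argument.

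For the logarithmic upper bound, my primary plan would be to refine the inductive peeling used in Theorem \ref{thm.77ytre}. That proof writes $A_N = H A_{N-1} H + 2\re(\check H A_N H)$ and bounds the off-diagonal kernel $\omega_N$ by a pure Hardy kernel, entirely discarding the oscillatory factor $\sin\bigl(\pi(x-y)(\phi+\psi)(\tfrac{x+y}{2})\bigr)/\bigl(\pi(y-x)\bigr)$ explicitly present in $k_{N-1}$. I would split $\omega_N$ dyadically in $|x-y|$: on the piece $|x-y|\gtrsim 1/N$ the Hardy bound already gains a factor $1/N$, and on $|x-y|\lesssim 1/N$ a $TT^*$ Schur-test argument treats the slowly-varying envelope $(\phi+\psi)((x+y)/2)$ as a quasi-constant parameter, so that the $N-1$ edges decouple in a controlled way. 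If this succeeds, the recursion $(\mu_N^+)^2 \le (\mu_{N-1}^+)^2 + 1$ of Theorem \ref{thm.77ytre} is replaced by $(\mu_N^+)^2 \le (\mu_{N-1}^+)^2 + C/N$, which iterates to $\mu_N^+ = O(\sqrt{\log N})$, strictly stronger than the conjecture.

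A safer backup is a dyadic divide-and-conquer strategy. Using an affine symplectic change of coordinates, send a well-chosen chord of $\mathcal P$ to the axis $\{x = 0\}$; this splits $\mathcal P$ into two convex sub-polygons $\mathcal P_{\pm}$ with at most $\lceil N/2 \rceil + 1$ sides each. Theorem \ref{thm.78ytre}~[1] gives $\check H \opw{\mathbf 1_{\mathcal P_{+}}} \check H = 0$ and, symmetrically, $H \opw{\mathbf 1_{\mathcal P_{-}}} H = 0$, so modulo off-diagonal corrections one has an orthogonal block decomposition $\opw{\mathbf 1_{\mathcal P}} = H \opw{\mathbf 1_{\mathcal P_{+}}} H + \check H \opw{\mathbf 1_{\mathcal P_{-}}} \check H + 2\re\bigl(\check H \opw{\mathbf 1_{\mathcal P}} H\bigr)$, whose first two summands together contribute at most $\mu^+_{\lceil N/2 \rceil + 1}$ to the norm. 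If the residual Hardy-type off-diagonal term contributes a universal constant $C$, one obtains $\mu_N^+ \le \mu^+_{\lceil N/2 \rceil + 1} + C$, and hence $\mu_N^+ \le C \log_2 N + \mu_2^+$.

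The hard part, common to both strategies, is the off-diagonal interaction across the cut $\{x=0\}$: it couples to every edge of $\mathcal P$ simultaneously, and its norm a priori depends on the polygon's fine geometry (vertex angles, and edge lengths measured in symplectic coordinates). Controlling this norm uniformly over all $\mathcal P \in \mathscr P_N$ would require a microlocal analysis near each vertex quite analogous to the hyperbolic-region computation of Section \ref{sec.52} that rigorously disproves Flandrin's conjecture; in particular one must show that the off-diagonal contribution does not degenerate along families of polygons with nearly parallel adjacent edges or very acute angles. It is precisely at this step that Conjecture \ref{65kjhg} ceases to follow by formal manipulations from the existing estimates and becomes a genuine analytic problem, and whether the off-diagonal contributions truly stabilize or quietly accumulate logarithmically is, in my view, the crux of the matter.
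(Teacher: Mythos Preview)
This statement is a \emph{conjecture} in the paper, not a theorem: the paper does not prove it. The only comment following it is the N.B. that Theorem~\ref{thm.77ytre} (the $\sqrt{N/2}$ bound) is ``a small step in this direction.'' So there is no proof in the paper against which to compare your attempt; the question is whether your proposal actually closes the gap, and it does not --- as you yourself acknowledge in the final paragraph.

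Your monotonicity argument is fine and matches the paper's footnote. Your two strategies for the logarithmic bound are reasonable research directions, but both are incomplete at exactly the point you identify. In the first approach, the claim that the dyadic piece $|x-y|\gtrsim 1/N$ of the Hardy kernel ``already gains a factor $1/N$'' is not justified: the Hardy kernel $H(x+y)(\check H(x)H(y)+H(x)\check H(y))/(\pi|x-y|)$ restricted to $|x-y|\ge c$ still has operator norm of order $1$ in general (it is a truncated Hilbert-type transform, not a small perturbation), and there is no scale $1/N$ intrinsic to the kernel $\omega_N$ --- the dependence on $N$ is hidden in the oscillatory envelope $(\phi+\psi)$, which is piecewise affine with at most $N$ breakpoints but whose slopes and breakpoint locations are completely uncontrolled over $\mathscr P_N$. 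In the second approach, the recursion $\mu_N^+\le \mu_{\lceil N/2\rceil+1}^+ + C$ would indeed give the conjecture, but the off-diagonal term $2\re(\check H\opw{\mathbf 1_{\mathcal P}}H)$ has a kernel that genuinely sees all $N$ edges through the functions $\phi,\psi$, and you give no mechanism for why its norm should be bounded independently of $N$; the paper's proof of Theorem~\ref{thm.77ytre} bounds it by the crude Hardy majorant precisely because no finer uniform estimate is available.

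In short: your proposal is an honest outline of where the difficulty lies, not a proof, and the paper agrees --- it lists this as an open problem.
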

\begin{nb}
 Theorem \ref{thm.77ytre} is a small step in this direction. 
\end{nb}
A stronger version of Conjecture \ref{65kjhg} would be 
\begin{conjecture}\label{65kj++}
We define
\begin{equation}
\mu^{+}=\sup_{\substack{\text{$\mathcal C$ convex }\\\text{bounded}}}\text{\rm Spectrum}\left(\opw{\mathbf 1_{\mathcal C}}\right).
\end{equation}
Then we have $\mu^{+}<+\io$.
\end{conjecture}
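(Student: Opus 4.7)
The plan is to combine the polygonal bound of Theorem \ref{thm.77ytre} with a refinement sensitive to the local geometry at each vertex, then to close the loop by an integral-geometric control of Gauss--Bonnet type.

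First I would approximate a bounded convex body $\mathcal C \subset \R^2$ from outside by polygons $\mathcal P_N \in \mathscr P_N$ with $\mathcal P_N \supset \mathcal C$ and $\val{\mathcal P_N \setminus \mathcal C} \rightarrow 0$, choosing $\mathcal P_N$ inscribed-dual so that its exterior angles at the $N$ vertices are $\theta_1, \dots, \theta_N$ with $\sum_j \theta_j = 2\pi$. Since $\opw{\mathbf 1_{\mathcal P_N}} - \opw{\mathbf 1_{\mathcal C}} = \opw{\mathbf 1_{\mathcal P_N \setminus \mathcal C}}$ has Hilbert--Schmidt norm equal to $\val{\mathcal P_N\setminus\mathcal C}^{1/2}$ by \eqref{norm01} applied via \eqref{lieb44}, it would suffice to produce a bound on $\norm{\opw{\mathbf 1_{\mathcal P_N}}}_{\mathcal B(L^2(\R))}$ which is uniform in $N$ for these well-chosen polygons.

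The key idea is to replace the crude inductive step
\begin{equation*}
\mu_N^+ \le \frac{\mu_{N-1}^+ + \sqrt{(\mu_{N-1}^+)^2 + 1}}{2}
\end{equation*}
used in the proof of Theorem \ref{thm.77ytre} by an angle-sensitive version of the form
\begin{equation*}
\norm{\opw{\mathbf 1_{\mathcal P_N}}} \le \norm{\opw{\mathbf 1_{\mathcal P_{N-1}}}} + C\,\psi(\theta_{N}),
\end{equation*}
where $\psi(\theta) = o(\theta)$ as $\theta \rightarrow 0$ and $\psi(\theta) \le \mu_2^+ - 1$ for all $\theta \in (0,\pi)$. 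The heuristic justification is that  adding the $N^{\text{th}}$ side with small exterior angle $\theta_N$ only modifies $\mathcal P_{N-1}$ in a cone-like neighborhood of the new vertex with opening angle $\pi - \theta_N$, and that, after a symplectic straightening placing the vertex at the origin (possible by \eqref{phtrans} and the generators \eqref{sym1}--\eqref{sym3}), the off-diagonal kernel $2\re\check H A_N H$ appearing in the decomposition \eqref{531} localizes to scales dictated by $\theta_N$. If this refinement holds with $\psi(\theta) \le C\theta^2$ for $\theta$ small, then $\sum_j \psi(\theta_j) \le C\max_j\theta_j \cdot \sum_j \theta_j \le 2\pi C\max_j\theta_j$, while for polygons approximating a $\mathcal C^2$ convex body the exterior angles shrink uniformly; a Cauchy--Schwarz-type interpolation with the uniform bound $\mu_2^+ - 1$ then handles the corners of $\mathcal C$ itself, of which only countably many exist and whose excess angles sum to at most $2\pi$.

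The main obstacle will be to prove the quadratic-in-$\theta$ refinement of the Hardy step. In Theorem \ref{thm.77ytre} the universal constant $\tfrac12$ comes from the Hilbert transform and ignores $\theta$ completely; bringing in the angle requires writing the kernel of $\check H A_N H$ in the affine symplectic frame attached to the two sides meeting at the new vertex and then estimating a two-dimensional integral operator whose norm must be shown to vanish as the two frames become parallel. A natural tool is a phase-space localization on scale $\theta_N^{-1}$ combined with the $L^2$-boundedness Theorem \ref{thm.117j++}; the analogous localization in the parabolic case was carried out in Section \ref{secmainpara} by an explicit conjugation to an Airy-type Fourier multiplier, and one may hope for a similar, though necessarily less explicit, model at each vertex. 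A fallback strategy, should the quadratic gain be out of reach, is to weaken the conjecture to $\mu_N^+ = O(\ln N)$ (Conjecture \ref{65kjhg}), which would follow already from any  refinement of the form $\psi(\theta) \le C/N$ distributed over $N$ vertices with bounded exterior angles; but such logarithmic growth would not prove Conjecture \ref{65kj++} itself.
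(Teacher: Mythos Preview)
This statement is Conjecture~\ref{65kj++}, an \emph{open problem} in the paper: there is no proof to compare against. Your proposal is a research strategy rather than a proof, and you correctly identify its own main gap---the angle-sensitive refinement of the Hardy-type off-diagonal estimate in the proof of Theorem~\ref{thm.77ytre} is not established, and the entire argument rests on it.

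There is, however, a structural obstruction that your outline does not address. The quantity you propose to track, the exterior angle $\theta_j$ at each vertex, is \emph{not a symplectic invariant}: as recalled in Section~\ref{sec.cvx345} (see the discussion after \eqref{539iua} and Theorem~\ref{thm.71aqsd}), every convex cone $\Sigma_\theta$ with $\theta\in(0,\pi)$ is symplectically equivalent to the quarter-plane $\Sigma_{\pi/2}$, and consequently $\opw{\mathbf 1_{\Sigma_\theta}}$ has the \emph{same} spectrum $[\mu_2^-,\mu_2^+]$ for every $\theta\in(0,\pi)$. The local model at a vertex is precisely such a cone, so its contribution to the operator norm cannot depend on $\theta$ in any symplectically covariant way. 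Your hoped-for bound $\psi(\theta)\le C\theta^2$ (or even $C\theta$) on the off-diagonal term $2\re\check H A_N H$ therefore cannot follow from vertex-angle considerations alone: after the symplectic straightening used in the proof of Theorem~\ref{thm.77ytre}, the angle information has been normalized away, and the kernel bound $\val{\omega_N(x,y)}\le \frac{H(x+y)(H(x)\check H(y)+\check H(x)H(y))}{\pi\val{x-y}}$ is already sharp for the cone model. Any genuine refinement must exploit how the \emph{remaining} sides of $\mathcal P_{N-1}$ interact with the new half-space---a global, not local, feature---and the Gauss--Bonnet constraint $\sum_j\theta_j=2\pi$, being purely Euclidean, provides no leverage here. (Incidentally, your Hilbert--Schmidt approximation claim is correct but the references are off: the identity $\norm{\opw a}_{\text{HS}}=\norm{a}_{L^2(\R^2)}$ follows from \eqref{1225} and Lemma~\ref{lem.113esd}, not from \eqref{norm01} or \eqref{lieb44}.)
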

The invalid Flandrin's conjecture was $\mu^{+}=1$ and we know now that $\mu_{+}\ge \mu_{2}^{+}>1$ as given 
by \eqref{713ezj}.
\begin{question}\label{que.88uytr}
There is a diagonalization  of the quantization of the indicator function of Ellipsoids, Paraboloids and Hyperbolic regions. Is there a non-quadratic example of diagonalization?
\end{question}
\begin{question}
The value of $\mu_{2}^{+}$ is known explicitly, but for $\mu_{3}^{+}$, we have only the upperbound $\tilde\mu_{3}$ as given by Theorem \ref{thm.75fdgy}. Is it possible to determine explicitly the value of $\mu_{3}^{+}$,
either by answering Question \ref{que.88uytr}, or via another argument?
\end{question}
\begin{conjecture}
 Let $\mathcal C$ be a proper closed convex subset of $\R^{2}$ with positive Lebesgue measure
 such that $\opw{\mathbf 1_{\mathcal C}}$ is bounded self-adjoint on $L^{2}(\R)$
 (that assumption is useless if Conjecture \ref{65kj++} is proven) with a spectrum included in $[0,1]$. Then $\mathcal C$ is the  strip $[0,1]\times \R$, up to an affine symplectic map. 
\end{conjecture}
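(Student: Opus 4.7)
My plan is to combine three tools: a trace identity for the Weyl calculus, Theorem~\ref{thm.78ytre}~[2] on non-negative operators whose symbol is supported in a half-plane, and the corner construction of Theorem~\ref{thm.517ytu}. The argument proceeds by a dichotomy on the recession cone of $\mathcal C$.

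First I would rule out bounded $\mathcal C$. If $0 < |\mathcal C| < +\infty$ and $A = \opw{\mathbf 1_{\mathcal C}}$, then $A$ is Hilbert--Schmidt with $\tr A = |\mathcal C|$, and by the Weyl isometry \eqref{norm}, $\tr A^{2} = \|\mathbf 1_{\mathcal C}\|_{L^{2}}^{2} = |\mathcal C|$. Hence $\tr\bigl(A(\mathrm{Id}-A)\bigr) = 0$; since $A(\mathrm{Id}-A)$ is a non-negative operator of trace zero, it vanishes, and $A$ is an orthogonal projection of finite rank $|\mathcal C|$. Its Weyl symbol must then be $\sum_{j}\mathcal W(\psi_{j},\psi_{j})$ for a finite orthonormal family of eigenfunctions, and is therefore uniformly continuous on $\R^{2}$ by Theorem~\ref{thm776655}, contradicting the jump of $\mathbf 1_{\mathcal C}$ across $\partial\mathcal C$.

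So $\mathcal C$ is unbounded. The supporting hyperplane theorem produces a closed half-plane $\mathfrak H_{1} \supset \mathcal C$; by the affine symplectic covariance \eqref{segal+} assume $\mathfrak H_{1} = \{x \geq 0\}$. Since $A \geq 0$ has symbol in $\{x \geq 0\}$, Theorem~\ref{thm.78ytre}~[2] yields $A = HAH$ with $H$ multiplication by $\mathbf 1_{[0,+\infty)}(x)$. Let $\Sigma = \operatorname{rec}(\mathcal C)$; it is a non-trivial closed convex cone. If $\Sigma$ is a half-plane, then $\mathcal C$ contains a half-plane $\{L \geq c\}\subset\{x\geq 0\}$, which forces $L$ parallel to $x$; the convex profile $\xi\mapsto \inf\{x:(x,\xi)\in\mathcal C\}$ is bounded above and hence constant, so $\mathcal C$ is itself a half-plane (the degenerate ``infinite strip'' case). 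If $\Sigma$ is a line or ray, then $\mathcal C$ is bounded in the transverse direction; after a symplectic rotation sending $\Sigma$ to the $\xi$-axis, $\mathcal C \subset [a,b]\times\R$, and applying Theorem~\ref{thm.78ytre}~[2] to both $\{x\geq a\}$ and $\{x\leq b\}$ gives $A = PAP$ with $P$ multiplication by $\mathbf 1_{[a,b]}(x)$; any proper convex subset of the strip with purely vertical recession then has a vertex of interior angle $<\pi$ on $\{x = a\}$ or $\{x = b\}$, and Theorem~\ref{thm.517ytu} furnishes a pulse violating $A\leq\mathrm{Id}$. Hence $\mathcal C = [a,b]\times\R$ and $A = P$, a projection of spectrum $\{0,1\}$. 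Finally, if $\Sigma$ is a proper two-dimensional convex cone of aperture in $(0,\pi)$, then either $\mathcal C$ itself is a translated such cone --- and Theorem~\ref{thm.71aqsd} combined with the phase-translation identity \eqref{1224} immediately produces $\mu_{2}^{+} > 1$ in $\mathrm{Spec}(A)$, a contradiction --- or $\partial\mathcal C$ has smooth unbounded branches.

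The main obstacle will be the last sub-case: $\Sigma$ a proper convex cone of aperture in $(0,\pi)$ with $\partial\mathcal C$ smooth, typified by paraboloids (Section~\ref{secmainpara}) and convex hyperbolic regions (Section~\ref{sec.52}). Theorem~\ref{thm.517ytu} requires a genuine affine cone vertex lying in $\mathcal C$, which a smooth boundary does not furnish. My intended fix is asymptotic: under the symplectic dilation $D_{R}:(x,\xi)\mapsto(Rx,\xi/R)$, the sets $D_{R}^{-1}(\mathcal C)$ should Kuratowski-converge as $R\to+\infty$ to a translate of $\Sigma$; if $\phi_{0}$ is the pulse from Corollary~\ref{cor.55} for which $\iint_{\Sigma}\mathcal W(\phi_{0},\phi_{0}) > \|\phi_{0}\|^{2}_{L^{2}}$, dominated convergence together with the Schwartz decay of $\mathcal W(\phi_{0},\phi_{0})$ should give
\begin{equation*}
\lim_{R\to+\infty}\iint_{D_{R}^{-1}(\mathcal C)}\mathcal W(\phi_{0},\phi_{0})(x,\xi)\,dx\,d\xi \; > \; \|\phi_{0}\|^{2}_{L^{2}(\R)},
\end{equation*}
which by symplectic covariance transfers to a normalized pulse contradicting $A\leq\mathrm{Id}$. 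Controlling this convergence uniformly across smooth convex sets with asymptotic cone behaviour --- and ruling out possibly pathological examples whose finite boundary resists the dilation argument --- is the principal technical difficulty, and seems to require genuinely new input beyond the tools assembled in the paper.
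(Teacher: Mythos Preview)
The statement you are trying to prove is listed in the paper as an open \emph{conjecture}; the author offers no proof, only the remark that all explicitly known examples are compatible with it and that Theorem~\ref{thm.78ytre} is an indication in its direction. So there is no ``paper's own proof'' to compare against, and your proposal should be read as an attempted attack on an open problem.

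Your treatment of the bounded case is essentially correct, though you should justify the trace-class step: from the hypothesis $A=\opw{\mathbf 1_{\mathcal C}}\ge 0$ and the coherent-state resolution of the identity one gets
\[
\tr A \;=\; \int_{\R^{2}}\poscal{A\,\tau_{Y}\psi_{0}}{\tau_{Y}\psi_{0}}\,dY
\;=\;\int_{\R^{2}}(\mathbf 1_{\mathcal C}\ast\Gamma)(Y)\,dY
\;=\;|\mathcal C|<+\infty,
\]
with $\Gamma=\mathcal W(\psi_{0},\psi_{0})$, so $A$ is indeed trace class and your projection/continuity contradiction goes through.

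In the unbounded analysis, however, there are two concrete gaps beyond the one you flag at the end. First, the assertion ``if $\Sigma$ is a ray then $\mathcal C$ is bounded in the transverse direction'' is false: the epigraph of a parabola $\{\xi\ge x^{2}\}$ has recession cone the ray $\{0\}\times[0,+\infty)$ yet is unbounded in $x$. You later list the parabola under the two-dimensional cone case, but its recession cone is one-dimensional, so it actually lands in your ray case and breaks that branch of the argument. Second, your invocations of Theorem~\ref{thm.517ytu} do not give what you need: that theorem produces a pulse for which $\iint_{K_{\lambda}}\mathcal W(\phi,\phi)>\|\phi\|^{2}$ holds for some \emph{non-symplectic dilation} $K_{\lambda}=X_{0}+\lambda(K-X_{0})$ of the set, not for $K$ itself. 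Since $\opw{\mathbf 1_{K}}$ and $\opw{\mathbf 1_{K_{\lambda}}}$ are not unitarily equivalent, this does not contradict $A\le\Id$ for the original $\mathcal C$. Both uses of Theorem~\ref{thm.517ytu} in your ray and strip sub-cases suffer from this defect. Combined with the acknowledged obstacle in the smooth two-dimensional-cone sub-case, the proposal remains a heuristic outline rather than a proof.
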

All the explicitly avalaible examples are compatible with that conjecture (see also Remark \ref{rem.72poiu})
and  the second part of Theorem \ref{thm.78ytre} is also an indication in that direction. It would be nice in that instance to reach a spectral characterization of a subset modulo the affine symplectic group. 
\section{Appendix}
\subsection{Fourier transform, Weyl quantization, Harmonic Oscillator}\label{sec31}
\subsubsection{Fourier transform.}
\index{Fourier transform}
 We use in this paper  the following normalization for the Fourier transform and inversion formula: for $u\in \mathscr S(\R^{n})$,
 \begin{equation}\label{fourier}
\hat u(\xi)=\int_{\R^{n}} e^{-2i \pi x\cdot \xi} u(x) dx,\quad u(x)=\int_{\R^{n}} e^{2i \pi x\cdot \xi} \hat u(\xi) d\xi,
\end{equation}
a formula that can be extended to $u\in \mathscr S'(\R^{n})$,
with defining the distribution $\hat u$ by the duality bracket  
\begin{equation}\label{}
\poscal{\hat u}{\phi}_{\mathscr S'(\R^{n}), \mathscr S(\R^{n})}=\poscal{u}{\hat \phi}_{\mathscr S'(\R^{n}), \mathscr S(\R^{n})}.
\end{equation}
Checking \eqref{fourier} for $u\in \mathscr S'(\R^{n})$ is then easy, that  is
\begin{equation}\label{inve66}
\check{\hat{\hat u}}=u,
\end{equation}
where the distribution $\check u$ is defined by
\begin{equation}\label{}
\poscal{\check u}{\phi}_{\mathscr S'(\R^{n}), \mathscr S(\R^{n})}=\poscal{u}{\check \phi}_{\mathscr S'(\R^{n}), \mathscr S(\R^{n})}, \quad \text{with}\ \check \phi(x)=\phi(-x).
\end{equation}
It is useful to notice that for $u\in \mathscr S'(\R^{n})$,
\begin{equation}\label{inve55}
\check{\hat u}=\hat{\check u}.
\end{equation}
Using \eqref{phase} and denoting the Fourier transformation 
 by $\mathcal F$, \eqref{inve66} and 
\eqref{inve55} read
\begin{equation}\label{azqs22}
\sigma_{0}\mathcal F^{2}=\Id,\quad[\mathcal F,\sigma_{0}]=0, \quad{\text{so that}\quad \mathcal F^{*}=\mathcal F^{-1}=\sigma_{0}\mathcal F=\mathcal F\sigma_{0}.}
\end{equation}
This normalization yields simple formulas for the Fourier transform of Gaussian functions: for $A$ a real-valued symmetric  positive definite  $n\times n$ matrix, we define the function $v_{A}$ in the Schwartz space by
\begin{equation}\label{gau135}
v_{A}(x)= e^{-\pi\poscal{Ax}{x}},\quad \text{and we have }\quad \widehat{v_{A}}(\xi)=(\det A)^{-1/2}
 e^{-\pi\poscal{A^{-1}\xi}{\xi}}.
\end{equation}
Similarly when $B$ is 
a real-valued symmetric  non-singular  $n\times n$ matrix, the function $w_{B}$
defined by
$$
w_{B}(x)=e^{i\pi\poscal{Bx}{x}}
$$
is in $L^{\io}(\R^{n})$ and thus a tempered distribution and we have 
\index{signature}
\index{index}
\begin{equation}\label{foimga}
 \widehat{w_{B}}(\xi)=\val{\det B}^{-1/2} e^{\frac{i\pi}{4}\sign B}
 e^{-i\pi\poscal{B^{-1}\xi}{\xi}},
\end{equation}
where $\sign B$
stands for the {\it signature of $B$} that is, with $\mathtt E$ the set of eigenvalues of $B$
(which are real and non-zero),
\begin{equation}\label{ind001}
\sign B=\underbrace{\card (\mathtt E\cap \R_{+})}_{\nu_{+}(B)}-\underbrace{\card (\mathtt E\cap \R_{-})}_{\nu_{-}(B)=\inde{(B)}}.
\end{equation}
The integer $\nu_{-}(B)$ is called the \emph{index} of $B$, noted $\inde{(B)}$; Formula 
\eqref{foimga} can be written as 
\begin{equation}\label{foimga++}
e^{-i\pi n/4}\mathcal F\left(e^{i\pi\poscal{Bx}{x}}\right)=i^{-{\inde{B}}}\val{\det B}^{-1/2} 
e^{-i\pi\poscal{B^{-1}\xi}{\xi}},
\end{equation}
since $\nu_{+}+\nu_{-}=n$ (as $B$ is non-singular), 
$$
e^{\frac {i\pi n}4} e^{-\frac{i\pi\nu_{-}}2}=e^{\frac{i\pi}4(\nu_{+}+\nu_{-}-2\nu_{-})}
=e^{\frac{i\pi}4\sign (B)}.
$$
We note also that
\begin{equation}\label{ind002}
\sign(\det B)=(-1)^{\inde{B}},
\end{equation}
so that
$$
\left(i^{-{\inde{B}}}\val{\det B}^{-1/2}\right)^{2}=(-1)^{\nu_{-}}\val{\det B}^{-1}=\sign(\det B)
\val{\det B}^{-1}=(\det B)^{-1},
$$
and thus  the prefactor $i^{-{\inde{B}}}\val{\det B}^{-1/2} $ in the rhs of  \eqref{foimga++} is a square root of
$1/\det B$. 
\par
With
$H$ standing for the characteristic function of $\R_{+}$,
we have
\begin{align*}
&1=H+\check H,\quad \delta_{0}=\hat H+\hat{\check H},\\
&D\sign =\frac{\delta_{0}}{i\pi},\quad \widehat{D\sign }=\frac{1}{i\pi},\quad \xi\widehat{\sign}=\frac{1}{i\pi},\quad\widehat{\sign}=\frac{1}{i\pi}\text{pv}{\frac1{\xi}}, \quad\text{\footnotesize (principal value)}
\end{align*}
the latter formula following from the fact that
$$
\xi\bigl(\widehat{\sign}-\text{pv}\frac{1}{i\pi \xi} \bigr)=0,\text{\quad which implies\quad}
\widehat{\sign}-\text{pv}\frac{1}{i\pi \xi} =c\delta_{0}=0,
$$
since  $\widehat{\sign}-\frac{1}{i\pi \xi} $ is odd.
We infer from that 
\begin{align*}
&\hat H-\widehat{\check H}=\widehat{\sign}=\text{pv}\frac{1}{i\pi \xi},
\end{align*}
and
\begin{equation}\label{}
\hat H=\frac{\delta_{0}}{2}+\text{pv}\frac{1}{2i\pi \xi}.
\end{equation}
\begin{lem}\label{lem.9110}
 Let $T$ be a compactly supported distribution on $\R^{n}$ such that
 \begin{equation}\label{est2}
 \forall N\in \N, \quad \valjp \xi^{N}\hat T(\xi) \quad \text{is bounded, with $\valjp{\xi}=\sqrt{1+\val \xi^{2}}$.}
\end{equation}
Then $T$ is a $\moo$ function.
\end{lem}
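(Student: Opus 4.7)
The statement is a classical smoothness-from-decay result and the strategy is to use the Fourier inversion formula in reverse: under the stated hypothesis, $\hat T$ and all its weighted versions $\xi^{\alpha}\hat T$ are integrable, which will force $T$ to be the inverse Fourier transform of an $L^{1}$ function with fast decay -- hence a $\moo$ function. The compact support assumption is in fact only used to guarantee \emph{a priori} that $\hat T$ is an honest (even real-analytic) function, so that the pointwise bounds $\valjp{\xi}^{N}\hat T(\xi)=O(1)$ are meaningful.

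First I would observe that since $T\in\mathscr S'(\R^{n})$ is compactly supported, the Paley--Wiener--Schwartz theorem gives
\[
\hat T(\xi)=\poscal{T(x)}{e^{-2i\pi x\cdot\xi}}_{\mathscr S',\mathscr S},
\]
which defines an entire function of $\xi\in\C^{n}$ of exponential type, so in particular a $\moo$ function on $\R^{n}$. The hypothesis \eqref{est2} can then be read pointwise: for every $N\in\N$ there exists $C_{N}$ with $|\hat T(\xi)|\le C_{N}\valjp{\xi}^{-N}$.

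Next, for any multi-index $\alpha\in\N^{n}$ and any $N\in\N$, choosing $N>\val\alpha+n$ yields
\[
\int_{\R^{n}}\val{\xi^{\alpha}\hat T(\xi)}\,d\xi
\le C_{N}\int_{\R^{n}}\frac{\val\xi^{\val\alpha}}{\valjp{\xi}^{N}}\,d\xi<+\io,
\]
so that $\xi^{\alpha}\hat T\in L^{1}(\R^{n})$ for every $\alpha$. In particular $\hat T\in L^{1}$. Define then
\[
g(x)=\int_{\R^{n}}e^{2i\pi x\cdot\xi}\hat T(\xi)\,d\xi,
\]
an absolutely convergent integral depending continuously on $x$. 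The $L^{1}$-bound on $\xi^{\alpha}\hat T$ allows us, via the Lebesgue Dominated Convergence Theorem, to differentiate under the integral sign arbitrarily often, yielding
\[
\p_{x}^{\alpha}g(x)=(2i\pi)^{\val\alpha}\int_{\R^{n}}\xi^{\alpha}\,e^{2i\pi x\cdot\xi}\hat T(\xi)\,d\xi\in C^{0}(\R^{n}),
\]
so $g\in\moo(\R^{n})$. Finally, the Fourier inversion formula in $\mathscr S'(\R^{n})$ (see footnote \ref{foot1}) gives $T=\check{\hat{\hat T}}=\mathcal F^{-1}(\hat T)=g$ as tempered distributions, so $T$ is the smooth function $g$.

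\textbf{Main obstacle.} There is no genuine obstacle here; the argument is routine once Paley--Wiener is invoked to upgrade $\hat T$ from a distribution to a function. The only point that deserves a careful sentence is precisely this upgrade: without the compact support hypothesis on $T$, the symbol $\hat T(\xi)$ in \eqref{est2} would only denote a tempered distribution and the pointwise boundedness condition would have to be reinterpreted (e.g.\ as $\hat T$ being a function in $\cap_{N}\valjp{\xi}^{-N}L^{\io}$). Compact support makes the hypothesis unambiguous and allows the argument above to proceed verbatim.
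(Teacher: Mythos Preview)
Your proof is correct and follows essentially the same approach as the paper's own proof: both use that $\hat T$ is an entire function (as the Fourier transform of a compactly supported distribution), show that $\xi^{\alpha}\hat T\in L^{1}(\R^{n})$ for every $\alpha$ from the decay hypothesis, and then differentiate under the integral in the inversion formula to conclude $T\in\moo$. Your version is slightly more explicit in citing Paley--Wiener--Schwartz and the Dominated Convergence Theorem, whereas the paper's proof is more terse, but the arguments are identical in substance.
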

\begin{proof}
Note that $\hat T$ is an entire function, as the Fourier transform of a compactly supported distribution. Moreover, from \eqref{est2} with $N=n+1$, we get that $\hat T$ belongs to $L^{1}(\R^{n})$ and thus $T$ is a continuous function. Moreover,
 we have for any $\alpha\in \N^{n}$, 
 $$
( D_{x}^{\alpha}T)(x)=\int e^{2i\pi x\cdot \xi}\underbrace{\xi^{\alpha}\hat T(\xi) }_{\in L^{1}(\R^{n})}d\xi,
 $$
 so that $T$ is a $\moo$ function.
\end{proof}
\begin{pro}\label{pro.92}
 Let $\rho>0$
 and let $f$ be an holomorphic function on a neighborhood of  $\{z\in \C, \val{\im z}\le \rho\}$
 such that
 \begin{align}
&\forall y\in [-\rho, \rho], \
 \int\val{f(x+iy)} dx<+\io, 
 \label{firstc}\\
 &
 \lim_{R\rightarrow+\io}\int_{\val y\le \rho}\val{f(\pm R+iy)} dy=0.
 \label{secondc}
\end{align}
Then we have
\begin{equation}\forall \xi\in \R, \quad 
\val{\hat f(\xi)}\le C e^{-2\pi\rho\val \xi},
\end{equation}
with 
$
C=\max(C_{+},C_{-}), \ C_{\pm}=
\int_{\R}\val{
f(x\pm i\rho)}
dx.
$
Conversely, if $f$ is a bounded measurable function such that $\hat f(\xi)$ is  $O(e^{-2\pi r\val \xi})$ for some $r>0$, then 
$f$ is holomorphic on $\{z\in \C, \val{\im z}<r\}$.
\end{pro}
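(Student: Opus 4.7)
The proof splits into the direct implication (exponential decay of $\hat f$) and the converse (holomorphic extension of $f$), both of which follow a classical Paley--Wiener-type template.

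For the direct implication, I would proceed by contour deformation. Fix $\xi>0$ and consider the rectangular contour $\gamma_{R}$ with vertices $-R$, $R$, $R-i\rho$, $-R-i\rho$, traversed counterclockwise. Since $f$ is holomorphic on a neighborhood of the closed strip $\{\val{\im z}\le \rho\}$ and $z\mapsto e^{-2i\pi z\xi}$ is entire, Cauchy's theorem gives $\oint_{\gamma_{R}} e^{-2i\pi z\xi} f(z)\, dz=0$. Splitting into the four edges rewrites
\begin{equation*}
\int_{-R}^{R} e^{-2i\pi x\xi} f(x)\, dx = \int_{-R}^{R} e^{-2i\pi(x-i\rho)\xi} f(x-i\rho)\, dx + I_{R}^{+}-I_{R}^{-},
\end{equation*}
where $I_{R}^{\pm}$ denote the two vertical contributions. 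For $\xi>0$ and $y\in [-\rho,0]$ one has $\val{e^{-2i\pi(\pm R+iy)\xi}}=e^{2\pi y\xi}\le 1$, so that hypothesis \eqref{secondc} forces $I_{R}^{\pm}\to 0$ as $R\to +\io$. Hypothesis \eqref{firstc} at $y=-\rho$ then justifies passing to the limit on the remaining horizontal edge, yielding
\begin{equation*}
\hat f(\xi) = e^{-2\pi\rho\xi}\int_{\R} e^{-2i\pi x\xi} f(x-i\rho)\, dx,
\end{equation*}
whence $\val{\hat f(\xi)}\le C_{-}\, e^{-2\pi\rho\xi}$. The case $\xi<0$ is symmetric, using the rectangle in the upper half-strip and producing $\val{\hat f(\xi)}\le C_{+}\, e^{-2\pi\rho\val\xi}$. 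Combining both estimates gives the announced bound with $C=\max(C_{+},C_{-})$.

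For the converse, the hypothesis $\hat f(\xi)=O(e^{-2\pi r\val\xi})$ places $\hat f$ in $L^{1}(\R)$, so the integral
\begin{equation*}
g(z)=\int_{\R} e^{2i\pi z\xi}\,\hat f(\xi)\, d\xi
\end{equation*}
is absolutely convergent whenever $\val{\im z}<r$, with $\val{e^{2i\pi z\xi}\hat f(\xi)}\le C'\, e^{-2\pi(r-\val{\im z})\val\xi}$ uniformly integrable on every compact subset of the strip $\{\val{\im z}<r\}$. Differentiation under the integral sign (or Morera's theorem applied on small triangles) shows that $g$ is holomorphic on $\{\val{\im z}<r\}$. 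Since $f$ is bounded measurable, hence a tempered distribution, Fourier inversion in $\mathscr S'(\R)$ together with $\hat f\in L^{1}(\R)$ identifies $g$ with $f$ as tempered distributions; as $g$ is continuous, $f$ agrees almost everywhere with $g$, and modifying $f$ on a Lebesgue-null set yields the desired holomorphic extension.

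The only genuine subtlety is, in the direct part, the careful bookkeeping that the exponential factor $e^{-2i\pi z\xi}$ is bounded by $1$ on the edges used for the contour shift -- a sign mismatch would prevent one from discarding $I_{R}^{\pm}$ via \eqref{secondc}. In the converse, the delicate point is upgrading the almost-everywhere identification of $f$ with $g$: this rests on Fourier inversion applied to tempered distributions when $\hat f\in L^{1}(\R)$, not on any stronger regularity of $f$ itself.
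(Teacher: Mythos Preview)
Your proposal is correct and matches the paper's proof essentially line for line: the direct implication is obtained by shifting the real contour to $\im z=\mp\rho$ according to the sign of $\xi$, using Cauchy's theorem on the rectangle and discarding the vertical edges via \eqref{secondc}; the converse is obtained by defining the holomorphic extension through the absolutely convergent Fourier integral and invoking inversion. The only cosmetic difference is that the paper first establishes the identity $\int_{\R} e^{-2i\pi(x+iy)\xi}f(x+iy)\,dx=\hat f(\xi)$ for all $y\in[-\rho,\rho]$ and then specializes to $y=-\rho\,\sign\xi$, whereas you pick the appropriate rectangle from the outset; your treatment of the almost-everywhere identification in the converse is in fact slightly more careful than the paper's.
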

\begin{proof}
 If $f$ is holomorphic
 near $\{z\in \C, \val{\im z}\le \rho\}$,
satisfies \eqref{firstc}
and
\eqref{secondc}, then Cauchy's formula
shows that for $\val y\le \rho$, 
\begin{align*}
\int_{\R} &e^{-2i\pi (x+iy) \xi} f(x+iy) dx
=e^{2\pi y\xi}\lim_{R\rightarrow+\io}\int_{-R}^{R}
e^{-2i\pi x \xi} f(x+iy) dx 
\\
&=\lim_{R\rightarrow+\io}\int_{[-R+iy, R+iy]}
e^{-2i\pi z\xi} f(z) dz 
\\
&=\lim_{R\rightarrow+\io}
\int_{[-R+iy, -R]\cup[-R,R]\cup[R,R+iy]}
e^{-2i\pi z\xi} f(z) dz
\\
&=\hat f(\xi) +
\lim_{R\rightarrow+\io}
\left(\int_{0}^{y}
e^{-2i\pi (R+it)\xi} f(R+it) idt-
\int_{0}^{y}
e^{-2i\pi (-R+it)\xi} f(-R+it) idt
\right).
\end{align*}
We have  for $\val y\le \rho$, 
$$
\Bigl\vert\int_{0}^{y}
e^{-2i\pi (\pm R+it)\xi} f(\pm R+it) idt\Bigr\vert\le 
\int_{\val t\le \rho}\val{f(\pm R+it)} dt\ 
e^{2\pi \rho\val \xi},
$$
which goes to 0 when $R$ goes to $+\io$, thanks to \eqref{secondc}, so that for all $y\in [-\rho,\rho]$, we have 
$$
\int_{\R} e^{-2i\pi (x+iy) \xi} f(x+iy) dx=\hat f(\xi),
$$
which implies for $y=-\rho \sign \xi$ (taken as 0, if $\xi=0$)
$$
\val{\hat f(\xi)}\le \int_{\R}\val{
f(x\mp i\rho)}
dx \ e^{-2\pi \rho\val \xi}\underbrace{\le}_{\text{from \eqref{firstc}}} Ce^{-2\pi \rho\val \xi},
$$
proving the first part of the proposition.
Let us consider  now a function $f$ in $L^{\io}(\R)$ such that 
$\hat f(\xi)$ is $O(e^{-2\pi r\val \xi})$ for some $r>0$, and let $\rho\in (0,r)$.
We have 
$
f(x)=\int e^{2i\pi x\xi}\hat f(\xi)d\xi
$
and for $\val y\le \rho$, we have 
$
\int_{\R} e^{2\pi \val y\val \xi}\val{\hat f(\xi)} d\xi<+\io,
$
so that 
$f$ is holomorphic on $\{z\in \C, \val {\im z}<r\}$ with
$$
f(x+iy)=\int_{\R} e^{2i\pi (x+iy) \xi}\hat f(\xi)d\xi,
$$
concluding the proof.
\end{proof}
\subsubsection{Weyl quantization}
\index{Weyl quantization}
Let $a\in \mathscr S'(\RZ)$. We define the operator $\opw{a}$, continuous from 
$\mathscr S(\R^{n})$
into $\mathscr S'(\R^{n})$, given by the formula
\begin{equation}\label{weylq1}
(\opw{a}u)(x)=\iint e^{2i\pi (x-y)\cdot \xi} a(\frac{x+y}2, \xi) u(y) dy d\xi,
\end{equation}
to be understood weakly as 
\begin{equation}\label{eza654+}
\poscal{\opw{a}u}{\bar v}_{\mathscr S'(\R^{n}), \mathscr S(\R^{n})}=\poscal{a}{\mathcal W(u,v)}_{\mathscr S'(\R^{2n}), \mathscr S(\R^{2n})},
\end{equation}
where the so-called Wigner function $\mathcal W(u,v)$ is defined for $u,v\in \mathscr S(\R^{n})$ by
\begin{equation}\label{wigner1}
\mathcal W(u,v)(x,\xi)=\int e^{-2i\pi z\cdot \xi} u(x+\frac z2) \bar v(x-\frac z2) dz.
\end{equation}
We note that the sesquilinear  mapping 
$\mathscr S(\R^{n})\times \mathscr S(\R^{n})\ni(u,v)\mapsto \mathcal W(u,v) \in \mathscr S(\RZ)$
is continuous so that the above bracket of duality 
$$
\poscal{a}{\mathcal W(u,v)}_{\mathscr S'(\R^{2n}), \mathscr S(\R^{2n})},
$$
makes sense.
We note as well that a temperate distribution $a\in\mathscr S'(\RZ)$ gets quantized
by a continuous  operator $a^{w}$ from 
$\mathscr S(\R^{n})$ 
into
$\mathscr S'(\R^{n})$.
\par
Also, we find that $\mathcal W(u,u)$ is real-valued since
\begin{multline*}
\overline{\mathcal W(u,u)(x,\xi)}
=\int e^{2i\pi z\cdot \xi} \bar u(x+\frac z2) u(x-\frac z2) dz
\\
=\int e^{-2i\pi z\cdot \xi} \bar u(x-\frac z2) u(x+\frac z2) dz
=\mathcal W(u,u)(x,\xi).
\end{multline*}
\begin{lem}
Let $a$ be a tempered distribution on $\RZ$ and let  $b$ be a  polynomial of degree $d$ on $\RZ$.
Then we have 
\begin{align}
a\sharp b&=\sum_{0\le k\le d} \omega_{k}(a,b),\quad \text{with}
\\
 \omega_{k}(a,b)&=\frac1{(4i\pi)^{k}}\sum_{\val \alpha+\val \beta=k}
\frac{(-1)^{\val \beta}}{\alpha!\beta!}(\p_{\xi}^{\alpha}\p_{x}^{\beta}a)(x,\xi)
(\p_{x}^{\alpha}\p_{\xi}^{\beta}b)(x,\xi),
\label{omegak}\\
 \omega_{k}(b,a)&=(-1)^k\omega_{k}(a,b).
 \label{9118}
\end{align}
The Weyl symbol of the commutator $[\opw{a}, \opw{b}]$ is
\begin{equation}\label{}
c(a,b)=2\sum_{\substack{0\le k\le d\\ k\ \text{\rm odd}}} \omega_{k}(a,b).
\end{equation}
If the degree of $b$ is smaller than 2, we have
\begin{equation}\label{}
c(a,b)=2\omega_{1}(a,b)=\frac{1}{2\pi i}\poi{a}{b},
\end{equation}
and if $a$ is a function of $b$, the commutator $[\opw{a},\opw{b}]=0$.
\end{lem}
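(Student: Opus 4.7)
The plan is to deduce the entire statement from the general composition formula \eqref{gfcd44+}, together with a simple symmetry of the bidifferential expression \eqref{omegak}. First, I would note that since $b$ is a polynomial of degree $d$, the partial derivatives $\partial_x^\alpha \partial_\xi^\beta b$ vanish whenever $|\alpha|+|\beta|>d$, so the {\sl a priori} infinite series
\[
a\sharp b \;=\; \sum_{k\ge 0}\omega_k(a,b)
\]
provided by \eqref{gfcd44+} collapses to a finite sum over $0\le k\le d$, which justifies both the convergence and the displayed formula. Here everything can be interpreted distributionally since the coefficients $\partial_x^\alpha\partial_\xi^\beta b$ are polynomials and the action on a tempered distribution $a$ is classical.

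Next, I would establish the symmetry \eqref{9118}. Starting from \eqref{omegak} written for $\omega_k(b,a)$ and performing the change of summation indices $\alpha\leftrightarrow\beta$, one obtains
\[
\omega_k(b,a)
=\frac{1}{(4i\pi)^k}\sum_{|\alpha|+|\beta|=k}\frac{(-1)^{|\alpha|}}{\alpha!\beta!}\,(\partial_\xi^\alpha\partial_x^\beta a)(\partial_x^\alpha\partial_\xi^\beta b),
\]
and, using $|\alpha|+|\beta|=k$, the sign $(-1)^{|\alpha|}=(-1)^k(-1)^{|\beta|}$ factors out to give $(-1)^k\omega_k(a,b)$. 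Combining this with the first point, the Weyl symbol of the commutator is
\[
c(a,b)=a\sharp b-b\sharp a=\sum_{0\le k\le d}\bigl(1-(-1)^k\bigr)\omega_k(a,b)=2\!\sum_{\substack{0\le k\le d\\ k\text{ odd}}}\!\omega_k(a,b),
\]
which is the asserted formula.

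For the case $\deg b\le 2$, the only odd index in $\{0,\dots,d\}$ is $k=1$, so $c(a,b)=2\omega_1(a,b)$. A direct expansion of \eqref{omegak} for $k=1$ (with $(\alpha,\beta)$ running over $(e_j,0)$ and $(0,e_j)$) yields
\[
\omega_1(a,b)=\frac{1}{4i\pi}\sum_{1\le j\le n}\bigl((\partial_{\xi_j}a)(\partial_{x_j}b)-(\partial_{x_j}a)(\partial_{\xi_j}b)\bigr)=\frac{1}{4i\pi}\poi{a}{b},
\]
whence $c(a,b)=\frac{1}{2i\pi}\poi{a}{b}$. Finally, if $a=F(b)$ with $\deg b\le 2$ and $F$ smooth, the chain rule gives $\partial_{x_j}F(b)=F'(b)\partial_{x_j}b$ and $\partial_{\xi_j}F(b)=F'(b)\partial_{\xi_j}b$, so $\poi{F(b)}{b}=F'(b)\poi{b}{b}=0$, hence $[\opw{a},\opw{b}]=0$.

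The only genuinely delicate point is the last assertion when $a$ is merely a tempered distribution rather than a smooth function of $b$: one then has to give a meaning to ``$a$ is a function of $b$''. The cleanest way is to reduce to the smooth case by Segal's formula \eqref{segal} applied to an affine symplectic change of variables which turns $b$ into a coordinate function (or a sum of $x_j^2+\xi_j^2$'s), so that $\opw{b}$ becomes, up to unitary conjugation, a standard self-adjoint operator with an explicit functional calculus; the vanishing of the commutator is then an immediate consequence of the commutativity of functions of a single self-adjoint operator. This reduction uses $\deg b\le 2$ crucially and is where I would concentrate the technical work, the earlier steps being essentially bookkeeping with \eqref{gfcd44+}.
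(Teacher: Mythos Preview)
Your proof is correct and follows essentially the same approach as the paper: the paper's proof consists solely in checking the antisymmetry relation \eqref{9118} by the same index swap $\alpha\leftrightarrow\beta$ that you perform, and declares everything else to be a consequence of the composition formula \eqref{compos}. You are in fact more thorough than the paper, which does not spell out the commutator identity, the $k=1$ computation, or the final ``$a$ a function of $b$'' claim; on this last point, note that the distributional chain rule $\partial_{x_j}F(b)=F'(b)\,\partial_{x_j}b$ (valid for $b$ a polynomial and $F$ a tempered distribution on $\R$) already gives $\poi{F(b)}{b}=0$ directly, so your Segal-reduction paragraph, while correct, is more machinery than the paper intends here.
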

\begin{rem}
 In particular if $q(x,\xi)$ is a quadratic polynomial and $$a(x,\xi)=H\bigl(1-q(x,\xi)\bigr),$$
is the characteristic function of the set $\{(x,\xi), q(x,\xi)\le 1\}$,
then we have
\begin{equation}\label{}
\bigl[\opw{a}, \opw{q}\bigr]=0.
\end{equation}
\end{rem}
\begin{proof}
 Applying \eqref{gfcd44}, \eqref{gfcd44+}, we obtain that this lemma follows from 
 \eqref{9118}, that we check now:
 \begin{align*}
(4i\pi)^k\omega_{k}(a,b)&=
 \sum_{\val \alpha+\val \beta=k}
\frac{(-1)^{\val \beta}}{\alpha!\beta!}(\p_{\xi}^{\alpha}\p_{x}^{\beta}a)(x,\xi)
(\p_{x}^{\alpha}\p_{\xi}^{\beta}b)(x,\xi)
\\
&= \sum_{\val \alpha+\val \beta=k}
\frac{(-1)^{\val \alpha}}{\alpha!\beta!}(\p_{\xi}^{\beta}\p_{x}^{\alpha}a)(x,\xi)
(\p_{x}^{\beta}\p_{\xi}^{\alpha}b)(x,\xi)
\\
&= \sum_{\val \alpha+\val \beta=k}
\frac{(-1)^{k-\val \beta}}{\alpha!\beta!}(\p_{\xi}^{\beta}\p_{x}^{\alpha}a)(x,\xi)
(\p_{x}^{\beta}\p_{\xi}^{\alpha}b)(x,\xi)
=(-1)^k(4i\pi)^k\omega_{k}(b,a),
\end{align*}
which is the sought result.\end{proof}
\begin{rem}\rm
 We can note that Formula \eqref{2.wecofo} is non-local in the sense that for $a,b\in \mathscr S(\RZ)$ with disjoint supports, although all $\omega_{k}(a,b)$ (given by \eqref{omegak}) are identically 0, the function $a\sharp b$ (which belongs to $\mathscr S(\RZ)$) is different from 0;
 let us give an example. Let $\chi_{0}\in \mooc(\R;[0,1])$ with support $[-1+\epsilon_{0},1-\epsilon_{0}]$ with $\epsilon_{0}\in (0,1)$
 and let us consider in $\R^{2}$,
 $$
 a (x,\xi)=\chi_{0}(x) e^{-\pi \xi^{2}}, \quad b(x,\xi)=\chi_{0}(x-2) e^{-\pi \xi^{2}},
 $$ 
 so that $a,b$ both belong to $\mathscr S(\R^{2})$ and 
 $$
\supp a=[-1+\epsilon_{0},1-\epsilon_{0}]\times \R, \quad \supp b=[1+\epsilon_{0},3-\epsilon_{0}]\times \R,
 $$
 so that the supports are disjoint and all $\omega_{k}(a,b)$ are identically vanishing.
 We check now
 \begin{align*}
 (a\sharp b)(x,\xi)&= 4\iint\hskip-5pt\iint\chi_{0}(y)e^{-\pi\eta^{2}}
 \chi_{0}(z-2)e^{-\pi\zeta^{2}}
 e^{-4i\pi(\xi-\eta)(x-z)} e^{4i\pi(x-y)(\xi-\zeta)} dy d\eta dzd\zeta
 \\
 &=4\iint \chi_{0}(y)\chi_{0}(z-2) e^{-4\pi(x-z)^{2}}e^{-4\pi(x-y)^{2}}
 e^{4i\pi\xi(z-x+x-y)} dy dz
 \\
&= 4\Bigl(\int \chi_{0}(y)
e^{-4i\pi\xi y} e^{-4\pi(x-y)^{2}} dy\Bigr)
\Bigl(\int\chi_{0}(z) e^{4i\pi\xi z}e^{-4\pi(x-2-z)^{2}}
dz\Bigr),
\end{align*}
 so that 
 $$
 (a\sharp b)(0,0)=
 4\underbracket[0.5pt][1pt]{\Bigl(\int \chi_{0}(y)
 e^{-4\pi y^{2}} dy\Bigr)}_{>0}
\underbracket[0.5pt][1pt]{\Bigl(\int\chi_{0}(z) e^{-4\pi(2+z)^{2}}
dz\Bigr)}_{>0}>0.
 $$
 \end{rem}
\subsubsection{Some explicit computations}
We may also calculate with 
\begin{equation}\label{gg}
u_{a}(x)=(2a)^{1/4}e^{- \pi a x^{2}},  a >0,
\end{equation}
\begin{multline}\label{gauss}
\mathcal W(u_{a}, u_{a})(x,\xi)
=(2a)^{1/2}\int e^{-2i\pi z\cdot \xi} e^{-\pi a\val{x-\frac z2}^{2}}e^{-\pi a\val{x+\frac z2}^{2}}dz
\\=(2a)^{1/2}\int e^{-2i\pi z\cdot \xi} e^{-2\pi a  x^{2}    } e^{-\pi a z^{2}/2}dz
=(2a)^{1/2}e^{-2\pi a  x^{2}    }  2^{1/2} a^{-1/2} e^{-\pi\frac2{a}\xi^{2}}
\\= 2 e^{-2\pi(a x^{2}+a^{-1}\xi^{2})},
\end{multline}
which is also a Gaussian function on the phase space (and positive function).
The calculation of $\mathcal W(u'_{a}, u'_{a})(x,\xi)$ is interesting since we have 
$$
4\pi^{2}\poscal{D_{x}b^{w}D_{x}u_{a}}{\bar u_{a}}_{\mathscr S'(\R^{n}), \mathscr S(\R^{n})}
$$
$$
=
\poscal{b^{w}u'_{a}}{\bar u'_{a}}_{\mathscr S'(\R^{n}), \mathscr S(\R^{n})}=\poscal{b}{\mathcal W(u'_{a},u'_{a})}_{\mathscr S'(\R^{2n}), \mathscr S(\R^{2n})},
$$
and for $b(x,\xi)$ real-valued we have
$$
\xi\sharp b\sharp \xi=\bigl(\xi b+\frac{b'_{x}}{4i\pi}\bigr)\sharp \xi=\xi^{2}b+\frac{b'_{x}\xi}{4i\pi}
-\frac{\p_{x}}{4i\pi}\bigl(\xi b+\frac{b'_{x}}{4i\pi}\bigr)=\xi^{2}b+\frac{b''_{xx}}{16\pi^{2}},
$$
so that
$$
4\pi^{2}\iint
2 e^{-2\pi(a x^{2}+a^{-1}\xi^{2})}\bigl(
\xi^{2}b+\frac{b''_{xx}}{16\pi^{2}}
\bigr)
dxd\xi
=\poscal{b}{\mathcal W(u'_{a},u'_{a})},$$
proving that
\begin{align*}
\mathcal W(u'_{a},u'_{a})(x,\xi)&=
2 e^{-2\pi(a x^{2}+a^{-1}\xi^{2})}4\pi^{2}\xi^{2}+\frac14
2 \p_{x}^{2}\bigl(e^{-2\pi(a x^{2}+a^{-1}\xi^{2})}\bigr)
\\&=2 e^{-2\pi(a x^{2}+a^{-1}\xi^{2})}
\bigl(4\pi^{2}\xi^{2}+\frac14
((-4\pi ax)^{2}-4\pi a)\bigl)
\\
&=8\pi^{2} e^{-2\pi(a x^{2}+a^{-1}\xi^{2})}a
\bigl(a^{-1}\xi^{2}+ax^{2}-\frac 1{4\pi}\bigr).
\end{align*}
We obtain that the function $\mathcal W(u'_{a},u'_{a})$ is negative on 
$$
a^{-1}\xi^{2}+ax^{2}<\frac1{4\pi},
$$
which has area  $1/4$.
We may note as well for consistency that for $u_{a}$ given by \eqref{gg},
we have
$$
u'_{a}=(2a)^{1/4}(-2\pi ax) e^{-\pi a x^{2}},\quad \norm{u'_{a}}_{L^{2}}^{2}=\pi a,
$$
and
\begin{align*}
\iint \mathcal W(u'_{a},u'_{a})(x,\xi) dx d\xi&=8\pi^{2}a\iint
e^{-2\pi(y^{2}+\eta^{2})}(y^{2}+\eta^{2}-\frac1{4\pi}) dyd\eta
\\
&=\frac{8\pi^{2}a}{8\pi}
=\pi a=\norm{u'_{a}}_{L^{2}}^{2}.
\end{align*}
For $\lambda>0$ and $a\in \mathscr S'(\RZ)$, we define
\begin{equation}\label{segal-}
a_{\lambda}(x,\xi)=a(\lambda^{-1}x, \lambda \xi),
\end{equation}
and we find that
\begin{align}\label{segal0}
&(a_{\lambda})^{w}=U_{\lambda}^{*} a^{w} U_{\lambda},\\
&\text{for $f\in \mathscr S(\R^{n})$, }
(U_{\lambda} f)(x)=f(\lambda x)\lambda^{n/2}, \quad U_{\lambda}^{*}=U_{\lambda^{-1}}=(U_{\lambda})^{-1}.
\end{align}
We note that the above formula   is a particular case of {\it Segal's Formula} (see e.g. Theorem 2.1.2 in \cite{MR2599384}).
\subsubsection{The Harmonic Oscillator}\label{sechar}
\index{harmonic oscillator}
The Harmonic oscillator $\mathcal H_{n}$ in $n$ dimensions is defined as the operator with Weyl symbol
$\pi(\val x^{2}+\val \xi^{2})$ and thus from \eqref{segal0}, we find that 
$$
\mathcal H= U_{\sqrt{2\pi}}\  \frac12\bigl(\val x^{2}+4\pi^{2}\val \xi^{2}\bigr)^{w}U_{\sqrt{2\pi}} ^{*}=
U_{\sqrt{2\pi}}\  \frac12\bigl(-\Delta+\val x^{2}\bigr)U_{\sqrt{2\pi}} ^{*}.
$$
We shall define in one dimension the Hermite function of level $k\in \N$, 
by
\index{Hermite functions}
\index{{~\bf Notations}!$\psi_{k},\Psi_{\alpha}$}
\begin{equation}\label{herm1}
\psi_{k}(x)=\frac{(-1)^{k}}{2^{k}\sqrt{k! }   }  2^{1/4} e^{\pi x^{2}}\left(\frac{d}{\sqrt \pi dx}\right)^{k}(e^{-2\pi x^{2}}),
\end{equation}
and we find that $(\psi_{k})_{k\in \N}$ is a Hilbertian orthonormal basis on $L^{2}(\R)$.
The one-dimensional harmonic oscillator can be written as
\begin{equation}\label{hardec}
\mathcal H_{1}=\sum_{k\ge 0}(\frac12+k) \mathbb P_{k},
\end{equation}
where $\mathbb P_{k}$ is the orthogonal projection onto $\psi_{k}$.
\vs
In $\did $ dimensions, we consider a multi-index $(\alpha_{1},\dots, \alpha_{\did})=\alpha\in \N^{\did}$
and we define on $\R^{\did}$, using the one-dimensional \eqref{herm1},
\begin{equation}\label{psidim}
\Psi_{\alpha}(x)=\prod_{1\le j\le \did}\psi_{\alpha_{j}}(x_{j}),\quad \mathcal E_{k}=\text{Vect}\bigl\{\Psi_{\alpha}\bigr\}_{\alpha\in \mathbb N^{\did}, \val \alpha=k},
\quad \val \alpha=\sum_{1\le j\le \did}\alpha_{j}.
\end{equation}
We note that 
\begin{equation}\label{}
\text{the  dimension of $\mathcal E_{k,n}$ is $\binom{k+\did-1}{\did-1}$}
\end{equation}
and that \eqref{hardec} holds with 
$\mathbb P_{k;n}$ standing for the orthogonal projection onto $\mathcal E_{k,n}$;
the lowest eigenvalue of $\mathcal H_{n}$ is $n/2$ and the corresponding eigenspace is one-dimensional
in all dimensions, although in two and more  dimensions, the eigenspaces corresponding to  the eigenvalue 
$\frac n2+k, k\ge 1$ are multi-dimensional with dimension $\binom{k+\did-1}{\did-1}$.
The $n$-dimensional harmonic oscillator can be written as
\index{{~\bf Notations}!$\mathbb P_{k;n}, \mathbb P_{\alpha}$}
\begin{equation}\label{hardecn}
\mathcal H_{n}=\sum_{k\ge 0}(\frac n2+k) \mathbb P_{k;n},
\end{equation}
where $\mathbb P_{k;n}$ stands for the orthogonal projection 
onto $\mathcal E_{k,n}$ defined above.
We have in particular
\begin{equation}\label{app.paln}
\mathbb P_{k;n}=\sum_{\alpha\in \N^{n}, \val \alpha=k}\mathbb P_{\alpha},\quad
\text{where $\mathbb P_{\alpha}$ is the orthogonal projection onto $\Psi_{\alpha}$.}
\end{equation}
\index{anisotropic harmonic oscillator}
\subsubsection{On the spectrum of the anisotropic harmonic oscillator}
The standard $n$-dimensional harmonic oscillator is the operator
$$
\mathcal H_{n}=\pi\sum_{1\le j\le n}(D_j^2+x_j^2), \text{ $D_j=\frac{1}{2\pi i}\partial_{x_j}$},
$$
and its spectral decomposition is
$$
\mathcal H=\sum_{k\ge 0}(\frac{n}{2}+k)\mathbb P_{k;n}, \quad \mathbb P_{k;n}=\sum_{
\alpha\in \mathbb N^n,\alpha_1+\dots+\alpha_n=k}
\mathbb P_{\alpha_1}\otimes\dots\otimes\mathbb P_{\alpha_n},
$$
where $\mathbb P_{\alpha_j}$ stands for the orthogonal projection onto the one-dimensional Hermite function with level $\alpha_j$. Now let us consider for ${\mu}=({\mu}_1,\dots, {\mu}_n)$ with $\mu_j>0$,
the operator
\begin{equation}\label{harani}
\mathcal H_{(\mu)}=\pi\sum_{1\le j\le n}{\mu}_j(D_j^2+x_j^2) =\pi
\opw{q_{\mu}(x,\xi)},
\end{equation}
with
\begin{equation}\label{}
q_{\mu}(x,\xi)=\sum_{1\le j\le n}\mu_{j}(x_{j}^2+\xi_{j}^2).
\end{equation}
With the notation $\vert {\mu}\vert=\sum_{1\le j\le n} {\mu}_j$ and  ${\mu}\cdot \alpha=\sum_{1\le j\le n}{\mu}_j\alpha_j$, we have 
\begin{equation}\label{}
\mathcal H_{(\mu)}=\sum_{\alpha\in \mathbb N^n}(\frac{\vert {\mu}\vert}2+{\mu}\cdot\alpha)
\underbrace{(\mathbb P_{\alpha_1}\otimes\dots\otimes\mathbb P_{\alpha_n})}_{\mathbb P_{\alpha}},
\end{equation}
so that the eigenspaces are the same as for $\mathcal H_{n}$ but the arithmetic properties of ${\mu}$ make possible that all eigenvalues $(\frac{\vert {\mu}\vert}2+{\mu}\cdot\alpha)$ are simple.  For instance for $$n=2, 0< \mu_{1}< \mu_{2}, \ \frac{\mu_{2}}{\mu_{1}}\notin \Q,$$
if $\beta\in \Z^2$ is such that 
$
\mu_{1}\beta_{1}+\mu_{2}\beta_{2}=0,
$
this implies that $\beta=0$ and thus that all the eigenvalues of $\mathcal H_{(\mu)}$ are simple.
\begin{rem}
 If $0<\mu_{1}\le \dots\le \mu_{n}$ and if for all $j\in [2,n]$ we have $\mu_{j}/\mu_{1}\in \N$, we then have for $\alpha\in \N^n$,
$$
\alpha\cdot \mu=\mu_{1}\underbrace{\Bigl(\alpha_{1}+\sum_{2\le j\le n}\frac{\alpha_{j}\mu_{j}}{\mu_{1}}\Bigr)}_{\beta_{1}}=\beta\cdot \mu, \quad \beta=(\beta_{1},0,\dots,0)\in \N^n.
$$
\end{rem}
\noindent
{\it Sinus cardinal}.
It is a classical result of Distribution Theory that the weak limit when $\lambda\rightarrow+\io$
of the Sinus Cardinal $\frac{\sin(\lambda x)}x$ is $\pi \delta_{0}$, where $\delta_{0}$ is the Dirac mass at 0, but we wish to extend that result to more general test functions.
\begin{lem}\label{lem93}
 Let $f$ be a function in $L^1_{\text{loc}}(\R)$ such that
 $$\int_{\val \tau\ge 1}\frac{\val{f(\tau)}}{\val\tau}d\tau<+\io
 \quad\text{and $\exists a\in \C$ so that\quad }
 \int_{\val \tau\le 1}
 \frac{\val{f(\tau)-a}}{\val \tau}d\tau<+\io.$$
 Then we have
 \begin{equation}\label{9227}
\lim_{\lambda\rightarrow+\io}\int_{\R}\frac{\sin(\lambda \tau)}{\pi \tau} f(\tau) d\tau=a.
\end{equation}
\end{lem}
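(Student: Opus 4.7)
The plan is to split the integrand into a piece that carries the whole limit $a$ (coming from the Dirac-mass behaviour of $\frac{\sin(\lambda\tau)}{\pi\tau}$ near $0$) and a piece that gives no contribution in the limit by Riemann--Lebesgue. Concretely, write
\begin{equation*}
f(\tau) = a\,\mathbf 1_{[-1,1]}(\tau) + g(\tau), \qquad g(\tau):= f(\tau) - a\,\mathbf 1_{[-1,1]}(\tau),
\end{equation*}
and treat the two pieces separately.

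First I would handle the $a\,\mathbf 1_{[-1,1]}$ piece. By the change of variable $\sigma=\lambda\tau$ and the classical Dirichlet integral $\int_{\R}\frac{\sin \sigma}{\sigma}\,d\sigma=\pi$ (already used in \eqref{sin3}), one gets
\begin{equation*}
\int_{\R}\frac{\sin(\lambda\tau)}{\pi\tau}\,a\,\mathbf 1_{[-1,1]}(\tau)\,d\tau
=\frac{a}{\pi}\int_{-\lambda}^{\lambda}\frac{\sin\sigma}{\sigma}\,d\sigma
\xrightarrow[\lambda\to+\io]{} a.
\end{equation*}

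Next I would show that the $g$-piece has limit zero. The decisive observation is that the two hypotheses exactly say that $\tau\mapsto g(\tau)/\tau$ lies in $L^{1}(\R)$: on $\{|\tau|\le 1\}$ one has $g(\tau)/\tau=(f(\tau)-a)/\tau$, which is integrable by the second assumption, and on $\{|\tau|>1\}$ one has $g(\tau)/\tau=f(\tau)/\tau$, integrable by the first assumption. Writing
\begin{equation*}
\int_{\R}\frac{\sin(\lambda\tau)}{\pi\tau}\,g(\tau)\,d\tau
=\frac{1}{2i\pi}\int_{\R}\bigl(e^{i\lambda\tau}-e^{-i\lambda\tau}\bigr)\frac{g(\tau)}{\tau}\,d\tau,
\end{equation*}
the Riemann--Lebesgue Lemma applied to $g(\tau)/\tau\in L^{1}(\R)$ gives that this quantity tends to $0$ as $\lambda\to+\io$. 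Adding the two contributions yields \eqref{9227}.

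There is really no hard step here: once the splitting of $f$ around the singularity of $\frac{\sin(\lambda\tau)}{\tau}$ is made, the Dirichlet integral and Riemann--Lebesgue do all the work. The only point that requires a little care is to verify that $g/\tau\in L^{1}(\R)$, which is precisely the purpose of the two integrability hypotheses placed on $f$; the role of the constant $a$ is exactly to cancel the non-integrable $1/\tau$ singularity of $f(\tau)/\tau$ at the origin.
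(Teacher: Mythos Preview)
Your proof is correct and follows essentially the same strategy as the paper: split off the constant near the origin so that the remainder divided by $\tau$ lies in $L^{1}(\R)$ and vanishes by Riemann--Lebesgue, while the constant piece reproduces $a$ via the Dirichlet integral. The only cosmetic difference is that the paper uses a smooth cutoff $\chi_{0}\in C^{\infty}_{c}$ (and Plancherel for the constant term) whereas you use the sharp cutoff $\mathbf 1_{[-1,1]}$ and a direct change of variable, which is arguably a bit cleaner.
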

\begin{nb}
 In particular if $f$ is an  H\"olderian function such that $f(\tau)/\tau\in L^1(\{\val \tau\ge 1\})$ we get that the left-hand-side of \eqref{9227} equals $f(0)$.
\end{nb}
\begin{proof}
 Let $\chi_{0}$ be a function in $\mooc(\R)$ equal to 1 near the origin and let us define $\chi_{1}=1-\chi_{0}$. We have
\begin{multline*}
 \int_{\R}\frac{\sin(\lambda \tau)}{\pi \tau} f(\tau) d\tau=
 \int_{\R}\frac{\sin(\lambda \tau)}{\pi } \underbrace{\frac{\bigl(f(\tau) -a\bigr)}{\tau}\chi_{0}(\tau)}_{\in L^1(\R)} d\tau+
 a \int_{\R}\frac{\sin(\lambda \tau)}{\pi \tau} \chi_{0}(\tau) d\tau
 \\+\int_{\R}\frac{\sin(\lambda \tau)}{\pi }\underbrace{ f(\tau) \tau^{-1} \chi_{1}(\tau)}_{\in L^1(\R)}d\tau,
\end{multline*}
so that the limit when $\lambda\rightarrow+\io$ of the first and the third integral is zero, thanks to the Riemann-Lebesgue Lemma. We note also that
$$
\frac{\sin(\lambda \tau)}{\pi \tau} =\widehat{\mathbf 1_{[-\frac{\lambda}{2\pi},\frac{\lambda}{2\pi}]}}(\tau),
$$
and applying Plancherel's Formula to the second integral yields
$$
 \int_{\R}\frac{\sin(\lambda \tau)}{\pi \tau} \chi_{0}(\tau) d\tau=\int_{\val t\le \lambda/(2\pi)}\widehat{\chi_{0}}(t) dt ,
$$
whose limit  when $\lambda\rightarrow+\io$ is
$\int_{\R}\widehat{\chi_{0}}(t) dt=\chi_{0}(0)=1$, thanks to the Lebesgue Dominated Convergence Theorem, completing the proof of the lemma.
\end{proof}
\subsection{Further properties of the metaplectic group}
\index{generators of the metaplectic group}
\index{{~\bf Notations}!$\mathcal M_{P,L,Q}$}
\subsubsection{Another set of generators for the metaplectic group}
\begin{definition}\label{def.lersym} Let $P, L, Q$ be $n\times n$ real matrices such that $P=P^{*}, Q=Q^{*}$ and $\det L\not=0$.
We define the operator $\mathcal M_{P,L,Q}$ by the formula
\begin{equation}\label{lersym}
(\mathcal M_{P,L,Q} u)(x)=e^{-i\pi n/4}(\det L)^{1/2}\int_{\R^{n}} e^{i\pi\{
\poscal{Px}{x}-2\poscal{Lx}{y}+\poscal{Q y}{y}\}}
u(y) dy.
\end{equation}
\end{definition}
\index{{~\bf Notations}!$\Mett{P}{L}{Q}{m(L)}$}
\begin{nb}\rm 
 In that definition, $(\det L)^{1/2}$ stands for a choice of a square root of the real number $\det L$, that is $\pm\sqrt{\det L}$ if $\det L>0$ and $\pm i\sqrt{-\det L}$ if $\det L<0$.
\end{nb}
With $m(L)\in \Z/4\Z$ defined by \eqref{maslov} we shall also define
\begin{equation}\label{}
(\Mett{P}{L}{Q}{m(L)}u)(x)=e^{-\frac{i\pi n}4} e^{\frac{i\pi m(L)}{2}}\val{\det L}^{1/2}\int_{\R^{n}} e^{i\pi\{
\poscal{Px}{x}-2\poscal{Lx}{y}+\poscal{Q y}{y}\}}
u(y) dy.
\end{equation}
\index{generators of the metaplectic group}
\begin{pro}\label{pro.989898}
 The operator $\mathcal M_{P,L,Q}$ given in Definition \ref{def.lersym} is an automorphism of $\mathscr S(\R^{n})$ and of $\mathscr S'(\R^{n})$ which is a unitary operator on $L^{2}(\R^{n})$ belonging to the metaplectic group
 (cf. Definition \ref{def.41fdhh}). Moreover the metaplectic group is generated by the set $\{\mathcal M_{P,L,Q}\}_{\substack{P=P^{*}, Q=Q^{*}\\
 \det L\not=0}}$.
\end{pro}
\begin{proof}
 Using the Notation \eqref{4.genmet} and \eqref{1254}, we see that\footnote{
 We note that $m(B)+n\in\{m(-B),m(-B)+2\}$ modulo 4:
 indeed we have modulo 4
 $$
\begin{cases} 
\text{for $n$ even,}&\underbracket[0.2pt]{\{0,2\}}_{\det B >0}+n=\underbracket[0.2pt]{\{0,2\}}_{\det(-B)>0},
\qquad \underbracket[0.2pt]{\{1,3\}}_{\det B<0}+n=\underbracket[0.2pt]{\{1,3\}}_{\det (-B)<0},
\\
\text{for $n$ odd,}&\underbracket[0.2pt]{\{0,2\}}_{\det B >0}+n=\underbracket[0.2pt]{\{1,3\}}_{\det(-B)<0},
\qquad \underbracket[0.2pt]{\{1,3\}}_{\det B<0}+n=\underbracket[0.2pt]{\{0,2\}}_{\det (-B)>0}.
 \end{cases}
 $$
 We have also 
 $
 m(L)-n\in \{m(-L), m(-L)+2\}
 $
 since we know already (from the above in that footnote)
 that  $m(L)-n\in\{m(-L),m(-L)+2\}-2n$,
which gives 
 $m(L)-n\in\{m(-L),m(-L)+2\}$ for $n$ even; for $n=2l+1$ odd we get
 the same result since 
 $$
 m(L)-n\in\{m(-L),m(-L)+2\}-4l-2=\{m(-L)-2,m(-L)\}=\{m(-L)+2,m(-L)\}.
 $$
 }
 \begin{equation}\label{linkme}
 M_{A,B,C}^{\scriptscriptstyle\{m(B)\}}=\mathcal M_{A,-B,C}^{\scriptscriptstyle\{m(B)+n\}}\mathcal F e^{-i\pi n/4},\quad \mathcal M_{P,L,Q}^{\scriptscriptstyle\{m(L)\}}=M_{P,-L,Q}^{\scriptscriptstyle\{m(L)-n\}}
 \left(\mathcal F e^{-i\pi n/4}\right)^{-1},
\end{equation}
and \eqref{778803} imply that  the set $\{\mathcal M_{P,L,Q}\}$ is included in $Mp(n)$
(second formula in \eqref{linkme})
whereas  the fact that
\begin{equation}\label{923923}
\mathcal F e^{-i\pi n/4}=\Mett{0}{I_{n}}{0}{0},
\end{equation}
the first formula in \eqref{linkme} and Definition \ref{def.41fdhh} imply that $Mp(n)$ is generated by the
 set $\{\mathcal M_{P,L,Q}\}$, proving the Proposition.
 \end{proof}
 \begin{rem}\rm
 From \eqref{linkme}, we deduce, noting that {\footnotesize$ m(I_{n})\in \{0,2\}, m(-I_{n})\in\{n, n+2\}$},
 \begin{align}\label{}
-\Id_{L^{2}(\R^{n})}&=M_{0,I_{n},0}^{\scriptscriptstyle\{2\}}=\Mett{0}{-I_{n}}{0}{n+2}\Mett{0}{I_{n}}{0}{0},
\qquad \text{so that}
\\
\Mett{P}{L}{Q}{m(L)+2}&=
-\Mett{P}{L}{Q}{m(L)}=\Mett{0}{-I_{n}}{0}{n+2}\Mett{0}{I_{n}}{0}{0}\Mett{P}{L}{Q}{m(L)}.
\end{align}
\end{rem}
 \index{{~\bf Notations}!$\Lambda_{P,L,Q}$}
\begin{lem} With the homomorphism $\Psi$ defined in \eqref{hqma28} and defining 
\begin{equation}\label{}
\Lambda_{P,L,Q}=\Psi(\mathcal M_{P,L,Q}),
\end{equation}
we find that 
\begin{equation}\label{}
\Lambda_{P,L,Q}=\mat22{L^{-1}Q}{L^{-1}}{PL^{-1}Q-L^{*}}{PL^{-1}}.
\end{equation}
\end{lem}
\begin{proof}
 Indeed, from the second formula in \eqref{linkme}, \eqref{kjh123}, \eqref{noti22} and \eqref{1270} we get that
 $$
 \Lambda_{P,L,Q}=\Xi_{P,-L,Q}\ \Xi_{-I_{n}, 2^{1/2}I_{n}, -I_{n}}^{-2}
 =\mat22{-L^{-1}}{L^{-1}Q}{-PL^{-1}}{-L^{*}+PL^{-1}Q}\mat22{0}{-I_{n}}{I_{n}}{0},
 $$
 providing the sought result.
\end{proof}
\begin{lem}\label{lem.910910}
 Let $P_{j}, L_{j}, Q_{j}, j=1,2$ be as in Definition \ref{def.lersym} and let us assume that 
 \begin{equation}\label{926926}
\mathcal M_{P_{1},L_{1}, Q_{1}}\mathcal M_{P_{2},L_{2}, Q_{2}}=e^{i\phi}\Id_{L^{2}(\R^{n})}, \quad \phi\in \R.
\end{equation}
Then we have
\begin{equation}\label{927927}
P_{1}+Q_{2}=Q_{1}+P_{2}=0,\quad L_{2}=-L_{1}^{*},  \qquad e^{i\phi}\in \{\pm 1\}.
\end{equation}
\end{lem}
\begin{proof}
The assumption \eqref{926926} implies that both sides of the equality belong to $Mp(n)$ and 
$$
\Lambda_{P_{1}, L_{1},Q_{1}}\Lambda_{P_{2}, L_{2},Q_{2}}=\Psi\bigl(e^{i\phi}\Id_{L^{2}(\R^{n})}\bigr)=I_{2n},
$$
where the last equality  follows from the fact that $e^{i\phi}\Id_{L^{2}(\R^{n})}$ commutes with every operator
$\opw{L_{Y}}$ given in Lemma \ref{lem.54dftz}.
We have thus
$$
\mat22{L_{1}^{-1}Q_{1}}{L_{1}^{-1}}{P_{1}L_{1}^{-1}Q_{1}-L_{1}^{*}}{P_{1}L_{1}^{-1}}\mat22{L_{2}^{-1}Q_{2}}{L_{2}^{-1}}{P_{2}L_{2}^{-1}Q_{2}-L_{2}^{*}}{P_{2}L_{2}^{-1}}=\mat22{I_{n}}{0}{0}{I_{n}},
$$ 
so that
\begin{align*}
\text{\scriptsize first line $\times$ second column:\hs}&L_{1}^{-1}Q_{1} L_{2}^{-1}+L_{1}^{-1}P_{2}L_{2}^{-1}=0\Longrightarrow Q_{1}+P_{2}=0,\\
\text{\scriptsize second line $\times$ first column:\hs}&\color{bured}(P_{1}L_{1}^{-1}Q_{1}-L_{1}^{*})L_{2}^{-1}Q_{2}+P_{1}L_{1}^{-1}(P_{2}L_{2}^{-1}Q_{2}-L_{2}^{*})=0,\\
\text{\scriptsize second line $\times$ second column:\hs}&\color{blue}(P_{1}L_{1}^{-1}Q_{1}-L_{1}^{*})L_{2}^{-1}+P_{1}L_{1}^{-1}P_{2}L_{2}^{-1}=I_{n},
\end{align*}
which gives
\begin{align*}
&\color{blue}(P_{1}L_{1}^{-1}Q_{1}-L_{1}^{*})L_{2}^{-1}+P_{1}L_{1}^{-1}\underbracket[0.3pt]{P_{2}}_{-Q_{1}}L_{2}^{-1}=I_{n}
\Longrightarrow -L_{1}^{*}L_{2}^{-1}=I_{n}\Longrightarrow L_{2}=-L_{1}^{*},\\
&\color{bured}P_{1}L_{1}^{-1}Q_{1}L_{2}^{-1}Q_{2}
\underbrace{-L_{1}^{*}L_{2}^{-1}}_{I_{n}}Q_{2}
+P_{1}L_{1}^{-1}\underbracket[0.3pt]{P_{2}}_{-Q_{1}}L_{2}^{-1}Q_{2}
-P_{1}\underbrace{L_{1}^{-1}L_{2}^{*}}_{-I_{n}}=0\Longrightarrow P_{1}+Q_{2}=0,
\end{align*}
providing the sought formulas in \eqref{927927}, except for the last one.
 Let $\kappa_{j}$ be the kernel of $\mathcal M_{P_{j},L_{j}, Q_{j}}$ and let $\kappa=\kappa_{1}\circ\kappa_{2}$
 be the kernel of the composition (in the lhs of \eqref{926926}).
 We have consequently 
\begin{align*}
 \kappa(x,y)&=(\det L_{1})^{1/2}(\det L_{2})^{1/2} e^{-i\pi n/2}
 \int e^{i\pi\{P_{1}x^{2}-2L_{1}x\cdot z+Q_{1}z^{2}
+P_{2}z^{2}-2L_{2}z\cdot y+Q_{2}y^{2}\}} dz
\\
&=(\det L_{1})^{1/2}(\det (-L_{1}^{*}))^{1/2} e^{-i\pi n/2}
e^{i\pi\{P_{1}x^{2}-P_{1}y^{2}\}}
\int e^{-2i\pi\{z\cdot(L_{1}x+L_{2}^{*}y)\}}dz
\\&=
(\det L_{1})^{1/2}(\det (-L_{1}^{*}))^{1/2} e^{-i\pi n/2}
e^{i\pi\{P_{1}x^{2}-P_{1}y^{2}\}} \delta_{0}\bigl(L_{1}x+L_{2}^{*}y\bigr)
\\&=
(\det L_{1})^{1/2}(\det (-L_{1}^{*}))^{1/2} e^{-i\pi n/2}
e^{i\pi\{P_{1}x^{2}-P_{1}y^{2}\}} \delta_{0}\bigl(x-y\bigr)\val{\det L_{1}}^{-1},
\end{align*}
 entailing
 $$e^{i\phi}\delta_{0}(x-y)\underbracket[0.3pt]{=}_{\eqref{926926}}
 \kappa(x,y)=e^{i\frac\pi 2 \left(m(L_{1})+m(L_{1}^{*})+n   \right)}\delta_{0}(x-y)e^{-i\pi n/2}= e^{i\pi m(L_{1})}\delta_{0}(x-y),
 $$
 proving that $e^{i\phi}=e^{i\pi m(L_{1})}\in \{\pm 1\}$. The proof of the lemma is complete.
\end{proof}
\begin{claim}\label{claim.911}
   Let $P, L, Q$ be as in Definition \ref{def.lersym}. Then we have 
   \begin{equation}\label{929929}
(\mathcal M_{P,L,Q}^{\scriptscriptstyle\{m(L)\}})^{-1}=\mathcal M_{-Q,-L^{*},-P}^{\scriptscriptstyle\{n-m(L)\}},
\end{equation}
and moreover $n-m(L)\in\{m(-L^{*}), m(-L^{*})+2\}$ modulo 4.
\end{claim}
\no
Indeed, calculating the kernel $\kappa$ of $\mathcal M_{P,L,Q}^{\scriptscriptstyle\{m(L)\}}
\mathcal M_{-Q,-L^{*},-P}^{\scriptscriptstyle\{n-m(L)\}}$,
we get
\begin{multline*}
\kappa(x,y)
=e^{\frac{i\pi}2(m(L)+n-m(L)-n)}
\val{\det L}
 \int e^{i\pi\{Px^{2}-2Lx\cdot z+Qz^{2}
-Qz^{2}+2L^{*}z\cdot y-Py^{2}\}} dz
\\
= \val{\det L}
e^{i\pi\{Px^{2}-P y^{2}\}} \delta_{0}(Lx-Ly)=
 \delta_{0}(x-y),
\end{multline*}
so that 
$\mathcal M_{P,L,Q}^{\scriptscriptstyle\{m(L)\}}
\mathcal M_{-Q,-L^{*},-P}^{\scriptscriptstyle\{n-m(L)\}}=\Id_{L^{2}(\R^{n})}$ and since 
$\mathcal M_{P,L,Q}$ is unitary, this proves \eqref{929929}.
The last assertion is equivalent to
$
m(L)\in\{n-m(-L^{*}),n-m(-L^{*})-2\}.
$
Since the latter
set is equal to $\{-m(L), -m(L)-2\}$ and the mapping 
$$\Z/4\Z\ni x\mapsto-x\in \Z/4\Z,$$
leaves invariant the sets $\{0,2\}, \{1,3\}$,
we obtain the sought result,
concluding the proof of the claim.\qed
\begin{pro}\label{pro.912912}
  Let $P_{j}, L_{j}, Q_{j}, j=1,2$ be as in Definition \ref{def.lersym} and let us assume that 
\begin{equation}\label{}
\det(Q_{1}+P_{2})\not=0.
\end{equation}
Then there exist $P,L,Q,$ as in Definition \ref{def.lersym}  such that 
\begin{equation}\label{}
\mathcal M_{P_{1},L_{1}, Q_{1}}^{\scriptscriptstyle{\{m(L_{1})\}}}
\mathcal M_{P_{2},L_{2}, Q_{2}}^{\scriptscriptstyle{\{m(L_{2})\}}}
=\mathcal M_{P,L, Q}^{\scriptscriptstyle{\{m(L_{1})+m(L_{2})-\inde{(Q_{1}+P_{2})\}}}}.
\end{equation}
More precisely, we have 
\begin{gather}
P=P_{1}-L_{1}^{*}(Q_{1}+P_{2})^{-1}L_{1}, \qquad Q=Q_{2}-L_{2}(Q_{1}+P_{2})^{-1}L_{2}^{*},
\label{9212-}\\
L=L_{2}(Q_{1}+P_{2})^{-1}L_{1}.\label{9213-}
\end{gather}
Moreover we have
\begin{equation}\label{indstu}
m(L_{1})+m(L_{2})-\inde{(Q_{1}+P_{2})}\in\{m(L), m(L)+2\}\ \mod 4.
\end{equation}

\end{pro}
\begin{proof}
 The kernel $\kappa$ of $\mathcal M_{P_{1},L_{1}, Q_{1}}\mathcal M_{P_{2},L_{2}, Q_{2}}$ is
 \begin{align*}
 \kappa(x,y)
 &=(\det L_{1})^{1/2}(\det L_{2})^{1/2} e^{-i\pi n/2}
 \int e^{i\pi\{P_{1}x^{2}-2L_{1}x\cdot z+Q_{1}z^{2}
+P_{2}z^{2}-2L_{2}z\cdot y+Q_{2}y^{2}\}} dz
 \\&=
(\det L_{1})^{1/2}(\det L_{2})^{1/2} e^{-i\pi n/2}
e^{i\pi\{P_{1}x^{2}+Q_{2}y^{2}\}}
\int e^{-2i\pi(L_{1}x+L_{2}^{*}y)\cdot z}
e^{i\pi(Q_{1}+P_{2})z^{2}} dz
\\
&=(\det L_{1})^{1/2}(\det L_{2})^{1/2} e^{-i\pi n/2}
e^{i\pi\{P_{1}x^{2}+Q_{2}y^{2}\}}
e^{-i\pi(Q_{1}+P_{2})^{-1}(L_{1}x+L_{2}^{*}y)^{2}}
\\
&\hskip195pt \times\val{\det(Q_{1}+P_{2})}^{-1/2}
e^{i\frac\pi 4\sign (Q_{1}+P_{2})},
\end{align*}
according to Formula \eqref{foimga} (see also \eqref{foimga++}), noting that the matrix $Q_{1}+P_{2}$ is real symmetric and non-singular.
As a result, we have 
\begin{multline*}
\kappa(x,y)=e^{i\pi\left\{\left(P_{1}-L_{1}^{*}(Q_{1}+P_{2})^{-1}L_{1}\right)x^{2}
+\left(Q_{2}-L_{2}(Q_{1}+P_{2})^{-1}L_{2}^{*}y^{2}\right)
\right\}}
e^{-2i\pi\left\{
L_{2}(Q_{1}+P_{2})^{-1}L_{1}x\cdot y
\right\}}
\\
\times (\det L_{1})^{1/2}(\det L_{2})^{1/2} e^{-i\pi n/2}
\val{\det(Q_{1}+P_{2})}^{-1/2}
e^{i\frac\pi 4\sign (Q_{1}+P_{2})}.
\end{multline*}
We note that, with $\mathtt {E_{12}}$ standing for the eigenvalues of $Q_{1}+P_{2}$,
\begin{align*}
\nu_{+}=\card(\mathtt{ E_{12}}\cap\R_{+}),\qquad
\nu_{-}=\card(\mathtt {E_{12}}\cap\R_{-})=\inde{(Q_{1}+P_{2})},
\end{align*}
implying that the kernel $\kappa$ is given by 
\begin{equation}\label{9211}
\kappa(x,y)=
e^{i\frac \pi 2\left(m(L_{1})+m(L_{2})-n+\frac12(\nu_{+}-\nu_{-})\right)}
\val{\det L}^{1/2}
e^{i\pi\{P x^{2}-2Lx\cdot y+Q y^{2}\}},
\end{equation}
with
\begin{gather}
P=P_{1}-L_{1}^{*}(Q_{1}+P_{2})^{-1}L_{1}, \qquad Q=Q_{2}-L_{2}(Q_{1}+P_{2})^{-1}L_{2}^{*},
\label{9212}\\
L=L_{2}(Q_{1}+P_{2})^{-1}L_{1}.\label{9213}
\end{gather}
Checking the unit factor in front of the rhs of \eqref{9211}, we note that $\nu_{+}+\nu_{-}=n$ since $Q_{1}+P_{2}$
is non-singular and  we get
\begin{multline*}
e^{i\frac \pi 2\left(m(L_{1})+m(L_{2})-n+\frac12(\nu_{+}-\nu_{-})\right)}=e^{-\frac{i\pi n}4}
e^{i\frac \pi 2\left(m(L_{1})+m(L_{2})-\frac n2+\frac12(\nu_{+}-\nu_{-})\right)}
\\=e^{-\frac{i\pi n}4}
e^{i\frac \pi 2\left(m(L_{1})+m(L_{2})-\nu_{-}\right)}.
\end{multline*}
We have also, since $\inde{(Q_{1}+P_{2})}=\inde{(Q_{1}+P_{2})^{-1}}$,
\begin{align*}
\left(e^{i\frac \pi 2\left(m(L_{1})+m(L_{2})-\nu_{-}\right)}\right)^{2}&=\sign(\det L_{1})
\sign(\det L_{2}) (-1)^{\nu_{-}}
\\
&=\sign(\det L_{1})
\sign(\det L_{2}) \sign(\det(Q_{1}+P_{2})^{-1})
\\&=\sign(\det L),
\end{align*}
entailing that
$
\kappa(x,y)=e^{-\frac{i\pi n}4}(\det L)^{1/2}e^{i\pi\{P x^{2}-2Lx\cdot y+Q y^{2}\}},
$
concluding the proof of the proposition.
\end{proof}
\begin{lem}\label{lem.913913}
  Let $P_{j}, L_{j}, Q_{j}, j=1,2,3$ be as in Definition \ref{def.lersym}. Then there exist
  $(P', L',Q'), (P'', L'', Q'')$ as in Definition \ref{def.lersym} such that
  \begin{equation}\label{91214}
\mathcal M_{P_{1}, L_{1},Q_{1}}\mathcal M_{P_{2}, L_{2},Q_{2}}\mathcal M_{P_{3}, L_{3},Q_{3}}
=
\mathcal M_{P', L',Q'}\mathcal M_{P'', L'',Q''}.
\end{equation}
\end{lem}
\begin{proof}
If $\det (Q_{1}+P_{2})\not=0$, Lemma \ref{pro.912912} implies that $\mathcal M_{P_{1}, L_{1},Q_{1}}\mathcal M_{P_{2}, L_{2},Q_{2}}=\mathcal M_{P', L',Q'}$ so that \eqref{91214} is satisfied with $(P'',L'',Q'')=(P_{3},L_{3},Q_{3}).$ We may thus assume in the sequel that $\det (Q_{1}+P_{2})=0$. Then the kernel of $Q_{1}+P_{2}$ is of dimension $r\in \llbracket 1,n\rrbracket$; let us define $J_{r}$ as the orthogonal projection onto 
$\ker (Q_{1}+P_{2})$.
\begin{claim}
 The matrix $J_{r}+ (Q_{1}+P_{2})^{2}$ is positive definite (thus invertible).
\end{claim}
\no
Indeed, if $J_{r}x+ (Q_{1}+P_{2})^{2}x=0$, we obtain by taking the dot-product with $x$  that 
$$\norm{J_{r}x}^{2}+\norm{(Q_{1}+P_{2})x}^{2}=0\Longrightarrow x\in \ker (Q_{1}+P_{2}), J_{r}x=0\Longrightarrow x=0.
$$
This matrix is also non-negative, proving the claim.\qed
\vs
Let us define the real $n\times n$ symmetric matrix
\begin{equation}\label{oi54zz}
P=\mu L_{2}\bigl[J_{r}+ (Q_{1}+P_{2})^{2}\bigr]^{-1}L_{2}^{*}-Q_{2},\end{equation}
where $\mu$ is a positive parameter to be chosen later;
we note that $P+Q_{2}$
is invertible.
Also 
we have
\begin{equation}\label{9216}
L_{2}^{*}\bigl(Q_{2}+P\bigr)^{-1}L_{2}
-(Q_{1}+P_{2})
=
\mu^{-1}\bigl[J_{r}+ (Q_{1}+P_{2})^{2}\bigr]
-(Q_{1}+P_{2}),
\end{equation}
which is invertible if $\mu$ (is different from 0 and) does not meet the spectrum of $Q_{1}+P_{2}$.\footnote{
The symmetric matrices $Q_{1}+P_{2}$ and $J_{r}$ can be diagonalized simultaneously so that the invertibility of
$\mu^{-1}\bigl[J_{r}+ (Q_{1}+P_{2})^{2}\bigr]
-(Q_{1}+P_{2})$ is equivalent to
$$
\mu\not=0, \quad \mu^{-1}\lambda_{j}^{2}\not=\lambda_{j}\text{  i.e.  } \mu\not=\lambda_{j},
\text{ where the $\lambda_{j}$ are the non-zero eigenvalues of $Q_{1}+P_{2}$.}
$$
}
We have also
\begin{multline*}
P-P_{3}=\mu L_{2}\bigl[J_{r}+ (Q_{1}+P_{2})^{2}\bigr]^{-1}L_{2}^{*}-(Q_{2}+P_{3})
\\
=L_{2}\Bigl\{\mu
\bigl[J_{r}+ (Q_{1}+P_{2})^{2}\bigr]^{-1}
-L_{2}^{-1}(Q_{2}+P_{3})L_{2}^{*-1}
\Bigr\}
L_{2}^{*},
\end{multline*}
which is invertible for $\mu$ large enough.\footnote{
Indeed the eigenvalues of 
$\bigl[J_{r}+ (Q_{1}+P_{2})^{2}\bigr]^{-1}$
are $1$ and $\lambda_{j}^{-2}$ where the $\lambda_{j}$ are the non-zero eigenvalues of $Q_{1}+P_{2}$.
To secure the invertibility of $P-P_{3}$, it is thus enough to have
$$
\min(\mu, \mu\lambda_{j}^{-2})> \norm{L_{2}^{-1}(Q_{2}+P_{3})L_{2}^{*-1}},
$$
\text{ where the $\lambda_{j}$ are the non-zero eigenvalues of $Q_{1}+P_{2}$.}
}
Eventually, defining 
\begin{equation}\label{}
\lambda_{0}=\max (\spectrum\val {Q_{2}+P_{1}}),
\end{equation}
the condition
\begin{equation}\label{}
\mu>\max\bigl\{
\lambda_{0}, 
\norm{L_{2}^{-1}(Q_{2}+P_{3})L_{2}^{*-1}}, \norm{L_{2}^{-1}(Q_{2}+P_{3})L_{2}^{*-1}}
\lambda_{0}^{2}\bigr\},
\end{equation}
implies that, with $P$ given by \eqref{oi54zz}, we obtain that  
\begin{equation}\label{9219}
\text{the matrices }P+Q_{2},\  Q_{1}+P_{2}-
L_{2}^{*}\bigl(Q_{2}+P\bigr)^{-1}L_{2},\  P-P_{3}\ \text{are invertible.} 
\end{equation}
 Using now Lemma \ref{pro.912912} and the first property in \eqref{9219}, we get that
 we can find $\tilde P, \tilde L, \tilde Q$ as in Definition \ref{def.lersym} such that 
 \begin{equation}\label{}
\mathcal M_{P_{2}, L_{2}, Q_{2}}\mathcal M_{P, I_{n}, 0}=\mathcal M_{\tilde P, \tilde L, \tilde Q},
\end{equation}
with (thanks to \eqref{9212}),
\begin{equation}\label{}
\tilde P=P_{2}-L_{2}^{*}(Q_{2}+P)^{-1}L_{2}.
\end{equation}
We check now 
\begin{equation}\label{9222}
\mathcal M_{P_{1}, L_{1}, Q_{1}}\mathcal M_{P_{2}, L_{2}, Q_{2}}\mathcal M_{P, I_{n}, 0}=
\mathcal M_{P_{1}, L_{1}, Q_{1}}\mathcal M_{\tilde P, \tilde L, \tilde Q},
\end{equation}
and we note that 
$$
Q_{1}+\tilde P=Q_{1}+P_{2}-L_{2}^{*}(Q_{2}+P)^{-1}L_{2}\quad\text{is invertible,}
$$
thanks to the second property in \eqref{9219} so that, from Lemma \ref{pro.912912}, we can find 
 $P', L', Q'$ as in Definition \ref{def.lersym} such that
 \begin{equation}\label{}
\mathcal M_{P_{1}, L_{1}, Q_{1}}\mathcal M_{\tilde P, \tilde L, \tilde Q}=\mathcal M_{P',L', Q'},
\end{equation}
and this yields
\begin{equation}\label{9224}
\mathcal M_{P_{1}, L_{1}, Q_{1}}\mathcal M_{P_{2}, L_{2}, Q_{2}}\mathcal M_{P, I_{n}, 0}=\mathcal M_{P',L', Q'}.
\end{equation}
Finally, we check
 \begin{equation*}\label{}
\underbracket[0.3pt]{ \mathcal M_{P,I_{n},0}^{-1}}_{
\substack{=\mathcal M_{0,-I_{n}, -P}\\ \text{cf. Claim \ref{claim.911}}}
}
\mathcal M_{P_{3}, L_{3}, Q_{3}}
=\mathcal M_{0,-I_{n}, -P}\mathcal M_{P_{3}, L_{3}, Q_{3}},
\end{equation*}
and since $-P+P_{3}$ is invertible
(thanks to the third property in \eqref{9219}), we obtain, using once again Lemma  \ref{pro.912912}, 
that
 we can find $P'', L'', Q''$ as in Definition \ref{def.lersym} such that 
 \begin{equation}\label{9225}
\mathcal M_{P,I_{n},0}^{-1}\mathcal M_{P_{3}, L_{3}, Q_{3}}=\mathcal M_{P'', L'', Q''}.
\end{equation}
Gathering the information above, we find that
\begin{multline}\label{}
\mathcal M_{P_{1}, L_{1}, Q_{1}}\mathcal M_{P_{2}, L_{2}, Q_{2}}
\mathcal M_{P_{3}, L_{3}, Q_{3}}
\\=
\underbracket[0.3pt]{\mathcal M_{P_{1}, L_{1}, Q_{1}}\mathcal M_{P_{2}, L_{2}, Q_{2}}\mathcal M_{P,I_{n},0}}_{\mathcal M_{ P',  L', Q'},\ \eqref{9224}}
\underbracket[0.3pt]
{\mathcal M_{P,I_{n},0}^{-1}
\mathcal M_{P_{3}, L_{3}, Q_{3}}}_{\mathcal M_{ P'',  L'', Q''},\ \eqref{9225}},
\end{multline}
which ends the proof of the lemma.
\end{proof}
\begin{pro}\label{pro.915915}
 The metaplectic group $Mp(n)$ is equal to the set
\begin{equation}\label{}
 \left\{\mathcal M_{P_{1}, L_{1}, Q_{1}}\mathcal M_{P_{2}, L_{2}, Q_{2}}\right\}_{
 \substack
 {P_{j}=P_{j}^{*}, Q_{j}=Q_{j}^{*}\\        \det L_{j}\not=0}
 }
\end{equation}
In other words, every metaplectic operator of $Mp(n)$ is the product of two operators of type 
$\mathcal M_{P, L, Q}$ as given by Definition \ref{def.lersym}.
\end{pro}
\begin{proof}
From Proposition \ref{pro.989898}, the metaplectic group is generated by the $\mathcal M_{P,L,Q}$ and since the inverse of $\mathcal M_{P,L,Q}$ is $\mathcal M_{-Q,-L^{*},-P}$, thanks to Claim \ref{claim.911}, it is enough to check the products  $\mathcal M_{P_{1}, L_{1}, Q_{1}}\dots \mathcal M_{P_{N}, L_{N}, Q_{N}}$ for $N\ge 3$.
Lemma \ref{lem.913913} is tackling the case $N=3$ and a trivial recurrence on $N$ provides the result of the proposition.
 \end{proof}
 \begin{theorem}\label{thm.phaseb}
 Let $M$ be an element of $Mp(n)$ such that $M=e^{i\phi}\Id_{L^{2}(\R^{n})}, \phi\in \R.$
 Then $e^{i\phi}$ belongs to the set $\{-1,1\}$. In other words, the intersection of the metaplectic group with the unit circle (identified to the unitary operators in $L^{2}(\R^{n})$ defined by the mappings $v\mapsto z v$ where $z\in\mathbb S^{1}\subset \C$)
 is reduced to the set $\{-1,1\}$.
 \end{theorem}
 \begin{proof}
 Using Proposition \ref{pro.915915}, the result follows from Lemma \ref{lem.910910}.
\end{proof}
We may go back to the description given by Proposition \ref{pro.kj77qq} and Definition \ref{def.41fdhh}.
\begin{pro}\label{pro.917917}
 The metaplectic group $Mp(n)$ is equal to the set
\begin{equation}\label{}
 \left\{M_{A_{1}, B_{1}, C_{1}}M_{A_{2}, B_{2}, C_{2}}\right\}_{
 \substack
 {A_{j}=A_{j}^{*}, C_{j}=C_{j}^{*}\\        \det B_{j}\not=0}
 },
\end{equation}
where the operators $M_{A,B,C}$ are defined in 
Proposition \ref{pro.kj77qq}.
\end{pro}
\begin{proof} Let $M$ be in $Mp(n)$. We have
\begin{multline*}
M=(M_{A_{1},B_{1}, C_{1}})^{\pm 1}\dots(M_{A_{N},B_{N}, C_{N}})^{\pm 1}
\\
\underbracket[0.3pt]{=}_{\eqref{linkme}}\left(\mathcal M_{A_{1},-B_{1}, C_{1}} e^{-i\pi n/4}\mathcal F\right)^{\pm 1}
\dots
\left(\mathcal M_{A_{N},-B_{N}, C_{N}} e^{-i\pi n/4}\mathcal F\right)^{\pm 1}
\\
=
\left(\mathcal M_{A_{1},-B_{1}, C_{1}} \mathcal M_{0,I_{n},0}\right)^{\pm 1}
\dots
\left(\mathcal M_{A_{N},-B_{N}, C_{N}} \mathcal M_{0,I_{n},0}\right)^{\pm 1},
\end{multline*}
 and since from Claim \ref{claim.911}, we have $\mathcal M_{A,B,C}^{-1}=\mathcal M_{-C, -B^{*}, -A}$,
 we find that $M$ is in fact a product of $2N$ terms of type $\mathcal M_{P,L,Q}$,
 and thanks to Proposition \ref{pro.915915}, we get
 \begin{align*}
 M&=\mathcal M_{P_{1},L_{1},Q_{1}}\mathcal M_{P_{2},L_{2},Q_{2}}
 =\underbrace{\mathcal M_{P_{1},L_{1},Q_{1}}e^{-i\pi n/4}\mathcal F}_{M_{P_{1},-L_{1}, Q_{1}}}
 \underbrace{\left(e^{-i\pi n/4}\mathcal F\right)^{-1}\mathcal M_{P_{2},L_{2},Q_{2}}}_{
 \left(\mathcal M_{-Q_{2},-L_{2}^{*},-P_{2}}e^{-i\pi n/4}\mathcal F\right)^{-1}
 }
 \\
 &=M_{P_{1},-L_{1}, Q_{1}}\left(M_{-Q_{2},-L_{2}^{*},-P_{2}}\right)^{-1}
 =M_{P_{1}, -L_{1}, 0}M_{0,I_{n},Q_{1}}
 \left(M_{-Q_{2}, -L_{2}^{*}, 0}
 M_{0, I_{n}, -P_{2}}\right)^{-1}
 \\
 &=M_{P_{1}, -L_{1}, 0}M_{0,I_{n},Q_{1}} M_{0, I_{n}, P_{2}}
 \left(M_{-Q_{2}, -L_{2}^{*}, 0}\right)^{-1}
\\& =M_{P_{1}, -L_{1}, 0}M_{0,I_{n},Q_{1}+P_{2}}
 \left(M_{-Q_{2}, -L_{2}^{*}, 0}\right)^{-1}\qquad\text{\tiny (cf. Formula \eqref{54gvrs})}
 \\
 &=M_{P_{1}, -L_{1}, Q_{1}+P_{2}}M_{A'', B'', 0}\quad\text{\tiny (cf. Lemma \ref{lem.784512} below in the next subsection)},
\end{align*}
proving the Proposition.
\end{proof}
\subsubsection{On some subgroups of the metaplectic group}\label{sec.metapl}
We have seen in \eqref{4.eqsym'}, \eqref{4.eqsyma} some equivalent conditions for a matrix
\begin{equation}\label{fgw123}
\Xi=\begin{pmatrix}
P&Q\\
R&S
\end{pmatrix}
\quad\text{where $P,Q,R, S$ are  $n\times n$ real matrices,}
\end{equation}
to be symplectic. 
We note here that when $\Xi\in Sp(n,\R)$, we have 
\begin{equation}\label{arfd58}
\Xi^{-1}=\begin{pmatrix}
S^{*}&-Q^{*}\\
-R^{*}&P^{*}
\end{pmatrix},
\end{equation}
as it is easily checked from \eqref{4.eqsym'}, \eqref{4.eqsyma}. 
When $\det P\not=0$, we proved  that $\Xi=\Xi_{A,B,C}$ as defined in  \eqref{gensym}.
Also from  \eqref{arfd58}, we get 
that if $\det S\not =0$ we have 
$$
\Xi^{-1}=\Xi_{A,B,C},
$$
so that 
\begin{equation}\label{}
\Xi=\begin{pmatrix}
I_{n}&C\\
0&I_{n}
\end{pmatrix}
\begin{pmatrix}
B&0\\
0&B^{*-1}
\end{pmatrix}
\begin{pmatrix}
I_{n}&0\\
-A&I_{n}
\end{pmatrix}.
\end{equation}
Some other properties of the same type  are available when $\det Q$ or $\det R$ are different from $0$. Indeed we have for $\Xi\in Sp(n,\R)$ and $\sigma$ given by \eqref{ffqq99},
\begin{equation}\label{singsing}
\Xi \sigma=
\begin{pmatrix}
P&Q\\
R&S
\end{pmatrix} \sigma
=\begin{pmatrix}
-Q&P\\
-S&R
\end{pmatrix}\underbracket[0.3pt]{=}_{\text{if $\det Q\not=0$}}\Xi_{A,B,C},
\end{equation}
so that 
\begin{equation}\label{}
\Xi =-\Xi_{A,B,C} \sigma=
\begin{pmatrix}
I_{n}&0\\
A&I_{n}
\end{pmatrix}
\begin{pmatrix}
B^{-1}&0\\
0&B^{*}
\end{pmatrix}
\begin{pmatrix}
I_{n}&-C\\
0&I_{n}
\end{pmatrix}
\begin{pmatrix}
0&-I_{n}\\
I_{n}&0
\end{pmatrix}.
\end{equation}
If we have $\det R\not=0$, using the two first equalities in \eqref{singsing}, we get that 
$
(\Xi \sigma)^{-1}=\Xi_{A,B,C} 
$
which gives
\begin{equation}\label{}
\Xi=
\begin{pmatrix}
I_{n}&C\\
0&I_{n}
\end{pmatrix}
\begin{pmatrix}
B&0\\
0&B^{*-1}
\end{pmatrix}
\begin{pmatrix}
I_{n}&0\\
-A&I_{n}
\end{pmatrix}
\begin{pmatrix}
0&-I_{n}\\
I_{n}&0
\end{pmatrix}.
\end{equation}
However, it is indeed possible when $n\ge 2$ to have a symplectic matrix in $Sp(n,\R)$ in the form \eqref{fgw123} such that all the blocks are singular,
as shown in the following remark.
\begin{rem}
 \rm The $4\times 4$ matrix
 $$
 \begin{pmatrix}
\begin{matrix}
0&0\\
0&1
\end{matrix}
&\begin{matrix}
1&0\\
0&0
\end{matrix}\\
\begin{matrix}
-1&0\\
0&0
\end{matrix}&
\begin{matrix}
0&0\\
0&1
\end{matrix}
\end{pmatrix}
=\begin{pmatrix}
P&Q
\\
R&S
\end{pmatrix}
 $$ 
 belongs to $Sp(2, \R)$
 although all the block $2\times 2$
 matrices $P,Q,R, S,$
 are singular (with rank 1).
\end{rem}
\begin{lem}\label{lem.784512} With $M_{A,B,C}$ defined in Proposition \ref{pro.kj77qq}, the sets 
\begin{equation}\label{921456}
\mathcal L=\{M_{A, B, 0}\}_{\substack{
A=A^{*}\\
\det B\not=0}
},
\qquad
\mathcal R=\{M_{0, B, C}\}_{\substack{
C=C^{*}\\
\det B\not=0}
},
\end{equation}
are subgroups of the metaplectic group (cf. Definition \ref{def.41fdhh}).
 \end{lem}
 \begin{proof}
 Indeed $\mathcal L$ contains the identity of $L^{2}(\R^{n})$ and we have for $v\in L^{2}(\R^{n})$, 
 \begin{align*}
&M_{A_{1}, B_{1}, 0}M_{A_{2}, B_{2}, 0}^{-1} v
 = M_{A_{1}, B_{1}, 0}\bigl\{
 M_{0, B_{2}^{-1}, 0}\{e^{-i\pi A_{2}x^{2}} v(x)\}
 \bigr\}
 \\
 &\hs=M_{A_{1}, B_{1}, 0}\bigl\{
 e^{-i\pi B_{2}^{*-1}A_{2}B_{2}^{-1}x^{2}} v(B_{2}^{-1} x)
 \bigr\}(\det{B_{2}})^{-1/2}
 \\
 &\hs
 =e^{i\pi A_{1}x^{2}}
e^{-i\pi B_{1}^{*}B_{2}^{*-1}A_{2}B_{2}^{-1}B_{1}x^{2}}
v(B_{2}^{-1}B_{1}x)
(\det{B_{1}})^{1/2} (\det{B_{2}})^{-1/2}
\\
&\hs
=e^{i\pi (A_{1} -  B_{1}^{*}B_{2}^{*-1}A_{2}B_{2}^{-1}B_{1} ) x^{2}
}
v(B_{2}^{-1}B_{1}x)
(\det{B_{1}})^{1/2} (\det{B_{2}})^{-1/2}
\\
&\hs
= (M_{A_{1} -  B_{1}^{*}B_{2}^{*-1}A_{2}B_{2}^{-1}B_{1}, B_{2}^{-1}B_{1},0}v)(x),
\end{align*}
so that $M_{A_{1}, B_{1}, 0}M_{A_{2}, B_{2}, 0}^{-1}$ belongs to the set $\mathcal L$ in \eqref{921456}, proving that 
$\mathcal L$
is indeed a subgroup of the metaplectic group.
We note also that the bijective mapping 
\begin{equation}\label{gf54ze}
\mathcal L\ni M\mapsto F^{*}MF\in \mathcal R,
\end{equation}
($F$ stands for the Fourier transformation) sends $\mathcal L$ onto $\mathcal R$
since we have  
\begin{multline}\label{9axgrt}
F^{*}M_{A,B,0} F=F^{*}M_{A,I_{n},0}F F^{*}M_{0,B,0} F=
M_{0,I_{n},A}M_{0,B^{*-1},0}
\\
=M_{0,B^{*-1},B^{*-1}AB^{-1}}.
\end{multline}
Moreover the mapping \eqref{gf54ze} is obviously one-to-one and is also onto
since, given $B_{1}\in Gl(n, \R)$ and $C_{1}$ a symmetric $n\times n$ matrix,
we see from \eqref{9axgrt} that
$$
F^{*}M_{B_{1}^{-1}C_{1}B_{1}^{*-1},B_{1}^{*-1}, 0}F=M_{0,B_{1}, C_{1}}.
$$
The mapping \eqref{gf54ze} also extends to a group isomorphism of $Mp(n)$, proving the lemma.
\end{proof}
\begin{rem}\rm
We may note that 
\begin{multline*}
(M_{A_{1}, B_{1}, 0}M_{A_{2}, B_{2}, 0} v)(x)=e^{i\pi A_{1}x^{2}}
(M_{A_{2}, B_{2}, 0} v)(B_{1}x)(\det B_{1})^{1/2}
\\
=e^{i\pi (A_{1}+B^{*}_{1}A_{2}B_{1})
x^{2}} v(B_{2}B_{1}x)(\det B_{1})^{1/2}(\det B_{2})^{1/2}
=(M_{A_{1}+B_{1}^{*}A_{2}B_{1}, B_{2}B_{1}, 0}v)(x),
\end{multline*}
so that the internal binary operation $\star$ can be defined on the set $\{(A,B)\}_{\substack{
A=A^{*}\\
\det B\not=0}
}
$
as 
\begin{equation}\label{}
(A_{1},B_{1})\star(A_{2},B_{2})=(A_{1}+B_{1}^{*}A_{2}B_{1}, B_{2}B_{1}),
\end{equation}
for which the identity is $(0,I_{n})$ and the inverse
\begin{equation}\label{}
(A,B)^{-1}=(-B^{*-1}AB^{-1},B^{-1}).
\end{equation}
\end{rem}
\begin{rem}\rm
A consequence of Lemma \ref{lem.784512}
is,
with $\Psi$ defined in \eqref{hqma28}, that 
$$
\{\Psi(M_{A, B, 0})\}_{\substack{
A=A^{*}\\
\det B\not=0}
}=\{\Xi_{A, B, 0}\}_{\substack{
A=A^{*}\\
\det B\not=0}
},
\quad
\{\Psi(M_{0, B, C})\}_{\substack{
C=C^{*}\\
\det B\not=0}
}=\{\Xi_{0, B, C}\}_{\substack{
C=C^{*}\\
\det B\not=0}
},
$$
are  subgroups of the symplectic group $Sp(n,\R)$.\end{rem}
\begin{pro}
 The metaplectic group $Mp(n)$ is equal to the set
\begin{equation}\label{}
 \left\{M_{A_{1},B_{1},C_{1}}
 M_{A_{2},B_{2},C_{2}}\right\}_{
 \substack
 {A_{j}=A_{j}^{*}, C_{j}=C_{j}^{*}\\        \det B_{j}\not=0}
 }
\end{equation}
In other words, every metaplectic operator of $Mp(n)$ is the product of two operators of type 
$M_{A, B, C}$ as given by Proposition \ref{pro.kj77qq}.
\end{pro}
\begin{proof}
Let $M\in Mp(n)$;
using Proposition \ref{pro.915915},
we may assume that 
\begin{align*}
M&=\mathcal M_{P_{1}, L_{1}, Q_{1}}\mathcal M_{P_{2}, L_{2}, Q_{2}}
\\
&=
\mathcal M_{P_{1}, L_{1}, Q_{1}}\mathcal F e^{-i\pi n/4}
\left(\mathcal F e^{-i\pi n/4}\right)^{-1}\mathcal M_{P_{2}, L_{2}, Q_{2}}
\\
\text{\scriptsize \eqref{linkme}}&=M_{P_{1}, -L_{1}, Q_{1}}
\left(\mathcal M_{P_{2}, L_{2}, Q_{2}}^{-1}
\mathcal F e^{-i\pi n/4}\right)^{-1}
\\
\text{\scriptsize (Claim \ref{claim.911})}&=
M_{P_{1}, -L_{1}, Q_{1}}
\left(\mathcal M_{-Q_{2}, -L_{2}^{*}, -P_{2}}
\mathcal F e^{-i\pi n/4}\right)^{-1}
\\
\text{\scriptsize \eqref{linkme}, \eqref{54gvrs}}&=M_{P_{1}, -L_{1}, Q_{1}}
M_{-Q_{2}, L_{2}^{*}, -P_{2}}^{-1}
\\&=M_{P_{1}, -L_{1},0}
M_{0, I_{n},Q_{1}}
\bigl(M_{-Q_{2}, L_{2}^{*}, 0}
M_{0,I_{n}, -P_{2}}\bigr)^{-1}
\\
&=M_{P_{1}, -L_{1},0}
M_{0, I_{n},Q_{1}} M_{0,I_{n},P_{2}}M_{-Q_{2}, L_{2}^{*}, 0}^{-1}
\\&=
M_{P_{1}, -L_{1},0}M_{0, I_{n},Q_{1}+P_{2}} M_{-Q_{2}, L_{2}^{*}, 0}^{-1}
=M_{P_{1}, -L_{1},Q_{1}+P_{2}}M_{-Q_{2}, L_{2}^{*}, 0}^{-1}
\\\text{\scriptsize  (using Lemma \ref{lem.784512})}&=
M_{P_{1}, -L_{1},Q_{1}+P_{2}} M_{A',B',0},
\end{align*}
proving the sought result.
\end{proof}
\begin{rem}\rm
 We have used two different sets of generators of the metaplectic group. First the set $\mathscr G_{1}=\big\{\mett{A}{B}{C}{m(B)}\bigr\}$ given by \eqref{1256hh} which is somewhat natural, also allowing us to recover the operator
 $e^{-i\pi n/4}\mathcal F$ where the phase factor appears via Formula \eqref{kjh123}.
 The Identity appears clearly as $\mett{0}{I_{n}}{0}{0}$, but the inverse of 
 $\mett{A}{B}{C}{m(B)}$ cannot always be expressed within $\mathscr G_{1}$.
\par
Also we have the set
$\mathscr G_{2}=\big\{\Mett{A}{B}{C}{m(B)}\bigr\}$ given in Definition \ref{def.lersym},
which incorporates a phase prefactor $e^{-i\pi n/4}$, looking a priori rather arbitrary but of course necessary for the sequel (this prefactor is also suggested by  \eqref{kjh123});
here to express the identity, we need to write it as 
$\Mett{0}{I_{n}}{0}{0}\Mett{0}{-I_{n}}{0}{n}$, but the inverse of  
$\Mett{A}{B}{C}{m(B)}$
is easily obtained by Claim \ref{claim.911} within 
$\mathscr G_{2}$.
Certainly the description given by $\mathscr G_{2}$
is much better, in particular because the calculations leading to 
Lemma \ref{lem.910910} and Proposition \ref{pro.912912}
are rather easy as well as the proof of Lemma \ref{lem.913913};
a statement analogous to Proposition \ref{pro.915915}  for $\mathscr G_{1}$ is true (cf. Proposition \ref{pro.917917}),
but its proof is quite indirect and relies heavily on the results for $\mathscr G_{2}$.
\end{rem}
\subsection{Mehler's formula}\label{sec.mehler}
\index{Mehler formula}
We provide here a couple of statements related to the so-called Mehler's formula, appearing as particular cases of L.~H\"ormander's study in \cite{MR1339714}
(see also the more recent K. Pravda-Starov' article  \cite{MR3880300}).
 In the general framework, we consider a complex-valued quadratic form $Q$
on the phase space $\RZ$ such that $\re Q\le 0$: we want to quantize the Gaussian 
function (here $X$ stands for $(x,\xi)$) 
$
\mathbf a (X)=e^{\poscal{Q X}{X}},
$
and to relate the operator with Weyl symbol $\mathbf a$ to the operator
$$
\exp\left\{
\opw{\poscal{Q X}{X}}\right\}.
$$
\begin{lem}
For $\re t \ge 0$, $t\notin i\pi(2\Z+1)$, we have in $n$  dimensions, 
\begin{equation}\label{mehler}
\bigl(\cosh (t/2)\bigr)^{n}\exp-t\pi
\opw{\val x^{2}+\val \xi^{2}}
=\OPW{e^{-2\tanh (\frac t2) \pi(x^{2}+\xi^{2})}}.
\end{equation}
\end{lem}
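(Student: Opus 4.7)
The plan is to reduce to one dimension by tensorization and then to diagonalize both sides in the Hermite basis, where the identity boils down to the classical generating function for Laguerre polynomials. First, observe that $\opw{\val x^{2}+\val\xi^{2}}=\sum_{1\le j\le n}\opw{x_{j}^{2}+\xi_{j}^{2}}$ is a sum of pairwise commuting operators acting on disjoint tensor factors of $L^{2}(\R^{n})=\bigotimes_{j}L^{2}(\R_{x_{j}})$, hence
\begin{equation*}
\exp\bigl(-t\pi\opw{\val x^{2}+\val\xi^{2}}\bigr)=\bigotimes_{1\le j\le n}\exp\bigl(-t\pi\opw{x_{j}^{2}+\xi_{j}^{2}}\bigr).
\end{equation*}
Since the symbol $e^{-2\tanh(t/2)\pi(\val x^{2}+\val\xi^{2})}$ also factorises as a tensor product over the variables $(x_{j},\xi_{j})$, and because Weyl quantization of a tensor product of symbols supported on disjoint variables is the tensor product of the one-dimensional quantizations, it suffices to prove the identity in one dimension, where the prefactor reads $\cosh(t/2)$.

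For $n=1$, I would use the spectral decomposition \eqref{hardec} of $\mathcal H_{1}=\opw{\pi(x^{2}+\xi^{2})}$ to write, for $\re t\ge 0$,
\begin{equation*}
\exp\bigl(-t\pi\opw{x^{2}+\xi^{2}}\bigr)=\sum_{k\ge 0}e^{-t(k+\frac12)}\mathbb P_{k}.
\end{equation*}
By Lemma~\ref{lem.113esd}, the Weyl symbol of $\mathbb P_{k}$ is $\mathcal W(\psi_{k},\psi_{k})$, and Formula~\eqref{wigherm} gives the explicit expression $\mathcal W(\psi_{k},\psi_{k})(x,\xi)=(-1)^{k}2 e^{-2\pi r^{2}}L_{k}(4\pi r^{2})$ with $r^{2}=x^{2}+\xi^{2}$. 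The identity \eqref{mehler} (with $n=1$) therefore reduces to the pointwise statement
\begin{equation*}
2 e^{-t/2}e^{-2\pi r^{2}}\sum_{k\ge 0}(-e^{-t})^{k}L_{k}(4\pi r^{2})=\frac{1}{\cosh(t/2)}\,e^{-2\tanh(t/2)\pi r^{2}}.
\end{equation*}

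The main computation is then an application of the classical generating function $\sum_{k\ge 0}z^{k}L_{k}(X)=(1-z)^{-1}\exp\bigl(-zX/(1-z)\bigr)$, valid for $\val z<1$. With $z=-e^{-t}$ (so $\val z<1$ when $\re t>0$) and $X=4\pi r^{2}$, the prefactor becomes $2e^{-t/2}/(1+e^{-t})=1/\cosh(t/2)$, while the combined exponent is
\begin{equation*}
-2\pi r^{2}+\frac{4\pi r^{2}e^{-t}}{1+e^{-t}}=-2\pi r^{2}\cdot\frac{1-e^{-t}}{1+e^{-t}}=-2\pi r^{2}\tanh(t/2),
\end{equation*}
which matches exactly. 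Uniform convergence on compact sets in $(x,\xi)$ is guaranteed by the exponential factor $e^{-t(k+1/2)}$ dominating the at-most-polynomial growth of $L_{k}$ in its argument, and the convergence of the operator series in, say, the Hilbert--Schmidt norm ensures the validity of the symbol identity in $L^{2}(\R^{2})$.

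The main obstacle I anticipate is not the algebraic identity itself, which follows from the generating function, but the extension from $\re t>0$ to the boundary case $\re t=0$, $t\notin i\pi(2\Z+1)$ appearing in the statement. On the open half plane $\re t>0$, both sides of \eqref{mehler} are holomorphic families of bounded operators on $L^{2}(\R^{n})$ (the right-hand side is the Weyl quantization of a Gaussian with $\re\tanh(t/2)>0$, so Proposition~\ref{pro1717} applies; the left-hand side is a contraction semigroup). To reach the imaginary axis I would verify that both sides extend holomorphically across $\re t=0$ away from the poles $t\in i\pi(2\Z+1)$ of $\cosh(t/2)^{-1}$, for instance by using the spectral decomposition on the left, which converges strongly whenever $e^{-t(k+1/2)}$ remains bounded in $k$, i.e. $\re t\ge 0$, and on the right by noting that $\re\bigl(\tanh(t/2)\bigr)\ge 0$ for $\re t\ge 0$ whenever $\tanh(t/2)$ is finite, so that the Gaussian symbol remains in $\mathscr S'(\R^{2})$ with a well-defined Weyl quantization. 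Matching the two holomorphic extensions on the already established set $\re t>0$ yields the identity on the full domain by the principle of analytic continuation.
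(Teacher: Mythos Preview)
Your proof is correct, but it follows a genuinely different route from the paper's.

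The paper reduces to $n=1$ by tensorization, as you do, but then argues via an ODE: it sets $M(t)=\beta(t)\opw{e^{-\alpha(t)\pi(x^{2}+\xi^{2})}}$ with $\alpha,\beta$ to be chosen, computes $\dot M+\pi\opw{x^{2}+\xi^{2}}M$ using the composition formula \eqref{compos}, and finds that the choice $\alpha(t)=2\tanh(t/2)$, $\beta(t)=1/\cosh(t/2)$ makes this vanish. Since $M(0)=\Id$, uniqueness of solutions to the linear ODE $\dot M=-\pi\opw{x^{2}+\xi^{2}}M$ forces $M(t)=\exp\bigl(-t\pi\opw{x^{2}+\xi^{2}}\bigr)$.

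Your approach instead diagonalizes both sides on the Hermite basis via \eqref{wigherm} and reduces to the Laguerre generating function. This is perfectly valid: the formula \eqref{wigherm} is quoted in the paper from an external source and does not rely on Mehler, so there is no circularity. In effect you are running the paper's logic backwards~--- the paper proves Mehler first and then deduces the diagonalization \eqref{6.knb44++}, whereas you take the diagonalization as known and recover Mehler from it.

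The ODE argument has the advantage of being self-contained within the Weyl calculus (only the composition formula is needed) and of delivering the identity directly on the closed half-plane $\re t\ge 0$ once one checks the solution makes sense there. Your route is computationally lighter and makes the role of the Laguerre generating function transparent, but it imports \eqref{wigherm} as a black box and, as you correctly note, requires a separate analytic continuation step to reach the boundary $\re t=0$. Your treatment of that step (holomorphy on $\re t>0$, boundedness of both sides for $\re t\ge0$ away from the poles of $1/\cosh(t/2)$, identity theorem) is adequate.
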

In particular, for $t=-2is,s\in \R$, $s\notin \frac{\pi}2(1+2\Z)$, we have  in $n$ dimensions
\begin{equation}\label{mehler+n}
(\cos  s)^{n} \exp \bigl(2 i \pi s
\opw{\val x^{2}+\val\xi^{2}}
\bigr)=
\OPW{e^{2i \pi\tan s(\val x^{2}+\val \xi^{2})}}.
\end{equation}
\begin{lem}
For any $z\in \C,\ \re z \ge 0$, we have  in $n$ dimensions
 \begin{equation}\label{6.knb44}
\OPWM{\exp-\bigl(2z\pi \bigl(\val \xi^2+\val x^{2}\bigr)\bigr)}
=\frac{1}{(1+z)^\did}\sum_{k\ge 0}\Big(\frac{1-z}{1+z}\Big)^k\mathbb P_{k;n},
\end{equation}
where $\mathbb P_{k;n}$ is defined in Section \ref{sechar} 	and the equality holds between
 $L^2(\R^\did)$-bounded operators.
\end{lem}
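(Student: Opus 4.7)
The strategy is to reduce \eqref{6.knb44} to the already established formula \eqref{mehler} by the substitution $z=\tanh(t/2)$, expand the exponential of the harmonic oscillator in the Hermite basis using \eqref{hardecn}, and then analytically continue from the real interval $z\in(0,1)$ to the full half-plane $\{\re z\ge 0\}$.

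First I would verify the identity on the real interval $z\in(0,1)$. For $t>0$, setting $z=\tanh(t/2)$ gives $z\in(0,1)$, and one computes $e^{-t}=\tfrac{1-z}{1+z}$ and $\cosh(t/2)=(1-z^{2})^{-1/2}$. Using the spectral decomposition \eqref{hardecn}, namely $\opw{\pi(\val x^{2}+\val \xi^{2})}=\sum_{k\ge 0}(\tfrac{n}{2}+k)\mathbb P_{k;n}$, we obtain
\begin{equation*}
\exp\bigl(-t\pi\opw{\val x^{2}+\val \xi^{2}}\bigr)=\sum_{k\ge 0}\Bigl(\frac{1-z}{1+z}\Bigr)^{\frac{n}{2}+k}\mathbb P_{k;n}.
\end{equation*}
Inserting this into \eqref{mehler} and using that $(1-z^{2})^{-n/2}(1-z)^{n/2}(1+z)^{-n/2}=(1+z)^{-n}$, the identity \eqref{6.knb44} follows for $z\in(0,1)$.

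Next I would extend by analyticity to $\re z>0$. On the left, when $\re z>0$ the symbol $e^{-2z\pi(\val x^{2}+\val\xi^{2})}$ lies in $\mathscr S(\RZ)$ and depends holomorphically on $z$ in, say, the $L^{1}(\RZ)$-norm, hence by \eqref{norm01} the operator $\OPW{e^{-2z\pi(\val x^{2}+\val\xi^{2})}}$ is a holomorphic $\mathcal B(L^{2}(\R^{n}))$-valued function of $z$ (in fact it is even Hilbert-Schmidt). On the right, the condition $\re z>0$ is exactly equivalent to $\bigl|\tfrac{1-z}{1+z}\bigr|<1$, so that the series $\sum_{k\ge 0}\bigl(\tfrac{1-z}{1+z}\bigr)^{k}\mathbb P_{k;n}$ converges in operator norm to a holomorphic function of $z$. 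Since both sides are holomorphic on the connected open set $\{\re z>0\}$ and agree on the accumulating subset $(0,1)$, they agree throughout, giving \eqref{6.knb44} for $\re z>0$.

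Finally I would pass to the boundary $\re z=0$ by a strong-continuity argument. For $z=is$ with $s\in\R$, $s\notin \tfrac{\pi}{2}(2\Z+1)$, both members are well-defined bounded operators: the left side via \eqref{mehler+n} (the quantization of $e^{2i\pi\tan s(\val x^{2}+\val\xi^{2})}$ times $(\cos s)^{-n}$), and the right side as the strong-operator limit of the partial sums, which by the spectral theorem applied to $\mathcal H_{n}$ yields a unitary operator. Taking $z\to is$ with $\re z>0$ and $s$ avoiding the singular set, strong convergence of both sides reduces to (a) the strong convergence of $\OPW{e^{-2z\pi(\val x^{2}+\val\xi^{2})}}$ to $\OPW{e^{-2is\pi(\val x^{2}+\val\xi^{2})}}$, and (b) the strong convergence of the Hermite series, both routine via dominated-convergence arguments in the spectral expansion. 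The main technical nuance — and the only point that needs real care — is this boundary step: the geometric series ceases to converge in operator norm there, so equality \eqref{6.knb44} at $\re z=0$ must be interpreted in the strong operator topology; the singular values $z\in i\pi(2\Z+1)$ of Mehler's formula translate into the discrete set where $\tfrac{1-z}{1+z}=-1$ and the right-hand side is merely Abel-summable, which we exclude implicitly via the hypothesis $\re z\ge 0$ combined with convergence of the partial sums in the sense of the spectral calculus.
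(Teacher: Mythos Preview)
Your approach is correct and close in spirit to the paper's, but the paper takes a slightly cleaner route. Instead of starting from the real Mehler formula \eqref{mehler} with $z=\tanh(t/2)\in(0,1)$ and then working outward, the paper starts directly on the imaginary axis from \eqref{mehler+n}: for $\tau\in\R$ it expands $\OPW{e^{2i\pi\tau(\val x^{2}+\val\xi^{2})}}$ in the Hermite basis and uses the identity $e^{i\arctan\tau}=(1+i\tau)/\sqrt{1+\tau^{2}}$ to obtain at once
\[
\OPW{e^{2i\pi\tau(\val x^{2}+\val\xi^{2})}}=\sum_{k\ge 0}\frac{(1+i\tau)^{k}}{(1-i\tau)^{k+n}}\,\mathbb P_{k;n},
\]
which is exactly \eqref{6.knb44} for $z=-i\tau$. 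A single analytic-continuation step then covers the interior $\re z>0$, so the paper never needs your separate boundary argument. What your route buys is a slightly more elementary algebraic substitution (no $\arctan$ identity); what the paper's route buys is that the whole boundary $\re z=0$ is obtained for free.

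One point in your boundary discussion is muddled and worth fixing: when you write $z=is$, your $s$ plays the role of $\tau$ (that is, of $\tan s$) in \eqref{mehler+n}, not of the parameter $s$ there, so the restriction ``$s\notin\tfrac{\pi}{2}(2\Z+1)$'' is spurious --- the left-hand side is a bounded operator for every real $s$. Likewise the equation $\tfrac{1-z}{1+z}=-1$ has no solution, so there is no excluded discrete set on the imaginary axis. This confusion is harmless for the argument (the strong-continuity step goes through for all $z=is$), but the parenthetical remarks about singular values should be deleted.
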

We provide first a proof of a particular case of the results of \cite{MR1339714}.
\begin{lem}
For $\re t \ge 0$, $t\notin i\pi(2\Z+1)$, we have in $n$  dimensions, 
\begin{equation}\label{mehler+++}
\bigl(\cosh (t/2)\bigr)^{n}\exp-t\pi
\opw{\val x^{2}+\val \xi^{2}}=\OPW{e^{-2\tanh (\frac t2) \pi(x^{2}+\xi^{2})}}.
\end{equation}
\end{lem}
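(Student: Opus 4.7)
The plan is to exploit the spectral decomposition of the harmonic oscillator together with the already established Gaussian formula \eqref{6.knb44}, reducing the identity to an elementary change of variable. Both sides of \eqref{mehler+++} are functions of the self-adjoint operator $\mathcal H_n=\pi\opw{\val x^2+\val\xi^2}$ and thus preserve each spectral eigenspace, so it is enough to match coefficients on each $\mathbb P_{k;n}$.

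First I would expand the left-hand side using the spectral decomposition \eqref{hardecn}:
$$\cosh(t/2)^n\,\exp(-t\mathcal H_n)=\cosh(t/2)^n\sum_{k\ge 0}e^{-t(n/2+k)}\,\mathbb P_{k;n}.$$
For the right-hand side, setting $z=\tanh(t/2)$, the formula \eqref{6.knb44} yields
$$\OPW{e^{-2z\pi(\val x^2+\val\xi^2)}}=\frac{1}{(1+z)^n}\sum_{k\ge 0}\left(\frac{1-z}{1+z}\right)^k\mathbb P_{k;n}.$$
The elementary identities
$$1\pm\tanh(t/2)=\frac{e^{\pm t/2}}{\cosh(t/2)}$$
then give $(1+z)^{-n}=\cosh(t/2)^n\,e^{-nt/2}$ and $(1-z)/(1+z)=e^{-t}$, so the second series becomes exactly the first and the termwise identification is complete.

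The main obstacle is bookkeeping rather than substance: one must check that the change of variable $z=\tanh(t/2)$ maps the admissible set $\{\re t\ge 0,\ t\notin i\pi(2\Z+1)\}$ for \eqref{mehler+++} into the admissible set $\{\re z\ge 0\}$ for \eqref{6.knb44}, and one must address convergence on the boundary $\re t=0$. Writing $t/2=a+ib$, a direct computation gives
$$\re\tanh(t/2)=\frac{\sinh(2a)}{\cosh(2a)+\cos(2b)},$$
which is nonnegative precisely when $a\ge 0$, the denominator vanishing exactly at the excluded points $t\in i\pi(2\Z+1)$. For $\re t>0$ we have $\val{e^{-t}}<1$ and both series converge in operator norm; on the line $\re t=0$ every term has modulus one, so convergence only takes place in the strong operator topology thanks to the orthogonality of the projectors $\mathbb P_{k;n}$, and I would conclude by invoking the strong continuity in $t$ of both sides to pass the identity to this limiting regime.
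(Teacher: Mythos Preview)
Your termwise computation is correct and the domain check for $z=\tanh(t/2)$ is accurate. However, the argument is circular within the paper's own logical development: formula \eqref{6.knb44} is only \emph{stated} just before the present lemma, and its proof (given immediately afterward as \eqref{6.knb44++}) explicitly starts from \eqref{mehler+n++}, which is the specialization of \eqref{mehler+++} to $t=-2is$. So what you have shown is that \eqref{mehler+++} and \eqref{6.knb44} are equivalent under the substitution $z=\tanh(t/2)$, not that either holds independently.

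The paper's approach avoids this dependency by a direct ODE argument. Reducing to $n=1$ by tensorization, it sets $M(t)=\beta(t)\opw{e^{-\alpha(t)\pi|L|^2}}$ with unknown scalar functions $\alpha,\beta$, and uses the composition formula \eqref{compos} to compute the Weyl symbol of $\dot M+\pi\opw{|L|^2}M$. Choosing $\alpha(t)=2\tanh(t/2)$ and $\beta(t)=1/\cosh(t/2)$ makes this vanish, so $M$ solves the same first-order linear ODE as $\exp(-t\pi\opw{|L|^2})$ with the same initial value $M(0)=\Id$, and uniqueness concludes. This argument is self-contained and makes no appeal to the spectral coefficients of Gaussian Weyl quantizations; your route would become a legitimate alternative only if you supplied an independent derivation of \eqref{6.knb44}.
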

\begin{proof}
By tensorisation, it is enough to prove that formula for $n=1$, which we assume from now on.
 We define
$$
L=\xi+ix,\quad \bar L=\xi-i x,\quad M(t)=\beta(t) 
\opw{e^{-\alpha(t) \pi L\bar L}}
,
$$
where $\alpha, \beta$ are smooth functions of $t$ to be chosen below.
Assuming $\beta(0)=1, \alpha(0)=0,$ we find that $M(0)=\Id$ and
$$
\dot M+\pi 
\opw{\val L^{2}}
M=
\OPWM
{\dot \beta e^{-\alpha \pi\val L^{2}}-\beta \dot \alpha \pi \val L^{2} e^{-\alpha \pi\val L^{2}}
+\pi (\val L^{2})\sharp \beta e^{-\alpha \pi\val L^{2}}}.
$$
We have from \eqref{gfcd44+}, since $\p_{x}\p_{\xi}\val{L}^{2}=0$,
\begin{align*}
\val L^{2}\sharp e^{-\alpha \pi \val L^{2}}&=\val L^{2} e^{-\alpha \pi \val L^{2}}+
\frac{1}{4i\pi}\overbrace{\poi {\val L^{2}}{e^{-\alpha \pi \val L^{2}}}}^{=0}
\\&\hskip25pt+\frac{1}{(4i \pi)^{2}}
\frac12
\Bigl(\p_{\xi}^{2}(\val L^{2})\p_{x}^{2}e^{-\alpha \pi \val L^{2}}
+\p_{x}^{2}(\val L^{2})\p_{\xi}^{2}e^{-\alpha \pi \val L^{2}}\Bigr)
\\&=\val L^{2} e^{-\alpha \pi \val L^{2}}
\\&\hskip25pt+
\frac{1}{(4i \pi)^{2}}
\frac12e^{-\alpha \pi \val L^{2}}
\Bigl(2\bigl((-2\alpha \pi x)^{2}-2\alpha \pi\bigr)
+2\bigl((-2\alpha \pi \xi)^{2}-2\alpha \pi\bigr)\Bigr)
\\&=
\val L^{2} e^{-\alpha \pi \val L^{2}}
\Bigl(1-\frac{4\alpha^{2}\pi^{2}}{16\pi^{2}}\Bigr)
+
\frac{\alpha \pi}{4\pi^{2}}e^{-\alpha \pi \val L^{2}},
\end{align*}
so that
\begin{multline*}
\dot M+\pi 
\opw{\val L^{2}}
M\\
=
\OPWM{\dot \beta e^{-\alpha \pi\val L^{2}}-\beta \dot \alpha \pi \val L^{2} e^{-\alpha \pi\val L^{2}}
+\pi  \beta 
\val L^{2} e^{-\alpha \pi \val L^{2}}
\Bigl(1-\frac{4\alpha^{2}\pi^{2}}{16\pi^{2}}\Bigr)+\frac{\alpha \pi \beta}{4\pi}e^{-\alpha \pi \val L^{2}}}
\\=\OPWM
{e^{-\alpha \pi \val L^{2}}
\Bigl\{\val L^{2}\bigl(-\pi \dot \alpha \beta+\pi \beta(1-\frac{\alpha^{2}}4)\bigr)
+\dot \beta+\frac{\alpha\beta}{4}\Bigr\}
}.
\end{multline*}
We solve now
$$
\dot \alpha=1-\frac{\alpha^{2}}4,\ \alpha(0)=0\Longleftrightarrow \alpha(t)= 2\tanh(t/2),
$$
and 
$$
4\dot \beta+\alpha \beta=0, \beta(0)=1\Longleftrightarrow \beta(t)=\frac1{\cosh (t/2)}.
$$
We obtain that
$
\dot M+\pi \opw{\val L^{2}}
M=0,\ M(0)=\Id,
$
and this implies 
$$
\beta(t) 
\opw{e^{-\alpha(t) \pi L\bar L}}
=M(t)=\exp{-t\pi (\val L^{2})^{w}},
$$
which proves \eqref{mehler+++}.
\end{proof}
In particular, for $t=-2is,s\in \R$, $s\notin \frac{\pi}2(1+2\Z)$, we have  in $n$ dimensions
\begin{equation}\label{mehler+n++}
(\cos  s)^{n} \exp \bigl(2 i \pi s
\opw{\val x^{2}+\val\xi^{2}}
\bigr)=
\OPW{e^{2i \pi\tan s(\val x^{2}+\val \xi^{2})}}
.
\end{equation}
\begin{lem}
For any $z\in \C,\ \re z \ge 0$, we have  in $n$ dimensions
 \begin{equation}\label{6.knb44++}
\OPW{\exp-\bigl(2z\pi(\val \xi^2+\val x^{2})\bigr)}
=\frac{1}{(1+z)^\did}\sum_{k\ge 0}\Big(\frac{1-z}{1+z}\Big)^k\mathbb P_{k;n},
\end{equation}
where $\mathbb P_{k;n}$ is defined in Section \ref{sechar} 	and the equality holds between
 $L^2(\R^\did)$-bounded operators.
\end{lem}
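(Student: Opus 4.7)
My plan is to derive \eqref{6.knb44++} directly from the already-proven Mehler formula \eqref{mehler+++} together with the spectral decomposition \eqref{hardecn} of the harmonic oscillator, and then to extend the resulting identity to all $z$ with $\re z\ge 0$ by analytic continuation. The key substitution is the rational change of variables $z=\tanh(t/2)$, inverse to $e^{-t}=\frac{1-z}{1+z}$, which is the conformal map sending the half-plane $\re t\ge 0$ (minus the exceptional set $i\pi(2\Z+1)$) onto the closed half-plane $\re z\ge 0$ (minus the point $z=1$).

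First I would take \eqref{mehler+++}, which, by tensorisation, already holds in arbitrary dimension $n$, and substitute the spectral decomposition $\exp(-t\pi\opw{\val x^2+\val \xi^2})=\sum_{k\ge 0} e^{-t(\frac n2+k)}\mathbb P_{k;n}$ (from \eqref{hardecn}) on the left. Using the elementary identity $\cosh(t/2)\,e^{-t/2}=\frac{1+e^{-t}}{2}$, the prefactor rearranges as $(\cosh(t/2))^n e^{-tn/2}=\bigl(\tfrac{1+e^{-t}}{2}\bigr)^n$, so that \eqref{mehler+++} takes the form
\begin{equation*}
\OPW{e^{-2\tanh(t/2)\pi(\val x^2+\val\xi^2)}}
=\Bigl(\frac{1+e^{-t}}{2}\Bigr)^{n}\sum_{k\ge 0} e^{-tk}\,\mathbb P_{k;n}.
\end{equation*}
Now setting $z=\tanh(t/2)$, a direct computation gives $e^{-t}=\frac{1-z}{1+z}$ and $\frac{1+e^{-t}}{2}=\frac{1}{1+z}$, which, after substitution, is precisely the identity \eqref{6.knb44++}. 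This argument establishes the formula for every $z$ in the image of $\{\re t\ge 0\}\setminus i\pi(2\Z+1)$ under the map $t\mapsto\tanh(t/2)$, that is, for all $z$ with $\re z\ge 0$ except possibly $z=1$; the latter value is simply checked against the Gaussian computation already recorded in footnote \ref{foot4}, giving $\opw{e^{-2\pi\val X^2}}=2^{-n}\mathbb P_{0;n}$, in agreement with $\frac{1}{(1+z)^n}$ at $z=1$.

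The main obstacle is justifying that the right-hand side of \eqref{6.knb44++} actually defines a bounded operator and that the identity holds as an $L^2$-equality in the full claimed generality. When $\re z>0$ one has $\val{1-z}^2<\val{1+z}^2$ by the elementary computation $\val{1+z}^2-\val{1-z}^2=4\re z$, so the ratio $\val{(1-z)/(1+z)}<1$ and the series converges geometrically in the operator norm of $\mathcal B(L^2(\R^n))$, since the $\mathbb P_{k;n}$ are mutually orthogonal projections of finite rank. Both sides are then holomorphic functions of $z$ with values in $\mathcal B(L^2(\R^n))$ on the open half-plane $\re z>0$: the left-hand side is holomorphic there because the symbol $e^{-2z\pi(\val x^2+\val\xi^2)}$ depends holomorphically on $z$ in $L^1(\RZ)\cap L^\infty(\RZ)$ and the Weyl quantization is continuous from these spaces into $\mathcal B(L^2(\R^n))$, cf.~\eqref{norm01}. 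So the identity, established on the curve $z=\tanh(t/2)$ for $t\in[0,+\infty)$, propagates by analytic continuation to the whole open half-plane.

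Finally, to reach the boundary $\re z=0$ (where the series is only conditionally convergent and defines a unitary operator via the strong operator topology, being a sum of mutually orthogonal projections with unimodular coefficients), I would pass to the limit $\re z\to 0^+$ on both sides: on the right-hand side one tests against any $u\in L^2(\R^n)$ decomposed as $u=\sum_k \mathbb P_{k;n}u$ and applies the dominated convergence theorem to the scalar series, while on the left-hand side one uses the weak continuity of $\opw{\cdot}$ on bounded Gaussian symbols (the quadratic phase case $\re z=0$ is covered by Segal's formula \eqref{segal} and the metaplectic representation, so the left-hand side remains bounded and depends weakly continuously on $z$ up to the boundary). The subtlety here is exactly that the series on the right ceases to converge in operator norm at $\re z=0$, so only strong/weak operator convergence is available — but this matches the status of the left-hand side as a unitary metaplectic operator on this imaginary axis, and the identity survives the limit.
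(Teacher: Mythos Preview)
Your proof is correct and follows essentially the same route as the paper: combine Mehler's formula \eqref{mehler+++} with the spectral decomposition \eqref{hardecn} of the harmonic oscillator, rewrite via the substitution $z=\tanh(t/2)$ (equivalently $e^{-t}=\frac{1-z}{1+z}$), and invoke analytic continuation together with the observation $\bigl\vert\frac{1-z}{1+z}\bigr\vert\le 1$ for $\re z\ge 0$. The paper's version is slightly more compact: it starts from the purely imaginary case \eqref{mehler+n++} (real $\tau$, hence $z=-i\tau$ on the imaginary axis), uses the identity $e^{i\arctan\tau}=(1+i\tau)/\sqrt{1+\tau^2}$ to reach the closed form $\sum_k\frac{(1+i\tau)^k}{(1-i\tau)^{k+n}}\mathbb P_{k;n}$, and then analytically continues into $\re z>0$. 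Your version instead starts from real $t\ge 0$ (so $z\in[0,1)$) and continues outward; both are valid, and your separate treatment of $z=1$ via the explicit Gaussian computation is a nice touch. One minor remark: since \eqref{mehler+++} is already stated for all $\re t\ge 0$, $t\notin i\pi(2\Z+1)$, your substitution in fact directly yields the identity on the full closed half-plane $\re z\ge 0$ minus $\{1\}$, including the imaginary axis, so the final paragraph's limiting argument to reach $\re z=0$ is redundant (though not incorrect).
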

\begin{proof}
 Starting from \eqref{mehler+n++}, we get for $\tau\in \R$, in $n$ dimensions,
 $$
 (\cos  (\arctan \tau))^{n} \exp \bigl(2 i \pi \arctan \tau
 \opw{\val x^{2}+\val\xi^{2}}
 \bigr)=
 \OPW{e^{2i \pi\tau(\val x^{2}+\val \xi^{2})}}
 ,
 $$
 so that using the spectral decomposition of the ($n$-dimensional)
 Harmonic Oscillator and \eqref{arctan}, we get
 $$
 (1+\tau^{2})^{-n/2}\sum_{k\ge 0}e^{2i (\arctan \tau)(k+\frac n2)}\mathbb P_{k;n}=
 \OPW{e^{2i \pi\tau(\val x^{2}+\val \xi^{2})}},
 $$
 which implies
 $$
 (1+\tau^{2})^{-n/2}\sum_{k\ge 0}
\frac{(1+i\tau)^{2k+n}}{(1+\tau^{2})^{k+\frac n2}}
 \mathbb P_{k;n}=
 \OPW{e^{2i \pi\tau(\val x^{2}+\val \xi^{2})}}
,
 $$
 entailing
 $$
\sum_{k\ge 0}
\frac{(1+i\tau)^{k}}{(1-i\tau)^{k+n}}
 \mathbb P_{k;n}=
\OPW{e^{2i \pi\tau(\val x^{2}+\val \xi^{2})}}
,
 $$
 proving the lemma by analytic continuation
 (we may refer the reader as well to \cite{MR552965} (pp. 204-205)
 and note  that for any $z\in \C,\ \re z \ge 0$, we have $\val{\frac{1-z}{1+z}}\le 1$
).
\end{proof}
\subsection{Laguerre polynomials}\label{sec.laguerre}
\index{Laguerre polynomials}
\index{{~\bf Notations}!$L_{k}$}
\subsubsection{Classical Laguerre polynomials}
The Laguerre polynomials  $\{L_{k}\}_{k\in \N}$
are defined by 
\begin{equation}\label{laguerre}
L_{k}(x)=\sum_{0\le l\le k}\frac{(-1)^{l}}{l!}\binom{k}{l} x^{l}=e^{x}\frac{1}{k!}\left(\frac {d}{dx}\right)^{k}\bigl\{x^{k} e^{-x}\bigr\}=\left(\frac {d}{dx}-1\right)^{k}\bigl\{\frac{x^{k}}{k!}\bigr\},
\end{equation}
and we have
{\small\begin{align*}
L_{0}&=  1,  \\
L_{1}&=  -X+1 , \\
L_{2}&= \frac12(X^{2}-4X+2),   \\
L_{3}&= \frac16(-X^{3}+9X^{2}-18 X+6),   \\
L_{4}&= \frac{1}{24}(X^{4}-16 X^{3}+72 X^{2}-96 X+24) ,  \\
L_{5}&=  \frac{1}{120}(-X^{5}+25 X^{4}-200 X^{3}+600 X^{2}-600 X+120),\\
L_{6} &=\frac{1}{720} \left(X^6-36 X^5+450 X^4-2400 X^3+5400 X^2-4320 X+720\right),\\
L_{7}&=\frac{-X^7+49 X^6-882 X^5+7350 X^4-29400 X^3+52920
X^2-35280 X+5040}{5040}.
\end{align*}
}
We get also easily from the above definition that 
\begin{equation}\label{654poi}
L'_{k+1}=L'_{k}-L_{k},
\end{equation}
since with $T=d/dX-1$
$$
L'_{k}-L_{k}=T L_{k}= T^{k+1}(\frac{X^{k}}{k!})= T^{k+1}(\frac{d}{dX}\frac{X^{k+1}}{(k+1)!})=\frac{d}{dX} L_{k+1}.
$$
\index{Feldheim inequality}
Formula (6.8) and Theorem 12 in the R.~Askey \& G.~Gasper's article \cite{MR0430358} provide
the inequalities
\begin{equation}\label{inelag}
\forall k\in \N, \forall x\ge 0,\quad 
\sum_{0\le l\le k}(-1)^{l}L_{l}(x)\ge 0.
\end{equation}
This result follows as well from  Formula (73) in the 1940 paper \cite{MR0001401} by E.~Feldheim.
\index{Fourier tr. of Laguerre polynomials}
Let us calculate the Fourier transform of the Laguerre polynomials:
we have 
$$
L_{k}(x)=
\left(\frac {d}{dx}-1\right)^{k}\bigl\{\frac{x^{k}}{k!}\bigr\},
$$
so that 
$\dis
\widehat{L_{k}}(\xi)=(2i\pi \xi-1)^{k}\left(\frac{-1}{2i\pi}\right)^{k}\frac{\delta_{0}^{(k)}}{k!}=\frac{(-1)^{k}}{k!}(\xi-\frac1{2i\pi})^{k}
\delta_{0}^{(k)}(\xi).
$
As a result, defining for $k\in \N, t\in \R$, 
\begin{equation}\label{foulag-}
M_{k}(t)=(-1)^{k}H(t)e^{-t}L_{k}(2t),\quad H=\mathbf 1_{\R_{+}},
\end{equation}
we find, using the homogeneity of degree $-k-1$ of $\delta_{0}^{(k)}$, 
\begin{align*}
\widehat{M_{k}}(\tau)
&=\frac12
\frac{(-1)^{k}}{k!}\bigl(\frac\tau 2-\frac1{2i\pi}\bigl)^{k}
\delta_{0}^{(k)}(\frac\tau 2)\ast \frac{(-1)^{k}}{1+2i\pi \tau}
\\&={(-1)^{k}}{}(\frac{d}{d\sigma})^{k}\left\{
\frac{(\sigma-\frac 1{i\pi})^{k}/k!}{1+2i\pi (\tau-\sigma)}
\right\}_{\vert \sigma=0} 
\end{align*}
\begin{align*}
\widehat{M_{k}}(\tau)&=\sum_{l}{(-1)^{k}}{}\binom{k}{l}
\frac{(\sigma-\frac 1{i\pi})^{k-l}}{(k-l)!}\frac{(k-l)!(2i\pi)^{k-l}}{\bigl(1+2i\pi (\tau-\sigma)\bigr)^{1+k-l}}_{\vert \sigma=0}
\\&
=\sum_{l}{(-1)^{k}}{}\binom{k}{l}
{}\frac{(-2)^{k-l}}{\bigl(1+2i\pi \tau\bigr)^{1+k-l}}
\\&
=\frac{(-1)^{k}}{(1+2i\pi \tau)}\sum_{l}\binom{k}{l}
{}\frac{(-2)^{k-l}}{\bigl(1+2i\pi \tau\bigr)^{k-l}}
\\&=\frac{(-1)^{k}}{(1+2i\pi \tau)}
\left(1-\frac{2}{\bigl(1+2i\pi \tau\bigr)}\right)^{k}
\\&=\frac{(-1)^{k}}{(1+2i\pi \tau)}
\left(\frac{-1+2i\pi \tau}{1+2i\pi \tau}\right)^{k}
\\&=
\frac{1}{(1+2i\pi \tau)}
\left(\frac{1-2i\pi \tau}{1+2i\pi \tau}\right)^{k}
\end{align*}
so that 
\begin{equation}\label{foulag+}
\widehat{M_{k}}(\tau)=
\frac{(1-2i\pi \tau)^{k}}{(1+2i\pi \tau)^{k+1}}=\frac{(1-2i\pi \tau)^{2k+1}}{(1+4\pi^{2}\tau^{2})^{k+1}}.
\end{equation}
\subsubsection{Generalized Laguerre polynomials}
\index{generalized Laguerre polynomials}
\index{{~\bf Notations}!$L_{k}^{\alpha}$}
Let $\alpha$ be a complex number  
and let $k$ be a non-negative integer such that $\alpha+k\notin (-\N^*)$. We define the generalized Laguerre polynomial $L_{k}^\alpha$ by
\begin{equation}\label{laggen}
L_{k}^\alpha(x)=x^{-\alpha} e^x\left(\frac{d}{dx}\right)^k\bigl\{e^{-x}\frac{x^{k+\alpha}}{k!}\bigr\}
=x^{-\alpha} \left(\frac{d}{dx}-1\right)^k\bigl\{\frac{x^{k+\alpha}}{k!}\bigr\}.
\end{equation}
We note that  $L_{k}^\alpha$ is indeed a polynomial with degree $k$ with the formula
\begin{align}
L_{k}^\alpha(x)
&=\sum_{k_{1}+k_{2}=k}\frac{1}{k!}\binom{k}{k_{1}}(-1)^{k_{2}}
\Gamma(k+\alpha+1)
\frac{x^{k-k_{1}}}{\Gamma(k+\alpha+1-k_{1})}
\notag\\
&=\sum_{0\le k_{1}\le k}\frac{(-1)^{k_{2}}}{k_{1}!(k-k_{1})!}
\Gamma(k+\alpha+1)
\frac{x^{k-k_{1}}}{\Gamma(k+\alpha+1-k_{1})}
\notag\\
&=\sum_{0\le l\le k}
\binom{k+\alpha}{k-l}
\frac{(-1)^{l}x^{l}}{l!}.
\label{957}
\end{align}
\begin{nb}\rm
 We recall that the function $1/\Gamma$ is an entire function with simple zeroes at $-\N$. As a result to make sense for the binomial coefficient
 $$
 \binom{k+\alpha}{k-l}=\frac{\Gamma(k+\alpha+1)}{(k-l)!\Gamma(l+\alpha+1)},
 $$
 we need to make sure that $k+\alpha+1\notin -\N$, i.e. $\alpha\notin -\N^*-k$.\end{nb}
\begin{lem}
Let $\alpha\in \C\backslash(-\N^*)$ 
and let $k$ be a non-negative integer. For $\alpha=0$, we have $L_{k}^\alpha=L_{k}$, where $L_{k}$ is the classical Laguerre polynomial defined in \eqref{laguerre}. Moreover we have  for $l\le k$,
\begin{equation}\label{958}
\left(\frac{d}{dX}\right)^l L_{k}^\alpha=(-1)^lL_{k-l}^{\alpha+l}.
\end{equation}
\end{lem}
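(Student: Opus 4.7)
The lemma has two parts and both are essentially bookkeeping with the explicit expansion \eqref{957}. Here is how I would organize the argument.

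First I would dispose of the case $\alpha=0$ immediately: comparing the second equality of \eqref{laggen} (which gives $L^0_k(x) = e^x (d/dx)^k\{e^{-x} x^k/k!\}$) with the Rodrigues-type middle expression in \eqref{laguerre} yields $L^0_k = L_k$ on the nose, no computation required. This fixes normalizations and reassures the reader that the generalized polynomials really extend the classical ones.

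For the differentiation identity, I would work with the closed form \eqref{957}, namely
\begin{equation*}
L_k^\alpha(x) = \sum_{j=0}^{k} \binom{k+\alpha}{k-j} \frac{(-1)^j x^j}{j!},
\end{equation*}
and first handle the case $l=1$. Term-by-term differentiation (the $j=0$ term drops out) followed by the reindexing $m=j-1$ gives
\begin{equation*}
\frac{d}{dX} L_k^\alpha(X) = -\sum_{m=0}^{k-1}\binom{k+\alpha}{k-1-m}\frac{(-1)^m X^m}{m!}.
\end{equation*}
Now I apply \eqref{957} with $k$ replaced by $k-1$ and $\alpha$ replaced by $\alpha+1$: the sum on the right is precisely $L_{k-1}^{\alpha+1}(X)$, since $\binom{(k-1)+(\alpha+1)}{(k-1)-m}=\binom{k+\alpha}{k-1-m}$. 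Therefore $\frac{d}{dX}L_k^\alpha=-L_{k-1}^{\alpha+1}$, which is \eqref{958} for $l=1$.

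The general formula follows by an immediate induction on $l$: assuming $(d/dX)^l L_k^\alpha = (-1)^l L_{k-l}^{\alpha+l}$, one applies the $l=1$ case to $L_{k-l}^{\alpha+l}$ (which is legitimate as long as $l\le k-1$, i.e.\ $l+1\le k$, and the parameter $\alpha+l$ still avoids $-\N^\ast$, which it does since $\re\alpha>-1$ is automatic from $\alpha\notin -\N^\ast$ only up to checking $\alpha+l+1\notin -\N$—this is the one small verification to include) to obtain $(d/dX)^{l+1}L_k^\alpha = (-1)^{l+1}L_{k-l-1}^{\alpha+l+1}$. There is no real obstacle here; the only mildly delicate point is confirming that the generalized binomial coefficient identity used in the reindexing is valid for arbitrary complex $\alpha\notin -\N^\ast$, which is just the functional equation $\Gamma(z+1)=z\Gamma(z)$ applied to $\Gamma(k+\alpha+1)$ in the ratio defining $\binom{k+\alpha}{k-1-m}$.
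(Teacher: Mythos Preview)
Your proof is correct and follows the same approach as the paper: differentiate the explicit expansion \eqref{957} term by term and reindex. The paper does the general $l$ in a single step rather than proving $l=1$ first and inducting, but that is cosmetic.

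One quibble with your parenthetical: the claim that ``$\re\alpha>-1$ is automatic from $\alpha\notin -\N^\ast$'' is false (take $\alpha=-3/2$). What you actually need for the induction is that $\alpha+l\notin -\N^\ast$ so that $L_{k-l}^{\alpha+l}$ is well defined, and this does hold: if $\alpha\notin\Z$ then $\alpha+l\notin\Z$, while if $\alpha\in\Z\setminus(-\N^\ast)=\N$ then $\alpha+l\in\N$. The paper's proof simply does not comment on this point.
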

\begin{proof}Indeed,
 we have from \eqref{957}
\begin{multline*}
 \left(\frac{d}{dX}\right)^l L_{k}^\alpha
 =(-1)^l\sum_{l\le m\le k}
\binom{k+\alpha}{k-m}
\frac{(-1)^{m-l}X^{m-l}}{(m-l)!}
 \\=
 (-1)^l\sum_{0\le r\le k-l}
\binom{k-l+\alpha+l}{k-r-l}
\frac{(-1)^{r}X^{r}}{r!}=(-1)^lL_{k-l}^{\alpha+l},
\end{multline*}
proving the sought formula.
\end{proof}
\subsection{Singular integrals}\label{secsing}
\index{singular integrals}
\index{Hardy operator}
\begin{pro}\label{pro.hardy}~
\par\no
{$\mathbf{[1]}$\ }The (Hardy) operator with  distribution kernel $$\frac{H(x)H(y)}{\pi(x+y)}$$ is self-adjoint bounded on $L^2(\R)$ with spectrum $[0,1]$ and thus norm 1.
\par\no
{$\mathbf{[2]}$\ }The (modified Hardy) operators with respective distribution kernels $$H(x-y)\frac{H(x)H(y)}{\pi(x+y)},\quad
H(y-x)\frac{H(x)H(y)}{\pi(x+y)},$$ are  bounded on $L^2(\R)$ with norm $1/2$.
\end{pro}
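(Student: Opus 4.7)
\medskip

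\noindent\textbf{Proof plan.} For both parts, the natural strategy is to diagonalise the operators by passing to logarithmic coordinates on $\R_+$, which turns multiplicative convolution against a homogeneous kernel into ordinary convolution on $\R$, i.e.\ a Fourier multiplier whose symbol can be read off directly. Concretely, using the isometric isomorphism
\begin{equation*}
\Psi : L^{2}(\R_{+})\longrightarrow L^{2}(\R),\qquad (\Psi u)(s) = u(e^{s})\,e^{s/2},
\end{equation*}
(a variant of what is already used in \eqref{5113oi} and in \eqref{519jhf}), the substitution $x=e^{s}$, $y=e^{t}$ together with $e^{s}+e^{t}=2e^{(s+t)/2}\cosh\!\bigl(\tfrac{s-t}{2}\bigr)$ gives for $v=\Psi u\in L^{2}(\R)$
\begin{equation*}
\Psi\!\left[x\mapsto\!\int_{0}^{+\infty}\!\!\frac{u(y)}{\pi(x+y)}\,dy\right]\!(s)=\int_{\R}\frac{v(t)}{2\pi\cosh\!\bigl(\tfrac{s-t}{2}\bigr)}\,dt=(g\ast v)(s),\quad g(s)=\frac{1}{2\pi\cosh(s/2)}.
\end{equation*}
So the Hardy operator of (1) is unitarily equivalent to convolution by the even, positive, $L^{1}\cap\mathscr S$ function $g$, hence to the Fourier multiplier $\widehat g(\tau)$.

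For (1), the main computation is to evaluate $\widehat g$ explicitly: using the classical formula $\int_{\R}\sech(u)\,e^{-2iku}\,du=\pi\sech(\pi k)$ and the substitution $u=s/2$ one finds
\begin{equation*}
\widehat g(\tau)=\frac{1}{\cosh(2\pi^{2}\tau)}=\sech(2\pi^{2}\tau).
\end{equation*}
This is a continuous, even, real-valued function from $\R$ onto $(0,1]$ attaining $1$ only at $\tau=0$ and tending to $0$ at infinity, so the corresponding convolution operator is self-adjoint on $L^{2}(\R)$ with (purely continuous) spectrum $\overline{\widehat g(\R)}=[0,1]$; transporting back through $\Psi$ gives assertion (1).

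For (2), the two modified Hardy kernels are mapped by $\Psi$, via the same computation, to convolution by the one-sided, positive kernels
\begin{equation*}
g_{-}(s)=\frac{H(s)}{2\pi\cosh(s/2)},\qquad g_{+}(s)=\frac{H(-s)}{2\pi\cosh(s/2)},
\end{equation*}
which are the adjoints of each other and satisfy $\|g_{\pm}\|_{L^{1}(\R)}=\int_{0}^{+\infty}\frac{ds}{2\pi\cosh(s/2)}=\tfrac12$ (the integral computed already in the proof of Lemma \ref{lem.52ezrr}). Young's inequality gives $\|g_{\pm}\ast \cdot\|_{\mathcal B(L^{2})}\le\|g_{\pm}\|_{L^{1}}=\tfrac12$, and because $g_{\pm}\ge 0$ we also have $\|\widehat{g_{\pm}}\|_{L^{\infty}}\ge\widehat{g_{\pm}}(0)=\|g_{\pm}\|_{L^{1}}=\tfrac12$, giving equality. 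Transporting back through $\Psi$ yields (2). I expect the only mildly delicate step to be the explicit computation of $\widehat g$ in part (1); the passage from $\|\widehat{g_{\pm}}\|_{\infty}=\tfrac12$ to the operator norm is immediate via Plancherel, and the positivity of $g_{\pm}$ removes the need to optimise $|\widehat{g_{\pm}}|$ over $\tau$.
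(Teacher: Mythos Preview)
Your proof is correct and follows essentially the same approach as the paper: the logarithmic change of variable $x=e^{s}$ reducing the Hardy kernel to convolution by $\frac{1}{2\pi}\sech(s/2)$ on $L^{2}(\R)$, the explicit Fourier transform $\sech(2\pi^{2}\tau)$ giving the spectrum $[0,1]$ for part~(1), and for part~(2) the same substitution yielding convolution by the one-sided kernel $R(t)=\frac{H(t)}{2\pi\cosh(t/2)}$ with $\|\widehat R\|_{\infty}=\widehat R(0)=\tfrac12$. The only cosmetic difference is that you phrase the upper bound in~(2) via Young's inequality rather than directly as $|\widehat R(\tau)|\le\widehat R(0)$, but these are the same observation since $R\ge 0$.
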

\begin{proof}Let us prove $\mathbf{[1]}$: for $\phi\in L^2(\R_{+})$, we define for $t\in \R$, $\tilde\phi(t)=\phi(e^t) e^{t/2}$, and 
 we have to check the kernel 
 $$
 \frac{e^{t/2}e^{s/2}}{\pi(e^t+e^s)}=\frac{1}{\pi(e^{(t-s)/2}+e^{-(t-s)/2})}=\frac{1}{2\pi}
\sech\bigl(\frac{t-s}{2}\bigr),
 $$
 which is a convolution kernel.
 Using now the classical formula
 \begin{equation}\label{}
\int e^{-2i\pi x \xi}\sech x dx=\pi\sech (\pi^2 \xi),
\end{equation}
we get that 
$
\frac{1}{2\pi}\int
\sech(\frac{t}{2}) e^{-2i\pi t\tau} dt=\sech (\pi^2 2 \tau),
$
a smooth function whose range is $(0,1]$, proving the first part of the proposition.
To obtain $\mathbf{[2]}$,
we observe with the notations $\phi(t)=u(e^{t}) e^{t/2}$, $\psi(s)=v(e^{s}) e^{s/2}$  that we have to check 
\begin{multline*}
 \iint H(s-t)\frac{e^{t/2}e^{s/2}}{\pi(e^t+e^s)}\phi(t)\bar \psi(s) dtds
 \\=\iint\frac{H(s-t)}{\pi(e^{(t-s)/2}+e^{-(t-s)/2})}
 \phi(t)\bar \psi(s) dtds
 =\poscal{R\ast \phi}{\psi}_{L^{2}(\R)},
\end{multline*}
with 
\begin{equation}
R(t)=\frac{H(t)}{2\pi \cosh(t/2)},\quad \hat R(\tau)=\frac{1}{2\pi}\int_{0}^{+\io} \sech(t/2)
e^{-2i\pi t \tau} dt,
\end{equation}
so that\footnote{We recall  that $\frac{d}{ds}\arctan(\sinh s)=\sech s$.}
\begin{equation}
\val{ \hat R(\tau)}\le \hat R(0)=\frac{1}{2\pi}\int_{0}^{+\io} \sech(t/2) dt=\frac12,
\end{equation}
yielding the sought result.
\end{proof}
\subsection{On some auxiliary functions}
\subsubsection{A preliminary quadrature}
\begin{lem}\label{lem.913}
 We have 
 \begin{equation}
\int_{0}^{\pi/2}(\cosec s-\cosech s)ds=\int_{\pi/2}^{+\io}\cosech s ds=\Lg(\coth \frac\pi4),
\end{equation}
with $\csc s=1/\sin s, \cosech s=1/\sinh s$. 
\end{lem}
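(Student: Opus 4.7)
The plan is to produce explicit antiderivatives for both integrands and then evaluate carefully at the endpoints, with attention to the removable singularity at $s=0$ in the first integral.

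First I would recall the two classical primitives
\[
\int \csc s\,ds = \Lg\bigl\lvert\tan(s/2)\bigr\rvert,\qquad
\int \csch s\,ds = \Lg\bigl\lvert\tanh(s/2)\bigr\rvert,
\]
the second of which is readily checked from the identity $\sinh s = 2\sinh(s/2)\cosh(s/2)$, since
$\frac{d}{ds}\Lg\tanh(s/2) = \frac{\sech^{2}(s/2)/2}{\tanh(s/2)} = \frac{1}{2\sinh(s/2)\cosh(s/2)} = \csch s$. Using the first primitive the second equality of the lemma is immediate: as $s\to+\infty$, $\tanh(s/2)\to 1$, so
\[
\int_{\pi/2}^{+\infty}\csch s\,ds = \bigl[\Lg\tanh(s/2)\bigr]_{\pi/2}^{+\infty} = -\Lg\tanh(\pi/4) = \Lg\coth(\pi/4).
\]

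For the first equality, the separate antiderivatives of $\csc s$ and $\csch s$ each diverge logarithmically at $s=0$, so I would combine them into the single primitive
\[
F(s) = \Lg\tan(s/2) - \Lg\tanh(s/2) = \Lg\!\left(\frac{\tan(s/2)}{\tanh(s/2)}\right),
\]
valid on $(0,\pi/2]$. Since $\tan(s/2)\sim s/2$ and $\tanh(s/2)\sim s/2$ as $s\to 0^{+}$, the ratio inside the logarithm tends to $1$ and $F(s)\to 0$. Consequently
\[
\int_{0}^{\pi/2}(\csc s-\csch s)\,ds = F(\pi/2)-\lim_{s\to 0^{+}}F(s) = \Lg\!\left(\frac{\tan(\pi/4)}{\tanh(\pi/4)}\right) = \Lg\coth(\pi/4),
\]
which matches the right-hand side.

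There is no real obstacle here: the only point requiring minor care is the justification that the combined primitive $F$ extends continuously to $0$, which follows from the elementary equivalents above. Both equalities are thus reduced to the explicit antiderivative computations, and the common value $\Lg\coth(\pi/4)$ emerges at $s=\pi/2$ in each case.
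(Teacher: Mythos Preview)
Your proof is correct and follows essentially the same approach as the paper: explicit antiderivatives for $\csc$ and $\csch$, combined so that the logarithmic singularities at $s=0$ cancel, then evaluation at $\pi/2$. The paper uses the equivalent forms $\frac12\Lg\frac{1-\cos s}{1+\cos s}$ and $\frac12\Lg\frac{\cosh s-1}{\cosh s+1}$ (related to yours via $\tan^{2}(s/2)=\frac{1-\cos s}{1+\cos s}$), but your half-angle tangent version makes the final evaluation at $s=\pi/2$ slightly cleaner since $\tan(\pi/4)=1$ directly.
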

\begin{proof}
Note that the function $\dis
[0,\pi/2]\ni s\mapsto \frac{\sinh s-\sin s}{\sinh s \sin s},$ is continuous.
Moreover, we have 
$$\int \frac{ds}{\sin s}=\frac12\Lg\bigl(\frac{1-\cos s}{1+\cos s}\bigr) \
\text{\quad and \ }
 \int \frac{ds}{\sinh s}=\frac12\Lg\bigl(\frac{\cosh s-1}{\cosh s+1}\bigr),
 $$ so that 
 \begin{multline*}
 \int_{\epsilon}^{\pi/2}(\cosec s-\cosech s)ds=
 \frac12\left[\Lg\bigl(\frac{1-\cos s}{1+\cos s}\bigr)\right]^{\pi/2}_{\epsilon}
 -\left[\frac12\Lg\bigl(\frac{\cosh s-1}{\cosh s+1}\bigr)\right]^{\pi/2}_{\epsilon}
 \\
 =\frac12\Lg\Bigl(
 \underbrace{\bigl(\frac{1+\cos \epsilon}{1-\cos \epsilon}\bigr)
 \bigl(\frac{\cosh \epsilon-1}{\cosh \epsilon+1}\bigr)}_{=\frac{(2+O(\epsilon^{2}))(\frac{\epsilon^{2}}{2}+O(\epsilon^{4}))}{
 (\frac{\epsilon^{2}}{2}+O(\epsilon^{4}))
 (2+O(\epsilon^{2}))
 }
 \rightarrow 1\text{ for $\epsilon\rightarrow 0$}}
 \Bigr)
 +
 \frac12\Lg\bigl(\frac{\cosh \frac\pi2+1}{\cosh \frac\pi2-1}\bigr),
\end{multline*}
so that we obtain
$$
\int_{0}^{\pi/2}(\cosec s-\cosech s)ds
= \frac12\Lg\bigl(\frac{e^{\pi/2}+e^{-\pi/2}+2}{e^{\pi/2}+e^{-\pi/2}-2}\bigr)
=\Lg\frac{\cosh(\pi/4)}{\sinh(\pi/4)},
$$
which is the first result.
Also we have 
$
\int_{\pi/2}^{+\io}\cosech s ds=
\frac12\Lg\bigl(\frac{\cosh(\pi/2)+1}{\cosh(\pi/2)-1}\bigr),
$
yielding the second result.
\end{proof}
\subsubsection{Study of the function \texorpdfstring{$\rho_{\sigma}$}{rhos}}
We study in this section the real-valued Schwartz function $\rho_{\sigma}$ given in \eqref{526-}. Using the notations 
\begin{equation}\label{note}
\omega=2\pi \tau,\quad \kappa=2\pi \sigma,\quad \nu=\sqrt{\kappa/\omega},
\end{equation}
we have 
\begin{equation}
\rho_{\sigma}(\tau)=\int_{\R}\frac{s}{\sinh s} e^{2i\omega(s-\nu^{2}\tanh s)} ds
=\int_{\R}\frac{s}{\sinh s} \cos\bigl(2\omega(s-\nu^{2}\tanh s)\bigr) ds.
\end{equation}
Defining the holomorphic function $F$ by
\begin{equation}\label{953}
F(z)=\frac{z}{\sinh z} e^{2i\omega(z-\nu^{2}\tanh z)},
\end{equation}
we see that $F$ has simple poles at $i\pi \Z^{*}$ and essential singularities at
$i\pi(\frac12+\Z)$.
We already know that the function $\rho_{\sigma}$ belongs to the Schwartz space, but we want to prove a more precise exponential decay.
We start with the calculation of 
\begin{multline}\label{964}
\int_{\R+i\frac\pi 4} F(z) dz=\int_{\R}
\frac{t+i\frac\pi 4}{\sinh (t+i\frac\pi 4)} e^{2i\omega(t+i\frac\pi 4-\nu^{2}\tanh (t+i\frac\pi 4))}
dt 
\\= e^{-\pi\omega/2} 2\sqrt 2
\int_{\R}
\frac{t+i\frac\pi 4}{(1+i) e^{t}-(1-i) e^{-t}} 
e^{2i\omega t}e^{-2i\omega\nu^{2}\frac{e^{t}(1+i)-e^{-t}(1-i)}{e^{t}(1+i)+e^{-t}(1-i)}}
dt 
\\
=
e^{-\pi\omega/2} \sqrt 2
\int_{\R}
\frac{t+i\frac\pi 4}{\sinh t+i\cosh t} 
e^{2i\omega t}e^{-2i\omega\nu^{2}\frac{e^{t}(1+i)-e^{-t}(1-i)}{e^{t}(1+i)+e^{-t}(1-i)}}
dt. 
\end{multline}
We have 
$$
\im\left(\frac{e^{t}(1+i)-e^{-t}(1-i)}{e^{t}(1+i)+e^{-t}(1-i)}\right)=
\im\left(\frac{\sinh t+i\cosh t}{\cosh t+i\sinh t}\right)=\frac{1}{\cosh^{2} t+\sinh^{2} t},
$$
so that
\begin{multline}\label{955}
\Bigl\vert{\int_{\R+i\frac\pi 4} F(z) dz}\Bigr\vert\le 
e^{-\frac{\pi\omega}2} \sqrt 2
\int_{\R}
\frac{\sqrt{t^{2}+(\frac\pi 4)^{2}}}{\sqrt{\sinh^{2} t+\cosh^{2} t}} 
\ e^{\frac{2\omega\nu^{2}}{\sinh^{2} t+\cosh^{2} t}}
dt
\\= 
e^{-\frac{\pi\omega}2} \sqrt 2 e^{2\kappa}
\int_{\R}
\frac{\sqrt{t^{2}+(\frac\pi 4)^{2}}}{\sqrt{\sinh^{2} t+\cosh^{2} t}} 
dt
\le 6 e^{-\frac{\pi\omega}2} e^{2\kappa}.
\end{multline}
\begin{claim}\label{claim3}
 We have  
 $$
 \lim_{R\rightarrow+\io}\oint_{[R, R+i\pi/4]} F(z) dz=
  \lim_{R\rightarrow+\io}\oint_{[-R, -R+i\pi/4]} F(z) dz=0.
 $$
\end{claim}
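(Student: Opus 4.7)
The plan is to verify both limits by direct estimation on the vertical segments, parametrising $z = \pm R + iy$ with $y \in [0, \pi/4]$ and bounding each of the three factors in
\begin{equation*}
F(z) = \frac{z}{\sinh z}\cdot e^{2i\omega z}\cdot e^{-2i\omega \nu^{2}\tanh z}
\end{equation*}
separately. Since the segments have fixed length $\pi/4$, it suffices to prove uniform decay of $|F|$ along them as $R\to +\io$.

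First I would handle $|z/\sinh z|$. Using $\sinh(\pm R + iy) = \sinh(\pm R)\cos y + i\cosh(\pm R)\sin y$, one has $|\sinh(\pm R+iy)|^{2} = \sinh^{2} R + \sin^{2} y \ge \sinh^{2} R$, so for $R\ge 1$,
\begin{equation*}
\Bigl\vert\frac{\pm R + iy}{\sinh(\pm R + iy)}\Bigr\vert \le \frac{R + \frac{\pi}{4}}{\sinh R} = O(R\, e^{-R}).
\end{equation*}
Next, $|e^{2i\omega z}| = e^{-2\omega y}$ which, since $\omega>0$ and $y\in[0,\pi/4]$, is bounded by $1$. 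Finally, for the tangent factor I would observe that $\tanh(\pm R + iy) \to \pm 1$ as $R\to +\io$, uniformly in $y\in[0,\pi/4]$ (we stay away from the singularity at $i\pi/2$), so $|\im \tanh(\pm R+iy)|$ is bounded by some constant $c_{0}$ independent of $R$. Hence $|e^{-2i\omega\nu^{2}\tanh(\pm R+iy)}| \le e^{2\omega\nu^{2} c_{0}} = e^{2\kappa c_{0}}$, a constant depending only on $\omega,\kappa$ (equivalently, on $\tau,\sigma$).

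Combining these three bounds, I obtain uniformly in $y\in[0,\pi/4]$
\begin{equation*}
|F(\pm R + iy)| \le C(\omega,\kappa)\, R\, e^{-R},
\end{equation*}
and therefore
\begin{equation*}
\Bigl\vert\oint_{[\pm R,\pm R + i\pi/4]} F(z)\, dz\Bigr\vert \le \frac{\pi}{4}\, C(\omega,\kappa)\, R\, e^{-R} \underset{R\to+\io}{\longrightarrow} 0,
\end{equation*}
which gives the claim. The main (very mild) obstacle is simply to keep track of the three bounds simultaneously and to note that the path avoids $i\pi/2$, so the only potentially delicate factor — the tangent exponential — remains bounded; the exponential smallness comes entirely from the factor $z/\sinh z$, and the $\omega,\kappa$ dependence in the constant is harmless because here we fix $\tau,\sigma$ and let only $R\to+\io$. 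Once the claim is established, combined with the bound \eqref{955} on $\int_{\R + i\pi/4} F\, dz$ and Cauchy's theorem applied to the rectangle with vertices $\pm R, \pm R + i\pi/4$ (on which $F$ is holomorphic, since its only singularities in that strip are on $i\pi \Z\cup i\pi(\tfrac12 + \Z)$, which do not meet the open strip $0<\im z<\pi/4$), this will yield the desired estimate $\rho_{\sigma}(\tau) = O(e^{-\pi\omega/2}e^{2\kappa})$ used in Lemma \ref{lem.912}.
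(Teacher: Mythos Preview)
Your proposal is correct and follows essentially the same route as the paper: direct estimation of $|F|$ on the vertical segments, with the exponential decay coming from $|z/\sinh z|=O(Re^{-R})$ and the remaining two factors bounded. The only cosmetic difference is that the paper first observes the symmetry $F(-\bar z)=\overline{F(z)}$, which gives $\oint_{[-R,-R+i\pi/4]}F\,dz=-\overline{\oint_{[R,R+i\pi/4]}F\,dz}$ and reduces the work to a single segment, whereas you treat both segments simultaneously; the estimates themselves are equivalent.
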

\begin{proof}[Proof of the Claim]
We note first that 
$$
\oint_{[-R, -R+i\pi/4]} F(z) dz=-\overline{\oint_{[R, R+i\pi/4]} F(z) dz},
$$
so that it is enough to prove one equality.
Indeed for $R>0$, we have 
$$
\oint_{[R, R+i\pi/4]} F(z) dz=\int_{0}^{\pi/4}
\frac{R+it}{\sinh (R+it)} e^{2i\omega(R+it-\nu^{2}\tanh (R+it))} i dt,
$$
so that 
\begin{multline*}
\Bigl\vert{\oint_{[R, R+i\pi/4]} F(z) dz}\Bigr\vert
\le \int_{0}^{\pi/4}
\frac{2\sqrt{R^{2}+t^{2}}}{
\val{e^{R+it}}\val{1-e^{-2R-2it}}
} 
e^{-2\omega t}
e^{2\kappa\im(\tanh (R+it))}  dt
\\\le e^{-R}
\frac{\sqrt{4R^{2}+\pi^{2}/4}}
{
{1-e^{-2R}}
} 
 \int_{0}^{\pi/4}
e^{2\kappa\val{
\frac{1-e^{-2R-2it}}{1+e^{-2R-2it}}
}}  dt
\le 
e^{-R}
\frac{\sqrt{4R^{2}+\pi^{2}/4}}
{
{1-e^{-2R}}
} 
\frac \pi4
e^{
\frac{4\kappa}{(1-e^{-2R})}
},
\end{multline*}
proving the claim.
\end{proof}
\begin{lem}\label{lem.912}
 We have for $\tau>0, \sigma\ge 0$, $\rho_{\sigma}$ given in \eqref{526-},
 \begin{equation}\label{956}
\val{\rho_{\sigma}(\tau)}\le 
6 e^{-{\pi^2\tau}} e^{4\pi \sigma}.
\end{equation}
\end{lem}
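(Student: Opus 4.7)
The plan is to realize $\rho_\sigma(\tau)$ as the real-line integral of the holomorphic function $F(z) = \frac{z}{\sinh z} e^{2i\omega(z - \nu^2 \tanh z)}$ introduced in \eqref{953}, and then deform the contour upward into the complex plane to extract exponential decay in $\tau$. After the substitution $t = 2s$ in \eqref{526-}, and with the notation $\omega = 2\pi\tau$, $\kappa = 2\pi\sigma$, $\nu = \sqrt{\kappa/\omega}$, the function $\rho_\sigma(\tau)$ appears (up to taking the real part, which is legitimate since $\rho_\sigma$ is real-valued) as $\int_\R F(s)\, ds$.

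Next I would verify that $F$ is holomorphic on the closed strip $\{z \in \C : 0 \le \im z \le \pi/4\}$: the poles of $1/\sinh z$ lie at $i\pi\Z$ (nearest to the real axis at $\pm i\pi$), and the essential singularities of $\tanh z$ lie at $i\pi(\tfrac{1}{2} + \Z)$ (nearest at $\pm i\pi/2$); both are at distance $\ge \pi/2 > \pi/4$ from $\R$ within the strip. Cauchy's theorem applied to the rectangle with corners $\pm R$ and $\pm R + i\pi/4$ therefore yields a vanishing contour integral, and Claim \ref{claim3} shows that the two vertical pieces $[\pm R, \pm R + i\pi/4]$ contribute nothing in the limit $R \to +\infty$. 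One concludes
\begin{equation*}
\rho_\sigma(\tau) = \int_\R F(s)\, ds = \int_{\R + i\pi/4} F(z)\, dz.
\end{equation*}

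The pointwise estimate \eqref{955} — already established in the preparation — gives
\begin{equation*}
\Bigl| \int_{\R + i\pi/4} F(z)\, dz \Bigr| \le 6\, e^{-\pi\omega/2}\, e^{2\kappa},
\end{equation*}
and translating back via $\pi\omega/2 = \pi^2 \tau$ and $2\kappa = 4\pi\sigma$ yields precisely the stated bound $|\rho_\sigma(\tau)| \le 6\, e^{-\pi^2 \tau}\, e^{4\pi\sigma}$. The two factors have clean origins: the decaying $e^{-\pi\omega/2}$ comes from shifting the linear phase $e^{2i\omega z}$, while the bounded growth $e^{2\kappa}$ comes from the fact that $\tanh(x + i\pi/4) = \frac{\sinh x + i\cosh x}{\cosh x + i \sinh x}$ has imaginary part $\frac{1}{\cosh^2 x + \sinh^2 x} \in (0,1]$ uniformly in $x$.

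Since almost all of the analytical labor — both the absolute-convergence estimate on $\R + i\pi/4$ and the vanishing of the vertical sides — has already been dispatched in the preceding computations \eqref{964}, \eqref{955} and Claim \ref{claim3}, the only remaining work is this essentially bookkeeping step of invoking Cauchy's theorem and reassembling the constants. The one conceptual point worth highlighting is that $\pi/4$ is near-optimal: one cannot shift further than $i\pi/2$ without meeting the essential singularities of $\tanh$, which is precisely what pins the decay rate at $e^{-\pi^2 \tau}$ rather than something stronger.
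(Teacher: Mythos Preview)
Your proposal is correct and follows essentially the same route as the paper: express $\rho_\sigma(\tau)$ as $\int_\R F(s)\,ds$ with $F$ as in \eqref{953}, shift the contour to $\R+i\pi/4$ using Cauchy's theorem together with Claim~\ref{claim3}, and conclude via the already-established bound \eqref{955}. Your remark on the near-optimality of the shift height $\pi/4$ is a helpful addition not made explicit in the paper's proof.
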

\begin{proof}
 We have, with the notations \eqref{note},  $F$ given in \eqref{953} and 
 $\gamma_{R}=[-R,-R+i\frac\pi 4]\cup[-R+i\frac\pi 4,R+i\frac\pi 4]\cup
[R+i\frac\pi 4,R]$,
$$
\rho_{\sigma}(\tau)=\lim_{R\rightarrow+\io}\int_{[-R,R]} F(s) ds
=
\lim_{R\rightarrow+\io}\left(
\oint_{\gamma_{R}
}
F(z) dz\right)
\underbrace{=}_{\text{Claim \eqref{claim3}}}
\oint_{\R+\frac{i\pi}4}
F(z) dz,
$$
so that \eqref{955} implies the lemma.
\end{proof}
\subsubsection{On the function \texorpdfstring{$\psi_{\nu}$}{psis}}
Let $\nu\in(0,1)$ be given. We study first the function $\phi_{\nu}$ defined on $[0,\pi/2)$ by
\begin{equation}\label{}
\phi_{\nu}(s)=s-\nu^2\tan s,\quad\text{so that \quad}
\phi'_{\nu}(s)=1-\nu^2(1+\tan^2 s)=\frac{\cos^2 s-\nu^2}{\cos^2 s},
\end{equation}
so that
\begin{equation}\label{963}
\renewcommand{\arraystretch}{1.8}
\begin{array}{|c|lcccccc|}
\hline
s & 0&\hskip 33pt& s_{\nu}&\hskip 33pt&t_{\nu}&\hskip 33pt&\frac\pi 2\\
\hline
\phi'_{\nu}(s)&\text{\footnotesize{$1-\nu^2$}}&+&0&-&&-&\\
\hline
\phi_{\nu}(s) & 0 & \nearrow&\phi_{\nu}(s_{\nu})&\searrow&0&\searrow&-\io\\
\hline
\end{array}
\end{equation}
We have 
\begin{equation}\label{964+}
\begin{cases}
&\hskip-10pt s_{\nu}=\arccos  \nu=\frac\pi 2-\nu+O(\nu^{3}),
 \\
&\hskip-10pt \phi_{\nu}(s_{\nu})=\arccos \nu-\nu\sqrt{1-\nu^{2}}=\frac\pi 2-2\nu+O(\nu^{3}),
\end{cases}
 \qquad \text{for}\ \nu\rightarrow0.
\end{equation}
The function $\phi_{\nu}$ is concave  on $(0,\pi/2)$
since we have there
$$
\phi_{\nu}''(s)=-\nu^{2}(-2)(\cos s)^{-3} (-\sin s)=-\nu^2 2(\cos s)^{-3}\sin s\le 0.
$$
We have defined in \eqref{5254}
\begin{equation}\label{psinunu}
\psi_{\nu}(\omega)
=
\frac{e^{-\pi \omega}}{2\pi}
\int_{0}^{\pi/2}
\frac{e^{2\omega \phi_{\nu}(s)}-1}{\sin s}
ds.
\end{equation}
Let us start with an elementary lemma.
\begin{lem}\label{lem.915}
 Let $\lambda>0$ be given. Defining
 \begin{equation}
J(\lambda)=e^{-\lambda}\int_{0}^{\lambda}\frac{e^{\sigma}-1}{\sigma} d\sigma,
\qquad \text{we have}
\end{equation}
\begin{align}
&J(\lambda)=\lambda^{-1}+O(\lambda^{-2}), \quad \lambda\rightarrow+\io,
\label{966966}
\\
&\forall \lambda>0, \quad
J(\lambda)
\ge \lambda^{-1}-\lambda^{-2}.
\label{9610}
\end{align}
\end{lem}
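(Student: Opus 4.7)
The plan is to rewrite $J(\lambda)$ in a form that makes both the lower bound and the asymptotic transparent. Performing the change of variable $\sigma = \lambda - t$ in the defining integral gives the representation
\begin{equation*}
J(\lambda) = \int_0^\lambda \frac{e^{-t} - e^{-\lambda}}{\lambda - t}\, dt,
\end{equation*}
where the integrand is positive on $[0,\lambda)$ and extends continuously to $t = \lambda$ with value $e^{-\lambda}$ (by L'Hospital's rule, or the mean value theorem applied to $s \mapsto e^{-s}$). The entire proof will be driven by this formula combined with the elementary expansion $\tfrac{1}{\lambda - t} = \tfrac{1}{\lambda} + \tfrac{t}{\lambda(\lambda - t)}$ (and one more step for the asymptotic), together with the trivial pointwise bound $\frac{e^{-t} - e^{-\lambda}}{\lambda - t} \le e^{-t}$ for $t \in [0,\lambda)$, which follows from monotonicity of $s \mapsto e^{-s}$.

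For the lower bound \eqref{9610}, I would keep only the first term of the expansion: since $\tfrac{1}{\lambda - t} \ge \tfrac{1}{\lambda}$ and $e^{-t} - e^{-\lambda} \ge 0$ on $[0,\lambda]$,
\begin{equation*}
J(\lambda) \ge \frac{1}{\lambda}\int_0^\lambda (e^{-t} - e^{-\lambda})\, dt = \frac{1 - (1+\lambda)e^{-\lambda}}{\lambda}.
\end{equation*}
The claim $J(\lambda) \ge \lambda^{-1} - \lambda^{-2}$ thus reduces to the elementary inequality $\lambda(1+\lambda)e^{-\lambda} \le 1$ for all $\lambda > 0$. Note that for $\lambda \le 1$ the right-hand side $\lambda^{-1} - \lambda^{-2} = (\lambda-1)/\lambda^2$ is nonpositive, whereas $J(\lambda) > 0$, so the inequality is automatic in that range. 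For $\lambda > 1$ the bound $e^{\lambda} \ge 1 + \lambda + \lambda^2/2 + \lambda^3/6$ yields $e^\lambda - \lambda(1+\lambda) = 1 - \lambda^2/2 + \lambda^3/6 + \cdots \ge 0$ after checking the range $1 \le \lambda \le 3$ by hand and using $\lambda^3/6 \ge \lambda^2/2$ for $\lambda \ge 3$.

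For the asymptotic \eqref{966966}, I would push the expansion one step further:
\begin{equation*}
\frac{1}{\lambda - t} = \frac{1}{\lambda} + \frac{t}{\lambda^2} + \frac{t^2}{\lambda^2(\lambda - t)}.
\end{equation*}
The first two terms integrate to give the main contribution $\lambda^{-1} + \lambda^{-2}$ up to terms of order $\lambda^{k}e^{-\lambda}$ (which are negligible). The remainder is controlled by the elementary pointwise bound above:
\begin{equation*}
\left|\int_0^\lambda \frac{t^2 (e^{-t} - e^{-\lambda})}{\lambda^2(\lambda - t)}\, dt\right| \le \frac{1}{\lambda^2}\int_0^\lambda t^2 e^{-t}\, dt \le \frac{2}{\lambda^2}.
\end{equation*}
Combining this explicit upper bound with the lower bound $J(\lambda) \ge \lambda^{-1} - \lambda^{-2}$ established above gives $|J(\lambda) - \lambda^{-1}| = O(\lambda^{-2})$ as $\lambda \rightarrow +\io$.

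The main obstacle is really not computational but bookkeeping: one must verify the uniform inequality $\lambda(1+\lambda) \le e^\lambda$ on all of $(0,+\io)$, and although it admits several routes (the sharp route optimizes $g(\lambda) = \lambda(1+\lambda)e^{-\lambda}$ at the golden ratio $\varphi = (1+\sqrt 5)/2$, where $g(\varphi) = (2+\sqrt 5)e^{-\varphi} < 1$), the cleanest route is the case split $\lambda \le 1$ versus $\lambda > 1$ described above, so that no extremal analysis is actually required.
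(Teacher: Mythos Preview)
Your proof is correct and takes a genuinely different route from the paper. Both arguments land on the same intermediate inequality $\lambda J(\lambda)\ge 1-(1+\lambda)e^{-\lambda}$ and hence both must verify $\lambda(1+\lambda)e^{-\lambda}\le 1$; you even mention the golden-ratio optimization, which is precisely how the paper handles it (in a footnote). The difference lies upstream: the paper expands $(e^{\sigma}-1)/\sigma$ as a power series and rewrites $\lambda J(\lambda)=e^{-\lambda}\sum_{k\ge 1}\frac{\lambda^{k+1}}{(k+1)!}\frac{k+1}{k}$, splitting off the main part algebraically to obtain the exact identity $\lambda J(\lambda)=1-(1+\lambda)e^{-\lambda}+\lambda^{-1}R(\lambda)$ with $R\ge 0$ bounded. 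You instead change variables to the Laplace-type form $J(\lambda)=\int_0^{\lambda}\frac{e^{-t}-e^{-\lambda}}{\lambda-t}\,dt$ and expand $\frac{1}{\lambda-t}$ geometrically. Your representation makes the asymptotic especially transparent---the integrand concentrates near $t=0$---and the mean-value bound $\frac{e^{-t}-e^{-\lambda}}{\lambda-t}\le e^{-t}$ is a clean device for the remainder; the paper's series route, on the other hand, yields a sharper structural identity (with an explicit non-negative remainder) at the cost of a little more bookkeeping. One minor polish: your ``check by hand'' for $1-\lambda^{2}/2+\lambda^{3}/6\ge 0$ on $[1,3]$ can be made fully explicit by noting that this cubic has its minimum on $\R_{+}$ at $\lambda=2$, where it equals $1/3$.
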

\begin{proof}
 Indeed we have  for $\lambda>0$,
 \begin{multline}\label{968}
\lambda J(\lambda)=\lambda e^{-\lambda}\sum_{k\ge 1}\int_{0}^{\lambda}\frac{\sigma^{k-1}}{k!}
d\sigma =\lambda e^{-\lambda}\sum_{k\ge 1}\frac{\lambda^{k}}{k!k}
=e^{-\lambda}\sum_{k\ge 1}\frac{\lambda^{k+1}}{(k+1)!}\frac{k+1}{k}
\\
=e^{-\lambda}\sum_{k\ge 1}\frac{\lambda^{k+1}}{(k+1)!}
+
e^{-\lambda}\sum_{k\ge 1}\frac{\lambda^{k+1}}{(k+1)!}\frac{1}{k}
\\=
e^{-\lambda}(e^{\lambda}-1-\lambda)
+
\lambda^{-1} \underbrace{\left(e^{-\lambda}\sum_{k\ge 1}\frac{\lambda^{k+2}}{(k+1)!}\frac{1}{k}\right)}_{R(\lambda)},
\end{multline}
with 
\begin{align}\label{969}
0\le R(\lambda)\le e^{-\lambda}
\sum_{k\ge 1}\frac{\lambda^{k+2}}{(k+2)!}\frac{k+2}{k}\le e^{-\lambda}(e^{\lambda}-1-\lambda-\frac{\lambda^{2}}{2})\times 3=O(1),
\end{align}
so that 
$$
\lambda J(\lambda)
=
e^{-\lambda}(e^{\lambda}-1-\lambda)
+
\lambda^{-1}O(1)=1+\lambda^{-1}O(1)-(1+\lambda)e^{-\lambda}=1+\lambda^{-1}O(1),
$$
proving \eqref{966966}.
Note also that \eqref{968}, \eqref{969} imply, since $R(\lambda)\ge 0$,
$$
\lambda J(\lambda)\ge  1-e^{-\lambda}(1+\lambda),
$$
so that $J(\lambda)\ge \lambda^{-1}-e^{-\lambda}(1+\lambda^{-1}),$
and thus\footnote{We leave for the reader to check that for $\lambda>0$, $e^{-\lambda}(1+\lambda^{-1})\le \lambda^{-2}$, which boils dow to study $q(\lambda)=e^{-\lambda}(\lambda^{2}+\lambda)$ reaching its maximum for $\lambda\in \R_{+}$, at $\lambda_{0}=(1+\sqrt{5})/{2}$
with $q(\lambda_{0})\approx0.84<1.$
} the sought result 
\eqref{9610}.
\end{proof}
\begin{rem}\rm
 Considering now the function $ \varphi_{0}$ defined by 
 \begin{equation}
 \varphi_{0}(\omega)=
\frac{e^{-\pi \omega}}{2\pi}
\int_{0}^{\pi/2}
\frac{e^{2\omega s}-1}{\sin s}
ds,
\end{equation}
we find that, for $\omega\ge 0$, using Lemma \ref{lem.915},
$$
\varphi_{0}(\omega)\ge \frac{e^{-\pi \omega}}{2\pi}
\int_{0}^{\pi/2}
\frac{e^{2\omega s}-1}{s}
ds=
 \frac{e^{-\pi \omega}}{2\pi}
\int_{0}^{\pi \omega}
\frac{e^{\sigma}-1}{\sigma}
d\sigma=\frac1{2\pi} J(\pi \omega),
$$
so that 
\begin{equation}
\varphi_{0}(\omega)\ge \frac{1}{2\pi^{2}\omega}-\frac{1}{2\pi^{3}\omega^{2}}.
\end{equation}
It is our goal now to prove a minoration of the same flavour for the function 
\eqref{psinunu} defined above.
\end{rem}
Assuming $\nu\in (0,1/2)$, 
we have
$\frac\pi 3<s_{\nu}<t_{\nu}<\frac\pi 2$ ($s_\nu, t_\nu$ are defined in \eqref{963}, $\psi_{\nu}$ in \eqref{psinunu}),
\begin{align}\label{9611}
2\pi &e^{\pi\omega}\psi_{\nu}(\omega)
=
\int_{0}^{t_{\nu}}
\frac{e^{2\omega \phi_{\nu}(s)}-1}{\sin s}
ds+\int_{t_{\nu}}^{\pi/2}
\frac{e^{2\omega \phi_{\nu}(s)}-1}{\sin s}
ds
\\
&\ge 
\underbrace{\int_{0}^{t_{\nu}}
\frac{e^{2\omega \phi_{\nu}(s)}-1}{\sin s}
ds}_{\text{on $(0,t_{\nu}), \ \phi_{\nu}(s)\ge 0$}}
-\int_{t_{\nu}}^{\pi/2}
\frac{ds}{\sin s}
\ge \int_{0}^{s_{\nu}}
\frac{e^{2\omega \phi_{\nu}(s)}-1}{\sin s}
ds-\int_{\pi/3}^{\pi/2}
\frac{ds}{\sin s}
\notag\\
&
\hskip175pt =
\underbrace{\int_{0}^{s_{\nu}}
\frac{e^{2\omega \phi_{\nu}(s)}-1}{\sin s}
ds}_{\substack{
\text{on $(0,s_{\nu})$}\\ 
\phi_{\nu}(s) >0\ \text{and\ } \phi'_{\nu}(s)> 0
}}-\frac{\ln 3}{2}.
\notag
\end{align}
\begin{claim}\label{claim1}
 For $s\in (0,\pi/2)$, we have $\phi_{\nu}(s)\ge \phi'_{\nu}(s)\sin s$.
 Moreover,
 for $s\in (0,s_{\nu})$, we have 
$\frac{1}{\sin s}\ge \frac{\phi'_{\nu}(s)}{\phi_{\nu}(s)}$.
\end{claim}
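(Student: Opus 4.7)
My plan is to reduce the first inequality to an algebraic identity that displays both sides as manifestly non-negative on $(0,\pi/2)$, and then deduce the second inequality as an immediate corollary by division, using the sign information already recorded in the table \eqref{963}.

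\emph{First inequality.} I would introduce the auxiliary function
\[
g_\nu(s) := \phi_\nu(s) - \phi'_\nu(s)\sin s = \bigl(s-\nu^2\tan s\bigr) - \bigl(1-\nu^2\sec^2 s\bigr)\sin s,
\]
defined on $[0,\pi/2)$, and rearrange its terms in powers of $\nu^2$:
\[
g_\nu(s) = (s-\sin s) + \nu^2\bigl(-\tan s+\sin s+\sec^2 s\,\sin s\bigr).
\]
The key algebraic observation is the identity $-\tan s+\sin s+\sec^2 s\,\sin s = \sin s\bigl(\sec^2 s-\sec s\bigr) = \sin s\,\sec s\,(\sec s-1)$, so that
\[
g_\nu(s) = (s-\sin s) + \nu^2 \sin s\,\sec s\,(\sec s-1).
\]
On $[0,\pi/2)$ both summands are non-negative: the first by the classical inequality $s\ge \sin s$, the second because $\sec s\ge 1$ and $\sin s\ge 0$. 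Hence $g_\nu(s)\ge 0$ on $[0,\pi/2)$, which is the first claim.

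\emph{Second inequality.} On $(0,s_\nu)$ the table \eqref{963} yields $\phi'_\nu(s)>0$, and since $\phi_\nu(0)=0$ with $\phi_\nu$ strictly increasing on $(0,s_\nu)$ (because $\phi'_\nu>0$ there), we also have $\phi_\nu(s)>0$. Dividing the first inequality by the positive quantity $\phi_\nu(s)\sin s$ gives
\[
\frac{\phi'_\nu(s)}{\phi_\nu(s)} \le \frac{1}{\sin s},
\]
which is the second assertion.

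\emph{Expected difficulty.} There is essentially no difficulty here: the whole argument rests on spotting the factorisation $-\tan s+\sin s+\sec^2 s\sin s = \sin s\,\sec s(\sec s-1)$, after which both inequalities are transparent. The only point requiring a word of care is the positivity of $\phi_\nu(s)$ on $(0,s_\nu)$ needed to divide, but this is already contained in the monotonicity table \eqref{963}.
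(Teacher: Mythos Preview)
Your proof is correct and follows exactly the paper's route: compute $\phi_\nu(s)-\phi'_\nu(s)\sin s$, group the $\nu^2$-terms, and factor to obtain $(s-\sin s)+\nu^2\sin s\,\sec s\,(\sec s-1)$, both summands non-negative on $(0,\pi/2)$; the second inequality then follows by division. One minor slip: the $\nu^2$-bracket should read $-\tan s+\sec^2 s\,\sin s$ (no extra ``$+\sin s$''), and the identity you state is correspondingly off by $\sin s$; your final expression is nonetheless correct, so this is a typo rather than a gap.
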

\begin{proof}[Proof of the Claim]
Indeed, we have 
\begin{align}
\phi_{\nu}(s)- &\phi'_{\nu}(s)\sin s= s-\nu^{2}\tan s- \sin s+\nu^{2}(1+\tan^{2}s)\sin s
\notag\\
&=\nu^{2}\bigl(\sin s+\sin s \tan^{2} s-\tan s\bigr)+s-\sin s
\notag
\\&=\nu^{2}\bigl(\frac{\sin s}{\cos^{2}s}-\frac{\sin s}{\cos s}\bigr)+s-\sin s
\notag
\\
&=
\frac{\nu^{2}\sin s}{\cos^{2}s}\bigl(1-\cos s\bigr)+s-\sin s\ge 0, \quad \text{for $s\in(0,\pi/2)$}.
\end{align}
The last part of the claim follows from the first part and  the fact that $\sin s$ 
and $\phi_{\nu}(s)$ are both positive on $(0,s_{\nu})$.
 \end{proof}
Going back now to  \eqref{9611}, we obtain that for $\nu\in (0,1/2)$ and $\omega>0$, we have 
\begin{multline}
2\pi e^{\pi\omega}\psi_{\nu}(\omega)
\ge
\int_{0}^{s_{\nu}}
\frac{e^{2\omega \phi_{\nu}(s)}-1}{\phi_{\nu}(s)}
\phi'_{\nu}(s)ds-\frac{\ln 3}{2}
\\
=\int_{0}^{2\omega\phi_{\nu}(s_{\nu})}
\frac{e^{\sigma}-1}{\sigma}
d\sigma-\frac{\ln 3}{2}=e^{2\omega \phi_{\nu}(s_{\nu})} J(2\omega \phi_{\nu}(s_{\nu}))
-\frac{\ln 3}{2},
\end{multline}
so that, using \eqref{9610}, we get 
$$
\psi_{\nu}(\omega)\ge\frac{1}{2\pi} e^{-\pi \omega}
e^{2\omega \phi_{\nu}(s_{\nu})}\Bigl(\frac{1}{2\omega \phi_{\nu}(s_{\nu})}
-\frac{1}{(2\omega \phi_{\nu}(s_{\nu}))^{2}}\Bigr)
-\frac{\ln 3}{2}\frac{1}{2\pi} e^{-\pi \omega},
$$
and since $\phi_{\nu}(s_{\nu})=\frac\pi 2-\epsilon_{\nu}$, with $\epsilon_{\nu}
\in (0,\pi/2)$,
we find also that $\epsilon_{\nu}$ is a concave function\footnote{We have from \eqref{964+},
$$
\epsilon_{\nu}=\frac\pi 2-\arccos \nu+\nu\sqrt{1-\nu^{2}}, \
\frac{d\epsilon_{\nu}}{d\nu}=2\sqrt{1-\nu^{2}},\
\frac{d^{2}\epsilon_{\nu}}{d\nu^{2}}=-2\nu/\sqrt{1-\nu^{2}}<0,
$$
so that the concavity gives 
$
\frac{\pi}2 \nu\le \epsilon_{\nu}\le 2\nu.
$} 
of $\nu\in (0,1)$ and 
$$
\frac{\pi \nu}{2}\le \epsilon_{\nu}\le 2\nu\quad \text{so that } \quad
2\phi_{\nu}(s_{\nu})=\pi-2\epsilon_{\nu}\in [\pi-{4\nu},\pi-\pi\nu],
\quad
$$
so that for $\nu\in (0,1/2]$, we have\footnote{We know that
$
\omega(\pi-2\epsilon_{\nu})\ge \omega(\pi-4\nu)\ge \omega(\pi-2)
$
so that to ensure $\omega(\pi-2\epsilon_{\nu})\ge4$, it suffices to assume
$\omega\ge 4/(\pi-2)$.}
(assuming $\omega>0$), 
\begin{multline*}
\psi_{\nu}(\omega)\ge
\frac{1}{2\pi} e^{-\pi \omega}
e^{\omega(\pi-2\epsilon_{\nu})}\Bigl(\frac{1}{\omega (\pi-2\epsilon_{\nu})}
-\frac{1}{(\omega
(\pi-2\epsilon_{\nu})
)^{2}}\Bigr)
-\frac{\ln 3}{2}\frac{1}{2\pi} e^{-\pi \omega},
\\
\ge \frac{1}{2\pi} e^{-4\nu \omega}\Bigl(\frac{1}{\omega \pi}
-\frac{1}{\omega^{2}(\pi-2)^{2}
}\Bigr)
-\frac{\ln 3}{2}\frac{1}{2\pi} e^{-\pi \omega},
\end{multline*}
We recall the notations \eqref{note},
so that $\nu=\sqrt{\kappa/\omega}$ i.e.
$
\nu \omega= \sqrt{\kappa \omega}
$
and we get
  \begin{equation}\label{9521}
\forall \omega>0,\quad
\psi_{\nu}(\omega)\ge
\frac{1}{2\pi} e^{-4\sqrt{\kappa\omega}}
 \Bigl(\frac{1}{\pi \omega}
-\frac{1}{\omega^{2}}\Bigr)
-\frac{\ln 3}{2}\frac{1}{2\pi} e^{-\pi \omega},\ 
\nu=\sqrt{\kappa/\omega}.
\end{equation}
\subsubsection{An explicit expresssion for $a_{11}$}
According to \eqref{2525}, we have
\begin{equation}\label{9522}
a_{11}(\tau, \sigma)=
\frac12+\frac 1{2\pi}\int_{0}^{+\io}
\frac{
\sin(2\pi t\tau-4\pi \sigma\tanh(t/2))
}{\sinh(t/2)}
dt.
\end{equation}
We have used in Section \ref{sec.52} the equivalent expression
$a_{11}(\tau,\sigma)=\frac12+\hat T_{\sigma}(\tau)$, where $T_{\sigma}$ is defined in 
\eqref{525+} and we were able to prove the estimate in Lemma \ref{lem.57}. It turns out that \eqref{956} is not optimal, and it is interesting to give an ``explicit'' expression for $a_{11}$ as displayed in \cite{MR2131219}.
\par
Using the notations \eqref{note}, we can write \eqref{9522} as
\begin{multline}\label{9523++}
a_{11}(\tau, \sigma)=
\frac12+\frac 1{4\pi}\int_{\R}
\im\frac{
\exp {i(\omega t-2\kappa\tanh(t/2))}
}{\sinh(t/2)}
dt
\\
=\frac12+\im\lim_{R\rightarrow+\io}
\frac 1{2\pi}\int_{[-R,R]}
\frac{
\exp {2i(\omega s-\kappa\tanh s)}
}{\sinh s}
ds.
\end{multline}
Defining the holomorphic function $G$ by
\begin{equation}\label{953++}
G(z)=\frac{
\exp {2i(\omega z-\kappa\tanh z)}
}{2\pi \sinh z},
\end{equation}
we see that $G$ has simple poles at $i\pi \Z$ and essential singularities at
$i\pi(\frac12+\Z)$.
For $R\in \R_{+}\backslash \frac\pi2\Z, \epsilon\in (0,\pi/2)$, we have
\begin{multline}\label{9525+}
\oint_{[-R,-\epsilon]\cup[\epsilon, R]} G(z) dz+
\oint_{\substack{\gamma_{\epsilon}^{-}\\\gamma_{\epsilon}^{-}(\theta)=\epsilon e^{i\theta}\\
-\pi\le t\le 0}} G(z) dz
+\oint_{\substack{\gamma_{R}^{+}\\\gamma_{R}^{+}(\theta)
=R e^{i\theta}\\
0\le t\le \pi}} G(z) dz
\\=2i\pi \sum_{\substack{k\in \N\\
k\pi<2R}}\text{\tt Res}(G,ik\pi/2).
\end{multline}
\begin{claim}\label{claim917}
We have
 $\lim_{\epsilon\rightarrow 0}\oint_{\gamma_{\epsilon}^{-}} G(z) dz=\frac{i}{2}$.
\end{claim}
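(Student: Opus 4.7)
The plan is to isolate the simple pole of $G$ at the origin and show that the remaining part of the integrand contributes nothing in the limit. Observe that $\sinh z = z + O(z^3)$ and $\tanh z = z + O(z^3)$ near $0$, so that
\[
\exp\bigl(2i(\omega z - \kappa\tanh z)\bigr) = 1 + 2i(\omega-\kappa) z + O(z^2),
\]
and $\frac{1}{\sinh z} = \frac{1}{z} + O(z)$ on a neighborhood of $0$ (the nearest singularities of $G$ away from $0$ are the essential singularities at $\pm i\pi/2$, which stay at distance $\pi/2$ from the origin). Therefore we can write, for $|z|$ small,
\[
G(z) = \frac{1}{2\pi z} + H(z),
\]
where $H$ is holomorphic in an open disk $D(0,\pi/2)$.

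Next I would split the integral accordingly:
\[
\oint_{\gamma_{\epsilon}^{-}} G(z)\,dz = \oint_{\gamma_{\epsilon}^{-}} \frac{dz}{2\pi z} + \oint_{\gamma_{\epsilon}^{-}} H(z)\,dz.
\]
For the principal term, parametrize $z = \epsilon e^{i\theta}$ with $\theta \in [-\pi, 0]$, so that $dz = i\epsilon e^{i\theta}\,d\theta$ and
\[
\oint_{\gamma_{\epsilon}^{-}} \frac{dz}{2\pi z} = \frac{1}{2\pi}\int_{-\pi}^{0} i\,d\theta = \frac{i}{2}.
\]
This is an exact computation, independent of $\epsilon$.

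For the remainder term, since $H$ is continuous at $0$, it is bounded by some constant $M$ on a small closed disk containing $\gamma_{\epsilon}^{-}$ for $\epsilon$ small enough, and the arc length of $\gamma_{\epsilon}^{-}$ is $\pi\epsilon$, so
\[
\Bigl|\oint_{\gamma_{\epsilon}^{-}} H(z)\,dz\Bigr| \le M \pi\epsilon \xrightarrow[\epsilon \to 0_{+}]{} 0.
\]
Combining these two estimates yields $\lim_{\epsilon\to 0_+} \oint_{\gamma_{\epsilon}^{-}} G(z)\,dz = \tfrac{i}{2}$, as claimed. There is no real obstacle in this argument; the only point needing care is verifying that $G - \frac{1}{2\pi z}$ extends holomorphically across $0$, which is immediate from the simple-zero structure of $\sinh z$ at the origin and the entirety of the numerator on $|z| < \pi/2$.
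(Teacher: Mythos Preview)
Your proof is correct and takes essentially the same approach as the paper: both arguments rest on the observation that $zG(z)$ extends holomorphically across $0$ with value $\tfrac{1}{2\pi}$ there. The paper phrases this by factoring the integrand as $\tfrac{i}{2\pi}$ times a function holomorphic near $0$ with value $1$ at $0$ and passing to the limit in $\theta$, whereas you split $G = \tfrac{1}{2\pi z} + H$ and bound the $H$-integral by arc length; these are the same computation in slightly different clothing.
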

\begin{proof}
Indeed we have 
\begin{multline*}
\int_{-\pi}^{0} \frac{
\exp {2i(\omega \epsilon e^{i\theta}-\kappa\tanh(\epsilon e^{i\theta}))}
}{2\pi \sinh (\epsilon e^{i\theta})
} i\epsilon e^{i\theta}d\theta\\=\frac{i}{2\pi}
\int_{-\pi}^{0} 
\frac{e^{2i\omega\epsilon e^{i\theta}}\epsilon e^{i\theta}}{ \sinh (\epsilon e^{i\theta})}
\exp {(-2i\kappa\tanh(\epsilon e^{i\theta}))}
d\theta,
\end{multline*}
and since the function 
$z\mapsto \frac{ze^{2i\omega z}}{\sinh z} e^{-2i\kappa \tanh z}$ is holomorphic  near 0 with value 1 at 0, we get the result of the claim.
 \end{proof}
\begin{lem}\label{lem.918}We have
 $\lim_{\N\ni m\rightarrow +\io}\im\left(\oint_{\gamma_{\frac\pi4+m\frac\pi2}^{+}} G(z) dz\right)=0.$
\end{lem}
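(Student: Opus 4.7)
The plan is to prove the stronger statement that the full integral $I_m := \oint_{\gamma_{R_m}^+} G(z)\,dz$ tends to $0$ as $m \to +\io$, which trivially gives the vanishing of its imaginary part. Parametrizing $z = R_m e^{i\theta}$ for $\theta \in [0,\pi]$ and recalling $\omega = 2\pi \tau > 0$, $\kappa = 2\pi \sigma \ge 0$, the integrand has modulus $\frac{R_m\, e^{-2\omega R_m \sin\theta}\,|e^{-2i\kappa \tanh(R_m e^{i\theta})}|}{2\pi\, |\sinh(R_m e^{i\theta})|}$. The first observation is that the radii $R_m = \pi/4 + m\pi/2$ are chosen so that $\gamma_{R_m}^+$ stays uniformly at distance $\ge \pi/4$ from every singularity of $G$: the distance from the semicircle to $ik\pi$ (a simple pole of $G$) equals $|R_m - k\pi|$, and to $i(k+\tfrac12)\pi$ (an essential singularity of $G$) equals $|R_m - (k+\tfrac12)\pi|$; elementary arithmetic shows both are $\ge \pi/4$ for all $k \in \Z$, $m \in \N$. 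In particular $|\tanh z|$ is bounded by a universal constant $C_0$ on $\bigcup_m \gamma_{R_m}^+$, so $|e^{-2i\kappa \tanh z}| \le e^{2\kappa C_0}$ uniformly in $m$.

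The next step is to split $[0,\pi]$ into the side arcs $A = [0,\pi/4] \cup [3\pi/4,\pi]$ and the top arc $B = [\pi/4, 3\pi/4]$, and estimate on each piece separately. On $A$, the bound $|\cos\theta| \ge \sqrt 2/2$ together with $|\sinh(x + iy)| \ge |\sinh x|$ gives $|\sinh(R_m e^{i\theta})| \ge \sinh(R_m/\sqrt 2)$, so the contribution of $A$ to $I_m$ has modulus $O(R_m\, e^{-R_m/\sqrt 2})$. On $B$, the bound $\sin\theta \ge \sqrt 2/2$ combined with $\omega > 0$ gives the crucial decay $|e^{2i\omega z}| \le e^{-\sqrt 2\, \omega R_m}$, and a uniform lower bound $|\sinh z| \ge c_1 > 0$ on $\bigcup_m \gamma_{R_m}^+ \cap B$ then makes the contribution of $B$ of order $O(R_m\, e^{-\sqrt 2\, \omega R_m})$.

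Adding these two exponentially small contributions yields $I_m \to 0$, a fortiori $\im I_m \to 0$, completing the proof. The main delicate point, which is where the specific choice of $R_m$ really matters, is the uniform lower bound $|\sinh z| \ge c_1$ on the top arcs: the key observation is that $|Re^{i\theta} - ik\pi|^2 = R^2 - 2Rk\pi\sin\theta + k^2\pi^2$ is a monotone decreasing function of $\sin\theta$ when $k > 0$, so the distance from $\gamma_{R_m}^+ \cap B$ to the nearest zero $ik\pi$ of $\sinh$ is attained at the top $\theta = \pi/2$ and equals $|R_m - k\pi| \ge \pi/4$; combining this with $\sinh(z + i\pi) = -\sinh z$ (so that $|\sinh z|$ is $\pi$-periodic in $\im z$) and continuity on a fundamental domain with the large-$|\re z|$ bound $|\sinh z| \ge |\sinh(\re z)|$ yields the claimed uniform positive lower bound.
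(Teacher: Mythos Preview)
Your proof is correct and in fact establishes the stronger statement that the full integral $I_m\to 0$, not only its imaginary part. The paper's argument is organised differently: it first exploits the parity $G(-\bar z)\,d(-\bar z)=\overline{G(z)\,dz}$ on the semicircle (equivalently, with $f(\theta)=G(R_me^{i\theta})\,iR_me^{i\theta}$ one has $f(\pi-\theta)=-\overline{f(\theta)}$), which makes $\im f$ even about $\theta=\pi/2$ and reduces the integral to $[0,\pi/2]$; there the paper writes $\sinh z=\tfrac12 e^{z}(1-e^{-2z})$, extracts $e^{-R\cos\theta}$ from $e^{-z}$, and invokes Claim~\ref{claim919} (uniform lower bounds on $|1\pm e^{-2R_me^{i\theta}}|$) to control what remains, obtaining the single combined decay $e^{-R(\cos\theta+2\omega\sin\theta)}$. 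Your side/top decomposition is cleaner and avoids the symmetry trick: on $A$ the growth of $|\sinh|$ does the work, on $B$ the oscillatory factor $e^{2i\omega z}$ does; this is exactly the strategy the paper itself adopts for the companion estimate on $\tilde G$ in Claim~\ref{claim.924}. One harmless imprecision: your formula ``the distance from the semicircle to $ik\pi$ equals $|R_m-k\pi|$'' is literally true only for $k\ge 0$; for $k<0$ the point $ik\pi$ lies below the real axis and the distance is $\sqrt{R_m^2+k^2\pi^2}$, which is still $\ge R_m\ge\pi/4$, so your conclusion is unaffected.
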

\begin{proof}
Indeed we have  with $R=\frac\pi4+m\frac\pi2$,
\begin{multline*}
\im \int_{0}^{\pi} \frac{
\exp {2i(\omega R e^{i\theta}-\kappa\tanh(R e^{i\theta}))}
}{2\pi \sinh (R e^{i\theta})
} iR e^{i\theta}d\theta
\\=\frac{R}{\pi}\re 
\int_{0}^{\pi} 
\frac{e^{2i\omega R \cos{\theta}} 
e^{-2R\omega\sin \theta}
e^{i\theta}}{ 
1-e^{-2Re^{i\theta}}
}
e^{-Re^{i\theta}}
\exp {(-2i\kappa\tanh(R e^{i\theta}))}
d\theta
\\
=\frac{2R}{\pi}
\int_{0}^{\pi/2} 
\re\left\{
\frac{e^{2i\omega R \cos{\theta}} 
e^{-2R\omega\sin \theta}
e^{i\theta}}{ 
1-e^{-2Re^{i\theta}}
}
e^{-Re^{i\theta}}
\exp {(-2i\kappa\tanh(R e^{i\theta}))}
\right\}
d\theta,
\end{multline*}
so that 
\begin{multline}\label{9526}
\im\Bigl(\oint_{\gamma_{\frac\pi4+m\frac\pi2}^{+}} G(z) dz\Bigr)
\\=
\frac{2R}{\pi}
\int_{0}^{\pi/2} 
e^{-R\cos \theta}
e^{-2R\omega\sin \theta}
\re\left\{
\frac{e^{2i\omega R \cos{\theta}} 
e^{i\theta}}{ 
1-e^{-2Re^{i\theta}}
}
e^{-iR\sin\theta}
\exp {(-2i\kappa\tanh(R e^{i\theta}))}
\right\}
d\theta.
\end{multline}
We have also
\begin{align}\label{78899}
\tanh (Re^{i\theta})&=\frac{1-e^{-2Re^{i\theta}}}{1+e^{-2Re^{i\theta}}}.
\end{align}
\begin{claim}\label{claim919}
Defining for $m\in \N, \theta\in [0,\pi]$, 
$
g_{m}(\theta)=1-e^{-(\frac\pi2+m\pi)e^{i\theta}},
$
we find that 
\begin{equation}
\inf_{\substack{\theta\in [0,\pi]\\m\in \N}}\val{g_{m}(\theta)}=\beta_{0}>0,\quad
\inf_{\substack{\theta\in [0,\pi]\\m\in \N}}\val{2-g_{m}(\theta)}=\beta_{1}>0.
\end{equation}
\end{claim}
\begin{proof}[Proof of the claim]
If it were not the case, we could find sequences $\theta_{l}\in[0,\pi],  m_{l}\in \N$ such that
\begin{equation}\label{7699}
\lim_{l\rightarrow +\io}e^{-(\frac\pi2+m_{l}\pi)e^{i\theta_{l}}}=1.
\end{equation}
Taking the logarithm of the modulus of both sides, we would get  
$$\lim_{l\rightarrow +\io}(\frac\pi2+m_{l}\pi)\cos \theta_{l}=0,$$ i.e.
$
\cos \theta_{l}=\frac{\epsilon_{l}}{\frac\pi2+m_{l}\pi}$,  
$\lim_{l\rightarrow +\io}\epsilon_{l}=0
$.
Going back to \eqref{7699}, we find then
$$\lim_{l\rightarrow +\io}e^{-i(\frac\pi2+m_{l}\pi)
\sin\theta_{l}}=1,$$
i.e. since $\sin \theta_l\ge 0$,
$$
\lim_{l\rightarrow +\io}\exp -i\Bigl\{(\frac\pi2+m_{l}\pi)\Bigl(1-
\frac{\epsilon_{l}^{2}}{(\frac\pi2+m_{l}\pi)^{2}}
\Bigr)^{1/2}\Bigr\}=1,
$$
implying
$
\lim_{l\rightarrow +\io}e^{-i(\frac\pi2+m_{l}\pi)}=1,
$
which is not possible since$$
e^{-i(\frac\pi2+m_{l}\pi)}=-i(-1)^{m_{l}}\in \{\pm i\},
$$
proving the first inequality of the claim.
The second inequality follows from the same {\sl reductio ad absurdum},
starting with
\begin{equation}\label{7699++}
\lim_{l\rightarrow +\io}e^{-(\frac\pi2+m_{l}\pi)e^{i\theta_{l}}}=-1,
\end{equation}
ending-up with an impossibility since $-1\notin\{\pm i\}$.
 \end{proof}
 As a consequence of Claim \ref{claim919} and \eqref{78899}, we obtain
 for $R=\frac\pi 4+m\frac\pi2$, $\theta\in (0,\pi)$,
 \begin{equation}
\val{\tanh (Re^{i\theta})}\le \frac{2}{\beta_{1}}.
\end{equation}
Formula \eqref{9526} gives then 
\begin{multline}\label{9526++}
\Val{\im\left(\oint_{\gamma_{\frac\pi4+m\frac\pi2}^{+}} G(z) dz\right)}
\le 
\frac{2R}{\pi}
\int_{0}^{\pi/2} 
e^{-R\cos \theta}
e^{-2R\omega\sin \theta}
\frac{1}{ 
\beta_{0}}
\exp {(4\kappa/\beta_{1})}
d\theta,
\end{multline}
where, for $\omega>0$, the right-hand-side goes to zero when $R$ goes to $+\io$,
completing the proof of Lemma \ref{lem.918}.
\end{proof}
\begin{lem}\label{lem.920}With $G$ defined in \eqref{953++},
 we have
 \begin{equation}\label{9533}
2\pi \sum_{\substack{k\in \N}}\res{G,ik\pi/2}
=\frac{1}{1+e^{-2\pi \omega}}
+
\frac{e^{-\pi \omega}}{i(1+e^{-2\pi \omega})}
\res
{\frac{e^{2i\omega z -2i\kappa\coth z}}{\cosh z},0}.
\end{equation}
\end{lem}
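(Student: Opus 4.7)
\textbf{Proof proposal for Lemma \ref{lem.920}.} The plan is to exploit the $i\pi$-periodicity (up to sign) of $G$ to reduce the residue sum to a sum of two geometric series, one coming from the simple poles at $ij\pi$ ($j\in\N$) and one from the essential singularities at $i\pi/2+ij\pi$ ($j\in\N$). First I will classify the singularities of $G$ on the positive imaginary ray. Since $\sinh z$ has simple zeros at $ij\pi$ while $\tanh z$ vanishes there, each point $z=ij\pi$ is a simple pole of $G$. By contrast, at $z=i(2j+1)\pi/2$ the factor $\sinh z$ is non-zero but $\tanh z$ has a simple pole, so $\exp(-2i\kappa\tanh z)$ contributes an essential singularity; nevertheless the residues are well defined.

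The next step is to observe the translation identity $G(z+ij\pi)=(-1)^{j}e^{-2\pi\omega j}G(z)$ for every $j\in\N$. This follows from $\tanh(z+ij\pi)=\tanh z$, $\sinh(z+ij\pi)=(-1)^{j}\sinh z$, and $e^{2i\omega(z+ij\pi)}=e^{-2\pi\omega j}e^{2i\omega z}$. Consequently
\begin{equation*}
\res{G,ij\pi}=(-1)^{j}e^{-2\pi\omega j}\res{G,0},\qquad
\res{G,i\pi/2+ij\pi}=(-1)^{j}e^{-2\pi\omega j}\res{G,i\pi/2}.
\end{equation*}
A direct computation using $\sinh z\sim z$ and $\tanh 0=0$ gives $\res{G,0}=\frac{1}{2\pi}$. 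For the pole at $i\pi/2$, I will substitute $z=i\pi/2+w$ and apply the three identities of \eqref{formfo}: $\sinh(i\pi/2+w)=i\cosh w$ and $\tanh(i\pi/2+w)=\coth w$. This yields
\begin{equation*}
G(i\pi/2+w)=\frac{e^{-\pi\omega}}{2\pi i}\cdot\frac{\exp\bigl(2i\omega w-2i\kappa\coth w\bigr)}{\cosh w},
\end{equation*}
whence $\res{G,i\pi/2}=\dfrac{e^{-\pi\omega}}{2\pi i}\res{\dfrac{e^{2i\omega z-2i\kappa\coth z}}{\cosh z},0}$.

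Assuming $\omega>0$, the moduli of the residues are controlled by $e^{-2\pi\omega j}$ so the series converge absolutely. Splitting the sum over $\N$ into even and odd indices and summing each geometric series (both are $\sum_{j\ge 0}(-1)^{j}e^{-2\pi\omega j}=1/(1+e^{-2\pi\omega})$) I obtain
\begin{equation*}
2\pi\sum_{k\in\N}\res{G,ik\pi/2}=\frac{2\pi\res{G,0}}{1+e^{-2\pi\omega}}+\frac{2\pi\res{G,i\pi/2}}{1+e^{-2\pi\omega}},
\end{equation*}
and substituting the two residues computed above produces exactly the right-hand side of \eqref{9533}. No genuine obstacle is anticipated; the only subtlety is verifying that the essential singularities at $i\pi/2+ij\pi$ are indeed permuted covariantly by the translation $z\mapsto z+ij\pi$, which is immediate from the periodicity of $\tanh$. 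The residue at $z=0$ of $e^{2i\omega z-2i\kappa\coth z}/\cosh z$ itself is a non-trivial quantity (an infinite series in $\kappa$ coming from the essential singularity of $e^{-2i\kappa\coth z}$), but the lemma only asserts the algebraic identity and does not ask us to evaluate it further.
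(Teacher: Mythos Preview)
Your proof is correct and follows essentially the same approach as the paper: both split the residue sum according to the parity of $k$, compute the base residues at $0$ and $i\pi/2$ via the identities \eqref{formfo}, and sum the resulting geometric series in $(-1)^{j}e^{-2\pi\omega j}$. The only cosmetic difference is that you isolate the translation identity $G(z+ij\pi)=(-1)^{j}e^{-2\pi\omega j}G(z)$ upfront, whereas the paper computes $G(z+ik\pi/2)$ separately for $k=2l$ and $k=2l+1$; the content is identical.
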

\begin{proof}
We have 
$
\text{\tt Res}(G,ik\pi/2)=\text{\tt Res}(G_{k},0)
$
and
with $k=2l$,
\begin{equation*}
G_{k}(z)=\frac{
\exp {2i(\omega (z+\frac{ik\pi}2)-\kappa\tanh (z+\frac{ik\pi}2))}
}{2\pi \sinh (z+\frac{ik\pi}2)}
=\frac{e^{-2l\pi \omega} e^{2i \omega z}
e^{-2i\kappa
\tanh z}}
{2\pi (-1)^{l}\sinh z},
\end{equation*}
so that 
\begin{equation}
\text{\tt Res}(G_{2l},0)=
\frac{(-1)^{l}e^{-2l\pi \omega} }
{2\pi },
\end{equation}
whereas for $k=2l+1$, we have 
\begin{multline*}
G_{2l+1}(z)=\frac{
\exp {2i(\omega (z+il\pi+\frac{i\pi}2)-\kappa\tanh (z+il\pi+\frac{i\pi}2))}
}{2\pi \sinh (z+il\pi+\frac{i\pi}2)}
\\
=\frac{e^{-(2l+1)\pi \omega} e^{2i \omega z}
e^{-2i\kappa
\coth z}}
{2\pi (-1)^{l}i\cosh z},
\end{multline*}
so that 
\begin{equation}
\text{\tt Res}(G_{2l+1},0)=
\frac{(-1)^{l}e^{-(2l+1)\pi \omega} }
{2\pi  i}\text{\tt Res}\Bigl(
\frac{e^{2i\omega z -2i\kappa\coth z}}{\cosh z},0\Bigr),
\end{equation}
yielding
\begin{multline*}
2\pi\sum_{\substack{k\in \N}}
\text{\tt Res}(G,ik\pi/2)
\\=
\sum_{l\in \N}
{(-1)^{l}e^{-2l\pi \omega} }
+
\sum_{l\in \N}
\frac{(-1)^{l}e^{-(2l+1)\pi \omega} }
{i }\text{\tt Res}\Bigl(
\frac{e^{2i\omega z -2i\kappa\coth z}}{\cosh z},0\Bigr),
\\
=\frac{1}{1+e^{-2\pi \omega}}
+
\frac{e^{-\pi \omega}}{i(1+e^{-2\pi \omega})}
\text{\tt Res}\Bigl(
\frac{e^{2i\omega z -2i\kappa\coth z}}{\cosh z},0\Big),
\end{multline*}
concluding the proof of the lemma.
\end{proof}
\begin{pro}\label{pro.921}
 Using the notations \eqref{note}, with $a_{11}$ defined in  \eqref{9522}
 (see also \eqref{9523++}), we have for $\tau>0, \sigma\ge 0$,
\begin{equation}\label{9536}
 a_{11}(\tau, \sigma)
 =\frac{1}{1+e^{-2\pi \omega}}
+
\frac{e^{-\pi \omega}}{1+e^{-2\pi \omega}}
\im\left\{
\res{
\frac{e^{2i(\omega z -\kappa\coth z)}}{\cosh z},0}\right\}.
\end{equation}
\end{pro}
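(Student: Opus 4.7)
The plan is to assemble the preceding lemmas via Cauchy's residue theorem applied to the function $G$ of \eqref{953++}. First I would pass from the real formula \eqref{9522} to its complex rewriting \eqref{9523++}: with $s=t/2$ and the notations \eqref{note}, the integrand of \eqref{9522} equals $2\pi\,\im G(s)$ and is even in $s$, so the half-line integral extends to a full-line principal value; moreover, the apparent singularity at $s=0$ sits entirely in $\re G$ (since $\im\bigl(e^{2i(\omega s-\kappa\tanh s)}/\sinh s\bigr)\sim 2(\omega-\kappa)$ near $0$), so taking $\im$ before the limit produces a convergent quantity.

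Next I would apply Cauchy's theorem to the closed contour of \eqref{9525+}, choosing the large radius $R=R_m:=\pi/4+m\pi/2$ so that $\gamma_{R_m}^+$ avoids the singular set $i\pi(\tfrac12+\Z)$ of $G$. The enclosed singularities are exactly the points $ik\pi/2$ for $k\in\N$ with $k\pi/2<R_m$, and \eqref{9525+} rearranges to
\[
\int_{[-R_m,-\epsilon]\cup[\epsilon,R_m]}\!G(z)\,dz \;=\; 2i\pi\!\!\!\sum_{k\pi<2R_m}\!\!\res{G,\tfrac{ik\pi}{2}} \;-\;\oint_{\gamma_\epsilon^-}\!G\,dz \;-\;\oint_{\gamma_{R_m}^+}\!G\,dz.
\]
I would then take imaginary parts on both sides and pass to the limits $\epsilon\to 0_+$ and $m\to+\io$: Claim \ref{claim917} gives $\im\!\lim\oint_{\gamma_\epsilon^-}G\,dz=\im(i/2)=\tfrac12$, Lemma \ref{lem.918} kills the upper-arc contribution, and Lemma \ref{lem.920} evaluates the residue series in closed form. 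Combining these with \eqref{9523++} produces
\[
a_{11}(\tau,\sigma) \;=\; \tfrac12 \;+\; \re\Bigl(2\pi\!\sum_{k\in\N}\res{G,\tfrac{ik\pi}{2}}\Bigr)-\tfrac12 \;=\; \re\Bigl(2\pi\!\sum_{k\in\N}\res{G,\tfrac{ik\pi}{2}}\Bigr),
\]
and inserting the explicit expression from Lemma \ref{lem.920} together with the identity $\re(w/i)=\im w$ converts the second summand into the desired $\tfrac{e^{-\pi\omega}}{1+e^{-2\pi\omega}}\,\im\!\bigl(\res{e^{2i(\omega z-\kappa\coth z)}/\cosh z,0}\bigr)$, yielding \eqref{9536} exactly.

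The only real difficulty is bookkeeping of imaginary parts: the complex principal-value integral $\lim\int_{[-R,R]}G(s)\,ds$ diverges, but its imaginary part converges, and the constant $\tfrac12$ appearing in \eqref{9523++}, the contribution $\res(G,0)=1/(2\pi)$ in the $k=0$ residue term, and the small-arc limit $\im(i/2)=\tfrac12$ must be matched up consistently so that they telescope to the clean identity above. All of the substantial analytic work -- the exponential decay on $\gamma_{R_m}^+$ controlled through Claim \ref{claim919} and Lemma \ref{lem.918}, the small-arc limit of Claim \ref{claim917}, and the summation of the residue series in Lemma \ref{lem.920} -- is already in hand, so no further estimates are required and the proof reduces to this organizational step.
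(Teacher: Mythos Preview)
Your proposal is correct and follows essentially the same route as the paper's proof: take the imaginary part of the contour identity \eqref{9525+}, send $\epsilon\to 0_{+}$ and $R=R_m\to+\io$, invoke Claim \ref{claim917} for the small arc, Lemma \ref{lem.918} for the large arc, and Lemma \ref{lem.920} for the residue sum, then read off \eqref{9536} via $\im(iX)=\re X$ and $\re(w/i)=\im w$. The paper compresses all of this into the single displayed line $a_{11}-\tfrac12+\im\tfrac{i}{2}=\im\,i\bigl(\dots\bigr)$, but the content is identical; your additional remark that the $s=0$ singularity lies entirely in $\re G$ (so that the imaginary-part integral in \eqref{9523++} is genuinely convergent) is a helpful clarification that the paper leaves implicit.
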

\begin{proof}
 Taking  the imaginary part of both sides in \eqref{9525+}, and letting 
 $R\rightarrow +\io$, 
 $\epsilon\rightarrow 0_{+}$, we get, using \eqref{9533},
 \eqref{9523++}, Claim \ref{claim917},
 $$
a_{11}-\frac12+\im \frac i2=\im i\Bigl(
\frac{1}{1+e^{-2\pi \omega}}
+
\frac{e^{-\pi \omega}}{i(1+e^{-2\pi \omega})}
\text{\tt Res}\bigl(
\frac{e^{2i\omega z -2i\kappa\coth z}}{\cosh z},0\bigr)
\Bigr),
 $$
 which is \eqref{9536}.
\end{proof}
\begin{rem}\rm
 In particular, when $\sigma=0$, we find for $\tau>0$
 \begin{equation}
1- a_{11}(\tau, 0)=\frac{e^{-4\pi^{2} \tau}}{1+e^{-4\pi^{2} \tau}},
\end{equation}
and since \eqref{5229} implies that
\begin{align*}
2\pi\re a_{12}(\tau, 0)&=
\int_{0}^{+\io}
\frac{\sin(4\pi t\tau)}{\cosh t} dt
=\im \poscal{e^{i4\pi \tau t} H(t)}{\sech t}_{\mathscr S'(\R_{t}), \mathscr S(\R_{t})}
\\
&=\im \frac{1}{4i\pi\tau}\poscal{\frac{d}{dt}\bigl\{e^{i4\pi \tau t}\bigr\} H(t)}{\sech t}
\\
&=\im \frac{1}{4i\pi\tau}
\left(\poscal{\frac{d}{dt}\bigl\{e^{i4\pi \tau t} H(t)\bigr\}}{\sech t}
-\poscal{\delta_{0}}{\sech}
\right)
\\
&=\frac{1}{4\pi\tau}-\im\frac{1}{4i\pi\tau}\poscal{e^{i4\pi \tau t} H(t)}{\sech' (t)}
=\frac{1}{4\pi\tau}+O(\tau^{-3}),\quad \tau\rightarrow+\io,
\end{align*}
we readily find that 
$$
\re a_{12}(\tau,0)\gg 1-a_{11}(\tau,0), \quad \tau\rightarrow+\io,
$$
providing another proof of Theorem \ref{thm.59} in the case $\sigma=0$.
\end{rem}
\begin{rem}\label{rem.923}\rm The equation \eqref{zeze} gives also 
$
 \im a_{12}(\tau,\sigma)
=
\frac{e^{-2\pi^2\tau}}{2} a_{11}(\tau,\sigma),
$
where \eqref{2525} gives,  using the notations \eqref{note},
\begin{align}
\im a_{12}(\tau,\sigma)&=\frac{1}{4\pi}
\int_{0}^{+\io}
\frac{\cos(t\omega-{2\kappa}{\coth(t/2)})}{\cosh({t}/2)} dt 
\\&=\frac{1}{2\pi}
\int_{0}^{+\io}
\frac{\cos \bigl(2(t\omega-{\kappa}{\coth t})\bigr)}{\cosh {t}} dt
\notag
\\&=\frac{1}{4\pi}
\int_{\R}
\frac{\cos \bigl(2(t\omega-{\kappa}{\coth t})\bigr)}{\cosh {t}} dt.
\notag
\end{align}
With $G$ given by \eqref{953++}, we note that 
\begin{equation}
\tilde G(z)=\frac {ie^{\pi \omega}}2G(z+\frac{i\pi}2)=\frac{
\exp {2i(\omega z-\kappa\coth z)}
}{4\pi \cosh z},
\end{equation}
an holomorphic function with simple poles at ${i\pi}(\frac12+\Z)$
and essential singularities  at ${i\pi}\Z$.
Following now for $\tilde G$
the track of $G$ in  Claim \ref{claim917},
Lemmas \ref{lem.918}, \ref{lem.920}  and Proposition \ref{pro.921},
we get
\begin{equation}\label{1111++}
\im a_{12}(\tau,\sigma)=\lim_{\substack{m\rightarrow+\io\\\epsilon\rightarrow 0_{+}}}
\re 
\oint_{[-R_{m},-\epsilon]\cup[\epsilon,R_{m}]}\tilde G(z) dz,\quad R_{m}=\frac\pi4+m\frac\pi2,
\end{equation}
and we have also  
\begin{multline}\label{9525+++}
\oint_{[-R_{m},-\epsilon]\cup[\epsilon, R_{m}]} \tilde G(z) dz-
\oint_{\substack{\gamma_{\epsilon}^{+}\\\gamma_{\epsilon}^{+}(\theta)=\epsilon e^{i\theta}\\
0\le t\le \pi}} \tilde G(z) dz
+\oint_{\substack{\gamma_{R_{m}}^{+}\\\gamma_{R_{m}}^{+}(\theta)
=R_{m} e^{i\theta}\\
0\le t\le \pi}} \tilde G(z) dz
\\=2i\pi \sum_{1\le k\le m}\text{\tt Res}(\tilde G,ik\pi/2)
=-\pi 
{e^{\pi \omega}}
\sum_{1\le k\le m}\res
{G(\zeta+\frac{ik\pi}2+\frac{i\pi}2)
,0}
\\
=-\pi 
{e^{\pi \omega}}
\sum_{2\le l\le m+1}\res
{G(\zeta+\frac{il\pi}2)
,0}.
\end{multline}
\begin{claim}\label{claim.924-}
 We have 
 $\lim_{\epsilon\rightarrow 0}\oint_{\gamma_{\epsilon}^{+}} \tilde G(z) dz=0.$
\end{claim}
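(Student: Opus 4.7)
The function $\tilde G$ has an essential singularity at $0$, coming entirely from the $\coth z$ term inside the exponential. My plan is to isolate this singular part and exploit the fact that, on the \emph{upper} semicircle, the resulting factor $\exp(-2i\kappa/z)$ actually \emph{decays}, rather than oscillating.

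First, I would write $\coth z = \tfrac{1}{z} + h(z)$, where $h$ is holomorphic in a neighborhood of $0$ (this is standard: $\coth z - 1/z$ extends analytically since the Laurent expansion of $\coth z$ at $0$ has principal part $1/z$). Consequently, near $0$,
\begin{equation*}
\tilde G(z) = H(z)\,\exp\bigl(-2i\kappa/z\bigr), \qquad
H(z)= \frac{\exp\bigl(2i\omega z-2i\kappa h(z)\bigr)}{4\pi\cosh z},
\end{equation*}
where $H$ is holomorphic in a neighborhood of $0$, and thus bounded on a fixed closed disc $\overline{D(0,r_0)}$ by some constant $M_0$.

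Next, I would parameterize $z=\epsilon e^{i\theta}$ for $\theta\in[0,\pi]$, giving $1/z=\epsilon^{-1}e^{-i\theta}=\epsilon^{-1}(\cos\theta-i\sin\theta)$, hence
\begin{equation*}
\bigl\vert\exp(-2i\kappa/z)\bigr\vert
= \exp\bigl(-2\kappa\epsilon^{-1}\sin\theta\bigr)\le 1 \quad\text{for }\theta\in[0,\pi],
\end{equation*}
since $\sin\theta\ge 0$ there (this is precisely the reason the upper semicircle is the right contour: on the lower semicircle this factor blows up). Then, for $\epsilon\le r_0$,
\begin{equation*}
\Bigl\vert\oint_{\gamma_\epsilon^+}\tilde G(z)\,dz\Bigr\vert
\le M_0\,\epsilon\int_0^\pi e^{-2\kappa\epsilon^{-1}\sin\theta}\,d\theta.
\end{equation*}

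If $\kappa=0$, the right-hand side is $M_0\pi\epsilon\to 0$ trivially (as is obvious anyway from $\tilde G$ being holomorphic at $0$ in that case). For $\kappa>0$, the convexity inequality $\sin\theta\ge 2\theta/\pi$ on $[0,\pi/2]$ and symmetry give
\begin{equation*}
\int_0^\pi e^{-2\kappa\epsilon^{-1}\sin\theta}\,d\theta
\le 2\int_0^{\pi/2}e^{-4\kappa\theta/(\pi\epsilon)}\,d\theta
\le \frac{\pi\epsilon}{2\kappa},
\end{equation*}
so $|\oint_{\gamma_\epsilon^+}\tilde G(z)\,dz|\le M_0\pi\epsilon^2/(2\kappa)=O(\epsilon^2)\to 0$, proving the claim. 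No step here is really an obstacle; the only conceptual point is the sign bookkeeping that makes $\exp(-2i\kappa/z)$ a \emph{decaying} factor on the upper semicircle, which is precisely the contour chosen in \eqref{9525+++}.
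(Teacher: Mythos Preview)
Your proof is correct and follows essentially the same approach as the paper: both arguments show that $\lvert e^{-2i\kappa\coth z}\rvert\le 1$ on the upper semicircle (you via the Laurent decomposition $\coth z=1/z+h(z)$, the paper via a direct computation of $\im\coth(\epsilon e^{i\theta})$), after which the integral is trivially $O(\epsilon)$. Your extraction of the principal part is a slightly cleaner way to see the sign, and your Jordan-type estimate even gives $O(\epsilon^2)$ when $\kappa>0$, but the core idea is the same.
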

\begin{proof}
Indeed, we have
$
-2i\kappa\coth \epsilon e^{i\theta}={-2i\kappa\frac{1+e^{-2\epsilon e^{i\theta}}}{
1-e^{-2\epsilon e^{i\theta}}
}}
$
and  for $\theta\in (0,\pi)$,
\begin{multline*}
\im\bigl(
\frac{1+e^{-2\epsilon e^{i\theta}}}{
1-e^{-2\epsilon e^{i\theta}}
}
\bigr)=\im\frac{
(1+e^{-2\epsilon e^{i\theta}})(1-e^{-2\epsilon e^{-i\theta}})
}{\val{1-e^{-2\epsilon e^{i\theta}}}^{2}}
=\im \frac{
e^{-2\epsilon e^{i\theta}}
-e^{-2\epsilon e^{-i\theta}}
}
{\val{1-e^{-2\epsilon e^{i\theta}}}^{2}}
\\
=e^{-2\epsilon \cos \theta}
\im \frac{
e^{-2\epsilon i\sin \theta}
-e^{2\epsilon i\sin \theta}
}
{\val{1-e^{-2\epsilon e^{i\theta}}}^{2}}
=e^{-2\epsilon \cos \theta}
\im \frac{
-2i \sin(2\epsilon \sin \theta)
}
{\val{1-e^{-2\epsilon e^{i\theta}}}^{2}}
\\
=-2 e^{-2\epsilon \cos \theta}
\frac{
\sin(2\epsilon \sin \theta)
}
{\val{1-e^{-2\epsilon e^{i\theta}}}^{2}}\le 0,\quad \text{if $\epsilon\le \pi/4$,}
\end{multline*}
so that 
$
\val{e^{-2i\kappa\coth \epsilon e^{i\theta}}}\le 1,
$
implying 
$$
4\pi\Val{\oint_{\gamma_{\epsilon}^{+}} \tilde G(z) dz}\le \int_{0}^{\pi}
\frac{\val{e^{i\omega\epsilon e^{i\theta}}}}{\val{\cosh \epsilon e^{i\theta}}} \epsilon \val{i e^{i\theta}} d\theta=\epsilon\int_{0}^{\pi}
\frac{e^{-\omega\epsilon\sin \theta}}
{\val{\cosh \epsilon e^{i\theta}}} d\theta,
$$
which goes to zero when $\epsilon\rightarrow0_{+}$, concluding the proof of Claim \ref{claim.924-}.
\end{proof}
\begin{claim}\label{claim.924}
 We have 
 $\lim_{\N\ni m\rightarrow +\io}\oint_{\gamma_{\frac\pi4+m\frac\pi2}^{+}} \tilde G(z) dz=0.$
\end{claim}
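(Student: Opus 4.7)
\textbf{Proof plan for Claim \ref{claim.924}.} The plan is to mimic the structure of Lemma \ref{lem.918}, but with $\tanh$ replaced by $\coth$ and $\sinh$ replaced by $\cosh$. Parameterizing $z=R_m e^{i\theta}$ with $R_m=\frac\pi4+m\frac\pi2$, the integral to control is
\[
\oint_{\gamma^+_{R_m}}\tilde G(z)\,dz \;=\; \frac{iR_m}{4\pi}\int_0^\pi \frac{\exp 2i\bigl(\omega R_m e^{i\theta}-\kappa\coth(R_m e^{i\theta})\bigr)}{\cosh(R_m e^{i\theta})}\,e^{i\theta}\,d\theta,
\]
so we must bound
\(
\bigl|\cosh(R_m e^{i\theta})\bigr|
\)
from below and
\(
\bigl|e^{-2i\kappa\coth(R_m e^{i\theta})}\bigr|=e^{2\kappa\,\im\coth(R_m e^{i\theta})}
\)
from above, both uniformly in $m\in\N$ and $\theta\in[0,\pi]$.

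\textbf{Main step (analog of Claim \ref{claim919}).} I will prove that
\[
\beta_0:=\inf_{\substack{m\in\N\\ \theta\in[0,\pi]}}\bigl|1-e^{-2R_m e^{i\theta}}\bigr|>0,
\qquad
\beta_1:=\inf_{\substack{m\in\N\\ \theta\in[0,\pi]}}\bigl|1+e^{-2R_m e^{i\theta}}\bigr|>0.
\]
The argument is exactly the \emph{reductio ad absurdum} used for Claim \ref{claim919}: if $e^{-2R_{m_l}e^{i\theta_l}}\to \pm 1$ along sequences, then taking moduli gives $R_{m_l}\cos\theta_l\to 0$, which forces $\sin\theta_l\to 1$ and so $R_{m_l}\sin\theta_l-R_{m_l}\to 0$; passing to arguments then yields $e^{-2iR_{m_l}}\to\pm 1$, contradicting $e^{-2iR_m}=e^{-i\pi/2-im\pi}=-i(-1)^m\in\{\pm i\}$. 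This is the heart of the proof and the only place where the specific form $R_m=\pi/4+m\pi/2$ matters.

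\textbf{Consequences of the main step.} By the symmetry $\coth(Re^{i\theta})=\overline{\coth(Re^{i(\pi-\theta)})}$ and $\cosh(Re^{i\theta})=\overline{\cosh(Re^{i(\pi-\theta)})}$ (from $\coth\bar z=\overline{\coth z}$ and the fact that $\cosh$ is even), it suffices to work on $\theta\in[0,\pi/2]$, where $\cos\theta\ge 0$ and hence $|e^{-2R_m e^{i\theta}}|\le 1$. There
\[
\bigl|\coth(R_m e^{i\theta})\bigr|=\frac{|1+e^{-2R_m e^{i\theta}}|}{|1-e^{-2R_m e^{i\theta}}|}\le \frac{2}{\beta_0},
\quad
\bigl|\cosh(R_m e^{i\theta})\bigr|=\frac{e^{R_m\cos\theta}}{2}\bigl|1+e^{-2R_m e^{i\theta}}\bigr|\ge \frac{\beta_1\,e^{R_m\cos\theta}}{2}.
\]
In particular $\bigl|e^{-2i\kappa\coth(R_m e^{i\theta})}\bigr|\le e^{4\kappa/\beta_0}$.

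\textbf{Conclusion.} Plugging these bounds in and using $|e^{2i\omega R_m e^{i\theta}}|=e^{-2\omega R_m\sin\theta}$, we get
\[
\Bigl|\oint_{\gamma^+_{R_m}}\tilde G(z)\,dz\Bigr|\le \frac{R_m\,e^{4\kappa/\beta_0}}{2\pi\beta_1}\int_0^\pi e^{-R_m(2\omega\sin\theta+|\cos\theta|)}\,d\theta.
\]
Since $2\omega\sin\theta+|\cos\theta|\ge \min(2\omega,1)>0$ for all $\theta\in[0,\pi]$ (both terms being nonnegative and not simultaneously zero), the integrand is dominated by $e^{-R_m\min(2\omega,1)}$, so the whole right-hand side is $O(R_m e^{-cR_m})$ with $c=\min(2\omega,1)>0$, and therefore tends to $0$ as $m\to+\io$. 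The main obstacle is the sublemma on $\beta_0,\beta_1$: it is where the arithmetic of $R_m=\pi/4+m\pi/2$ enters, and once it is in hand, the remainder is a routine exponential estimate, exactly parallel to \eqref{9526++} in the proof of Lemma \ref{lem.918}.
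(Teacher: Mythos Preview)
Your proposal is correct and takes essentially the same approach as the paper: the paper invokes Claim \ref{claim919} directly (this is exactly your ``Main step'' on $\beta_0,\beta_1$), then derives the same pointwise bound $\frac{2R_m}{\beta_1}e^{4\kappa/\beta_0}e^{-2\omega R_m\sin\theta - R_m|\cos\theta|}$ on the integrand by splitting into $\theta\in[0,\pi/2]$ and $\theta\in[\pi/2,\pi]$ rather than your symmetry reduction. One harmless slip: your symmetry identity should read $\coth(Re^{i(\pi-\theta)})=-\overline{\coth(Re^{i\theta})}$ (since $\coth$ is odd), but only moduli are used, so the argument is unaffected.
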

\begin{proof}
Indeed, we have, using Claim \ref{claim919}, 
$$
\val{\coth (R_{m} e^{i\theta})}=\Bigl\vert\frac{1+e^{-2R_{m}e^{i\theta}}}{1-e^{-2R_{m}e^{i\theta}}}
\Bigr\vert\le
\begin{cases}
 \frac{1+e^{-2R_{m}\cos \theta}}{\beta_{0}}\le \frac2{\beta_{0},}&\text{for $\theta\in [0,\pi/2],$}\\
 \\
 \Val{\frac{
 1+e^{2R_{m}e^{i\theta}}
 }{
 1-e^{2R_{m}e^{i\theta}}
 }}\le \frac{2}{\beta_{0}},&\text{for $\theta\in [\frac\pi 2,\pi],$}
 \end{cases}
 $$
 so that
 \begin{multline*}
\val {\tilde G(R_{m} e^{i\theta}) iR_{m}e^{i\theta}}\le R_{m} e^{4\kappa/\beta_{0}} e^{-2\omega R_{m}\sin \theta}
\begin{cases}
\Val{\frac{2 e^{-R_{m}e^{i\theta}}}{1+e^{-2R_{m}e^{i\theta}}}}
\le \frac{2 e^{-R_{m}\cos \theta}}{\beta_{1}}&\text{for $\theta\in [0,\frac\pi 2]$,}
 \\
\Val{\frac{2 e^{R_{m}e^{i\theta}}}{1+e^{2R_{m}e^{i\theta}}}}
\le \frac{2 e^{R_{m}\cos \theta}}{\beta_{1}}&\text{for $\theta\in [\frac\pi 2,\pi]$,}
\end{cases}
\\
\le  \frac{2R_{m}}{\beta_{1}}e^{4\kappa/\beta_{0}} e^{-2\omega R_{m}\sin \theta-R_{m}\val{\cos \theta}},
\end{multline*}
which goes to 0 when $m$ goes to $+\io$, proving the claim.
\end{proof}
Using \eqref{9533},
we calculate now
\begin{align*}
2\pi\sum_{l\ge 2}&\res
{G(\zeta+\frac{il\pi}2)
,0}
\\&=\frac{1}{1+e^{-2\pi \omega}}
+
\frac{e^{-\pi \omega}}{i(1+e^{-2\pi \omega})}
\res
{\frac{e^{2i\omega z -2i\kappa\coth z}}{\cosh z},0}
\\&\hskip199pt-2\pi\Bigl(\res{G,i\pi/2}+\res{G,0}\Bigr)
\\
&=\frac{1}{1+e^{-2\pi \omega}}
+
\frac{e^{-\pi \omega}}{i(1+e^{-2\pi \omega})}
\res
{\frac{e^{2i\omega z -2i\kappa\coth z}}{\cosh z},0}
\\
&\hskip199pt
+ie^{-\pi \omega}\res
{\frac{e^{2i\omega z -2i\kappa\coth z}}{\cosh z},0}
-1
\\
&=-\frac{e^{-2\pi \omega}}{1+e^{-2\pi \omega}}
-i\left(\frac{e^{-\pi \omega}}{1+e^{-2\pi \omega}}-e^{-\pi \omega}\right)
\res
{\frac{e^{2i\omega z -2i\kappa\coth z}}{\cosh z},0}
\\
&=-\frac{e^{-2\pi \omega}}{1+e^{-2\pi \omega}}
+ie^{-\pi \omega}\left(\frac{e^{-2\pi \omega}}{1+e^{-2\pi \omega}}\right)
\res
{\frac{e^{2i\omega z -2i\kappa\coth z}}{\cosh z},0},
\end{align*}
so that 
from \eqref{1111++}, \eqref{9525+++},  Claims \ref{claim.924-} \& \ref{claim.924},
we obtain
\begin{align*}
&\im a_{12}(\tau,\sigma)
\\
&=-\pi e^{\pi \omega}\frac1{2\pi}
\left(
-\frac{e^{-2\pi \omega}}{1+e^{-2\pi \omega}}
-e^{-\pi \omega}\left(\frac{e^{-2\pi \omega}}{1+e^{-2\pi \omega}}\right)
\im\Bigl\{
\text{\tt Res}\Bigl(
{\frac{e^{2i\omega z -2i\kappa\coth z}}{\cosh z},0}
\Bigr)
\Bigr\}
\right)
\\
&=e^{\pi \omega}\frac1{2}
\left(
\frac{e^{-2\pi \omega}}{1+e^{-2\pi \omega}}
+e^{-\pi \omega}\left(\frac{e^{-2\pi \omega}}{1+e^{-2\pi \omega}}\right)
\im\Bigl\{
\text{\tt Res}\Bigl(
{\frac{e^{2i\omega z -2i\kappa\coth z}}{\cosh z},0}
\Bigr)
\Bigr\}
\right),
\end{align*}
so that 
\begin{multline}\label{9542}
\im a_{12}(\tau,\sigma)
\\=
\frac{e^{-\pi \omega}}{2(1+e^{-2\pi \omega})}
+\frac{e^{-2\pi \omega}}{2(1+e^{-2\pi \omega})}
\im\left\{\res
{\frac{e^{2i\omega z -2i\kappa\coth z}}{\cosh z},0}\right\},
\end{multline}
recovering \eqref{9536} from \eqref{zeze}.
\end{rem}
\begin{nb}\rm
 We note that
 \begin{align}\label{}
\res
{\frac{e^{2i\omega z -2i\kappa\coth z}}{\cosh z},0}=\frac12
\res
{\frac{e^{i(\omega z -2\kappa\coth (z/2))}}{\cosh(z/2)},0},
\end{align}
so that \eqref{9542} corroborates  $(\text{A}14)$ in \cite{MR2131219};
however, we were not able to understand formulas 
 $(\text{A}10)$, $(\text{A}11)$ and $(20)$ in \cite{MR2131219}.
\end{nb}
\subsection{Airy function}\label{secairy} 
\index{Airy function}
\index{{~\bf Notations}!$\ai(x)$}
\subsubsection{Standard results on the Airy function}
We collect in this section a couple of classical results on the Airy function
(see e.g. Definition 7.6.8 in Section 7.6 of \cite{MR1996773} or  the references \cite{MR2722693},
\cite{MR0136773}, \cite{MR0174795}). For all the statements of this section whose proofs are not included, we refer the reader to Chapter 9 of \cite{special}.
\begin{defi}
 The Airy function $\ai$ is defined as the inverse Fourier transform of $\xi\mapsto e^{i(2\pi \xi)^3/3}$.
\end{defi}
\begin{pro}\label{pro910}
 For any $h>0$ and all $x\in \C$, we have 
 \begin{equation}\label{}
\ai(x)=\frac1{2\pi}\int e^{\frac i3(\xi+ih)^{3}} e^{ix(\xi+ih)} d\xi =e^{-xh} e^{\frac{h^{3}}3}
\frac1{2\pi}\int e^{-h \xi^{2}} e^{i(\frac{\xi^{3}}{3}-\xi h^{2})}e^{ix\xi} d\xi.
\end{equation}
We note that the function $\R\ni \xi\mapsto e^{\frac i3(\xi+ih)^{3}}$ belongs to the Schwartz space 
 for any $h>0$ since 
 $$
 \frac i3(\xi+ih)^{3}=-h \xi^{2}+\frac{h^{3}}{3} +i\bigl(\frac{\xi^{3}}{3}-\xi h^{2}\bigr),
 $$
 so that 
 $$
 e^{\frac i3(\xi+ih)^{3}}=e^{-h \xi^{2}} e^{i(\frac{\xi^{3}}{3}-\xi h^{2})} e^{h^{3}/3}.
 $$
\end{pro}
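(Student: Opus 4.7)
The plan is a Cauchy contour-shift argument, combined with an analytic continuation to handle complex $x$. First, I would perform the substitution $\eta = 2\pi \xi$ in the Fourier definition of $\ai$ to rewrite $\ai(x) = \frac{1}{2\pi}\int_{\R} e^{i\eta^3/3 + ix\eta}\, d\eta$ (understood as an oscillatory integral for real $x$). Then for fixed real $x$ and $h>0$ I would apply Cauchy's theorem to the entire function $F(z) = e^{iz^3/3 + ixz}$ on the rectangular contour with vertices $-R, R, R+ih, -R+ih$, and let $R \to +\io$.

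The key computation is that on $z = \pm R + it$ with $t\in[0,h]$, one has $(\pm R + it)^3 = (\pm R^3 \mp 3Rt^2) + i(3R^2 t - t^3)$, so that
\begin{equation*}
\re\bigl(iz^3/3 + ixz\bigr) = -R^2 t + t^3/3 - xt,
\end{equation*}
and the modulus on the vertical sides is bounded by $e^{h^3/3 + |x|h} e^{-R^2 t}$. Hence
\begin{equation*}
\left\vert \int_{0}^{h} F(\pm R + it)\, i\, dt\right\vert \le e^{h^3/3 + |x|h} \int_{0}^{h} e^{-R^2 t}\, dt \le \frac{C_{x,h}}{R^2} \tend{}{0}{R}{+\io},
\end{equation*}
so that the two horizontal integrals coincide in the limit. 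This yields $\ai(x) = \frac{1}{2\pi}\int_{\R} F(\xi + ih)\, d\xi$ for real $x$.

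To handle general $x \in \C$, I would note that the same real-part computation at $z = \xi + ih$ gives $\re \bigl(i(\xi+ih)^3/3\bigr) = -h\xi^2 + h^3/3$, so the integrand of the RHS has modulus $e^{-h\xi^2 + h^3/3 - h\,\re x - \xi\,\im x}$, which is Gaussian in $\xi$ uniformly on compact subsets of $\C_x$. By differentiation under the integral sign the RHS defines an entire function of $x$; since $\ai$ is entire and the two entire functions coincide on $\R$, they coincide on $\C$. The second displayed equality then follows by peeling off the explicit real and imaginary parts of $\frac{i}{3}(\xi+ih)^3 + ix(\xi+ih) = -h\xi^2 + h^3/3 - xh + i\bigl(\xi^3/3 - \xi h^2\bigr) + ix\xi$.

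The only delicate point I anticipate is the bookkeeping for the vertical-side estimate when trying to run the contour shift directly at complex $x$: for $\im x < 0$ on the right side (resp. $\im x > 0$ on the left side) one picks up an unwanted factor $e^{|\im x|R}$ that the crude $1/R^2$ from $\int_{0}^{h} e^{-R^2 t}\, dt$ cannot absorb. The clean way around this, which I would adopt, is precisely to restrict the contour-shift step to real $x$ and obtain the complex-$x$ statement by the uniqueness of analytic continuation described above, rather than by trying to push the contour argument into the full complex plane.
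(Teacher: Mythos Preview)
Your proof is correct and follows the standard contour-shift approach. The paper does not include a detailed proof of this proposition; it states at the outset of the section that ``For all the statements of this section whose proofs are not included, we refer the reader to Chapter~9 of~\cite{special}.'' What the paper does supply, embedded in the statement itself, is precisely the algebraic ingredient your argument needs at the end: the expansion of $\frac{i}{3}(\xi+ih)^3$ showing that the shifted integrand lies in $\mathscr S(\R)$ for $h>0$. Your rectangular-contour computation of $\re\bigl(iz^3/3+ixz\bigr)=-R^2t+t^3/3-xt$ on the vertical sides, the resulting $O(R^{-2})$ bound, and the two-step strategy (real $x$ by Cauchy's theorem, then complex $x$ by analytic continuation of two entire functions agreeing on $\R$) together form the natural complete argument that the paper leaves to the reference. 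Your observation that the contour shift cannot be run directly for all complex $x$ because of the $e^{|\im x|R}$ growth on one vertical side, and that analytic continuation is the clean workaround, is exactly right.
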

\begin{theorem}
 The Airy function $\ai$ is an entire function on $\C$, real-valued on the real line, which is the unique solution of the initial value problem for the Airy equation
 \begin{equation}\label{992}
\ai''(x)-x\ai(x)=0, \quad \ai(0)=\frac{3^{-1/6}\Gamma(1/3)}{2\pi}, \quad \ai'(0)=-
\frac{3^{1/6}\Gamma(2/3)}{2\pi}.
\end{equation}
We have also, for any $x\in \C$,
\begin{equation}\label{992+}
\ai(x)=\frac1\pi\int_{0}^{+\io} e^{-\xi^3/3} e^{-x\xi/2}\cos\bigl(\frac{x\xi\sqrt 3}{2}+\frac\pi 6\bigr)d\xi,
\end{equation}
and the power series expansion of the Airy function is
\begin{equation}\label{}
\ai(x)=\frac{1}{\pi 3^{2/3}}\sum_{k\ge 0}\frac{(3^{1/3}x)^k}{k!}\Gamma\bigl(\frac{k+1}{3}\bigr)
\sin\bigl(2(k+1)\frac \pi 3\bigr).
\end{equation}
\end{theorem}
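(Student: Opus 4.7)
The plan is to handle the six assertions in the order: entireness, the Airy ODE (hence uniqueness with the prescribed initial data), the integral representation \eqref{992+}, the value of $\ai(0)$ and $\ai'(0)$, and finally the power series, with real-valuedness being a direct consequence of \eqref{992+}. The starting point is Proposition \ref{pro910}: for every $h>0$,
$$
\ai(x)=\frac{1}{2\pi}\int_{\R} e^{\frac i3 (\xi+ih)^{3}}\,e^{ix(\xi+ih)}\,d\xi,
$$
with integrand bounded in modulus by $e^{h^{3}/3}e^{-h\xi^{2}}e^{\val{x}h}$, which is Schwartz in $\xi$ uniformly on compact sets of $x\in\C$. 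Thus the right-hand side defines an entire function of $x$; this settles the first point and also permits differentiation under the integral sign.

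For the Airy equation I will differentiate twice in $x$ and then integrate by parts in $\xi$. Writing $z=\xi+ih$, one has $\p_{x}^{2}e^{ixz}=-z^{2}e^{ixz}$, while $\frac{d}{d\xi}e^{\frac i3 z^{3}}=iz^{2}e^{\frac i3 z^{3}}$ and $\frac{d}{d\xi}e^{ixz}=ix e^{ixz}$; the boundary terms vanish because of the Gaussian decay $e^{-h\xi^{2}}$ built into $e^{iz^{3}/3}$ along $\im z=h$. The two identities together give $\ai''(x)=x\ai(x)$. Uniqueness with prescribed $\ai(0),\ai'(0)$ is then standard ODE theory for a linear equation with entire coefficient.

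To get \eqref{992+} I will deform the contour $\R+ih$ into the two rays $\Gamma_{\pm}=\{re^{\pm i\pi/6}r e^{i\cdot 0}\}$, more precisely $\Gamma_{+}=\{re^{i\pi/6}:r\ge 0\}$ traversed outward and $\Gamma_{-}=\{re^{i5\pi/6}:r\ge 0\}$ traversed inward, joined at the origin. On $z=re^{i\theta}$ one has $\val{e^{iz^{3}/3}}=e^{-r^{3}\sin(3\theta)/3}$, which decays for $\theta\in(0,\pi/3)\cup(2\pi/3,\pi)$; the rays at $\pi/6$ and $5\pi/6$ are the bisectors of those sectors, and a Jordan-type estimate on the connecting arcs at radius $R\to+\io$ (together with one at radius $\epsilon\to 0$) gives the deformation. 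On $\Gamma_{+}$ one has $iz^{3}/3=-r^{3}/3$ and $ixz=-xr/2+ixr\sqrt 3/2$; on $\Gamma_{-}$ one gets the complex conjugate, and since $e^{i5\pi/6}=-e^{-i\pi/6}$ the two contributions combine into
$$
\ai(x)=\frac1{2\pi}\int_{0}^{+\io} e^{-r^{3}/3-xr/2}\Bigl(e^{i\pi/6}e^{ixr\sqrt 3/2}+e^{-i\pi/6}e^{-ixr\sqrt 3/2}\Bigr)dr,
$$
which is \eqref{992+}. This formula is manifestly real for real $x$, giving the real-valuedness statement.

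The initial values follow by setting $x=0$ in \eqref{992+} and in its $x$-derivative (differentiation under the integral is justified by the $e^{-\xi^{3}/3}$ factor). Using $\cos(\pi/6)=\sqrt 3/2$, $\sin(\pi/6)=1/2$, and the change of variables $u=\xi^{3}/3$, the integrals reduce to $\int_{0}^{\io}e^{-u}u^{1/3-1}du=\Gamma(1/3)$ and $\int_{0}^{\io}e^{-u}u^{2/3-1}du=\Gamma(2/3)$, yielding the constants in \eqref{992}. For the power series I will rewrite $e^{-x\xi/2}\cos(x\xi\sqrt 3/2+\pi/6)=\re\bigl(e^{i\pi/6}e^{x\xi e^{2i\pi/3}}\bigr)$, expand the exponential in powers of $x\xi$, and use $\re(e^{i\pi/6}e^{2ik\pi/3})=\cos(\pi/6+2k\pi/3)=\sin(2(k+1)\pi/3)$; interchanging sum and integral (justified by dominated convergence against $e^{-\xi^{3}/3}\cosh(\val x\xi)$) and evaluating each moment via Gamma gives the stated series. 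The main technical obstacle is the contour deformation in step three: one must verify rigorously that the arcs at infinity contribute nothing for all $x\in\C$, which requires the standard Jordan-lemma argument adapted to the cubic phase $e^{iz^{3}/3}$ in the appropriate sectors; the rest of the proof is then routine Fubini and Gamma-function bookkeeping.
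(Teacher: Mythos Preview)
Your proof is correct and complete. The paper itself does not prove this theorem: at the start of Section~\ref{secairy} it announces that ``for all the statements of this section whose proofs are not included, we refer the reader to Chapter~9 of \cite{special}'', and no proof is supplied after the statement. So there is no in-paper argument to compare against; you have filled in what the paper treats as classical.

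Your route---Proposition~\ref{pro910} for entireness and the ODE, then the contour deformation from $\R+ih$ to the rays $e^{i\pi/6}\R_{+}$ and $e^{i5\pi/6}\R_{+}$ to obtain \eqref{992+}, then Gamma-function evaluations for the initial data and the power series---is the standard one and each step checks out. One point worth tightening in a final write-up: when you invoke the Jordan-type estimate for the arcs at radius $R$, the arc on the right starts at $R+ih$ with argument $\theta_{0}\sim h/R$, where $\sin(3\theta)$ is small; you should note explicitly that on this arc $\sin(3\theta)\ge \sin(3\theta_{0})\sim 3h/R$, so the bound on the arc integral is of order $R\,e^{\val{x}R}e^{-hR^{2}}\to 0$, the factor $e^{-hR^{2}}$ coming precisely from the shift by $ih$ in Proposition~\ref{pro910}. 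With that made explicit, the deformation is fully justified for every $x\in\C$.
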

\begin{lem}
 For $x\in\C\backslash\R_{-}$, we have 
 \begin{equation}\label{995+}
\ai(x)=\frac{1}{2\pi} e^{-\frac23 x^{3/2}}\int_{\R} e^{-x^{1/2}\xi^{2}} e^{i\xi^{3}/3} d\xi.
\end{equation}
\end{lem}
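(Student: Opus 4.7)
The plan is to start from Proposition \ref{pro910}, which gives, for any $h>0$,
\[
\ai(x)=\frac{1}{2\pi}\int_{\R} e^{\frac{i}{3}(\xi+ih)^{3}}e^{ix(\xi+ih)}\,d\xi,
\]
and to make the ``stationary'' choice $h=x^{1/2}$ when $x>0$. Expanding the cubic gives
\[
\tfrac{i}{3}(\xi+ih)^{3}+ix(\xi+ih)=\bigl(-xh+\tfrac{h^{3}}{3}\bigr)-h\xi^{2}+i\tfrac{\xi^{3}}{3}+i(x-h^{2})\xi,
\]
so that, specializing to $h=x^{1/2}$ (so $h^{2}=x$), the linear term in $\xi$ vanishes and the constant collapses to $-xh+\tfrac{h^{3}}{3}=-\tfrac{2}{3}x^{3/2}$. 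One reads off
\[
\ai(x)=\frac{1}{2\pi}\,e^{-\frac{2}{3}x^{3/2}}\int_{\R} e^{-x^{1/2}\xi^{2}}e^{i\xi^{3}/3}\,d\xi,\qquad x>0,
\]
which is the formula \eqref{995+} on the positive real axis.

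To pass from $x>0$ to $x\in\C\setminus\R_{-}$, the plan is a one-line analytic continuation argument. The left-hand side $\ai$ is entire. For the right-hand side, write $z:=x^{1/2}$ in the principal branch, so that $\re z>0$ on $\C\setminus\R_{-}$; then $|e^{-z\xi^{2}}|=e^{-(\re z)\xi^{2}}$ and $e^{i\xi^{3}/3}$ has modulus $1$, so the integrand is dominated uniformly on compact subsets of $\C\setminus\R_{-}$ by a Gaussian in $\xi$. Standard differentiation under the integral sign (or Morera plus Fubini) then shows that
\[
x\longmapsto \int_{\R} e^{-x^{1/2}\xi^{2}}e^{i\xi^{3}/3}\,d\xi
\]
is holomorphic on $\C\setminus\R_{-}$; the prefactor $e^{-\frac{2}{3}x^{3/2}}$ is obviously holomorphic there as well. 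Both sides of \eqref{995+} are therefore holomorphic on the connected open set $\C\setminus\R_{-}$ and coincide on the ray $(0,+\io)$, hence they coincide everywhere on $\C\setminus\R_{-}$ by the identity principle.

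The only step that requires any care is the specialization $h=x^{1/2}$, since Proposition \ref{pro910} is stated for real $h>0$. This is harmless here because on $(0,+\io)$ the choice $h=\sqrt{x}$ is itself real and positive, so one applies the proposition as stated; the extension to complex $x$ is deferred to the analytic-continuation step rather than being performed by a contour shift. I expect the only mildly subtle point to be the routine verification that the parameter integral is holomorphic on $\C\setminus\R_{-}$, which follows from the Gaussian bound $|e^{-z\xi^{2}}|=e^{-(\re z)\xi^{2}}$ together with local uniformity of $\re(x^{1/2})$ away from $\R_{-}$; no genuine obstacle arises.
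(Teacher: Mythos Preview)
Your proof is correct and follows exactly the same route as the paper: apply Proposition~\ref{pro910} with the choice $h=x^{1/2}$ for $x>0$, then extend to $\C\setminus\R_{-}$ by analytic continuation. The paper states this in a single sentence, while you have filled in the algebraic expansion and the Gaussian domination justifying holomorphy of the parameter integral, but the argument is identical.
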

\begin{proof}
 Using Proposition \ref{pro910}, we get \eqref{995+} for $x>0$ (choosing $h=x^{1/2}$), and then we may use 
 an analytic continuation argument.
\end{proof}
\begin{theorem}\label{thm913}
For all $M\in \N$, for all
  $x\in \C\backslash \R_{-}$, we have 
 \begin{multline}\label{995}
\ai(x)=\frac{1}{2\pi}e^{-\frac{2x^{3/2}}{3}} x^{-1/4}
\left\{
\sum_{0\le l\le M}\frac{(-1)^l}{3^{2l}(2l)!}\Gamma\bigl(3l+\frac 12\bigr) x^{-3l/2}
+R_{M}(x)
\right\},
\\\text{with}\quad
\val{R_{M}(x)}\le \frac{\Gamma\bigl(3M+3+\frac 12\bigr)}{3^{2M+2}(2M+2)!} \val{x}^{-\frac{3(M+1)}2)}
\left(
\cos(\frac{\arg x}2)
\right)^{-3(M+1)-\frac{1}{2}}.
\end{multline}
For $x<0$, we have 
\begin{align}
\ai(x)&=\frac{1}{\val x^{1/4}\sqrt \pi}\Bigl(\sin\bigl(\frac \pi 4+\frac 23\val x^{3/2}\bigr)
+O(\val x^{-3/2})\Bigr),
\label{aineg}
\\
\ai'(x)&=-\frac{\val x^{1/4}}{\sqrt \pi}\Bigl(\cos\bigl(\frac \pi 4+\frac 23\val x^{3/2}\bigr)
+O(\val x^{-3/2})\Bigr).
\label{aineg+}
\end{align}
\end{theorem}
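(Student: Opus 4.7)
The plan is to treat the two claims separately, using different but natural starting points. For the asymptotic expansion \eqref{995} valid on $\C\setminus\R_{-}$, I would begin from Formula \eqref{995+} of the preceding lemma and Taylor-expand $e^{i\xi^{3}/3}$ with Lagrange remainder:
\[
e^{i\xi^{3}/3}=\sum_{k=0}^{2M+1}\frac{(i\xi^{3}/3)^{k}}{k!}+r_{M}(\xi),\qquad \val{r_{M}(\xi)}\le \frac{\val{\xi}^{6M+6}}{3^{2M+2}(2M+2)!}.
\]
Since $e^{-x^{1/2}\xi^{2}}$ is even in $\xi$, the odd-$k$ terms in the sum integrate to zero against it. For the surviving indices $k=2l$ with $0\le l\le M$, one has $i^{2l}=(-1)^{l}$, and the Gaussian moment identity $\int_{\R}e^{-a\xi^{2}}\xi^{2n}d\xi=a^{-n-1/2}\Gamma(n+\frac12)$ applied with $a=x^{1/2}$, $n=3l$ produces $x^{-3l/2-1/4}\Gamma(3l+\frac12)$. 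Factoring $x^{-1/4}$ out of the resulting sum produces exactly the polynomial-in-$x^{-3/2}$ expression claimed in \eqref{995}.

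For the remainder, the key identity is $\val{e^{-x^{1/2}\xi^{2}}}=e^{-\re(x^{1/2})\xi^{2}}$ with $\re(x^{1/2})=\val x^{1/2}\cos(\arg x/2)$; inserting the pointwise bound on $r_{M}$ and applying the same Gaussian-moment formula now with $n=3M+3$ bounds the remainder integral by $\frac{\Gamma(3M+7/2)}{3^{2M+2}(2M+2)!}\bigl(\val x^{1/2}\cos(\arg x/2)\bigr)^{-3M-7/2}$. Multiplying by the extracted $\val x^{1/4}$ that accompanies $R_M$ converts the $\val x$-exponent $\frac14-\frac12(3M+\frac72)$ into $-3(M+1)/2$, while $-3M-\frac72=-3(M+1)-\frac12$ matches the claimed exponent on $\cos(\arg x/2)$. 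For the second part, $x<0$ say $x=-y$ with $y>0$, I would start from the tempered-distribution identity $\ai(-y)=\frac{1}{2\pi}\int_{\R}e^{i(\xi^{3}/3-y\xi)}d\xi$ (Fourier inversion of the defining formula), regularized by insertion of $e^{-\epsilon\xi^{2}}$ with $\epsilon\to 0^{+}$. The phase $\phi(\xi)=\xi^{3}/3-y\xi$ has two real non-degenerate critical points $\xi=\pm\sqrt y$, with $\phi(\pm\sqrt y)=\mp\frac23 y^{3/2}$ and $\phi''(\pm\sqrt y)=\pm 2\sqrt y$. A smooth cut-off localizing near each critical point (with the tail controlled by integration by parts, giving $O(y^{-N})$ for every $N$) together with the standard second-order stationary phase formula yields
\[
\ai(-y)=\frac{1}{2\pi}\sqrt{\frac{2\pi}{2\sqrt y}}\bigl(e^{i\pi/4}e^{-2iy^{3/2}/3}+e^{-i\pi/4}e^{2iy^{3/2}/3}\bigr)+O(y^{-7/4})=\frac{1}{\sqrt\pi\,y^{1/4}}\cos\bigl(\tfrac23 y^{3/2}-\tfrac\pi4\bigr)+O(y^{-7/4}),
\]
and the elementary identity $\cos(\theta-\pi/4)=\sin(\theta+\pi/4)$ yields \eqref{aineg}. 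The derivative estimate \eqref{aineg+} follows from the same analysis applied to $\ai'(-y)=\frac{1}{2\pi}\int i\xi\,e^{i(\xi^{3}/3-y\xi)}d\xi$, the extra factor $i\xi$ contributing $\pm i\sqrt y$ at the critical points and rearranging the trigonometric combination via $\sin(\theta-\pi/4)=-\cos(\theta+\pi/4)$.

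The main obstacle is the second part. Part 1 is essentially bookkeeping: a Taylor expansion under a Gaussian weight, with every estimate reducing to the single Gaussian moment formula. Part 2 by contrast is an oscillatory-integral argument: the integral defining $\ai(-y)$ is only conditionally convergent, so one cannot simply apply stationary phase off the shelf, and one cannot even use the expansion \eqref{995} by letting $\arg x\to\pi^-$ since the bound on $R_M$ blows up as $\cos(\arg x/2)\to 0$---this is a manifestation of the Stokes phenomenon for $\ai$. The care required is the regularization and the uniform control of the error as $\epsilon\to 0^{+}$, together with the localization argument that separates the two stationary-point contributions from a negligible remainder; none of this is automatic from the material developed earlier.
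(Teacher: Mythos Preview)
Your treatment of Part~1 (the expansion \eqref{995} on $\C\setminus\R_-$) is correct and is the standard argument: Taylor-expand $e^{i\xi^3/3}$ under the Gaussian weight from \eqref{995+}, discard the odd powers by parity, and evaluate Gaussian moments. The paper does not include a proof of this theorem in the text (it refers to external lecture notes), but your argument is the natural one and matches what any such reference would do.

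For Part~2, your stationary-phase approach is valid, but it is not the route the paper takes, and you have dismissed the simpler alternative too quickly. You write that ``one cannot even use the expansion \eqref{995} by letting $\arg x\to\pi^-$'' because the remainder bound blows up there. That is true, but the paper does not let $\arg x\to\pi^-$: instead it uses the connection formula \eqref{929l},
\[
\ai(-r)=2\re\bigl(e^{i\pi/3}\ai(re^{i\pi/3})\bigr),
\]
which expresses $\ai$ on the negative axis in terms of its values on the ray $\arg x=\pi/3$, where the bound on $R_M$ is perfectly good (the cosine factor is $\cos(\pi/6)=\sqrt3/2$). This is exactly what the paper does in the Remark following Lemma~\ref{lem914} to derive \eqref{n01}--\eqref{n03}, obtaining \eqref{aineg} with an explicit numerical constant in the error term rather than a bare $O(\cdot)$. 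The derivative formula \eqref{aineg+} follows by the same device applied one order further. Your stationary-phase argument would work, but it requires the regularization and localization you yourself flag as the main obstacle, whereas the connection-formula approach reduces Part~2 to Part~1 plus an algebraic identity---and, crucially, delivers the explicit constants the paper needs downstream for Lemma~\ref{lem918} and Lemma~\ref{lem920}.
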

\begin{lem}
 With $j=e^{2i\pi/3}$ we have for all $x\in \C$,
 \begin{equation}\label{}
\ai(x)+j\ai(jx)+j^{2}\ai(j^{2}x)=0.
\end{equation}
In particular for $r\ge 0$, we have
\begin{equation}\label{929l}
\ai(-r)=2\re\bigl(e^{\frac{i\pi}3}\ai(re^{\frac{i\pi}3})\bigr).
\end{equation}
\end{lem}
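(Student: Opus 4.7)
The strategy is to exploit the fact that the Airy ODE is invariant under the substitution $x \mapsto jx$ up to the scalar factor $j^3 = 1$, and then to identify the left-hand side as the unique solution of an initial value problem that is identically zero. Concretely, for $k \in \{0,1,2\}$ set
\begin{equation*}
f_k(x) = j^k \ai(j^k x).
\end{equation*}
Differentiating twice and using $j^{3k} = 1$, one finds $f_k''(x) = j^{3k}\ai''(j^k x) = \ai''(j^k x)$; then the Airy equation \eqref{992} applied at the point $j^k x$ yields $\ai''(j^k x) = (j^k x)\, \ai(j^k x) = x\, f_k(x)$. Thus each $f_k$ solves $u'' = x u$ on $\C$, and so does the sum $g(x) = f_0(x) + f_1(x) + f_2(x)$.

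The next step is to compute the Cauchy data of $g$ at the origin. Since each $f_k(0) = j^k \ai(0)$ and $f_k'(0) = j^{2k}\ai'(0)$, the relation $1 + j + j^2 = 0$ (with $1 + j^2 + j^4 = 1 + j^2 + j = 0$ as well) gives $g(0) = g'(0) = 0$. By the existence and uniqueness for the Cauchy problem attached to the linear ODE $u'' = xu$ (whose coefficients are entire), we conclude $g \equiv 0$ on $\C$, which is the first identity.

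For the second identity, one specializes to $x = -r$, $r \ge 0$. Since $-j = e^{-i\pi/3}$ and $-j^2 = e^{i\pi/3}$, the identity reads
\begin{equation*}
\ai(-r) = -j\,\ai(r e^{-i\pi/3}) - j^2\, \ai(r e^{i\pi/3}).
\end{equation*}
Now $\ai$ is entire with real Taylor coefficients (for instance from the series expansion after \eqref{992+}, or simply because $\ai$ is real-valued on $\R$ and entire), so the Schwarz reflection principle yields $\overline{\ai(z)} = \ai(\bar z)$. Applied at $z = re^{i\pi/3}$, this gives $\ai(re^{-i\pi/3}) = \overline{\ai(re^{i\pi/3})}$. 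Combined with $\bar j = j^2$, the right-hand side becomes
\begin{equation*}
-\overline{j^2\,\ai(r e^{i\pi/3})} - j^2 \ai(r e^{i\pi/3}) = -2\re\bigl(j^2 \ai(r e^{i\pi/3})\bigr) = 2\re\bigl(e^{i\pi/3} \ai(r e^{i\pi/3})\bigr),
\end{equation*}
which is \eqref{929l}. There is essentially no obstacle in this argument; the only point requiring care is the bookkeeping of the cube roots of unity in passing from the first to the second formula, and the invocation of Schwarz reflection to convert the complex conjugate arguments into real parts.
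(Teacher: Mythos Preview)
Your proof is correct. The paper does not actually supply a proof of this lemma: it is listed among the ``standard results on the Airy function'' for which the reader is referred to Chapter~9 of \cite{special}. Your argument via the invariance of the Airy ODE under $x\mapsto j^k x$, combined with the vanishing of the Cauchy data of $g=f_0+f_1+f_2$ at the origin (using $1+j+j^2=0$) and uniqueness for the linear ODE, is the standard route and is carried out cleanly. The passage to \eqref{929l} via Schwarz reflection and the bookkeeping $-j=e^{-i\pi/3}$, $-j^2=e^{i\pi/3}$, $\bar j=j^2$ is also correct.
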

\begin{lem}\label{lem914}
 The zeroes of the Airy function are simple and located on $(-\io, 0)$. We shall use the notation
 \begin{equation}\label{ai000}
 \ai^{-1}(\{0\})=\{\eta_{k}\}_{k\ge 0}, \quad \eta_{k+1}<\eta_{k}<0, \quad \lim_{k\rightarrow+\io} \eta_{k}=-\io.
\end{equation}
 The largest zero of $\ai$ is $\eta_{0}\approx -2.338107410$ and $\ai(\eta)$ is positive for $\eta>\eta_{0}$. We have also for all $k\ge 0$, 
 \begin{align}
&\ai(\eta_{2k+1})=0,\ \ai'(\eta_{2k+1}) <0
,\hskip5pt 
\ai(\eta_{2k})=0,\ \ai'(\eta_{2k}) >0,
\\
&\text{$\ai(\eta)<0$ \ for $\eta\in(\eta_{2k+1},\eta_{2k})$},\ 
\ai(\eta)>0\text{\ for $\eta\in (\eta_{2k+2},\eta_{2k+1}),$}\label{c11}
\\
&\text{$\ai''(\eta)>0$ for $\eta\in(\eta_{2k+1},\eta_{2k})$}, \ 
\text{$\ai''(\eta)<0$ for $\eta\in(\eta_{2k+2},\eta_{2k+1})$}.\label{convex}
\end{align}
\end{lem}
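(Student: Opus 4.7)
The plan is to combine three ingredients already established earlier in the Appendix: the Airy ODE \eqref{992}, the positive integral representation \eqref{992+}, and the asymptotic formulas \eqref{aineg}, \eqref{aineg+} for $x\to-\infty$. First I would show the (stronger) statement that $\ai>0$ on $[0,+\io)$, which immediately places all real zeros in $(-\io,0)$. At $x=0$ the representation \eqref{992+} gives $\ai(0)=\frac{\cos(\pi/6)}{\pi}\int_0^{+\io}e^{-\xi^3/3}d\xi>0$, consistent with \eqref{992}. Suppose there exists $x_0>0$ with $\ai(x_0)=0$, and take the smallest such $x_0$; then $\ai>0$ on $[0,x_0)$, forcing $\ai'(x_0)\le 0$. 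The case $\ai'(x_0)=0$ is excluded by uniqueness for the IVP of the linear ODE \eqref{992} (which would give $\ai\equiv 0$). So $\ai'(x_0)<0$, hence $\ai<0$ just to the right of $x_0$; but then $\ai''=x\ai<0$ there, so $\ai'$ keeps decreasing and $\ai$ decreases monotonically, contradicting $\lim_{x\to+\io}\ai(x)=0$ coming from the asymptotic \eqref{995} of Theorem \ref{thm913}.

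Next I would argue that every real zero is simple: if $\ai(x_0)=\ai'(x_0)=0$, the ODE $\ai''=x\ai$ yields inductively $\ai^{(k)}(x_0)=0$ for every $k$, and analyticity of $\ai$ forces $\ai\equiv 0$, absurd. To produce infinitely many zeros going to $-\io$, I would use \eqref{aineg}: write
\[
\ai(x)=\frac{1}{\sqrt\pi\,|x|^{1/4}}\Bigl(\sin\bigl(\tfrac{\pi}{4}+\tfrac{2}{3}|x|^{3/2}\bigr)+r(x)\Bigr),\qquad r(x)=O(|x|^{-3/2}).
\]
Choosing $x_m^{\pm}<0$ with $\frac{\pi}{4}+\frac{2}{3}|x_m^{\pm}|^{3/2}=\frac{\pi}{2}+m\pi\pm\frac{\pi}{4}$, the sine term equals $\pm\frac{\sqrt 2}{2}$ there, and for $m$ large enough the remainder $r$ is smaller in modulus than $\frac{\sqrt 2}{4}$, so $\ai(x_m^+)\ai(x_m^-)<0$ and the intermediate value theorem gives a zero in each interval $(x_m^-,x_m^+)$; this provides a sequence of distinct zeros tending to $-\io$. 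Enumerating all real zeros in decreasing order yields the sequence $(\eta_k)_{k\ge 0}$ of \eqref{ai000}.

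Finally I would deduce the sign structure. By what precedes, $\ai>0$ on $(\eta_0,+\io)$; simplicity of $\eta_0$ then forces $\ai'(\eta_0)>0$, confirming the case $k=0$ of the second row. Between two consecutive simple zeros $\eta_{k+1}<\eta_k$, the continuous function $\ai$ has constant sign, and simplicity forces that sign to change at each $\eta_k$; propagating from $(\eta_0,+\io)$ leftwards gives \eqref{c11}, and reading $\ai'(\eta_k)$ as a one-sided difference quotient at a sign change delivers the alternating signs of $\ai'(\eta_k)$. The statement \eqref{convex} on $\ai''$ is then immediate from the identity $\ai''(\eta)=\eta\,\ai(\eta)$ of \eqref{992}, since all $\eta\in(\eta_{k+1},\eta_k)$ are negative, so $\ai''$ has the sign opposite to $\ai$.

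The only delicate point is Step 1, the exclusion of zeros in $[0,+\io)$: the ODE/maximum-principle argument above does the job, but one must quote \eqref{995} to guarantee decay at $+\io$ (the Airy equation alone admits the exponentially growing solution $\operatorname{Bi}$, so the selection of $\ai$ through its decaying behavior at $+\io$ is essential). Everything else is a routine combination of simplicity, the IVT on the asymptotic expansion, and the ODE.
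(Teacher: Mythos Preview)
Your argument is correct. The ODE maximum-principle step excluding zeros on $[0,+\io)$ is clean (and you are right that quoting the decay at $+\io$ from \eqref{995} is essential to rule out the $\operatorname{Bi}$-type behavior); the IVT application to \eqref{aineg} to manufacture infinitely many zeros is standard; and your deductions of the alternating signs and of \eqref{convex} from $\ai''=\eta\,\ai$ with $\eta<0$ are exactly what is needed.

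By comparison, the paper does not prove the full lemma in the text: it defers most of it to an external reference (Chapter~9 of \cite{special}) and only supplies, in the accompanying \textit{N.B.}, the simplicity argument (identical to yours: a double zero of a nontrivial solution of $y''=xy$ would have all derivatives vanishing, contradicting analyticity) and the observation that \eqref{convex} follows from \eqref{992} and \eqref{c11}. Your write-up is therefore more self-contained than the paper's, while agreeing with it on the two points the paper does treat.
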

\begin{nb}\rm
The simplicity of the zeroes of the Airy function holds true for any non-zero solution of the Airy differential equation
$
y''=xy.
$
The solutions of this ODE are analytic functions and if $a$ is a double zero, we have
$y(a)=y'(a)=0$ and thus from the Airy equation, we get $y''(a)=0$; we may then prove by induction on $k\ge 1$ that $y^{(l)}(a)=0$ for $0\le l\le k+1$: it is proven for $k=1$, and if true for some $k\ge 1$, we get
$$
y^{(k+2)}(x)=\bigl(x y(x)\bigr)^{(k)}\Longrightarrow y^{(k+2)}(a)=0,
$$
proving the final step in the induction; as a consequence, the function has a zero of infinite order, which is impossible for a non-zero analytic function.
 Assertion \eqref{convex} follows from the Airy differential equation
 \eqref{992}, from \eqref{c11} and $\eta_{2k}<0$.
\end{nb}
\begin{rem}\rm
 For $M=0$, $\val{\arg x}\le \pi/3$, we have
\begin{equation*}
\val{R_{0}(x)}\le \frac{\Gamma\bigl(3+\frac 12\bigr)}{3^{2}(2)!} \val{x}^{-\frac{3}2}
\Bigl(
\frac{\sqrt 3}2
\Bigr)^{-\frac{7}{2}}
=\val{x}^{-\frac{3}2}\sqrt\pi 
\frac{5}{3^{11/4}\sqrt 2}\le\val{x}^{-\frac{3}2}\times 0.305455,
\end{equation*}
so that
\begin{align}
&\val{R_{0}(x)}\le 0.305455 \val x^{-3/2}\quad \text{if $\val{\arg x}\le \pi/3$,}\label{nest1}
\\
&\text{and  for $\val x\ge 12$, $\val{\arg x}\le \pi/3$ we have}\quad 
 \val{R_{0}(x)}\le  0.007349.\label{nest2}
\end{align}
We get then for $\lambda>0$, using \eqref{929l}
\begin{align*}
\ai(-\lambda)&=\frac{1}{\pi}\re\Bigl(e^{i\pi/3} \lambda^{-1/4}
e^{-i\frac23 \lambda^{3/2}}\bigl(\sqrt \pi e^{-i\pi/12}+R_{0}(\lambda e^{i\pi/3})\bigr)
\Bigr)
\\&\hs=\frac{1}{\sqrt\pi} \lambda^{-1/4}\cos(\frac \pi 4-\frac23 \lambda^{3/2})+\frac1\pi\re
\Bigl\{  \lambda^{-1/4}R_{0}(re^{i\pi/3})
e^{i\pi/4}e^{-i\frac23 \lambda^{3/2}}\Bigr\}
\\
&\hs\hs=\frac{1}{\sqrt\pi}\lambda^{-1/4}\Bigl(\sin(\frac \pi 4+\frac23 \lambda^{3/2})
+
\frac{1}{\sqrt \pi}
\re  \bigl\{R_{0}(\lambda e^{i\pi/3})
e^{i\pi/4}e^{-i\frac23 \lambda^{3/2}}\bigr\}
\Bigr),
\end{align*}
so that 
\begin{align}
\text{for $\lambda>0$,\quad}&\ai(-\lambda)=\frac{1}{\sqrt\pi}\lambda^{-1/4}\Bigl(\sin(\frac \pi 4+\frac23 \lambda^{3/2})
+
\tilde R_{0}(\lambda)\Bigr),\label{n01}
\\
\text{with}\quad&\val{\tilde R_{0}(\lambda)}\le \lambda^{-3/2} \times 0.172335,
\label{n02}
\\
\text{and for $\lambda\ge 12$,}\quad&
\val{\tilde R_{0}(\lambda)}\le 0.004146.\label{n03}
\end{align}
\end{rem}
\begin{rem}\rm
 For $M=1$, $\val{\arg x}\le \pi/3$, we have 
 \begin{multline}
 \val{R_{1}(x)}\le \frac{\Gamma\bigl(6+\frac 12\bigr)}{3^{4}(4)!} \val{x}^{-3}
\Bigl(
\frac{\sqrt{3}}{2}
\Bigr)^{-6-\frac{1}{2}}
\\
=
\val{x}^{-3}\sqrt \pi\frac{11!}{2^{21/2}\times 3^{37/4}\times 5}
\le \val{x}^{-3}\times 0.377203,
\end{multline}
and 
\begin{equation}\label{}
\text{for $\val x\ge 12$,\quad}
\val{R_{1}(x)}\le 0.000219,
\end{equation}
so that
\begin{align*}
\ai(-r)&=
\frac{1}{\sqrt\pi}r^{-1/4}\Bigl(\sin(\frac \pi 4+\frac23 r^{3/2})
+
\frac{\Gamma(7/2)}{18\sqrt \pi}\sin(\frac23 r^{3/2}-\frac \pi 4) r^{-3/2}
\\&\hskip184pt +
\frac{1}{\sqrt \pi}
\re  \bigl\{R_{1}(re^{i\pi/3})
e^{i\pi/4}e^{-i\frac23 r^{3/2}}\bigr\}
\Bigr)
\\
&=\frac{1}{\sqrt\pi}r^{-1/4}\Bigl(\sin(\frac \pi 4+\frac23 r^{3/2})
+
\frac{\Gamma(7/2)}{18\sqrt \pi}\sin(\frac23 r^{3/2}-\frac \pi 4) r^{-3/2}+
\frac{1}{\sqrt \pi}
\tilde R_{1}(r)\Bigr),
\end{align*}
with
\begin{align}
\text{for $r>0$,\quad }\val{\tilde R_{1}(r)}&\le 
r^{-3}\times 0.377203,\label{vest1}
\\
\text{for $r\ge 12$,\quad }\val{\tilde R_{1}(r)}&\le 
0.000219.\label{vest2}
\end{align}
We find  for $\lambda>0$,
\begin{multline}\label{9916+}
G(-\lambda)
\\=
 \int_{\lambda}^{+\io}\frac{1}{r^{1/4}\sqrt \pi}\Bigl( \sin\bigl(\frac \pi 4+\frac 23r^{3/2}\bigr)
 +\frac{\Gamma(7/2)}{18\sqrt\pi}
 r^{-3/2}\sin\bigl(\frac 23r^{3/2}-\frac{\pi}4\bigr)
+\frac1{\sqrt \pi}\tilde R_{1}(r)\Bigr) dr,
\end{multline}
and we  have 
\begin{multline*}
\int_{\lambda}^{+\io}\frac{1}{r^{3/4}\sqrt \pi} r^{1/2}\sin\bigl(\frac \pi 4+\frac 23r^{3/2}\bigr) dr
\\={\cos\bigl(\frac \pi 4+\frac 23\lambda^{3/2}\bigr)\frac{1}{\lambda^{3/4}\sqrt \pi}}
-\frac34
\int_{\lambda}^{+\io}\frac{1}{r^{7/4}\sqrt \pi} \cos\bigl(\frac \pi 4+\frac 23r^{3/2}\bigr) dr,
\end{multline*}
as well as 
\begin{multline*}
-\frac34
\int_{\lambda}^{+\io}\frac{1}{r^{7/4}\sqrt \pi} \cos\bigl(\frac \pi 4+\frac 23r^{3/2}\bigr) dr
=
-\frac34
\int_{\lambda}^{+\io}\frac{1}{r^{9/4}\sqrt \pi} r^{1/2}\cos\bigl(\frac \pi 4+\frac 23r^{3/2}\bigr) dr
\\
{=\frac3{4\sqrt \pi}\sin\bigl(\frac \pi 4+\frac 23\lambda^{3/2}\bigr) \lambda^{-9/4}
-\frac3{4\sqrt \pi}\frac94\int_{\lambda}^{+\io}r^{-13/4} \sin\bigl(\frac \pi 4+\frac 23r^{3/2}\bigr) dr,}
\end{multline*}
so that
\begin{multline}\label{9917}
\int_{\lambda}^{+\io}\frac{1}{r^{1/4}\sqrt \pi} \sin\bigl(\frac \pi 4+\frac 23r^{3/2}\bigr) dr
={\cos\bigl(\frac \pi 4+\frac 23\lambda^{3/2}\bigr)\frac{1}{\lambda^{3/4}\sqrt \pi}}
\\
+\frac3{4\sqrt \pi}\sin\bigl(\frac \pi 4+\frac 23\lambda^{3/2}\bigr) \lambda^{-9/4}
-\frac3{4\sqrt \pi}\frac94\int_{\lambda}^{+\io}r^{-13/4} \sin\bigl(\frac \pi 4+\frac 23r^{3/2}\bigr) dr.
\end{multline}
We have also 
\begin{multline}\label{9918+}
 \int_{\lambda}^{+\io}\frac{1}{r^{1/4}}
 \frac{\Gamma(7/2)}{18\pi}
 r^{-3/2}\sin\bigl(\frac 23r^{3/2}-\frac{\pi}4\bigr)
 dr
 =
  \frac{\Gamma(7/2)}{18\pi}\int_{\lambda}^{+\io}
r^{-7/4}\sin\bigl(\frac 23r^{3/2}-\frac{\pi}4\bigr)
 dr
 \\=-\frac{\Gamma(7/2)}{18\pi}
 \cos\bigl(\frac 23\lambda^{3/2}-\frac{\pi}4\bigr)\lambda^{-9/4}+
 \frac{\Gamma(7/2)}{18\pi}\frac94
 \int_{\lambda}^{+\io}
 \cos\bigl(\frac 23r^{3/2}-\frac{\pi}4\bigr) r^{-13/4}dr,
\end{multline}
so that 
\eqref{9917},
\eqref{9918+}
and \eqref{9916+} entail
\begin{align*}
G(-\lambda)
&={\cos\bigl(\frac \pi 4+\frac 23\lambda^{3/2}\bigr)\frac{1}{\lambda^{3/4}\sqrt \pi}}
+\frac3{4\sqrt \pi}\sin\bigl(\frac \pi 4+\frac 23\lambda^{3/2}\bigr) \lambda^{-9/4}
\\&\hskip20pt-\frac3{4\sqrt \pi}\frac94\int_{\lambda}^{+\io}r^{-13/4} \sin\bigl(\frac \pi 4+\frac 23r^{3/2}\bigr) dr
\\
&
\hskip20pt-\frac{\Gamma(7/2)}{18\pi}
 \cos\bigl(\frac 23\lambda^{3/2}-\frac{\pi}4\bigr)\lambda^{-9/4}+
 \frac{\Gamma(7/2)}{18\pi}\frac94
 \int_{\lambda}^{+\io}
 \cos\bigl(\frac 23r^{3/2}-\frac{\pi}4\bigr) r^{-13/4}dr
 \\
 & \hskip20pt +\frac1\pi\int_{\lambda}^{+\io} r^{-1/4 }\tilde R_{1}(r).
\end{align*}
We get then 
\begin{align*}
G(-\lambda)
&=
\frac{\lambda^{-3/4}}{\sqrt\pi}\Biggl(
\cos\bigl(\frac \pi 4+\frac 23\lambda^{3/2}\bigr)
+\frac3{4}\sin\bigl(\frac \pi 4+\frac 23\lambda^{3/2}\bigr) \lambda^{-6/4}
\notag
\\&\hskip70pt-\frac3{4}\times\frac94\lambda^{3/4}\int_{\lambda}^{+\io}r^{-13/4} \sin\bigl(\frac \pi 4+\frac 23r^{3/2}\bigr) dr
\notag
\\
&
\hskip80pt-\frac{\Gamma(7/2)}{18\sqrt\pi}
 \cos\bigl(\frac 23\lambda^{3/2}-\frac{\pi}4\bigr)\lambda^{-6/4}
 \notag
 \\&\hskip90pt+
 \frac{\Gamma(7/2)}{18\sqrt\pi}\frac94\lambda^{3/4}
 \int_{\lambda}^{+\io}
 \cos\bigl(\frac 23r^{3/2}-\frac{\pi}4\bigr) r^{-13/4}dr
 \notag
 \\
 & \hskip100pt +\frac{\lambda^{3/4}}{\sqrt\pi}\int_{\lambda}^{+\io} r^{-1/4 }\tilde R_{1}(r)\Biggr)
 \notag
 ,\end{align*}
 so that 
 \begin{equation}\label{n04}
G(-\lambda)=\frac{\lambda^{-3/4}}{\sqrt\pi}\Bigl(
 \cos\bigl(\frac \pi 4+\frac 23\lambda^{3/2}\bigr)
 +\lambda^{-3/2} S_{1}(\lambda)
 \Bigr),
\end{equation}
with
\begin{equation}\label{n05}
\val{S_{1}(\lambda)}\le \frac34+\frac{3}{4}+\frac{\Gamma(7/2)}{18\sqrt\pi}
+\frac{\Gamma(7/2)}{18\sqrt\pi}+\frac4{9\sqrt \pi}\times 0.377203\le 1.80293
\end{equation}
where we have used \eqref{vest1} for the bound of the last term above.
As a consequence, if $\lambda\ge 12$, we get that 
\begin{equation}\label{n06}
\val{\lambda^{-3/2} S_{1}(\lambda)}\le 0.0433716.
\end{equation}
This is allowing us to extend the proof of Lemma \ref{lem920} to all values.
Note that the first 10 values (and more) are accessible numerically.
\end{rem}
Since we have $\eta_{9}=-12.82877675<-12$,
Formulas \eqref{n01}, \eqref{n03}, \eqref{n04}, \eqref{n06} imply the following result.
\begin{lem}\label{lem918}
 With $\ai$ and $G$ defined above, we have for $-\lambda\le\eta_{9}$
 \begin{align}
\ai(-\lambda)&=\frac{1}{\sqrt\pi}\lambda^{-1/4}\Bigl(\sin(\frac \pi 4+\frac23 \lambda^{3/2})
+
\tilde R_{0}(\lambda)\Bigr),
\\
& \val{\tilde R_{0}(\lambda)}\le  \lambda^{-3/2} \times 0.172335\le  0.004146,
\label{9928}\\
G(-\lambda)&=\frac{\lambda^{-3/4}}{\sqrt\pi}\Bigl(
 \cos\bigl(\frac \pi 4+\frac 23\lambda^{3/2}\bigr)
 +\tilde S_{1}(\lambda)
 \Bigr),
 \\& \val{\tilde S_{1}(\lambda)}\le \lambda^{-3/2}\times 1.80293\le  0.0433716.
 \label{9929}
\end{align} 
\end{lem}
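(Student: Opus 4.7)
The plan is to collect the asymptotic estimates already derived in the preceding remarks and combine them with the single numerical observation that the ninth zero of the Airy function satisfies $\val{\eta_{9}}\approx 12.82878>12$. No new analytic content is needed; the lemma is essentially a bookkeeping exercise that extracts the threshold version from the pointwise bounds \eqref{n01}--\eqref{n06}.

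First I would observe that the hypothesis $-\lambda\le\eta_{9}$ is equivalent to $\lambda\ge\val{\eta_{9}}>12$, so any asymptotic bound stated for $\lambda\ge 12$ is automatically available. For the first displayed identity, I would invoke Theorem \ref{thm913} at $M=0$ with $x=\lambda e^{i\pi/3}$, for which $\val{\arg x}=\pi/3$ and hence $\val{R_{0}(x)}\le 0.305455\,\val{x}^{-3/2}$, as recorded in \eqref{nest1}--\eqref{nest2}. Pushing this through the reflection identity \eqref{929l} to express $\ai(-\lambda)$ as $2\re\bigl(e^{i\pi/3}\ai(\lambda e^{i\pi/3})\bigr)$, combining the phases $e^{i\pi/3}$ and $e^{-i\pi/12}$ into $e^{i\pi/4}$, and folding $R_{0}(\lambda e^{i\pi/3})$ into a real remainder produces the formula and the explicit constant $0.172335$ of \eqref{n01}--\eqref{n02}. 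Evaluating at $\lambda=12$ then yields $0.172335\cdot 12^{-3/2}\le 0.004146$, which gives \eqref{9928}.

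For the integrated version \eqref{9929}, I would begin from the $M=1$ expansion of Theorem \ref{thm913}, writing $\ai(-r)$ as the sum of a principal sine term at scale $r^{-1/4}$, a $\Gamma(7/2)$-weighted next-order $r^{-7/4}$ correction, and a remainder $\tilde R_{1}(r)$ controlled by $0.377203\, r^{-3}$ as in \eqref{vest1}--\eqref{vest2}. The function $G(-\lambda)$ is $\int_{\lambda}^{+\io}\ai(-r)\,dr$, and the two integration-by-parts identities \eqref{9917} and \eqref{9918+} -- already carried out in the text -- convert each oscillatory piece into a boundary contribution at $\lambda$ (of respective sizes $\lambda^{-3/4}$ and $\lambda^{-9/4}$) plus a tail of size $\lambda^{-9/4}$. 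Factoring $\lambda^{-3/4}/\sqrt\pi$ and collecting the five contributing coefficients $\tfrac34$, $\tfrac34$, $\Gamma(7/2)/(18\sqrt\pi)$, $\Gamma(7/2)/(18\sqrt\pi)$, and $(4/(9\sqrt\pi))\cdot 0.377203$ as displayed in \eqref{n05} produces $\val{S_{1}(\lambda)}\le 1.80293$. Finally, evaluating at $\lambda\ge 12$ gives the claimed $0.0433716$.

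The entire argument is arithmetic; the only conceivable obstacle is verifying that the numerical constants propagate faithfully through each step, in particular that the combined coefficient after integrating the $M=1$ expansion twice really stays below $1.80293$. Since the relevant integration-by-parts computations \eqref{9917}--\eqref{9918+} and the coefficient summation are written out explicitly in the preceding remarks, the proof reduces to citing those displays and inserting $\lambda=\val{\eta_{9}}$ (or the weaker $\lambda=12$) at the end.
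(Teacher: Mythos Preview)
Your proposal is correct and follows exactly the same approach as the paper: the paper's proof is the single sentence ``Since we have $\eta_{9}=-12.82877675<-12$, Formulas \eqref{n01}, \eqref{n03}, \eqref{n04}, \eqref{n06} imply the following result,'' and you have simply unpacked the content behind those citations. Your identification of $\tilde S_{1}(\lambda)$ with $\lambda^{-3/2}S_{1}(\lambda)$ from \eqref{n04}--\eqref{n05} is exactly what is needed to match the statement.
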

\subsubsection{More on the Airy function}
\begin{pro}\label{proai1}
 We have
 \begin{equation}\label{996}
\int_{0}^{+\io} \ai(x) dx =\frac 13.
\end{equation}
\end{pro}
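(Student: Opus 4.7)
The plan is to piggyback on the analysis of the function $G$ developed in Section \ref{secmainpara}, where essentially all the work has already been done. Recall from Lemma \ref{lem.nnbbxx} that $G$ is the antiderivative of $\ai$ normalized so that
$$G(\eta) = \frac{2}{3} + \int_{0}^{\eta}\ai(\xi)\, d\xi,$$
and from Lemma \ref{lem48} that $\lim_{\eta \to +\infty} G(\eta) = 1$. Consequently, letting $\eta \to +\infty$ yields immediately
$$\int_{0}^{+\infty}\ai(\xi)\, d\xi \;=\; G(+\infty) - G(0) \;=\; 1 - \frac{2}{3} \;=\; \frac{1}{3}.$$
Thus the proof reduces to \emph{quoting} the two boundary values of $G$.

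For completeness, the value $G(0)=2/3$ was obtained in \eqref{425} from the elementary computation \eqref{sin3}, namely
$\frac{1}{\pi}\int_{\R}\frac{\sin(s^{3}/3)}{s}\, ds = \frac{1}{3}$, which follows by the change of variables $\sigma = s^{3}/3$ and the classical Dirichlet integral. The limit $G(+\infty)=1$ uses the asymptotic behaviour of the Airy function on the negative half-line (Theorem \ref{thm913}, equation \eqref{aineg}) together with the fast decay $\ai(\eta) = O(e^{-2\eta^{3/2}/3}\eta^{-1/4})$ for $\eta \to +\infty$ established in \eqref{995}, ensuring that $\ai \in L^{1}((0,+\infty))$ so that the integral on the right-hand side actually converges.

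The only subtle point — and the one I would flag as the main obstacle for an a priori independent proof — is the convergence of $\int_{-\infty}^{+\infty}\ai(\xi)\, d\xi$, since on $(-\infty,0)$ the Airy function decays only like $\val{\xi}^{-1/4}$ while oscillating, so one must interpret the integral as an improper (conditionally convergent) integral and justify the passage to the limit by integration by parts using the oscillation, exactly as done implicitly in the argument of Lemma \ref{lem48}. Once that is in place, the proposition is a one-line consequence of \eqref{427} and \eqref{428}.

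As an alternative, one could give a direct proof via regularization: for $\epsilon > 0$, the integral $\int_{0}^{+\infty}\ai(x) e^{-\epsilon x}\, dx$ is absolutely convergent thanks to \eqref{995}, and swapping integrals (justified by Fubini after substituting the contour-shifted representation of Proposition \ref{pro910} with $h > 0$) produces an explicit expression that can be evaluated by shifting the $\xi$-contour and using the residue theorem. Letting $\epsilon \to 0_{+}$ via dominated convergence then gives the value $1/3$. However, the first route through $G$ is considerably shorter and completely self-contained within the paper.
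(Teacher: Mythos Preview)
Your primary argument is circular. You invoke Lemma \ref{lem48} for the limit $G(+\infty)=1$, but look at the proof of Lemma \ref{lem48}: it states explicitly that ``\eqref{428} is implied by \eqref{427} and \eqref{996}, \eqref{99o}'' --- that is, the limit $G(+\infty)=1$ is \emph{deduced from} Proposition \ref{proai1} (equation \eqref{996}), not the other way around. Your attempted independent justification that ``$G(+\infty)=1$ uses the asymptotic behaviour of the Airy function on the negative half-line'' is muddled: behaviour on $(-\infty,0)$ is irrelevant to $G(+\infty)$, and the fast decay of $\ai$ on $(0,+\infty)$ only shows that $\int_0^{+\infty}\ai$ \emph{converges}, not what value it takes. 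Nothing in Section \ref{secmainpara} computes this value independently of the Appendix; the logical flow in the paper is Appendix $\Rightarrow$ Lemma \ref{lem48}, not the reverse.

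Your ``alternative'' via regularization is essentially the paper's actual proof. The paper multiplies by $e^{xh}$ (the compensating factor $e^{-xh}$ is already present in the representation of Proposition \ref{pro910}), recognises $\int_0^{+\infty}\widehat{\psi_h}(-x)\,dx$ as the pairing of $\psi_h$ against $\frac{\delta_0}{2}-\frac{1}{2\pi i}\text{pv}\frac{1}{\xi}$, and is left with showing that $\langle\text{pv}\frac{1}{\xi},\, e^{-h\xi^2}\sin(\xi^3/3-\xi h^2)\rangle\to\pi/3$ as $h\to 0_+$, which follows from \eqref{sin3} after some dominated-convergence bookkeeping. So the route you dismiss as longer is in fact the only non-circular one on offer here.
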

\begin{proof}
 According to Theorem \ref{thm913}, the Airy function $\ai$ is rapidily decreasing on the positive half-line 
 and thus belongs to $L^{1}(\R_{+})$, so that the integral in \eqref{996} makes sense.
 Also we have  from Theorem \ref{thm913} and the Lebesgue Dominated Convergence Theorem that,
 \begin{equation}\label{998}
\int_{0}^{+\io} \ai(x) dx =\lim_{h\rightarrow 0_{+}}\int_{0}^{+\io} \ai(x) e^{xh} dx e^{-h^{3}/3},
\end{equation}
and we shall now calculate the right-hand-side of \eqref{998}.
We have for $h>0$,
$$
\int_{0}^{+\io} \ai(x) e^{xh} dx e^{-h^{3}/3}=\int_{0}^{+\io} 
\frac1{2\pi}\int e^{-h \xi^{2}} e^{i(\frac{\xi^{3}}{3}-\xi h^{2})}e^{ix\xi} d\xi dx
=\int_{0}^{+\io} \widehat{\psi_{h}}(- x) dx,
$$
with 
\begin{equation}\label{psih}
\psi_{h}(\xi)= e^{-h (2\pi\xi)^{2}} e^{i(\frac{(2\pi\xi)^{3}}{3}-(2\pi \xi) h^{2})},
\end{equation}
so that 
\begin{align*}
\int_{0}^{+\io} \ai(x) e^{xh} dx e^{-h^{3}/3}
&=\poscal{\frac{\delta_{0}}2
-\frac1{2\pi i}\text{pv}\frac1{\xi}}{\psi_{h}}_{\mathscr S', \mathscr S}
\\&\hs=\frac12
-\frac1{2\pi i}\langle\text{pv}\frac1{\xi},{e^{-h (2\pi\xi)^{2}} e^{i(\frac{(2\pi\xi)^{3}}{3}-(2\pi \xi) h^{2})}}
\rangle
\\&\hs\hs=
\frac12
-\frac1{2\pi}\langle\text{pv}\frac1{\xi},{e^{-h \xi^{2}} \sin(\frac {\xi^{3}}3-\xi h^{2})}\rangle.
\end{align*}
We note at this point that, according to \eqref{sin3}, the right-hand-side of the above equality
is for $h=0$ equal to 
$$
\frac12-\frac1{2\pi}\frac\pi3=\frac13,
$$
so that, with \eqref{998}, we are left to proving that 
\begin{equation}\label{999}
\lim_{h\rightarrow 0_{+}}\langle\text{pv}\frac1{\xi},{e^{-h \xi^{2}} \sin(\frac {\xi^{3}}3-\xi h^{2})}\rangle=\frac\pi3.
\end{equation}
We have 
\begin{multline*}
\int\frac{ \sin(\frac {\xi^{3}}3-\xi h^{2})}{\xi} e^{-h \xi^{2}} d\xi=\frac\pi3+
\int\frac{ \sin(\frac {\xi^{3}}3-\xi h^{2}) e^{-h \xi^{2}}-\sin(\frac{\xi^{3}}{3})}{\xi} d\xi
\\
=\frac\pi3+\underbrace{\int\frac{\sin(\frac{\xi^{3}}{3})}{\xi}\bigl(\cos(\xi h^{2})e^{-h\xi^{2}}-1\bigr)d\xi}_{I_{1}(h)}
-\underbrace{\int\frac{\sin(\xi h^{2})}{\xi}\cos(\frac{\xi^{3}}{3})e^{-h\xi^{2}}d\xi}_{I_{2}(h)}.
\end{multline*}
We have 
\begin{multline*}
I_{1,1}(h)=\int_{1}^{+\io}\frac{\xi^{2}\sin(\frac{\xi^{3}}{3})}{\xi^{3}}\bigl(\cos(\xi h^{2})e^{-h\xi^{2}}-1\bigr)d\xi
\\=\int_{1}^{+\io}\frac{\frac{d}{d\xi}(\cos(\frac{\xi^{3}}{3}))}{\xi^{3}}\bigl(\cos(\xi h^{2})e^{-h\xi^{2}}-1\bigr)d\xi,
\end{multline*}
and a simple integration by parts\footnote{The boundary term is easy to handle and for the derivative falling on $\xi^{-3}$, we use that
$\val{\cos(\xi h^{2})e^{-h\xi^{2}}-1}\le 2$; if the derivative falls on the other term we get
$$
\int_{1}^{+\io}\frac{\cos(\frac{\xi^{3}}{3})}{\xi^{3}}\bigl(2h \xi\cos(\xi h^{2})e^{-h\xi^{2}}
+e^{-h\xi^{2}}\sin(\xi h^2)h^2\bigr)d\xi,
$$
which goes trivially to 0 with $h$. } shows that 
$\lim_{h\rightarrow 0}I_{1,1}(h)=0$; we have also trivially that
$$0=\lim_{h\rightarrow 0}\int_{0}^{1}\frac{\xi^{2}\sin(\frac{\xi^{3}}{3})}{\xi^{3}}
\bigl(\cos(\xi h^{2})e^{-h\xi^{2}}-1\bigr)d\xi.$$
On the other hand, we have  
$$
\val{I_{2}(h)}\le \int h^{2}e^{-h\xi^{2}}d\xi=O(h^{3/2}),
$$
which completes the proof of \eqref{999} as well as the proof of Proposition \ref{proai1}. 
\end{proof}
\begin{lem}We have
 \begin{equation}\label{99o}
\lim_{R\rightarrow+\io}\int_{-R}^{0} \ai(x) dx=\frac23.
\end{equation}
\end{lem}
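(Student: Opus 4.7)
The plan is to combine Proposition \ref{proai1} (which gives $\int_0^{+\io} \ai(x) dx = 1/3$) with a Gaussian regularization and an Abelian summability argument. First, I would verify that the improper integral $L := \lim_{R \to +\io} \int_{-R}^0 \ai(x) dx$ exists. From the asymptotic expansion \eqref{aineg},
\begin{equation*}
\ai(-\lambda) = (\sqrt{\pi}\,\lambda^{1/4})^{-1}\sin\bigl(\pi/4 + (2/3)\lambda^{3/2}\bigr) + O(\lambda^{-7/4}) \quad\text{as }\lambda \to +\io.
\end{equation*}
The remainder is absolutely integrable on $[1,+\io)$, and for the oscillatory leading term, setting $\phi(\lambda) = \pi/4 + (2/3)\lambda^{3/2}$ (so that $\phi'(\lambda) = \lambda^{1/2}$), one rewrites $\sin\phi \cdot \lambda^{-1/4} = -\lambda^{-3/4}\frac{d}{d\lambda}\cos\phi$ and integrates by parts; the boundary term vanishes at $+\io$ and leaves an $O(\lambda^{-7/4})$ tail. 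Thus $L \in \R$.

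Second, I would introduce the Gaussian regularization $J(h) := \int_{\R} \ai(x) e^{-hx^2} dx$ for $h > 0$. This converges absolutely since $e^{-hx^2}$ dominates the $\val{x}^{-1/4}$ envelope of $\ai$ at $-\io$. Since $\ai \in \mathscr S'(\R)$ with Fourier transform $\widehat{\ai}(\xi) = e^{i(2\pi\xi)^3/3}$ and $e^{-hx^2} \in \mathscr S(\R)$ has inverse Fourier transform $\sqrt{\pi/h}\, e^{-\pi^2\xi^2/h}$, the duality identity for tempered distributions paired with Schwartz functions yields
\begin{equation*}
J(h) = \int_{\R} e^{i(2\pi\xi)^3/3}\, \sqrt{\pi/h}\, e^{-\pi^2\xi^2/h}\, d\xi.
\end{equation*}
The kernel $\xi \mapsto \sqrt{\pi/h}\, e^{-\pi^2\xi^2/h}$ is a normalized approximate identity, so $J(h) \to e^{i(2\pi\cdot 0)^3/3} = 1$ as $h \to 0_+$. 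Decomposing $J(h) = \int_{-\io}^0 \ai(x) e^{-hx^2} dx + \int_0^{+\io} \ai(x) e^{-hx^2} dx$: the second piece converges by dominated convergence (using that $\ai \in L^1([0,+\io))$ thanks to the super-exponential decay in \eqref{995}) to $\int_0^{+\io} \ai(x) dx = 1/3$, so $\lim_{h \to 0_+}\int_{-\io}^0 \ai(x) e^{-hx^2} dx = 2/3$.

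Third, I would invoke an Abel-Gauss summability theorem: if $g \in C([0,+\io);\C)$ and $\int_0^{+\io} g(\lambda) d\lambda = \ell$ as an improper integral, then $\lim_{h\to 0_+}\int_0^{+\io} g(\lambda) e^{-h\lambda^2} d\lambda = \ell$ as well. This is a routine integration by parts using that the antiderivative $G(\Lambda) = \int_0^\Lambda g$ is bounded (since $G \to \ell$) together with $\int_0^{+\io} 2h\lambda e^{-h\lambda^2} d\lambda = 1$. Applied to $g(\lambda) = \ai(-\lambda)$, and after the change of variable $x = -\lambda$ identifying the two regularized integrals, this together with Step 2 forces $L = 2/3$. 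The main technical obstacle is justifying the duality identity for $J(h)$: since $\ai \notin L^2(\R)$ the classical Plancherel theorem does not apply, but the identity holds in the standard pairing $\langle \ai, e^{-hx^2}\rangle_{\mathscr S'(\R),\mathscr S(\R)}$, and one must verify carefully that this distributional pairing coincides with the absolutely convergent Lebesgue integral defining $J(h)$, which reduces to Fubini applied to the integral representation of $\ai$ from Proposition \ref{pro910} where the Gaussian weight $e^{-hx^2}$ furnishes the required decay at $-\io$.
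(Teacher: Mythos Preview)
Your proof is correct and follows the same three-step skeleton as the paper: show the improper integral exists via the asymptotic \eqref{aineg}, regularize to compute the full-line integral as $1$, then invoke an Abelian argument plus Proposition~\ref{proai1}. The genuine difference is the choice of regularizer. The paper uses the one-sided exponential $e^{xh}$ and computes $\int_{\R}\ai(x)e^{xh}\,dx\,e^{-h^3/3}$ by recognizing it as $\psi_h(0)=1$ via the contour-shifted representation of Proposition~\ref{pro910}; its Abelian step (the Claim) is done by hand, splitting at $-1$ and integrating the oscillatory term by parts. You use the Gaussian $e^{-hx^2}$ and compute $J(h)$ by Fourier duality $\langle\ai,\phi\rangle=\langle\widehat{\ai},\check{\hat\phi}\rangle$ together with a clean approximate-identity limit, then appeal to a general Abel--Gauss summability lemma. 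Your route is more conceptual and avoids the specific integral formula of Proposition~\ref{pro910}; the paper's route is more self-contained within the machinery it has already set up. One small point: your final paragraph flags the identification of the distributional pairing with the Lebesgue integral as the ``main technical obstacle'', but since $\ai$ is a continuous bounded function (hence an $L^\infty$ element of $\mathscr S'$), this identification is immediate and needs no appeal to Proposition~\ref{pro910} or Fubini---the pairing is by definition the absolutely convergent integral.
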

\begin{proof}
 Using \eqref{aineg}, we find for $R\ge 1$,
 \begin{multline*}
 \int_{-R}^{0} \ai(x) dx=\int_{0}^{R} \ai(-r) dr=\int_{0}^{1}\ai(-r) dr
 \\+\int_{1}^{R}
 \Bigl( \frac{1}{r^{1/4}\sqrt \pi}\sin\bigl(\frac \pi 4+\frac 23r^{3/2}\bigr)
+O(r^{-7/4})\Bigr)dr,
\end{multline*} 
 proving that the limit in the left-hand-side of \eqref{99o} is existing.
 \par\no
\begin{claim}
 $
\lim_{h\rightarrow 0_{+}}\int_{-\io}^{0} \ai(x) e^{xh}dx=
\int_{-\io}^{0} \ai(x) dx.
 $
\end{claim}
 \begin{proof}[Proof of the Claim]
 We have 
 $$
\int_{-\io}^{0} \ai(x) e^{xh}dx=\int_{-\io}^{-1} \ai(x) e^{xh}dx+\underbrace{\int_{-1}^{0} \ai(x) e^{xh}dx}_{\text{with limit $\int_{-1}^{0} \ai(x) dx$}}
 $$
 and using  \eqref{aineg}, we have only to check 
 \begin{multline*}
 \int_{-\io}^{-1} \val x^{-1/4} e^{xh+i\frac23\val x^{3/2}}dx=\int_{1}^{+\io} t^{-1/4} e^{-th+i\frac23 t^{3/2}} dt
 \\=-\int_{1}^{+\io}\frac{d}{dt}\left\{e^{-th+i\frac23 t^{3/2}} \right\}(h-i t^{1/2})^{-1} t^{-1/4} dt
 =e^{-h+i\frac23 }(h-i )^{-1} \\+\int_{1}^{+\io}e^{-th+i\frac23 t^{3/2}}
 \bigl(
 (h-i t^{1/2})^{-2}\frac i2 t^{-3/4}-(h-i t^{1/2})^{-1}\frac14 t^{-5/4}
\bigr) dt,
\end{multline*}
and since the absolute value of the  integrand in the last integral is bounded above by
$
\frac34 t^{-7/4}
$, we get the result of the Claim.
\end{proof}
With \eqref{998}, \eqref{psih},
this gives
$$
\int_{-\io}^{+\io} \ai(x) dx =\lim_{h\rightarrow 0_{+}}\int_{-\io}^{+\io} \ai(x) e^{xh} dx e^{-h^{3}/3}
=\lim_{h\rightarrow 0_{+}}\left(\int_{\R}\widehat{\psi_{h}}(-\xi) d\xi=\psi_{h}(0)\right)=1,
$$
and Proposition \ref{proai1} provides the result of the lemma.
\end{proof}
\subsubsection{Asymptotic expansion for the function $G$ defined in \eqref{427}}
\begin{lem}\label{lem917}
With  $G$ defined in \eqref{427}, we have
\begin{equation}\label{9918}
G(-\lambda)=\lambda^{-3/4}\pi^{-1/2}\sin(\frac{3\pi}4+\frac23 \lambda^{3/2})+ O(\lambda^{-9/4}), \quad \lambda\rightarrow+\io.
\end{equation}
\end{lem}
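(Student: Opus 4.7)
The plan is to reduce the claim to an oscillatory-integral estimate and then harvest the leading term by repeated integration by parts, using the Airy asymptotics already recorded in Theorem~\ref{thm913}.

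First I would rewrite $G(-\lambda)$ as a tail integral. Since $G$ is an antiderivative of $\ai$ (cf.~\eqref{426}) with $G(+\infty)=1$, formula~\eqref{99o} combined with \eqref{425}--\eqref{427} gives $G(-\infty)=0$, so
\[
G(-\lambda)=\int_{-\io}^{-\lambda}\ai(\xi)\,d\xi=\int_{\lambda}^{+\io}\ai(-r)\,dr.
\]
Next I would substitute the refined negative-axis asymptotics for $\ai$. The crude version \eqref{aineg} gives only an $O(r^{-7/4})$ remainder, whose tail integral is $O(\lambda^{-3/4})$, so it is not sharp enough. I would therefore use the $M=1$ expansion already derived in the text,
\[
\ai(-r)=\frac{1}{r^{1/4}\sqrt\pi}\Bigl(\sin\bigl(\tfrac\pi 4+\tfrac23 r^{3/2}\bigr)
+\tfrac{\Gamma(7/2)}{18\sqrt\pi}\,r^{-3/2}\sin\bigl(\tfrac23 r^{3/2}-\tfrac\pi4\bigr)
+\tfrac1{\sqrt\pi}\tilde R_{1}(r)\Bigr),
\]
with $|\tilde R_1(r)|\le C r^{-3}$, so that the $\tilde R_1$--piece contributes $\int_\lambda^{+\io}r^{-1/4}\cdot O(r^{-3})\,dr=O(\lambda^{-9/4})$ without further work.

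The heart of the proof is the oscillatory integral $I(\lambda)=\int_\lambda^{+\io}\frac{r^{-1/4}}{\sqrt\pi}\sin\phi(r)\,dr$ with $\phi(r)=\tfrac\pi4+\tfrac23 r^{3/2}$, $\phi'(r)=r^{1/2}$. Writing $\sin\phi=-r^{-1/2}\tfrac{d}{dr}\cos\phi$ and integrating by parts yields
\[
I(\lambda)=\frac{\lambda^{-3/4}}{\sqrt\pi}\cos\phi(\lambda)-\frac{3}{4\sqrt\pi}\int_\lambda^{+\io} r^{-7/4}\cos\phi(r)\,dr.
\]
A second IBP using $\cos\phi=r^{-1/2}\tfrac{d}{dr}\sin\phi$ transforms the remaining integral into a boundary term of size $\lambda^{-9/4}$ plus $\tfrac{9}{4}\int_\lambda^\io r^{-13/4}\sin\phi(r)\,dr$, the latter being trivially $O(\lambda^{-9/4})$ by absolute value. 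The same two-step IBP applied to the middle term of the Ai--expansion (with $r^{-7/4}\sin(\tfrac23 r^{3/2}-\tfrac\pi4)$) produces $O(\lambda^{-9/4})$ as well, so assembling these pieces gives
\[
G(-\lambda)=\frac{\lambda^{-3/4}}{\sqrt\pi}\cos\bigl(\tfrac\pi 4+\tfrac 23\lambda^{3/2}\bigr)+O(\lambda^{-9/4}).
\]
Finally, the trivial identity $\cos(\tfrac\pi 4+\theta)=\sin(\tfrac{3\pi}4+\theta)$ (applied with $\theta=\tfrac23\lambda^{3/2}$) converts this into the stated form \eqref{9918}.

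The only subtlety is step three: a single integration by parts in $I(\lambda)$ produces a remainder of order $\lambda^{-6/4}$, which is worse than what the lemma asks. The hard part is therefore to recognize that a \emph{second} IBP is needed both in $I(\lambda)$ and in the cross term coming from the $r^{-3/2}$--correction of $\ai(-r)$, and that the naive $O(r^{-7/4})$ remainder of the leading Airy asymptotic must be refined to $O(r^{-3})$ via \eqref{995}. Everything else is bookkeeping already carried out in the remarks preceding Lemma~\ref{lem918}, where the explicit bound $|\lambda^{-3/2}S_1(\lambda)|\le 1.80293\,\lambda^{-3/2}$ is in fact a quantitative strengthening of the $O(\lambda^{-9/4})$ remainder claimed here.
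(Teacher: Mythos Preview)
Your proposal is correct and follows essentially the same route as the paper: rewrite $G(-\lambda)$ as the tail integral $\int_\lambda^{+\io}\ai(-r)\,dr$, insert the $M=1$ asymptotic expansion of $\ai(-r)$ (with $O(r^{-3})$ remainder), and perform two successive integrations by parts against $\phi'(r)=r^{1/2}$ to extract the $\lambda^{-3/4}\cos(\tfrac\pi4+\tfrac23\lambda^{3/2})$ leading term with an $O(\lambda^{-9/4})$ error. The only cosmetic difference is that the paper makes the substitution $s=\tfrac23 r^{3/2}$ before integrating by parts, whereas you work directly in $r$; your version is in fact identical to the explicit computations \eqref{9917}--\eqref{9918+} in the remark preceding Lemma~\ref{lem918}, which the paper's proof is abbreviating.
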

\begin{proof}
Property \eqref{99o} and \eqref{aineg} give  for $\eta=-\lambda <0$,
\begin{align*}
&G(\eta)=\frac23+\int_{0}^{\eta}\ai(\xi) d\xi=\int_{-\io}^{\eta} \ai(\xi) d\xi
=\int_{\lambda}^{+\io} \ai(-r) dr
\\&=\int_{\lambda}^{+\io} 2\re\bigl(e^{\frac{i\pi}3}\ai(e^{\frac{i\pi}3} r)\bigr) dr\ 
\text{\tiny(we have used \eqref{929l}); we use now \eqref{995} for $M=1, x\in e^{i\pi/3}\R_{+}$)}
\\&=
 \int_{\lambda}^{+\io}\Bigl( \frac{1}{r^{1/4}\sqrt \pi}\sin\bigl(\frac \pi 4+\frac 23r^{3/2}\bigr)
 +\frac{\Gamma(7/2)}{3^{2}2\pi}
 r^{-7/4}\sin\bigl(\frac 23r^{3/2}-\frac{\pi}4\bigr)
+O(r^{-13/4})\Bigr) dr
\\
&=(2/3)^{1/2}\pi^{-1/2}\int_{\frac23 \lambda^{3/2}}^{+\io}
 s^{-1/2}\sin\bigl(\frac \pi 4+s\bigr) ds
\\&\hskip75pt +\frac{(2/3)^{3/2}\Gamma(7/2)}{3^{2}2\pi}\int_{\frac23 \lambda^{3/2}}^{+\io}s^{-3/2}\sin\bigl(s-\frac {\pi} 4\bigr)
ds
+O(\lambda^{-9/4}).
\end{align*}
We integrate by parts in the first integral with
$$
\int_{\frac23 \lambda^{3/2}}^{+\io}
 s^{-1/2}\sin\bigl(\frac \pi 4+s\bigr) ds=-\int_{\frac23 \lambda^{3/2}}^{+\io}
 s^{-1/2}\frac{d}{ds}\left\{\cos\bigl(\frac \pi 4+s\bigr)\right\}ds
$$
$$
=(\frac23 \lambda^{3/2})^{-1/2}\cos(\frac\pi4+\frac23 \lambda^{3/2})+
\int_{\frac23 \lambda^{3/2}}^{+\io}(-1/2) s^{-3/2}\cos(\pi/4+s) ds.
$$
We have to deal with two integrals of type
$$
\int_{\lambda^{3/2}}^{+\io}s^{-3/2} \frac{d}{ids}e^{is} ds=i(\lambda^{3/2})^{(-3/2)} e^{i\lambda^{3/2}}
-\frac1i\int_{\lambda^{3/2}}^{+\io}(-3/2) s^{-5/2} e^{is}ds=
 O(\lambda^{-9/4}).
$$
Eventually we find
$
G(-\lambda)=\lambda^{-3/4}\pi^{-1/2}\cos(\frac\pi4+\frac23 \lambda^{3/2})+ O(\lambda^{-9/4}).
$
\end{proof}
With $(\eta_{k})_{k\ge 0}$ standing for the decreasing sequence of the zeroes of the Airy function (cf. Lemma \ref{lem914}), we have the following table of variation for the function $G$.
\vs\vs
\begin{center}
{\tiny\renewcommand{\arraystretch}{2}
\begin{tabular}{   |    l      | cc c c c c c c c c c c c  |    }
\hline
$\eta$         &$-\io$&\dots&$\eta_{2k+2}$ &         &$\eta_{2k+1}$   &    &$\eta_{2k}$        &\dots  &$\eta_{1}$   &&$\eta_{0}$&&$+\io$\\
\hline
 $G''(\eta)=\ai'(\eta)$&0&\dots&$+$ &       &$-$ &      &$+$         &\dots  &$- $  &              &$+$&&0\\
 \hline
$G'(\eta)=\ai(\eta)$&0&\dots&$0$  &  +                  &0  &$-$                          &$0$                       &\dots  &$0$&$-$                 &0&$+$&0\\
\hline
$G(\eta)$   &0&\dots&$G(\eta_{2k+2})$ &$\nearrow$ &G($\eta_{2k+1})$&$\searrow$  &G($\eta_{2k})$   &\dots   &$G(\eta_{1})$&$\searrow$    &
$G(\eta_{0})$
&$\nearrow$&1\\
  \hline
 \end{tabular}}
 \end{center}
 \vs
 
 \vs
\vs
\begin{center}
\renewcommand{\arraystretch}{2}
\begin{tabular}{|c|c|c|c|c|l|l|l|l|l|c|l|}
  \hline
  $\mathbf\eta$&\tiny${\eta_{4}}\!=\!-7.944133589$&\tiny${\eta_{3}}\!=\!-6.786708100$ & \tiny${\eta_{2}}\!=\! -5.520559828$ &\tiny${\eta_{1}}\!=\!-4.087949444$&\tiny${\eta_{0}}\!=\!-2.338107410$
 \\
  \hline
{$\mathbf{G(\eta)}$}&\tiny$-0.1187912133$&\tiny0.1333996865& \tiny$-0.1550343634$&\tiny 0.1917571397&\tiny$-0.2743520591$ \\
  \hline
\end{tabular}
\end{center}
\vs
\begin{center}
\renewcommand{\arraystretch}{2}
\begin{tabular}{|c|c|c|c|c|l|l|l|l|l|c|l|}
  \hline
  $\mathbf\eta$&\tiny$\eta_{9}\!=\!-12.82877675$&\tiny$\eta_{8}\!=\!-11.93601556$&
  \tiny$\eta_{7}\!=\!-11.00852430$&\tiny${\eta_{6}}\!=\!-10.04017434,$& \!\!\tiny${\eta_{5}}\!=\!-9.022650854$\!\!\!
 \\
  \hline
{$\mathbf{G(\eta)}$}&\tiny 0.08315615192&\tiny$ -0.08775971160$&\tiny 0.09322050200&\tiny $-0.09984115980$&\tiny 0.1080976882 \\
  \hline
\end{tabular}
\end{center}
\vs\vs\vs
\begin{lem}\label{lem920}
 The zeroes of the function $G$ on the real line are simple and make a decreasing sequence of negative numbers $(\xi_{l})_{l\le 0}$ such that
 \begin{equation}\label{stuff}
 \dots\eta_{2k+2}<\xi_{2k+2}<\eta_{2k+1}<\xi_{2k+1}<\eta_{2k}<\xi_{2k}\dots
 ,\quad \xi_{0}\approx-1.38418.
\end{equation}
The largest ten zeroes of $G$ are given by the following table
\begin{center}
\vs\vs
\begin{tabular}{|c|c|c|c|c|c|c|c|c|c|}
  \hline
$\xi_{0}$& $\xi_{1}$ & $\xi_{2}$&$\xi_{3}$& $\xi_{4}$\\
  \hline
{$-1.38418$} & $-3.33004$ & $-4.86074$&$-6.18885$ &$-7.39024$\\
  \hline
\end{tabular}
\vs\vs\vs
\begin{tabular}{|c|c|c|c|c|c|c|c|c|c|}
  \hline
 $\xi_{5}$&  $\xi_{6}$&  $\xi_{7}$&  $\xi_{8}$&  $\xi_{9}$ \\
  \hline
$-8.5022$&$-10.5366$&$-11.4826$&$-12.3913$&$-13.2679$\\
  \hline
\end{tabular}
\end{center}
\vskip15pt \noindent
 For all $k\in \N$, we have 
 \begin{equation}\label{stuff+}
G(\eta_{2k})<0<G(\eta_{2k+1}),
\end{equation} 
 and 
 $G(\eta_{2k})$ (resp. $G(\eta_{2k+1})$) is a local minimum (resp. maximum) of $G$  near $\eta_{2k}$ (resp. $\eta_{2k+1}$).
 Moreover, $G(\eta_{0})$ is an absolute minimum of the function $G$ on the real line.
 \end{lem}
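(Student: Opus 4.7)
My plan rests on the two identities $G'=\ai$ and $G''=\ai'$. Combined with Lemma \ref{lem914} these give strict monotonicity of $G$ on each interval $(\eta_{k+1},\eta_k)$ between consecutive zeroes of the Airy function: $G$ strictly decreases on $(\eta_{2k+1},\eta_{2k})$ (where $\ai<0$) and strictly increases on $(\eta_{2k+2},\eta_{2k+1})$ (where $\ai>0$), so the points $\eta_{2k}$ (resp.\ $\eta_{2k+1}$) are local minima (resp.\ maxima) of $G$, as recorded in the table preceding the lemma. Moreover on $(\eta_0,+\infty)$, $\ai$ is positive and $G$ increases from $G(\eta_0)$ to $\lim_{+\infty}G=1$, while $\lim_{-\infty}G=0$.

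The core step is the sign alternation $G(\eta_{2k})<0<G(\eta_{2k+1})$ for all $k\geq 0$. Granting it, the intermediate value theorem places exactly one zero $\xi_k$ in $(\eta_k,\eta_{k-1})$ for $k\geq 1$ and one $\xi_0\in(\eta_0,0)$ (using $G(0)=2/3>0$), yielding the announced interlacing. Each $\xi_k$ is automatically simple since $\xi_k\neq\eta_j$ implies $G'(\xi_k)=\ai(\xi_k)\neq 0$. For $k\in\{0,1,2,3,4\}$ the signs are read directly from the tabulated values. For $k\geq 5$, i.e.\ $\eta_k\leq \eta_9\approx -12.83$, I would invoke Lemma \ref{lem918}: setting $\alpha_k:=\tfrac{\pi}{4}+\tfrac{2}{3}|\eta_k|^{3/2}$, the identity $\ai(\eta_k)=0$ together with \eqref{9928} forces $\sin\alpha_k=O(|\eta_k|^{-3/2})$, so $\alpha_k$ lies within $O(|\eta_k|^{-3/2})$ of some $m_k\pi$. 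The formula \eqref{9929} then yields
\[
G(\eta_k)=\frac{|\eta_k|^{-3/4}}{\sqrt\pi}\bigl((-1)^{m_k}+O(|\eta_k|^{-3/2})\bigr).
\]
A direct check for $k=0,1,2$ from the tabulated $\eta_k$ gives $m_0=1$, $m_1=2$, $m_2=3$; an easy induction (controlling the phase increment $\alpha_{k-1}-\alpha_k$ via the monotonic gap between consecutive Airy zeroes) then shows $m_k=k+1$ for all $k$, whence $\operatorname{sgn}G(\eta_k)=(-1)^{k+1}$ and the alternation follows.

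It remains to show $G(\eta_0)$ is the \emph{absolute} minimum. Since $G\to 0,1$ at $\mp\infty$ and all interior critical values are of the form $G(\eta_j)$, it suffices to compare $G(\eta_0)\approx -0.2744$ with $G(\eta_{2k})$ for $k\geq 1$. For $2k\in\{2,4,6,8\}$ the tabulated values immediately give $|G(\eta_{2k})|<|G(\eta_0)|$. For $2k\geq 10$ (so $|\eta_{2k}|>|\eta_9|\approx 12.83$), the bound \eqref{9929} yields
\[
|G(\eta_{2k})|\leq \frac{|\eta_{2k}|^{-3/4}}{\sqrt\pi}(1+0.0434)<0.09<|G(\eta_0)|.
\]
The main obstacle in the whole argument is this asymptotic comparison: one must simultaneously exploit $\ai(\eta_k)=0$ to pin down the phase $\alpha_k$ modulo $\pi$ and use \eqref{9929} to extract the sign of the resulting cosine, while keeping the tail remainder small enough to still lie below $|G(\eta_0)|$. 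The threshold $\eta_9$ chosen in Lemma \ref{lem918} is precisely calibrated so that the numerical constants there make both the sign determination and the absolute-minimum comparison tight.
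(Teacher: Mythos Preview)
Your overall structure matches the paper's proof closely: the monotonicity analysis via $G'=\ai$, the use of Lemma~\ref{lem918} for $|\eta_k|\ge 12$, the numerical tables for $k\le 9$, and the absolute-minimum bound $|G(-\lambda)|\le \lambda^{-3/4}\pi^{-1/2}(1+0.0434)<|G(\eta_0)|$ are exactly what the paper does.

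The one substantive divergence is in how you pin down the \emph{sign} of $\cos\alpha_k$ for large~$k$, where $\alpha_k=\tfrac{\pi}{4}+\tfrac{2}{3}|\eta_k|^{3/2}$. You propose an induction on the integer label $m_k$ (nearest multiple of~$\pi$), controlled by the phase increment between consecutive Airy zeroes. This can be made to work, but the step you call ``easy'' is not quite free: you need a uniform two-sided bound on $\alpha_k-\alpha_{k-1}$ keeping it strictly inside $(\tfrac{\pi}{2},\tfrac{3\pi}{2})$, which in turn requires quantitative control on the spacing $|\eta_k|-|\eta_{k-1}|$. The paper bypasses this entirely: since Lemma~\ref{lem914} already gives the alternation $\ai'(\eta_{2k})>0$, $\ai'(\eta_{2k+1})<0$, one feeds this sign information into the asymptotic~\eqref{aineg+} for $\ai'$ to read off directly that $\cos\alpha_{2k}<0$ and $\cos\alpha_{2k+1}>0$ (the paper notes in a footnote that a numerical version of~\eqref{aineg+} analogous to Lemma~\ref{lem918} is needed, but leaves this to the reader). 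This trades your induction for a single pointwise application of an already-known sign, and is the cleaner route; your approach would work too but carries an extra lemma's worth of justification that you have not written out.
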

 \begin{nb}
 We claim  also  that 
  \begin{equation}\label{stuff++}
 \val{G(\eta_{2k})}>G(\eta_{2k+1})>\val{G(\eta_{2k+2})},
\end{equation}
but shall not provide a complete proof for that statement, which is anyway not needed is our Section \ref{secmainpara}.
\end{nb}
\begin{proof}
In the first place, we know that $G(\eta_{0})<0$ and $G$ strictly increases on $[\eta_{0},+\io)$ so that $\xi_{0}\approx -1.38418$ is defined as the unique zero of $G$ on 
$(\eta_{0},0)$ since $G(0)=2/3$. We may note that we found in particular that 
\begin{equation}\label{}
\forall \eta>\eta_{0}, \quad 1>G(\eta)>G(\eta_{0}).
\end{equation}
Also, the first ten zeroes of $G$ are simple and satisfy \eqref{stuff}, \eqref{stuff+}  and  \eqref{stuff++}.
Moreover, using Lemma \ref{lem918}, we obtain that  for $\lambda\ge 12$,
 \begin{align*}
 G(-\lambda)=0&\Longrightarrow \bigl\vert\cos\bigl(\frac{3\pi}4+\frac23\lambda^{3/2}\bigr)\bigr\vert\le 0.0433716,
 \\
 \ai(-\lambda)=0&\Longrightarrow \bigr\vert\sin\bigl(\frac{\pi}4+\frac23\lambda^{3/2}\bigr)\bigr\vert\le 0.004146,
\end{align*}
As a result, if $-\lambda$ is a double zero of $G$ we must have
both inequalities above,
which is impossible.
As a result all zeroes of $G$ are simple\footnote{It is not hard to obtain an asymptotic version of this, namely the same result for $\lambda$
large enough. However, asymptotic methods provide asymptotic results and to get a result at a finite distance, we had
to use the numerical results of Lemma \ref{lem918}, grounded on a numerical estimate of the constants appearing in Theorem \ref{thm913}.
}
and located on $(-\io, 0)$.
Let us consider the interval $[\eta_{2k+1}, \eta_{2k}]$: 
we have 
$$
\ai(\eta_{2k+1})=\ai(\eta_{2k})=0, \quad 
\ai'(\eta_{2k+1})<0<\ai'(\eta_{2k}), \quad \ai'' >0\text{ on $(\eta_{2k+1}, \eta_{2k})$.}
$$
As a result, we obtain that $G$ has a local minimum at $\eta_{2k}$ and a local maximum at
$\eta_{2k+1}$ . Moreover we find from \eqref{9928} in  Lemma \ref{lem918} and $k\ge 5$ that 
$$
\max\Bigl(\val{\sin(\frac \pi 4+\frac23\val{\eta_{2k}}^{3/2})}, 
\val{\sin(\frac \pi 4+\frac23\val{\eta_{2k+1}}^{3/2})}\Bigr)
\le  0.004146
$$
{which implies that}
$$
\min\Bigl(\val{\cos(\frac \pi 4+\frac23\val{\eta_{2k}}^{3/2})}, 
\val{\cos(\frac \pi 4+\frac23\val{\eta_{2k+1}}^{3/2})}\Bigr)
\ge   0.99999.
$$ 
We know that $\ai'(\eta_{2k})>0$,
which implies, thanks\footnote{Here this is proven if $k$ is large enough from \eqref{aineg+}, and we leave to the reader the proof of a numerical estimate
analogous to Lemma \ref{lem918} for the derivative of the Airy function.
A direct estimate is possible, using \eqref{995+} and the identity (to be differentiated) for $\lambda>0$,
\begin{align}
\ai(-\lambda)&=\frac{\lambda^{-1/4}}{\sqrt\pi}\Bigl\{
\sin\bigl(\frac \pi4+\frac23\lambda^{3/2}\bigr)+a_{0}(\lambda)\lambda^{-3/2}
\Bigr\},
\\
a_{0}(\lambda)&=\frac{\lambda^{3/2}}\pi e^{i(\frac\pi3-\frac23\lambda^{3/2})}\int_{\R}
e^{-\xi^{2}\lambda^{1/2} e^{i\pi/6}}\bigl(\cos(\xi^{3}/3)-1\bigr) d\xi.
\end{align}
} 
to \eqref{aineg+}
$$
\cos\bigl(\frac \pi 4+\frac23\val{\eta_{2k}}^{3/2}\bigr)\le -0.99999,
\qquad
\cos\bigl(\frac \pi 4+\frac23\val{\eta_{2k+1}}^{3/2}\bigr)\ge 0.99999,
$$
and Lemma \ref{lem918} implies that
$G(\eta_{2k})<0<G(\eta_{2k+1})$, which is \eqref{stuff+}.
Since the function $G$ is strictly  monotone decreasing
on the interval 
$[\eta_{2k+1},\eta_{2k}]$, it has a unique simple zero $\xi_{2k+1}$ 
on the interior of this interval.
Analogously, we can prove that 
on the interval 
$[\eta_{2k+2},\eta_{2k+1}]$, it has a unique simple zero $\xi_{2k+2}$ 
on the interior of this interval,
proving that the sequence of zeroes of the function $G$ is decreasing strictly with
$$
\eta_{2k+2}<\xi_{2k+2}<\eta_{2k+1}<\xi_{2k+1}<\eta_{2k}<\xi_{2k}, \quad k\ge 0.
$$
We shall prove a weaker statement than  \eqref{stuff++}: we know that 
$\val{G(\eta_{l})}<\val{G(\eta_{0})})$ for $1\le l\le 9$ from the numerical values obtained above.
Moreover if $\lambda\ge 12$ we find
$$
\val{G(-\lambda)}\le \lambda^{-3/4}\pi^{-1/2}(1+0.0433716)\le 0.0913016<\val{G(\eta_{0})}=0.2743520591,
$$
proving indeed that $G(\eta_{0})$ is the absolute minimum of the function $G$ on the real line,
since the desired estimate is proven for $\eta>\eta_{0}$ and for $\eta<\eta_{0}$,
either $G(\eta)\ge 0$, or  $-0.0913016 \le G(\eta)< 0$ if $\eta \le -12$.
As said above, the values less than 12 are treated directly by a numerical calculation.
The proof of the lemma is complete.
\end{proof}
 \begin{figure}[ht]
\centering
\hskip-55pt\scalebox{0.7}{\includegraphics[angle=90,height=750pt,width=1.2\textwidth]{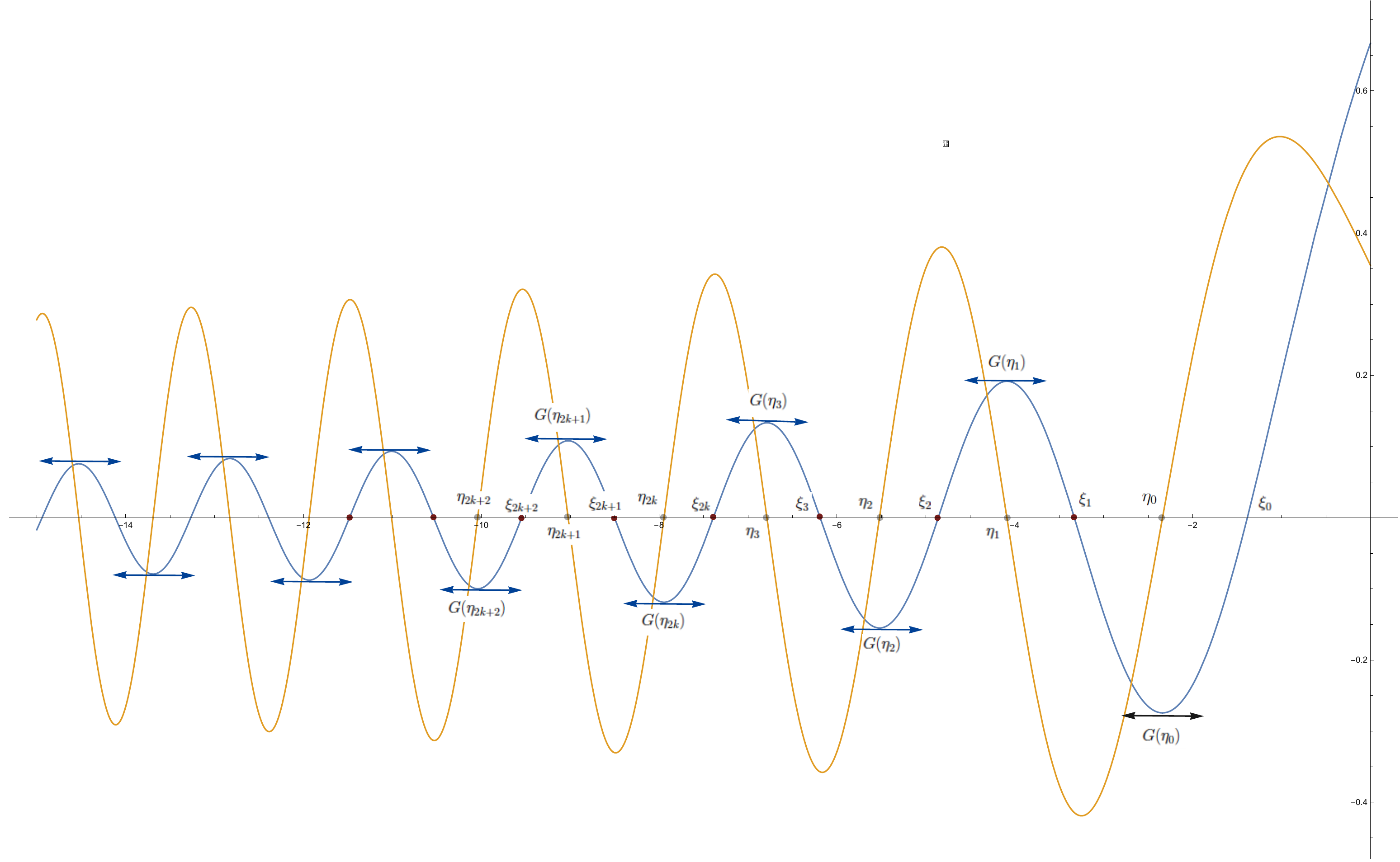}}\vs\vs
\caption{The function $G$ and its derivative the  Airy function,  on $\R_{-}$.}
\label{mmllkkjj}
\label{pic9}
\end{figure}
\subsection{Miscellaneous formulas}
\subsubsection{Some elementary formulas}\label{sec.arctan}
We define for $\tau\in \R$, 
\begin{equation}\label{}
\arctan \tau=\int_{0}^{\tau}\frac{dt}{1+t^{2}},
\end{equation}
and we note that $\arctan \tau\in (-\pi/2, \pi/2)$,
$$\forall \tau\in \R,\ 
\tan (\arctan \tau)=\tau,\qquad \forall\theta\in(-\pi/2, \pi/2),\ \arctan(\tan \theta)=\theta.
$$
Moreover we have for $\tau\in \R$,
\begin{equation}\label{arctan}
e^{i\arctan \tau}=\frac{1}{\sqrt{1+\tau^{2}}}(1+i\tau),
\end{equation}
since for $\theta\in (-\pi/2, \pi/2)$, $\tau=\tan \theta$, we have $1+\tau^{2}=\frac1{\cos^{2}\theta}$ and thus
$$
\cos \theta>0\Longrightarrow \cos \theta=\frac{1}{\sqrt{1+\tau^{2}}}
\Longrightarrow
-{\sin \theta}=-\frac12(1+\tau^{2})^{-3/2} 2\tau (1+\tau^{2}),
$$
so that 
$
e^{i\theta}=\frac{1}{\sqrt{1+\tau^{2}}}(1+i\tau).
$
\vs\vs\noindent
Let $a\in \R_{+}$ be given.
The Fourier transform of $\indic{[-a,a]}$ is
\begin{equation}\label{12345}
\int_{-a}^{a} e^{-2i\pi x\xi} dx=2\int_{0}^{a}\cos(2\pi x \xi) dx=\frac{2}{2\pi \xi}[\sin(2\pi x\xi)]^{x=a}_{x=0}
=\frac{\sin (2\pi a \xi)}{\pi \xi}.
\end{equation}
\vs\vs\vs\noindent
\subsubsection{Taking the derivative of $F_{k}$ on $\R_{+}$}\label{sub555+}
We have, using a parity argument, 
$$
F_{k}(a)=
\int_{\R}
\frac{\sin a\tau}{\pi \tau}
\frac{(1+i\tau)^{2k+1}}{(1+\tau^{2})^{k+1}}d\tau=\sum_{0\le 2l\le 2k}
\int_{\R}
\frac{\sin a\tau}{\pi \tau}
\frac{\binom{2k+1}{2l}(-1)^{l}\tau^{2l}}{(1+\tau^{2})^{k+1}}d\tau.
$$
We see also that
$
1+2k+2-2l=2k+3-2l\ge 3
$
so that we can take the derivative of $F_{k}$
and get
$$
F'_{k}(a)=
\sum_{0\le 2l\le 2k}
\int_{\R}
\frac{\cos a\tau}{\pi}
\frac{\binom{2k+1}{2l}(-1)^{l}\tau^{2l}}{(1+\tau^{2})^{k+1}}d\tau=
\frac1{\pi}\int_{\R}
({\cos a\tau})
\re\left(\frac{(1+i\tau)^{k}}{(1-i\tau)^{k+1}}\right)d\tau,
$$
with absolutely converging integrals.
For $a>0$, we have 
\begin{equation}\label{fprime}
F'_{k}(a)=\frac1{\pi}\int_{\R}
({\cos a\tau})
\frac{(1+i\tau)^{k}}{(1-i\tau)^{k+1}}
d\tau,
\end{equation}
since 
\begin{equation}\label{}
\lim_{\lambda\rightarrow+\io}\int_{-\lambda}^{\lambda}\frac{\tau^{j}\cos(a\tau)}{(1+\tau^{2})^{k+1}} d\tau
\text{\quad  makes sense for $j\le 2k+1$ (and vanishes for $j$ odd).}
\end{equation}
\subsubsection{A proof of the weak limit}\label{sub555}
We have for $u\in \mathscr S(\R^{n})$, according to \eqref{eza654},
$$
\poscal{\bigl(\mathbf 1\{2\pi (x^{2}+\xi^{2})\le a\}\bigr)^{w}u}{u}=\iint_{2\pi (x^{2}+\xi^{2})\le a}\mathcal W(u,u)(x,\xi) dx d\xi,
$$
so that 
 implies 
$$
\sum_{k\ge 0} F_{k}(a)\poscal{\mathbb P_{k} u}{u}_{L^{2}(\R^{n})}=\iint_{2\pi (x^{2}+\xi^{2})\le a}\mathcal W(u,u)(x,\xi) dx d\xi.
$$
Choosing now $u=u_{k}$ as a normalized  eigenfunction of the Harmonic Oscillator
with eigenvalue $k+1/2$,
we obtain
$$
F_{k}(a)=\iint_{2\pi (x^{2}+\xi^{2})\le a}\mathcal W(u_{k},u_{k})(x,\xi) dx d\xi.
$$
Since the function $(x,\xi)\mapsto \mathcal W(u_{k},u_{k})(x,\xi) $ belongs to the Schwartz class of $\RZ$,
we find that
$$
\lim_{a\rightarrow+\io} F_{k}(a)=\iint_{\RZ}\mathcal H(u_{k},u_{k})(x,\xi) dx d\xi=\norm{u_{k}}^{2}_{L^{2}(\R^{n})}=1,\quad\text{\tt qed.}
$$
\subsubsection{A different normalization for the Wigner function}
\index{a different normalization for the Wigner distribution}
The paper \cite{MR2761287} is using a different normalization for the Wigner distribution in $n$ dimensions with
\begin{equation}\label{}
\widetilde{\mathcal W}(u,v)(x,\xi)=(2\pi)^{-n}\int_{\R^{n}} u(x+\frac z2)\bar v(x-\frac z2) e^{-i z\cdot \xi} dz.
\end{equation}
The relationship with our definition \eqref{wigner} is 
\begin{equation}\label{wwwlll}
\widetilde{\mathcal W}(u,v)(x,\xi)=\mathcal W(u,v)(x,\frac{\xi}{2\pi})(2\pi)^{-n}.
\end{equation}
As a result, we find that
$$
\mathcal E_{lo}\bigl(\mathbb B^{2n}(R)\bigr)=\sup_{\norm{u}_{L^{2}(\R^{n})}=1}\iint_{\val x^{2}+\val \xi^{2}\le R^{2}}
\widetilde{\mathcal W}(u,u)(x,\xi) dx d\xi,
$$
is equal to 
\begin{multline*}
\sup_{\norm{u}_{L^{2}(\R^{n})}=1}\iint_{\val x^{2}+4\pi^{2}\val \xi^{2}\le R^{2}}
\mathcal W(u,u)(x,\xi) dx d\xi
\\=
\sup_{\norm{u}_{L^{2}(\R^{n})}=1}\iint_{2\pi(\val x^{2}+\val \xi^{2})\le R^{2}}
\mathcal W(u,u)(x,\xi) dx d\xi,
\end{multline*}
and we have proven here that for $u\in L^{2}(\R^{n})$ with norm 1
$$
\iint_{\val x^{2}+\val \xi^{2}\le \frac{a}{2\pi}=\frac{R^{2}}{2\pi}}
\mathcal W(u,u)(x,\xi) dx d\xi
\le 1-\frac1{(n-1)!}\int_{a}^{+\io} e^{-t}t^{n-1}dt=1-\frac{\Gamma(n, R^{2})}{\Gamma(n)},
$$
where the upper incomplete Gamma function $\Gamma(z,x)$
is given by
\index{incomplete Gamma function} 
\begin{equation}\label{incgam}
\Gamma(z,x)=\int_{x}^{+\io} t^{z-1}e^{-t}dt.
\end{equation}
This is indeed the result of Theorem 1 in \cite{MR2761287}.
\begin{nb}
 Let $x>0$ be given  and let $z\in \C$ with $\re z >0$. Then we have 
 $$
 \Gamma(z,x)=\int_{0}^{+\io} (s+x)^{z-1}e^{-s-x}ds = e^{-x}
 \int_{0}^{+\io} (s+x)^{z-1}e^{-s}ds,
 $$
 so that if $z=n+1$,  $n\in \N$, we find
\begin{multline*}
 \Gamma(n+1,x)= e^{-x}
 \int_{0}^{+\io} (s+x)^{n}e^{-s}ds=e^{-x}\sum_{0\le k\le n}\binom{n}{k} x^k
  \int_{0}^{+\io} s^{n-k}e^{-s}ds
  \\
  =e^{-x}\sum_{0\le k\le n}\binom{n}{k} x^k
\Gamma(n+1-k)
 =n!e^{-x}\sum_{0\le k\le n}\frac{x^k}{k!}.
\end{multline*}
\end{nb}
\bibliography{refsurvey}
\bibliographystyle{amsplain}
\providecommand{\bysame}{\leavevmode\hbox to3em{\hrulefill}\thinspace}
\providecommand{\MR}{\relax\ifhmode\unskip\space\fi MR }
\providecommand{\MRhref}[2]{%
  \href{http://www.ams.org/mathscinet-getitem?mr=#1}{#2}
}
\providecommand{\href}[2]{#2}

\printindex
\end{document}